\newcommand\myshade{85}
\colorlet{mylinkcolor}{violet}
\colorlet{mycitecolor}{Green}
\colorlet{myurlcolor}{Aquamarine}
\newcommand{\phantomsection}{} 
\def\namedlabel#1#2{\begingroup
  #2%
  \def\@currentlabel{#2}%
  \phantomsection\label{#1}\endgroup
}
\numberwithin{equation}{section}
\newtheorem{theorem}{Theorem}[section]
\newtheorem{corollary}[theorem]{Corollary}
\newtheorem{lemma}[theorem]{Lemma}
\newtheorem{proposition}[theorem]{Proposition}
\newtheorem{prop}[theorem]{Proposition}
\newtheorem*{theorem*}{Theorem}
\theoremstyle{definition}
\newtheorem{definition}[theorem]{Definition}
\newtheorem{defin}[theorem]{Definition}
\newtheorem{fact}[theorem]{Fact}
\newtheorem{question}[theorem]{Question}
\theoremstyle{remark}
\newtheorem*{remark}{Remark}
\newtheorem{claim}{Claim}[theorem]
\newtheorem*{claim*}{Claim}
\newcommand{\Gianluca}[1]{{\textcolor{ForestGreen}{[\textbf{Gianluca:} #1]}}}
\newcommand{\Andy}[1]{{\textcolor{blue}{[\textbf{Andy:} #1]}}}
\DeclareMathOperator{\id}{id}
\DeclarePairedDelimiter{\set}{\{}{\}}
\renewcommand{\b}[1]{\mathbf{#1}}
\renewcommand{\cal}[1]{\mathcal{#1}}
\renewcommand{\rm}[1]{\mathrm{#1}}
\renewcommand{\sf}[1]{\mathsf{#1}}
\renewcommand{\frak}[1]{\mathfrak{#1}}
\newcommand{\bA}{\mathbf{A}}
\newcommand{\rmA}{\mathrm{A}}
\newcommand{\cA}{\mathcal{A}}
\newcommand{\bB}{\mathbf{B}}
\newcommand{\rmB}{\mathrm{B}}
\newcommand{\cB}{\mathcal{B}}
\newcommand{\bC}{\mathbf{C}}
\newcommand{\bbC}{\mathbb{C}}
\newcommand{\rmC}{\mathrm{C}}
\newcommand{\frakC}{\mathfrak{C}}
\newcommand{\bD}{\mathbf{D}}
\newcommand{\bE}{\mathbf{E}}
\newcommand{\rmE}{\mathrm{E}}
\newcommand{\rmF}{\mathrm{F}}
\newcommand{\cF}{\mathcal{F}}
\newcommand{\bbG}{\mathbb{G}}
\newcommand{\bbH}{\mathbb{H}}
\newcommand{\rmH}{\mathrm{H}}
\newcommand{\bK}{\mathbf{K}}
\newcommand{\rmK}{\mathrm{K}}
\newcommand{\cK}{\mathcal{K}}
\newcommand{\cL}{\mathcal{L}}
\newcommand{\bM}{\mathbf{M}}
\newcommand{\rmM}{\mathrm{M}}
\newcommand{\cM}{\mathcal{M}}
\newcommand{\bN}{\mathbf{N}}
\newcommand{\bbN}{\mathbb{N}}
\newcommand{\rmN}{\mathrm{N}}
\newcommand{\cN}{\mathcal{N}}
\newcommand{\rmP}{\mathrm{P}}
\newcommand{\cP}{\mathcal{P}}
\newcommand{\bbQ}{\mathbb{Q}}
\newcommand{\bbR}{\mathbb{R}}
\newcommand{\rmS}{\mathrm{S}}
\newcommand{\cS}{\mathcal{S}}
\newcommand{\bbT}{\mathbb{T}}
\newcommand{\cU}{\mathcal{U}}
\newcommand{\cV}{\mathcal{V}}
\newcommand{\sfV}{\mathsf{V}}
\newcommand{\sfW}{\mathsf{W}}
\newcommand{\bbZ}{\mathbb{Z}}
\newcommand{\wt}{\widetilde}
\newcommand{\wh}{\widehat}
\newcommand{\ol}{\overline}
\newcommand{\sa}{\mathrm{Sa}}
\newcommand{\la}{\langle}
\newcommand{\ra}{\rangle}
\newcommand{\flim}{\mathrm{Flim}}
\newcommand{\age}{\mathrm{Age}}
\newcommand{\fin}[1]{[#1]^{{<}\omega}}
\newcommand{\fr}{Fra\"iss\'e }
\renewcommand{\phi}{\varphi}
\newcommand{\emb}{\mathrm{Emb}}
\newcommand{\aut}{\mathrm{Aut}}
\newcommand{\homeo}{\mathrm{Homeo}}
\newcommand{\dom}{\mathrm{dom}}
\newcommand{\im}{\mathrm{Im}}
\newcommand{\Int}{\mathrm{Int}}
\newcommand{\st}{\mathrm{St}}
\newcommand{\gl}{\mathrm{Gl}}
\newcommand{\NU}{\mathrm{NU}}
\newcommand{\sn}{\mathrm{SN}}
\newcommand{\op}{\mathrm{op}}
\newcommand{\rc}{\mathrm{rc}}
\newcommand{\mg}{\mathrm{M}(G)}
\newcommand{\sub}{\subseteq}
\newcommand{\address}[1]{\gdef\@address{#1}}
\newcommand{\email}[1]{\gdef\@email{\url{#1}}}
\title{Topological groups with tractable minimal dynamics}
\date{}
\author{Gianluca Basso and Andy Zucker}
\begin{document}
\maketitle

\begin{abstract}

    A Polish group $G$ has the \emph{generic point property} if any minimal $G$-flow admits a comeager orbit, or equivalently if the universal minimal flow (UMF) does. The class $\sf{GPP}$ of such Polish groups is a proper extension of the class $\sf{PCMD}$ of Polish groups with metrizable UMF. Motivated by analogous results for $\sf{PCMD}$, we define and explore a robust generalization of $\sf{GPP}$ which makes sense for all topological groups, thus defining the class $\sf{TMD}$ of topological groups with \emph{tractable minimal dynamics}. These characterizations yield novel results even for $\sf{GPP}$; for instance, a Polish group is in $\sf{GPP}$ iff its UMF has no points of first countability.

    Motivated by work of Kechris, Pestov, and Todor\v{c}evi\'c that connects topological dynamics and structural Ramsey theory, we state and prove an \emph{abstract KPT correspondence} which characterizes the class $\sf{TMD}$ and shows that $\sf{TMD}$ is $\Delta_1$ in the L\'evy hierarchy. We then develop set-theoretic methods which allow us to apply forcing and absoluteness arguments to generalize numerous results about $\sf{GPP}$ to all of $\sf{TMD}$. We also apply these new set-theoretic methods to first generalize parts of Glasner's structure theorem for minimal, metrizable \emph{tame} flows to the non-metrizable setting, and then to prove the \emph{revised Newelski conjecture} regarding definable NIP groups. We conclude by discussing some tantalizing connections between definable NIP groups and $\sf{TMD}$ groups.

    \let\thefootnote\relax\footnote{2020 Mathematics Subject Classification. Primary: 37B05. Secondary: 54H11, 22A05.}
	\let\thefootnote\relax\footnote{Keywords: topological groups, universal minimal flows.}
    	\let\thefootnote\relax\footnote{The first author was partially supported by Istituto Nazionale di Alta Matematica “Francesco Severi”, and wishes to thank the Faculty of Mathematics at the University of Waterloo for its hospitality during a month-long research visit. The second author was supported by NSERC grants RGPIN-2023-03269 and DGECR-2023-00412.}
\end{abstract}

\section{Introduction}

This paper is a contribution to the study of abstract topological dynamics, in particular to the study of the \emph{minimal dynamics} of topological groups. 
We refer to Section~\ref{Section:Background}, in particular Subsection~\ref{Subsection:Top_Dyn}, for more precise statements of definitions and conventions. 

Given a topological group $G$, a $G$-flow is a continuous action of $G$ on a compact space $X$; the $G$-flow $X$ is \emph{minimal} if every orbit is dense, equivalently if $X$ contains no proper subflows. A $G$-map between $G$-flows $X$ and $Y$ is a continuous, $G$-equivariant map. By a fundamental result of Ellis \cite{Ellis}, there is for each topological group $G$ a \emph{universal minimal flow} $\mg$, a minimal $G$-flow which admits a $G$-map onto every other minimal $G$-flow, which is furthermore unique up to isomorphism.

The study of $\mg$ can tell us meaningful information about the class of \emph{all} minimal $G$-flows. This is perhaps most clear for the class $\sf{EA}$ of \emph{extremely amenable} topological groups, those $G$ for which $\mg$ is a singleton. In this case, there are no non-trivial minimal $G$-flows, and every $G$-flow contains a fixed point. One can say that the extremely amenable groups are exactly those with ``\emph{trivial minimal dynamics}.'' By a result of Granirer and Lau \cite{GranLau}*{Lemma 4}, non-trivial locally compact groups never belong to $\sf{EA}$. However, among groups which are not locally compact, examples abound, including the unitary group of an infinite dimensional Hilbert space \cite{GroMil} and the group of orientation-preserving homeomorphisms of the unit interval \cite{Pestov_1998}. For closed subgroups of $S_\infty$, the group of all permutations of $\bbN$ equipped with the pointwise convergence topology, the seminal work of Kechris, Pestov, and Todor\v{c}evi\'c \cite{KPT} established a deep connection between extreme amenability and the \emph{Ramsey property} (Subsection~\ref{Subsection:KPT}), a combinatorial statement that some classes of finite structures possess. Any closed subgroup $G\subseteq S_\infty$ has the form $\aut(\bK)$ for $\bK$ a \emph{\fr structure}, and given a \fr structure $\bK$, the class $\age(\bK)$ of all finite structures which embed into $\bK$ is a \emph{\fr class} (Subsection~\ref{Subsection:AutGroups}). One of the major theorems of \cite{KPT} is that in this situation, $G\in \sf{EA}$ iff $\age(\bK)$ has the Ramsey property. For instance, the ordinary finite Ramsey theorem \cite{Ramsey} is equivalent to the statement that the class of finite linear orders has the Ramsey property, hence the theorem recovers the earlier result of Pestov \cite{Pestov_1998} that $\aut(\bbQ, <)\in \sf{EA}$.

Before mentioning other aspects of \cite{KPT}, we point out two generalizations of the theorem discussed above. In one direction, Barto\v{s}ov\'a \cites{Bartosova_Thesis, Bartosova_2013, Bartosova_More} has provided a Ramsey-theoretic characterization of extreme amenability for closed subgroups of $S_\kappa$, the group of permutations of an infinite cardinal $\kappa$. In particular, \fr classes with the Ramsey property yield examples of automorphism groups of uncountable structures in $\sf{EA}$.  In another direction, Melleray and Tsankov \cite{MT} define a notion of \emph{approximate Ramsey property} for \emph{metric \fr classes} and show that this characterizes extreme amenability of the automorphism group of a \emph{metric \fr structure}. Any Polish group is isomorphic to the automorphism group of some metric \fr structure.
   
Moving beyond extremely amenable groups, there are Polish groups $G$ with the property that $\mg$ (hence every minimal flow) is metrizable. Let us say that a Polish group has \emph{concrete minimal dynamics} in this case and write $\sf{PCMD}$ for this class. Pestov \cite{Pestov_1998} shows for instance that the universal minimal flow of the group of orientation-preserving homeomorphisms of the unit circle equipped with the compact-open topology is its natural action on the circle, while Glasner and Weiss \cite{GW_UMF_S_infty} show that the universal minimal flow of $S_\infty$ is its natural action on the space $\rm{LO}(\bbN)$ of linear orders on $\bbN$.

Kechris, Pestov, and Todor\v{c}evi\'c in \cite{KPT} (see also \cite{Lionel_2013}) introduce a framework, often referred to as \emph{KPT correspondence}, which successfully computes the universal minimal flows of automorphism groups of certain \fr structures. Given a \fr class $\cK$ of $\cL$-structures with \emph{\fr limit} $\bK$, there is often a canonical way of forming an \emph{expansion class} $\cK^*$ in a larger language $\cL^*$ whose $\cL$-reduct is $\cK$ and such that $(\cK^*, \cK)$ is a so called \emph{excellent pair}. Among the requirements of being an excellent pair is that $\cK^*$ is a \fr class satisfying the Ramsey property. Combining a major theorem of \cite{KPT} with its converse proven in \cite{Zucker_Metr_UMF}, writing $G:=\aut(\bK)$, we have $G \in \sf{PCMD}$ iff there is $\cK^*$ making $(\cK^*, \cK)$ an excellent pair, and $\mg$ can be computed explicitly from $\cK^*$. For general Polish groups, by combining the main theorems of \cite{BYMT} and \cite{MNVTT}, we have that $G\in \sf{PCMD}$ iff there is a closed, co-precompact, presyndetic, extremely amenable subgroup $H\leq G$ (Subsection~\ref{Subsection:Topological_groups}). In this case, we have $\mg\cong \wh{G/H}$, where this denotes the completion of the left coset space $G/H$ under the right uniformity. Thus, \textbf{one can think of $H\leq G$ (or similarly $\cK^*)$ as not only a certificate of the metrizability of $\mg$, but also as an explicit set of directions for constructing $\mg$. 
}

A consequence of the above discussion (and the main theorem from \cite{BYMT}) is that if $G\in \sf{PCMD}$, then $\mg$ has a comeager orbit. Indeed, if $\mg = \wh{G/H}$, the orbit $G/H$ is comeager. Let us define the class $\sf{GPP}$ of Polish groups with the \emph{generic point property}, i.e.\ such that $\mg$ has a comeager orbit. This is equivalent by \cite{AKL}*{Proposition~14.1} to every minimal flow having a comeager orbit. It is natural to ask whether the inclusion $\sf{PCMD}\subseteq \sf{GPP}$ is strict. In \cite{ZucThesis}, it is shown that upon dropping one assumption from the definition of excellent pair, namely that of \emph{precompactness}, then if one could find $(\cK^*, \cK)$ a weakly excellent, non-excellent pair in this sense and letting $\bK = \flim(\cK)$ and $G = \aut(\bK)$, we would have $G\in \sf{GPP}\setminus \sf{PCMD}$. Kwiatkowska \cite{Kw_Dendrites} exhibits several examples of such weakly excellent, non-excellent pairs by considering $\cK$ a \fr class whose limit encodes a \emph{generalized Wa\.{z}ewski dendrite}. Basso and Tsankov \cite{MR4549426} give many more examples of Polish groups $G\in \sf{GPP}\setminus \sf{PCMD}$, which are all applications of the \emph{kaleidoscopic group} construction of Duchesne, Monod, and Wesolek \cite{DMW}. Mirroring the structure theorem of \cite{MNVTT}, Zucker \cite{ZucMHP} shows that for a Polish group $G$, we have $G\in \sf{GPP}$ iff there is an extremely amenable subgroup $H\leq G$ such that $\mg\cong \sa(G/H)$, where this is the \emph{Samuel compactification} of $G/H$ equipped with its right uniformity. When $G = \aut(\bK)$ for $\bK = \flim(\cK)$ a \fr structure, this amounts to saying that there is $\cK^*$ making $(\cK^*, \cK)$ a weakly excellent pair. Like before, \textbf{we can think of this $H$ (or similarly $\cK^*$) as a certificate of the fact that $G\in \sf{GPP}$, but where the instructions for forming $\mg$ from this certificate are less explicit} since once needs to form a Samuel compactification.

There are many other characterizations of the class $\sf{PCMD}$ that are useful for proving various closure properties of it. For instance, Jahel and Zucker \cite{JahelZuckerExt} show that $\sf{PCMD}$ is closed under group extensions by considering what is a priori a completely unrelated property of topological groups. Given a $G$-flow $X$, write $\rm{AP}_G(X)\subseteq X$ for the set of \emph{almost periodic} points, i.e.\ those $x\in X$ with $\ol{G\cdot x}$ minimal, and write $\rm{Min}_G(X)$ for the space of minimal subflows of $X$ equipped with the Vietoris topology. Define the class $\sf{CAP}$ (for \emph{closed almost periodic}) of topological groups such that $\rm{AP}_G(X)$ is closed for any $G$-flow $X$, and $\sf{SCAP}$ for the class of ``strongly $\sf{CAP}$'' groups, i.e.\ those where $\rm{Min}_G(X)$ is compact for any $G$-flow $X$. It turns out that for $G$ Polish, we have $G\in \sf{PCMD}$ iff $G\in \sf{CAP}$ iff $G\in \sf{SCAP}$. The direction $\sf{PCMD}$ implies $\sf{SCAP}$ essentially appears in \cite{JahelZuckerExt}, and the direction $\neg\sf{PCMD}$ implies $\neg \sf{CAP}$ is due to Barto\v{s}ov\'a and Zucker and appears in \cite{ZucThesis}.

Revisiting Barto\v{s}ov\'a's work on uncountable structures \cites{Bartosova_Thesis, Bartosova_2013, Bartosova_More},  there it is shown under some mild extra hypotheses that excellent pairs of \fr classes also yield information about $\mg$ for $G$ the automorphism group of an uncountable structure. For instance, if $\kappa > \aleph_0$ is an uncountable cardinal and  $S_\kappa$ denotes the group of all permutations of $\kappa$ equipped with the pointwise convergence topology, then $\rmM(S_\kappa) \cong \rm{LO}(\kappa)$, in direct analogy with the result on $S_\infty$ from \cite{GW_UMF_S_infty}. While the space $\rm{LO}(\kappa)$ is no longer metrizable for uncountable $\kappa$, this is still a nice ``concrete'' description of the universal minimal flow of $S_\kappa$. Motivated by these examples and by the fact that the classes $\sf{PCMD}$, $\sf{CAP}$, and $\sf{SCAP}$ coincide for Polish groups, the current authors in \cite{BassoZucker} work with $\sf{CAP}$ and $\sf{SCAP}$ as potential \emph{definitions} of what ``concrete minimal dynamics'' should mean for general topological groups. In particular, it is shown that $\sf{CAP} = \sf{SCAP}$ and that all of the examples of automorphism groups of uncountable structures whose universal minimal flows are computed in \cites{Bartosova_Thesis, Bartosova_2013, Bartosova_More} belong to this class. In the present paper, we introduce a more neutral name and call this the class of topological groups with \emph{concrete minimal dynamics}, denoted $\sf{CMD}$ (Definition~\ref{Def:Concrete_Min_Dyn}), making $\sf{PCMD}$ exactly the intersection of Polish and $\sf{CMD}$. 

The present paper seeks to address the following questions:
\begin{itemize}
    \item 
    Is there an alternate characterization of the class $\sf{GPP}$ along the lines of the definitions of the classes $\sf{CAP}$ and/or $\sf{SCAP}$? 
    
    \item 
    Can this characterization be used to find the correct generalization of the class $\sf{GPP}$ to all topological groups? Let us call this the class of topological groups with ``tractable minimal dynamics'' and denote it by $\sf{TMD}$.
    \item 
    Can this characterization be used to prove closure properties of the class $\sf{TMD}$ similar to those enjoyed by $\sf{CMD}$ as shown in \cite{BassoZucker}? 
    \item 
    For both of the classes $\sf{CMD}$ and $\sf{TMD}$, are there ``certificates'' (in the spirit of KPT correspondence) that describe membership in these classes that make sense for arbitrary topological groups?
\end{itemize}
We provide affirmative answers to all of these lines of inquiry. We define the class of $\sf{TMD}$ groups to be those $G$ such that $\mg$ satisfies the conclusions of Theorem~\ref{Thm:Rosendal_Minimal}. Item~\ref{Item:Ultra_Thm:RM} of the theorem in particular is equivalent to the following: if $X$ is a $G$-flow and $(Y_i)_{i\in I}$ is a net of minimal subflows with $Y_i\to Y$ in the Vietoris topology, then $Y$ contains a unique minimal subflow. Compare this to the definition of $\sf{SCAP}$, which demands that the Vietoris limit $Y$ be minimal. Theorem~\ref{Thm:Polish_Group_WUEB} asserts that $\sf{GPP}$ is exactly the intersection of Polish and $\sf{TMD}$. The way in which $\sf{TMD}$ generalizes $\sf{GPP}$ is best understood via the \emph{Rosendal criterion} (Definition~\ref{Def:Rosendal_Criterion}), which characterizes when a Polish group $G$ acting on a Polish space $X$ admits a comeager orbit (for a more general statement, see Theorem~\ref{thm:Rosendal-general}). For general topological groups and $G$-spaces, Rosendal's criterion still makes sense, and we show that $\sf{TMD}$ is exactly the class of topological groups such that $\mg$ satisfies the Rosendal criterion. 

Using the various equivalent definitions of the class $\sf{TMD}$, we prove a variety of properties of this class, many of which are new even for the class $\sf{GPP}$. We show that locally-compact non-compact groups are never $\sf{TMD}$, that the class $\sf{TMD}$ is closed under group extensions (Theorem~\ref{Thm:WCAP_Group_Extensions}) and surjective inverse limits (Proposition~\ref{Prop:Inverse_Limits}). In the case of a product $G = H\times K$ with at least one of $H$ or $K$ in $\sf{TMD}$, we show that $\mg$ is ``almost'' $\rmM(H)\times \rmM(K)$, in the sense that $\mg$ is a highly proximal extension of $\rmM(H)\times \rmM(K)$ (Theorem~\ref{Thm:Products}). We show that the class $\sf{GPP}$ is characterized by those Polish groups $G$ such that $\mg$ has a point of first countability (Theorem~\ref{Thm:Polish_Group_WUEB}). In particular, if $G$ is locally compact, non-compact Polish, then $\mg$ does not contain any point of first countability.

Before proceeding, let us mention three novel aspects of this work that particularly inform the content of the other sections:
\begin{itemize}
    \item 
    Ultracoproducts of $G$-flows (Subsection~\ref{Subsection:Ultracoproducts}).
    \item 
    Fattening spaces (Section~\ref{Section:NUlts_Fattenings}).
    \item 
    Forcing and absoluteness in topological dynamics (Section~\ref{Section:Abs}).
\end{itemize}

\noindent
\textbf{Ultracoproducts of $G$-flows:} While ultracoproducts were introduced in the preprint \cite{ZucUlts}, we give a completely self-contained introduction here to the portion of the theory that we will need (\cite{ZucUlts} will be revised and expanded using concepts developed in this paper). An ultracoproduct of a tuple $(X_i)_{i\in I}$ of $G$-flows along an ultrafilter $\cU\in \beta I$ can be thought of as a universal instance of Vietoris convergence in $\rm{Sub}_G(Z)$ for some ambient $G$-flow $Z$ containing all the $X_i$; see Subsection~\ref{Subsection:Ultracoproducts} for the instance of the precise definition we will need. Theorems~\ref{Thm:Concrete}\ref{Item:Ult_Thm:Concrete} and \ref{Thm:Rosendal_Minimal}\ref{Item:Ultra_Thm:RM} give characterizations of both $\sf{CMD}$ and $\sf{TMD}$ using ultracoproducts; $G\in \sf{CMD}$ iff every ultracopower of $\mg$ is minimal (equivalently canonically isomorphic to $\mg$), while $G\in \sf{TMD}$ if every ultracopower of $\mg$ is a highly proximal extension of $\mg$, so in particular contains a unique minimal subflow. 

In Section~\ref{Section:AutMG}, we use ultracoproducts to investigate the automorphism group of $\rmM(G)$, which we denote by $\bbG$. One can equip $\bbG$ with the so-called \emph{tau-topology}, which was introduced by Furstenberg \cite{Furstenberg_Distal} and further investigated in \cites{Ellis_book, EGS_1975, Glasner_Prox, Auslander, KP_2017, Rze_2018}. This is a compact T1 topology with separately continuous multiplication and continuous inversion. Given a tuple $(p_i)_{i\in I}$ of automorphisms of $\rmM(G)$ and an ultrafilter $\cU\in \beta I$, we naturally obtain an automorphism of the ultracopower of $\rmM(G)$ along $\cU$; from this automorphism, and by considering the various minimal subflows of the ultracopower, we obtain a set of automorphisms of $\mg$, which we prove are closely related to tau-ultralimits. Our new characterization of the tau-topology, together with Theorem~\ref{Thm:Rosendal_Minimal}\ref{Item:Ultra_Thm:RM}, allows us to conclude that if $G \in \sf{TMD}$, the tau-topology on $\mg$ is Hausdorff, so a compact Hausdorff topological group. More generally, given any minimal flow $X$, one can form its \emph{Ellis group}, the automorphism group of any minimal subflow of $\rmE(X)$, the enveloping semigroup of $X$. The Ellis group of $X$, denoted $\bbG_X$, can also be equipped with the tau-topology; in particular $\bbG = \bbG_{\mg}$. Theorem~\ref{Thm:Tau_Hausdorff} gives a complete characterization of minimal flows $X$ with $\bbG_X$ Hausdorff: They are factors of minimal flows which are a proximal extension of an equicontinuous extension of a proximal flow. In particular, we obtain a structure theorem for $\mg$ for $G\in \sf{TMD}$ and, at first, a sharper structure theorem when $G\in \sf{GPP}$.
\vspace{3 mm}

\noindent
\textbf{Fattening spaces:} This paper features \emph{near ultrafilters} in two distinct contexts. The first is when constructing the \emph{Samuel compactification} of a uniform space. The second is when constructing the \emph{Gleason completion} of a $G$-space $X$. Similar theory has sprouted up in each context, in particular in equipping each near ultrafilter space with a \emph{topo-uniform structure} \cites{BYMT, ZucMHP, BassoZucker}. We introduce in Section~\ref{Section:NUlts_Fattenings} the concept of a \emph{fattening space} which unifies these two constructions into a general framework. This framework suggests many natural generalizations of concepts from topology to the fattening space context, including versions of being an isolated point, a discrete space, a finite space, or being extremally disconnected, and we generalize many results from general topology to their fattening counterparts. This viewpoint informs many of the vocabulary choices we make when defining certain properties of $G$-flows. Given a $G$-flow $X$, these various fattening versions of topological definitions reflect properties of the ``local action'' of small neighborhoods of $e_G$. In addition to being a useful conceptual framework, fattening spaces also provide a robust technical framework for quickly and accurately performing various topological computations when working with near ultrafilter spaces. 

Comfort with these technical aspects of near ultrafilter spaces is particularly helpful for understanding Section~\ref{Section:KPT}, which develops our ``abstract KPT correspondence.''  The most important concept from this section is that of a \emph{$G$-skeleton}, which in the familiar case that $G = \aut(\flim(\cK))$ becomes a reasonable (Definition~\ref{Def:Expansion_Classes}) \fr expansion class $\cK^*$. 
We state an abstract \emph{Ramsey property} for $G$-skeletons and prove that $G \in \sf{TMD}$ if and only if $G$ admits a $G$-skeleton with the Ramsey, minimality and ED properties (see Definition~\ref{Def:GED_Unfolded}). When this holds, the \emph{folded flow} of such a $G$-skeleton is then isomorphic to $\mg$. We believe that $\sf{TMD}$ is the largest class of topological groups which admit any meaningful form of KPT correspondence describing their minimal dynamics. Upon adding a pre-compactness demand on $G$-skeletons, we obtain a characterization of when $G\in \sf{CMD}$. This yields a new characterization of Polish groups with metrizable universal minimal flow which very roughly can be viewed as a form of KPT correspondence for metric \fr classes. It would be interesting to make this connection more formal.
\vspace{3 mm}

\noindent
\textbf{Forcing and absoluteness in topological dynamics:}
We apply our abstract KPT correspondence in Section~\ref{Section:Abs} to show that the classes $\sf{EA}$, $\sf{CMD}$, and $\sf{TMD}$ are $\Delta_1$ in the L\'evy hierarchy (Theorem~\ref{Thm:TMD_Absolute}); while it is not hard to see that these classes are $\Pi_1$, the abstract KPT correspondence provides $\Sigma_1$ definitions. L\'evy absoluteness then provides yet another characterization of these classes: a group $G$ is in $\sf{CMD}$ (resp.\ $\sf{TMD}$) iff in some forcing extension where $G$ is second countable, its Raikov completion is in $\sf{PCMD}$ (resp.\ $\sf{GPP}$). We apply these absoluteness results to prove (in ZFC) further closure properties of $\sf{CMD}$ and $\sf{TMD}$ (Theorem~\ref{Thm:Abs_Apps}), and we strenthen the $\mg$ structure theorem for $\sf{TMD}$ to have the same form as the one for $\sf{GPP}$ from Section~\ref{Section:AutMG} (Theorem~\ref{Thm:MG_Structure_TMD}). For many of these results, the forcing and absoluteness proof is the only argument we know of. These techniques also motivate some key open questions and conjectures that we have, which primarily ask: Given  $G\not\in\sf{TMD}$, to what extent does $G$ have ``wild'' minimal dynamics? 

The last part of Section~\ref{Section:Abs} and Section~\ref{Section:Definable} are geared towards an application of the set-theoretic techniques developed in Section~\ref{Section:Abs} to the field of \emph{definable} topological dynamics. Pioneered by Newelski~\cite{Newelski_2009}, this is the study of groups definable in first-order structures and their \emph{definable actions} on compact spaces. Much like the Samuel compactification of a topological group, every definable group $G$, say definable in $\bM$, admits a largest externally definable ambit, denoted $\rmS_{G, ext}(\bM)$, and any minimal subflow $\cM\subseteq S_{G, ext}(\bM)$ is the \emph{universal minimal externally definable $G$-flow}. Newelski asked if in the case that $\bM$ is NIP, the automorphism group $\bbG_\cM$ equipped with the tau-topology was isomorphic to $G^\frak{C}/G_{00}$, so in particular, a compact Hausdorff topological group which only depends on the theory of $\bM$ and the first-order definition of $G$. This turns out to be false \cite{GPP_2015}; the  \emph{revised Newelski conjecture} as posed by Krupi\'nski and Pillay \cite{KP_2023} asks if $\bbG_\cM$ is tau-Hausdorff. Recently, Chernikov, Gannon, and Krupi\'nski \cite{DC3} proved the revised Newelski conjecture in the case that $\bM$ is countable, using deep results of Glasner \cite{Glasner_Tame_Gen} on the structure of minimal, metrizable, \emph{tame} flows. In particular, the structure theorem implies that the Ellis group of every minimal, metrizable, tame flow is Hausdorff. We prove that when suitably stated, the assertions that a given minimal flow has Hausdorff Ellis group and/or is tame are both $\Delta_1$. This allows us to use a forcing and absoluteness argument to affirmatively resolve the revised Newelski conjecture with no cardinality constraints. We also obtain a partial version of Glasner's structure theorem valid for all minimal tame flows. We conclude by exploring some tantalizing connections between the existing theory of definable dynamics and the development of the classes $\sf{CMD}$ and $\sf{TMD}$ given here. Upon appropriately generalizing to definable groups, stable groups are examples of ``definably $\sf{CMD}$'' groups, and work of Chernikov and Simon \cite{CS_2018} strongly suggests that definably amenable NIP groups should be ``definably $\sf{TMD}$.''
\section{Background}
\label{Section:Background}

\subsection{Notation and binary relations}
\label{Subsection:Notation}
Set-theoretic notation is standard. We write $\omega = \bbN = \{0, 1, 2,...\}$ for the set of non-negative integers, write $n< \omega$ when $n\in \omega$, and identify $n< \omega$ with the set $\{0,...,n-1\}$. Given sets $X$ and $Y$, we let $Y^X$ denote the set of functions from $X$ to $Y$. 

Given sets $X$ and $Y$ and $R\subseteq X\times Y$, we put $R^{-1} = \{(y, x)\in Y\times X: (x, y)\in R\}$. If $A\subseteq X$ and $x\in X$, we put $R[A] = \{y\in Y: \exists a\in A (a, y)\in R\}$, $R[x] = R[\{x\}]$ (unless $x\leq \omega$), $\dom(R) = R^{-1}[Y]$, and $\im(R) = R[X]$. In particular, if $f\colon X\to Y$ is a function, then $f[A]$ is the image of $A$ under $f$, and given $B\subseteq Y$, $f^{-1}[B]$ is the pre-image of $B$ under $f$. The \emph{fiber image} of $A$ under $R$ is the set $R_{\rm{fib}}(A) = \{y\in Y: \forall x\in X ((x, y)\in R\rightarrow x\in A)\}$. If $Z$ is a set and $S\subseteq Y\times Z$, we put $S\circ R = \{(x, z): \exists y\in Y (x, y)\in R\text{ and }(y, z)\in S\}$.

Given sets $X', Y'$ and a function $f'\colon X'\to Y'$, we often abuse notation and write $f\times f'$ for the function $X\times X'\to Y\times Y'$ given by $(f\times f')(x, x') = (f(x), f'(x'))$.

\subsection{Topological spaces}
\label{Subsection:Topological_spaces}
All topological spaces discussed in this paper are regular and Hausdorff by default, with exceptions pointed out explicitly.

Given $(X, \tau)$ a topological space (with $\tau$ typically understood), we write $\op(X)$ for the set of non-empty open subsets of $X$, and given $x\in X$, we write $\op(x, X)\subseteq \op(X)$ for the set of open neighborhoods of $x$ in $X$. If $A\subseteq X$, we write $\rm{Int}_X(A)$ or just $\rm{Int}(A)$ for the interior of $A$ in $X$, and we write $\rm{cl}_X(A)$, $\rm{cl}_\tau(A)$ or just $\ol{A}$ for the closure of $A$ in $X$. We write $\rmF(X)$ for the set of \emph{non-empty} closed subsets of $X$, $\rmK(X)$ for the set of \emph{non-empty} compact subsets of $X$, $\rm{clop}(X) = (\op(X)\cap \rmF(X))\cup \{\emptyset\}$, $\rc(X) = \{\ol{A}: A\in \op(X)\}\cup \{\emptyset\}$ for the \emph{regular closed} subsets of $X$, i.e, the closure of open sets of $X$, and $\rc(x, X) = \{\ol{A}: A\in \op(x, X)\}$. We recall that $\rm{clop}(X)$ is a Boolean algebra with meet and join given by union and intersection, and that $\rc(X)$ is a complete Boolean algebra with $\bigvee_i A_i = \ol{\bigcup_i A_i}$ and $\bigwedge_i A_i = \ol{\Int(\bigcap_i A_i)}$, and $\neg A = \ol{X \setminus A}$. 

One can equip $\rmK(X)$ with the \emph{Vietoris topology}, where the sub-basic open sets have the following two forms:
\begin{itemize}
    \item 
    Given $A\in \op(X)$, we define $\rm{Meets}(A, X) = \{K\in \rmK(X): K\cap A = \emptyset\}$.
    \item 
    Given $A\in \op(X)$, we define $\rm{Subset}(A, X) = \{K\in \rmK(X): K\subseteq A\}$.
\end{itemize}
Whenever $X$ is compact, $\rmK(X)$ is compact; we equip $\rmK(X)$ with the Vietoris topology unless explicitly mentioned otherwise (see Subsection~\ref{Subsection:Meets}). 

We also mention the following shortcut for verifying that a function is continuous; though we imagine that this result is folklore, we are not aware of a reference aside from \cite{ZucThesis}*{Proposition~0.0.1}

\begin{fact}
    \label{Fact:Continuity}
    Let $X$ and $Y$ be topological spaces. If $D\subseteq X$ is dense and $\phi\colon X\to Y$ is a map with the property that whenever $(d_i)_{i\in I}$ is a net from $D$ with $d_i\to x\in X$, then $\phi(d_i)\to \phi(x)$, then $\phi$ is continuous.
\end{fact}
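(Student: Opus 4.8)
The plan is to argue by contradiction: assume $\phi$ fails to be continuous at some point $x\in X$, and then manufacture a single net $(d_U)$ drawn from $D$ which converges to $x$ but whose image stays outside a fixed neighbourhood of $\phi(x)$, contradicting the hypothesis. Everything hinges on turning the pointwise failure of continuity into such a net without leaving $D$.

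First I would extract the failure of continuity: there is an open set $V$ with $\phi(x)\in V$ such that $\phi^{-1}[V]$ is not a neighbourhood of $x$. Since $Y$ is regular, choose an open $V'$ with $\phi(x)\in V'\sub\ol{V'}\sub V$, and set $O=Y\sminus\ol{V'}$, which is open and is a neighbourhood of every point of $Y$ lying outside $\ol{V'}$. Because $\phi^{-1}[\ol{V'}]\sub\phi^{-1}[V]$ and the latter is not a neighbourhood of $x$, neither is the former; hence for every $U\in\op(x,X)$ there is a point $x_U\in U$ with $\phi(x_U)\in O$.

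The key step, and the one I expect to be the main obstacle, is that the witnesses $x_U$ need not lie in $D$, and the ``bad set'' $U\cap\phi^{-1}[O]$ need not be open, so density of $D$ cannot be applied to it directly. Instead, fix $U$; since $D$ is dense, $x_U\in\ol D$, so there is a net $(d^U_j)_j$ from $D$ with $d^U_j\to x_U$. Applying the hypothesis to this net gives $\phi(d^U_j)\to\phi(x_U)\in O$. As $O$ is an open neighbourhood of $\phi(x_U)$ and $U$ is an open neighbourhood of $x_U$, eventually $d^U_j\in U$ and $\phi(d^U_j)\in O$ hold simultaneously; fixing such an index, let $d_U$ be the resulting point, so $d_U\in D\cap U$ and $\phi(d_U)\in O$.

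Finally, directing $\op(x,X)$ by reverse inclusion, $(d_U)_U$ is a net from $D$ with $d_U\in U$ for each $U$, hence $d_U\to x$. But $\phi(d_U)\in O$ for every $U$, while $\phi(x)\in V'$ and $V'\cap O=\emptyset$, so the net $(\phi(d_U))_U$ never enters the neighbourhood $V'$ of $\phi(x)$ and therefore does not converge to $\phi(x)$ --- contradicting the hypothesis. Hence $\phi$ is continuous. The only point at which any separation axiom is used is the passage from $V$ to $V'$ with $\ol{V'}\sub V$, i.e.\ regularity of $Y$; the rest is formal manipulation of nets together with the density of $D$.
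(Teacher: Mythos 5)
The paper does not actually prove Fact~\ref{Fact:Continuity}; it is cited as folklore with a pointer to \cite{ZucThesis}*{Proposition~0.0.1}, so there is no in-paper argument to compare against. Your proof is correct: the contradiction setup, the passage from the non-neighbourhood $\phi^{-1}[V]$ to witnesses $x_U\in U$ with $\phi(x_U)\in O=Y\setminus\ol{V'}$, the replacement of each $x_U$ by a point $d_U\in D\cap U$ with $\phi(d_U)\in O$ via the hypothesis applied to a net from $D$ converging to $x_U$, and the final net $(d_U)_{U\in\op(x,X)}$ all check out. Your one use of regularity of $Y$ (to shrink $V$ to $V'$ with $\ol{V'}\subseteq V$ so that the images $\phi(d^U_j)$ are eventually trapped in the open set $O$ disjoint from $V'$) is exactly where a separation axiom is needed, and it is covered by the paper's standing convention that all spaces are regular and Hausdorff.
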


 If $Y$ is a topological space, a function $f\colon Y\to \bbR$ is \emph{lower semi-continuous} (lsc) if for every $c\in \bbR$, we have $f^{-1}[(-\infty, c]]\subseteq Y$ closed. 

If $X$ is a set, a function $d\colon X\times X\to [0, \infty]$ is a \emph{semi-(pseudo)metric} if $d$ satisfies all properties of being a (pseudo)metric with the possible exception of the triangle inequality. If $d$ is a semi-pseudometric on $X$, $A\subseteq X$, and $c> 0$, we write $\rmB_d(A, c) := \{y\in X: \exists x\in A\, d(x, y)< c\}$. If $x\in X$, we write $\rmB_d(x, c)$ for $\rmB_d(\{x\}, c)$. We can emphasize $X$ if needed by writing $\rmB_d^X(A, c)$, etc. If $X, Y$ are sets equipped with semi-pseudometrics $d_X, d_Y$, respectively, a function $f\colon X\to Y$ is $(d_X, d_Y)$-Lipschitz if $d_Y(f(a), f(b))\leq d_X(a, b)$ for any $a, b\in X$. If $Y = \bbC$ or $\bbR$ equipped with the Euclidean metric, we just say $d_X$-Lipschitz.
Note that we allow semi-pseudometrics to take distance $\infty$.

Given spaces $X$ and $S$, we let $\rmC(X, S)$ denote the set of continuous functions from $X$ to $S$. If $S$ is a subspace of a normed space, we let $\rm{CB}(X, S)\subseteq \rmC(X, S)$ denote the bounded continuous functions. When $S = \bbC$, we omit it from the notation.  In particular, $\rm{CB}(X)$ is a unital commutative $C^*$-algebra. If $d$ is a (not necessarily continuous) semi-pseudometric on $X$, we let $\rmC_d(X, S)\subseteq \rmC(X, S)$ denote those continuous functions from $X$ to $S$ which are $(d, d_S)$-Lipschitz. Similarly for $\rm{CB}_d(X, S)\subseteq \rm{CB}(X, S)$.

\subsection{Stone and Gelfand duality}
\label{Subsection:Stone-and-Gelfand}

We will make extensive use of various duality results for Boolean and $C^*$-algebras that we briefly recall. 

Stone duality asserts that the categories of Boolean algebras and compact zero-dimensional spaces are contravariantly isomorphic. Given a Boolean algebra $B$, write $B^+:= B\setminus \{0_B\}$. An \emph{ultrafilter on $B$} is a subset $p\subseteq B^+$ maximal with respect to being closed under meets. The \emph{Stone space} of $B$, denoted $\rm{St}(B)$, is the space of all ultrafilters on $B$. Given $a\in B$, we let $\rmC_a = \{p\in \rm{St}(B): a\in p\}$. The sets $\{\rmC_a: a\in B^+\}$ form a base for a compact zero-dimensional topology on $\rm{St}(B)$, and we have $\rm{clop}(\rm{St}(B))\cong B$. Dually, given a compact zero-dimensional space $X$, we have $\st(\rm{clop}(X)) \cong X$. If $X$ and $Y$ are zero-dimensional spaces and $\phi\colon X\to Y$ is continuous, we obtain a Boolean algebra homomorphism $\hat{\phi}: \rm{clop}(Y)\to \rm{clop}(X)$ via $\hat{\phi}(A) = \phi^{-1}[A]$. If $B$ and $C$ are Boolean algebras and $\eta\colon B\to C$ is a Boolean algebra homomorphism, we obtain a continuous map $\hat{\eta}\colon \st(C)\to \st(B)$ where given $p\in \st(C)$ we have $\hat{\eta}(p) = \{a\in B: \eta(a)\in p\}$. 

We mention that the Stone duals of \emph{complete} Boolean algebras are exactly the compact \emph{extremally disconnected} spaces; we call $X$ extremally disconnected (ED) if $A\in \op(X)$ implies that $\ol{A}\in \op(X)$. Equivalently, this happens when $\rc(X) = \rm{clop}(X)$. If $X$ is any space, we write $\gl(X):= \st(\rc(X))$ for the \emph{Gleason cover} of $X$.
Since $\rc(X)$, is complete $\gl(X)$ is always ED.

Gelfand duality states that the categories of compact spaces and unital commutative $C^*$-algebras are contravariantly isomorphic.  To each compact Hausdorff space $X$, one associates the algebra $\rm{CB}(X, \bbC)$, and given a unital commutative $C^*$-algebra $\cA$, one forms the \emph{Gelfand space} or \emph{spectrum} of $\cA$, the space $\wh{\cA}$ of multiplicative linear functionals $\cA\to \bbC$ equipped with the topology of pointwise convergence. If $X$ and $Y$ are compact and $\phi\colon X\to Y$ is continuous, then one obtains a $*$-homomorphism $\hat{\phi}\colon \rmC(Y, \bbC)\to \rm{CB}(X)$ via $\hat{\phi}(f) = f\circ \phi$. Conversely, if $\cA$ and $\cB$ are unital commutative $C^*$-algebras and  $\eta\colon \cA\to \cB$ is a $*$-homomorphism, then one obtains a continous map $\hat{\eta}\colon \wh{\cB}\to \wh{\cA}$ via $\hat{\eta}(x)(a) = x(\eta(a))$. 

Using Gelfand duality, one defines the \emph{beta compactification} of a topological space $X$ to be the space $\beta X:= \wh{\rm{CB}(X)}$. There is a natural map $\iota_X\colon X\to \beta X$ given by $\iota_X(x)(f) = f(x)$, and $\iota_X$ satisfies (and is defined by) the following universal property: for any compact space $Y$ and any continuous map $\phi\colon X\to Y$, there is a continuous map $\wt{\phi}\colon \beta X\to Y$ with $\phi = \wt{\phi}\circ \iota_X$. Recall that $X$ is \emph{Tychonoff} if for any $x\in X$ and closed $K\subseteq X$ with $x\not\in K$, there is $f\in \rm{CB}(X)$ with $f(x) = 0$ and $f[K] = \{1\}$. Tychonoff spaces are exactly those $X$ for which $\iota_X$ is an embedding; in this case, we typically suppress $\iota_X$ and just view $X$ as a subspace of $\beta X$. If $I$ is a discrete space, $\beta I$ can be identified with the space of ultrafilters \emph{over} $I$, i.e.\ the space  $\rm{St}(\cP(I))$, and is called the \emph{\v{C}ech-Stone compactification} of $I$. If $\leq_I$ is an upwards-directed partial order on $I$, we say that $\cU\in \beta I$ is \emph{cofinal} if for every $i\in I$, we have $\{j\in I: i\leq_I j\}\in \cU$. If $X$ is a compact space, $(x_i)_{i\in I}$ is a tuple from $X$, $x\in X$, and $\cU\in \beta I$, we write $\lim_\cU x_i = x$ and call $x$ the \emph{ultralimit} of $(x_i)$ along $\cU$ iff for every $A\in \op(x, X)$, we have $\{i\in I: x_i\in A\}\in \cU$. By compactness, the ultralimit of a tuple always exists. In Section~\ref{Section:AutMG}, we will consider compact, not-necessarily-Hausdorff spaces, where the ultralimit becomes a subspace of $X$ rather than a member of it.

\subsection{Uniform spaces and Samuel compactifications}
\label{Subsection:Uniform_spaces}

Recall that a \emph{uniform space} $(X, \cU_X)$ is a set $X$ equipped with a system of \emph{entourages} $\cU_X\subseteq \cP(X\times X)$, such that 
\begin{itemize}
    \item 
    $\cU_X$ is upwards closed and closed under finite intersections (a filter).
    \item 
    $\cU_X$ is Hausdorff, i.e.\ $\bigcap_{U\in \cU_X} U = \Delta_X := \{(x, x): x\in X\}$.
    \item 
    $\cU_X$ admits square roots, i.e.\ for every $U\in \cU_X$, there is $V\in \cU_X$ with $V^2:= V\circ V\subseteq U$.
\end{itemize}
If $(X, d)$ is a metric space, we can equip $X$ with the uniformity $\cU_d$  generated by $\{\{(x, y): d(x, y)<\epsilon\}: \epsilon > 0\}$, and in various notation defined below, we can write $d$ instead of $\cU_d$. 

The \emph{uniform topology of $(X, \cU_X)$} has a base of neighborhoods of $x\in X$ given by $\{U[x]: U\in \cU_X\}$ and is Tychonoff (embeds into a compact space, namely $\beta X$). Conversely, if $X$ is a Tychonoff space, then the topology of $X$ can be described via a uniform space structure, possibly several. We recall that if $X$ is compact, then there is a unique compatible uniformity, namely the one generated by all open neighborhoods of $\Delta_X$, and we denote this uniformity by $\cU_X$.

A net $(x_i)_{i\in I}$ is \emph{$\cU_X$-Cauchy} if for any $U\in \cU_X$, we eventually have $U[x_i]\cap U[x_j]\neq \emptyset$. We let $(\wh{X}^{\cU_X}, \wh{\cU_X})$ denote the \emph{completion} of $(X, \cU_X)$, the uniform space of all equivalence classes of Cauchy nets. Usually, $\cU_X$ is understood, and we just write $\wh{X}$. The uniformity $\wh{\cU_X}$ is given by the upwards closure of $\{\rm{cl}_{\wh X}(U) : U\in \cU_X\}$. 

A function $f\colon X\to \bbC$ is $\cU_X$-\emph{uniformly continuous} (omitting $\cU_X$ if understood) if, for any $\epsilon > 0$, there is $U\in \cU_X$ such that $(x, y)\in U$ implies $|f(x) - f(y)|< \epsilon$. The \emph{Samuel compactification} of $(X, \cU_X)$, denoted $\sa(X, \cU_X)$ (omitting $\cU_X$ if understood) is the Gelfand dual of the algebra of bounded uniformly continuous functions from $X$ to $\bbC$. Up to canonical identifications, we have inclusions $X\subseteq \wh{X}\subseteq \sa(X)$ with $X\subseteq \sa(X)$ dense. In particular, any bounded uniformly continuous $f\colon X\to \bbC$ admits a unique continuous extension to $\sa(X)$; we often abuse notation and simply let $f\colon \sa(X)\to \bbC$ denote the continuous extension. When $I$ is a set equipped with the discrete uniformity, we have $\sa(I) = \beta I$. We will discuss a \emph{near ultrafilter} constuction of $\sa(X, \cU_X)$ that works for any uniform space at the end of Section~\ref{Section:NUlts_Fattenings}.

\subsection{Topological groups}
\label{Subsection:Topological_groups}
If $G$ is a topological group, we denote its identity by $e_G$, and we let $\cN_G$ denote a fixed base of symmetric open neighborhoods of $e_G$. The Birkhoff-Kakutani theorem (see \cite{Berberian}) implies that the topology on any topological group $G$ can be generated by continuous, bounded \emph{semi-norms} on $G$, functions $\sigma\colon G\to \bbR^{{\geq}0}$ with $\sigma(e_G) = 0$, $\sigma(g) = \sigma(g^{-1})$, and  $\sigma(gh)\leq \sigma(g)+\sigma(h)$ for any $g, h\in G$.  We write $\rm{SN}(G)$ for the set of bounded continuous semi-norms on $G$. We remark that bounded continuous semi-norms are in one-one correspondence with bounded, continuous, right-invariant pseudometrics on $G$, where given $\sigma\in \rm{SN}(G)$ and $g, h\in G$, we put $\sigma_r(g, h) = \sigma(gh^{-1})$, and given a bounded, continuous, right-invariant pseudometric, the distance from $e_G$ provides a semi-norm. We will abuse notation and use semi-norms and continuous, right-invariant pseudometrics interchangeably, simply giving a semi-norm two inputs if we want to view it as a pseudometric. Given $\sigma\in \rm{SN}(G)$ and $\epsilon > 0$, we let $\rmB_\sigma(\epsilon)\in \cN_G$ denote the set $\sigma^{-1}[[0, \epsilon )]$. A \emph{base of semi-norms} is any $\cB\subseteq \rm{SN}(G)$ so that $\{\rmB_\sigma(\epsilon): \sigma\in \cB, \epsilon > 0\}$ forms a neighborhood base for $e_G$.

Any topological group $G$ comes with several compatible uniform structures, some of which we now recall: 
\begin{itemize}
    \item 
    The \emph{right uniformity}, generated by $\{\{(g, h)\in G\times G: g\in Uh\}: U\in \cN_G\}$. Equivalently, this is the uniformity generated by the collection of continuous, bounded, right-invariant pseudometrics on $G$. Since we will define $G$-flows as \emph{left} actions, the \emph{right} uniformity is the one we will use the most. Thus $\wh{G}$ and $\sa(G)$ will always refer to the right uniformity on $G$. We let $\rm{RUCB}(G)$ denote the $C^*$-algebra of bounded right-uniformly-continuous functions from $G$ to $\bbC$.
    \item 
    The \emph{left uniformity}, generated by $\{\{(g, h)\in G\times G: g\in hU\}: U\in \cN_G\}$.
    
    \item 
    The \emph{Raikov uniformity}, the meet of left and right, generated by $\{\{(g, h)\in G\times G: g\in Uh\cap hU\}: U\in N_G\}$. We will denote the completion with respect to this uniformity by $\wt{G}$. 
\end{itemize}
We mention that the Raikov completion of a topological group is always a topological group. Polish groups are exactly the Raikov complete, second countable groups \cite{Kechris_Classical}*{Section~9}.

We write $H\leq^c G$ (or $H\trianglelefteq^c G$) when $H$ is a closed (closed and normal) subgroup of $G$, and we equip both $G/H$ and $H\backslash G$ with the quotient topology. The right uniformity on $G$ induces a compatible uniform structure on $G/H$ generated by $\{\{(gH, kH)\in G/H\times G/H: gH\in UkH\}: U\in \cN_G\}$, and all uniform space notation regarding $G/H$ will refer to this uniformity. We call $H\leq^c G$ \emph{co-precompact} if $\wh{G/H} = \sa(G/H)$;  this happens iff for every $U\in \cN_G$, there is $F\in \fin{G}$ with $UFH = G$. For $G$ Polish, this is iff $\sa(G/H)$ is metrizable.

While there is an analogous uniform structure on $H\backslash G$, it is not the one we will most often consider. If $\sigma\in \rm{SN}(G)$, write $\ker(\sigma):= \{g\in G: \sigma(g) = 0\}\leq^c G$. Then $\sigma$ induces a metric on $\ker(\sigma)\backslash G$, and this is the type of uniformity we most often encounter on right coset spaces.

Given a group $G$, a subset $T\subseteq G$ is called \emph{thick} if the collection $\{gT: g\in G\}$ has the finite intersection property, equivalently, if for any $F\in \fin{G}$, there is $g\in G$ with $Fg\subseteq T$. A subset $S\subseteq G$ is \emph{syndetic} if $G\setminus S$ is not thick, equivalently, if there is $F\in \fin{G}$ with $FS = G$. A subset $P\subseteq G$ is \emph{piecewise syndetic} if there is $F\in \fin{G}$ with $FP\subseteq G$ thick. Equivalently, $P$ is piecewise syndetic iff for some thick $T\subseteq G$, the set $(P\cap T)\cup(G\setminus T)\subseteq G$ is syndetic. From this, it follows that the collection of piecewise syndetic subsets of $G$ is a coideal: If $P\subseteq G$ is piecewise syndetic and $P = P_0\cup P_1$, then $P_0$ or $P_1$ is also piecewise syndetic. All three of these largeness notions are $2$-sided invariant.

When $G$ is a topological group, we call a subset $S\subseteq G$ \emph{pre-thick} or \emph{presyndetic} if for any $U\in \cN_G$, the subset $US\subseteq G$ is thick or syndetic, respectively.\footnote{In particular, $H\leq^c G$ is presyndetic iff for every $U\in \cN_G$, there is $F\in \fin{G}$ with $G = FUH$. Note the similarity between the definitions of $H\subseteq G$ being presyndetic versus being co-precompact.}

\subsection{Topological dynamics}
\label{Subsection:Top_Dyn}
Fix a topological group $G$. A \emph{left $G$-space} is a topological space $X$ equipped with a continuous action $a\colon G\times X\to X$. Typically, the action $a$ is understood, and we simply write $g\cdot x$ or $gx$ for $a(g, x)$. One can define right $G$-spaces analogously; we refer to left by default. Generally speaking, topological vocabulary attached to a $G$-space $X$ refers to $X$ as a topological space. Later on, we will decorate certain topological vocabulary with $G$ as a prefix, and this can change the meaning. A key example of a $G$-space is $G$ itself under left multiplication, i.e.\ $a(g, h) = gh$ for $g, h\in G$. Whenever we refer to $G$ as a $G$-space, we by default refer to this action. A \emph{$G$-flow} is a compact $G$-space. We remark that for any $G$-flow $X$, there is a unique continuous extension of the action to $\wt{G}$.

Given a $G$-space $X$, one obtains a \emph{right} action of $G$ on $\rm{CB}(X)$ where given $f\in \rm{CB}(X)$ and $g\in G$, $f{\cdot}g\in \rm{CB}(X)$ is defined via $f{\cdot}g(x) = f(gx)$. However, the action may fail to be norm-continuous. We let $\rm{CB}_G(X)\subseteq \rm{CB}(X)$ denote the subset of $f\in \rm{CB}(X)$ where the map from $G$ to $\rm{CB}(X)$ given by $g\to f\cdot g$ is norm-continuous. Given  $\sigma\in \rm{SN}(G)$, we put $\rm{CB}_\sigma(X) = \{f\in \rm{CB}(X): \|f{\cdot}g - f\|\leq \sigma(g)\}$. Note that $\rm{CB}_G(X) = \bigcup_{\sigma\in \rm{SN}(G)} \rm{CB}_\sigma(X)$ and that $\rm{CB}_G(G) = \rm{RUCB}(G)$. If $X$ is a $G$-flow, we have $\rm{CB}(X) = \rm{CB}_G(X)$.

If $X$ and $Y$ are $G$-spaces, we call $\phi\colon X\to Y$ a \emph{$G$-map} if $\phi$ is continuous and $G$-equivariant. If $\phi$ is also surjective, we call it a \emph{factor $G$-map} and say that $Y$ is a \emph{$G$-factor} of $X$ or that $X$ is a \emph{$G$-extension} of $Y$; one can say \emph{factor} or \emph{extension} if $G$ is understood. A \emph{$G$-subspace} of the $G$-space $X$ is a non-empty $G$-invariant subspace $Y\subseteq X$. A $G$-subspace $Y\subseteq X$  is \emph{proper} if $Y\subsetneq X$, and it is a \emph{$G$-subflow} or just \emph{subflow} if it is compact. We write $\rm{Sub}_G(X)$ for the space of $G$-subflows of $X$ equipped with the Vietoris topology; when $X$ is a $G$-flow, $\rm{Sub}_G(X)$ is compact. A $G$-flow $X$ is called \emph{minimal} if every orbit is dense, equivalently if $X$ contains no proper subflows. Note by Zorn's lemma that every flow contains a minimal subflow. Let us briefly note two criteria for minimality; one is folklore, and the other is a mild strengthening of a folklore result. 

\begin{lemma}
    \label{Lem:Min}
    Fix a $G$-flow $X$.
    \begin{enumerate}[label=\normalfont(\arabic*)]
        \item\label{Item:AP_Lem:Min}
        For $x\in X$, the subflow $\ol{G\cdot x}$ is minimal iff for every $A\in \op(x, X)$, the set $\{g\in G: gx\in A\}$ is syndetic. In particular, if $X$ is minimal and $A\in \op(X)$, then $\{g\in G: gx\in A\}$ is syndetic. 
        \item\label{Item:Min_Lem:Min} 
        $X$ is minimal iff for every $A\in \op(X)$, there is $F\in \fin{G}$ with $FA\subseteq X$ dense.
    \end{enumerate}
\end{lemma}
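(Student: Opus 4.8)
The plan is to treat the two items separately, and within each the two implications, reducing the ``in particular'' clause of (1) to the main equivalence via the left-invariance of syndeticity. Throughout I write $Y := \overline{G\cdot x}$, which is a subflow, and $R_A := \{g \in G : g\cdot x \in A\}$.

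For item (1), in the forward direction I would assume $Y$ minimal and fix $A \in \op(x,X)$: for each $y \in Y$ minimality gives $x \in Y = \overline{G\cdot y}$, so $y \in g^{-1}A$ for some $g$, making $\{g^{-1}A : g \in G\}$ an open cover of the compact set $Y$; a finite subcover indexed by $F$ gives $Y \subseteq \bigcup_{g\in F} g^{-1}A$, and evaluating at $h\cdot x \in Y$ for arbitrary $h$ yields $G = F^{-1}R_A$, so $R_A$ is syndetic. For the converse, assuming every $R_A$ is syndetic, I would show $x \in \overline{G\cdot z}$ for all $z \in Y$, which forces $Y = \overline{G\cdot z}$ and hence minimality of $Y$. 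Given $z \in Y$ and $A \in \op(x,X)$, pick (by regularity of $X$) an open $V \ni x$ with $\overline{V} \subseteq A$, and a finite $F$ with $G = FR_V$; since $R_V\cdot x \subseteq V$ we get $G\cdot x \subseteq \bigcup_{f\in F} f\cdot V$, so passing to closures (a finite union, each $f$ acting homeomorphically) gives $Y \subseteq \bigcup_{f\in F} f\cdot \overline{V} \subseteq \bigcup_{f\in F} f\cdot A$, and as $z \in Y$ some $f$ satisfies $f^{-1}\cdot z \in A \cap G\cdot z$. Finally, for the ``in particular'' clause: if $X$ is minimal and $A \in \op(X)$, choose $g_0$ with $g_0\cdot x \in A$ (possible as $G\cdot x$ is dense), apply the equivalence to the neighborhood $g_0^{-1}A$ of $x$, and observe $R_{g_0^{-1}A} = g_0^{-1}R_A$, so $R_A$ is syndetic.

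For item (2), the forward direction is the standard compactness argument: if $X$ is minimal then for any $A \in \op(X)$ the sets $\{g\cdot A : g \in G\}$ cover $X$ (every orbit is dense, hence meets $A$), and a finite subcover $F$ gives $FA = X$, so a fortiori $FA$ dense. For the converse, suppose $X$ is not minimal and fix a proper subflow $Y$; choose $x_0 \in X \setminus Y$ and, by regularity, an open $A \ni x_0$ with $\overline{A} \cap Y = \emptyset$. By hypothesis there is a finite $F$ with $\overline{FA} = X \supseteq Y$, but $\overline{FA} \subseteq \bigcup_{f\in F} f\cdot\overline{A}$, and $f\cdot\overline{A}$ meets $Y$ iff $\overline{A}$ meets $f^{-1}\cdot Y = Y$ (using $G$-invariance of $Y$), which it does not; so $Y$ is disjoint from $\overline{FA}$, contradicting $Y \neq \emptyset$.

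Neither argument is long; the only real subtlety is the $(\Leftarrow)$ direction of (2) — the ``mild strengthening'' — where the hypothesis supplies only $\overline{FA} = X$ rather than $FA = X$, handled by shrinking $A$ via regularity so that even $\overline{A}$ avoids the offending subflow, after which $G$-invariance finishes it. In (1) the analogous minor point is that the converse argument naturally lands a translate of $z$ in $\overline{V}$ rather than in $A$, again resolved by choosing $\overline{V} \subseteq A$ from the start.
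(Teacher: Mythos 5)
Your proof is correct. For item (2) your argument is essentially identical to the paper's (the paper also picks a proper minimal subflow $Y$, shrinks to an open $A$ with $\ol{A}\cap Y=\emptyset$, and uses $G$-invariance of $Y$ to see $\ol{FA}$ misses $Y$); for item (1) the paper simply cites Auslander, and your compactness/regularity argument is the standard one found there, so there is nothing to flag.
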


\begin{proof}
    \ref{Item:AP_Lem:Min}: See \cite{Auslander}*{Theorem 7, p.\ 11}.
    \vspace{3 mm}

    \noindent
    \ref{Item:Min_Lem:Min}: The classical version is that $X$ is minimal iff for every $A\in \op(X)$, there is $F\in \fin{G}$ with $FA = X$, so one direction is clear. In the other, suppose $Y\subsetneq X$ is a minimal subflow. Fix $A\in \op(X)$ with $Y\cap \ol{A} =  \emptyset$. For any $F\in \fin{G}$, $X\setminus \ol{FA}$ is open and contains $Y$. Hence $FA\subseteq X$ cannot be dense for any $F\in \fin{G}$. 
\end{proof}

\begin{corollary}
     \label{Cor:Orbit_Fragment}
    Let $X$ be a minimal $G$-flow and $x\in X$. Then if $T\subseteq G$ is thick, we have $T\cdot x\subseteq X$ dense. If $P\subseteq G$ is piecewise syndetic, then $P\cdot x\subseteq X$ is somewhere dense.
\end{corollary}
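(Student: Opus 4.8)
The plan is to reduce both assertions to the minimality criteria already recorded in Lemma~\ref{Lem:Min}, together with the purely combinatorial definitions of thick and piecewise syndetic subsets of $G$. The point is that minimality is exactly the statement that, for every nonempty open $A$, the set of $g$ with $gx\in A$ is ``large'' (syndetic), and thickness/piecewise syndeticity are tailored to intersect syndetic sets nicely.

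For the first claim, fix $A\in\op(X)$; I want $T\cdot x\cap A\neq\emptyset$. By Lemma~\ref{Lem:Min}\ref{Item:AP_Lem:Min}, the set $S := \{g\in G : gx\in A\}$ is syndetic, so there is $F\in\fin{G}$ with $FS = G$. Since $T$ is thick, applying the definition to the finite set $F^{-1}$ yields $h\in G$ with $F^{-1}h\subseteq T$. Writing $h = fg$ with $f\in F$ and $g\in S$, we get $g = f^{-1}h\in F^{-1}h\subseteq T$, while $g\in S$ gives $gx\in A$; hence $gx\in T\cdot x\cap A$. As $A$ was arbitrary, $T\cdot x$ is dense. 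For the second claim, let $P$ be piecewise syndetic, so there is $F\in\fin{G}$ with $T := FP$ thick. By the first part, $T\cdot x = \bigcup_{f\in F}(fP)\cdot x$ is dense, whence $X = \ol{T\cdot x} = \bigcup_{f\in F}\ol{(fP)\cdot x}$ is a finite union of closed sets. Some $\ol{(fP)\cdot x}$ must then have nonempty interior: if all had empty interior, then inductively intersecting the complements (each step uses that a nonempty open set cannot be contained in a closed set with empty interior) would produce a nonempty open set disjoint from every $\ol{(fP)\cdot x}$, contradicting the covering of $X$. Finally, $y\mapsto f^{-1}y$ is a homeomorphism of $X$ carrying $(fP)\cdot x$ onto $P\cdot x$ and commuting with closures, so $\ol{P\cdot x}$ also has nonempty interior; that is, $P\cdot x$ is somewhere dense.

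I do not anticipate a genuine obstacle here: the argument is essentially bookkeeping. The only mild points requiring care are getting the ``sidedness'' right when invoking thickness (one needs $F^{-1}h\subseteq T$, not $Fh\subseteq T$), and the observation that a space covered by finitely many closed sets has one of them with nonempty interior — which holds in any topological space by the elementary iterative argument above, so no appeal to Baire category (and in particular no use of compactness beyond what is built into ``$G$-flow'') is needed.
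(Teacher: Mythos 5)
Your proof is correct and takes essentially the same route as the paper: the thick case is exactly Lemma~\ref{Lem:Min}\ref{Item:AP_Lem:Min} combined with the fact that a thick set meets every syndetic set, and the piecewise syndetic case reduces to the thick case by writing the dense set $(FP)\cdot x$ as a finite union of translates $f\cdot(P\cdot x)$, one of whose closures must have nonempty interior. The paper compresses all of this into one line ("it suffices to show the claim about thick sets, which follows immediately from Lemma~\ref{Lem:Min}\ref{Item:AP_Lem:Min}"); your write-up simply supplies the omitted details, including the correct handling of sidedness when invoking thickness.
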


\begin{proof}
    It suffices to show the claim about thick sets, which follows immediately from Lemma~\ref{Lem:Min}\ref{Item:AP_Lem:Min}. 
\end{proof}

$\sa(G)$ is a $G$-flow, where given $g\in G$, $p\in \sa(G)$, and $f\in \rm{RUCB}(G)$, we have $gp(f) = p(fg)$. Note that the orbit of $e_G$ in $\sa(G)$ is $G$ and is therefore dense. The flow $\sa(G)$ satisfies the following universal property: If $X$ is any other $G$-flow and $x\in X$, then there is a (necessarily unique) $G$-map $\rho_x\colon \sa(G)\to X$ with $\rho_x(e_G) = x$. In particular, for $g\in G$, we have $\rho_x(g) = gx$, hence our choice of notation $\rho_x$ which suggests ``right multiplication'' by $x$. Hence given $p\in \sa(G)$, we often write $p\cdot x$ or $px$ for $\rho_x(p)$. When $X = \sa(G)$, this is an associative binary operation, giving $\sa(G)$ the structure of a \emph{compact right-topological semigroup (CRTS)}. Every CRTS contains an \emph{idempotent}, an element $u$ with $u\cdot u = u$, and any subflow of $\sa(G)$ is a subsemigroup. If $M\subseteq \sa(G)$ is a minimal subflow and $u\in M$ is idempotent, then $Mu = M$, implying that the map $\rho_u\colon \sa(G)\to M$ is a retraction of $\sa(G)$ onto $M$. For much more on CRTSs, see \cite{HS_Alg_SC}. 

Similarly, if $H\leq^c G$, then $\sa(G/H)$ is a $G$-flow, the orbit of \emph{the element} $H\in \sa(G/H)$ is dense, and given any $G$-flow $X$ and an $H$-fixed-point $x\in X$, there is a $G$-map $\rho_{x, H}\colon \sa(G/H)\to X$ with $\rho_{x, H}(H) = x$. Writing $\pi_H\colon G\to G/H$ for the quotient, it continuously extends to $\wt{\pi}_H\colon \sa(G)\to \sa(G/H)$, and we have $\rho_x = \rho_{x, H}\circ \wt{\pi}_H$.

By a theorem of Ellis \cite{Ellis}, there exists for each topological group $G$ a \emph{universal minimal flow}, which we denote by $\rmM(G)$. This is a minimal $G$-flow with the property that every minimal $G$-flow is a factor of it, and this universal property uniquely characterizes $\rmM(G)$ up to isomorphism. Every minimal subflow of $\sa(G)$ is clearly a universal minimal flow. Ellis then uses techniques from the theory of CRTSs to show that any minimal $M\subseteq \sa(G)$ is \emph{coalescent}, meaning that every $G$-map from $M$ to itself is an isomorphism. The uniqueness of $\rmM(G)$ then follows. One can also give an argument that $\rmM(G)$ is coalescent directly using its universal property \cite{Gutman_Li}. 

\section{Near ultrafilters on fattening spaces}
\label{Section:NUlts_Fattenings}

We will make heavy use of two distinct but very similar near ultrafilter constructions; one from a uniform space builds the Samuel compactification \cite{KocakStrauss}, and the other from a $G$-space $X$ builds a  $G$-flow $\rmS_G(X)$, whose relationship with $X$ depends on the properties of $X$ \cites{ZucDirectGPP, ZucThesis}. In this section, we take the opportunity to unify these constructions by creating the framework of \emph{fattening spaces}. Given a fattening space, we will construct the compact space of \emph{near ultrafilters} on it, and we can use the fattening space structure to define canonical families of lower semi-continuous pseudometrics on the near ultrafilter  space, first explored in \cite{BYMT}. Our proof that these pseudometrics are \emph{adequate} is conceptually much simpler than those appearing in \cite{ZucMHP}.  Uniform spaces are discussed in detail in Subsection~\ref{Subsection:Unif_Fat}, and $G$-spaces in Section~\ref{Section:Gleason}.

\subsection{Lower semi-continuous semi-pseudometrics}
\label{SubSec:LSC}

\begin{defin}
    \label{Def:Topometric}
    Fix a topological space $Y$ and an lsc semi-pseudometric $\partial$ on $Y$.
    
    \begin{itemize}
        \item We say that $y\in Y$ is \emph{$\partial$-compatible} iff $y\in \Int(\rmB_{\partial}(y, c))$ for every $c> 0$,  iff $y\in \Int(\ol{\rmB_{\partial}(y, c)})$ for every $c> 0$. If each $y \in Y$ is $\partial$-compatible, we say that $\partial$ is \emph{compatible}.
        \item We say that $\partial$ is \emph{open-compatible} if whenever $A\in \op(Y)$ and $c> 0$, we have $\ol{A}\subseteq \Int(\ol{\rmB_\partial(A, c)})$. If $\partial$ is compatible then it is open-compatible.
        \item We say that $\partial$ is \emph{adequate} if whenever $A\in \op(Y)$ and $c> 0$, we have $\rmB_{\partial}(A, c)\in \op(Y)$.
    \end{itemize}
\end{defin}

Notice that if $\partial$ is a \emph{discrete} lsc semi-pseudometric on $Y$, then the above properties reduce to topological properties: a point is $\partial$-compatible iff it is isolated, $\partial$ is compatible iff $Y$ is discrete, and $\partial$ is open-compatible iff $Y$ is ED. This observation will inform our choice of terminology in the sequel.

\begin{proposition}
    \label{Prop:EDOpenComp}
    If $\partial$ is an lsc semi-pseudometric on an ED topological space $Y$, then $\partial$ is open-compatible.
\end{proposition}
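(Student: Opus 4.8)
The plan is to observe that open-compatibility of $\partial$ is a purely topological statement once one notes the trivial containment $A\subseteq \rmB_\partial(A,c)$, and that extremal disconnectedness does all the work; in particular the lower semi-continuity of $\partial$ is not needed here.

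Fix $A\in\op(Y)$ and $c>0$. For every $x\in A$ we have $\partial(x,x)=0<c$, so $x\in\rmB_\partial(A,c)$; hence $A\subseteq\rmB_\partial(A,c)$, and taking closures, $\ol{A}\subseteq\ol{\rmB_\partial(A,c)}$. Since $Y$ is ED and $A$ is open, $\ol{A}$ is open, so $\ol{A}=\Int(\ol{A})$. Combining these with monotonicity of $\Int$ gives
\[
\ol{A}=\Int(\ol{A})\subseteq\Int(\ol{\rmB_\partial(A,c)}),
\]
which is exactly the defining inclusion for $\partial$ being open-compatible.

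There is no real obstacle: the only point worth stressing is conceptual, namely that the definition of open-compatibility was set up precisely so that it collapses to ``$\ol{A}$ open'' in the degenerate (discrete) case, so the ED hypothesis is already enough in general. One could alternatively phrase this as a corollary of the degenerate case of the remark following Definition~\ref{Def:Topometric}, but the direct three-line argument above is cleaner.
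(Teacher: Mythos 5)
Your argument is correct and is essentially identical to the paper's own proof: both use the trivial containment $A\subseteq\rmB_\partial(A,c)$, take closures, and invoke extremal disconnectedness to conclude $\ol{A}=\Int(\ol{A})\subseteq\Int(\ol{\rmB_\partial(A,c)})$. Your side remark that lower semi-continuity plays no role here is also accurate.
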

\begin{proof}
    Given $A\in \op(Y)$ and $c> 0$, we have $\ol{A}\subseteq \ol{\rmB_\partial(A, c)}$, so $\Int(\ol{A})\subseteq \Int(\ol{\rmB_\partial(A, c)})$, and as $Y$ is ED, we have $\Int(\ol{A}) = \ol{A}$. 
\end{proof}

A \emph{compact topometric space} $(Y, \partial)$ is a compact space $Y$ equipped with an lsc metric $\partial$. This implies that $\partial$ is complete and yields a finer uniformity than the compact one \cite{Ben_Yaacov_2008}.

If $Y$ is compact and $\partial$ is an lsc pseudometric on $Y$, then the equivalence relation on $Y$ given by $\partial(x, y) = 0$ is closed. We write $Y/\partial$ for the quotient, $\pi_\partial\colon Y\to Y/\partial$ for the quotient map, and $[y]_\partial$, $[A]_\partial$ for the $\partial$-saturation of some $y\in Y$ or $A\subseteq Y$. Letting $\ol{\partial}$ denote the metric induced by $\partial$ on $Y/\partial$, then $(Y/\partial, \ol{\partial})$ is a compact topometric space. More generally, if $Z$ is another compact space and $\pi\colon Y\to Z$ is a continuous surjection, we say that $\pi$ is \emph{subordinate to $\partial$} if $\{(x, y)\in Y^2: \pi(x) = \pi(y)\}\subseteq \partial^{-1}[\{0\}]$. In this case, we write $\partial^\pi$ for the induced lsc pseudometric on $Z$ (so, in the case of $Y/\partial$, $\ol{\partial} = \partial^{\pi_\partial}$).

\begin{prop}
\label{Prop:Adequate}
    Fix a space $Y$ and $\partial$ an adequate lsc pseudometric on $Y$.  
    \begin{enumerate}[label=\normalfont(\arabic*)]
        \item\label{Item:Dense_Prop:Adequate} 
         If $A\subseteq Y$, $D\subseteq A$ is dense, and $c> 0$, then $\rmB_\partial(D, c)\subseteq \rmB_\partial(A, c)$ is dense.
        \item\label{Item:Comp_Prop:Adequate}
        $y\in Y$ is $\partial$-compatible iff $\Int(\rmB_\partial(y, c))\neq \emptyset$ for every $c> 0$.
    \end{enumerate}
    For the next items, assume $Y$ is compact, $Z$ is another compact space, and $\pi\colon Y\to Z$ is a continuous surjection subordinate to $\partial$.
    \begin{enumerate}[resume, label=\normalfont(\arabic*)]
        \item\label{Item:Quotient_Prop:Adequate}
        $\partial^\pi$ is adequate. If $\partial$ is also open-compatible, then so is $\partial^\pi$. 
        \item\label{Item:CompDown_Prop:Adequate}
        $y\in Y$ is $\partial$-compatible iff $\pi(y)$ is $\partial^\pi$-compatible.
    \end{enumerate}
\end{prop}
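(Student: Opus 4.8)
Throughout, the plan is to use only that $\partial$ is lsc and adequate (together with the triangle inequality) and, for the quotient items~\ref{Item:Quotient_Prop:Adequate} and~\ref{Item:CompDown_Prop:Adequate}, the standard fact that a continuous surjection $\pi\colon Y\to Z$ between compact Hausdorff spaces is a closed map (hence a quotient map). Two elementary identities will do most of the work. First, for any $A\subseteq Z$ one has $\pi^{-1}[\rmB_{\partial^\pi}(A,c)]=\rmB_\partial(\pi^{-1}[A],c)$; this is immediate from surjectivity of $\pi$ and the fact that, $\pi$ being subordinate to $\partial$, $\partial^\pi(\pi(w),\pi(w'))=\partial(w,w')$ for all $w,w'\in Y$. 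Second, if $\partial(y,y')=0$ then $\rmB_\partial(y',c)=\rmB_\partial(y,c)$ for every $c>0$ (triangle inequality); consequently $\partial$-compatibility is invariant under the relation ``$\partial(\cdot,\cdot)=0$'', and $\rmB_\partial(\pi^{-1}(\pi(y)),c)=\rmB_\partial(y,c)$. I will record these up front.

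For item~\ref{Item:Dense_Prop:Adequate}: given $y\in\rmB_\partial(A,c)$ and $B\in\op(y,Y)$, I would choose $a\in A$ with $\partial(a,y)<c$; since $y\in B$ this shows $a\in\rmB_\partial(B,c)$, which is open by adequacy, so it meets the dense set $D$, and a point $d\in D\cap\rmB_\partial(B,c)$ yields, by symmetry of $\partial$, a point of $B$ inside $\rmB_\partial(D,c)$; thus $y\in\ol{\rmB_\partial(D,c)}$. For item~\ref{Item:Comp_Prop:Adequate}, one direction is the definition; conversely, if $U:=\Int(\rmB_\partial(y,c/2))\neq\emptyset$ then $y\in\rmB_\partial(U,c/2)$ (every point of $U$ lies within $c/2$ of $y$), the set $\rmB_\partial(U,c/2)$ is open by adequacy and lies in $\rmB_\partial(y,c)$ by the triangle inequality, so $y\in\Int(\rmB_\partial(y,c))$; as $c>0$ is arbitrary, $y$ is $\partial$-compatible.

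For item~\ref{Item:Quotient_Prop:Adequate}: adequacy of $\partial^\pi$ follows from the first identity, since $\rmB_\partial(\pi^{-1}[A],c)$ is open by adequacy of $\partial$ and $\pi$ is a quotient map. For open-compatibility of $\partial^\pi$ I would first prove the pointwise statement: for $A\in\op(Y)$, $x\in\ol A$, and $y$ with $\partial(x,y)=0$, one has $y\in\Int(\ol{\rmB_\partial(A,c)})$. With $\epsilon=c/3$: open-compatibility of $\partial$ gives $\ol A\subseteq\Int(\ol{\rmB_\partial(A,\epsilon)})=:B$; then $\rmB_\partial(A,\epsilon)$ is open by adequacy, hence a dense subset of $B$, and item~\ref{Item:Dense_Prop:Adequate} applied to it gives $\ol{\rmB_\partial(B,\epsilon)}\subseteq\ol{\rmB_\partial(A,2\epsilon)}$; since $x\in B$ and $\partial(x,y)=0$ we get $y\in\rmB_\partial(B,\epsilon)\subseteq\ol{\rmB_\partial(A,2\epsilon)}$; a final application of open-compatibility of $\partial$ to the open set $\rmB_\partial(A,2\epsilon)$ places $y$ in $\Int(\ol{\rmB_\partial(A,3\epsilon)})=\Int(\ol{\rmB_\partial(A,c)})$. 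To deduce open-compatibility of $\partial^\pi$: given $A^*\in\op(Z)$ and $z\in\ol{A^*}$, set $A=\pi^{-1}[A^*]$; since $\pi$ is closed, $z\in\pi[\ol A]$, so some $x\in\ol A$ has $\pi(x)=z$, hence every $y'\in\pi^{-1}(z)$ satisfies $\partial(x,y')=0$, so by the pointwise statement $\pi^{-1}(z)\subseteq\Int(\ol{\rmB_\partial(A,c)})=:O$; because $\pi$ is closed and $\pi^{-1}(z)\subseteq O$, the set $W:=Z\setminus\pi[Y\setminus O]$ is an open neighborhood of $z$ with $\pi^{-1}[W]\subseteq O\subseteq\ol{\rmB_\partial(A,c)}\subseteq\pi^{-1}[\ol{\rmB_{\partial^\pi}(A^*,c)}]$, so $z\in W\subseteq\ol{\rmB_{\partial^\pi}(A^*,c)}$ and therefore $z\in\Int(\ol{\rmB_{\partial^\pi}(A^*,c)})$.

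For item~\ref{Item:CompDown_Prop:Adequate}: if $\pi(y)$ is $\partial^\pi$-compatible, then $\pi^{-1}[\Int(\rmB_{\partial^\pi}(\pi(y),c))]$ is a nonempty open subset of $\pi^{-1}[\rmB_{\partial^\pi}(\pi(y),c)]=\rmB_\partial(y,c)$, so $\Int(\rmB_\partial(y,c))\neq\emptyset$ for all $c$, and item~\ref{Item:Comp_Prop:Adequate} gives that $y$ is $\partial$-compatible. Conversely, if $y$ is $\partial$-compatible, then by the invariance noted above every $y'\in\pi^{-1}(\pi(y))$ is $\partial$-compatible with $\rmB_\partial(y',c/2)=\rmB_\partial(y,c/2)$, so $\pi^{-1}(\pi(y))\subseteq U:=\Int(\rmB_\partial(y,c/2))$; then $O:=\rmB_\partial(U,c/2)$ is open by adequacy, contains $\pi^{-1}(\pi(y))$, and lies in $\rmB_\partial(y,c)=\pi^{-1}[\rmB_{\partial^\pi}(\pi(y),c)]$, and the same closed-map argument as in item~\ref{Item:Quotient_Prop:Adequate} converts $O$ into an open neighborhood of $\pi(y)$ contained in $\rmB_{\partial^\pi}(\pi(y),c)$; as $c$ is arbitrary, $\pi(y)$ is $\partial^\pi$-compatible. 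The one recurring technical point ---and the step I expect to take the most care--- is precisely this passage from an open $O\subseteq Y$ containing a whole fiber $\pi^{-1}(z)$ to an open neighborhood of $z$ in $Z$: since $\pi$ is closed but not open, it must be done through the complement, via $W:=Z\setminus\pi[Y\setminus O]$. Isolating this, together with the two identities, at the outset should reduce items~\ref{Item:Quotient_Prop:Adequate} and~\ref{Item:CompDown_Prop:Adequate} to bookkeeping.
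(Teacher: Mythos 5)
Your proof is correct and follows essentially the same route as the paper's: the same density argument for (1), the same self-fattening trick for (2), the same $c/3$ chain through open-compatibility and item (1) for (3), and the same fiberwise reduction for (4). The only difference is that you spell out the passage from a fiber-neighborhood in $Y$ to a neighborhood in $Z$ via $W=Z\setminus\pi[Y\setminus O]$, a step the paper compresses into ``it is enough to show'' and ``by compactness.''
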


\begin{proof}
    \ref{Item:Dense_Prop:Adequate}: Fix $y\in \rmB_\partial(A, c)$ and $B\in \op(y, Y)$. Then $\rmB_\partial(B, c)\cap A\neq \emptyset$. By adequacy, $\rmB_\partial(B, c)$ is open, hence $\rmB_\partial(B, c)\cap D\neq \emptyset$, i.e.\ $B\cap \rmB_\partial(D, c)\neq \emptyset$.  
    \vspace{3 mm}

    \noindent
    \ref{Item:Comp_Prop:Adequate}: One direction is clear. For the other, if $\Int(\rmB_\partial(y, c))\neq \emptyset$, then $y\in \rmB_\partial(\Int(\rmB_\partial(y, c)), c)\subseteq \rmB_\partial(y, 2c)$, and by adequacy $\rmB_\partial(\Int(\rmB_\partial(y, c)), c)$ is open.
    \vspace{3 mm}

    \noindent
    \ref{Item:Quotient_Prop:Adequate}: The first statement is immediate. For the second, fix $c > 0$. It is enough to show that given $A\in \op(Y)$, $x\in \ol{A}$, and $y\in Y$ with $\partial(x, y) = 0$, then $y\in \Int(\ol{\rmB_{\partial}(A, c)})$. Write $\epsilon = c/3$. As $\partial$ is open-compatible, we have $\ol{A}\subseteq \Int(\ol{\rmB_{\partial}(A, \epsilon)})=: B$. By adequacy $\rmB_{\partial}(A, \epsilon)\subseteq B$ is dense. Then using Item~\ref{Item:Dense_Prop:Adequate}, we have $$y\in \ol{\rmB_{\partial}(B, \epsilon)} = \ol{\rmB_{\partial}(\rmB_{\partial}(A, \epsilon), \epsilon)}\subseteq \ol{\rmB_{\partial}(A, 2\epsilon)}.$$
    Hence, once again using that $\partial$ is open-compatible, we have 
    \begin{equation*}
    y\in \Int(\ol{\rmB_{\partial}(\rmB_{\partial}(A, 2\epsilon), \epsilon)})\subseteq \Int(\ol{\rmB_{\partial}(A, 3\epsilon)}).
    \end{equation*}

    \noindent
    \ref{Item:CompDown_Prop:Adequate}: For the forward direction, first note that by Item~\ref{Item:Comp_Prop:Adequate} that if $z\in Y$ with $\partial(y, z) = 0$, then $z$ is also $\partial$-compatible. Now note that $\pi^{-1}[\rmB_{\partial^\pi}(\pi(y), c)] = \rmB_{\partial}(y, c) = \rmB_{\partial}(z, c)$. In particular, for every $z\in \pi^{-1}[\pi(y)]$, we have $z\in \Int(\rmB_\partial(z, c))$. By compactness, we have $\pi(y)\in \Int(\rmB_{\partial^\pi}(\pi(y), c))$ as desired. 

    The reverse follows from the equality $\pi^{-1}[\rmB_{\partial^\pi}(\pi(y), c)] = \rmB_{\partial}(y, c)$ along with Item~\ref{Item:Comp_Prop:Adequate}. 
\end{proof}

\subsection{Fattening spaces}
\label{SubSec:Fat}

Given a topological space $X$ and an adequate lsc pseudometric $\rho$ on $X$, then this equips $\op(X)$ with a natural ``fattening'' operation, where the fattening of $A\in \op(X)$ by some $c> 0$ is just $\rmB_{\rho}(A, c)\in \op(X)$. We also note that by Proposition~\ref{Prop:Adequate}\ref{Item:Dense_Prop:Adequate}, if $A, B\in \op(X)$ and $\ol{A} = \ol{B}$, then also $\ol{\rmB_\rho(A, c)} = \ol{\rmB_\rho(B, c)}$. Hence we can think of this fattening structure as living on $\rc(X)$. Fattening spaces are abstractions of this idea.

\begin{defin}
    \label{Def:Fattening}
    Fix $L$ an upwards-directed partial order. 
    
    An \emph{$L$-fattening space} is a topological space $X$ equipped with a function $\Phi\colon \rc(X)\times L\times (0, \infty)\to \rc(X)$, called an $L$-\emph{fattening system}, satisfying the following properties. We typically suppress $\Phi$ and write $\Phi(A, \sigma, c)=: A(\sigma, c)$. Below, we have  $A, A_i, B\in \rc(X)$, $\sigma \in L$, $c, \epsilon > 0$.
    \begin{itemize}
         \item 
        (Basics): $A(\sigma, c)\supseteq A$, $\emptyset(\sigma, c) = \emptyset$.
        \item 
        (Monotonicity): $(A, \sigma, c)\to A(\sigma, c)$ is increasing in the $\rc(X)$ and $(0, \infty)$ coordinates and decreasing in the $L$ coordinate (both non-strict). For intuition, note that larger pseudometrics have smaller balls.
        \item 
        (Continuity): $A(\sigma, c) = \bigvee_{\epsilon > 0} A(\sigma, c-\epsilon)$. 
        \item 
        (Symmetry): $A(\sigma, c)\wedge B\neq \emptyset$ iff $A\wedge B(\sigma, c)\neq \emptyset$.
        \item 
        (Joins): $\bigvee_i A_i(\sigma, c) = (\bigvee_i A_i)(\sigma, c)$. 
        \item 
        (Triangle inequality): $A(\sigma, c)(\sigma, \epsilon) \subseteq A(\sigma, c+\epsilon)$. 
    \end{itemize}
    Given $A, B\in \rc(X)$, we say $A$ and $B$ are \emph{apart} and write $A\perp B$ if there are $\sigma\in L$ and $c> 0$ with $A(\sigma, c)\cap B = \emptyset$. If $A$ and $B$ are not apart, we call them \emph{near}, and write $A\parallel B$. Given any $\cA\subseteq \rc(X)^+$, we put $[\cA]_{\rm{fat}} = \{A(\sigma, c): A\in \cA, \sigma\in L, c> 0\}$. We remark that we always understand $\neg A(\sigma, c)$ to mean $\neg (A(\sigma, c))$. Topological vocabulary applied to an $L$-fattening space applies to the underlying space.

    We say that $X$ is \emph{$L$-continuous} if for any $x\in X$ and $A\in \op(x, X)$, there is $B\in [\rc(x, X)]_{\rm{fat}}$ with $B\subseteq A$.
\end{defin}

We briefly introduce two examples of fattening spaces as motivation. The first example comes from dynamics.

\begin{defin}
\label{def:SNG-fattening}
    Suppose $G$ is a topological group and $X$ is a $G$-space. Consider $\rm{SN}(G)$ with the pointwise order. Then $X$ admits a $\sn(G)$-continuous fattening system given by letting $\ol{A}(\sigma, c) := \ol{\rmB_\sigma(c)\cdot A}$, 
    for any $A \in \op(X)$, $\sigma\in \sn(G)$, and $c> 0$.
\end{defin}

The second example will be prototypical in a sense we will soon make precise. Let $L$ be any upwards directed poset, let $X$ be a space, and let $\sigma\to d_\sigma$ be an order-preserving map from $L$ to adequate lsc pseudometrics on $X$ (where we equip the set of pseudometrics with the pointwise order). We define an $L$-fattening system on $\rc(X)$ where given $A\in \op(X)$ and $c> 0$, we set $\ol{A}(\sigma, c) := \ol{\rmB_{d_\sigma}(A, c)} = \ol{\rmB_{d_\sigma}(\ol{A}, c)}$, with the second equality by Proposition~\ref{Prop:Adequate}\ref{Item:Dense_Prop:Adequate}. 

\begin{defin}
    \label{Def:AdFat}
    An $L$-fattening space is \emph{adequate} if it is induced by adequate lsc pseudometrics as above. To emphasize the pseudometrics, we can write $(X, (d_\sigma)_\sigma)$ for this fattening space. 
\end{defin}

We will eventually see that the pseudometrics are uniquely determined (Lemma~\ref{Lem:FuncFat}) and that every locally-compact ED fattening space is adequate. 

\begin{remark}
    In both of the examples discussed, there is arguably somewhat more emphasis on open sets rather than regular closed sets. Indeed, in both cases, we actually have a function $\op(X)\times L\times (0, \infty)\to \op(X)$ which induces a well-defined function $\rc(X)\times L\times (0, \infty)\to \rc(X)$. We define things using regular closed sets to take advantage of the fact that $\rc(X)$ is a complete Boolean algebra.    
\end{remark}

We now associate to a fattening space a collection of lsc semi-pseudometrics. For adequate fattening spaces, these will agree with the corresponding adequate lsc pseudometrics.

\begin{defin}
    \label{Def:Fattening_SPM}
    Fix an $L$-fattening space $X$. Given $\sigma\in L$, we define $\partial_\sigma^X\colon X^2\to [0, \infty]$ where given $x, y\in X$ and $c\in [0, \infty)$, we have
    \begin{align*}
    \partial_\sigma^X(x, y)\leq c &\Leftrightarrow \forall \epsilon > 0\,  \forall A\in \op(x, X)\,  \forall B\in\op(y, X) \,\, \ol{A}(\sigma, c+\epsilon)\wedge \ol{B} \ne \emptyset\\
    &\Leftrightarrow  \forall \epsilon > 0\,  \forall A\in \op(x, X)\,\, y\in \ol{A}(\sigma, c+\epsilon).
    \end{align*}
    When $X$ is understood, we omit it from the notation. Remark that for each $\sigma\in L$,  $c, \epsilon>0$, and $A \in \op(X)$, we have
\begin{equation}
    \label{Eq:Basic_Fat_partial}
    \rmB_{\partial_\sigma}(A, c) \subseteq \ol{A}(\sigma, c) \subseteq \rmB_{\partial_\sigma}(A, c+\epsilon).
\end{equation}
\end{defin}

Clearly $\partial_\sigma$ is an lsc semi-pseudometric. In general it need not be a pseudometric. 
A sufficient condition for this to hold is that $X$ satisfies the following weakening of the ED property.

\begin{defin}
    \label{Def:Fattening_Spaces_Props}
   An $L$-fattening space $X$ is \emph{$L$-extremally disconnected}, or $L$-ED, if for every $A\in \rc(X)$,  $\sigma \in L, c>0$, we have $A\subseteq \rm{Int}(A(\sigma, c))$.
\end{defin}

\begin{lemma}
    \label{Lem:ED_Pseudo}
    An $L$-fattening space $X$ is $L$-ED if and only if $\partial_\sigma$ is open-compatible, for each $\sigma\in L$, in which case each $\partial_\sigma$ is a pseudometric.
\end{lemma}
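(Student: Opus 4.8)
The plan is to prove the two implications separately, and to note that each $\partial_\sigma$ being open-compatible (for every $\sigma$) immediately forces it to be a pseudometric, via the following observation: if $\partial_\sigma$ is open-compatible, then in particular singletons $\{x\}$ — or rather their closures, which are points since $X$ is Hausdorff — satisfy the relevant inclusions, and one checks the triangle inequality holds on the quotient $X/\partial_\sigma$ (which is metric by construction) and pulls back. Actually the cleaner route: $\partial_\sigma$ is always an lsc semi-pseudometric (noted after Definition~\ref{Def:Fattening_SPM}), and a semi-pseudometric that is lsc and open-compatible is a pseudometric because open-compatibility lets us run a triangle-inequality argument through open sets. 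So I would first record that reduction, then focus on the equivalence ``$L$-ED $\iff$ each $\partial_\sigma$ open-compatible.''

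For the forward direction, assume $X$ is $L$-ED and fix $\sigma \in L$. Let $A \in \op(X)$ and $c > 0$; we must show $\ol{A} \subseteq \Int(\ol{\rmB_{\partial_\sigma}(A, c)})$. Using the right-hand inclusion of \eqref{Eq:Basic_Fat_partial}, pick $\epsilon > 0$; then $\ol{A}(\sigma, c) \subseteq \rmB_{\partial_\sigma}(A, c+\epsilon)$, hence $\ol{A}(\sigma,c) \subseteq \ol{\rmB_{\partial_\sigma}(A, c+\epsilon)}$. By $L$-ED applied to the regular closed set $\ol{A}$, we have $\ol{A} \subseteq \Int(\ol{A}(\sigma, c))$. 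Chaining these, $\ol{A} \subseteq \Int(\ol{A}(\sigma,c)) \subseteq \Int(\ol{\rmB_{\partial_\sigma}(A, c+\epsilon)})$. Since $c$ was arbitrary and $\epsilon>0$ arbitrary, rescaling (replace $c$ by any $c' < c$) gives $\ol{A} \subseteq \Int(\ol{\rmB_{\partial_\sigma}(A, c)})$ for all $c>0$, which is exactly open-compatibility. (One should double-check the direction of the Continuity axiom is being used correctly here — it guarantees $\ol{A}(\sigma,c) = \bigvee_{\epsilon>0}\ol{A}(\sigma,c-\epsilon)$, so passing from $c+\epsilon$ back to $c$ is harmless.)

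For the reverse direction, assume $\partial_\sigma$ is open-compatible for the fixed $\sigma$. Let $A \in \rc(X)$ and $c > 0$; we must show $A \subseteq \Int(A(\sigma, c))$. Write $A = \ol{U}$ for $U := \Int(A) \in \op(X)$, so $\ol{U} = A$. Pick $\epsilon \in (0, c)$. By the left-hand inclusion of \eqref{Eq:Basic_Fat_partial}, $\rmB_{\partial_\sigma}(U, c-\epsilon) \subseteq \ol{U}(\sigma, c-\epsilon) \subseteq A(\sigma, c)$, and since $A(\sigma,c)$ is regular closed, $\ol{\rmB_{\partial_\sigma}(U, c-\epsilon)} \subseteq A(\sigma, c)$. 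By open-compatibility of $\partial_\sigma$ applied to the open set $U$, we get $A = \ol{U} \subseteq \Int(\ol{\rmB_{\partial_\sigma}(U, c-\epsilon)}) \subseteq \Int(A(\sigma, c))$. This proves $L$-ED. Finally, for the ``pseudometric'' clause: having shown each $\partial_\sigma$ is open-compatible, I invoke the general fact (or give the short argument) that an lsc, open-compatible semi-pseudometric on a compact — or more generally regular — space satisfies the triangle inequality. The argument: given $x, y, z$ with $\partial_\sigma(x,y) \le a$ and $\partial_\sigma(y,z) \le b$, one uses the ``$\forall A \in \op(x,X)$'' characterization together with the Triangle inequality axiom of the fattening system ($\ol{A}(\sigma, a+\epsilon)(\sigma, b+\epsilon) \subseteq \ol{A}(\sigma, a+b+2\epsilon)$) to conclude $\partial_\sigma(x,z) \le a+b$; here open-compatibility of $\partial_\sigma$ is what lets one find, inside each neighborhood of $y$, a point whose $\partial_\sigma$-ball is controlled, closing the gap. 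I expect this last step — verifying the triangle inequality cleanly from the fattening axioms plus open-compatibility — to be the only place requiring genuine care; the two main inclusions are direct from \eqref{Eq:Basic_Fat_partial}.
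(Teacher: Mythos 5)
Your proposal is correct and follows essentially the same route as the paper: both directions of the equivalence come from sandwiching $\ol{A}(\sigma,c)$ between $\partial_\sigma$-balls via \eqref{Eq:Basic_Fat_partial}, and the triangle inequality follows because $L$-ED upgrades $y\in\ol{A}(\sigma,a+\epsilon)$ to $y\in\Int(\ol{A}(\sigma,a+\epsilon))$, which is precisely the open neighborhood of $y$ one feeds into $\partial_\sigma(y,z)\le b$ before applying the fattening's triangle-inequality axiom. Your phrasing of that last step (finding a point inside a neighborhood of $y$ with controlled ball) is slightly off from the actual mechanism, but the idea and all the ingredients match the paper's argument.
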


\begin{proof}
Suppose that each $\partial_\sigma$ is open-compatible. Fix $A\in \op(X)$, $\sigma\in L$, and $c> 0$. Then by open-compatibility and \eqref{Eq:Basic_Fat_partial}, we have
\begin{equation*}
    \ol{A}\subseteq \Int(\ol{\rmB_{\partial_\sigma}(A, c)}) \subseteq \Int(\ol{A}(\sigma, c)).
\end{equation*}

Conversely, suppose $X$ is $L$-ED, and fix $\sigma\in L$, $A\in \op(X)$ and $c> 0$. Then by $L$-ED and \eqref{Eq:Basic_Fat_partial}, we have
\begin{equation*}
    \ol{A}\subseteq \Int\left(\ol{A}(\sigma, c/2)\right) \subseteq \Int\left(\ol{\rmB_{\partial_\sigma}(A, c)}\right),
\end{equation*}
so $\partial_\sigma$ is open-compatible. Finally, notice that if $X$ is $L$-ED,
$$\partial_\sigma(x, y)\leq c\Leftrightarrow \forall \epsilon > 0\,  \forall A\in \op(x, X)\,\, y\in \Int(\ol{A}(\sigma, c+\epsilon)),$$
and from this, triangle inequality follows. 
\end{proof}

\begin{defin}
    If $X$ is an $L$-ED fattening space, the uniformity generated by the family of pseudometrics $(\partial_\sigma)_{\sigma\in L}$ is called the \emph{$L$-fattening uniformity} on $X$, or just the \emph{$L$-uniformity}. Notice that the $L$-fattening uniformity is Hausdorff whenever  $X$ is $L$-continuous.

    The topology induced by the $L$-fattening uniformity is called the \emph{$L$-fattening topology} on $X$.
\end{defin}
The pair $(X, (\partial_\sigma)_\sigma)$ form a topo-uniform space in the sense of \cite{BassoZucker}.
If $X$ is a compact $L$-continous $L$-ED fattening space, then the $L$-fattening uniformity is complete and and the $L$-fattening topology is finer than the topology on $X$, as is the case for compact topometric spaces.

We now show that for adequate fattening spaces, the pseudometrics are uniquely determined. 

\begin{lemma}
\label{Lem:FuncFat}
    Suppose that $(X, (d_\sigma)_\sigma)$ is an adequate $L$-fattening space. Then for each $\sigma\in L$, we have $d_\sigma = \partial_\sigma$.  
\end{lemma}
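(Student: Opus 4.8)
The plan is to fix $\sigma\in L$ and prove the two inequalities $\partial_\sigma\le d_\sigma$ and $d_\sigma\le \partial_\sigma$ separately, unpacking everything through the defining formula of $\partial_\sigma$ in Definition~\ref{Def:Fattening_SPM} together with the fact that, for the adequate fattening system, $\ol{A}(\sigma, c) = \ol{\rmB_{d_\sigma}(A,c)}$ whenever $A\in\op(X)$. The only other ingredient needed is lower semi-continuity of $d_\sigma$ on $X^2$, i.e.\ that $\{(u,w)\in X^2 : d_\sigma(u,w)\le r\}$ is closed for every $r\ge 0$.

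For $\partial_\sigma\le d_\sigma$: given $x,y\in X$, set $c := d_\sigma(x,y)$, with nothing to prove if $c=\infty$. For any $\epsilon>0$ and any $A\in\op(x,X)$ we have $x\in A$ and $d_\sigma(x,y)=c<c+\epsilon$, so $y\in\rmB_{d_\sigma}(x,c+\epsilon)\subseteq\rmB_{d_\sigma}(A,c+\epsilon)\subseteq\ol{A}(\sigma,c+\epsilon)$; by the second equivalent characterization in Definition~\ref{Def:Fattening_SPM} this gives $\partial_\sigma(x,y)\le c=d_\sigma(x,y)$. (This is also immediate from the left inclusion in \eqref{Eq:Basic_Fat_partial}.) This direction requires essentially no work.

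The main obstacle is the reverse inequality $d_\sigma\le\partial_\sigma$: I must deduce $d_\sigma(x,y)\le c$ from $\partial_\sigma(x,y)\le c$, and the difficulty is precisely that $d_\sigma$ is only lower semi-continuous, so arbitrarily small open neighborhoods of $x$ may still contain points far from $x$ in $d_\sigma$ — there is no way to simply shrink the open set $A$ appearing in the definition of $\partial_\sigma$ down to the point $x$. The fix is to combine the lower semi-continuity of $d_\sigma$ with a net indexed by neighborhoods of both $x$ and $y$. Concretely, fix $\epsilon>0$ and let $I$ be the set of pairs $(A,V)$ with $A\in\op(x,X)$ and $V\in\op(y,X)$, directed by reverse inclusion in each coordinate. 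For each $(A,V)\in I$, the hypothesis $\partial_\sigma(x,y)\le c$ gives $y\in\ol{A}(\sigma,c+\epsilon)=\ol{\rmB_{d_\sigma}(A,c+\epsilon)}$, so $V\cap\rmB_{d_\sigma}(A,c+\epsilon)\ne\emptyset$; pick $w_{(A,V)}$ in this intersection and then $u_{(A,V)}\in A$ with $d_\sigma(u_{(A,V)},w_{(A,V)})<c+\epsilon$. By the ordering, the net $(u_{(A,V)})_{(A,V)\in I}$ converges to $x$ and $(w_{(A,V)})_{(A,V)\in I}$ converges to $y$, so $(u_{(A,V)},w_{(A,V)})\to(x,y)$ in $X^2$; since this net lies entirely in the closed set $\{(u,w)\in X^2 : d_\sigma(u,w)\le c+\epsilon\}$, so does its limit, i.e.\ $d_\sigma(x,y)\le c+\epsilon$. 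Letting $\epsilon\to 0$ yields $d_\sigma(x,y)\le c$, and together with the first inequality this gives $d_\sigma=\partial_\sigma$. I do not anticipate further subtleties; in particular this argument does not use compactness of $X$, only that $d_\sigma$ is lsc and adequate.
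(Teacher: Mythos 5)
Your proof is correct and follows the same route as the paper's: the easy inequality $\partial_\sigma\le d_\sigma$ from the definitions, and the reverse by noting $\ol{A}(\sigma,c+\epsilon)=\ol{\rmB_{d_\sigma}(A,c+\epsilon)}$, extracting points of $\rmB_{d_\sigma}(A,c+\epsilon)$ near $y$, and invoking lower semi-continuity. The only difference is that the paper compresses the final step into the single sentence ``using that $d_\sigma$ is lsc,'' whereas you spell out the underlying net argument explicitly.
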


\begin{proof}
    The inequality $\partial_\sigma \leq d_\sigma$ is clear. For the reverse, suppose $x, y\in X$ and $c\geq 0$ is such that $\partial_\sigma(x, y)\leq c$. Thus for any $A\in \op(x, X)$, $B\in \op(y, X)$, and $\epsilon > 0$, we have that $\ol{A}(\sigma, c+\epsilon)\cap B = \ol{\rmB_{d_\sigma}(A, c+\epsilon)}\cap B\neq \emptyset$. So also $\rmB_{d_\sigma}(A, c+\epsilon)\cap B\neq \emptyset$. Using that $d_\sigma$ is lsc, we have $d_\sigma(x, y)\leq c$.  
\end{proof}

In general, we do not know if one can always recover the fattening structure on $X$ just from the $\partial_\sigma$ functions. This can be done if $X$ is locally compact.

\begin{prop}
    \label{Prop:FD}
    Fix $X$ a locally compact $L$-fattening space. Given $A\in \op(X)$, $\sigma\in L$, and $c> 0$, we have
    $$\rmB_{\partial_\sigma}(A, c) = \bigcup\{\ol{B}(\sigma, c-\epsilon): B\in \op(X) \text{ with } \ol{B}\in \rmK(A), \epsilon > 0\}.$$
    In particular, we have $ \ol{A}(\sigma, c) = \ol{\rmB_{\partial_\sigma}(A, c)}$.
\end{prop}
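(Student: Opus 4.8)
The plan is to prove the two containments $\rmB_{\partial_\sigma}(A, c) \subseteq \bigcup\{\ol{B}(\sigma, c-\epsilon): \ol{B}\in \rmK(A), \epsilon > 0\}$ and the reverse, and then derive the ``in particular'' clause by taking closures.

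First I would handle the inclusion $\subseteq$. Fix $y\in \rmB_{\partial_\sigma}(A, c)$, so there is $x\in A$ with $\partial_\sigma(x, y) < c$; pick $\epsilon > 0$ with $\partial_\sigma(x, y) < c - 2\epsilon$. Since $X$ is locally compact and $A$ is an open neighborhood of $x$, choose $B\in \op(X)$ with $x\in B$ and $\ol{B}\in \rmK(A)$. Now $B\in \op(x, X)$, so by the definition of $\partial_\sigma$ (Definition~\ref{Def:Fattening_SPM}, using $\partial_\sigma(x,y) \le c - 2\epsilon$ and the slack $\epsilon$ in the ``$+\epsilon$'') we get $y\in \ol{B}(\sigma, c - 2\epsilon + \epsilon) = \ol{B}(\sigma, c-\epsilon)$. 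Hence $y$ lies in the right-hand union. For the reverse inclusion, suppose $y\in \ol{B}(\sigma, c-\epsilon)$ with $\ol{B}\in \rmK(A)$ and $\epsilon > 0$. By \eqref{Eq:Basic_Fat_partial}, $\ol{B}(\sigma, c-\epsilon)\subseteq \rmB_{\partial_\sigma}(B, c-\epsilon + \epsilon/2)\subseteq \rmB_{\partial_\sigma}(B, c)$. Since $\ol{B}\subseteq A$, we have $B\subseteq A$, so $\rmB_{\partial_\sigma}(B, c)\subseteq \rmB_{\partial_\sigma}(A, c)$, giving $y\in \rmB_{\partial_\sigma}(A,c)$. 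This establishes the displayed equality.

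For the ``in particular'' clause, I would argue $\ol{A}(\sigma, c) = \ol{\rmB_{\partial_\sigma}(A,c)}$. The inclusion $\ol{\rmB_{\partial_\sigma}(A,c)} \subseteq \ol{A}(\sigma,c)$ is immediate from \eqref{Eq:Basic_Fat_partial} (which gives $\rmB_{\partial_\sigma}(A,c)\subseteq \ol{A}(\sigma,c)$, then take closures, using that $\ol{A}(\sigma,c)\in \rc(X)$ is closed). For the reverse, by the Continuity axiom of the fattening system, $\ol{A}(\sigma, c) = \bigvee_{\epsilon>0}\ol{A}(\sigma, c-\epsilon) = \ol{\bigcup_{\epsilon>0}\ol{A}(\sigma,c-\epsilon)}$; I would instead cover $A$ by sets $B$ with $\ol{B}\in\rmK(A)$. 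Using local compactness, $A = \bigcup\{B : B\in\op(X),\ \ol{B}\in\rmK(A)\}$, and by the Joins axiom applied to the corresponding regular closed sets, $\ol{A}(\sigma, c-\epsilon) = \bigl(\bigvee_B \ol{B}\bigr)(\sigma, c-\epsilon) = \bigvee_B \ol{B}(\sigma, c-\epsilon) = \ol{\bigcup_B \ol{B}(\sigma, c-\epsilon)}$. Then $\bigcup_{\epsilon>0}\bigcup_B \ol{B}(\sigma, c-\epsilon) = \rmB_{\partial_\sigma}(A,c)$ by the first part of the proposition, so taking closures and combining with Continuity yields $\ol{A}(\sigma,c) = \ol{\rmB_{\partial_\sigma}(A,c)}$.

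The main obstacle I anticipate is bookkeeping with the Joins axiom: one must check that $\bigvee\{\ol{B} : \ol{B}\in\rmK(A)\} = \ol{A}$ in $\rc(X)$, which holds because $\bigcup\{B : \ol{B}\in\rmK(A)\}$ is dense in $A$ by local compactness, and then that the (possibly infinitary) Joins axiom legitimately commutes the fattening operation past this supremum — the axiom as stated covers arbitrary joins, so this is fine, but one should be careful that $\ol{A}(\sigma, c-\epsilon)$ really equals the closure of the union of the $\ol{B}(\sigma, c-\epsilon)$ rather than just containing it. A secondary point is making sure the $\epsilon$-slack in Definition~\ref{Def:Fattening_SPM} is deployed correctly so that strict inequality $\partial_\sigma(x,y) < c$ translates into membership in a fattening of radius strictly less than $c$; this is routine but needs the two-$\epsilon$ trick indicated above.
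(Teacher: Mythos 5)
Your forward inclusion and your derivation of the ``in particular'' clause from the displayed equality are correct and essentially the paper's argument: the paper compresses the forward inclusion to ``follows from local compactness'' and the last sentence to ``follows from the join and continuity properties,'' and your bookkeeping with $\ol{A}=\bigvee\{\ol{B}: B\in\op(X),\ \ol{B}\in\rmK(A)\}$ together with the Joins and Continuity axioms is exactly what is intended.

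The one place you genuinely diverge is the reverse inclusion, and it is the step that carries the content of the proposition. You dispose of it by citing the second containment of \eqref{Eq:Basic_Fat_partial}, namely $\ol{B}(\sigma,c-\epsilon)\subseteq\rmB_{\partial_\sigma}(B,c-\epsilon/2)$. Formally the paper does assert \eqref{Eq:Basic_Fat_partial} as a remark, so the citation is licensed by the text; but that containment is precisely the nontrivial point here, and it is not clear how to justify it for an arbitrary open set. From $y\in\ol{B}(\sigma,c-\epsilon)$ one must produce a \emph{single} $x\in \ol{B}$ with $y\in\ol{E}(\sigma,c-\epsilon)$ for every $E\in\op(x,X)$; covering $B$ by such $E$'s and applying the Joins axiom only tells you that $y$ lies in the closure of the union of the sets $\ol{E}(\sigma,c-\epsilon)$, not in any one of them. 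This is exactly why the statement restricts to $\ol{B}\in\rmK(A)$ and why the paper's proof runs a compactness argument instead of quoting the remark: if for every $x\in\ol{B}$ there were $E_x\in\op(x,X)$ with $y\notin\ol{E_x}(\sigma,c-\epsilon)$, then a finite subcover $E_{x_1},\dots,E_{x_n}$ of the compact set $\ol{B}$, monotonicity, and the Joins axiom would give $\ol{B}(\sigma,c-\epsilon)\subseteq\bigcup_{i}\ol{E_{x_i}}(\sigma,c-\epsilon)$, a finite union of closed sets, contradicting $y\in\ol{B}(\sigma,c-\epsilon)$. Hence some $x\in\ol{B}\subseteq A$ satisfies $\partial_\sigma(x,y)\le c-\epsilon<c$, so $y\in\rmB_{\partial_\sigma}(A,c)$. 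I would replace your appeal to \eqref{Eq:Basic_Fat_partial} with this argument: as written, your proof outsources to an unproved remark the only step of the proposition that actually uses compactness.
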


\begin{proof}
    The $\subseteq$ direction follows from local compactness. For the reverse, let $B\in \op(X)$ with $\ol{B}\in \rmK(A)$, $\epsilon > 0$, and $y\in \ol{B}(\sigma, c-\epsilon)$. Using compactness and the joins property, there is $x\in \ol{B}$ with $y\in \ol{E}(\sigma, c-\epsilon)$ for every $E\in \op(x, X)$. So $\partial(x, y)< c$. The equality $\ol{\rmB_{\partial_\sigma}(A, c)} = \ol{A}(\sigma, c)$ follows from the join and continuity properties.
\end{proof}

 We can now show that locally-compact $L$-ED fattening spaces are adequate.

\begin{corollary}
    \label{Cor:ED_Adequate}
    Fix $X$ a locally compact  $L$-ED fattening space. Then for each $\sigma\in L$, $\partial_\sigma$ is adequate.
\end{corollary}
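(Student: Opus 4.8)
The plan is to combine Proposition~\ref{Prop:FD} with the $L$-ED hypothesis, reducing adequacy to a statement about the already-established open-compatibility of the $\partial_\sigma$. Recall that adequacy of $\partial_\sigma$ means: for every $A\in \op(X)$ and $c>0$, the set $\rmB_{\partial_\sigma}(A, c)$ is open. By Proposition~\ref{Prop:FD}, since $X$ is locally compact we have
\[
\rmB_{\partial_\sigma}(A, c) = \bigcup\{\ol{B}(\sigma, c-\epsilon): B\in \op(X),\ \ol{B}\in \rmK(A),\ \epsilon > 0\}.
\]
So it suffices to show that each set $\ol{B}(\sigma, c-\epsilon)$ occurring in this union is contained in the interior of $\rmB_{\partial_\sigma}(A,c)$, since then $\rmB_{\partial_\sigma}(A,c)$ is a union of open sets.

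The key step is the following. Fix $B\in \op(X)$ with $\ol{B}\in \rmK(A)$ and $\epsilon>0$. Because $X$ is $L$-ED, Lemma~\ref{Lem:ED_Pseudo} tells us $\partial_\sigma$ is open-compatible; applying open-compatibility to the open set $B$ and the radius $\epsilon/2$ gives $\ol{B}\subseteq \Int(\ol{\rmB_{\partial_\sigma}(B, \epsilon/2)})$. Alternatively, and more directly, $L$-ED applied to $\ol{B}\in\rc(X)$ gives $\ol{B}\subseteq \Int(\ol{B}(\sigma, \epsilon/2))$, and then the triangle inequality property of the fattening system yields $\ol{B}(\sigma,\epsilon/2)(\sigma, c-\epsilon) \subseteq \ol{B}(\sigma, c-\epsilon/2)$. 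Combining, we get
\[
\ol{B}(\sigma, c-\epsilon)\subseteq \ol{B}(\sigma, \epsilon/2)(\sigma, c - \epsilon) \subseteq \Int(\ol{B}(\sigma,\epsilon/2))(\sigma, c-\epsilon) \subseteq \ol{B}(\sigma, c - \epsilon/2),
\]
wait — I need the middle set to actually be open; here the point is that $\ol{B}(\sigma,\epsilon/2)\supseteq \Int(\ol{B}(\sigma,\epsilon/2))\supseteq \ol B$, and the monotonicity of fattening is only in $\rc(X)$. A cleaner route: use that $\ol B(\sigma,c-\epsilon)\subseteq \ol B(\sigma, c-\epsilon/2) = \ol{\rmB_{\partial_\sigma}(B, c-\epsilon/2)}$ by Proposition~\ref{Prop:FD}, and also by \eqref{Eq:Basic_Fat_partial} that $\ol{B}(\sigma, c-\epsilon)\subseteq \rmB_{\partial_\sigma}(B, c-\epsilon/2)$ when $\epsilon/2 < \epsilon$, i.e.\ $\ol B(\sigma, c-\epsilon)\subseteq \rmB_{\partial_\sigma}(B,c-\epsilon/2)$. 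Now I claim $\rmB_{\partial_\sigma}(B, c-\epsilon/2)$ is contained in $\rmB_{\partial_\sigma}(A,c)$ and, moreover, is open — the openness is exactly what we are trying to prove, so this is circular unless handled carefully.

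The way out of the circularity is to prove the union formula directly gives an open set by showing each $y\in\rmB_{\partial_\sigma}(A,c)$ has an open neighborhood inside $\rmB_{\partial_\sigma}(A,c)$. Given such $y$, pick $x\in \ol B\subseteq A$ (with $\ol B\in\rmK(A)$) and $\epsilon>0$ as in Proposition~\ref{Prop:FD} so that $y\in\ol B(\sigma, c-\epsilon)$. By $L$-ED applied to $\ol B(\sigma, c-\epsilon)\in \rc(X)$, the set $U:=\Int(\ol{B}(\sigma, c-\epsilon)(\sigma,\epsilon/2))$ is a \emph{genuine} open set, and $y\in\ol{B}(\sigma,c-\epsilon)\subseteq U$. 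Every $z\in U$ satisfies $z\in \ol{B}(\sigma, c-\epsilon)(\sigma,\epsilon/2)\subseteq \ol B(\sigma, c-\epsilon/2)$ by the triangle inequality, and then by \eqref{Eq:Basic_Fat_partial}, $z\in \rmB_{\partial_\sigma}(B, c) \subseteq \rmB_{\partial_\sigma}(A,c)$ since $B\subseteq A$. Hence $U$ is an open neighborhood of $y$ inside $\rmB_{\partial_\sigma}(A,c)$, so $\rmB_{\partial_\sigma}(A,c)$ is open, i.e.\ $\partial_\sigma$ is adequate. The main obstacle to watch for is precisely this circularity trap — one must extract openness from the $L$-ED condition (which \emph{a priori} produces interiors, hence genuinely open sets) rather than assuming adequacy along the way; once the neighborhood $U$ is built as an honest interior, the monotonicity, triangle-inequality, and basic containment \eqref{Eq:Basic_Fat_partial} properties close the argument routinely.
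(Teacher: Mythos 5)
Your final paragraph is correct and is essentially the paper's own argument: the paper likewise combines Proposition~\ref{Prop:FD}, the $L$-ED property, the triangle inequality, and \eqref{Eq:Basic_Fat_partial} to rewrite $\rmB_{\partial_\sigma}(A,c)$ as the union of the open sets $\Int\bigl(\ol{B}(\sigma, c-\epsilon)\bigr)$ over $\ol{B}\in\rmK(A)$ and $\epsilon>0$. The earlier false starts and the circularity discussion can simply be deleted; the closing argument stands on its own and matches the paper's proof.
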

\begin{proof}
    Let $A\in \op(X)$, $c> 0$, and $\epsilon > 0$. Then by Proposition~\ref{Prop:FD}, \eqref{Eq:Basic_Fat_partial} and $L$-ED, we have
    $$\rmB_{\partial_\sigma}(A, c) = \bigcup\left\{\Int\left(\ol{B}(\sigma, c-\epsilon)\right): B\in \op(X) \text{ with } \ol{B}\in \rmK(A), \epsilon > 0\right\}.$$
\end{proof}

When $X$ is compact and topologically ED, Proposition \ref{Prop:FD} gives an explicit description of the $\partial_\sigma$ balls.
\begin{corollary}
    \label{Prop:Fattening_PM_CED}
    Suppose $X$ is a compact ED $L$-fattening space. Then for each $\sigma\in L$,  $A\in \rc(X) = \rm{clop}(X)$ and $c> 0$, we have that:
    $$\rmB_{\partial_\sigma}(A, c) = \bigcup_{\epsilon > 0} A(\sigma, c-\epsilon).$$
\end{corollary}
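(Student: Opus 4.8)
The plan is to derive the statement directly from Proposition~\ref{Prop:FD} and the basic sandwich~\eqref{Eq:Basic_Fat_partial}. First I would note that a compact space is locally compact, so Proposition~\ref{Prop:FD} applies to $X$, and that the ED hypothesis gives $\rc(X) = \rm{clop}(X)$, so the set $A$ is clopen; in particular $A\in \op(X)$ and $\ol{A} = A$. Both facts are needed below.

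For the inclusion $\bigcup_{\epsilon > 0} A(\sigma, c-\epsilon)\subseteq \rmB_{\partial_\sigma}(A, c)$, I would fix $\epsilon\in (0, c)$ and apply the right-hand containment of~\eqref{Eq:Basic_Fat_partial} with $c-\epsilon$ in the role of ``$c$'' and $\epsilon$ in the role of ``$\epsilon$'', using $A\in \op(X)$ and $\ol A = A$, to get
\[
A(\sigma, c-\epsilon) = \ol{A}(\sigma, c-\epsilon)\subseteq \rmB_{\partial_\sigma}(A, (c-\epsilon)+\epsilon) = \rmB_{\partial_\sigma}(A, c);
\]
unioning over $\epsilon\in (0,c)$ then finishes this direction. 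For the reverse inclusion I would apply Proposition~\ref{Prop:FD} to the open set $A$, obtaining
\[
\rmB_{\partial_\sigma}(A, c) = \bigcup\bigl\{\ol{B}(\sigma, c-\epsilon): B\in \op(X),\ \ol{B}\in \rmK(A),\ \epsilon > 0\bigr\},
\]
and then observe that each such $\ol B$ is a regular closed set with $\ol B\subseteq A = \ol A$, hence $\ol B\le A$ in the Boolean algebra $\rc(X)$; the monotonicity axiom of the fattening system (in the $\rc(X)$ coordinate) then gives $\ol{B}(\sigma, c-\epsilon)\subseteq A(\sigma, c-\epsilon)$, so $\rmB_{\partial_\sigma}(A, c)\subseteq \bigcup_{\epsilon > 0} A(\sigma, c-\epsilon)$. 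Combining the two inclusions yields the claimed equality.

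I do not expect a genuine obstacle here. The only points requiring care are the bookkeeping of the two slack parameters (the $\epsilon$ inside $A(\sigma, c-\epsilon)$ versus the $\epsilon$ in~\eqref{Eq:Basic_Fat_partial}) and keeping track of exactly where the ED hypothesis is used: it is precisely what makes $A$ both open — so that Proposition~\ref{Prop:FD} and~\eqref{Eq:Basic_Fat_partial} may be applied to it — and equal to its own closure — so that the regular-closed sets $\ol B$ appearing in Proposition~\ref{Prop:FD} satisfy $\ol B\le A$ in $\rc(X)$.
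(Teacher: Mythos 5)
Your proof is correct and follows exactly the route the paper intends: the corollary is stated without proof immediately after Proposition~\ref{Prop:FD} precisely because, once ED gives $\rc(X)=\rm{clop}(X)$ so that $A$ is clopen, the inclusion $\subseteq$ is Proposition~\ref{Prop:FD} plus monotonicity of the fattening, and $\supseteq$ is immediate from~\eqref{Eq:Basic_Fat_partial} (or, equivalently, from taking $B=A$ in Proposition~\ref{Prop:FD}). Your bookkeeping of the slack parameters and of where the ED hypothesis enters is accurate.
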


The prototypical example of a compact ED fattening space is the Gleason cover of a fattening space. 

\begin{defin}
    Given an $L$-fattening space $X$, we endow $\rm{Gl}(X)$ of the $L$-fattening system inherited from $X$ by Stone duality. We call such $L$-fattening space the \emph{Gleason cover} of $X$. 
\end{defin}

\subsection{Near Ultrafilters}

Our goal now is to define the space of \emph{near ultrafilters} on a fattening space $X$, denoted $\NU(X)$. This will be defined by forming a closed equivalence relation $\sim$ on $\gl(X)$ and setting $\NU(X):= \gl(X)/\sim$. Given $Y\subseteq \gl(X)$ a $\sim$-class, the union $\bigcup Y\subseteq \rc(X)$ will be a combinatorial object that we call a \emph{near ultrafilter} on $X$, and we will see that we can work with these objects in similar ways as ultrafilters.

Indeed, by Corollary~\ref{Cor:ED_Adequate} we can form the adequate lsc pseudometrics $\partial_\sigma^{\rm{Gl}(X)} = \partial_\sigma$. Let $\sim$ be the equivalence relation on $\gl(X)$ given by $p\sim q$ iff $\partial_\sigma(p, q) = 0$ for every $\sigma\in L$. This is a closed equivalence relation, and we equip $\gl(X)/{\sim}$ with the quotient topology. We analyze the quotient map $\psi_X \colon \gl(X)\to \gl(X)/{\sim}$ in more detail, giving us a way to view members of $\gl(X)/{\sim}$ as ultrafilter-like objects. 

First a technical lemma, where we observe that the $\partial_\sigma$-distance between two compact sets can be computed using almost the same formula. 

\begin{lemma}
    \label{Lem:FD_Compacts}
    Suppose $X$ is an $L$-fattening space, let $\sigma \in L$, and $P, Q\in \rmK(X)$, and fix $c\geq 0$. Then 
    $$\partial_\sigma(P, Q)\leq c \Leftrightarrow \forall A, B\in \op(X) ((A\supseteq P \text{ and } B\supseteq Q)\rightarrow (\ol{A}(\sigma, c+\epsilon)\wedge \ol{B}\neq \emptyset)).$$
\end{lemma}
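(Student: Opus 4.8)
The plan is to prove both directions directly from the pointwise definition of $\partial_\sigma$ in Definition~\ref{Def:Fattening_SPM}, using compactness of $P$ and $Q$ only for the harder direction. Throughout I read $\partial_\sigma(P,Q)\le c$ as $\inf\{\partial_\sigma(x,y):x\in P,\ y\in Q\}\le c$, and I read the displayed right-hand side as quantified by $\forall\epsilon>0$ (matching the convention in Definition~\ref{Def:Fattening_SPM}). Observe also that nothing below uses the triangle inequality, so the statement is valid for arbitrary $L$-fattening spaces, not just $L$-ED ones.

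For the forward implication, suppose $\partial_\sigma(P,Q)\le c$, and fix $\epsilon>0$ and open sets $A\supseteq P$, $B\supseteq Q$. Since the infimum is at most $c<c+\epsilon/2$, there are $x\in P$ and $y\in Q$ with $\partial_\sigma(x,y)\le c+\epsilon/2$; as $A\in\op(x,X)$ and $B\in\op(y,X)$, applying the forward direction of the equivalence in Definition~\ref{Def:Fattening_SPM} with parameter $c+\epsilon/2$ and slack $\epsilon/2$ gives $\ol A(\sigma,c+\epsilon)\wedge\ol B\ne\emptyset$. This direction needs no compactness.

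For the converse I argue by contraposition: assuming $\partial_\sigma(P,Q)>c$, I produce $\epsilon>0$ and open $A\supseteq P$, $B\supseteq Q$ with $\ol A(\sigma,c+\epsilon)\wedge\ol B=\emptyset$. First, every $(x,y)\in P\times Q$ has $\partial_\sigma(x,y)>c$, so negating Definition~\ref{Def:Fattening_SPM} yields $\delta_{x,y}>0$ and opens $A_{x,y}\in\op(x,X)$, $B_{x,y}\in\op(y,X)$ with $\ol{A_{x,y}}(\sigma,c+\delta_{x,y})\wedge\ol{B_{x,y}}=\emptyset$. Now I patch twice. Fix $y\in Q$; the $A_{x,y}$ for $x\in P$ cover $P$, so choose a finite subcover $A_{x_1,y},\dots,A_{x_n,y}$ and set $A^y:=\bigcup_iA_{x_i,y}$, $B^y:=\bigcap_iB_{x_i,y}$, $\delta^y:=\min_i\delta_{x_i,y}>0$. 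By the Joins property of Definition~\ref{Def:Fattening} we have $\ol{A^y}(\sigma,c+\delta^y)=\bigvee_i\ol{A_{x_i,y}}(\sigma,c+\delta^y)$; since $\ol{B^y}\le\ol{B_{x_i,y}}$ and, by Monotonicity, $\ol{A_{x_i,y}}(\sigma,c+\delta^y)\le\ol{A_{x_i,y}}(\sigma,c+\delta_{x_i,y})$, distributing the meet over this finite join in the Boolean algebra $\rc(X)$ gives $\ol{A^y}(\sigma,c+\delta^y)\wedge\ol{B^y}=\emptyset$, with $A^y\supseteq P$ open and $B^y\ni y$ open. Next, the $B^y$ for $y\in Q$ cover $Q$; choose a finite subcover $B^{y_1},\dots,B^{y_m}$ and set $B:=\bigcup_jB^{y_j}\supseteq Q$, $A:=\bigcap_jA^{y_j}\supseteq P$, $\epsilon:=\min_j\delta^{y_j}>0$. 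Using $\ol A\le\ol{A^{y_j}}$ together with Monotonicity and the same distributivity argument, $\ol A(\sigma,c+\epsilon)\wedge\ol B=\emptyset$, contradicting the displayed condition; hence $\partial_\sigma(P,Q)\le c$.

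The only step requiring genuine care — and the crux of the argument — is this double compactness patching: one must take minima of the fattening radii at each stage and invoke the Joins and Monotonicity axioms so that the finitely many local apartness witnesses assemble into a single pair $(A,B)$ of neighbourhoods of $P$ and $Q$. Everything else is routine unwinding of definitions.
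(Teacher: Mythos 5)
Your proof is correct, but it runs the argument in the opposite direction from the paper's. The paper argues directly from the right-hand side: it first handles the case $P=\{x\}$ by observing that the closed set $\ol{A}(\sigma,c+\epsilon)$ meets $\ol{B}$ for every open $B\supseteq Q$ and hence meets the compact set $Q$ itself, then uses the finite intersection property (the family $\{\ol{A}(\sigma,c+\epsilon)\cap Q\}$ is downward directed as $A$ shrinks and $\epsilon\to 0$) to extract a single $y\in Q$ with $\partial_\sigma(x,y)\le c$, and finally bootstraps to general $P$ by running the same extraction once more and invoking the singleton case with the roles of the two sets swapped. You instead argue by contraposition, using the open-cover form of compactness twice to patch the pointwise apartness witnesses $(A_{x,y},B_{x,y},\delta_{x,y})$ into a single pair of neighbourhoods of $P$ and $Q$; this forces you to invoke the Joins and Monotonicity axioms and distributivity of $\rc(X)$ explicitly, where the paper's FIP route gets by with the second (one-sided) form of the definition of $\partial_\sigma$ and needs no algebra in $\rc(X)$ at all. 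The two arguments are dual executions of the same double-compactness idea; yours is longer but makes the dependence on the fattening axioms completely explicit, while the paper's is shorter and isolates a reusable singleton case. One stylistic remark: since you set up a contraposition, the closing phrase ``contradicting the displayed condition'' is out of place --- you have simply exhibited a witness to the failure of the right-hand side, which is exactly what the contrapositive requires.
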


\begin{proof}
    The $\Rightarrow$ implication is clear, so assume the right hand side. We first prove that $\partial_\sigma(P, Q)\leq c$ assuming that $P = \{x\}$ for some $x\in X$. Since $\ol{A}(\sigma, c+\epsilon)\cap B \neq \emptyset$ for every open $B\supseteq Q$, we have $\ol{A}(\sigma, c+\epsilon)\cap Q\neq \emptyset$. As $Q$ is compact, there is some $y\in Q$ with $y\in \ol{A}(\sigma, c+\epsilon)$ for every $\epsilon > 0$, i.e.\ $\partial_\sigma(x, y)\leq c$.

    Now let $P\in \rmK(X)$ be arbitrary. Mimicking the above argument, find $y\in Q$ with $y\in \ol{A}(\sigma, c+\epsilon)$ for every open $A\supseteq P$. By the above result, $\partial_\sigma(y, P)\leq c$.
\end{proof}

Recall the notation introduced at the beginning of \Cref{Subsection:Stone-and-Gelfand}.

\begin{lemma}
\label{Lem:NU_Prelims}
    Let $X$ be an $L$-fattening space and let $Y\neq Z\subseteq \gl(X)$ be $\sim$-classes. 
    \begin{enumerate}[label=\normalfont(\arabic*)]
        \item 
        For any $A\in \rc(X)$, $\sigma\in L$, and $c> 0$, if $\rmC_A\cap Y\neq \emptyset$, then $Y\subseteq \rmC_{A(\sigma, c)}$.
        \item 
        For any $A\in \rc(X)$ with $\rmC_A\cap Y = \emptyset$, then there are $\sigma\in L$ and $c> 0$ with $Y\cap \rmC_{A(\sigma, c)} = \emptyset$.
        \item
        There are $A, B\in \rc(X)$, $\sigma\in L$, and $c> 0$ with $Y\subseteq \rmC_A$, $Z\subseteq \rmC_B$, and $A(\sigma, c)\wedge B = \emptyset$.
    \end{enumerate}
\end{lemma}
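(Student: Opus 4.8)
I would work entirely inside the compact, extremally disconnected space $\gl(X) = \st(\rc(X))$, using the Stone-dual identification of $\rc(X)$ with $\clop(\gl(X))$ under which $A\leftrightarrow \rmC_A$ and — by the very way $\gl(X)$ was made into a fattening space — the fattening transports: $\ol{\rmC_A} = \rmC_A$ and $\rmC_A(\sigma, c) = \rmC_{A(\sigma, c)}$. Since $\gl(X)$ is compact and (topologically, hence $L$-) ED, each $\partial_\sigma := \partial_\sigma^{\gl(X)}$ is an adequate lsc pseudometric (Corollary~\ref{Cor:ED_Adequate}, Lemma~\ref{Lem:ED_Pseudo}); in particular $\partial_\sigma$-fattenings of open sets are open, $\partial_\sigma$ obeys the triangle inequality, and $\sigma \le \tau$ forces $\partial_\sigma \le \partial_\tau$ (from the monotonicity axiom in the $L$-coordinate). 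The inequality I will use repeatedly is \eqref{Eq:Basic_Fat_partial} applied to the clopen set $\rmC_A$, which in this notation reads
\[
\rmB_{\partial_\sigma}(\rmC_A, c)\ \subseteq\ \rmC_{A(\sigma, c)}\ \subseteq\ \rmB_{\partial_\sigma}(\rmC_A, c + \epsilon) \qquad (\sigma\in L,\ c,\epsilon > 0).
\]
I also record that every $\sim$-class $W$ is compact, being $\bigcap_{\sigma\in L}\{q\in \gl(X): \partial_\sigma(p, q)\le 0\}$ for any fixed $p\in W$, a closed subset of $\gl(X)$.

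For the first assertion I would pick $p\in \rmC_A\cap Y$; since $Y$ is a $\sim$-class, $\partial_\sigma(p, q) = 0$ for every $q\in Y$, so $q\in \rmB_{\partial_\sigma}(\rmC_A, c)\subseteq \rmC_{A(\sigma, c)}$ by the left inclusion of the display, giving $Y\subseteq \rmC_{A(\sigma, c)}$. For the second assertion I would argue contrapositively: assume $Y\cap \rmC_{A(\sigma, c)}\ne\emptyset$ for all $\sigma\in L$, $c > 0$, and fix $p_0\in Y$. Given $\sigma$ and $c > 0$, a point $q\in Y\cap \rmC_{A(\sigma, c)}$ satisfies $\partial_\sigma(q, \rmC_A)\le c$ (right inclusion of the display, using $\rmC_A$ compact) and $\partial_\sigma(p_0, q) = 0$, so $\partial_\sigma(p_0, \rmC_A)\le c$; letting $c\to 0$ gives $\partial_\sigma(p_0, \rmC_A) = 0$. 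By lsc of $\partial_\sigma$ and compactness of $\rmC_A$, the set $F_\sigma := \{r\in \rmC_A: \partial_\sigma(p_0, r) = 0\}$ is nonempty and closed, and $\{F_\sigma\}_{\sigma\in L}$ is downward directed (if $\rho\ge \sigma,\tau$ then $\partial_\rho\ge \partial_\sigma,\partial_\tau$, so $F_\rho\subseteq F_\sigma\cap F_\tau$); compactness of $\rmC_A$ then yields $r\in \bigcap_\sigma F_\sigma$, i.e.\ $r\sim p_0$ with $r\in\rmC_A$, so $r\in Y\cap\rmC_A$, contradicting the hypothesis.

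For the third assertion, distinctness of the classes gives $p_0\in Y$, $q_0\in Z$ and $\sigma_0\in L$ with $c_0 := \partial_{\sigma_0}(p_0, q_0) > 0$; the triangle inequality and the $\sim$-invariance of $\partial_{\sigma_0}$-distance make $\partial_{\sigma_0}(y, z) = c_0$ for all $y\in Y$, $z\in Z$, whence $\rmB_{\partial_{\sigma_0}}(Y, \eta)\cap \rmB_{\partial_{\sigma_0}}(Z, \eta') = \emptyset$ whenever $\eta + \eta'\le c_0$. Put $\eta := c_0/5$. Adequacy makes $\rmB_{\partial_{\sigma_0}}(Y, \eta)$ and $\rmB_{\partial_{\sigma_0}}(Z, \eta)$ open neighborhoods of the compact sets $Y$ and $Z$, so by zero-dimensionality of $\gl(X)$ I can choose $A, B\in \rc(X)$ with $Y\subseteq \rmC_A\subseteq \rmB_{\partial_{\sigma_0}}(Y, \eta)$ and $Z\subseteq \rmC_B\subseteq \rmB_{\partial_{\sigma_0}}(Z, \eta)$. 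Then the display together with the triangle inequality gives $\rmC_{A(\sigma_0, \eta)}\subseteq \rmB_{\partial_{\sigma_0}}(\rmC_A, 2\eta)\subseteq \rmB_{\partial_{\sigma_0}}(Y, 3\eta)$, which is disjoint from $\rmC_B\subseteq \rmB_{\partial_{\sigma_0}}(Z, \eta)$ since $3\eta + \eta < c_0$; hence $A(\sigma_0, \eta)\wedge B = \emptyset$, and $\sigma := \sigma_0$, $c := \eta$ work.

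The only points requiring care are the radius bookkeeping through \eqref{Eq:Basic_Fat_partial} (one must always concede an extra $\epsilon$ when passing between $\rmC_{A(\sigma, c)}$ and $\partial_\sigma$-balls) and, in the second assertion, the downward-directed intersection argument producing a single $r\in \rmC_A$ that is $\partial_\sigma$-close to $p_0$ for all $\sigma$ simultaneously. The conceptual crux, which makes all of this bookkeeping legitimate, is simply that the fattening on $\gl(X)$ is the Stone transport of the one on $X$ — precisely how $\gl(X)$ was defined as a fattening space — so that $\rmC_A(\sigma, c) = \rmC_{A(\sigma, c)}$ and $\rc(X)=\clop(\gl(X))$ may be used interchangeably.
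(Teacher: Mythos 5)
Items (1) and (2) of your proposal are correct and follow the paper's route. Your contrapositive compactness argument in (2) --- producing, via the downward-directed family $F_\sigma\subseteq\rmC_A$ of closed nonempty sets, a single $r\in\rmC_A$ with $r\sim p_0$ --- is exactly the content the paper compresses into ``by the definition of $\sim$ we can find $\sigma$ with $\partial_\sigma(Y,\rmC_A)>0$,'' so that part is a faithful (and usefully expanded) version of the intended argument.

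Item (3), however, has a genuine gap. You assert that adequacy makes $\rmB_{\partial_{\sigma_0}}(Y,\eta)$ an \emph{open neighborhood} of $Y$ and then squeeze a clopen $\rmC_A$ between $Y$ and this ball. Adequacy (Definition~\ref{Def:Topometric}) only guarantees that $\partial_{\sigma_0}$-fattenings of \emph{open} sets are open, and $Y$ is merely compact. Worse: since every point of $Y$ is at $\partial_{\sigma_0}$-distance $0$ from every other, $\rmB_{\partial_{\sigma_0}}(Y,\eta)=\rmB_{\partial_{\sigma_0}}(p,\eta)$ for any fixed $p\in Y$, and the assertion $p\in\Int(\rmB_{\partial_{\sigma_0}}(p,\eta))$ is precisely $\partial_{\sigma_0}$-compatibility of $p$ --- the property whose possible failure is a central theme of the paper (cf.\ Theorem~\ref{Thm:Rosendal_Minimal}). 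So in general there is no clopen set fitting between $Y$ and $\rmB_{\partial_{\sigma_0}}(Y,\eta)$, and your construction of $A$ and $B$ breaks down. (There is also a trivial issue if $\partial_{\sigma_0}(p_0,q_0)=\infty$; cap $c_0$ at $1$ before dividing.) The intended derivation of (3) is from (2) and needs no metric estimates on the classes themselves: since $Y$ and $Z$ are disjoint compact subsets of the compact zero-dimensional space $\gl(X)$, choose $D\in\rc(X)$ with $Y\subseteq\rmC_D$ and $Z\cap\rmC_D=\emptyset$; item (2) applied to $Z$ and $D$ yields $\sigma\in L$ and $c>0$ with $Z\cap\rmC_{D(\sigma,c)}=\emptyset$, i.e.\ $Z\subseteq\rmC_{\neg D(\sigma,c)}$. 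Now $A:=D$ and $B:=\neg D(\sigma,c)$ work, since $A(\sigma,c)\wedge B=D(\sigma,c)\wedge\neg D(\sigma,c)=\emptyset$ in the Boolean algebra $\rc(X)$.
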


\begin{proof}
    For the first item, if $\rmC_A\cap Y\neq \emptyset$, we have $\partial_\sigma(y, \rmC_A) = 0$. Hence $y\in \rmC_{A(\sigma, c)} = \ol{\rmB_{\partial_\sigma}(\rmC_A, c)}$. For the second item, by the defintion of $\sim$, we can find $\sigma\in L$ with $\partial_\sigma(Y, \rmC_A) = 2c > 0$, then use Lemma~\ref{Lem:FD_Compacts}. The third item then follows.
\end{proof}

\begin{defin}
    Let $X$ be an $L$-fattening space. 
    Say that $p\subseteq \rc(X)$ \emph{has finite $L$-near intersections} if for every $F\in \fin{p}$ $\sigma\in L$ and $c> 0$, we have $\bigwedge_{A\in F} A(\sigma, c)\neq \emptyset$. An $L$-\emph{near ultrafilter over $X$} is a subset of $\rc(X)$ which is maximal with respect to having finite $L$-near intersections. Let $\NU(X)$ denote the set of near ultrafilters over $X$. 
    
    If $A\in \op(X)$ and $p\in \NU(X)$, we often abuse notation and write $A\in p$ to mean $\ol{A}\in p$, and we sometimes view $p\subseteq \op(X)$ accordingly.
\end{defin}

\begin{prop}
\label{Prop:NU_Equals_Quotient}
    The near ultrafilters over $X$ are exactly of the form $\bigcup Y = \{A\in \rc(X): \rmC_A\cap Y\neq \emptyset\}$, where $Y\subseteq \gl(X)$ is a $\sim$-class. 
\end{prop}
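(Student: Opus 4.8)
The plan is to show the two containments, each direction corresponding to one inclusion in the claimed equality $\NU(X) = \{\,\bigcup Y : Y \subseteq \gl(X) \text{ a } \sim\text{-class}\,\}$. First I would verify that for a $\sim$-class $Y$, the set $p_Y := \bigcup Y = \{A\in \rc(X) : \rmC_A \cap Y \neq \emptyset\}$ has finite $L$-near intersections. Given $A_1, \dots, A_n \in p_Y$, pick $q \in Y$ witnessing membership, so $q \in \rmC_{A_i}$ for each $i$ — wait, that need not hold for a single $q$, since different $A_i$ may meet $Y$ at different points. The fix is Lemma~\ref{Lem:NU_Prelims}(1): if $\rmC_{A_i} \cap Y \neq \emptyset$ then $Y \subseteq \rmC_{A_i(\sigma, c)}$ for every $\sigma \in L$, $c > 0$. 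Hence for any fixed $\sigma, c$ and any point $q \in Y$ (nonempty since it is a $\sim$-class), we have $q \in \rmC_{A_i(\sigma, c)}$ for all $i$, i.e.\ $q \in \bigcap_i \rmC_{A_i(\sigma,c)} = \rmC_{\bigwedge_i A_i(\sigma,c)}$ in the Boolean algebra $\rm{clop}(\gl(X)) = \rc(X)$; in particular $\bigwedge_i A_i(\sigma,c) \neq \emptyset$. So $p_Y$ has finite $L$-near intersections.

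Next I would show $p_Y$ is \emph{maximal} with this property. Suppose $B \in \rc(X)$, $B \notin p_Y$, so $\rmC_B \cap Y = \emptyset$. By Lemma~\ref{Lem:NU_Prelims}(2) there are $\sigma \in L$, $c > 0$ with $Y \cap \rmC_{B(\sigma, c)} = \emptyset$, equivalently $Y \subseteq \rmC_{\neg B(\sigma,c)}$, so $\neg B(\sigma,c) \in p_Y$. Then $B$ and $\neg B(\sigma, c)$ are together incompatible with having finite $L$-near intersections: indeed $B(\sigma, c/2) \wedge (\neg B(\sigma,c))(\sigma, c/2) \subseteq B(\sigma,c) \wedge \neg B(\sigma, c) = \emptyset$ by the triangle inequality and the definition of $\neg$ in the Boolean algebra. (Here one uses monotonicity: $(\neg B(\sigma,c))(\sigma,c/2) \subseteq$ something disjoint from $B(\sigma,c/2)$ — more precisely, if $B(\sigma,c/2)$ and $(\neg B(\sigma,c))(\sigma,c/2)$ met, then by symmetry and triangle inequality $B(\sigma, c) \wedge \neg B(\sigma,c) \neq \emptyset$, a contradiction.) Hence $p_Y \cup \{B\}$ fails to have finite $L$-near intersections, so $p_Y$ is maximal, i.e.\ $p_Y \in \NU(X)$. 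This uses exactly Lemma~\ref{Lem:NU_Prelims} plus the fattening axioms.

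For the converse, let $p \in \NU(X)$ be an arbitrary $L$-near ultrafilter; I must produce a $\sim$-class $Y$ with $p = \bigcup Y$. The natural candidate is $Y := \bigcap_{A \in p} \rmC_A \subseteq \gl(X)$. First, $Y \neq \emptyset$: the family $\{\rmC_A : A \in p\}$ consists of clopen subsets of the compact space $\gl(X)$, and for finitely many $A_1,\dots,A_n \in p$ we have $\bigwedge_i A_i \neq \emptyset$ (take $\sigma$ arbitrary and $c \to 0$ using that $p$ has finite $L$-near intersections and continuity/monotonicity: actually $\bigwedge_i A_i(\sigma,c) \neq \emptyset$ for all $c$, and one wants $\bigwedge_i A_i \neq \emptyset$ — this requires a compactness argument, e.g.\ the sets $\rmC_{A_i(\sigma,c)}$ are closed and decrease to $\rmC_{\bigwedge_i A_i}$ as $c \to 0$ via the continuity axiom $A(\sigma,c) = \bigvee_\epsilon A(\sigma,c-\epsilon)$, hence by compactness $\bigcap_{i,c} \rmC_{A_i(\sigma,c)} \neq \emptyset$ and this set is contained in $\bigcap_i \rmC_{A_i}$ after intersecting over... hmm). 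So $\bigcap_{A\in p}\rmC_A \neq \emptyset$ by the finite intersection property in $\gl(X)$. Then I claim $Y$ is contained in a single $\sim$-class: if $q, q' \in Y$ but $q \not\sim q'$, Lemma~\ref{Lem:NU_Prelims}(3) gives $A, B \in \rc(X)$ and $\sigma, c$ with $q \in \rmC_A$, $q' \in \rmC_B$, $A(\sigma,c)\wedge B = \emptyset$. One then argues $A \in p$ and $B \in p$ (using maximality of $p$: if $A \notin p$ there is $C \in p$ with $C, A$ having no finite $L$-near intersection, contradicting $q \in \rmC_A \cap \rmC_C$ and the argument of the first paragraph applied to $\{q\}$) — but $A, B \in p$ with $A(\sigma,c) \wedge B = \emptyset$ contradicts finite $L$-near intersections of $p$. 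So $Y$ lies in one $\sim$-class $\hat Y$. Finally, $p \subseteq \bigcup Y \subseteq \bigcup \hat Y$ is clear from $Y = \bigcap_{A\in p}\rmC_A$, and $\bigcup \hat Y \in \NU(X)$ by the forward direction, so by maximality of $p$ we get $p = \bigcup \hat Y$, and in particular $Y = \hat Y$ (else $\bigcup Y \subsetneq \bigcup \hat Y$ would both need to equal $p$... one checks $\bigcup$ is injective on $\sim$-classes from item (3)).

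The main obstacle I anticipate is the careful bookkeeping in the non-emptiness and "single $\sim$-class" arguments: one must pass correctly between the combinatorial near-ultrafilter condition $\bigwedge_{A\in F} A(\sigma,c) \neq \emptyset$ and topological compactness in $\gl(X) = \st(\rc(X))$, using the continuity axiom to get honest intersections $\bigwedge A_i \neq\emptyset$ in the limit, and invoking Lemma~\ref{Lem:NU_Prelims} at each step to "fatten" between a point of $\gl(X)$ and the sets it contains. The fattening-space axioms (symmetry, triangle inequality, joins, continuity) are each used, but none in a deep way; the content is entirely in organizing the quantifier alternations correctly.
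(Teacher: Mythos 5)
Your forward direction (a set of the form $\bigcup Y$ has finite $L$-near intersections and is maximal) is correct and is exactly what the paper dismisses as ``immediate from Lemma~\ref{Lem:NU_Prelims}''; the use of symmetry and the triangle inequality to show that $B$ and $\neg B(\sigma,c)$ cannot coexist is the right argument for maximality. The converse, however, has a genuine gap at precisely the point you flagged with ``hmm'': your candidate $Y := \bigcap_{A\in p}\rmC_A$ can be \emph{empty}. A near ultrafilter is only required to have finite \emph{near} intersections, and this does not imply honest finite intersections $\bigwedge_{A\in F}A\neq\emptyset$. The continuity axiom $A(\sigma,c)=\bigvee_{\epsilon>0}A(\sigma,c-\epsilon)$ is continuity from below in $c$; it does not give $\bigwedge_{\sigma,c}A(\sigma,c)=A$ (that intersection is $\neg A^{*}$ in the notation introduced after Lemma~\ref{Lem:NUBasic}, and it is in general strictly larger than $A$). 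Concretely, in $X=\bbR$ with its metric fattening, take $A=\bigcup_n[2n,2n+1]$ and $B=\bigcup_n[2n+1+\tfrac1n,\,2n+2-\tfrac1n]$: then $A\wedge B=\emptyset$ but $A(\sigma,c)\wedge B\neq\emptyset$ for every $\sigma,c$, so $\{A,B\}$ extends to a near ultrafilter $p$ with $\rmC_A\cap\rmC_B=\rmC_{A\wedge B}=\emptyset$, hence $\bigcap_{C\in p}\rmC_C=\emptyset$. Your subsequent steps (single $\sim$-class via Lemma~\ref{Lem:NU_Prelims}(3), maximality to upgrade $\subseteq$ to $=$) are sound but rest on this empty set.

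The repair is to ask for a single $\sim$-\emph{class} meeting every $\rmC_A$, not a single \emph{point} lying in every $\rmC_A$; this is what the paper does. One shows that the family $\{\psi_X[\rmC_A]:A\in p\}$ of compact subsets of the (compact Hausdorff) quotient $\gl(X)/{\sim}$ has the finite intersection property: by Lemma~\ref{Lem:NU_Prelims}, a class $Y$ lies in $\psi_X[\rmC_A]$ iff $Y\cap\rmC_{A(\sigma,c)}\neq\emptyset$ for all $\sigma,c$, and for $F\in\fin{p}$ the sets $\bigcap_{A\in F}\rmC_{A(\sigma,c)}$ are nonempty closed sets (this is exactly finite near intersections), downward directed in $(\sigma,c)$ by directedness of $L$ and monotonicity, so compactness of $\gl(X)$ produces a point whose class witnesses the intersection. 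Any $Y$ in the total intersection then satisfies $p\subseteq\bigcup Y$, and your own maximality argument finishes. So the overall architecture of your proof is right, but the object you intersect must live in the quotient rather than in $\gl(X)$ itself.
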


\begin{proof}
    That sets of this form are near ultrafilters is immediate from Lemma~\ref{Lem:NU_Prelims}. Conversely, suppose $p\subseteq \rc(X)$ is a near ultrafilter over $X$. Recall that $\psi_X \colon \gl(X)\to \gl(X)/{\sim}$ denotes the quotient map. 
    We claim that $\{\psi_X[\rmC_A]: A\in p\}$ has the finite intersection property. To see this, consider $F\in \fin{p}$, and observe using Lemma~\ref{Lem:NU_Prelims} that 
    \begin{align*}
    \bigcap_{A\in F} \psi_X[\rmC_A] = \{Y\in \gl(X)/{\sim}: \forall A\in F\, \forall \sigma\in L\, \forall c> 0\,\, \rmC_{A(\sigma, c)}\cap Y\neq \emptyset\}
    \end{align*}
    As $p$ has finite near intersections and as $L$ is directed, the right hand side is non-empty.
    
    Now letting $Y\in \bigcap_{A\in p} \psi_X[\rmC_A]$, we have $p\subseteq \{A\in \rc(X): \rmC_A\cap Y\neq \emptyset\}$.
\end{proof}

\begin{corollary}
    $\NU(X) \cong \gl(X)/{\sim}$. Going forward, we write $\psi_X\colon \gl(X)\to \NU(X)$ for the quotient map and equip $\NU(X)$ with the compact quotient topology. 
\end{corollary}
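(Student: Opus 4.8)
The plan is to assemble the corollary from Proposition~\ref{Prop:NU_Equals_Quotient} together with standard facts about quotient topologies. The content of Proposition~\ref{Prop:NU_Equals_Quotient} is that the map $Y \mapsto \bigcup Y = \{A\in\rc(X): \rmC_A\cap Y\neq\emptyset\}$ is a bijection from $\gl(X)/{\sim}$ onto $\NU(X)$: surjectivity and well-definedness are exactly what Proposition~\ref{Prop:NU_Equals_Quotient} states (every near ultrafilter arises this way), and injectivity is immediate from Lemma~\ref{Lem:NU_Prelims}(3), since two distinct $\sim$-classes $Y\neq Z$ can be separated by regular closed sets $A,B$ with $A(\sigma,c)\wedge B=\emptyset$, whence $A\in\bigcup Y$ but $A\notin\bigcup Z$ (as $\rmC_A\cap Z=\emptyset$ would force, by Lemma~\ref{Lem:NU_Prelims}(1) applied to $Z\subseteq\rmC_B$, that $A\wedge B(\sigma,c)\neq\emptyset$, contradicting symmetry of the fattening applied to $A(\sigma,c)\wedge B=\emptyset$). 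So first I would record this bijection explicitly.

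Next I would simply \emph{transport} the topology: since $\gl(X)$ is compact (it is a Stone space) and $\sim$ is a closed equivalence relation, $\gl(X)/{\sim}$ is compact Hausdorff in the quotient topology, and we declare $\NU(X)$ to carry the unique topology making the above bijection a homeomorphism. Concretely, one takes $\psi_X\colon\gl(X)\to\NU(X)$ to be the composite of the canonical quotient $\gl(X)\to\gl(X)/{\sim}$ with this bijection, and gives $\NU(X)$ the final topology with respect to $\psi_X$ — equivalently, $U\subseteq\NU(X)$ is open iff $\psi_X^{-1}[U]$ is open in $\gl(X)$. Because the bijection is, by construction, a homeomorphism onto the quotient, $\NU(X)$ is compact (Hausdorff, in fact, granting $L$-continuity, though the statement only claims compactness), and $\psi_X$ is a continuous closed surjection.

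There is essentially no obstacle here: the corollary is a bookkeeping consequence of the preceding proposition and lemma, and the only thing to be careful about is that the displayed formula for $\bigcup Y$ agrees on the nose with the set-theoretic union of the $\sim$-class $Y$ viewed as a collection of ultrafilters on $\rc(X)$ — i.e.\ that for a $\sim$-class $Y$, $\{A : \rmC_A\cap Y\neq\emptyset\} = \bigcup_{p\in Y} p$, which holds because each $p\in Y$ is an ultrafilter on the Boolean algebra $\rc(X)$ and $p\in\rmC_A \iff A\in p$. I would state the proof in one or two sentences: "By Proposition~\ref{Prop:NU_Equals_Quotient}, $Y\mapsto\bigcup Y$ is a well-defined surjection $\gl(X)/{\sim}\to\NU(X)$, and it is injective by Lemma~\ref{Lem:NU_Prelims}; since $\sim$ is closed and $\gl(X)$ compact, $\gl(X)/{\sim}$ is a compact space, so transporting the topology along this bijection makes $\NU(X)$ compact, and we let $\psi_X\colon\gl(X)\to\NU(X)$ denote the resulting quotient map."
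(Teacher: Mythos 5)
Your proof is correct and matches the paper's approach: the corollary is stated as an immediate consequence of Proposition~\ref{Prop:NU_Equals_Quotient} (with injectivity supplied by Lemma~\ref{Lem:NU_Prelims}), and your unpacking of the bijection $Y\mapsto \bigcup Y$ together with the transported quotient topology is exactly the intended bookkeeping. One typo only: in your injectivity parenthetical, ``$\rmC_A\cap Z=\emptyset$ would force'' should read ``$\rmC_A\cap Z\neq\emptyset$ would force''; the surrounding argument makes clear this is what you meant.
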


Given $Y\subseteq \rm{Gl}(X)$ a $\sim$-class, we let $\hat{Y}\in \NU(X)$ denote the corresponding near ultrafilter, and given $p\in \NU(X)$, we let $\hat{p}\subseteq \gl(X)$ denote the corresponding $\sim$-class. 
Given $A\in \rc(X)$, we write $\rmC_A^X  = \{p\in \NU(X): A\in \hat p\}$ and $\rmN_A^X = \NU(X)\setminus \rmC_A^X$. Later, when it is clear that we are discussing $\NU(X)$ and not $\gl(X)$, we will often omit the $X$ superscripts; however, for the remainder of this subsection, we will maintain them. We collect some observations about near ultrafilters and the topology on $\NU(X)$.

\begin{lemma}\label{Lem:NUBasic}    
    Let $X$ be an $L$-fattening space. Fix $p\in \NU(X)$, $k< \omega$ and $A, A_i\in \rc(X)$ $(i < k)$.
    \begin{enumerate}[label=\normalfont(\arabic*)]
        
        \item\label{Item:Not_Lem:NUBasic} 
        If $A\not\in p$, then there are $\sigma\in L$ and $c> 0$ with $A(\sigma, c)\not\in p$.
        \item\label{Item:PR_Lem:NUBasic}
        If $A\in p$ and $A = \bigvee_{i< k} A_i$, then there is $i < k$ with $A_i\in p$.
        \item\label{Item:Filter_Lem:NUBasic}
        If $A, A_0,..., A_{k-1}\in p$, $\sigma\in L$, and $c> 0$, then $A\wedge \bigwedge_{i< k} A_i(\sigma, c)\in p$ and $\bigwedge_{i< k} A_i(\sigma, c)\in [p]_{\rm{fat}}$.
        \item 
        $\{\rmN_B^X: B\in \rc(X)\}$ is a base for the  topology on $\NU(X)$.
        \item \label{Item:Base_Lem:NUBasic}
        For $p\in \NU(X)$, a base of not-necessarily-open neighborhoods of $p$ is given by $\{\rmC_B^X: B\in [p]_{\rm{fat}}\}$.
        \item \label{Item:Fiber_Lem:NUBasic}
        $[p]_{\rm{fat}} = \{B\in \rc(X): \hat{p}\subseteq \rmC_B\}$.
    \end{enumerate}
\end{lemma}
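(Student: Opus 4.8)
The plan is to dispatch the items roughly in the order $(4),(1),(2),(3),(5),(6)$, first recording the description of the topology and then bootstrapping the rest from Lemma~\ref{Lem:NU_Prelims} and the axioms of a fattening system. Throughout I will use Proposition~\ref{Prop:NU_Equals_Quotient} freely, so that for $A\in\rc(X)$, ``$A\in p$'' means $\rmC_A\cap\hat p\ne\emptyset$; that $\rc(X)\cong\rm{clop}(\gl(X))$, so finite meets/joins and $\neg$ in $\rc(X)$ correspond to $\cap,\cup$ and complement of clopen subsets of $\gl(X)$; and that $\sim$ is a closed equivalence relation on the compact space $\gl(X)$. The one preliminary I would isolate is that every $p\in\NU(X)$ is \emph{upward closed} in $\rc(X)$ (if $A\in p$ and $A\le B$ then $\rmC_B\cap\hat p\supseteq\rmC_A\cap\hat p\ne\emptyset$; alternatively this is immediate from maximality and monotonicity of the fattening system).

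For item (4): for $B\in\rc(X)$ the set $\psi_X^{-1}[\rmC_B^X]$ is the $\sim$-saturation of the compact clopen $\rmC_B\subseteq\gl(X)$, hence compact, hence closed, so $\rmN_B^X$ is open; and if $V\ni p$ is open then $\gl(X)\setminus\psi_X^{-1}[V]$ is a closed $\sim$-saturated set disjoint from the closed set $\hat p$, so (compact Hausdorff zero-dimensional) there is a clopen $\rmC_B$ separating them, giving $p\in\rmN_B^X\subseteq V$. Item (1) is just a restatement of Lemma~\ref{Lem:NU_Prelims}(2): $A\notin p$ means $\rmC_A\cap\hat p=\emptyset$, and that lemma produces $\sigma,c$ with $\rmC_{A(\sigma,c)}\cap\hat p=\emptyset$. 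Item (2) follows since $\rmC_{\bigvee_{i<k}A_i}=\bigcup_{i<k}\rmC_{A_i}$ for finite joins, so $\rmC_A\cap\hat p\ne\emptyset$ forces $\rmC_{A_i}\cap\hat p\ne\emptyset$ for some $i$. For item (5): if $B=A(\sigma,c)$ with $A\in p$, then by Lemma~\ref{Lem:NU_Prelims}(1) $\hat p\subseteq\rmC_B$, so $\neg B\notin p$, so $p\in\rmN_{\neg B}^X$ (open, by item (4)), and $\rmN_{\neg B}^X\subseteq\rmC_B^X$ because $B\vee\neg B=1$ together with item (2) shows every $q$ with $\neg B\notin q$ has $B\in q$; conversely, given a basic neighborhood $\rmN_E^X\ni p$ (so $E\notin p$), item (1) gives $\sigma,c$ with $E(\sigma,c)\notin p$, hence $A:=\neg(E(\sigma,c))\in p$, and by the Symmetry axiom $A(\sigma,c)\wedge E=\emptyset$, so $A(\sigma,c)$ and $E$ are apart, no near ultrafilter contains both, and $\rmC_{A(\sigma,c)}^X\subseteq\rmN_E^X$ with $p\in\rmC_{A(\sigma,c)}^X$.

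For item (3), the first assertion is a direct computation in $\gl(X)$: by Lemma~\ref{Lem:NU_Prelims}(1), each $A_i\in p$ gives $\hat p\subseteq\rmC_{A_i(\sigma,c)}$, hence $\hat p\subseteq\bigcap_i\rmC_{A_i(\sigma,c)}=\rmC_{\bigwedge_i A_i(\sigma,c)}$; intersecting with $\rmC_A\cap\hat p\ne\emptyset$ gives $A\wedge\bigwedge_i A_i(\sigma,c)\in p$, and upward closure gives $\bigwedge_i A_i(\sigma,c)\in p$. To see $\bigwedge_i A_i(\sigma,c)\in[p]_{\rm fat}$, set $B:=\bigwedge_i A_i(\sigma,c)$ and $A':=\neg((\neg B)(\sigma,c/2))$. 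Using De Morgan in the complete Boolean algebra $\rc(X)$ and the Joins axiom, $(\neg B)(\sigma,c/2)=\bigvee_i(\neg A_i(\sigma,c))(\sigma,c/2)$; by Symmetry each disjunct satisfies $(\neg A_i(\sigma,c))(\sigma,c/2)\le\neg A_i(\sigma,c/2)$, so lies outside $p$ because $\hat p\subseteq\rmC_{A_i(\sigma,c/2)}$; by item (2) (primality) the join is outside $p$, whence $\hat p\subseteq\rmN_{(\neg B)(\sigma,c/2)}=\rmC_{A'}$, so $A'\in p$. By Symmetry $A'(\sigma,c/2)\le B$, and the reverse inclusion $B\le A'(\sigma,c/2)$ — i.e.\ $B$ being \emph{$(\sigma,c/2)$-regular} — uses that $B$ is a finite meet of genuine $(\sigma,c)$-fattenings together with the triangle and continuity axioms. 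Item (6): the inclusion $[p]_{\rm fat}\subseteq\{B:\hat p\subseteq\rmC_B\}$ is exactly Lemma~\ref{Lem:NU_Prelims}(1). Conversely, given $B$ with $\hat p\subseteq\rmC_B$, the compact class $\hat p$ is disjoint from the compact set $\rmC_{\neg B}$; fixing $q_0\in\hat p$, each $r\in\rmC_{\neg B}$ has $\partial_{\sigma_r}(q_0,r)>0$ for some $\sigma_r$, and by lower semicontinuity of $\partial_{\sigma_r}$ and compactness one extracts (using directedness of $L$ so that $\partial_\sigma$ dominates the finitely many $\partial_{\sigma_r}$) a single $\sigma$ and $c>0$ with $\{s:\partial_\sigma(q_0,s)\le 2c\}\subseteq\rmC_B$; since all points of $\hat p$ are at $\partial_\sigma$-distance $0$ from $q_0$, Corollary~\ref{Prop:Fattening_PM_CED} (or Proposition~\ref{Prop:FD}) turns this into $(\neg B)(\sigma,c)\notin p$, so $A:=\neg((\neg B)(\sigma,c))\in p$, $A(\sigma,c)\le B$ by Symmetry, and one again concludes $B=A(\sigma,c)$ via $(\sigma,c)$-regularity of $B$.

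The routine content is items (1), (2), (4), (5) and the first halves of (3), (6), which reduce cleanly to Lemma~\ref{Lem:NU_Prelims}, primality, and the Symmetry/Joins axioms. The hard part — and the step I would most need to be careful with — is the regularity claim that lets one upgrade ``there is a fattening of a $p$-element below $B$'' to ``$B$ itself is a fattening of a $p$-element'': that a finite meet of $(\sigma,c)$-fattenings, and more generally any $B$ caught between $A(\sigma,c)$ (with $A\in p$) and its own $(\sigma,c)$-regularization, actually equals that regularization. This is the one place the interplay of the triangle-inequality, continuity, and symmetry axioms (and, for (6), local compactness of $\gl(X)$ via Proposition~\ref{Prop:FD}) is genuinely used; if that identity turns out to require an extra hypothesis, the honest fallback is that $[p]_{\rm fat}$ and $\{B:\hat p\subseteq\rmC_B\}$ are at least mutually cofinal, which is all items (4)–(5) actually consume.
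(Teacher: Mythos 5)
Your route is the same as the paper's — the paper's entire proof of this lemma reads ``Immediate; the first and third items use Lemma~\ref{Lem:NU_Prelims}'' — and your executions of items \ref{Item:Not_Lem:NUBasic}, \ref{Item:PR_Lem:NUBasic}, (4), and the first clause of \ref{Item:Filter_Lem:NUBasic} via the Stone picture in $\gl(X)$ are correct. One quantitative slip in item \ref{Item:Base_Lem:NUBasic}: from $A=\neg E(\sigma,c)$ you correctly get $A(\sigma,c)\wedge E=\emptyset$, but mere disjointness of two regular closed sets does not prevent a near ultrafilter from containing both (only apartness does), so $\rmC^X_{A(\sigma,c)}\subseteq \rmN^X_E$ does not follow as written. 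Replace $A(\sigma,c)$ by $A(\sigma,c')$ for any $0<c'<c$: if $q$ contained both $A(\sigma,c')$ and $E$, then Lemma~\ref{Lem:NU_Prelims}, symmetry, and the triangle inequality would give $\neg E(\sigma,c)\wedge E(\sigma,c'+2d)\neq\emptyset$ for all $d>0$, which fails once $c'+2d\le c$.

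The regularity step you flag in items \ref{Item:Filter_Lem:NUBasic} and \ref{Item:Fiber_Lem:NUBasic} is not merely delicate: it does not follow from the fattening axioms, so the fallback you name is the statement you should actually prove. Take $X=\bbR^2$ with $L=\{\sigma\}$ a singleton and $A(\sigma,c)$ the closed Euclidean $c$-fattening (a legitimate adequate, $L$-ED fattening space); let $A_0,A_1$ be the closed unit disks centered at $(0,0)$ and $(2,0)$ and let $p$ be the near ultrafilter at their common point $(1,0)$, so $A_0,A_1\in p$. Then $B:=A_0(\sigma,c)\wedge A_1(\sigma,c)$ is a lens with two corners of interior angle strictly less than $\pi$, while any Euclidean fattening $C(\sigma,d)$ contains a closed $d$-ball through each of its points and hence has no such corner; so $B\notin[p]_{\rm{fat}}$ even though $B$ is a finite meet of fattenings of members of $p$ and $\hat p\subseteq\rmC_B$. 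Thus the literal equalities in \ref{Item:Filter_Lem:NUBasic} and \ref{Item:Fiber_Lem:NUBasic} can fail for general $L$-fattening spaces (in the rich settings the paper actually deploys them — uniform spaces with $L$ all bounded uniformly continuous pseudometrics, $G$-spaces with $L=\sn(G)$ — one can sometimes manufacture a custom $\sigma$ realizing a given $B$ as a fattening, but that is a separate argument, not an immediate consequence of the axioms). What your computations do establish, and what every downstream use consumes (item \ref{Item:Base_Lem:NUBasic} and the identification $\partial_\sigma^{\NU(X)}=\partial_\sigma^{\psi_X}$), is mutual cofinality: $[p]_{\rm{fat}}\subseteq\{B:\hat p\subseteq\rmC_B\}$, and every $B$ with $\hat p\subseteq\rmC_B$ lies above some $A(\sigma,c)$ with $A\in p$. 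State and prove that version.
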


\begin{proof}
    Immediate; the first and third items use Lemma~\ref{Lem:NU_Prelims}.
\end{proof}


The next proposition discusses what members of $\rc(\NU(X))$ look like. 
The following definition will be helpful: Given an $L$-fattening space $X$ and $A\in \rc(X)$, the \emph{strong $L$-complement} of $A$ is 
$$A^*:= \bigvee \{B\in \rc(X): A\perp B\} = \neg \bigwedge_{\sigma, c} A(\sigma, c).$$
The collection of regular closed sets of the form $\bigvee_{A\in S} A^*$ for some  $S\subseteq \rc(X)$, is denoted by $[\rc(X)]_{\rm{thin}}$.

\begin{prop}
    \label{Prop:NU_Fattening}
    Fix an $L$-fattening space $X$ and $A \in \rm{rc}(X)$. 
    \begin{enumerate}[label=\normalfont(\arabic*)]
         \item\label{Item:Image_Prop:NUF} 
        $\psi_X[\rmC_A] = \rmC_A^X$
        \item\label{Item:Joins_Prop:NUF} 
        If $\{A_i: i\in I\}\subseteq \rc(X)$ and $\bigvee_{i\in I} A_i = A$, then $\bigvee_{i\in I} \rmC_{A_i}^X = \rmC_A^X$. 
        \item\label{Item:Open_Prop:NUF}
        $(\psi_X)^{-1}[\rmN_A^X] = \bigcup \{\rmC_B: A\perp B\}$, and $\ol{\rmN_A^X} = \rmC_{A^*}^X$. 
        \item\label{Item:NU_RC:NUF}
        $\rc(\NU(X)) = \{\rmC_{B}^X: B \in [\rc(X)]_{\rm{thin}}\}$
    \end{enumerate}
\end{prop}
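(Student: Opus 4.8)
The plan is to establish the four items in order, deducing \ref{Item:NU_RC:NUF} from \ref{Item:Image_Prop:NUF}--\ref{Item:Open_Prop:NUF}. Part~\ref{Item:Image_Prop:NUF} is pure bookkeeping: unwinding the identification $\NU(X)\cong\gl(X)/{\sim}$, a point $q\in\gl(X)$ lies in $\rmC_A$ iff $A\in q$, and $\psi_X(q)$ is the near ultrafilter $\{D\in\rc(X):\rmC_D\cap[q]_\sim\ne\emptyset\}$; hence $p\in\psi_X[\rmC_A]$ iff some $q\in\hat p$ has $A\in q$ iff $\rmC_A\cap\hat p\ne\emptyset$ iff $p\in\rmC_A^X$. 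For \ref{Item:Joins_Prop:NUF} I would first recall the standard Stone-space fact that in $\gl(X)=\st(\rc(X))$, if $A=\bigvee_i A_i$ in $\rc(X)$ then $\rmC_A=\ol{\bigcup_i\rmC_{A_i}}$ (for density: a nonempty basic clopen $\rmC_b\subseteq\rmC_A$ disjoint from every $\rmC_{A_i}$ would satisfy $b\le A$ and $b\le\neg A_i$ for all $i$, hence $b\le A\wedge\neg A=0$). Since $\psi_X\colon\gl(X)\to\NU(X)$ is a continuous surjection of compact Hausdorff spaces it commutes with closure, so part~\ref{Item:Image_Prop:NUF} yields $\rmC_A^X=\psi_X[\rmC_A]=\psi_X[\ol{\bigcup_i\rmC_{A_i}}]=\ol{\bigcup_i\psi_X[\rmC_{A_i}]}=\ol{\bigcup_i\rmC_{A_i}^X}=\bigvee_i\rmC_{A_i}^X$.

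For \ref{Item:Open_Prop:NUF}, by part~\ref{Item:Image_Prop:NUF} we have $q\in(\psi_X)^{-1}[\rmN_A^X]$ iff $A\notin\psi_X(q)$ iff $\rmC_A\cap[q]_\sim=\emptyset$. If the latter holds, Lemma~\ref{Lem:NU_Prelims}(2) supplies $\sigma,c$ with $[q]_\sim\cap\rmC_{A(\sigma,c)}=\emptyset$, so $\neg A(\sigma,c)\in q$ and $A\perp\neg A(\sigma,c)$, whence $q\in\bigcup\{\rmC_B:A\perp B\}$. Conversely, if $q\in\rmC_B$ with $A(\sigma_0,c_0)\wedge B=\emptyset$ but some $q'\in\rmC_A\cap[q]_\sim$ existed, then Lemma~\ref{Lem:NU_Prelims}(1), applied to $\rmC_A\cap[q]_\sim\ni q'$, would give $[q]_\sim\subseteq\rmC_{A(\sigma_0,c_0)}$, so $A(\sigma_0,c_0),B\in q$ and $A(\sigma_0,c_0)\wedge B\ne\emptyset$, a contradiction. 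This proves the first equality. Applying the surjection $\psi_X$ and part~\ref{Item:Image_Prop:NUF} then gives $\rmN_A^X=\bigcup\{\rmC_B^X:A\perp B\}$, and taking closures and invoking part~\ref{Item:Joins_Prop:NUF} with the family $\{B\in\rc(X):A\perp B\}$, whose join in $\rc(X)$ is $A^*$ by definition, yields $\ol{\rmN_A^X}=\rmC_{A^*}^X$.

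For \ref{Item:NU_RC:NUF} I would use two elementary facts: the closure of an open set is regular closed, and the closure of a union of regular closed sets is regular closed. For the inclusion $\supseteq$: if $B=\bigvee_{A\in S}A^*$, then part~\ref{Item:Joins_Prop:NUF} gives $\rmC_B^X=\ol{\bigcup_{A\in S}\rmC_{A^*}^X}$, where each $\rmC_{A^*}^X=\ol{\rmN_A^X}$ is regular closed by part~\ref{Item:Open_Prop:NUF} (recall $\rmN_A^X$ is open, by Lemma~\ref{Lem:NUBasic}), so $\rmC_B^X\in\rc(\NU(X))$. For $\subseteq$: given $F\in\rc(\NU(X))$, since $\{\rmN_C^X:C\in\rc(X)\}$ is a base for $\NU(X)$ (Lemma~\ref{Lem:NUBasic}), setting $S=\{C\in\rc(X):\rmN_C^X\subseteq F\}$ gives $\Int(F)=\bigcup_{C\in S}\rmN_C^X$, so $F=\ol{\Int F}=\ol{\bigcup_{C\in S}\ol{\rmN_C^X}}=\ol{\bigcup_{C\in S}\rmC_{C^*}^X}=\rmC_{\bigvee_{C\in S}C^*}^X$ by parts~\ref{Item:Open_Prop:NUF} and~\ref{Item:Joins_Prop:NUF}, and $\bigvee_{C\in S}C^*\in[\rc(X)]_{\rm{thin}}$ by definition.

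The only genuinely non-formal step is \ref{Item:Open_Prop:NUF}: one needs that $\sim$-equivalence of $q,q'$ forces, for any $B\in q$ witnessing $A\perp B$ with parameters $(\sigma_0,c_0)$, that $A(\sigma_0,c_0)$ already belongs to $q$ — i.e.\ that apartness is detected at a single point of a $\sim$-class. This is exactly what Lemma~\ref{Lem:NU_Prelims}(1)--(2) package, so once those are invoked correctly, the rest is bookkeeping together with the two general-topology facts above; the remaining care is simply tracking which of $\rmC_A$, $\rmC_A^X$, $\rmN_A^X$ lives in $\gl(X)$ versus $\NU(X)$ and applying the compactness/closure lemmas cleanly.
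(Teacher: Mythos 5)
Your proof is correct and follows essentially the same route as the paper's: item~\ref{Item:Image_Prop:NUF} by unwinding the quotient description of near ultrafilters, item~\ref{Item:Joins_Prop:NUF} from the Stone-duality density fact together with the fact that $\psi_X$, being a continuous map of compact Hausdorff spaces, commutes with closures, item~\ref{Item:Open_Prop:NUF} from Lemma~\ref{Lem:NU_Prelims}, and item~\ref{Item:NU_RC:NUF} by combining the first three. The only difference is that you spell out details the paper compresses (its ``by compactness'' for $\ol{\rmN_A^X}=\rmC_{A^*}^X$ and its one-line dismissal of item~\ref{Item:NU_RC:NUF}), which is welcome but not a different argument.
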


\begin{proof}
    \ref{Item:Image_Prop:NUF}:
    Clearly $\psi_X[\rmC_A]\subseteq \rmC_A^X$. Conversely, given $p\in \rmC_A^X$, then $\{A\}\cup [p]_{\rm{fat}}$ has the finite intersection property, so can be extended to $q\in \gl(X)$. 
    \vspace{3 mm}
    
    \noindent
    \ref{Item:Joins_Prop:NUF}:
    $\rmC_A^X = \psi_X[\rmC_A] = \psi_X\left[\ol{\bigcup_{i\in I} \rmC_{A_i}}\right]= \ol{\psi_X\left[\bigcup_{i\in I} \rmC_{A_i}\right]} = \ol{\bigcup_{i\in I} \rmC_{A_i}^X} = \bigvee_{i\in I} \rmC_{A_i}^X$.
    \vspace{3 mm}
    
    \noindent
    \ref{Item:Open_Prop:NUF}:
    The containment $(\psi_X)^{-1}[\rmN_A^X] \supseteq  \bigcup \{\rmC_B: A\perp B\}$ is clear. Conversely, if $q \in \gl(X)$ and $A\not\in \psi_X(q)$, then for some $\sigma\in L$ and $c> 0$, we have $\neg A(\sigma, c)\in \psi_X(q)$, implying $\neg A(\sigma, c/2)\in q$. By compactness, this implies $\ol{\rmN_A^X} = \rmC_{A^*}^X$.
    \vspace{3 mm}
    
    \noindent
    \ref{Item:NU_RC:NUF} follows from the previous items. 
\end{proof}

\begin{definition}
    We endow $\NU(X)$ with the $L$-fattening structure given by letting $\rmC_B^X(\sigma, c) := \rmC_{B(\sigma, c)}^X$ for each $B\in [\rc(X)]_{\rm{thin}}$, $\sigma\in L, c>0$.
\end{definition}

\begin{proposition}
    Fix and $L$-fattening space $X$ and consider $\NU(X)$ endowed with the $L$-fattening structure from above. Then for each $\sigma\in L$, 
    $$\partial_\sigma^{\NU(X)} = \partial_\sigma^{\psi_X},$$
    where $\partial_\sigma^{\psi_X}$ is the quotient pseudometric inherited from $\gl(X)$. In particular, $(\NU(X), (\partial_\sigma^{\NU(X)})_\sigma)$ is an adequate $L$-fattening space.
\end{proposition}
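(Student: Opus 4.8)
The plan is to recognize $(\NU(X),(\partial_\sigma^{\psi_X})_\sigma)$ as an \emph{adequate} $L$-fattening space whose underlying fattening system is exactly the one just placed on $\NU(X)$, and then to invoke the uniqueness of the pseudometrics attached to an adequate fattening space (Lemma~\ref{Lem:FuncFat}). First I would observe that $\psi_X\colon\gl(X)\to\NU(X)$ is subordinate to each $\partial_\sigma := \partial_\sigma^{\gl(X)}$, since $\psi_X(p)=\psi_X(q)$ means $p\sim q$, which forces $\partial_\sigma(p,q)=0$. As $\gl(X)$ is compact and ED, each $\partial_\sigma$ is an adequate (Corollary~\ref{Cor:ED_Adequate}) and open-compatible (Proposition~\ref{Prop:EDOpenComp}) lsc pseudometric, so Proposition~\ref{Prop:Adequate}\ref{Item:Quotient_Prop:Adequate} makes $\partial_\sigma^{\psi_X}$ an adequate, open-compatible lsc pseudometric on $\NU(X)$; moreover $\sigma\mapsto\partial_\sigma^{\psi_X}$ is order-preserving because $\sigma\mapsto\partial_\sigma$ is (larger $\sigma$ yields smaller fattenings, hence larger $\partial_\sigma$). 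Thus $(\NU(X),(\partial_\sigma^{\psi_X})_\sigma)$ is an adequate $L$-fattening space in the sense of Definition~\ref{Def:AdFat}, its fattening of a regular closed set $W$ being $\overline{\rmB_{\partial_\sigma^{\psi_X}}(W,c)}$ (this equals $\overline{\rmB_{\partial_\sigma^{\psi_X}}(A,c)}$ for any open $A$ with $\overline A=W$, by Proposition~\ref{Prop:Adequate}\ref{Item:Dense_Prop:Adequate}).

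Next I would verify that this fattening system coincides with the given one. By Proposition~\ref{Prop:NU_Fattening}\ref{Item:NU_RC:NUF} every element of $\rc(\NU(X))$ has the form $\rmC_B^X$ with $B\in[\rc(X)]_{\rm{thin}}$, so it suffices to show $\overline{\rmB_{\partial_\sigma^{\psi_X}}(\rmC_B^X,c)} = \rmC_{B(\sigma,c)}^X$, the right side being $\rmC_B^X(\sigma,c)$ by the definition of the $L$-fattening structure on $\NU(X)$. I would use the elementary fact that for a surjection $\pi\colon Y\to Z$ subordinate to an lsc pseudometric $\partial$ one has $\rmB_{\partial^\pi}(\pi[K],c)=\pi[\rmB_\partial(K,c)]$ for every $K\subseteq Y$ (immediate from $\partial^\pi(\pi(x),\pi(y))=\partial(x,y)$, which is well-defined by the triangle inequality). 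Applying this with $\pi=\psi_X$ and $K=\rmC_B$, recalling $\psi_X[\rmC_A]=\rmC_A^X$ (Proposition~\ref{Prop:NU_Fattening}\ref{Item:Image_Prop:NUF}) and using Corollary~\ref{Prop:Fattening_PM_CED} in the compact ED space $\gl(X)$ (whose inherited fattening satisfies $\rmC_B(\sigma,r)=\rmC_{B(\sigma,r)}$), I get
\begin{equation*}
\rmB_{\partial_\sigma^{\psi_X}}(\rmC_B^X,c) = \psi_X\bigl[\rmB_{\partial_\sigma}(\rmC_B,c)\bigr] = \psi_X\Bigl[\bigcup_{\epsilon>0}\rmC_{B(\sigma,c-\epsilon)}\Bigr] = \bigcup_{\epsilon>0}\rmC_{B(\sigma,c-\epsilon)}^X.
\end{equation*}
Taking closures, Proposition~\ref{Prop:NU_Fattening}\ref{Item:Joins_Prop:NUF} together with the continuity axiom $\bigvee_{\epsilon>0}B(\sigma,c-\epsilon)=B(\sigma,c)$ of the fattening system on $X$ then gives $\overline{\rmB_{\partial_\sigma^{\psi_X}}(\rmC_B^X,c)} = \bigvee_{\epsilon>0}\rmC_{B(\sigma,c-\epsilon)}^X = \rmC_{B(\sigma,c)}^X$, as required.

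Combining the two previous steps, $(\NU(X),(\partial_\sigma^{\psi_X})_\sigma)$ is an adequate $L$-fattening space whose underlying fattening system is exactly the $L$-fattening structure placed on $\NU(X)$. Lemma~\ref{Lem:FuncFat} then forces the pseudometrics, i.e.\ $\partial_\sigma^{\NU(X)}=\partial_\sigma^{\psi_X}$ for each $\sigma\in L$; and the concluding assertion that $(\NU(X),(\partial_\sigma^{\NU(X)})_\sigma)$ is an adequate $L$-fattening space is then immediate, as it is literally $(\NU(X),(\partial_\sigma^{\psi_X})_\sigma)$.

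The main obstacle will be the computation of $\rmB_{\partial_\sigma^{\psi_X}}(\rmC_B^X,c)$ in the second step; the rest is bookkeeping. It works precisely because three things align: the quotient pseudometric $\partial_\sigma^{\psi_X}$ is literally transported from $\gl(X)$, so $\rmB$-balls push forward along $\psi_X$ (this is exactly where subordination of $\psi_X$ to $\partial_\sigma$ enters); the $\partial_\sigma$-ball of a clopen set in the compact ED space $\gl(X)$ has the explicit shape given by Corollary~\ref{Prop:Fattening_PM_CED}; and $\psi_X$ turns the relevant suprema of $\rmC_\bullet$'s into suprema of $\rmC_\bullet^X$'s (Proposition~\ref{Prop:NU_Fattening}), so the closure can absorb the union over $\epsilon$. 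I would also stay careful that each $\rmC_{B(\sigma,\cdot)}^X$ genuinely is a regular closed subset of $\NU(X)$, so that the identifications take place inside the complete Boolean algebra $\rc(\NU(X))$; this is guaranteed by the definition of the fattening structure on $\NU(X)$ together with Proposition~\ref{Prop:NU_Fattening}.
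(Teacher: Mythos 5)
Your argument is correct, but it takes a genuinely different route from the paper's. The paper proves the identity by a direct two-line computation: it unwinds $\partial_\sigma^{\NU(X)}(p,q)\le c$ via the neighborhood base $\{\rmC_B^X: B\in[p]_{\rm{fat}}\}$ and the identification $[p]_{\rm{fat}}=\{B\in\rc(X):\hat p\subseteq\rmC_B\}$ (Lemma~\ref{Lem:NUBasic}, items \ref{Item:Base_Lem:NUBasic} and \ref{Item:Fiber_Lem:NUBasic}), and unwinds $\partial_\sigma^{\psi_X}(p,q)\le c$ as the $\partial_\sigma^{\gl(X)}$-distance between the compact classes $\hat p,\hat q$ via Lemma~\ref{Lem:FD_Compacts}; both sides reduce to the condition that $A(\sigma,c+\epsilon)\wedge B\neq\emptyset$ for all $A\in p$, $B\in q$, $\epsilon>0$. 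You instead identify the fattening system induced by the quotient pseudometrics $\partial_\sigma^{\psi_X}$ with the one placed on $\NU(X)$ --- the computation $\overline{\rmB_{\partial_\sigma^{\psi_X}}(\rmC_B^X,c)}=\rmC^X_{B(\sigma,c)}$ --- and then appeal to the uniqueness clause of Lemma~\ref{Lem:FuncFat}. Your route is longer but has the virtue of making the ``in particular'' assertion completely explicit: it verifies head-on that the given fattening structure on $\NU(X)$ really is the one induced by the $\partial_\sigma^{\psi_X}$ in the sense of Definition~\ref{Def:AdFat}, something the paper's short proof leaves implicit. One small point to record in your write-up: the pushforward identity $\rmB_{\partial^{\pi}}(\pi[K],c)=\pi[\rmB_{\partial}(K,c)]$ and the formula $\partial^{\pi}(\pi(x),\pi(y))=\partial(x,y)$ both presuppose that $\partial_\sigma^{\gl(X)}$ satisfies the triangle inequality, which is not automatic for the semi-pseudometrics of Definition~\ref{Def:Fattening_SPM}; it should be justified by noting that $\gl(X)$ is $L$-ED and invoking Lemma~\ref{Lem:ED_Pseudo}. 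With that citation added, everything checks out.
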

\begin{proof}
    Given $p, q\in \NU(X)$ and $c\geq 0$, Lemmas~\ref{Lem:FD_Compacts} and \ref{Lem:NUBasic} items \ref{Item:Base_Lem:NUBasic} and \ref{Item:Fiber_Lem:NUBasic} yield that
\begin{align*}
    \partial_\sigma^{\NU(X)}(p, q) \leq c &\Leftrightarrow \forall A\in p\,\forall B\in q\, \forall \epsilon > 0\,\, A(\sigma, c+\epsilon)\wedge B\neq \emptyset \\ &\Leftrightarrow \partial_\sigma^{\psi_X}(p, q)\leq c. \qedhere
\end{align*}

\end{proof}

Note that $\NU(X) \cong \varprojlim \NU(X)/\partial_\sigma$, i.e.\ for $p\neq q\in \NU(X)$, there is $\sigma\in L$ with $\partial_\sigma(p, q)> 0$. 

The last goal of this subsection is to show that the near ultrafilter construction is idempotent, i.e.\ that $\NU(X)$ and $\NU(\NU(X))$ are canonically isomorphic (Corollary~\ref{Cor:NU_Idempotent}). First, using Proposition~\ref{Prop:NU_Fattening}, we can show that $\NU(X)$ is $L$-ED.

\begin{prop}
\label{Prop:NU_Fat_props}
    For any $L$-fattening space $X$, the $L$-fattening space $\NU(X)$ is $L$-ED and $L$-continuous.
\end{prop}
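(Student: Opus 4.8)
The plan is to verify the two properties directly from the definition of the $L$-fattening structure on $\NU(X)$, namely $\rmC_B^X(\sigma, c) = \rmC_{B(\sigma, c)}^X$ for $B\in[\rc(X)]_{\rm{thin}}$, using the structural facts about $\NU(X)$ established just above: Proposition~\ref{Prop:NU_Fattening} (which identifies $\rc(\NU(X))$ with $\{\rmC_B^X : B\in[\rc(X)]_{\rm{thin}}\}$ and computes closures of the sets $\rmN_A^X$), Lemma~\ref{Lem:NUBasic} (especially the basic neighborhood facts and the characterization of $[p]_{\rm{fat}}$), and the fact that $(\NU(X), (\partial_\sigma^{\NU(X)})_\sigma)$ is an adequate $L$-fattening space with $\partial_\sigma^{\NU(X)} = \partial_\sigma^{\psi_X}$. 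By Lemma~\ref{Lem:ED_Pseudo}, $L$-ED is equivalent to each $\partial_\sigma^{\NU(X)}$ being open-compatible, so the real content of $L$-ED is an open-compatibility statement for the quotient pseudometrics.

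First I would handle $L$-ED. By Lemma~\ref{Lem:ED_Pseudo} it suffices to show each $\partial_\sigma^{\NU(X)}$ is open-compatible, and since $\partial_\sigma^{\NU(X)} = \partial_\sigma^{\psi_X}$ is the pushforward of $\partial_\sigma^{\gl(X)}$ along the continuous surjection $\psi_X$, this will follow from Proposition~\ref{Prop:Adequate}\ref{Item:Quotient_Prop:Adequate}: $\psi_X$ is subordinate to $\partial_\sigma^{\gl(X)}$ by the very definition of $\sim$, and $\partial_\sigma^{\gl(X)}$ is open-compatible because $\gl(X)$ is a compact ED space (Proposition~\ref{Prop:EDOpenComp} applied to the Gleason cover, or Lemma~\ref{Lem:ED_Pseudo} together with the fact that $\gl(X)$, being extremally disconnected, is trivially $L$-ED). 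Hence each $\partial_\sigma^{\psi_X}$ is open-compatible, so by Lemma~\ref{Lem:ED_Pseudo} applied to $\NU(X)$ we conclude $\NU(X)$ is $L$-ED. (One should double-check that the fattening structure on $\NU(X)$ induces exactly the pseudometrics $\partial_\sigma^{\NU(X)}$ — but this is precisely the content of the immediately preceding proposition, which may be invoked.)

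Next I would handle $L$-continuity: for every $p\in\NU(X)$ and every open $V\in\op(p,\NU(X))$, I must find $B\in[\rc(p,\NU(X))]_{\rm{fat}}$ with $B\subseteq V$. By Lemma~\ref{Lem:NUBasic}, the sets $\rmN_D^X$ ($D\in\rc(X)$) form a base for the topology on $\NU(X)$, so I may assume $V = \rmN_D^X$ with $p\notin\rmC_D^X$, i.e.\ $D\notin\hat p$. By Lemma~\ref{Lem:NUBasic}\ref{Item:Not_Lem:NUBasic}, there are $\sigma\in L$ and $c>0$ with $D(\sigma,c)\notin\hat p$, and then $\neg D(\sigma,c) \in \hat p$ (since $\hat p$, being a near ultrafilter, contains $\neg D(\sigma,c)$ once $D(\sigma,c)\notin \hat p$ — here one uses that $D(\sigma,c)\vee\neg D(\sigma,c)$ is all of $X$ together with Lemma~\ref{Lem:NUBasic}\ref{Item:PR_Lem:NUBasic}). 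Now the set $E := \neg D(\sigma, c/2)$ lies in $[\rc(X)]_{\rm{thin}}$ (it is $A^*$-like, being a complement of a fattening — more precisely $\rmC_E^X\in\rc(\NU(X))$ by Proposition~\ref{Prop:NU_Fattening}\ref{Item:NU_RC:NUF} since $E$ is a join of strong complements), so $\rmC_E^X(\sigma, c/4) = \rmC_{E(\sigma,c/4)}^X$ is a legitimate fattening; one checks $p\in\rmC_E^X$ and, via the triangle inequality for the fattening system, $\rmC_{E(\sigma, c/4)}^X \subseteq \rmC_{\neg D(\sigma, c/4)}^X \subseteq \rmN_D^X$ since $D\wedge \neg D(\sigma, c/4) = \emptyset$. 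Thus $\rmC_{E(\sigma,c/4)}^X\in[\rc(p,\NU(X))]_{\rm{fat}}$ is contained in $V$, as required. The main obstacle here will be getting the bookkeeping with the various $\sigma$'s and $c$'s exactly right so that the fattened complement genuinely sits inside $\rmN_D^X$ while still being a neighborhood of $p$; the triangle inequality and symmetry axioms of the fattening system, plus the fact that fattening on $\NU(X)$ commutes with $\rmC_{(-)}^X$ by definition, are what make it work.
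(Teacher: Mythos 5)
Your argument for the $L$-ED half is correct but takes a different route from the paper. The paper proves $L$-ED in one line directly from the combinatorics of near ultrafilters: for $B\in[\rc(X)]_{\rm{thin}}$ one has $\rmC_B^X\subseteq \rmN_{\neg B(\sigma,c)}^X\subseteq \Int(\rmC_{B(\sigma,c)}^X)$, the first inclusion because $B$ and $\neg B(\sigma,c)$ are apart, the second by primality. Your detour through Lemma~\ref{Lem:ED_Pseudo} and the open-compatibility of the quotient pseudometrics $\partial_\sigma^{\psi_X}$ (via Proposition~\ref{Prop:EDOpenComp}, Corollary~\ref{Cor:ED_Adequate}, and Proposition~\ref{Prop:Adequate}\ref{Item:Quotient_Prop:Adequate}) is valid and arguably more conceptual, but it leans on the identification $\partial_\sigma^{\NU(X)}=\partial_\sigma^{\psi_X}$ from the preceding proposition, whereas the direct argument needs nothing beyond the definitions.

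The continuity half has a genuine gap at the choice of the set you fatten. The $L$-fattening structure on $\NU(X)$ is only defined on $\rc(\NU(X))=\{\rmC_B^X: B\in[\rc(X)]_{\rm{thin}}\}$, and $L$-continuity requires producing an element of $[\rc(p,\NU(X))]_{\rm{fat}}$, i.e.\ a fattening of the \emph{closure of an open neighborhood of $p$}. Your set $E=\neg D(\sigma,c/2)$ is not, in general, a join of strong complements: one only has the sandwich $(D(\sigma,c/2))^*\subseteq \neg D(\sigma,c/2)\subseteq (D(\sigma,c/4))^*$, and neither inclusion is an equality, so the parenthetical claim that $E\in[\rc(X)]_{\rm{thin}}$ is unjustified and the expression $\rmC_E^X(\sigma,c/4)$ is not a legitimate instance of the fattening on $\NU(X)$; you also verify only $p\in\rmC_E^X$ rather than $\rmC_E^X\in\rc(p,\NU(X))$. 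The repair is exactly what the paper does: take $B=D(\sigma,2c/3)$, so that $\rmN_B^X$ is an open neighborhood of $p$ whose closure is $\rmC_{B^*}^X$ by Proposition~\ref{Prop:NU_Fattening}\ref{Item:Open_Prop:NUF} --- automatically regular closed with $B^*\in[\rc(X)]_{\rm{thin}}$ --- and then check $\ol{\rmN_B^X}(\sigma,c/3)=\rmC_{B^*(\sigma,c/3)}^X\subseteq\rmN_D^X$. Your bookkeeping with the symmetry and triangle-inequality axioms is otherwise fine and transfers verbatim to this corrected choice.
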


\begin{proof}
    To check $L$-ED, fix $B\in [\rc(X)]_{\rm{thin}}$. Fix $\sigma\in L$ and $c> 0$. Now, $\rmC_B^X(\sigma, c) = \rmC_{B(\sigma, c)}^X$, and $\rmC_B^X\subseteq \rmN_{\neg B(\sigma, c)}^X\subseteq \Int(\rmC_{B(\sigma, c)}^X)$. 

     To check continuity, fix $A\in \rc(X)$ and $p\in \NU(X)$ with $p\in \rmN_A^X$. Find $\sigma\in L$ and $c> 0$ with $p\in \rmN_{A(\sigma, c)}^X$. Write $B =  A(\sigma, 2c/3)$. Then $p\in \rmN_B^X$, and using Proposition~\ref{Prop:NU_Fattening}\ref{Item:Open_Prop:NUF} we have $\ol{\rmN_B^X}(\sigma, c/3) = \rmC_{B^*(\sigma, c/3)}^X\subseteq \rmN_A^X$.
\end{proof}

We remind the reader of the conventions on binary relations from Subsection~\ref{Subsection:Notation}. We define
\begin{align*}
    \pi_X &:= \{(p, x)\in \NU(X)\times X: \rc(x, X)\subseteq p\},\\
    \iota_X &:= \pi_X^{-1}.
\end{align*}

We have that $\dom(\pi_X)\subseteq \NU(X)$ is dense, $\im(\pi_X) = X$, and $\pi_X^{-1}[x]$ is closed for every $x\in X$. Given $A\in \rc(X)$, we have $\rmC_A^X\cap \pi_X^{-1}[x]\neq \emptyset$ iff $x\in A(\sigma, c)$ for every $\sigma\in L$ and $c> 0$, and we have $\pi_X^{-1}[x]\subseteq \rmC_A^X$ iff $A(\sigma, c)\in \rc(x, X)$ for every $\sigma\in L$ and $c> 0$.

Any topological space can be seen as a continuous $L$-fattening space by just letting $A(\sigma, c) = A$ for any $A \in \rc(X)$, in which case $\NU(X) = \gl(X)$. The following proposition generalizes a well-known fact in general topology.
\begin{prop}\label{Prop:Fattening_Space_Props}
    Fix an $L$-fattening space $X$.
    \begin{enumerate}[label=\normalfont(\arabic*)]
        \item \label{Item:OC_Prop:FSP}
        $X$ is $L$-ED iff $\iota_X\colon X\to \NU(X)$ is a function, in which case $\iota_X$ is continuous.
        \item \label{Item:Sep_Prop:FSP} 
        If $X$ is $L$-continuous, then $\pi_X\colon \dom(\pi_X)\to X$ is a continuous function. 
        \item \label{Item:Compact_Prop:FSP} 
        If $X$ is compact, then $\dom(\pi_X) = \NU(X)$.
    \end{enumerate}
\end{prop}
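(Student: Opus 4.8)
The plan is to analyse, for each $x\in X$, the fiber $\pi_X^{-1}[x]=\{p\in\NU(X):\rc(x,X)\subseteq p\}$, which is always closed and nonempty (the family $\rc(x,X)$ has finite $L$-near intersections since every member contains $x$ and $A(\sigma,c)\supseteq A$, so it extends to a near ultrafilter). As $\im(\pi_X)=X$ is given, $\iota_X=\pi_X^{-1}$ is a total function precisely when every such fiber is a singleton; and by the identity $\pi_X^{-1}[x]\subseteq\rmC_A^X\iff A(\sigma,c)\in\rc(x,X)$ for all $\sigma,c$ recorded before the statement, the set $p_x:=\{A\in\rc(X):x\in\Int(A(\sigma,c))\text{ for all }\sigma\in L,\ c>0\}$ is contained in every member of $\pi_X^{-1}[x]$. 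So for the forward direction of (1), I would check that when $X$ is $L$-ED, $p_x$ is itself a near ultrafilter, forcing $\pi_X^{-1}[x]=\{p_x\}$: it has finite $L$-near intersections because $\Int\bigl(\bigwedge_iA_i(\sigma,c)\bigr)\ni x$; it contains $\rc(x,X)$ because $L$-ED says exactly $\ol U\subseteq\Int(\ol U(\sigma,c))$; and it is maximal because, given $B\notin p_x$ with witness $(\sigma_0,c_0)$, $L$-ED together with the triangle inequality gives $B(\sigma_0,c_0/2)\subseteq\Int(B(\sigma_0,c_0))$, hence $x\in X\setminus B(\sigma_0,c_0/2)$ so $\neg B(\sigma_0,c_0/2)\in\rc(x,X)\subseteq p_x$, while $B\perp\neg B(\sigma_0,c_0/2)$ (witnessed by $B(\sigma_0,c_0/2)\wedge\neg B(\sigma_0,c_0/2)=\emptyset$, the regular-closed identity $D\wedge\neg D=\emptyset$); a family with finite $L$-near intersections cannot contain two apart sets, by the Symmetry$+$triangle computation already used in Lemma~\ref{Lem:NU_Prelims}.

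For the converse of (1), if $X$ is not $L$-ED I would pick $C\in\rc(X)$, $(\sigma_0,c_0)$ and $x\in C$ with $x\notin\Int(C(\sigma_0,c_0))$, i.e.\ $x\in C\cap\neg C(\sigma_0,c_0)$. Since $C$ and $\neg C(\sigma_0,c_0)$ are regular closed and both contain $x$, each of $\rc(x,X)\cup\{C\}$ and $\rc(x,X)\cup\{\neg C(\sigma_0,c_0)\}$ has finite $L$-near intersections (using that $\Int(\bigwedge_i\ol U_i)$ is a neighborhood of $x$ and $x$ lies in the closure of the interior of a regular closed set) and extends to a near ultrafilter; these two near ultrafilters are distinct because $C\perp\neg C(\sigma_0,c_0)$, so the fiber is not a singleton. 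Finally, assuming $L$-ED, I would verify continuity of $\iota_X$ on the base $\{\rmN_B^X:B\in\rc(X)\}$: $\iota_X^{-1}[\rmN_B^X]=\{x:B\notin p_x\}$, and $L$-ED$+$triangle make ``$B\notin p_x$'' equivalent to ``$x\notin B(\sigma,c)$ for some $\sigma,c$'', so $\iota_X^{-1}[\rmN_B^X]=\bigcup_{\sigma,c}\bigl(X\setminus B(\sigma,c)\bigr)$, a union of open sets.

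For (2), assume $X$ is $L$-continuous. I would first show $\pi_X$ is single-valued: if $\rc(x,X),\rc(x',X)\subseteq p$ with $x\ne x'$, pick by regularity disjoint open $W\ni x$, $W'\ni x'$ with $\ol W\cap\ol{W'}=\emptyset$, use $L$-continuity to get $A\in\rc(x,X)$ and $(\sigma,c)$ with $A(\sigma,c)\subseteq W$; then $A\in p$ and $\ol{W'}\in\rc(x',X)\subseteq p$ are apart, contradicting that $p$ is a near ultrafilter. For continuity, given $A\in\op(X)$ and $p\in\pi_X^{-1}[A]$ with $\pi_X(p)=x$, $L$-continuity yields $D\in\rc(x,X)$ and $(\sigma,c)$ with $D(\sigma,c)\subseteq A$; since $D\in p$ forces $\neg D(\sigma,c)\notin p$, we get $p\in\rmN_{\neg D(\sigma,c)}^X$, and $\rmN_{\neg D(\sigma,c)}^X\cap\dom(\pi_X)\subseteq\pi_X^{-1}[A]$ because if $\rc(y,X)\subseteq q$ and $y\notin A$ then $y\notin D(\sigma,c)$, so $\neg D(\sigma,c)\in\rc(y,X)\subseteq q$.

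For (3), with $X$ compact, the quickest route is to push down from $\gl(X)$: $\psi_X\colon\gl(X)\to\NU(X)$ is a continuous surjection, and for a compact space the Gleason-cover map is total, i.e.\ every ultrafilter $q$ on the Boolean algebra $\rc(X)$ has $\bigcap_{A\in q}A=\{x\}$ for a unique $x\in X$ with $\rc(x,X)\subseteq q$ (standard, from compactness and Hausdorffness). Then $\rc(x,X)\subseteq q\subseteq\psi_X(q)$, so $(\psi_X(q),x)\in\pi_X$, and surjectivity of $\psi_X$ gives $\dom(\pi_X)=\NU(X)$; alternatively, directly, $[p]_{\rm{fat}}$ is a family of nonempty closed subsets of $X$ with the finite intersection property (using $\emptyset\notin p$, directedness of $L$, and Lemma~\ref{Lem:NUBasic}\ref{Item:Filter_Lem:NUBasic}), so compactness yields $x\in\bigcap[p]_{\rm{fat}}$ and a short neighborhood argument gives $\rc(x,X)\subseteq p$. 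I expect the only real friction throughout to be the fattening-algebra bookkeeping --- repeatedly converting ``$A,B$ lie together in a near ultrafilter'' into ``$A\not\perp B$'' and back via Symmetry, the triangle inequality, monotonicity, and $D\wedge\neg D=\emptyset$ --- while the one genuinely new ingredient is the construction, in the converse of (1), of two distinct near ultrafilters over a point where $L$-ED fails.
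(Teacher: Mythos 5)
Your proof is correct and follows essentially the same route as the paper's: the same construction of two distinct near ultrafilters over a point where $L$-ED fails, the same use of $L$-continuity to place a fattened regular closed neighborhood inside a given open set for item (2), and a finite-intersection-property/compactness argument for item (3) (which the paper dismisses as clear). The one cosmetic point is that the paper defines apartness via $A(\sigma,c)\cap B=\emptyset$ rather than $A(\sigma,c)\wedge B=\emptyset$, so your witness for $B\perp\neg B(\sigma_0,c_0/2)$ should be routed through $L$-ED and the triangle inequality (which give $B(\sigma_0,c_0/4)\subseteq\Int\bigl(B(\sigma_0,c_0/2)\bigr)$, hence genuine disjointness from $\neg B(\sigma_0,c_0/2)$) rather than through the Boolean identity $D\wedge\neg D=\emptyset$ alone; the Symmetry-plus-triangle computation you already cite supplies exactly this.
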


\begin{proof}
    \ref{Item:OC_Prop:FSP}: Assume $X$ is $L$-ED. Fix $x\in X$, and suppose $p, q\in \NU(X)$ both satisfied $(x, p), (x, q)\in \iota_X$. Observe that for every $A\in p$, $A_0\in \op(x, X)$, $\sigma\in L$, and $c> 0$ we have $\ol{A_0}\cap A(\sigma, c)\in p$, so non-empty. As $X$ is regular, this implies that $x\in A(\sigma, c)$. Using the triangle inequality and $L$-ED, we have $x\in \rm{Int}(A(\sigma, c))$. Similarly $x\in \rm{Int}(B(\sigma, c))$ for $B\in q$. But this implies $A\parallel B$ for every $A\in p$ and $B\in q$, implying $p = q$. To see $\iota_X$ is continuous, consider $\rmN_A^X$ for some $A\in \rc(X)$. Suppose $x\in X$ satisfies $\iota_X(x):= p\in \rmN_A^X$. Then for some $\sigma\in L$ and $c > 0$, also $p\in \rmN_{A(\sigma, c)}^X$. Using $L$-ED, find $B\in \op(x, X)$ satisfying $B\cap A(\sigma, c/2) = \emptyset$. It follows for every $y\in B$ that $\iota_X(y)\in \rmN_A^X$.  

    Conversely, suppose $X$ is not $L$-ED, witnessed by $A\in \rc(X)$, $\sigma\in L$, and $c> 0$, and $x\in A\setminus \Int(A(\sigma, c))$. Then $\{\neg A(\sigma, c)\}\cup \rc(x, X)$ and $\{A\}\cup \rc(x, X)$ can be extended to $p\neq q\in \NU(X)$ with $(x, p)$, $(x, q)\in \iota_X$. 
    \vspace{3 mm}
   
    \noindent
    \ref{Item:Sep_Prop:FSP}: Let $(p_i)_{i\in I}$ be a net from $\dom(\pi_X)$ with $p_i\to p\in \dom(\pi_X)$. Write $x_i, x$ for $\pi_X(p_i)$ and $\pi_X(p)$, respectively. Given $A\in \op(x, X)$, use the continuity of $X$ to find $B\in \rc(x, X)$, $\sigma\in L$, and $c> 0$ with $B(\sigma, c)\subseteq A$. As $B\in p$, $\rmC_{B(\sigma, c)}^X$ is a neighborhood of $p$, so eventually $p_i\in \rmC_{B(\sigma, c)}^X$. For such 
    $i\in I$, it follows using the continuity of $X$ that $x_i\in B(\sigma, c)\subseteq A$. As $A$ was arbitrary, we have $x_i\to x$ as desired. 
    \vspace{3 mm}

    \noindent
    \ref{Item:Compact_Prop:FSP}: Clear.
\end{proof}

\begin{prop}
    \label{Prop:Distance_X_NUX}
    Given a fattening space $X$ and $x, y\in X$, we have  $\partial_\sigma^X(x, y) = \partial_\sigma^{\NU(X)}(\pi_X^{-1}[x], \pi_X^{-1}[y])$.
\end{prop}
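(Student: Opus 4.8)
# Proof proposal

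The plan is to establish the two inequalities $\partial_\sigma^X(x,y) \le \partial_\sigma^{\NU(X)}(\pi_X^{-1}[x], \pi_X^{-1}[y])$ and $\ge$ separately, exploiting the definitions of $\partial_\sigma$ on a fattening space (Definition~\ref{Def:Fattening_SPM}) together with the description of $\pi_X^{-1}[x]$ via regular-closed neighborhoods given right before the statement, and Lemma~\ref{Lem:FD_Compacts}, which lets us compute $\partial_\sigma$-distances between compact sets. Note that $\pi_X^{-1}[x]$ and $\pi_X^{-1}[y]$ are closed subsets of the compact space $\NU(X)$ (by the remarks following the definition of $\pi_X$, together with Proposition~\ref{Prop:Fattening_Space_Props}\ref{Item:Compact_Prop:FSP} when $X$ is compact — though one should be careful, since $X$ is not assumed compact here; if $X$ is not compact, $\pi_X^{-1}[x]$ is still closed in $\NU(X)$ and hence compact), so Lemma~\ref{Lem:FD_Compacts} applies.

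First I would prove $\ge$, i.e.\ that $\partial_\sigma^{\NU(X)}(\pi_X^{-1}[x], \pi_X^{-1}[y]) \le \partial_\sigma^X(x,y)$. Suppose $\partial_\sigma^X(x,y) \le c$. By Lemma~\ref{Lem:FD_Compacts}, to show $\partial_\sigma^{\NU(X)}(\pi_X^{-1}[x], \pi_X^{-1}[y]) \le c$ it suffices to fix open $\cA \supseteq \pi_X^{-1}[x]$, open $\cB \supseteq \pi_X^{-1}[y]$ in $\NU(X)$ and $\epsilon > 0$, and show $\ol{\cA}(\sigma, c+\epsilon) \wedge \ol{\cB} \ne \emptyset$. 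Using that $\{\rmN_B^X : B \in \rc(X)\}$ is a base for $\NU(X)$ (Lemma~\ref{Lem:NUBasic}) and compactness of $\pi_X^{-1}[x]$, I can shrink: there is $A \in \op(x,X)$ with $\pi_X^{-1}[x] \subseteq \rmN_{\neg A}^X \subseteq \cA$ — more carefully, since $\pi_X^{-1}[x] \subseteq \rmC_{A'}^X$ for every $A'(\sigma,c) \in \rc(x,X)$, I can find a single basic $\rmN$-neighborhood of $\pi_X^{-1}[x]$ sitting inside $\cA$ and expressed via an open set $A \ni x$. Symmetrically get $B \ni y$ for $\cB$. Then $\partial_\sigma^X(x,y)\le c$ gives $\ol{A}(\sigma, c+\epsilon/2) \wedge \ol{B} \ne \emptyset$ in $X$; translating this upward through $\psi_X$ and the definition $\rmC_B^X(\sigma,c) = \rmC_{B(\sigma,c)}^X$, together with Proposition~\ref{Prop:NU_Fattening}, yields the required non-empty meet in $\NU(X)$ after absorbing an extra $\epsilon/2$.

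Next I would prove $\le$, i.e.\ $\partial_\sigma^X(x,y) \le \partial_\sigma^{\NU(X)}(\pi_X^{-1}[x], \pi_X^{-1}[y])$. Suppose $\partial_\sigma^{\NU(X)}(\pi_X^{-1}[x], \pi_X^{-1}[y]) \le c$. Fix $\epsilon > 0$ and $A \in \op(x, X)$; by Definition~\ref{Def:Fattening_SPM} I must show $y \in \ol{A}(\sigma, c+\epsilon)$. Pick any $p \in \pi_X^{-1}[x]$ and $q \in \pi_X^{-1}[y]$; these exist since $\dom(\pi_X) \supseteq$ the relevant fibers (when $X$ is compact this is automatic, otherwise $x, y$ being actual points of $X$ still have non-empty fibers by a Zorn's lemma / near-FIP argument as in Proposition~\ref{Prop:NU_Fattening}\ref{Item:Image_Prop:NUF}). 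Since $\pi_X(p) = x$ we have $\rc(x,X) \subseteq \hat p$ viewed appropriately, so in particular $\ol{A'} \in p$ for suitable $A' \in \rc(x,X)$; and since $\partial_\sigma^{\NU(X)}(p, q) \le \partial_\sigma^{\NU(X)}(\pi_X^{-1}[x],\pi_X^{-1}[y]) \le c$, Lemma~\ref{Lem:FD_Compacts} (or directly the characterization of $\partial_\sigma^{\NU(X)} = \partial_\sigma^{\psi_X}$ proven just above) gives $\ol{A'}(\sigma, c + \epsilon') \wedge \ol{B'} \ne \emptyset$ for all $B' \in q$ and small $\epsilon'$. Since $q \in \pi_X^{-1}[y]$, every regular-closed neighborhood of $y$ lies in $q$, so $y \in \ol{\ol{A'}(\sigma,c+\epsilon')} = \ol{A'}(\sigma, c+\epsilon')$ by regularity; choosing $A'$ small enough that $A'(\sigma,\delta) \subseteq A(\sigma,\delta)$ fails to directly help, so instead I use $\ol{A'} \le \ol{A}$ is wrong in general — rather, I should take $A' $ so that $\ol{A'} \subseteq \ol{A}$, which is possible since $x \in A$ open and $X$ regular, giving $\ol{A'}(\sigma, c+\epsilon') \subseteq \ol{A}(\sigma, c+\epsilon')$ by monotonicity; then $y \in \ol{A}(\sigma, c+\epsilon)$ for $\epsilon' < \epsilon$, as desired.

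The main obstacle I anticipate is bookkeeping the translation between $X$-level fattenings and $\NU(X)$-level fattenings — specifically ensuring the $\epsilon$'s match up when passing through $\psi_X$, the quotient by $\sim$, and the identification $\rmC_B^X(\sigma,c) = \rmC_{B(\sigma,c)}^X$, and handling the non-compact case cleanly (where one must verify $\pi_X^{-1}[x]$ is still a non-empty compact subset of $\NU(X)$ so that Lemma~\ref{Lem:FD_Compacts} is applicable). A secondary subtlety is choosing the right shrinking of a general open neighborhood in $\NU(X)$ of a fiber down to one of the form coming from an open subset of $X$; this is where $L$-continuity of $\NU(X)$ (Proposition~\ref{Prop:NU_Fat_props}) and the base description in Lemma~\ref{Lem:NUBasic} do the work, combined with compactness of the fiber. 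Once these reductions are set up, the core estimate is just the triangle-inequality / symmetry axioms of the fattening system applied once or twice.
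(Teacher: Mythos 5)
Your overall architecture — Lemma~\ref{Lem:FD_Compacts} applied to the compact fibers, the identification of $\partial_\sigma^{\NU(X)}$ with the quotient pseudometric $\partial_\sigma^{\psi_X}$, and the fact that $p\in\pi_X^{-1}[x]$ forces $\rc(x,X)\subseteq \hat{p}$ — is the same machinery the paper uses (its proof is a single biconditional chain rather than two separate inequalities). However, one step in your $\le$ direction is false as written: you pick \emph{arbitrary} $p\in\pi_X^{-1}[x]$ and $q\in\pi_X^{-1}[y]$ and assert $\partial_\sigma^{\NU(X)}(p,q)\le\partial_\sigma^{\NU(X)}(\pi_X^{-1}[x],\pi_X^{-1}[y])$. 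The distance between two sets is an infimum over pairs of points — this is exactly what Lemma~\ref{Lem:FD_Compacts} encodes, since its proof produces \emph{some} $q\in Q$ at distance $\le c$ from $P$, not all of them — so for an arbitrary pair the inequality runs the other way: $\partial_\sigma(p,q)\ge\partial_\sigma(P,Q)$. Two points in the two fibers can be far apart even when the fibers are at distance zero.

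The slip is repairable without changing your strategy: you only use two facts about $p$ and $q$, namely $\partial_\sigma^{\NU(X)}(p,q)\le c$ and $\rc(x,X)\subseteq\hat{p}$, $\rc(y,X)\subseteq\hat{q}$. The latter holds for \emph{every} point of the respective fibers, so it suffices to take $p,q$ to be a pair realizing the set distance, which exists by Lemma~\ref{Lem:FD_Compacts} together with compactness of the fibers and lower semicontinuity. Alternatively you can avoid selecting points altogether, as the paper does: Lemma~\ref{Lem:FD_Compacts} rewrites $\partial_\sigma^{\NU(X)}(\pi_X^{-1}[x],\pi_X^{-1}[y])\le c$ as the statement that for every $\epsilon>0$ and every $A\in\rc(X)$ with $\pi_X^{-1}[x]\subseteq\rmC_A^X$ one has $\rmC_{A(\sigma,c+\epsilon)}^X\cap\pi_X^{-1}[y]\neq\emptyset$ (the passage between the closed sets $\rmC_A^X$ and the open neighborhoods in the lemma being handled by $\rmC_A^X\subseteq\Int(\rmC_{A(\sigma,\epsilon)}^X)$), and the two fiber descriptions stated just before the proposition then convert this directly into $y\in A(\sigma,c+\epsilon)$ for all $A\in\rc(x,X)$ and $\epsilon>0$, i.e.\ $\partial_\sigma^X(x,y)\le c$. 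The remaining bookkeeping you flag — shrinking arbitrary open neighborhoods of a fiber to basic ones and matching the $\epsilon$'s — is real but routine, given that $\{\rmC_A^X: A\in\rc(x,X)\}$ is downward directed with intersection $\pi_X^{-1}[x]$ and that the fattening is monotone.
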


\begin{proof}
    By Lemma~\ref{Lem:FD_Compacts}, and since $p\in \rmC_A^X$ implies $p\in \Int(\rmC_{A(\sigma, \epsilon)}^X)$, the following holds: Given $c> 0$, we have  $\partial_\sigma^{\NU(X)}(\pi_X^{-1}[x], \pi_X^{-1}[y])\leq c$ iff for every $\epsilon > 0$ and every $A\in \rc(X)$ with $\pi_X^{-1}[x]\subseteq \rmC_A^X$, we have $\rmC_{A(\sigma, c+\epsilon)}^X\cap \pi_X^{-1}[y]\neq \emptyset$. This happens iff for every $\epsilon > 0$ and $A\in \rc(x, X)$, we have $y\in A(\sigma, c+\epsilon)$, i.e.\ iff $\partial_\sigma^X(x, y)\leq c$. 
\end{proof}

\begin{corollary}
    \label{Cor:NU_Idempotent}
    Suppose $X$ is an $L$-continuous $L$-ED fattening space. Then $\iota_X\colon X\to \NU(X)$ is a continuous embedding of fattening spaces. 
    If moreover $X$ is compact, then $\iota_X$ is an isomorphism.
    In particular $\NU(Y)\cong \NU(\NU(Y))$, for any $L$-fattening space $Y$.  
\end{corollary}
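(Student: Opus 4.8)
The plan is to assemble the statement entirely from the already-established behaviour of the relations $\pi_X$ and $\iota_X = \pi_X^{-1}$. First I would note that since $X$ is $L$-ED, Proposition~\ref{Prop:Fattening_Space_Props}\ref{Item:OC_Prop:FSP} tells us $\iota_X\colon X\to \NU(X)$ is a well-defined continuous function; unwinding the definition of $\iota_X$, the value $\iota_X(x)$ is the \emph{unique} near ultrafilter $p$ over $X$ with $\rc(x, X)\subseteq p$. Since $X$ is also $L$-continuous, Proposition~\ref{Prop:Fattening_Space_Props}\ref{Item:Sep_Prop:FSP} gives that $\pi_X\colon \dom(\pi_X)\to X$ is a continuous function. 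Because $\rc(x, X)\subseteq \iota_X(x)$ we have $(\iota_X(x), x)\in \pi_X$, so $\im(\iota_X)\subseteq \dom(\pi_X)$, and single-valuedness of $\pi_X$ forces $\pi_X\circ \iota_X = \id_X$. Hence $\iota_X$ is a continuous injection admitting a continuous left inverse (the restriction of $\pi_X$ to $\im(\iota_X)$), so it is a topological embedding.

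To promote this to an embedding of $L$-fattening spaces I would invoke Proposition~\ref{Prop:Distance_X_NUX}: for every $\sigma\in L$ and $x, y\in X$ one has $\partial_\sigma^X(x, y) = \partial_\sigma^{\NU(X)}(\pi_X^{-1}[x], \pi_X^{-1}[y])$, and since $X$ is $L$-ED the fibers $\pi_X^{-1}[x] = \{\iota_X(x)\}$ are singletons, so $\iota_X$ is a $\partial_\sigma$-isometry for each $\sigma$. As $\NU(X)$ is an adequate $L$-fattening space, its fattening system is recovered from the family $(\partial_\sigma^{\NU(X)})_\sigma$ via Lemma~\ref{Lem:FuncFat}, and the $\partial_\sigma$ are open-compatible pseudometrics on both sides by Lemma~\ref{Lem:ED_Pseudo}; this isometry statement is therefore exactly the compatibility of the fattening structures along $\iota_X$, so $\iota_X$ is a fattening-space embedding.

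Now suppose in addition that $X$ is compact. Then by Proposition~\ref{Prop:Fattening_Space_Props}\ref{Item:Compact_Prop:FSP} we have $\dom(\pi_X) = \NU(X)$, so $\pi_X$ is a continuous function on all of $\NU(X)$ with $\pi_X\circ \iota_X = \id_X$. Conversely, for $p\in \NU(X)$ set $x := \pi_X(p)$, so that $\rc(x, X)\subseteq p$; by the uniqueness observed in the first paragraph, $p$ must be $\iota_X(x)$, i.e.\ $\iota_X\circ \pi_X = \id_{\NU(X)}$. Thus $\iota_X$ and $\pi_X$ are mutually inverse continuous bijections, so $\iota_X$ is a homeomorphism, and together with the $\partial_\sigma$-isometry property of the previous paragraph it is an isomorphism of $L$-fattening spaces. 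Finally, for any $L$-fattening space $Y$, the space $\NU(Y)$ is compact (a quotient of the compact Stone space $\gl(Y) = \st(\rc(Y))$) and is $L$-continuous and $L$-ED by Proposition~\ref{Prop:NU_Fat_props}, so applying the compact case just proved with $X := \NU(Y)$ shows $\iota_{\NU(Y)}\colon \NU(Y)\to \NU(\NU(Y))$ is an isomorphism, whence $\NU(Y)\cong \NU(\NU(Y))$.

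The step I expect to need the most care is the second paragraph: making precise that ``embedding of fattening spaces'' is faithfully captured by ``topological embedding together with $\partial_\sigma$-isometry for every $\sigma\in L$''. This rests on the fattening system being reconstructible from the pseudometric family $(\partial_\sigma)_\sigma$ on the relevant spaces, which is available here because $\NU(X)$ is adequate (Lemma~\ref{Lem:FuncFat}) and $X$ is $L$-ED, so that $\iota_X$ being a $\partial_\sigma$-isometry loses no information about the fattenings; without $L$-ED the $\partial_\sigma$ need not even be pseudometrics and the argument would collapse. Everything else is bookkeeping with the properties of $\pi_X$ and $\iota_X$ recorded in Propositions~\ref{Prop:Fattening_Space_Props} and~\ref{Prop:Distance_X_NUX}.
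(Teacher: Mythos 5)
Your argument is correct and follows exactly the route the paper takes: its proof of this corollary is literally the citation of Propositions~\ref{Prop:NU_Fat_props}, \ref{Prop:Fattening_Space_Props}, and \ref{Prop:Distance_X_NUX}, which are precisely the three results you unpack (continuity and single-valuedness of $\iota_X$ and $\pi_X$, totality of $\pi_X$ in the compact case, the $\partial_\sigma$-isometry statement, and the $L$-ED/$L$-continuity of $\NU(Y)$ for the final claim). Your more detailed bookkeeping, including the caveat about what ``embedding of fattening spaces'' should mean, is consistent with the paper's intent.
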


\begin{proof}
    Propositions~\ref{Prop:NU_Fat_props},  \ref{Prop:Fattening_Space_Props}, and \ref{Prop:Distance_X_NUX}.
\end{proof}

\subsection{More on the $L$-uniformity}
\label{Subsection:L-uniformity}

The following definitions and results generalize results from \cite{ZucMHP} and \cite{BassoZucker} about $G$-flows to the general context of fattening spaces. The proof of Theorem~\ref{thm:ED-first-countable} is a mild generalization of the argument of one of the key results in \cite{BYMT}. Nonetheless, it has profound implications for the underlying topology of certain $G$-flows, in particular when $G$ is Polish (see \Cref{cor:Polish-dichotomy} and \Cref{Section:Polish}). 

\begin{defin}
    \label{Def:Topo_Uniform}
     Given an $L$-fattening space $X$, $\sigma \in L$, and $c > 0$, we define the following binary relations on $X$, where given $x\in X$, we have:
     \begin{itemize}
         \item 
        $Q_{\sigma, c}^X[x] = \bigcap\{A(\sigma, c): A\in \rc(X)\text{ with }x\in A\}$
        \item 
         $R_{\sigma, c}^X[x] = \bigcap\{A(\sigma, c): A\in \rc(x, X)\}$.
      \end{itemize}
     In any of the notation, when $X$ is understood, we may omit it.
\end{defin}

We always have $Q_{\sigma, c}\subseteq R_{\sigma, c}$ and $R_{\sigma, c} = (R_{\sigma, c})^{-1}$.  Note that if $x, y\in X$ and $c\geq 0$, we have
$$\partial_\sigma(x, y)\leq c\Leftrightarrow \forall \epsilon > 0\,\, (x, y)\in R_{\sigma, c+\epsilon}.$$

When $X$ is $L$-ED and  $\epsilon > 0$, we have $R_{\sigma, c}\subseteq Q_{\sigma, c+\epsilon}$. 
In particular, if $X$ is a compact $L$-ED space, then the collections $\{Q_{\sigma, c}: \sigma \in L, \; c >0\}$ and $\{R_{\sigma, c}: \sigma \in L, \; c >0\}$ both generate the  $L$-uniformity on $X$.

\begin{defin}
    \label{Def:L_Isolated}
    For $X$ a $L$-fattening space, we say that $x\in X$ is \emph{$L$-isolated} if $x \in \Int_X(Q_{\sigma, c}[x])$ for all $\sigma \in L$ and $c > 0$. When $X$ is $L$-ED, this happens iff $x\in \Int_X(R_{\sigma, c}[x])$ for all $\sigma \in L$ and $c > 0$ iff the $L$-fattening topology and the ordinary topology coincide at $x$. 

    If each $x \in X$ is $L$-isolated, we say that $X$ is \emph{$L$-discrete}. If $X$ is a $L$-ED space, this is equivalent to the compatibility of the $L$-fattening uniformity. 
\end{defin}

Notice that when the fattening is trivial, such points are exactly the isolated points of $X$, hence the choice of terminology.

\Cref{thm:ED-first-countable} generalizes the topological fact that in compact ED spaces, there are no non-trivial convergent sequences, so that the only points of first-countability are isolated. In the proof, we use the following fact, whose proof can be directly adapted from \cite{BassoZucker}*{Corollary~5.7}).

\begin{fact}
\label{Fact:BetaN}
    Fix a compact $L$-ED fattening space.
    If  $\{A_n: n< \omega\}\subseteq \op(X)$ are such that there are $\sigma \in L, c>0$ such that $\ol{A_n}(\sigma, c) \wedge \ol{A_m}(\sigma, c) = \emptyset$ for all $n < m$, then for any $(x_n)_{n< \omega}$ with  $x_n\in A_n$, the continuous map $\phi\colon \beta\omega\to X$ satisfying $\phi(n) = x_n$ for every $n< \omega$ is injective.
\end{fact}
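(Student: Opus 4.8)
The plan is to adapt the classical argument that a compact extremally disconnected space admits no nontrivial convergent sequences: there, a sequence of points with pairwise disjoint open neighbourhoods yields an embedding of $\beta\omega$ because in an ED space disjoint open sets have disjoint closures. Here the role of ``disjoint closures'' will be played by a fattening-flavoured separation built out of the extra room in the hypothesis.

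First I would recall that $\phi\colon\beta\omega\to X$ is the continuous extension of $n\mapsto x_n$ (it exists since $X$ is compact) and that $\phi(p)=\lim_p x_n$ for every $p\in\beta\omega$. To prove injectivity, fix $p\neq q$ in $\beta\omega$ and pick $S\subseteq\omega$ with $S\in p$ and $\omega\setminus S\in q$; then $\phi(p)\in\ol{\{x_n:n\in S\}}$ and $\phi(q)\in\ol{\{x_m:m\in\omega\setminus S\}}$, so it is enough to produce disjoint open sets $O_S$ and $O_{\omega\setminus S}$ containing these two closed sets respectively.

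To build $O_S$, I would spend the fattening radius $c$ in two installments. Set $B_n:=\ol{A_n}(\sigma,c/3)$. By $L$-ED each $x_n\in\ol{A_n}\subseteq\Int(B_n)$, and the triangle and monotonicity axioms together with the hypothesis give $B_n(\sigma,c/3)\wedge B_m(\sigma,c/3)=\emptyset$ for $n\neq m$, hence also $B_n(\sigma,c/3)\wedge B_m=\emptyset$. Put $W_S:=\bigvee_{n\in S}B_n\in\rc(X)$; this is a closed set containing $\{x_n:n\in S\}$, so $\ol{\{x_n:n\in S\}}\subseteq W_S$. Using the joins axiom to rewrite $W_S(\sigma,c/3)=\bigvee_{n\in S}B_n(\sigma,c/3)$, together with the elementary fact that in the complete Boolean algebra $\rc(X)$ one has $C\wedge\bigvee_iD_i\neq\emptyset$ iff $C\wedge D_i\neq\emptyset$ for some $i$, a short computation yields $W_S(\sigma,c/3)\wedge W_{\omega\setminus S}=\emptyset$, i.e.\ $W_S\perp W_{\omega\setminus S}$. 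Finally let $O_S:=\Int(W_S(\sigma,c/6))$; by $L$-ED we have $W_S\subseteq O_S$, and applying the symmetry axiom to transfer a $(\sigma,c/6)$-fattening between the two factors and then the triangle inequality, one gets $W_S(\sigma,c/6)\wedge W_{\omega\setminus S}(\sigma,c/6)=\emptyset$, so $O_S$ and $O_{\omega\setminus S}$ are disjoint. Then $\phi(p)\in\ol{\{x_n:n\in S\}}\subseteq W_S\subseteq O_S$ and symmetrically $\phi(q)\in O_{\omega\setminus S}$, whence $\phi(p)\neq\phi(q)$; since $p\neq q$ were arbitrary, $\phi$ is injective.

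The hard part — and the precise point where the full strength of $L$-ED (rather than an abstract ``apartness'' relation) is used — is the last step: knowing $W_S\perp W_{\omega\setminus S}$ only says the interiors of these two regular closed sets are disjoint, so they could still share a thin common boundary on which $\phi(p)$ and $\phi(q)$ might coincide. Absorbing one more layer of fattening and then passing to interiors, which is legitimate exactly because $X$ is $L$-ED, is what upgrades ``apart'' to ``separated by disjoint open sets.'' The only real bookkeeping is to keep every fattening radius used ($c/3$ to separate the $B_n$, a further $c/6$ to thicken $W_S$ into an open set) strictly below $c$, so that the hypothesis applies; the remaining manipulations are routine unwindings of the fattening axioms and of the standard facts that, for $A,B\in\rc(X)$, $A\wedge B\neq\emptyset$ iff $\Int A\cap\Int B\neq\emptyset$, and that an open set meeting a regular closed set meets its interior.
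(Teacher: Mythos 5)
Your proof is correct, and it is essentially the intended argument: the paper itself gives no proof but defers to \cite{BassoZucker}*{Corollary~5.7}, whose argument is exactly this classical ``$\beta\omega$ embeds into a compact ED space'' separation, adapted by spending the radius $c$ in installments so that $L$-ED can upgrade the apartness of $W_S$ and $W_{\omega\setminus S}$ to separation by genuinely disjoint open sets. All the fattening-axiom manipulations (triangle, symmetry, joins, and the infinite distributive law in the complete Boolean algebra $\rc(X)$) check out.
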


 \begin{theorem}[Compare to Theorem~2.3 of \cite{BYMT}]
    \label{thm:ED-first-countable}
    Let $X$ be an $L$-ED fattening space, and $x \in X$ a point of first countability. Then $x$ is $L$-isolated. In particular, first-countable $L$-ED fattening spaces are $L$-discrete.  
\end{theorem}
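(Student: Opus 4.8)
The plan is to argue by contradiction, imitating the classical proof that a compact extremally disconnected space has no nontrivial convergent sequences, with Fact~\ref{Fact:BetaN} playing the role of ``disjoint open sets have disjoint closures.'' So I would assume $x$ is a point of first countability that is \emph{not} $L$-isolated and fix a decreasing base $(B_n)_{n<\omega}$ of open neighborhoods of $x$. Since $X$ is $L$-ED, the failure of $L$-isolation (Definition~\ref{Def:L_Isolated}) means there are $\sigma\in L$ and $c>0$ with $x\notin\Int(R_{\sigma,c}[x])$, and using that $(B_n)$ is a neighborhood base together with monotonicity of the fattening system one checks $R_{\sigma,c}[x]=\bigcap_{n}\ol{B_n}(\sigma,c)$. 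For each $m$ I would pick $y_m\in B_m\setminus R_{\sigma,c}[x]$, so that $y_m\to x$ and there is $k(m)<\omega$ with $y_m\notin\ol{B_{k(m)}}(\sigma,c)$. Three preliminary observations: each $\ol{B_k}(\sigma,c)$ is a neighborhood of $x$ by $L$-ED (it contains $\ol{B_k}\ni x$ in its interior); consequently, since the $\ol{B_k}(\sigma,c)$ decrease in $k$ and $y_m\to x$, the values $k(m)$ cannot be bounded; and $m<k(m)$ because $y_m\in B_m$ while $y_m\notin\ol{B_{k(m)}}(\sigma,c)\supseteq B_{k(m)}$. Passing to a subsequence I would extract indices $n_0<l_0<n_1<l_1<\cdots$ with $l_j=k(n_j)$, and put $z_j:=y_{n_j}$, so $z_j\in B_{n_j}\setminus\ol{B_{l_j}}(\sigma,c)$, $z_j\neq x$, and $z_j\to x$.

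Next I would set $D_j:=B_{n_j}\cap\bigl(X\setminus\ol{B_{l_j}}(\sigma,c)\bigr)$, an open neighborhood of $z_j$, and prove the crucial uniform-apartness estimate $\ol{D_i}(\sigma,c/2)\wedge\ol{D_j}(\sigma,c/2)=\emptyset$ for $i\neq j$. For $i<j$ the interleaving gives $l_i<n_{i+1}\le n_j$, hence $D_j\subseteq B_{n_j}\subseteq B_{l_i}$ and so $\ol{D_j}\subseteq\ol{B_{l_i}}$; meanwhile $D_i$ is an open set disjoint from the regular closed set $\ol{B_{l_i}}(\sigma,c)$, which forces $\ol{D_i}\wedge\ol{B_{l_i}}(\sigma,c)=\emptyset$, and by the symmetry axiom $\ol{D_i}(\sigma,c)\wedge\ol{B_{l_i}}=\emptyset$; combining these (monotonicity of $\wedge$) yields $\ol{D_i}(\sigma,c)\wedge\ol{D_j}=\emptyset$, and one more application of the symmetry and triangle-inequality axioms upgrades this to the $c/2$ statement via $\ol{D_i}(\sigma,c/2)(\sigma,c/2)\subseteq\ol{D_i}(\sigma,c)$. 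I expect this step — manufacturing open neighborhoods of the $z_j$ that stay pairwise apart in the fattening sense, rather than merely having the $z_j$ far apart in $\partial_\sigma$ — to be the main obstacle; the interleaving trick with the neighborhood base is exactly what makes it go through.

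Finally, with $z_j\in D_j$ and uniform apartness in hand, Fact~\ref{Fact:BetaN} delivers an injective continuous $\phi\colon\beta\omega\to X$ with $\phi(j)=z_j$ for all $j$ (if $X$ is not compact I would note that $\ol{\{z_j:j<\omega\}}=\{z_j:j<\omega\}\cup\{x\}$ is compact, since $z_j\to x$, so such a $\phi$ into this compact subspace still exists and the injectivity argument of Fact~\ref{Fact:BetaN} is unaffected). Being injective and continuous out of the compact space $\beta\omega$ into a Hausdorff space, $\phi$ is an embedding; and since $z_j\to x$ we get $x\in\ol{\phi[\omega]}=\phi[\beta\omega]$, so $x=\phi(\cU)$ for some $\cU\in\beta\omega$, necessarily non-principal because $z_j\neq x$ for every $j$. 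Thus $x$ sits inside the subspace $\phi[\beta\omega]\cong\beta\omega$ as a point of $\beta\omega\setminus\omega$, which has uncountable character (classically, a compact extremally disconnected space has no nontrivial convergent sequences, so none of its non-isolated points is first countable); but first countability of $x$ in $X$ is inherited by $\phi[\beta\omega]$ — a contradiction. This shows $x$ is $L$-isolated, and the last sentence of the statement is then immediate.
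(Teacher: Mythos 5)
Your proof is correct, and while it ends exactly where the paper's does — a family of open neighborhoods of a sequence converging to $x$ whose $(\sigma,c)$-fattenings are pairwise disjoint, fed into Fact~\ref{Fact:BetaN} to realize $x$ as a point of $\beta\omega\setminus\omega$ — you reach that family by a genuinely different middle route. The paper passes through the pseudometrics $\partial_\sigma$: it uses completeness of the $L$-uniformity to turn ``not $L$-isolated'' into a non-Cauchy sequence with $\partial_\sigma(x_n,x_m)>\delta$, and then cites the separation argument of \cite{BYMT}*{Lemma~2.5} to fatten the points into uniformly separated open sets. You instead work directly with the relation $R_{\sigma,c}$ and the raw fattening axioms: the identity $R_{\sigma,c}[x]=\bigcap_n\ol{B_n}(\sigma,c)$, the interleaving $n_0<l_0<n_1<l_1<\cdots$ forced by $m<k(m)$, and the symmetry/triangle axioms to get $\ol{D_i}(\sigma,c/2)\wedge\ol{D_j}(\sigma,c/2)=\emptyset$. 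What your version buys is self-containment — no appeal to completeness of the $L$-uniformity, no external BYMT lemma, and no use of the $\partial_\sigma$ machinery at all — at the cost of a slightly longer combinatorial setup; the apartness verification via symmetry and the triangle inequality is exactly the right replacement for the metric estimate $\partial_\sigma(A_k,A_\ell)\ge\delta/2$. Your explicit remark that $\ol{\{z_j\}}=\{z_j\}\cup\{x\}$ is compact (so that $\phi$ exists even when $X$ is not) also patches a point the paper glosses over, since Fact~\ref{Fact:BetaN} is stated only for compact spaces while the theorem is not.
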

    \begin{proof}
    Suppose $x$ is not $L$-isolated. Then there is a net converging to $x$ which is not convergent in the $L$-uniformity. As $x$ is a point of first countability, this net can actually be chosen to be a sequence $(x_n)_{n< \omega}$. We may also assume that $x\not\in \{x_n: n< \omega\}$. Since the $L$-uniformity is complete, the sequence $(x_n)_{n<\omega}$ is not Cauchy, so there are $\sigma \in L, \delta >0$ such that $\partial_\sigma(x_n, x_m) > \delta$ for all $n \neq m$. Mimicking the argument from \cite{BYMT}*{Lemma~2.5}, one can find a subsequence $(x_{n_k})_{k< \omega}$, and sets $A_k\in \op(x_{n_k}, X)$ such that $\partial_\sigma(A_k, A_\ell) \ge \delta/2$ for all $k \neq \ell$. By \eqref{Eq:Basic_Fat_partial} and \Cref{Fact:BetaN}, the map $\phi\colon \beta \omega\to X$ which continuously extends the map $k\to x_{n_k}$ is injective, and $x\in \phi[\beta\omega\setminus \omega]$. As points in $\beta\omega\setminus \omega$ are never points of first countability, neither is $x\in X$. 
    \end{proof}

\subsection{Uniform spaces as fattening spaces}
\label{Subsection:Unif_Fat}

We end the section by considering uniform spaces. Given a uniform space $(X, \cU)$, just denoted $X$ when $\cU$ is understood, we can view $X$ as a fattening space as follows. We let $L$ be the collection of bounded, uniformly continuous pseudometrics on $X$ equipped with the pointwise order. Every $\sigma\in L$ is trivially lsc, adequate, and open-compatible. In particular, $(X, (\sigma)_{\sigma\in L})$ becomes an $L$-ED adequate fattening space as in Definition~\ref{Def:AdFat}. Since the members of $L$ generate the uniformity on $X$, this fattening is continuous, and thus $\iota_X\colon X\to \NU(X)$ is a continuous embedding onto its image, by Corollary~\ref{Cor:NU_Idempotent}. We will therefore identify $X$ with $\im(\iota_X)$.

\begin{fact}[\cite{KocakStrauss}]
    For $X$ a uniform space, $\iota_X\colon X\to \NU(X)$ is the Samuel compactification of $X$. 
\end{fact}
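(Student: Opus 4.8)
The plan is to verify that $\NU(X)$, built from the $L$-fattening structure on a uniform space $X$ described just above, carries the universal property that characterizes the Samuel compactification $\sa(X)$. Recall from Subsection~\ref{Subsection:Uniform_spaces} that $\sa(X)$ is the Gelfand dual of the algebra $\mathrm{CB}_u(X)$ of bounded uniformly continuous $\bbC$-valued functions on $X$, and is characterized (up to canonical identification fixing $X$) by the property that every bounded uniformly continuous map from $X$ into a compact space extends continuously over it. So the goal reduces to two things: (i) $\NU(X)$ is a compact Hausdorff space containing (a dense homeomorphic copy of) $X$, and (ii) $\NU(X)$ has the right universal/extension property.

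For (i): compactness of $\NU(X)$ is immediate since it is a quotient of $\gl(X) = \st(\rc(X))$, which is compact. That $\NU(X)$ is Hausdorff follows because $\NU(X) \cong \varprojlim_\sigma \NU(X)/\partial_\sigma$ (noted right after the adequacy proposition): distinct near ultrafilters are separated by some $\partial_\sigma > 0$, and since the fattening here is adequate and each $\partial_\sigma = \sigma$ is an honest pseudometric, one gets disjoint open neighborhoods via Lemma~\ref{Lem:NUBasic}\ref{Item:Base_Lem:NUBasic}. The density of $X$ in $\NU(X)$ and the fact that $\iota_X$ is a homeomorphic embedding are exactly Corollary~\ref{Cor:NU_Idempotent} applied to this $L$-ED $L$-continuous fattening space (as already observed in the sentence preceding the Fact). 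One should also record that $\partial_\sigma^X = \sigma$ on $X$ (Lemma~\ref{Lem:FuncFat}, since the fattening is adequate), so the $L$-uniformity on $\NU(X)$ restricts to the original uniformity $\cU$ on $X$.

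For (ii): I would show directly that every bounded uniformly continuous $f\colon X \to \bbC$ extends continuously to $\NU(X)$, and that these extensions separate points, which by Stone–Weierstrass identifies $\mathrm{CB}(\NU(X))$ with $\mathrm{CB}_u(X)$ and hence $\NU(X)$ with $\sa(X)$ by Gelfand duality. Given such an $f$ with modulus of uniform continuity controlled by some $\sigma \in L$ (WLOG $f$ is $\sigma$-Lipschitz after rescaling, since finitely many pseudometrics can be amalgamated in the directed poset $L$), and given $p \in \NU(X)$, define $\tilde f(p)$ to be the unique complex number lying in $\bigcap \{ \ol{f[A]} : A \in p\}$ — nonemptiness and uniqueness follow from the finite-near-intersection property of $p$ together with $\sigma$-Lipschitzness of $f$ (if $A,B\in p$ then $A(\sigma,c)\wedge B\neq\emptyset$ for all $c$, forcing $\ol{f[A]}$ and $\ol{f[B]}$ to be within $c$ of each other). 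Continuity of $\tilde f$ is checked on the dense set $X$ via Fact~\ref{Fact:Continuity}: if $d_i \to p$ in $\NU(X)$ with $d_i \in X$, then for each $A \in p$ eventually $d_i \in \rmC_{A(\sigma,c)}^X$, whence $f(d_i)$ is within $O(c)$ of $\tilde f(p)$. For separation of points, given $p \neq q$ in $\NU(X)$ there is $\sigma$ with $\partial_\sigma(p,q) = 2c > 0$; pick $A \in p$, $B \in q$ with $A(\sigma, c) \wedge B = \emptyset$, and the bounded $\sigma$-Lipschitz function $x \mapsto \min(\sigma(x, A), c)$ (using $\sigma(\cdot, A)$ as the point-set distance in the pseudometric $\sigma$, truncated) takes value $0$ throughout $A$ hence $\tilde f(p) = 0$, and value $c$ on $B$ hence $\tilde f(q) = c$.

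The main obstacle I anticipate is purely bookkeeping rather than conceptual: carefully matching the combinatorial notion of "$p \in \NU(X)$" (a maximal family in $\rc(X)$ with finite $L$-near intersections) against the classical function-theoretic construction of $\sa(X)$, and in particular making sure the "unique common value" $\tilde f(p)$ is well-defined — this needs the interaction between membership $A \in p$ in the sense of regular closed sets and the pseudometric fattening to be handled cleanly. An alternative, perhaps cleaner, route that sidesteps Gelfand duality entirely: invoke the near-ultrafilter description of $\sa(X)$ from \cite{KocakStrauss} directly and exhibit a canonical bijection between Ko\v{c}ak–Strauss near ultrafilters (maximal families in $\op(X)$ with the near finite intersection property) and the $\NU(X)$ near ultrafilters defined here, respecting the topologies; under the identification $A \leftrightarrow \ol A$ between $\op(X)$ (mod the equivalence $\ol A = \ol B$) and $\rc(X)$, the two maximality conditions coincide because $\bigcap_{A\in F} U[A] \neq \emptyset$ for all entourages $U$ is precisely $\bigwedge_{A \in F} \ol A(\sigma, c) \neq \emptyset$ for all $\sigma, c$. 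I would likely present this identification as the proof, since it is short given the machinery already in place, and only remark on the Gelfand-duality route as an aside.
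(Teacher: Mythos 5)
The paper does not actually prove this statement: it is quoted as a known result of Ko\v{c}ak and Strauss, and the only content supplied by the surrounding text is the observation that the $\NU$-construction, applied to the fattening structure induced by the bounded uniformly continuous pseudometrics, reproduces their near-ultrafilter model of $\sa(X)$. Your closing ``alternative route'' is therefore precisely what the citation amounts to: under the correspondence $A\leftrightarrow\ol{A}$ between $\op(X)$ and $\rc(X)$, a finite family has nonempty near intersections in the sense of this paper iff it has the near finite intersection property of Ko\v{c}ak--Strauss (the only discrepancy, a regular-closed meet of closed balls versus an intersection of open fattenings, is absorbed by quantifying over all $c$, since $\ol{\rmB_\sigma(A,c)}\subseteq \rmB_\sigma(A,c+\epsilon)$ by continuity of $\sigma$ and since $\rmB_\sigma(A,c)$ is open while $B$ is regular closed). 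Your primary route, via Gelfand duality and Stone--Weierstrass, is a genuinely self-contained argument that does not lean on the cited reference, and it is sound: compactness, Hausdorffness, and density of the image of $\iota_X$ are all available from the machinery already in place, and the extension-and-separation step correctly identifies $\rm{CB}(\NU(X))$ with the algebra of bounded uniformly continuous functions on $X$, since $f\mapsto\tilde f$ is a sup-norm isometry onto a closed, conjugation-stable, point-separating unital subalgebra.

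One justification in the Gelfand route is underspecified: the uniqueness of $\tilde f(p)\in\bigcap\{\ol{f[A]}:A\in p\}$ does not follow from the finite near intersection property together with Lipschitzness alone; that argument only shows the family $\{\ol{f[A]}:A\in p\}$ of compacta has the finite intersection property, hence nonempty intersection. Uniqueness requires the maximality of $p$, via the partition regularity of Lemma~\ref{Lem:NUBasic}\ref{Item:PR_Lem:NUBasic}: cover the bounded range of $f$ by finitely many balls of radius $\delta$, pull back to a finite join decomposition of the top element of $\rc(X)$ by the open sets $f^{-1}[B_i]$, and conclude that some piece lies in $p$ and has $f$-image of diameter at most $2\delta$. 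With that one line added, the proof is complete.
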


Going forward, we identify $\sa(X)$ and $\NU(X)$ when $X$ is a uniform space. In the metric case, we can detect when a topometric space is the Samuel compactification of a metric space as follows.

\begin{prop}
    \label{Prop:Topo_Converse}
    Let $Y$ be compact and $\partial$ be an lsc, adequate, open-compatible metric on $Y$. Suppose $X\subseteq Y$ is dense  and every $x\in X$ is $\partial$-compatible. Then, writing $d = \partial|_{X\times X}$, we have $(Y, \partial)\cong (\sa(X, d), \partial_d)$. 
\end{prop}

\begin{proof}
    Since $(Y, \partial)$ is adequate and since $X$ consists of compatibility points, we have for any $A\in \op(Y)$ and $c > 0$ that 
    $$\rmB_{\partial}(A, c)\cap X = \rmB_{\partial}(A\cap X, c)\cap X = \rmB_d(A\cap X, c).$$ 
    Given $y\in Y$, let $\cF_y = \{A\in \op(X): y\in \ol{A}\}$. We claim that $\cF_y\in \sa(X)$, i.e.\ is a near ultrafilter. It suffices to show that $\cF_y$ has the near FIP. First, given $c> 0$ and $A\in \cF_y$, the above adequacy observation gives us $\ol{\rmB_d(A, c)} = \ol{\rmB_\partial(\Int(\ol{A}), c)}$. As $A\in \op(X)$, we have $\ol{A}\subseteq Y$ regular closed. Then since $\partial$ is open-compatible on $Y$ and $y\in \ol{A} = \ol{\Int(\ol{A})}$, we have $y\in \Int(\ol{\rmB_d(A, c)})$. In particular, for any $Q\in \fin{\cF_y}$, we have $\bigcap_{A\in Q} \rmB_d(A, c)\neq \emptyset$.

    Since $\partial$ is lsc, the inclusion $(X, d)\hookrightarrow Y$ is uniformly continuous, where here $Y$ has its compact uniformity. Let $\phi\colon \sa(X)\to Y$. Given $p\in \sa(X)$, we claim $\cF_{\phi(p)} = p$. Let $A\in p$. Then $p\in \rmC_A$, so $\phi(p)\in \ol{A}$. In particular, $\phi$ is a homeomorphism. 
    
    We now show that $\phi$ is a $(\partial_d, \partial)$-isometry. We first observe that on $\sa(X)$, $\partial_d$ is the largest lsc metric with $\partial_d|_X = d$. Hence it suffices to show that on $Y$, $\partial$ is the largest lsc metric with $\partial|_X = d$. Fix $y_0, y_1\in Y$, and suppose $\partial(y_0, y_1) = c$. Given $A_i\in \op(y_i, Y)$ and $\epsilon > 0$, we need to show
    $$d(A_0\cap X, A_1\cap X)< c+\epsilon.$$
    If this is not true, then 
    \begin{align*}
        \rmB_d(A_0\cap X, c+\epsilon)\cap A_1\cap X &= \emptyset\\
        \Rightarrow \rmB_\partial(A_0, c+\epsilon)\cap A_1\cap X &= \emptyset.
    \end{align*}
    However, $\rmB_\partial(A_0, c+\epsilon)\cap A_1\neq \emptyset$ as it contains $y_1$, and it is open by adequacy. The density of $X\subseteq Y$ then gives a contradiciton.
\end{proof}

\section{$G$-extremally disconnected flows}
\label{Section:Gleason}

This section, using the presentation of fattening spaces from the previous section, presents some background material drawn from \cite{ZucDirectGPP}, \cite{ZucMHP}, and \cite{LeBoudecTsankov}. We consider 
$G$-ED flows and define the \emph{Gleason completion} of a $G$-flow $X$.
Using this, we define a particular instance of the \emph{$G$-ultracoproduct} construction. We also take the opportunity to give a characterization of closed subspaces and subflows of Gleason completions in terms of \emph{near filters} on $\op(X)$, generalizing some results from \cite{ZucThesis} about $\sa(G)$. 

We remark that many of the properties and results that only in this and the following sections could be stated in the more general context of $L$-fattening spaces.

\subsection{The Gleason completion}

Recall that we can endow any $G$-space $X$ with a continuous $\sn(G)$-fattening system, as in \Cref{def:SNG-fattening}. 
\emph{Abusing notation, we call this a $G$-fattening system, and write $G$-ED instead of $\sn(G)$-ED, $G$-isolated instead of $\sn(G)$-isolated, and so on.}
When discussing these properties, we can omit the second $G$ and just say \emph{$G$-ED space} or \emph{$G$-ED flow} when referring to such $G$-spaces and $G$-flows, respectively. 
The $G$-fattening uniformity on a $G$-ED space is called the \emph{UEB uniformity} (see \cite{BassoZucker}), and the topology it induces on $X$ is the \emph{UEB topology}.
Examples of $G$-ED flows are the universal minimal flow $\mg$ and the Samuel compactification $\sa(G)$.
Given $X$ a $G$-ED flow and $U \in \cN_G$, we write 
\begin{itemize}
         \item 
        $Q_U^X[x] = \bigcap\{\ol{UA}: A\in \op(X)\text{ with }x\in \ol{A}\}$,
        \item 
         $R_{\sigma, c}^X[x] = \bigcap\{\ol{UA}: A\in \op(x, X)\}$.
      \end{itemize}
Note that when $\sigma\in \sn(G)$, $c> 0$, and $U = \rmB_\sigma(c)$, then the above definition coincides with $Q_{\sigma, c}^X$ and $R_{\sigma, c}^X$ as in Definition~\ref{Def:Topo_Uniform}. As usual, we omit the $X$ superscript when understood.

\begin{remark}
    In \cite{LeBoudecTsankov} and \cite{ZucUlts},  $G$-ED flows are called \emph{Gleason complete}, and in \cite{ZucMHP}, they are called \emph{maximally highly proximal} (MHP). See the discussion around Definition~\ref{Def:HP} regarding the changes in terminology.
\end{remark}

Let us immediately note that $G$-maps respect the fattening structure in the following sense.

\begin{prop}
    \label{Prop:Level_Maps}
    If $X$ and $Y$ are $G$-spaces and $\phi\colon X\to Y$ is a $G$-map, then for each $\sigma\in \rm{SN}(G)$, $\phi$ is $(\partial_\sigma^X, \partial_\sigma^Y)$-Lipschitz. 
\end{prop}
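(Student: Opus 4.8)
The plan is to unwind the definition of the pseudometrics $\partial_\sigma$ in the $G$-space setting (Definition~\ref{def:SNG-fattening} together with Definition~\ref{Def:Fattening_SPM}) and verify the Lipschitz inequality directly, using only that $\phi$ is continuous and $G$-equivariant. Recall that for a $G$-space $X$ the $\sn(G)$-fattening system is $\ol{A}(\sigma, c) = \ol{\rmB_\sigma(c)\cdot A}$ for $A\in\op(X)$, and that consequently $\partial_\sigma^X(x,y)\leq c$ iff for every $\epsilon>0$ and every $A\in\op(x,X)$ we have $y\in\ol{\rmB_\sigma(c+\epsilon)\cdot A}$.

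Fix $\sigma\in\rm{SN}(G)$ and $x, y\in X$, and suppose $\partial_\sigma^X(x,y)\leq c$; I want to show $\partial_\sigma^Y(\phi(x),\phi(y))\leq c$. To that end, fix $\epsilon>0$ and $B\in\op(\phi(x), Y)$. By continuity of $\phi$, the set $A:=\phi^{-1}[B]$ is an open neighborhood of $x$, so the hypothesis yields $y\in\ol{\rmB_\sigma(c+\epsilon)\cdot A}$. Applying the continuous map $\phi$ and then using $G$-equivariance, I get
\[
\phi(y)\in\phi\big[\,\ol{\rmB_\sigma(c+\epsilon)\cdot A}\,\big]\subseteq\ol{\phi[\rmB_\sigma(c+\epsilon)\cdot A]}=\ol{\rmB_\sigma(c+\epsilon)\cdot\phi[A]}\subseteq\ol{\rmB_\sigma(c+\epsilon)\cdot B},
\]
where the last inclusion uses $\phi[A]=\phi[\phi^{-1}[B]]\subseteq B$. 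Since $\epsilon>0$ and $B\in\op(\phi(x),Y)$ were arbitrary, the second characterization of $\partial_\sigma$ in Definition~\ref{Def:Fattening_SPM} gives $\partial_\sigma^Y(\phi(x),\phi(y))\leq c$, which is exactly the claim $\partial_\sigma^Y(\phi(x),\phi(y))\leq\partial_\sigma^X(x,y)$ (the inequality being vacuous when $\partial_\sigma^X(x,y)=\infty$).

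I do not anticipate a real obstacle here; the argument is a one-line chase through the definitions. The only points requiring a moment's care are that $\rmB_\sigma(c+\epsilon)\cdot A$ is genuinely open — being a union of left translates of the open set $A$, with $\rmB_\sigma(c+\epsilon)=\sigma^{-1}[[0,c+\epsilon)]$ open by continuity of $\sigma$ — so that $\ol{A}(\sigma,c+\epsilon)$ is a legitimate element of $\rc(X)$, and that the image of a closure is contained in the closure of the image, which is just continuity of $\phi$. Equivariance is what turns $\phi[\rmB_\sigma(c+\epsilon)\cdot A]$ into $\rmB_\sigma(c+\epsilon)\cdot\phi[A]$.
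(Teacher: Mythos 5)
Your argument is correct and is exactly the definition-unwinding the paper has in mind: its own proof of this proposition is just the single line ``Immediate from Definition~\ref{Def:Fattening_SPM}.'' The chain $\phi\bigl[\ol{\rmB_\sigma(c+\epsilon)\cdot \phi^{-1}[B]}\bigr]\subseteq \ol{\rmB_\sigma(c+\epsilon)\cdot B}$ via continuity and equivariance is precisely the intended verification, so nothing further is needed.
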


\begin{proof}
    Immediate from Definition~\ref{Def:Fattening_SPM}.
\end{proof}


When $X$ is a $G$-space, then $G$ acts on the $G$-ED flow $\NU(X)$ in the obvious way, this action is continuous, and $\pi_X\subseteq \NU(X)\times X$ is $G$-invariant. Furthermore, the fattening structure, as well as the fattening uniformity, on $\NU(X)$ coming from $X$ are exactly the same as the one arising from $\NU(X)$ as a $G$-flow. Typically for $G$-spaces, we write $\rmS_G(X)$ for $\NU(X)$ and call it the \emph{Gleason completion}\footnote{While the name \emph{$G$-Gleason cover} might be more appropriate, we use \emph{Gleason completion} to agree with \cite{LeBoudecTsankov}.} of $X$. In the references \cite{ZucDirectGPP, ZucMHP, BassoZucker}, members of $\rmS_G(X)$ are viewed as subsets of $\op(X)$ rather than as subsets of $\rc(X)$, and \textbf{we also do this here} (see the remark following \Cref{Def:AdFat}). We remark that when $X = G$, we have $\rmS_G(G)\cong \sa(G)$. 

We discuss the universal property of $\pi_X$ for two key types of $G$-spaces. First, we discuss the case when $X$ is a $G$-flow. Then $\pi_X\colon \rmS_G(X)\to X$ is a $G$-map. 

\begin{defin}
    \label{Def:Irred_Map}
    If $X$ and $Y$ are spaces, a continuous map $\pi\colon Y\to X$ is \emph{irreducible} if $\pi[Y]\subseteq X$ is dense and for every $B\in \op(Y)$, $\Int(\pi_{\rm{fib}}(B))\neq \emptyset$. 
\end{defin}

Let us pause to make a useful observation about fiber images. 

\begin{lemma}
    \label{Lem:Fiber_Image_Open}
    Given a space $X$, a compact space $Y$, and $R\in \rmF(Y\times X)$, then if $B\in \op(Y)$, we have $R_{\rm{fib}}(B)\in \op(X)\cup \{\emptyset\}$.
\end{lemma}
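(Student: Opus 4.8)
The plan is to pass to complements and reduce everything to the standard fact that projection along a compact factor is a closed map. First I would unwind the definition of the fiber image. Recall $R \subseteq Y \times X$ and $R_{\rm fib}(B) = \{x \in X : \forall y\, ((y,x)\in R \rightarrow y \in B)\}$; thus $x \notin R_{\rm fib}(B)$ iff there exists $y \in Y \setminus B$ with $(y,x) \in R$, i.e.\ iff $x \in R[Y \setminus B]$. Hence $R_{\rm fib}(B) = X \setminus R[Y \setminus B]$, and it suffices to prove that $R[Y\setminus B]$ is closed in $X$.

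Next I would set $K := Y \setminus B$, which is closed in $Y$, hence compact, and observe that $R' := R \cap (K \times X)$ is the intersection of the closed set $R$ with the closed set $K \times X$, so it is closed in $Y \times X$, in particular closed in $K \times X$. Letting $p \colon K \times X \to X$ denote the projection, we have $R[Y \setminus B] = R[K] = p(R')$, so the claim reduces to showing $p(R')$ is closed.

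Finally I would invoke (or, since it is short, prove directly) the Kuratowski tube lemma: if $K$ is compact then $p \colon K \times X \to X$ is a closed map, for any space $X$. The quick argument: given $C \subseteq K \times X$ closed and $x \notin p(C)$, for each $k \in K$ choose basic open $U_k \times V_k \ni (k, x)$ disjoint from $C$; extract a finite subcover $U_{k_1}, \dots, U_{k_n}$ of $K$ and put $V := \bigcap_{i\le n} V_{k_i}$, so that $(K \times V) \cap C = \emptyset$ and $V$ is a neighbourhood of $x$ missing $p(C)$. Applying this to $C = R'$ shows $R[Y \setminus B]$ is closed, so $R_{\rm fib}(B) = X \setminus R[Y\setminus B]$ is open (and possibly empty), i.e.\ $R_{\rm fib}(B) \in \op(X) \cup \{\emptyset\}$. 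There is no substantive obstacle here; the only points needing care are getting the orientation of $R \subseteq Y \times X$ right so that the complement of $R_{\rm fib}(B)$ is genuinely $R[Y\setminus B]$ (and not something involving $R^{-1}$), and noting that compactness of $Y$ enters only through compactness of the closed set $Y \setminus B$.
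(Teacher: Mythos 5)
Your proposal is correct and in substance identical to the paper's argument: both identify the complement of $R_{\rm{fib}}(B)$ as $R[Y\setminus B]$ and show it is closed using compactness of $Y\setminus B$ together with closedness of $R$. The paper runs the net-and-subnet argument directly, whereas you package the same compactness step as the closed-projection (tube) lemma; the two implementations are interchangeable here, and your reduction $R_{\rm{fib}}(B)=X\setminus R[Y\setminus B]$ correctly matches the paper's orientation of $R\subseteq Y\times X$.
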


\begin{proof}
    If $(x_i)_{i\in I}$ is a net from $X$ with each $x_i\not\in R_{\rm{fib}}(B)$ and $x_i\to x\in X$, find $y_i\in Y\setminus B$ with $(y_i, x_i)\in R$. Passing to a subnet, let $y_i\to y\in Y\setminus B$. As $R\in \rmF(Y\times X)$, we have $(y, x)\in R$, showing that $x\not\in R_{\rm{fib}}(B)$.
\end{proof}

This definition of irreducible is somewhat more general than the one given in \cite{LeBoudecTsankov}, which discusses the case that both $X$ and $Y$ are compact. If $X$ and $Y$ are both compact, then by Lemma~\ref{Lem:Fiber_Image_Open}, the definition of $\pi$ being irreducible simplifies to demanding that $\pi$ is onto and that $\pi_{\rm{fib}}(B)\neq \emptyset$ for every $B\in \op(Y)$. A sufficient condition for a continuous onto map $\pi \colon Y \to X$ between compact spaces to be irreducible is that is is \emph{weakly almost 1-to-1} \cite{MR3752609}, that is, that the set $\set{y \in Y : f^{-1}[f(y)] = \set{y}}$ is dense in $Y$; if $Y$ is metrizable, then the condition is also necessary.
We justify the increased generality of our definition by pointing out a useful example: If $Y\subseteq X$ are topological spaces with $Y\subseteq X$ dense and $\pi\colon Y\hookrightarrow X$ is the inclusion map, then $\pi$ is irreducible.

\begin{fact}[\cite{ZucDirectGPP}]
	\label{Thm:Universal_Irreducible_Extension}
	Let $X$ be a $G$-flow. Then $\pi_X\colon \rmS_G(X)\to X$ is the \emph{universal irreducible extension} of $X$: $\pi_X$ is irreducible, and if $Y$ is a $G$-flow and $\theta\colon Y\to X$ is an irreducible extension, then there is a $G$-map $\phi\colon \rmS_G(X)\to Y$ such that $\pi_X = \theta\circ \phi$.
\end{fact}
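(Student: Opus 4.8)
\textbf{Proof proposal for Fact~\ref{Thm:Universal_Irreducible_Extension}.}

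The plan is to deduce this from the universal property of the space $\NU(X) = \rmS_G(X)$ established earlier in the paper. Since the excerpt ends precisely at the statement and the surrounding text indicates this is drawn from \cite{ZucDirectGPP}, I would first establish the two halves: that $\pi_X$ is irreducible, and that it is universal.

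For irreducibility, by Proposition~\ref{Prop:Fattening_Space_Props}\ref{Item:Compact_Prop:FSP} we have $\dom(\pi_X) = \NU(X)$ since $X$ is compact, so $\pi_X$ is a genuine continuous $G$-map by Proposition~\ref{Prop:Fattening_Space_Props}\ref{Item:Sep_Prop:FSP} (using that the $G$-fattening on a $G$-flow is $G$-continuous). Its image contains $\im(\pi_X) = X$, so it is onto; in particular $\pi_X[\NU(X)]$ is dense. For the fiber-image condition, I would show that for $A \in \op(X)$, the set $\rmN_{\neg A}^X = \{p : \neg A \notin \hat p\}$ is a nonempty open subset of $\NU(X)$ contained in $\pi_X^{-1}$-style preimages, and compute $(\pi_X)_{\rm{fib}}(\rmN_{\neg A}^X)$. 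Using the description right after Proposition~\ref{Prop:Fattening_Space_Props}: $\rmC_A^X \cap \pi_X^{-1}[x] \ne \emptyset$ iff $x \in A(\sigma,c)$ for all $\sigma, c$, and $\pi_X^{-1}[x] \subseteq \rmC_A^X$ iff $A(\sigma,c) \in \rc(x,X)$ for all $\sigma,c$. From this and the $G$-ED property of $\NU(X)$ (Proposition~\ref{Prop:NU_Fat_props}), together with Lemma~\ref{Lem:Fiber_Image_Open}, one checks that for every nonempty open $\rmN_B^X$, the set $(\pi_X)_{\rm{fib}}(\rmN_B^X)$ has nonempty interior, giving irreducibility in the sense of Definition~\ref{Def:Irred_Map}.

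For universality: let $\theta\colon Y\to X$ be an irreducible $G$-extension with $Y$ a $G$-flow. The key point is to recognize $\theta$ as a ``cover'' in a sense that lets us invoke a universal property of $\NU(X)$; concretely, for each $y \in Y$ set $\cF_y = \{\ol{\theta_{\rm{fib}}(B)} : B \in \op(y,Y)\} \subseteq \rc(X)$, which is legitimate by Lemma~\ref{Lem:Fiber_Image_Open}. Using irreducibility of $\theta$ and the square-root/triangle properties of the $G$-fattening, $\cF_y$ has finite $G$-near intersections, so extends to a near ultrafilter; conversely the ``apartness separates distinct fibers'' consequence of irreducibility shows each $p \in \NU(X)$ determines a unique $y$ with $\cF_y \subseteq p$. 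Define $\phi\colon \NU(X) \to Y$ by $\phi(p) = y$ iff $\cF_y \subseteq p$. Surjectivity follows because $\theta_{\rm{fib}}(B) \ne \emptyset$ for all $B \in \op(Y)$; continuity follows from Fact~\ref{Fact:Continuity} by checking convergence on nets and using that if $y_i \to y$ then eventually $\ol{\theta_{\rm{fib}}(B)} \in \hat{p_i}$ for $B \in \op(y,Y)$; $G$-equivariance follows since $\cF_{gy} = g\cdot \cF_y$ by equivariance of $\theta$. Finally $\pi_X = \theta \circ \phi$: given $(p,x) \in \pi_X$ and $y = \phi(p)$, for every $A \in \op(x,X)$ and $B \in \op(y,Y)$ both $\ol A$ (after fattening) and $\ol{\theta_{\rm{fib}}(B)}$ lie in $\hat p$ or are $G$-near to elements of it, forcing $A \cap \theta_{\rm{fib}}(B) \ne \emptyset$, hence $(y,x) \in \theta$ since $\theta$ is closed.

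The main obstacle I anticipate is the verification that $\cF_y$ has finite $G$-near intersections and, dually, that distinct points of $Y$ give apart fiber-images — this is where irreducibility of $\theta$ must be combined carefully with the $G$-fattening axioms (symmetry and triangle inequality), and it is the analogue of the ``cover'' axioms in Definition~\ref{Def:Cover}-style arguments. Once that bookkeeping is in place, surjectivity, continuity, equivariance, and the factorization identity are all routine net-chasing arguments using Fact~\ref{Fact:Continuity} and the closedness of $\theta$ and $\pi_X$.
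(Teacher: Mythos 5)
Your proposal is correct and follows essentially the same route as the paper: the statement is recorded as a Fact cited to \cite{ZucDirectGPP}, and the argument there (a general version of which survives, commented out, in this paper's source as a ``universal cover'' theorem for fattening spaces) is exactly your construction --- verify that an irreducible extension $\theta\colon Y\to X$ satisfies the cover axioms (nonempty open fiber images, $x\in\ol{\theta_{\rm{fib}}(B)}$ for $(y,x)\in\theta$ and $B\ni y$, and apartness of fiber images of disjoint fattened neighborhoods), then define $\phi(p)=y$ iff $\cF_y\subseteq p$ and check surjectivity, continuity, equivariance, and $\theta\circ\phi=\pi_X$ by net arguments. You correctly identify the one nontrivial verification (near-FIP of $\cF_y$ together with separation of distinct fibers via the fattening axioms), so no gap.
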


Notice that automatically, the map $\phi\colon \rmS_G(X)\to Y$ from \Cref{Thm:Universal_Irreducible_Extension} is an irreducible extension. 

The other situation we discuss is when $X$ is a $G$-ED space. Then $\iota_X\colon X\to \rmS_G(X)$ is a $G$-map and embedding, and we have the following.
\begin{fact}[\cite{ZucThesis}]
    \label{Fact:Gleason_Max_Comp}
	For $X$ a $G$-ED space, $\iota_X\colon X\to \rmS_G(X)$ is the \emph{universal $G$-equivariant compactification} of $X$. Namely, given any $G$-map $\phi\colon X\to Y$ with $Y$ a $G$-flow, there is $\wt{\phi}\colon \rmS_G(X)\to Y$ with $\phi = \wt{\phi}\circ \iota_G(X)$.
\end{fact}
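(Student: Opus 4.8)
The plan is to produce $\wt\phi$ by pushing near ultrafilters forward along $\phi$. First, recall the setup: since $X$ is a $G$-ED space the $\sn(G)$-fattening it carries is $\sn(G)$-continuous (\Cref{def:SNG-fattening}), so by \Cref{Cor:NU_Idempotent} the map $\iota_X\colon X\to \rmS_G(X)=\NU(X)$ is a continuous embedding of fattening spaces, and by the discussion preceding \Cref{Prop:Level_Maps} it is a $G$-map onto a dense $G$-subspace of the $G$-flow $\rmS_G(X)$. In particular $\wt\phi$, if it exists, is unique, since two continuous maps into the Hausdorff space $Y$ that agree on the dense set $\iota_X[X]$ coincide. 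Throughout I would view members of $\rmS_G(X)$ as subsets of $\op(X)$, identifying $A\in\op(X)$ with $\ol{A}$, as the text does.

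The one substantial input is that \emph{$\phi$ is uniformly continuous from $X$ with its UEB uniformity to $Y$ with its unique compatible uniformity $\cU_Y$.} To see this, fix $f\in\rm{CB}(Y)$; since $Y$ is a $G$-flow, $\rm{CB}(Y)=\rm{CB}_G(Y)=\bigcup_{\sigma\in\sn(G)}\rm{CB}_\sigma(Y)$, so $f\in\rm{CB}_\sigma(Y)$ for some $\sigma$. As $\phi$ is $G$-equivariant, $\|(f\circ\phi){\cdot}g-f\circ\phi\|=\sup_{x}|f(g\phi(x))-f(\phi(x))|\le\sigma(g)$, so $f\circ\phi\in\rm{CB}_\sigma(X)$. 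Now any $h\in\rm{CB}_\sigma(X)$ is $\partial_\sigma$-Lipschitz: if $\partial_\sigma(x,x')\le c$, then by \Cref{Def:Fattening_SPM} (applied to the $\sn(G)$-fattening $\ol{A}(\sigma,t)=\ol{\rmB_\sigma(t)A}$) there is a net $(g_\alpha,a_\alpha)$ in $\rmB_\sigma(c+\epsilon_\alpha)\times X$ with $\epsilon_\alpha\to0$, $a_\alpha\to x$ and $g_\alpha a_\alpha\to x'$, whence $|h(x')-h(x)|=\lim_\alpha|h(g_\alpha a_\alpha)-h(a_\alpha)|\le\limsup_\alpha\sigma(g_\alpha)\le c$. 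Thus every $f\circ\phi$ is UEB-uniformly continuous; since the pseudometrics $d_f(y,y')=|f(y)-f(y')|$ ($f\in\rm{CB}(Y,\bbR)$) generate $\cU_Y$, this gives the claim. (Equivalently, one may note that $\rm{CB}_\sigma$-functions extend $\partial_\sigma$-Lipschitzly to the Gleason completion, cf.\ \Cref{Prop:Level_Maps}, and that $\rmS_G(X)$ is the Samuel compactification of $(X,\cU_{\mathrm{UEB}})$.)

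With this in hand I would define $\wt\phi\colon\rmS_G(X)\to Y$ by letting $\wt\phi(p)$ be the unique point of $\bigcap_{A\in p}\ol{\phi[A]}$, and then check the required properties. \emph{Nonemptiness:} given $A_1,\dots,A_n\in p$ and an entourage $V$ of $Y$, use UEB-uniform continuity to pick $\sigma\in\sn(G)$, $c>0$ with $\partial_\sigma(u,v)<c\Rightarrow(\phi(u),\phi(v))\in V$; since $p$ has finite $\sn(G)$-near intersections pick $x\in\bigwedge_j A_j(\sigma,c/2)$, so by \eqref{Eq:Basic_Fat_partial} there are $a_j\in A_j$ with $\partial_\sigma(a_j,x)<c$, hence $\phi(x)\in\bigcap_j V[\phi[A_j]]$; compactness of $Y$ then yields $\bigcap_j\ol{\phi[A_j]}\ne\emptyset$, and the full family $\{\ol{\phi[A]}:A\in p\}$ has the finite intersection property. \emph{Single-valuedness:} if $y_0\ne y_1$ both lay in $\bigcap_{A\in p}\ol{\phi[A]}$, choose open $B_i\ni y_i$ with $\ol{B_0}\cap\ol{B_1}=\emptyset$; then $\phi^{-1}[B_i]$ meets every member of $p$, so by maximality (and the fattening axioms, as in \Cref{Lem:NUBasic}) $\ol{\phi^{-1}[B_i]}\in p$; but by continuity of the $G$-action on $Y$ and compactness there is $U\in\cN_G$ with $\ol{U\ol{B_0}}\cap\ol{B_1}=\emptyset$, and for $\sigma,c$ with $\rmB_\sigma(c)\subseteq U$ we get $\ol{\phi^{-1}[B_0]}(\sigma,c)=\ol{\rmB_\sigma(c)\phi^{-1}[B_0]}=\ol{\phi^{-1}[\rmB_\sigma(c)B_0]}\subseteq\phi^{-1}[\ol{U\ol{B_0}}]$, which is disjoint from $\ol{\phi^{-1}[B_1]}$; thus $\ol{\phi^{-1}[B_0]}\perp\ol{\phi^{-1}[B_1]}$, contradicting that a near ultrafilter contains no two apart sets (immediate from $\NU(X)\cong\gl(X)/{\sim}$, see \Cref{Lem:NU_Prelims}). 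Finally, $\wt\phi\circ\iota_X=\phi$ because $\iota_X(x)$ contains $\ol{A}$ for all $A\in\rc(x,X)$, so $\phi(x)\in\bigcap_{A\in\iota_X(x)}\ol{\phi[A]}$; $\wt\phi$ is $G$-equivariant since $\ol{\phi[gA]}=g\ol{\phi[A]}$; and $\wt\phi$ is continuous, which I would verify directly using \Cref{Lem:NUBasic}\ref{Item:Filter_Lem:NUBasic}--\ref{Item:Base_Lem:NUBasic} (near ultrafilters close to $p$ contain prescribed fattenings $A_j(\sigma,c)$ of finitely many members of $p$) together with the nonemptiness estimate to confine $\wt\phi$ to a prescribed neighborhood of $\wt\phi(p)$.

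The main obstacle is the second paragraph: establishing UEB-uniform continuity of $\phi$ is exactly where all three hypotheses enter ($X$ being $G$-ED, so that the $\partial_\sigma$ are pseudometrics generating the UEB uniformity; $Y$ being a flow, so that $\rm{CB}(Y)=\rm{CB}_G(Y)$; and $\phi$ being $G$-equivariant). Once that is secured, the pushforward construction and its verification are routine, the only delicate point among them being single-valuedness of $\bigcap_{A\in p}\ol{\phi[A]}$, which rests on the ``no two apart sets'' property of near ultrafilters.
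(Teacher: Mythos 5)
The paper does not actually prove this statement; it is imported as a Fact from \cite{ZucThesis}, so there is no internal proof to compare against. Your argument is a correct, self-contained reconstruction in the paper's own fattening/near-ultrafilter framework, and it follows what is surely the intended route: for $X$ a $G$-ED space the $\partial_\sigma$ are pseudometrics generating the UEB uniformity, $\rmS_G(X)=\NU(X)$ is (by \eqref{Eq:Basic_Fat_partial} and Subsection~\ref{Subsection:Unif_Fat}) the Samuel compactification of $X$ with respect to that uniformity, and the universal property reduces to showing that every $G$-map into a flow is UEB-uniformly continuous, which your $\rm{CB}(Y)=\rm{CB}_G(Y)$ argument does correctly. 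The pushforward construction and its verification are all sound: in particular the single-valuedness argument correctly exploits that a near ultrafilter cannot contain two apart sets.

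One small expository gap: in the step $\wt\phi\circ\iota_X=\phi$, the justification ``$\iota_X(x)$ contains $\ol{A}$ for all $A\in\rc(x,X)$'' only yields $\phi(x)\in\bigcap_{A\in\rc(x,X)}\ol{\phi[A]}$, whereas $\iota_X(x)$ may contain regular closed sets not in $\rc(x,X)$. This is harmless: if $\wt\phi(\iota_X(x))=y^*\ne\phi(x)$, then choosing $B_0\ni\phi(x)$ and $B_1\ni y^*$ as in your single-valuedness step, $\ol{\phi^{-1}[B_0]}\in\rc(x,X)\subseteq\iota_X(x)$ while $\ol{\phi^{-1}[B_1]}\in\iota_X(x)$ by maximality, and these are apart --- the same contradiction. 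So the fix is exactly the technique you already deploy; it just needs to be said.
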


Note that $X$ is a $G$-ED flow exactly when $\pi_X\colon \rmS_G(X)\to X$ is an isomorphism.

When $G$ is discrete, the property is purely topological: a $G$-space $X$ is $G$-ED exactly when the space $X$ is ED, and $\rmS_G(X)$ is just the Gleason cover of $X$. Many theorems about compact ED spaces admit dynamical versions when appropriately formulated. For instance, recall that a continuous map $\phi\colon X\to Y$ between topological spaces is called \emph{pseudo-open} if whenever $A\in \op(X)$, we have $\Int_Y(\phi[A])\neq \emptyset$. Also recall that any $G$-map between minimal flows is pseudo-open. 

\begin{prop} 
    \label{Prop:MHP_Open} 
    Let $G$ be a topological group and $\phi\colon X\to Y$ a pseudo-open $G$-map between $G$-flows with $Y$ $G$-ED. Then $\phi$ is open.
\end{prop}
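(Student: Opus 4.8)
The plan is to prove openness pointwise: it suffices to show that for every $A\in\op(X)$ and every $x\in A$, the point $y:=\phi(x)$ lies in $\Int_Y(\phi[A])$, so that $\phi[A]=\Int_Y(\phi[A])$. Fix such an $x$. First I would exploit that $X$ is compact and regular and that the action $G\curvearrowright X$ is continuous: choose an open set $A'$ with $x\in A'\subseteq \ol{A'}\subseteq A$, and then (by continuity of the action at $(e_G,x)$) choose $U\in\cN_G$ and an open $B\ni x$ with $U\cdot B\subseteq A'$. Set $W:=\Int_Y(\phi[B])$; this is nonempty because $\phi$ is pseudo-open.

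The next ingredient is the (purely topological) observation that $\phi(x)\in\ol{W}$. Indeed, for any open $O\ni\phi(x)$ the set $B\cap\phi^{-1}[O]$ is open and nonempty, so pseudo-openness gives $\Int_Y(\phi[B\cap\phi^{-1}[O]])\ne\emptyset$, and this open set is contained in $O\cap\Int_Y(\phi[B])=O\cap W$; hence every neighborhood of $\phi(x)$ meets $W$. Now I would bring in the hypothesis that $Y$ is $G$-ED: applied to the regular closed set $\ol{W}$ and the identity neighborhood $U$, the defining property of $G$-extremal disconnectedness (Definition~\ref{Def:Fattening_Spaces_Props}), together with $\ol{U\cdot\ol W}=\ol{U\cdot W}$ and Proposition~\ref{Prop:Adequate}\ref{Item:Dense_Prop:Adequate}, yields
\[
\ol{W}\subseteq \Int_Y\big(\ol{U\cdot W}\big).
\]
It remains to bound $\ol{U\cdot W}$ inside $\phi[A]$. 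By $G$-equivariance, $U\cdot W\subseteq U\cdot\phi[B]=\phi[U\cdot B]\subseteq\phi[A']$, so $\ol{U\cdot W}\subseteq\ol{\phi[A']}\subseteq\phi[\ol{A'}]\subseteq\phi[A]$, where we used that $\phi[\ol{A'}]$ is compact, hence closed in the Hausdorff space $Y$, and that $\ol{A'}\subseteq A$. Combining, $y=\phi(x)\in\ol{W}\subseteq\Int_Y(\ol{U\cdot W})\subseteq\Int_Y(\phi[A])$, which is what we wanted.

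I expect the only genuine obstacle to be this last closure bookkeeping: one cannot work with $A$ directly and must interpose the intermediate open set $A'$ with $\ol{A'}\subseteq A$, precisely so that passing from $\phi[A']$ to its closure stays inside $\phi[\ol{A'}]\subseteq\phi[A]$ — the naive attempt fails because $\phi[A]$ need not be closed and $U\cdot B$ need not have compact closure. Everything else is routine: the pseudo-openness observation and the single application of $G$-extremal disconnectedness in the form $\ol{W}\subseteq\Int_Y(\ol{U\cdot W})$. Note that the argument uses neither surjectivity of $\phi$ nor minimality of $X$ or $Y$; in particular it also shows $\phi[X]$ is clopen in $Y$.
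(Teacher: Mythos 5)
Your proof is correct and follows essentially the same route as the paper's: shrink to a neighborhood $B$ of $x$ with $\ol{UB}$ inside the given open set, observe via pseudo-openness that $\phi(x)$ lies in the closure of $\Int_Y(\phi[B])$, apply the $G$-ED property to that regular closed set, and pull the resulting open set back inside $\phi[A]$ using equivariance and compactness. The only differences are cosmetic (you spell out the pseudo-openness step that the paper asserts, and arrange the shrinking as $\ol{A'}\subseteq A$, $UB\subseteq A'$ rather than $\ol{UA}\subseteq B$ directly).
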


\begin{remark}
    When $G$ is trivial, Proposition~\ref{Prop:MHP_Open} recovers the standard topological fact that any continuous pseudo-open map from a compact space to a compact ED space is open.
\end{remark}

\begin{proof}
    Fix $B\in \op(X)$ and $x\in B$, towards showing that $\phi(x)\in \rm{Int}(\phi[B])$. Fix $U\in \cN_G$ and $A\in \op(x, B)$ with $\ol{UA}\subseteq B$. Note that $\phi[UA] = U{\cdot}\phi[A]$ and $\phi\Big[\ol{UA}\Big] = \ol{\phi[UA]}$. Because $\phi$ is pseudo-open, we have $\ol{\rm{Int}(\phi[A])} \supseteq \phi[A]$. As $\phi(x)\in \phi[A]$ and $Y$ is $G$-ED, we have 
    \begin{align*}
        \phi(x) &\in \rm{Int}\Big(\ol{U{\cdot}\rm{Int}(\phi[A])}\Big) = \rm{Int}\Big(\ol{U{\cdot}\phi[A]}\Big)
        = \rm{Int}\Big(\phi\Big[\ol{UA}\Big]\Big)
        \subseteq \rm{Int}(\phi[B]). \qedhere
    \end{align*}
\end{proof}

\subsection{Ultracoproducts and highly proximal maps}
\label{Subsection:Ultracoproducts}

Using Fact~\ref{Fact:Gleason_Max_Comp}, we can discuss a simple case of the \emph{$G$-ultracoproduct} construction. While the preprint \cite{ZucUlts} defines this more generally, we focus here on the case of an ultraco\emph{power} of a $G$-ED flow $X$.

Let $I$ be a set. Form the $G$-space $I\times X$ given by $g\cdot (i, x) = (i, gx)$. Then $I\times X$ is $G$-ED, and $\rmS_G(I\times X)$ is the universal $G$-equivariant compactification of $I\times X$. The projection maps from $I\times X$ to $I$ and $X$ both continuously extend to  $G$-maps $\pi_I\colon \rmS_G(I\times X)\to \beta I$ and $\pi_X\colon \rmS_G(I\times X)\to X$ (the notation for the projection maps should arguably have more decoration, but we will use this when the context is clear). Given an ultrafilter $\cU\in \beta I$, we define the \emph{$G$-ultracopower} of $X$ along $\cU$ to be the $G$-flow $\Sigma_\cU^GX:= \pi_I^{-1}(\{\cU\})$. More combinatorially, given $p\in \rmS_G(I\times X)$, we have $p\in \Sigma_\cU^G X$ iff for every $S\in \cU$, we have $S\times X\in p$. We write $\pi_{X, \cU}\colon \Sigma_\cU^GX\to X$ for the \emph{ultracopower map}, the restriction of $\pi_X$ to $\Sigma_\cU^GX\subseteq \rmS_G(I\times X)$. 

Observe that in $\rm{Sub}_G(\rmS_G(I\times X))$, we have
$$\lim_{i\to \cU} (\{i\}\times X) = \Sigma_\cU^GX.$$
The $\subseteq$ direction follows from  the continuity of $\pi_I$, and the $\supseteq$ direction follows from the density of $I\times X\subseteq \rmS_G(I\times X)$. Thus it is informative to think about ultracopowers as a universal instance of Vietoris convergence of subflows in the following sense. Suppose $Z$ is a flow and $\la Y_i: i\in I\ra\subseteq \rm{Sub}_G(Z)$ is a tuple of subflows with $\displaystyle\lim_{i\to \cU} Y_i = Y\in \rm{Sub}_G(Z)$, then if for every $i\in I$ we have a $G$-map $\phi_i\colon X\to Y_i$, we obtain a $G$-map $\phi_\cU\colon \Sigma_\cU^GX\to Y$ by forming the $G$-map $\phi\colon I\times X\to Z$ given by $\phi(i, x) = \phi_i(x)$, continuously extending to $\alpha_G(I\times X)$, and restricting to $\Sigma_\cU^G X$. We will use this fact repeatedly.

We now turn to a brief discussion of terminology, in particular comparing the notions of irreducible and \emph{highly proximal} $G$-maps. One of our main theorems, Theorem~\ref{Thm:Rosendal_Minimal}, provides several equivalent characterizations of when a minimal $G$-ED flow $X$ has the property that for any ultracopower of $X$, the ultracopower map is highly proximal.  

\begin{defin}
    \label{Def:HP}
    Let $\pi\colon Y\to X$ be a factor map of $G$-flows. We call $\pi$ \emph{highly proximal} if for any $x\in X$, there is a net $(g_i)_{i\in I}$ from $G$ such that $g_i \cdot \pi^{-1}[x]$ converges in $\rmK(X)$ to a singleton. 
\end{defin}

When $X$ is minimal and $(Y, \pi)$ is an irreducible extension, then $Y$ is also minimal and $\pi$ is highly proximal. This is the setting originally considered in \cite{Auslander_Glasner}. Conversely, if $Y$ is minimal and $\pi\colon Y\to X$ is highly proximal, then $\pi$ is irreducible. Thus, \emph{among minimal flows}, $\pi_G(X)\colon \rmS_G(X)\to X$ is the universal highly proximal extension, and \emph{among minimal flows}, $\rmS_G(X)$ is maximally highly proximal, explaining the ``MHP'' terminology used in \cite{ZucMHP}. However, upon considering non-minimal flows, the notions of irreducible and highly proximal extensions become distinct. Indeed, later on we will need to consider non-trivial highly proximal extensions of $\rmM(G)$, and the domain of such an extension can never be minimal. Thus, with the authors of \cite{LeBoudecTsankov}, we have agreed upon the ``Gleason completion'' terminology used both here and in \cite{LeBoudecTsankov}.

We note that highly proximal maps onto minimal flows can be characterized by being ``almost irreducible'' in the following sense.

\begin{prop}
    \label{Prop:HP_Onto_Minimal}
    Suppose $X$ and $Y$ are $G$-flows with $X$ minimal, and let $\pi\colon Y\to X$ be a $G$-map. Then $\pi$ is highly proximal iff both of the following:
    \begin{itemize}
        \item 
        $Y$ contains a unique minimal subflow $Z$,
        \item 
        for every $B\in \op(Y)$ with $B\cap Z\neq \emptyset$, we have that $\pi_{\rm{fib}}(B)\neq \emptyset$.
    \end{itemize}
\end{prop}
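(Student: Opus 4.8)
The plan is to prove the two implications directly, leaning on one elementary fact about the Vietoris topology: for a \emph{singleton} $\{y\}\in\rmK(Y)$ the family $\{\rm{Subset}(V, Y): V\in\op(y, Y)\}$ is already a neighbourhood base, so a net $(K_i)$ in $\rmK(Y)$ converges to $\{y\}$ exactly when every open $V\ni y$ eventually contains all $K_i$. I will also use freely that, since $X$ is minimal, $\pi[Z']=X$ for every subflow $Z'\subseteq Y$ (as $\pi[Z']$ is a subflow of $X$); in particular $\pi$ is automatically onto and $\pi^{-1}[x]$ meets every subflow of $Y$, for each $x\in X$. Finally, since the graph of $\pi$ lies in $\rmF(Y\times X)$, Lemma~\ref{Lem:Fiber_Image_Open} gives that $\pi_{\rm{fib}}(B)$ is open in $X$ for every $B\in\op(Y)$.

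For the forward direction, suppose $\pi$ is highly proximal. To see $Y$ has a unique minimal subflow, I would take two minimal subflows $Z_1, Z_2$, fix any $x\in X$, choose $z_j\in Z_j\cap\pi^{-1}[x]$, and take a net $(g_i)$ in $G$ with $g_i\cdot\pi^{-1}[x]\to\{y^*\}$; then $g_iz_j\to y^*$ for $j=1,2$, and as each $Z_j$ is closed and $G$-invariant, $y^*\in Z_1\cap Z_2$, forcing $Z_1=Z_2=:Z$ by minimality. For the second bullet, given $B\in\op(Y)$ meeting $Z$, I would pick $z\in B\cap Z$, set $x=\pi(z)$, take $(g_i)$ with $g_i\cdot\pi^{-1}[x]\to\{y^*\}$, note $y^*\in Z$ as before, use minimality of $Z$ to find $h\in G$ with $hy^*\in B$, and then push forward by $h$: by continuity of the $G$-action on $\rmK(Y)$ and equivariance, $\pi^{-1}[hg_ix]=hg_i\cdot\pi^{-1}[x]\to\{hy^*\}\subseteq B$, so $\pi^{-1}[hg_ix]\subseteq B$ for large $i$, i.e.\ $hg_ix\in\pi_{\rm{fib}}(B)$.

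For the reverse direction, assume the two bullets (so $\pi$ is onto as noted). Fix $x\in X$; the goal is a net in $G$ shrinking $\pi^{-1}[x]$ to a point. I would pick $z^*\in Z$ and index over $B\in\op(z^*, Y)$ directed by reverse inclusion. For each such $B$, the second bullet gives $\pi_{\rm{fib}}(B)\neq\emptyset$, and it is open; by Lemma~\ref{Lem:Min}\ref{Item:AP_Lem:Min} and minimality of $X$ there is $g_B\in G$ with $g_Bx\in\pi_{\rm{fib}}(B)$, that is, $g_B\cdot\pi^{-1}[x]=\pi^{-1}[g_Bx]\subseteq B$. Then for any $V\in\op(z^*, Y)$ and any $B\subseteq V$ we get $g_B\cdot\pi^{-1}[x]\subseteq V$, so $g_B\cdot\pi^{-1}[x]\to\{z^*\}$ and $\pi$ is highly proximal.

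I expect the only real obstacle to be in the reverse direction: guaranteeing that $\pi^{-1}[x]$ can be moved by $G$ into \emph{arbitrarily small} neighbourhoods of $Z$. This is precisely where the second bullet is used in tandem with the openness of $\pi_{\rm{fib}}(B)$ (Lemma~\ref{Lem:Fiber_Image_Open}) and the minimality of $X$ (Lemma~\ref{Lem:Min}): openness plus minimality upgrade the bare nonemptiness of $\pi_{\rm{fib}}(B)$ to an actual group element witnessing $g_B\cdot\pi^{-1}[x]\subseteq B$. A secondary, purely cosmetic point is that Definition~\ref{Def:HP} as printed speaks of convergence ``in $\rmK(X)$'', which I read as $\rmK(Y)$ since the fibres of $\pi$ are subsets of $Y$; nothing substantive depends on this reading.
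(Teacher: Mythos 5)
Your proof is correct and follows essentially the same route as the paper's: both directions rest on the fact that fibers over a minimal base meet every subflow, that Vietoris limits of translated fibers inherit this, and that in the converse direction the openness of $\pi_{\rm{fib}}(B)$ (Lemma~\ref{Lem:Fiber_Image_Open}) plus minimality of $X$ produce a group element pushing $\pi^{-1}[x]$ inside $B$. The only differences are cosmetic (you argue uniqueness directly where the paper argues by contradiction, and you chase nets explicitly where the paper phrases things via the unique minimal subflow of $\ol{G\cdot\pi^{-1}[x]}$); your reading of ``$\rmK(X)$'' as ``$\rmK(Y)$'' in Definition~\ref{Def:HP} is also the intended one.
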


\begin{proof}
    Suppose $Z_0, Z_1\subseteq Y$ are distinct minimal flows. For any $x\in X$, we have $\pi^{-1}[x]\cap Z_i = \emptyset$ for each $i< 2$. Hence if $(g_i)_{i\in I}$ is a net from $G$ such that $(\pi^{-1}[g_ix])_{i\in I}$ is Vietoris convergent with limit $K\in \rmK(Y)$, then $K\cap Z_i\neq \emptyset$ for each $i< 2$, implying that $|K|> 1$. Hence $\pi$ cannot be highly proximal.

    For the rest of the proof, assume that $Z\subseteq Y$ is the unique minimal subflow of $Y$. First suppose $\pi$ is highly proximal, and fix $B\in \op(Y)$ with $B\cap Z\neq \emptyset$. To see that $\pi_{\rm{fib}}(B)\neq \emptyset$, fix $x\in X$. Then $\ol{G\cdot \pi^{-1}[x]}\subseteq \rmK(Y)$ has a unique minimal subflow consisting of singletons from $Z$. As $B\cap Z\neq \emptyset$, we have $(G\cdot \pi^{-1}[x])\cap \rm{Subset}(B, Y) \neq \emptyset$. In particular for some $g\in G$, we have $gx\in \pi_{\rm{fib}}(B)$. 

    Now suppose for every $B\in \op(Y)$ with $B\cap Z = \emptyset$ we have $\pi_{\rm{fib}}(B)\neq \emptyset$. Then $\pi_{\rm{fib}}(B)\in \op(X)$. Hence given any $x\in X$, there is $g\in G$ with $gx\in \pi_{\rm{fib}}(B)$. By letting $B$ range over the neighborhoods of some $y\in Y$, we see that  
    $\{y\}\in \ol{G\cdot \pi^{-1}[x]}$. 
\end{proof}

\subsection{Subspaces and subflows of Gleason completions}
\label{Subsection:NUlts}

In this subsection, we explore the structure of $\rmS_G(X)$ in more detail, obtaining some generalizations of results from \cite{ZucThesis}. Throughout this subsection, \textbf{fix a topological group $G$ and a $G$-space $X$}. 

\begin{defin}
    \label{Def:Subsets_of_G_Spaces}
    We say $A\in \op(X)$ is \emph{thick} if for every $F\in \fin{G}$, we have $\bigcap_{g\in F} gA\neq \emptyset$. Equivalently, for every $F\in \fin{G}$, there is $B\in \op(X)$ with $FB\subseteq A$. We say that $A\in \op(X)$ is \emph{pre-thick} if for any $U\in\cN_G$, $UA\subseteq X$ is thick, that is, if $[A]_{\rm{fat}}$ consists of thick sets.
\end{defin}

Note that when $X = G$ and $A\in \op(G)$, then the above definition of thick coincides with the usual definition of a thick subset of a group.

\begin{defin}
    \label{Def:Near_Filters}
    We call $\cF\subseteq \op(X)$ a \emph{near filter} if the following all hold:
    \begin{enumerate}
        \item 
        If $A\in \cF$ and $B\in \op(X)$ satisfies $A\subseteq \ol{B}$, then $B\in \cF$ ($\cF$ is \emph{monotone}).
        \item 
        If $A, B\in \cF$ and $U\in \cN_G$, then $UA\cap UB\in \cF$ ($\cF$ is \emph{closed under finite near intersections}).
        \item 
        If $A\in \op(X)$ and $UA\in \cF$ for every $U\in \cN_G$, then $A\in \cF$ ($\cF$ is \emph{determined by fattenings}).
    \end{enumerate}
    Whenever $\cF\subseteq \op(X)$ is a near filter, write $\rmC_\cF = \bigcap_{A\in \cF} \rmC_A$, and whenever $K\subseteq \rmS_G(X)$ is a closed subspace, write $\cF_K = \{A\in \op(X): K\subseteq \rmC_A\}$. It is routine to check that $\cF_K$ is a near filter.

    We call a near filter \emph{pre-thick} if all of its members are pre-thick.
\end{defin}

\begin{lemma}
    The closed subspaces of $\rmS_G(X)$ are in 1-1 correspondence with near filters on $\op(X)$. 
\end{lemma}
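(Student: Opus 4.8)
The correspondence in question sends a closed subspace $K\sub\rmS_G(X)=\NU(X)$ to the near filter $\cF_K=\{A\in\op(X):K\sub\rmC_A^X\}$, and a near filter $\cF$ to $\rmC_\cF=\bigcap_{A\in\cF}\rmC_A^X$; the plan is to show these are mutually inverse. Two preliminary points are routine: that $\cF_K$ is a near filter, and that $\rmC_\cF$ is a closed subspace — it is closed as an intersection of the $\rmC_A^X$, and it is non-empty because $\{\ol A:A\in\cF\}$ has finite near intersections. Indeed, for $A_1,\dots,A_k\in\cF$ and $U\in\cN_G$, iterating the monotonicity and finite-near-intersection axioms of Definition~\ref{Def:Near_Filters} (refining $U$ to chain more than two sets) produces $W\in\cF$ with $W\sub\bigcap_i UA_i$; as $W\ne\emptyset$ is open, $\bigwedge_i\ol{UA_i}\ne\emptyset$. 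So $\{\ol A:A\in\cF\}$ extends to a near ultrafilter, which lies in $\rmC_\cF$.

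For $\rmC_{\cF_K}=K$, the inclusion $K\sub\rmC_{\cF_K}$ is immediate from the definition of $\cF_K$. For the reverse I would use that $\{\rmN_B^X:B\in\rc(X)\}$ is a base for $\NU(X)$ (Lemma~\ref{Lem:NUBasic}): given $p\notin K$, pick $B\in\rc(X)$ with $p\in\rmN_B^X\sub\NU(X)\sminus K$; since $K\ne\emptyset$, $B\ne\emptyset$, so $A:=\Int_X(B)\in\op(X)$ with $\ol A=B$. Then $K\sub\rmC_A^X$, i.e.\ $A\in\cF_K$, while $p\in\rmN_A^X$, so $p\notin\rmC_A^X\supseteq\rmC_{\cF_K}$.

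The heart of the matter is $\cF_{\rmC_\cF}=\cF$. The inclusion $\cF\sub\cF_{\rmC_\cF}$ is clear. For the reverse, given $A\in\op(X)\sminus\cF$, the goal is to build a near ultrafilter $p$ with $\{\ol{A'}:A'\in\cF\}\sub p$ but $\ol A\notin p$, witnessing $p\in\rmC_\cF\cap\rmN_A^X$, hence $A\notin\cF_{\rmC_\cF}$. The idea: since $\cF$ is determined by fattenings, choose $U_0\in\cN_G$ with $U_0A\notin\cF$; then $\ol A$ is \emph{apart} from $\neg\ol{U_0A}$, since $\ol A(\sigma_0,c_0)\sub\ol{U_0A}$ once $\rmB_{\sigma_0}(c_0)\sub U_0$. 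One checks, using only monotonicity, that adjoining $\neg\ol{U_0A}$ to $\{\ol{A'}:A'\in\cF\}$ preserves finite near intersections: for $A_1,\dots,A_k\in\cF$ (with $k\ge1$, as $\cF\ne\emptyset$) and $U\in\cN_G$, take $W\in\cF$ open with $W\sub\bigcap_i UA_i$; were $W\sub\ol{U_0A}$, monotonicity would force $U_0A\in\cF$, a contradiction, so $V:=W\cap(X\sminus\ol{U_0A})$ is a non-empty open set sitting inside $\neg\ol{U_0A}$ and inside each $\ol{UA_i}$, which gives the required non-empty near intersection. Extend to a near ultrafilter $p$. Then $p\in\rmC_\cF$, and since $\neg\ol{U_0A}\in p$ with $\ol A\perp\neg\ol{U_0A}$, the symmetry and triangle-inequality axioms of the fattening system force $\ol A\notin p$ (two apart sets cannot both be near-contained in a single near ultrafilter: otherwise the defining near-intersection property fails at half the separating scale). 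Hence $p\in\rmN_A^X$.

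I expect this last construction to be where the real work lies, for a somewhat subtle reason: a near ultrafilter, unlike an ordinary ultrafilter, may near-contain both a regular closed set and its Boolean complement (its underlying $\sim$-class in $\gl(X)$ can straddle a boundary), so $\ol A\notin p$ genuinely cannot be read off from $\neg\ol{U_0A}\in p$ — one must pass through the apartness relation, and the delicate step is producing an apart \emph{witness} that is still compatible with $\cF$, which is exactly where the monotone and determined-by-fattenings structure of near filters is used. (Throughout, near filters are taken to be non-empty, equivalently to contain $X$; the degenerate family $\cF=\emptyset$ must be excluded by hand, since $\rmC_\emptyset=\NU(X)$ while $\cF_{\NU(X)}\ni X$.)
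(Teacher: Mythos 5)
Your proof is correct, and for the substantive inclusion it takes a different route from the paper. Both arguments set up the same pair of maps $K\mapsto\cF_K$ and $\cF\mapsto\rmC_\cF$ and reduce everything to showing they are mutually inverse; the divergence is in proving $\cF_{\rmC_\cF}\subseteq\cF$. The paper argues directly: given $A\in\cF_{\rmC_\cF}$ and $U\in\cN_G$, the set $\rmC_{UA}$ is a neighborhood of $\rmC_A\supseteq\rmC_\cF$, so by compactness of $\NU(X)$ one extracts a single $B\in\cF$ with $\rmC_B\subseteq\rmC_{UA}$, whence $B\subseteq\ol{UA}$, monotonicity gives $UA\in\cF$, and the determined-by-fattenings axiom gives $A\in\cF$. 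You argue the contrapositive by an explicit construction: from $A\notin\cF$ you pick $U_0$ with $U_0A\notin\cF$, adjoin the complement $\neg\ol{U_0A}$ to $\cF$ while preserving finite near intersections, and extend to a near ultrafilter $p\in\rmC_\cF$ which, by the apartness $\ol A\perp\neg\ol{U_0A}$ together with the symmetry and triangle-inequality axioms, cannot contain $\ol A$. The paper's version is shorter because it outsources the work to compactness and the neighborhood basis $\{\rmC_{B(\sigma,c)}\}$; yours is longer but makes two things visible that the paper leaves tacit, namely exactly where the determined-by-fattenings axiom is forced (to manufacture an apart witness compatible with $\cF$) and why $\neg\ol{U_0A}\in p$ alone does not imply $\ol A\notin p$ for near ultrafilters. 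You also verify $\rmC_{\cF_K}=K$ and flag the degenerate $\cF=\emptyset$ case, both of which the paper treats as implicit; these are genuinely needed for a 1--1 correspondence and your handling of them is correct.
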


\begin{proof}
    Suppose $\cF\subseteq \op(X)$ is a near filter. We need to show that $\cF_{\rmC_\cF} = \cF$. The $\supseteq$ direction is clear, so towards showing $\subseteq$, suppose $A\in \cF_{\rmC_\cF}$. Then $\rmC_{UA}$ is a neighborhood of $\rmC_\cF$, so as $\cF$ is closed under finite near intersections, we may find $B\in \cF$ with $\rmC_B\subseteq \rmC_{UA}$, implying that $B\subseteq \ol{UA}$. As $\cF$ is monotone, $UA\in \cF$. As $U\in \cN_G$ was arbitrary and $\cF$ is determined by fattenings, we have $A\in \cF$. 
\end{proof}

\begin{lemma}
    \label{Lem:Subflows_of_SGX}
    Given $A\in \op(X)$, then $\rmC_A\subseteq \rmS_G(X)$ contains a subflow iff $A$ is pre-thick. A closed subspace $Y\subseteq \rmS_G(X)$ contains a subflow iff $\cF_Y$ is prethick. Hence minimal subflows of $\rmS_G(X)$ are in 1-1 correspondence with maximal pre-thick near filters.
\end{lemma}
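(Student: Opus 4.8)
The plan is to prove the three assertions in sequence, each building on the previous. For the first claim, I would use the characterization from \Cref{Thm:Universal_Irreducible_Extension} together with the ultracoproduct machinery, but actually a more direct route is cleaner: $\rmC_A$ contains a subflow iff there is $p \in \rmC_A$ with $Gp \subseteq \rmC_A$ closed, equivalently, for which $\ol{G\cdot p}\subseteq \rmC_A$. To get the forward direction, suppose $Y \subseteq \rmC_A$ is a subflow and fix any $p\in Y$; then for each $U\in\cN_G$ we have $UA \in [p]_{\rm{fat}}$... no — we need to argue $A$ itself is pre-thick. Since $Y$ is $G$-invariant and contained in $\rmC_A$, for every $g\in G$ we have $gY \subseteq \rmC_A$, so $Y \subseteq g^{-1}\rmC_A = \rmC_{g^{-1}A}$ (using $G$-equivariance of the $\rmC_{(-)}$ correspondence, i.e. $g\rmC_B = \rmC_{gB}$ appropriately interpreted via the $G$-action on $\rmS_G(X)$). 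Hence for any $F\in\fin G$ we get $\emptyset\neq Y\subseteq \bigcap_{g\in F}\rmC_{g^{-1}A}$. Picking $p$ in this intersection, every $B\in p$ and every $U\in\cN_G$ satisfies $UB \wedge g^{-1}A \neq\emptyset$ for all $g\in F$ (near finite intersection property with the fattened sets $g^{-1}A(\sigma,c)$); unwinding, this forces $\bigcap_{g\in F} U(g^{-1}A)\neq\emptyset$, i.e. $\bigcap_{g\in F} g^{-1}(UA)\neq\emptyset$, which is exactly pre-thickness of $A$ (after replacing $U$ by a square root and using that $\cN_G$ is symmetric and $g$ acts by homeomorphisms commuting suitably with fattenings — the key point being $g\cdot \ol{A}(\sigma,c)$ relates to $\ol{gA}$ up to a controlled change of semi-norm, but since we quantify over all of $\cN_G$ this is harmless). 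For the converse, if $A$ is pre-thick, then $\{UA : U\in\cN_G\}$ generates a near filter all of whose members are thick; by a Zorn's lemma argument extend to a maximal near ultrafilter (over $\rc(X)$) containing this family — the near finite intersection property is preserved precisely because thick sets have the finite intersection property up to translates. Any $p$ in the resulting nonempty closed set $\rmC_\cF \subseteq \rmC_A$: I would then check $\ol{G\cdot p}$ stays inside $\rmC_A$ by noting thickness of the members of $\cF$ is exactly what prevents $gp \notin \rmC_A$ for any $g$.

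For the second assertion, a closed subspace $Y\subseteq\rmS_G(X)$ contains a subflow iff $\cF_Y$ is pre-thick: the forward direction follows from the first assertion applied to each $A\in\cF_Y$, since a subflow contained in $Y\subseteq\rmC_A$ witnesses pre-thickness of $A$. For the converse, if $\cF_Y$ is pre-thick, I want to produce a subflow inside $Y = \rmC_{\cF_Y}$ (using the lemma identifying closed subspaces with near filters). The idea is to run the same Zorn argument as above but starting from $\cF_Y$ itself rather than a single pre-thick set: extend $\cF_Y$ to a maximal pre-thick near filter, or rather to a near ultrafilter $p$ such that $[p]_{\rm{fat}}$ consists of thick sets, which exists because the union of $\cF_Y$ with all translates of all its fattenings still has the near finite intersection property (each finite piece involves finitely many translates of finitely many pre-thick sets, and pre-thickness is exactly closure of thickness under translates-and-fattenings). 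Then $\ol{G\cdot p}\subseteq \rmC_{\cF_Y}=Y$ because every $A\in\cF_Y$ is pre-thick, hence $gp\in\rmC_A$ for all $g$ — the thickness ensures the fattened translates still meet $A$.

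The third assertion is then a formal consequence: minimal subflows of $\rmS_G(X)$ correspond to minimal closed $G$-invariant subspaces; by Zorn each minimal subflow sits inside a maximal... no, we want the other direction. Given a minimal subflow $Z$, $\cF_Z$ is pre-thick by the second assertion, and it should be maximal among pre-thick near filters: if $\cF_Z \subsetneq \cF$ with $\cF$ pre-thick, then $\rmC_\cF \subsetneq \rmC_{\cF_Z} = Z$ would be a nonempty closed set containing (by the second assertion) a subflow, contradicting minimality of $Z$. Conversely a maximal pre-thick near filter $\cF$ has $\rmC_\cF$ containing a subflow $Z$; then $\cF_Z \supseteq \cF$ is pre-thick, so by maximality $\cF_Z = \cF$ and $Z = \rmC_\cF$, and $Z$ is minimal since any proper subflow $Z'\subsetneq Z$ would give $\cF_{Z'}\supsetneq\cF$ pre-thick, contradicting maximality.

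\textbf{Main obstacle.} The delicate point throughout is the interaction between the group translation action on $\rmS_G(X)$ and the fattening operations: I need that $g\cdot\ol{A}(\sigma,c)$ and $\ol{gA}(\sigma',c')$ are comparable for suitable $(\sigma',c')$, or more precisely that the family $\{UA : U\in\cN_G\}$ of left-fattenings interacts correctly with left translates $g(UA) = (gUg^{-1})(gA)$, so that ``$A$ pre-thick'' genuinely matches ``$\rmC_A$ contains a $G$-invariant set.'' Since $G$ need not have small invariant neighborhoods, $gUg^{-1}$ need not lie in $\cN_G$, so one must be slightly careful: the correct statement quantifies $A$ pre-thick as ``$UA$ thick for all $U\in\cN_G$'' and thickness is a two-sided-invariant notion for subsets of $G$ but here $A\subseteq X$, so I'd want Definition~\ref{Def:Subsets_of_G_Spaces} to be robust under this. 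I expect this bookkeeping — verifying that the near finite intersection property is preserved under passing to $[p]_{\rm{fat}}$ together with $G$-translates, which is the engine of both Zorn arguments — to be the technical heart, while the rest is routine manipulation of the correspondences already established in the subsection.
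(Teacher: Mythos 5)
Your overall strategy is the same as the paper's: reduce everything to showing that the family of $G$-translates of (fattenings of) the relevant sets has the near finite intersection property, so that $\bigcap_{g,A}\rmC_{gA}$ is a nonempty closed $G$-invariant set, and your derivation of the third assertion from the second is correct. But there are two genuine problems in the first two steps.

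First, your converse for the single-set claim is set up wrongly. You extend only $\set{UA : U\in\cN_G}$ to a near ultrafilter $p$ and then propose to ``check that $\ol{G\cdot p}$ stays inside $\rmC_A$ by noting thickness of the members.'' Thickness of $A$ gives no control over where an \emph{individual} translate $gp$ of a \emph{particular} $p\in\rmC_A$ lands: take $G=X=\bbZ$ and $A=\bigcup_n[2^{2n},2^{2n+1}]$, which is thick; an ultrafilter $p$ concentrating on the left endpoints $\set{2^{2n}:n<\omega}\subseteq A$ satisfies $(-1)\cdot p\notin\rmC_A$. What pre-thickness actually buys is that the family $\set{gA : g\in G}$ itself has the NFIP, so that $\bigcap_{g\in G}\rmC_{gA}$ is nonempty — and this set is $G$-invariant \emph{by construction}, so no orbit-closure check is needed. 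The translates must be put into the family before you extend, not verified afterwards.

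Second, the NFIP verification for $\set{gA : A\in\cF_Y,\ g\in G}$ — which you defer as ``bookkeeping'' — is the actual content of the lemma, and the heuristic you offer for it (``each finite piece involves finitely many translates of finitely many pre-thick sets'') is not a proof: two thick sets can be disjoint (e.g.\ $\bigcup_n[2^{2n},2^{2n+1}]$ and its complement's thick part), so individual pre-thickness of the $A_i$ is not enough. One must use that $\cF_Y$ is closed under finite \emph{near} intersections, so that $B:=\bigcap_{i<n}VA_i$ is a single member of $\cF_Y$, hence $VB$ is thick, hence $\bigcap_{i<n}g_iVB\neq\emptyset$; then $g_iVB\subseteq g_iV^2A_i\subseteq Ug_iA_i$ once $V$ is shrunk so that $g_iV^2\subseteq Ug_i$ for each $i<n$. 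This conjugation step (legitimate because $F$ is finite and one quantifies over all $U$) is exactly the computation the paper carries out, and without it the argument does not close.
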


\begin{proof}
    We have that $A$ is pre-thick iff the collection $\{gA: g\in G\}$ has the NFIP iff $\bigcap_{g\in G} C_{gA}\neq \emptyset$, and when non-empty, this is a subflow of $\rmS_G(X)$. 

    If $Y\subseteq \rmS_G(X)$ contains a subflow, then $\cF_Y$ is pre-thick by the above. Conversely, suppose $\cF_Y$ is pre-thick. We will show that $\{gA: A\in \cF_Y\}$ has the NFIP, which will imply that $Y$ contains the subflow $\bigcap_{A\in \cF, g\in G} C_{gA}$. Fix $A_0,..., A_{n-1}
\subseteq \cF_Y$, $g_0,..., g_{n-1}\in G$, and $U\in \cN_G$. Let $V\in \cN_G$ be suitably small (to be determined). Then $\bigcap_{i< n} VA_i\in \cF$, so we have $\bigcap_{i< n} g_i\cdot (V\cdot(\bigcap_{i< n} VA_i))\neq \emptyset$, and $\bigcap_{i< n} g_i\cdot(V\cdot (\bigcap_{i< n} VA_i))\subseteq \bigcap_{i< n} g_iV^2A_i$. Thus if $V$ satisfies $g_iV^2\subseteq Ug_i$ for each $i< n$, we have $\bigcap_{i< n} Ug_iA_i\neq \emptyset$ as desired.  
\end{proof}

\section{Topological groups with tractable minimal dynamics}
\label{Section:TMD}

This section introduces the class of topological groups with \emph{tractable minimal dynamics} (Definition~\ref{Def:Dyn_Tractable}) and provides several equivalent characterizations of this property. Two such characterizations are natural enlargements of the classes $\sf{CAP}$ and $\sf{SCAP}$ from the introduction. Another considers ultracopowers of $\rmM(G)$ and the properties of the ultracopower map. Yet another considers the \emph{Rosendal criterion}, which in the case of a Polish group acting on a Polish space characterizes when the action has a comeager orbit.

\subsection{Concrete minimal dynamics}
\label{Subsection:Concrete}

The classes of (strongly) CAP groups and UEB groups are defined in \cite{BassoZucker}, where one of the main results is that these classes coincide. Here, we take the opportunity to introduce a new, neutral name for this class of topological groups, namely, the groups with \emph{concrete minimal dynamics}. We present some key features of this class of groups, both generalizing the main theorem of \cite{BassoZucker} while also introducing some concepts that will be used in characterizing the broader class of topological groups with \emph{tractable} minimal dynamics.

The following definition is implicit in \cite{ZucDirectGPP}.

\begin{defin}
    \label{Def:Dyn_Bounded}
    A $G$-space $X$ is \emph{$G$-finite} if for any $U\in \cN_G$ and any $\cF\subseteq \op(X)$ with the property that $\{UA: A\in \cF\}$ is pairwise disjoint, then $\cF$ is finite.
\end{defin}

\begin{lemma}
    \label{Lem:Dyn_Bounded}
    Let $X, Y$ be $G$-spaces, and let $\phi\colon X\to Y$ be a $G$-map. Moreover, let $\alpha\colon K \to G$ be a continuous group homomorphism with dense image.  
    \begin{enumerate}[label=\normalfont(\arabic*)]
        \item\label{Item:Factor_Lem:DB} 
        If $\phi[X]\subseteq Y$ is dense and $X$ is $G$-finite, then $Y$ is $G$-finite.
        \item \label{Item:Irred_Lem:DB}
        If $\phi$ is irreducible and $Y$ is $G$-finite, then $X$ is $G$-finite.
        \item \label{Item:Homo_Lem:DB}
        If $X$ is $K$-finite, then $X$ is $G$-finite.
    \end{enumerate}
\end{lemma}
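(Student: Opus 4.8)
The plan is to prove each of the three items by contradiction, in each case starting from a witness that the ``target'' space fails finiteness and transporting it across the relevant map.

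For \ref{Item:Factor_Lem:DB}, suppose $Y$ is not $G$-finite, witnessed by $U\in\cN_G$ and an infinite family $\{B_n:n<\omega\}\subseteq\op(Y)$ with the sets $UB_n$ pairwise disjoint. Set $A_n:=\phi^{-1}[B_n]$; these are open, nonempty (since $\phi[X]$ is dense and each $B_n$ is open nonempty), and pairwise disjoint, hence distinct. Equivariance of $\phi$ gives $\phi[UA_n]\subseteq U\phi[A_n]\subseteq UB_n$, so $\phi[UA_n\cap UA_m]\subseteq UB_n\cap UB_m=\emptyset$ and therefore $UA_n\cap UA_m=\emptyset$ for $n\ne m$, contradicting $G$-finiteness of $X$. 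Item \ref{Item:Homo_Lem:DB} is equally quick: regard $X$ as a $K$-space via $k\cdot x=\alpha(k)x$, and suppose $X$ is not $G$-finite, witnessed by $U\in\cN_G$ and an infinite $\cF\subseteq\op(X)$ with the $G$-fattenings $U\cdot_G A$ $(A\in\cF)$ pairwise disjoint. By continuity of $\alpha$ there is $W\in\cN_K$ with $W\subseteq\alpha^{-1}[U]$, and then $W\cdot_K A=\{\alpha(k)a:k\in W,\ a\in A\}\subseteq U\cdot_G A$ for each $A\in\cF$ (with $A\subseteq W\cdot_K A$ since $e_K\in W$); thus the sets $W\cdot_K A$ are pairwise disjoint while $\cF$ is infinite, contradicting $K$-finiteness of $X$.

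Item \ref{Item:Irred_Lem:DB} is the one requiring real care, since it runs the transport the other way, through an irreducible map, and must go through the dense set $\phi[X]$ to avoid spurious coincidences. Suppose $X$ is not $G$-finite, witnessed by $U\in\cN_G$ and infinite $\{A_n:n<\omega\}\subseteq\op(X)$ with the sets $UA_n$ pairwise disjoint; fix $V\in\cN_G$ with $V^2\subseteq U$ and put $B_n:=\Int\big(\phi_{\rm{fib}}(VA_n)\big)$. Each $VA_n$ is a nonempty open subset of $X$, so $B_n\ne\emptyset$ by irreducibility of $\phi$. Using the identity $g\cdot\phi_{\rm{fib}}(C)=\phi_{\rm{fib}}(gC)$, which follows from $\phi^{-1}[gy]=g\,\phi^{-1}[y]$, one checks that if $y\in VB_n$, say $y=vb$ with $v\in V$ and $b\in B_n$, then $v^{-1}\phi^{-1}[y]=\phi^{-1}[v^{-1}y]\subseteq VA_n$, so $\phi^{-1}[y]\subseteq (vV)A_n\subseteq V^2A_n\subseteq UA_n$. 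Consequently, if $VB_n\cap VB_m\ne\emptyset$ for some $n\ne m$, this open set meets the dense set $\phi[X]$ at a point $y$ whose (then nonempty) fiber lies in $UA_n\cap UA_m=\emptyset$, which is absurd. Hence the sets $VB_n$ are pairwise disjoint — in particular the $B_n$ are distinct, giving an infinite family in $\op(Y)$ — contradicting $G$-finiteness of $Y$. I expect this last item to be the main obstacle: the work is in getting the fiber-image bookkeeping right and remembering to route through $\phi[X]$ so that intersections supported off $\im(\phi)$ cannot occur.
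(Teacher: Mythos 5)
Your proof is correct and takes essentially the same route as the paper's: preimages for item (1), interiors of fiber images for item (2), and pullback of the neighborhood along $\alpha$ for item (3). The only cosmetic difference is in item (2), where the paper works directly with $U\cdot\Int(\phi_{\rm{fib}}(B_n))$ rather than inserting the $V^2\subseteq U$ buffer; your explicit appeal to density of $\phi[X]$ to rule out intersections supported off $\im(\phi)$ is a point the paper's one-line proof leaves implicit.
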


\begin{proof}
    \ref{Item:Factor_Lem:DB}: If $\{A_n: n< \omega\}\subseteq \op(Y)$ and $U\in \cN_G$ are such that $\{UA_n: n< \omega\}$ is pairwise disjoint, then $\{\phi^{-1}[UA_n]: n< \omega\} = \{U\phi^{-1}[A_n]: n< \omega\}\subseteq \op(X)$ is pairwise disjoint.
    \vspace{3 mm}

    \noindent
    \ref{Item:Irred_Lem:DB}: If $\{B_n: n< \omega\}\subseteq \op(X)$ and $U\in \cN_G$ are such that $\{UB_n: n <\omega\}$ is pairwise disjoint, then $\{U\cdot\Int(\phi_{\rm{fib}}(B_n)): n< \omega\}\subseteq \op(Y)$ is pairwise disjoint.
    \vspace{3 mm}

    \noindent
    \ref{Item:Homo_Lem:DB}: If $\{B_n: n< \omega\}\subseteq \op(X)$ and $U\in \cN_G$ are such that $\{UB_n: n <\omega\}$ is pairwise disjoint, then $\{\alpha^{-1}(U)B_n: n <\omega\}$ are pairwise disjoint.
\end{proof}

We next compare $G$-finiteness with the following notion of local topological transitivity implicit in \cite{BYMT}.

\begin{defin}
    \label{Def:U_TT}
    Let $G$ be a topological group and $X$ a $G$-space. Given $U\in \cN_G$ and $A\in \op(X)$, we say that $A$ is \emph{$U$-topologically transitive}, or $U$-TT for short, if whenever $B\in \op(A)$, we have $A\subseteq \ol{UB}$. Note that when $U = G$ and $A = X$, we recover the usual definition of $X$ being a topologically transitive $G$-space.  
\end{defin}

Recall the relations $Q_U^X$ defined in Subsection~\ref{Subsection:L-uniformity}.

\begin{lemma}
    \label{Lem:U_TT}
    Fix a $G$-space $X$. 
    \begin{enumerate}[label=\normalfont(\arabic*)]
        \item \label{Item:UTT_Lem:UTT}
        If $x\in X$, $U\in \cN_G$, and $\Int(Q_U[x])\neq \emptyset$, then $\Int(Q_U[x])$ is $U^2$-TT.
        \item \label{Item:Interior_Lem:UTT}
        If $U\in \cN_G$ and $A \in \op(X)$ is $U$-TT, then for each $x \in A$, we have $A\subseteq \Int(Q_U[x])$.   
    \end{enumerate}
\end{lemma}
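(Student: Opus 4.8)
The plan for \ref{Item:Interior_Lem:UTT} is a direct unwinding of the definition of $Q_U[x]$. Fix $x\in A$; it suffices to show $A\subseteq Q_U[x]$, since then $A\subseteq \Int(Q_U[x])$ because $A$ is open. By definition $Q_U[x] = \bigcap\{\ol{UC}: C\in \op(X),\ x\in \ol C\}$, so I fix such a $C$ and argue $A\subseteq \ol{UC}$. Since $A$ is an open neighborhood of $x$ and $x\in\ol C$, the set $A\cap C$ is a nonempty open subset of $A$, i.e.\ $A\cap C\in\op(A)$; then $U$-topological transitivity of $A$ gives $A\subseteq \ol{U(A\cap C)}\subseteq \ol{UC}$, as desired. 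I expect no obstacle here.

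For \ref{Item:UTT_Lem:UTT}, write $V:=\Int(Q_U[x])$, assumed nonempty, and fix $B\in\op(V)$; the goal is $V\subseteq \ol{U^2B}$. The key step is the auxiliary claim $x\in\ol{UB}$. To prove it, suppose not: there is $W\in\op(x,X)$ with $W\cap UB=\emptyset$. Using that $U$ is symmetric, this rearranges to $UW\cap B=\emptyset$ (if $b=uw\in UW\cap B$ then $w=u^{-1}b\in UB\cap W$), and since $B$ is open this upgrades to $\ol{UW}\cap B=\emptyset$. But $W\in\op(X)$ and $x\in W\subseteq\ol W$, so $W$ is one of the sets in the intersection defining $Q_U[x]$, whence $Q_U[x]\subseteq\ol{UW}$ and therefore $\emptyset\neq B\subseteq V\subseteq Q_U[x]\subseteq\ol{UW}$ — a contradiction. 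This proves $x\in\ol{UB}$. Now $UB$ is open (each $g\in U$ acts on $X$ as a homeomorphism, so $UB=\bigcup_{u\in U}uB$ is open), and $x\in\ol{UB}$, so $UB$ itself is one of the sets $C$ appearing in $Q_U[x]$; hence $Q_U[x]\subseteq\ol{U(UB)}=\ol{U^2B}$, and in particular $V\subseteq\ol{U^2B}$. Since $B\in\op(V)$ was arbitrary, $V$ is $U^2$-TT.

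The only routine verifications needed along the way are: that $UA'\in\op(X)$ for $A'\in\op(X)$ (translates of open sets by elements of $U$ are open), the symmetry manipulation converting $W\cap UB=\emptyset$ into $UW\cap B=\emptyset$, and the elementary fact that an open set disjoint from a set is disjoint from its closure. None of these is a genuine difficulty, so the real content of the lemma is isolating and proving the claim $x\in\ol{UB}$ in part \ref{Item:UTT_Lem:UTT} via the auxiliary neighborhood $W$; everything else is bookkeeping with the definition of $Q_U[x]$.
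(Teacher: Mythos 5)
Your proof is correct and follows essentially the same route as the paper's: part (2) is the same direct unwinding of the definition of $Q_U[x]$, and in part (1) you establish the same key claim $x\in\ol{UB}$ via the same symmetry manipulation, merely phrased as a proof by contradiction where the paper argues directly that every neighborhood of $x$ meets $UB$.
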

    
\begin{proof}
    \ref{Item:UTT_Lem:UTT}: Fix $B\in \op(\rm{Int}(Q_U[x]))$. 
    For any open neighborhood $A$ of $x$, we have $B \sub \ol{UA}$, so in particular $UB \cap A \ne \emptyset$, since $U$ is assumed to be symmetric. It follows that $x\in \ol{UB}$, hence $\rm{Int}(Q_U[x])\subseteq Q_U[x] \sub \ol{U^2B}$, by definition of $Q_U$. Since $B$ was arbitrary, $\rm{Int}(Q_U[x])$ is $U^2$-TT.
    \vspace{3 mm}
        
    \noindent
    \ref{Item:Interior_Lem:UTT}: Fix $x \in A$ and $B \in \op(X)$ with $x \in \ol{B}$. Then $B \cap A$ is nonempty and thus $A \sub \ol{UB}$, since $A$ is $U$-TT. As $B$ was arbitrary, we have $A \sub \Int(Q_U[x])$.
\end{proof}

The next theorem generalizes the main results of \cite{BassoZucker}, which discusses a subset of the following items in the case $X = \mg$. In that case, using the terminology from \cite{BassoZucker}, items \ref{Item:UEB_Thm:Concrete}, \ref{Item:R_Thm:Concrete}, \ref{Item:PsAll_Thm:Concrete} and \ref{Item:PsBase_Thm:Concrete} are ways of saying that $G$ is UEB, item \ref{Item:Ult_Thm:Concrete} can be thought of as saying that $G\in \sf{SCAP}$ \cite{ZucUlts}*{Corollary~6.22}.  
Notice that $\ref{Item:UEB_Thm:Concrete}\Rightarrow  \ref{Item:DB_Thm:Concrete}$ is a dynamical generalization of the fact that compact discrete spaces are finite and remains true without the $G$-ED assumption. 

\begin{theorem}
    \label{Thm:Concrete}
    If $G$ is a topological group and $X$ is a $G$-ED flow, then the following are equivalent.
    \begin{enumerate}[label=\normalfont(\arabic*)]
        \item \label{Item:DB_Thm:Concrete}
        $X$ is $G$-finite. 
        \item \label{Item:UTT_Thm:Concrete}
        For every $U\in \cN_G$, $X$ can be covered by (finitely many) open $U$-TT sets.
        \item \label{Item:UEB_Thm:Concrete}
        $X$ is $G$-discrete.
        \item\label{Item:R_Thm:Concrete}
        For each $U\in \cN_G$ and $x\in X$, $x\in\rm{Int}(R_U[x])$.
        \item \label{Item:Ult_Thm:Concrete}
        For any ultracopower of $X$, the ultracopower map is an isomorphism.
        \item \label{Item:PsAll_Thm:Concrete}
        For each $\sigma\in \rm{SN}(G)$, $\partial_\sigma$ is compatible.
        \item \label{Item:PsBase_Thm:Concrete}
        For some base of semi-norms $\cB\subseteq \rm{SN}(G)$ and for each $\sigma\in \cB$, $\partial_\sigma$ is compatible.
    \end{enumerate}
\end{theorem}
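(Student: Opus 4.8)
# Proof proposal for Theorem~\ref{Thm:Concrete}

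The plan is to prove the equivalences by a cycle through the topological characterizations, with a few side implications, leaning heavily on the fattening-space machinery of Section~\ref{Section:NUlts_Fattenings}. The key conceptual point is that, since $X$ is $G$-ED, the $G$-fattening uniformity (the UEB uniformity) is a genuine uniformity with the $\partial_\sigma$ pseudometrics generating it, and $G$-discreteness (\ref{Item:UEB_Thm:Concrete}) is exactly compatibility of this uniformity, so items \ref{Item:UEB_Thm:Concrete}, \ref{Item:R_Thm:Concrete}, \ref{Item:PsAll_Thm:Concrete}, \ref{Item:PsBase_Thm:Concrete} are essentially trivially equivalent: \ref{Item:UEB_Thm:Concrete}$\Leftrightarrow$\ref{Item:R_Thm:Concrete} is Definition~\ref{Def:L_Isolated} (using $G$-ED to pass between $Q_U$ and $R_U$), \ref{Item:UEB_Thm:Concrete}$\Leftrightarrow$\ref{Item:PsAll_Thm:Concrete} is the observation (from the discussion after Definition~\ref{Def:Fattening_SPM} and Lemma~\ref{Lem:ED_Pseudo}) that $x$ is $\partial_\sigma$-compatible for all $\sigma$ iff $x$ is $G$-isolated, and \ref{Item:PsAll_Thm:Concrete}$\Rightarrow$\ref{Item:PsBase_Thm:Concrete} is immediate while \ref{Item:PsBase_Thm:Concrete}$\Rightarrow$\ref{Item:PsAll_Thm:Concrete} follows since every $\sigma\in\rm{SN}(G)$ is dominated by a finite sum of members of any base $\cB$, and $\partial_\sigma$-balls control sums in the obvious way. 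So the real content is linking these to \ref{Item:DB_Thm:Concrete} ($G$-finite), \ref{Item:UTT_Thm:Concrete} (covering by $U$-TT sets), and \ref{Item:Ult_Thm:Concrete} (ultracopower maps are isomorphisms).

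For \ref{Item:UEB_Thm:Concrete}$\Rightarrow$\ref{Item:DB_Thm:Concrete}: if $X$ is $G$-discrete, fix $U\in\cN_G$ and pick $\sigma$, $c$ with $\rmB_\sigma(c)\subseteq U$. For each $x$, $x\in\Int(\rmB_{\partial_\sigma}(x,c/3))$, so $\{\Int(\rmB_{\partial_\sigma}(x,c/3)):x\in X\}$ is an open cover; extract a finite subcover by $n$ sets. Any family $\cF$ with $\{UA:A\in\cF\}$ pairwise disjoint must have $|\cF|\le n$, since two members of $\cF$ landing in the same cover element would have a $\partial_\sigma$-distance-$<2c/3$ pair of points, and then the $U$-fattenings (which contain $\rmB_{\partial_\sigma}(\cdot,c)$-balls by \eqref{Eq:Basic_Fat_partial}) would intersect. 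For \ref{Item:DB_Thm:Concrete}$\Rightarrow$\ref{Item:UTT_Thm:Concrete}: given $U$, use a Zorn/maximality argument to find a maximal family $\{A_i\}$ of nonempty open sets with $\{UA_i\}$ pairwise disjoint; $G$-finiteness makes it finite, and maximality forces $\{U^2 A_i\}$ to cover $X$ — more precisely, take each $A_i$ of the form $\Int(Q_U[x_i])$, which is $U^2$-TT by Lemma~\ref{Lem:U_TT}\ref{Item:UTT_Lem:UTT}; if $X\ne\bigcup_i\ol{U^2A_i}$ we could enlarge the family, contradiction (adjusting $U$ up front to absorb the square). For \ref{Item:UTT_Thm:Concrete}$\Rightarrow$\ref{Item:R_Thm:Concrete}: cover $X$ by finitely many open $U$-TT sets $A_1,\dots,A_n$; for $x\in X$, $x$ lies in some $A_j$, and by Lemma~\ref{Lem:U_TT}\ref{Item:Interior_Lem:UTT} we get $A_j\subseteq\Int(Q_U[x])$, hence $x\in\Int(Q_U[x])$, which is $G$-isolation.

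For the ultracopower equivalence I would argue \ref{Item:UEB_Thm:Concrete}$\Rightarrow$\ref{Item:Ult_Thm:Concrete} and \ref{Item:Ult_Thm:Concrete}$\Rightarrow$\ref{Item:R_Thm:Concrete}. Given an index set $I$ and $\cU\in\beta I$, the ultracopower map $\pi_{X,\cU}\colon\Sigma^G_\cU X\to X$ is always a factor map; we must see it is injective. Suppose $p\ne q\in\Sigma^G_\cU X$ with $\pi_{X,\cU}(p)=\pi_{X,\cU}(q)=:x$. Separate $p,q$ by clopen sets in $\rmS_G(I\times X)$, and push down via Proposition~\ref{Prop:Level_Maps}/the fattening description: there is $\sigma\in\rm{SN}(G)$ with $\partial_\sigma^{\Sigma^G_\cU X}(p,q)>0$; but one shows $\partial_\sigma^{\Sigma^G_\cU X}(p,q)\le\partial_\sigma^X(\pi_{X,\cU}(p),\pi_{X,\cU}(q))=0$ whenever $X$ is $G$-discrete, using that on a $G$-discrete ($=$ $\partial_\sigma$-compatible) flow the $\partial_\sigma$ pseudometric is determined by the topology, together with the fact that $I\times X$ is also $G$-discrete and $\Sigma^G_\cU X\subseteq\rmS_G(I\times X)$ sits inside the completion where $\partial_\sigma$ is controlled by neighborhoods of $x$ in $X$. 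Conversely, if \ref{Item:R_Thm:Concrete} fails at some $x_0$, so $x_0\notin\Int(R_{\sigma,c}[x_0])$ for some $\sigma,c$, I would build a net $(x_i)_{i\in I}\to x_0$ in $X$ with $\partial_\sigma(x_i,x_0)\ge c$ for all $i$, take $\cU$ a limit ultrafilter refining the net's filter, and observe that the corresponding point $p=\lim_\cU(i,x_i)\in\Sigma^G_\cU X$ satisfies $\pi_{X,\cU}(p)=x_0$ but $p\ne\iota_X(x_0)$ (they are separated by $\partial_\sigma$, since $\partial_\sigma^{\Sigma^G_\cU X}(p,\iota_X(x_0))\ge c$ by lower semicontinuity, Proposition~\ref{Prop:Distance_X_NUX}, and the choice of the net), so $\pi_{X,\cU}$ is not injective, contradicting \ref{Item:Ult_Thm:Concrete}. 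The main obstacle I anticipate is the bookkeeping in this last direction: ensuring the net can genuinely be taken with $\partial_\sigma$-distance bounded below while still topologically converging to $x_0$ (this is where $G$-ED of $X$ and the failure of $G$-isolation are both used essentially), and confirming that the resulting ultracopower point is distinct from $\iota_X(x_0)$ rather than collapsing — i.e., that the lower-semicontinuity estimate survives passage to the ultracopower. Everything else is routine manipulation of fattenings and \eqref{Eq:Basic_Fat_partial}.
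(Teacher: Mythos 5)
Your handling of the equivalences (2)--(4), (6), (7), and of (3)$\Rightarrow$(1), is essentially correct and close to the paper's, but there are two genuine gaps. The more serious one is in (1)$\Rightarrow$(2). Your maximal-family argument requires each member of the family to be of the form $\Int(Q_U[x_i])$ and requires that at any point $y$ not yet covered you can enlarge the family by another such set; but this presupposes $\Int(Q_U[y])\neq\emptyset$, which is essentially the conclusion being proved. If no point near $y$ has nonempty $\Int(Q_U[\cdot])$, the maximal family (possibly empty) simply fails to cover $X$ and no contradiction with $G$-finiteness arises. The real content of this implication is local: one must show that a point $x$ lying in \emph{no} open $U$-TT set forces an infinite uniformly separated family of open sets. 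The paper does this by a recursive construction: choosing $V^3\subseteq U$, it shows (using $G$-ED) that for every $A\in\op(x,X)$ the failure of $V$-TT is witnessed by some $B\in\op(x,A)$ with $A\not\subseteq\ol{VB}$, then builds a decreasing chain $A_{n+1}\subseteq A_n$ with $A_n\not\subseteq\ol{VA_{n+1}}$, and sets $B_n=A_n\setminus\ol{VA_{n+1}}$, whose $W$-fattenings (for $W^2\subseteq V$) are pairwise disjoint. This construction, or a substitute for it, is missing from your proposal and is not supplied by the global maximality argument.

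The second gap is in (5)$\Rightarrow$(4): lower semicontinuity goes the wrong way for you. Lsc of $\partial_\sigma$ says that $\{(p,q):\partial_\sigma(p,q)\le c'\}$ is closed, i.e.\ the distance of a limit pair is at most the liminf of the distances; a net of pairs each at distance $\ge c$ can therefore converge to a pair at distance $0$, so you cannot conclude $\partial_\sigma(p,\iota_X(x_0))\ge c$ in the ultracopower this way. To produce two distinct preimages of $x_0$ one must exhibit, for $\cU$-many $i$, open sets $A_i\ni x_i$ and $B_i\ni x_0$ that are \emph{uniformly} apart and then separate the two near ultrafilters by $A=\bigcup_i\{i\}\times A_i$ and $B=\bigcup_i\{i\}\times B_i$; this is exactly what the paper does, taking $I=\op(x,X)$ and using the failure of $U$-TT on each neighborhood $i$ to find $A_i,B_i\in\op(i)$ with $UA_i\cap B_i=\emptyset$. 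Relatedly, your (3)$\Rightarrow$(5) sketch asserts the inequality $\partial_\sigma^{\Sigma_\cU^GX}(p,q)\le\partial_\sigma^X(\pi_{X,\cU}(p),\pi_{X,\cU}(q))$, which is precisely what needs proof (Proposition~\ref{Prop:Level_Maps} gives only the reverse direction); the underlying idea --- that a $U$-TT neighborhood of the base point forces every pair of fattened open sets in the fiber to meet --- is correct and is how the paper argues, but it should be carried out explicitly rather than attributed to the pseudometric being ``determined by the topology.''
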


\begin{proof}
    $\ref{Item:DB_Thm:Concrete}\Rightarrow \ref{Item:UTT_Thm:Concrete}$: Say $U\in \cN_G$ and $x\in X$ is not contained in any open $U$-TT set. Let $V\in\cN_G$ satisfy $V^3\subseteq U$. Clearly $x$ cannot be contained in any open $V$-TT set, but now for every $A\in \op(x, X)$, the fact that $A$ is not $V$-TT can be witnessed by some $B\in \op(x, A)$. If not, then for any $C\in \op(A)$, we have $x\in \ol{VC}$, so $x\in\Int(\ol{V^2C})$ as $X$ is $G$-ED, then $A\subseteq \ol{V^3C}\subseteq \ol{UC}$ by our ``if not'' assumption. Therefore we can recursively find $\{A_n: n< \omega\}\subseteq \op(x, X)$ such that $A_{n+1}\subseteq A_n$ and $A_n\not\subseteq \ol{VA_{n+1}}$ for any $n< \omega$. If $W\in \cN_G$ satisfies $W^2\subseteq V$, then setting $B_n = A_n\setminus \ol{VA_{n+1}}$ gives us a family of open sets with $\{WB_n: n< \omega\}$ pairwise disjoint.
    \vspace{3 mm}

    \noindent
    $\ref{Item:UTT_Thm:Concrete}\Rightarrow \ref{Item:DB_Thm:Concrete}$: Fix $U\in \cN_G$, and let $A_0,..., A_{k-1}\in \op(X)$ be $U$-TT sets with $X = \bigcup_{i< k} A_i$. Suppose $\{B_n: n< \omega\}\subseteq \op(X)$. Find $m< n< \omega$ and $i< k$ with both $B_m\cap A_i\neq \emptyset$ and $B_n\cap A_i \neq \emptyset$. Then since $A_i$ is $U$-TT, we have $A_i\subseteq \ol{UB_m}\cap \ol{UB_n}$, implying that $UB_m\cap UB_n\neq \emptyset$. 
    \vspace{3 mm}

    \noindent
    $\ref{Item:UTT_Thm:Concrete}\Leftrightarrow \ref{Item:UEB_Thm:Concrete} \Leftrightarrow \ref{Item:R_Thm:Concrete}$: Follows from Lemma~\ref{Lem:U_TT} and since $X$ is $G$-ED.
    \vspace{3 mm}
    
    \noindent
    $\ref{Item:R_Thm:Concrete}\Rightarrow \ref{Item:PsAll_Thm:Concrete}\Rightarrow \ref{Item:PsBase_Thm:Concrete}\Rightarrow \ref{Item:R_Thm:Concrete}$: Clear.
    \vspace{3 mm}

    \noindent
    $\ref{Item:UTT_Thm:Concrete}\Rightarrow \ref{Item:Ult_Thm:Concrete}$: Fix a set $I$ and $\cU\in \beta I$. We identify $\Sigma_\cU^G X\subseteq \rmS_G(I\times X)$ as discussed before Definition~\ref{Def:HP}, and we let $\pi:= \pi_{X, \cU}$ denote the ultracopower map. Suppose $p, q\in \Sigma_\cU^G X$ satisfy $\pi(p) = \pi(q) = x$. This happens iff for every $A\in p$ and every $C\in \op(x, X)$, we have $\{i\in I: A\cap (\{i\}\times C)\neq \emptyset\}\in \cU$, and similarly for $q$. To show $p = q$, it suffices to show that for every $A\in p$, $B\in q$, and $U\in \cN_G$ that $UA\cap B\neq \emptyset$. Fix $C\in \op(x, X)$ which is $U$-TT. Given $i\in I$, write $A_i\in \op(X)$ for the set satisfying $A \cap (\{i\}\times X) = \{i\}\times A_i$, and likewise for $B_i$. We have $I':= \{i\in I: A_i\cap C\neq \emptyset \text{ and } B_i\cap C\neq \emptyset\}\in \cU$, and for $i\in I'$, we have $C\subseteq \ol{UA_i}$, implying $UA_i\cap B_i\neq \emptyset$. Hence $UA\cap B\neq \emptyset$. 
    \vspace{3 mm}

    \noindent
    $\ref{Item:Ult_Thm:Concrete}\Rightarrow \ref{Item:UTT_Thm:Concrete}$: Suppose $U\in \cN_G$ and $x\in X$ are such that no $A\in \op(x, X)$ is $U$-TT. We will construct a set $I$ and an ultrafilter $\cU\in \beta I$ so that $|(\pi_{X, \cU})^{-1}[x]|\geq 2$. We take $I = \op(x, X)$, and we let $\cU\in \beta I$ be any cofinal ultrafilter. For each $i\in I$, since $i$ is not $U$-TT, we can find $A_i, B_i\in \op(i)$ with $UA_i\cap B_i = \emptyset$. Let $A = \bigcup_{i\in I} \{i\}\times A_i$, and similarly for $B$. Then we can find $p, q\in \Sigma_\cU^G X$ with $A\in p$, $B\in q$, and $\pi_{X, \cU}(p) = \pi_{X, \cU}(q) = x$. Since $UA\cap B = \emptyset$, we have $p\neq q$.
\end{proof}

\begin{defin}
    \label{Def:Concrete_Min_Dyn}
    A topological group $G$ has \emph{concrete minimal dynamics} iff $\mg$ satisfies the equivalent conditions of Theorem~\ref{Thm:Concrete}. Write $\sf{CMD}$ for the class of topological groups with concrete minimal dynamics.
\end{defin}

For Polish groups, concrete minimal dynamics coincides exactly with having metrizable universal minimal flow. This can be seen from the following corollary.

\begin{corollary}
    \label{cor:Polish-dichotomy}
    If $G$ is Polish, then any $G$-ED flow $X$ is either: 
    \begin{itemize}
        \item metrizable, i.e., second-countable, or
        \item not first-countable.
    \end{itemize}
    In particular, if $X$ is not metrizable, then $X$ is not $G$-discrete.
\end{corollary}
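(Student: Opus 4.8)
The plan is to combine Theorem~\ref{thm:ED-first-countable} with the known fact that second-countable compact Hausdorff spaces are metrizable, and to use the Polishness of $G$ to rule out the intermediate case where $X$ is first-countable but not second-countable.

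First I would recall that $X$ is a $G$-ED flow, hence an $\sn(G)$-ED (compact, $L$-continuous) fattening space with $L = \sn(G)$, and its $L$-fattening (UEB) uniformity is complete with finer-than-compact topology. Suppose $X$ is first-countable. By Theorem~\ref{thm:ED-first-countable}, every point of $X$ is $G$-isolated, i.e.\ $X$ is $G$-discrete; equivalently (Definition~\ref{Def:L_Isolated}), the $G$-fattening topology and the ordinary topology coincide everywhere on $X$, so the UEB uniformity is a compatible uniformity for the compact topology of $X$. Since $G$ is Polish, it has a countable base of semi-norms $\cB \subseteq \sn(G)$, so the UEB uniformity is generated by the countable family $\{\partial_\sigma : \sigma \in \cB\}$ of pseudometrics. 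A uniformity with a countable base of pseudometrics is pseudometrizable; being Hausdorff (as $X$ is compact Hausdorff and the UEB topology refines it, or directly because $\NU(X) \cong \varprojlim \NU(X)/\partial_\sigma$), it is metrizable. Hence the compact topology of $X$ is metrizable, i.e.\ second-countable. This establishes the dichotomy: a first-countable $G$-ED flow over a Polish group is second-countable.

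For the final sentence: if $X$ is not metrizable, then by the dichotomy just proved $X$ is not first-countable; and by Theorem~\ref{thm:ED-first-countable} (in contrapositive form: $G$-discrete $\Rightarrow$ every point is $G$-isolated, which in a compact $G$-ED flow means the UEB topology agrees with the topology, forcing first countability once we know it is pseudometrizable as above) $X$ cannot be $G$-discrete. More directly: were $X$ $G$-discrete, the argument of the previous paragraph (Polish $G$ gives countably many $\partial_\sigma$) would make $X$ metrizable, a contradiction.

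The main obstacle I anticipate is the step asserting that first-countability plus $G$-discreteness plus Polishness of $G$ yields metrizability of the compact topology: one must be careful that the UEB uniformity, although generated by countably many pseudometrics when $G$ is Polish, actually induces the compact topology on $X$ — this is exactly where $G$-discreteness (Theorem~\ref{thm:ED-first-countable}) is used — and that ``compact $+$ pseudometrizable uniformity $+$ Hausdorff $\Rightarrow$ metrizable'' is applied correctly. Everything else is a direct appeal to Theorem~\ref{thm:ED-first-countable} and standard facts about uniform spaces, so I expect the write-up to be short.
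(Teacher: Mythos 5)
Your proof is correct and follows essentially the same route as the paper: first-countability gives $G$-discreteness via Theorem~\ref{thm:ED-first-countable}, and then the Polishness of $G$ upgrades the (now compatible) UEB uniformity to a compatible metric, yielding second-countability. The only cosmetic difference is that the paper invokes a single compatible norm $\sigma$ and cites Theorem~\ref{Thm:Concrete} to get that $\partial_\sigma$ is a compatible metric, whereas you metrize the uniformity from a countable base of semi-norms; both the dichotomy and the final sentence are handled the same way.
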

\begin{proof}
    If $X$ is first-countable, then it is $G$-discrete by \Cref{thm:ED-first-countable}. By \Cref{Thm:Concrete}, if $\sigma$ is a norm on $G$, then $\partial_\sigma$ is a compatible metric on $X$.

    For the last statement, we note that for any $G$-flow $Y$, as $G$ is metrizable, any $G$-isolated point in $Y$ is a point of first-countability. 
\end{proof}

To end this subsection, we indicate how to recover what is among the more technically challenging results of \cite{BassoZucker}, namely proving that $\sf{CAP}\subseteq \sf{CMD}$. Using ultracoproducts to distill the main ideas of the argument, we obtain the following generalization.

\begin{theorem}
    \label{Thm:CAP}
    Suppose $X$ is a minimal $G$-ED flow which is not $G$-finite. Then there is an ultracopower of $X$ with $\rm{AP}_G(\Sigma_\cU^GX)\subsetneq \Sigma_\cU^GX$. In particular, for some set $I$ with $|I| = \rm{d}(X)$, we have that $\rm{AP}_G(\alpha_G(I\times X))\subseteq \alpha_G(I\times X)$ is not closed.
\end{theorem}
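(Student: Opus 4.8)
The plan is to exhibit, inside a single ultracopower, an explicit point that fails to be almost periodic; the ``in particular'' clause then follows formally. Throughout, $\rmS_G(I\times X)=\alpha_G(I\times X)$ is the universal $G$-equivariant compactification of the $G$-ED flow $I\times X$, with projections $\pi_I\colon\rmS_G(I\times X)\to\beta I$, $\pi_X\colon\rmS_G(I\times X)\to X$, and $\pi_{X,\cU}=\pi_X|_{\Sigma_\cU^GX}$.

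First I would extract the combinatorial data. Since $X$ is not $G$-finite there are $U\in\cN_G$ and an infinite family $\{B_n:n<\omega\}\subseteq\op(X)$ with $\{UB_n:n<\omega\}$ pairwise disjoint; shrinking $U$, I may assume $U=\rmB_\sigma(c_0)$ for a fixed $\sigma\in\rm{SN}(G)$, $c_0>0$. (Notably, I will use neither the existence of a point with no $U$-TT neighbourhood nor the recursion in the proof of Theorem~\ref{Thm:Concrete}; the bare disjoint family is all that is needed.) Also $\rm{d}(X)\geq\aleph_0$, since an infinite compact Hausdorff space has no finite dense subset, so fix an index set $I$ with $\omega\subseteq I$ and $|I|=\rm{d}(X)$, put $\ol A:=\ol{\bigcup_{n<\omega}\{n\}\times B_n}\in\rc(I\times X)$, and let $\cU\in\beta I$ be a free ultrafilter with $\omega\in\cU$. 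Since every fattening $\ol A(\rho,r)$ still meets $S\times X$ for $S\in\cU$ (the $B_n$ are nonempty and $S\cap\omega$ is infinite), the family $\{\ol A\}\cup\{S\times X:S\in\cU\}$ has finite $L$-near intersections and so extends to a near ultrafilter $p\in\rmS_G(I\times X)$; having $S\times X\in p$ for all $S\in\cU$ is precisely the statement $p\in\Sigma_\cU^GX$.

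The core of the argument is that $\ol{Gp}$ is not minimal, so that $p\notin\rm{AP}_G(\Sigma_\cU^GX)$. Let $\pi:=\pi_{X,\cU}|_{\ol{Gp}}\colon\ol{Gp}\to X$; since $X$ is minimal $\pi$ is surjective, so if $\ol{Gp}$ were minimal then $\pi$, being a $G$-map between minimal flows, would be pseudo-open, hence (as $X$ is $G$-ED) \emph{open} by Proposition~\ref{Prop:MHP_Open}. I will contradict openness. As $\ol A\in p$ we have $\ol A(\sigma,c_0/3)\in[p]_{\rm{fat}}$, so by Lemma~\ref{Lem:NUBasic} the set $\rmC_{\ol A(\sigma,c_0/3)}$ is a neighbourhood of $p$ in $\rmS_G(I\times X)$; let $N:=\Int(\rmC_{\ol A(\sigma,c_0/3)})\cap\Sigma_\cU^GX$, an open neighbourhood of $p$ in $\Sigma_\cU^GX$ with $N\subseteq\rmC_{\ol A(\sigma,c_0/3)}$. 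Unwinding the description of the fattening on $I\times X$ (namely $\ol D(\rho,r)=\ol{\rmB_\rho(r)D}$), the near-ultrafilter structure of $\rmS_G(I\times X)$, the membership condition for $\Sigma_\cU^GX$, and the fact that $\pi_X(p')=x'$ iff $\ol{I\times C}\in p'$ for all $C\in\op(x',X)$, a short computation with the triangle inequality gives
\begin{equation*}
\pi_X\bigl[\rmC_{\ol A(\sigma,c_0/3)}\cap\Sigma_\cU^GX\bigr]\ \subseteq\ Y:=\bigcap_{\,S\in\cU}\ \ol{\textstyle\bigcup_{n\in S\cap\omega}\rmB_\sigma(2c_0/3)B_n}\ .
\end{equation*}
Finally $\Int_X(Y)=\emptyset$: the sets $\rmB_\sigma(2c_0/3)B_n\subseteq UB_n$ are pairwise disjoint, so a nonempty open $C\subseteq Y$ would meet none of them (a point of $C$ inside $\rmB_\sigma(2c_0/3)B_{n_0}$ contradicts $C\subseteq\ol{\bigcup_{n\in(\omega\setminus\{n_0\})}\rmB_\sigma(2c_0/3)B_n}$), hence $C$ is disjoint from $\ol{\bigcup_{n<\omega}\rmB_\sigma(2c_0/3)B_n}\supseteq Y$, which is absurd. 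Thus $\pi[N\cap\ol{Gp}]$ is a nonempty open subset of $Y$, contradicting openness of $\pi$. Hence $\ol{Gp}$ is not minimal, proving the first assertion.

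For the ``in particular'': with $I$ of size $\rm{d}(X)$ as above, each $\{i\}\times X$ embeds as a minimal subflow of $\alpha_G(I\times X)$, so $\bigcup_{i\in I}(\{i\}\times X)\subseteq\rm{AP}_G(\alpha_G(I\times X))$, and this union is dense because $I\times X$ is dense in its equivariant compactification. On the other hand $\rm{AP}_G(\alpha_G(I\times X))$ is proper: $p$ lies in $\Sigma_\cU^GX=\pi_I^{-1}[\cU]$, and since $G$ acts trivially on $\beta I$ we have $\ol{Gp}\subseteq\Sigma_\cU^GX$, so $p$ is not almost periodic in $\alpha_G(I\times X)$ either. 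A dense proper subset is never closed. The main obstacle is the displayed inclusion — keeping careful track of how the fattening operation propagates through the near-ultrafilter structure of $\rmS_G(I\times X)$ and through the ultracopower map; once that is in place, the rest is soft topology together with Proposition~\ref{Prop:MHP_Open}.
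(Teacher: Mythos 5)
Your proof is correct, but it takes a genuinely different route from the paper's. The paper does not work directly with the pairwise-disjoint family $\{UB_n\}$: it first routes back through the proof of Theorem~\ref{Thm:Concrete}\ref{Item:DB_Thm:Concrete}$\Rightarrow$\ref{Item:UTT_Thm:Concrete} to extract a point $x$ and $V\in\cN_G$ with $\Int(R_V[x])=\emptyset$, takes $I=\fin{D}$ for a dense $D$ avoiding $R_V[x]$ (so the cardinality $\rm{d}(X)$ arises organically rather than by padding), and builds the witness as the ultralimit $x_\cU=\lim_{i\to\cU}(i,x)$ of a single point along a cofinal ultrafilter. Non-almost-periodicity is then detected by showing that the return-time set of $x_\cU$ to the neighbourhood $\rmC_B$ fails to be syndetic, via Lemma~\ref{Lem:Min}\ref{Item:AP_Lem:Min} and Corollary~\ref{Cor:Orbit_Fragment}: a syndetic return set would force $Sx$ to be somewhere dense, yet $Sx\subseteq\bigcap_i\ol{VA_i}$, which is nowhere dense. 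You instead build a near ultrafilter containing the ``diagonal'' set $\ol{\bigcup_n\{n\}\times B_n}$ and rule out minimality of its orbit closure by the openness criterion of Proposition~\ref{Prop:MHP_Open}, pushing a neighbourhood into the nowhere dense set $Y$. Both arguments are sound; yours uses the raw $G$-finiteness witness and avoids the $R_V$/syndeticity machinery entirely (at the cost of the fattening computation you flag, which does check out with your constants), while the paper's is shorter once Theorem~\ref{Thm:Concrete} is in hand and produces the index set of the advertised size without an artificial enlargement. Your handling of the ``in particular'' clause (dense set of almost periodic points from the fibers $\{i\}\times X$, plus one non-AP point) matches what the paper intends.
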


\begin{proof}
    Using Theorem~\ref{Thm:Concrete}, and in particular the proof of $\ref{Item:DB_Thm:Concrete}\Rightarrow \ref{Item:UTT_Thm:Concrete}$, find $x\in X$ and $U\in \cN_G$ such that $\Int(R_U[x])\not\in \op(x, X)$. Then passing to $V\in \cN_G$ with $V^3\subseteq U$ and using that $X$ is $G$-ED, we have $\Int(R_V[x]) = \emptyset$. Let $D\subseteq X$ be a dense subset of $X$ with $|D| = \rm{d}(X)$; we can suppose that $D\cap R_V[x] = \emptyset$.  

    Set $I = \fin{D}$, viewed as a directed partial order under reverse inclusion. For each $i\in I$, let $A_i\in \op(x, X)$ satisfy $\ol{VA_i}\cap i = \emptyset$. Note that $\Int(\bigcap_{i\in I} \ol{VA_i}) = \emptyset$. Let $\cU\in \beta I$ be cofinal, and define $x_\cU = \lim_{i\to \cU} (i, x)$.  Then letting $W\in \cN_G$ satisfy $W^2\subseteq V$ and writing $B = \bigcup_{i\in I}\{i\}\times W A_i$, we have that $\rmC_B$ is a neighborhood of $x_\cU$. We show that $S:= \{g\in G: gx_\cU\in \rmC_B\}$ is not syndetic, which will imply that $x_\cU\not\in \rm{AP}_G(\Sigma_\cU^GX)$ by Lemma~\ref{Lem:Min}\ref{Item:AP_Lem:Min}. If $S$ were syndetic, then $Sx\subseteq X$ would be somewhere dense by Corollary~\ref{Cor:Orbit_Fragment}. However, every $g\in S$ satisfies $gx\in VA_i$ for $\cU$-many $i\in I$, and as $\cU\in \beta I$ is cofinal, we have $Sx\subseteq \bigcap_{i\in I}\ol{V\cdot A_i}$, contradicting that $\Int(\bigcap_{i\in I} \ol{VA_i}) = \emptyset$.
\end{proof}

\subsection{The meets topology}
\label{Subsection:Meets}
Given a compact space $Z$, this subsection introduces a new, not-necessarily-Hausdorff topology on $\rmK(Z)$ which coarsens the Vietoris topology that we call the \emph{meets topology}. Upon restricting our attention to $\rm{Min}_G(Z)$ for $Z$ a $G$-flow, whether the meets topology is Hausdorff will be one of our characterizations of groups with tractable minimal dynamics.

\begin{defin}
    \label{Def:Meets}
    Given a compact space $Z$, the not-necessarily-Hausdorff \emph{meets topology} on $\rmK(Z)$ is generated by the sub-basis $\{\rm{Meets}(A, Z): A\in \op(Z)\}$.
\end{defin}

Most often, $Z$ will be a $G$-flow, and we will want to consider the meets topology restricted to $\rm{Min}_G(Z)$. The Vietoris topology on $\rm{Min}_G(Z)$ is always Hausdorff, but not in general compact. Conversely, while the meets topology on $\rm{Min}_G(Z)$ is not in general Hausdorff, we have the following.

\begin{prop}
    \label{Prop:Meets_Compact}
    If $Z$ is a $G$-flow, $(X_i)_{i\in I}$ is a net from $\rm{Min}_G(Z)$, and $X\in \rm{Min}_G(Z)$, then $X_i\to X$ in the meets topology iff for any Vietoris convergent subnet, $X$ is a subflow of the Vietoris limit. In particular, $\rm{Min}_G(Z)$ equipped with the meets topology is compact.
\end{prop}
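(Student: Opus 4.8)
The plan is to prove the two assertions in order: first the characterization of meets-convergence, then compactness as an easy consequence via nets.

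\emph{The characterization of convergence.} Fix a net $(X_i)_{i\in I}$ from $\rm{Min}_G(Z)$ and $X\in \rm{Min}_G(Z)$. For the forward direction, suppose $X_i\to X$ in the meets topology, and let $(X_j)_{j\in J}$ be a subnet converging to some $Y$ in the Vietoris topology; since $Z$ is a $G$-flow, $\rmK(Z)$ is Vietoris-compact, so such subnets exist, and $Y$ is a subflow of $Z$. I claim $X\subseteq Y$. If not, pick $x\in X\setminus Y$ and, using regularity and compactness of $Y$, an open $A\in \op(x,Z)$ with $\ol{A}\cap Y=\emptyset$, so $Y\notin \rm{Meets}(A,Z)$, i.e.\ eventually along $J$ we have $X_j\notin \rm{Meets}(A,Z)$ (Vietoris convergence is finer, or directly: $\rm{Subset}(Z\setminus \ol A, Z)$ is a Vietoris-neighborhood of $Y$). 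But $X\in \rm{Meets}(A,Z)$ since $x\in X\cap A$, and meets-convergence of the original net forces $X_i\in \rm{Meets}(A,Z)$ eventually, hence $X_j\in \rm{Meets}(A,Z)$ eventually along the subnet — a contradiction.

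For the converse, suppose every Vietoris-convergent subnet of $(X_i)$ has Vietoris limit containing $X$, and suppose toward a contradiction that $X_i\not\to X$ in the meets topology. Then there is a basic meets-neighborhood of $X$, which we may take of the form $\bigcap_{k<n}\rm{Meets}(A_k,Z)$ with each $A_k\in\op(Z)$ meeting $X$, such that $X_i\notin \bigcap_{k<n}\rm{Meets}(A_k,Z)$ frequently; passing to a subnet and relabeling, we may assume there is a single fixed $A\in \op(Z)$ with $A\cap X\neq\emptyset$ and $X_i\cap A=\emptyset$ for all $i$. Passing to a further subnet using Vietoris-compactness of $\rmK(Z)$, assume $X_i\to Y$ in the Vietoris topology. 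Then $Y\cap A=\emptyset$: indeed $\rm{Subset}(Z\setminus A,Z)$ is Vietoris-closed and contains every $X_i$, hence contains $Y$. But by hypothesis $X\subseteq Y$, contradicting $A\cap X\neq\emptyset$. This proves the equivalence.

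\emph{Compactness.} Equip $\rm{Min}_G(Z)$ with the meets topology. Let $(X_i)_{i\in I}$ be any net in $\rm{Min}_G(Z)$; I must produce a meets-convergent subnet. By Vietoris-compactness of $\rmK(Z)$, pass to a subnet $(X_j)_{j\in J}$ with $X_j\to Y$ in $\rmK(Z)$ for some subflow $Y$ of $Z$. By Zorn's lemma $Y$ contains a minimal subflow $X\in \rm{Min}_G(Z)$ (as noted after the definition of minimal flow in the excerpt). I claim $X_j\to X$ in the meets topology. By the just-proved characterization, it suffices to check that every Vietoris-convergent subnet of $(X_j)$ has limit containing $X$; but every such subnet has Vietoris limit equal to $Y$ (since $(X_j)$ already Vietoris-converges to $Y$), and $X\subseteq Y$ by construction. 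Hence $X_j\to X$ in the meets topology, and $\rm{Min}_G(Z)$ with the meets topology is compact.

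\emph{Anticipated main obstacle.} The routine-but-careful point is the correct handling of multi-set basic neighborhoods $\bigcap_{k<n}\rm{Meets}(A_k,Z)$ in the contrapositive argument: one must be attentive that ``frequently outside the neighborhood'' does not immediately give a single bad $A_k$ without passing to a subnet, and that the resulting subnet manipulation is legitimate. Once one commits to replacing the original net by a subnet on which a single $\rm{Meets}(A,Z)$ fails throughout, everything else is a direct application of Vietoris-compactness plus the observation that $\rm{Subset}(Z\setminus A, Z)$ is Vietoris-closed. Note no Hausdorffness of the meets topology is used or needed — the limit $X$ produced in the compactness argument is generally not unique, which is exactly the phenomenon the later characterization of $\sf{TMD}$ will exploit.
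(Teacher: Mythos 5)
Your proof is correct and follows essentially the same route as the paper's: characterize meets-convergence by comparing against Vietoris limits of subnets (using that a minimal $X$ disjoint from the Vietoris limit $Y$ can be separated by open sets, and that $\rm{Subset}(Z\setminus A,Z)$ is Vietoris-closed), then obtain compactness by extracting a Vietoris-convergent subnet and taking a minimal subflow of its limit. The only difference is presentational — you spell out the reduction from basic neighborhoods $\bigcap_{k<n}\rm{Meets}(A_k,Z)$ to a single $\rm{Meets}(A,Z)$ and the quantification over all subnets, which the paper leaves implicit by fixing one Vietoris-convergent subnet at the outset.
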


\begin{proof}
    Suppose $(X_i)_{i\in I}$ is a net from $\rm{Min}_G(Z)$. By passing to a subnet, we can assume in the Vietoris topology that $X_i\to Y\in \rm{Sub}_G(Z)$. We claim that given $X\in \rm{Min}_G(Z)$, we have $X_i\to X$ in the meets topology iff $X\subseteq Y$. If $X\subseteq Y$, let $A\in \op(Z)$ be such that $X\in \rm{Meets}(A, Z)$. Then also $Y\in \rm{Meets}(A, Z)$, so eventually $X_i\in \rm{Meets}(A, Z)$. Conversely, if $X\not\subseteq Y$, then $X\cap Y = \emptyset$. Find disjoint $A, B\in \op(X)$ with $Y\subseteq A$ and $X\subseteq B$. Eventually we have $X_i\in \rm{Subset}(A, Z)$, implying that eventually $X_i\not\in \rm{Meets}(B, Z)$.
\end{proof}

On $\rm{Min}_G(X)$, the Vietoris topology is always Hausdorff, but not always compact, while the meets topology is always compact, but not always Hausdorff. The CAP groups are exactly those groups for which the Vietoris topology on $\rm{Min}_G(Z)$ is compact Hausdorff, which is the case exactly when the Vietoris and meets topologies coincide. We therefore investigate those topological groups where the meets topology is always compact Hausdorff. Proposition~\ref{Prop:Meets_Compact} tells us that this occurs exactly when Vietoris limits of minimal flows contain a unique minimal subflow. Ultracopowers of $\rmM(G)$ serve as a nice universal setting to investigate properties of Vietoris convergence of minimal subflows. In particular, we have the following.

\begin{prop}
    \label{Prop:WCAP_Ultracoproducts}
    Given a topological group $G$, the following are equivalent.
    \begin{itemize}
        \item 
        For any $G$-flow $Z$, the meets topology on $\rm{Min}_G(Z)$ is compact Hausdorff.
        \item 
        For any $G$-flow $Z$ and for any net $(X_i)_{i\in I}$ from $\rm{Min}_G(Z)$ with $X_i\to Y\in \rm{Sub}_G(Z)$, then $Y$ contains a unique minimal subflow.
        \item 
        Any ultracopower of $\mg$ contains a unique minimal subflow. 
    \end{itemize} 
\end{prop}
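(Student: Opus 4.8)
\textbf{Proof plan for Proposition~\ref{Prop:WCAP_Ultracoproducts}.}

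The plan is to prove the cyclic chain of implications (1)$\Rightarrow$(2)$\Rightarrow$(3)$\Rightarrow$(1), using Proposition~\ref{Prop:Meets_Compact} as the bridge between the meets-topology formulation and the Vietoris-convergence formulation, and using the universal property of $G$-ultracopowers to handle the passage from arbitrary ambient flows to ultracopowers of $\mg$.

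For (1)$\Rightarrow$(2): By Proposition~\ref{Prop:Meets_Compact}, the meets topology on $\rm{Min}_G(Z)$ is always compact, and a net $(X_i)$ converges to $X$ in the meets topology precisely when $X$ is a subflow of every Vietoris-convergent subnet's limit. Suppose $X_i\to Y$ in the Vietoris topology with $Y$ containing two distinct minimal subflows $M_0\ne M_1$. Then both $M_0$ and $M_1$ are meets-limits of $(X_i)$, so the meets topology is not Hausdorff; this is the contrapositive of (1)$\Rightarrow$(2). (Here one should first replace $\mg$ by a general $Z$, but (2) already quantifies over all $Z$, so there is nothing extra to do.)

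For (2)$\Rightarrow$(3): This is the routine direction --- an ultracopower $\Sigma_\cU^G\mg$ sits inside the ambient $G$-flow $\rmS_G(I\times \mg)$, and by the observation recorded in Subsection~\ref{Subsection:Ultracoproducts}, in $\rm{Sub}_G(\rmS_G(I\times\mg))$ we have $\lim_{i\to\cU}(\{i\}\times\mg) = \Sigma_\cU^G\mg$, with each $\{i\}\times\mg$ minimal (being isomorphic to $\mg$). Applying (2) with $Z = \rmS_G(I\times\mg)$ and this net (indexed by $\cU$-convergence, which one views as a genuine net via a cofinal refinement, or directly as an ultralimit of the constant net) yields that $\Sigma_\cU^G\mg$ contains a unique minimal subflow.

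For (3)$\Rightarrow$(1): By Proposition~\ref{Prop:Meets_Compact} it suffices to show that for every $G$-flow $Z$ and every net $(X_i)_{i\in I}$ from $\rm{Min}_G(Z)$ with Vietoris limit $Y\in\rm{Sub}_G(Z)$, the flow $Y$ contains a unique minimal subflow (Hausdorffness of the meets topology then follows, since two distinct meets-limits would be distinct minimal subflows of a common Vietoris limit along a subnet). Fix such a net and let $\cU\in\beta I$ be any cofinal ultrafilter; then $\lim_{i\to\cU}X_i = Y$ still. Each $X_i$ is minimal, hence is a factor of $\mg$ via some $G$-map $\phi_i\colon \mg\to X_i\subseteq Z$. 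Assembling these into $\phi\colon I\times\mg\to Z$, $\phi(i,y)=\phi_i(y)$, continuously extending to $\rmS_G(I\times\mg)$ and restricting to $\Sigma_\cU^G\mg$, we obtain a $G$-map $\phi_\cU\colon\Sigma_\cU^G\mg\to Y$ (this is exactly the ``universal instance of Vietoris convergence'' mechanism described before Definition~\ref{Def:HP}). Moreover $\phi_\cU$ is onto: its image is a subflow of $Z$ containing $\phi_\cU[\{i\}\times\mg\text{-limit stuff}]$, and more carefully, since $\lim_{i\to\cU}(\{i\}\times\mg)=\Sigma_\cU^G\mg$ maps onto $\lim_{i\to\cU}X_i = Y$ by continuity of $\phi$ and the Vietoris-continuity of taking images. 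Now if $Y$ contained two distinct minimal subflows $M_0\ne M_1$, pulling back along the surjective $G$-map $\phi_\cU$ and choosing minimal subflows inside $\phi_\cU^{-1}[M_0]$ and $\phi_\cU^{-1}[M_1]$ would give two distinct minimal subflows of $\Sigma_\cU^G\mg$ (they are distinct because they map into disjoint sets $M_0,M_1$), contradicting (3). Hence $Y$ has a unique minimal subflow, as desired.

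The main obstacle I anticipate is the surjectivity and the correct handling of $\phi_\cU$ in (3)$\Rightarrow$(1): one must check that the $G$-map $\phi_\cU$ assembled from the $\phi_i$ genuinely surjects onto the Vietoris limit $Y$ (rather than merely onto $\overline{\bigcup_i X_i}\supseteq Y$), which uses that image-taking $\rmK(\rmS_G(I\times\mg))\to\rmK(Z)$ is Vietoris-continuous together with $\lim_{i\to\cU}(\{i\}\times\mg)=\Sigma_\cU^G\mg$. A secondary point to get right is that ``net indexed by an ultrafilter'' should be interpreted via the standard device of turning $\cU$-convergence into honest net convergence, or equivalently by invoking that every net has a Vietoris-convergent subnet and then noting cofinal ultrafilters detect exactly those limits; the statement of Proposition~\ref{Prop:Meets_Compact} is phrased to make this bookkeeping painless.
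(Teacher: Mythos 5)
Your proof is correct and follows essentially the same route as the paper: the equivalence of the first two bullets via Proposition~\ref{Prop:Meets_Compact}, and the equivalence with the third via the ``ultracopower as a universal instance of Vietoris convergence'' mechanism from Subsection~\ref{Subsection:Ultracoproducts}. The only difference is that where the paper cites an external result from \cite{ZucUlts} to get that $Y$ is a factor of $\Sigma_\cU^G\mg$, you construct the surjection $\phi_\cU\colon \Sigma_\cU^G\mg\to Y$ inline and verify surjectivity via Vietoris-continuity of image-taking, which is a correct and slightly more self-contained version of the same step.
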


\begin{proof}
    The equivalence of the first two bullets follows from Proposition~\ref{Prop:Meets_Compact}. For the third bullet, if there is an ultracopower of $\rmM(G)$ which contains more than one minimal subflow, then since such an ultracopower is a Vietoris limit of copies of $\rmM(G)$, we see using Proposition~\ref{Prop:Meets_Compact} that the first bullet cannot hold. In the other direction, suppose every ultracopower of $\rmM(G)$ contains a unique minimal subflow.  Let $Z$ be a $G$-flow, and let $(Y_i)_{i\in I}$ be a Vietoris convergent net from $\rm{Min}_G(Z)$ with limit $Y\in \rm{Sub}_G(X)$. Let $\cU\in \beta I$ be any cofinal ultrafilter. By Proposition~4.4 of \cite{ZucUlts}, $Y$ is a factor of $\Sigma_\cU^G Y_i$, which in turn is a factor of $\Sigma_\cU^G \rmM(G)$. Since $\Sigma_\cU^G \rmM(G)$ has a unique minimal subflow, so does $Y$.
\end{proof}

To understand which $G$-flows have the property that their ultracopowers contain a unique minimal subflow, we isolate the following property. 

\begin{defin}
    \label{Def:Thickness_Condition}
    A $G$-flow $X$ satisfies the \emph{thickness condition} if for any $U\in \cN_G$, there is $F\in \fin{G}$ such that for any $A, B\in \op(X)$, we have $UFA\cap UFB \neq \emptyset$.
\end{defin}

Note that any flow satisfying the thickness condition has a unique minimal subflow. Also note that the thickness condition is preserved under factor maps and under irreducible extensions. This condition is exactly what ensures that if $I$ is an index set and $X$ is a $G$-flow satisfying the thickness condition, it is impossible to construct two thick members of $\op(I\times X)$ which are uniformly apart. Theorem~\ref{Thm:Rosendal_Minimal} will prove that for $G$-ED flows, the thickness condition is exactly what characterizes when all ultracopowers contain a unique minimal subflow.

\subsection{Rosendal's criterion} 
\label{Subsection:Rosendal}
    
When $G$ is a Polish group, the following criterion due to Rosendal characterizes when a Polish $G$-space has a comeager orbit appears in \cite{BYMT}. For a slightly more general formulation of the result, see \Cref{thm:Rosendal-general}. However, the criterion makes sense without any Polish assumptions.

\begin{defin}
    \label{Def:Rosendal_Criterion}
    Fix $G$ a topological group and $X$ a $G$-space. Given $U\in \cN_G$, we say that $A\in \op(X)$ is \emph{somewhere $U$-TT} if some $A_0\in \op(A)$ is $U$-TT, and $A$ is \emph{nowhere $U$-TT} if it is not somewhere $U$-TT. 
    
    We say that the $G$-space $X$ satisfies the \emph{Rosendal criterion} if $X$ for every $U\in \cN_G$ and $A\in \op(X)$, $A$ is somewhere $U$-TT. Write $\cal{RC}_G$ for the class of $G$-spaces satisfying the Rosendal criterion.
\end{defin}

If $G$ is discrete, then a $G$-space $X$ satisfies the Rosendal criterion if and only if the set of isolated points of $X$ is dense in $X$. We do not know of a commonly used terminology for such spaces, hence the choice to diverge from our naming scheme. 
Moreover, for an arbitrary topological group $G$, the property ``the $G$-space $X$ has a dense set of $G$-isolated points'' is a priori stronger than ``$X \in \cal{RC}_G$'', see question \Cref{Que:G-iso-dense}.

We say that a map $\phi\colon X\to Y$ between topological spaces is \emph{SWD-preserving} if $\phi[A]\subseteq Y$ is somewhere dense whenever $A\subseteq X$ is somewhere dense, equivalently whenever $A\in \op(X)$. This notion is referred to as \emph{category-preserving} in \cite{LeBoudecTsankov}. Pseudo-open maps are always SWD-preserving, and the two notions are equivalent if $\phi$ is closed and $X$ is semiregular.

\begin{lemma}
    \label{Lem:Rosendal_Maps}
    Let $X$ and $Y$ be $G$-spaces, and let $\phi\colon X\to Y$ be a $G$-map. Moreover, let $\alpha\colon K \to G$ be a continuous group homomorphism with dense image.
    \begin{enumerate}[label=\normalfont(\arabic*)]
        \item \label{Item:PsOpDown_Lem:RMaps}
        If $\phi[X]\subseteq Y$ is dense, $\phi$ is SWD-preserving, and $X\in \cal{RC}_G$, then $Y\in \cal{RC}_G$.
        \item \label{Item:IrredUP_Lem:RMaps}
        If $\phi$ is irreducible and $Y\in \cal{RC}_G$, then $X\in \cal{RC}_G$.
        \item \label{Item:Homo_Lem:RMaps}
        If $X \in \cal{RC}_K$, then $X \in \cal{RC}_G$.
    \end{enumerate}
\end{lemma}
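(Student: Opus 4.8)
The plan is to establish each of the three items by unwinding the definition of the Rosendal criterion in terms of somewhere-$U$-TT sets, using the three corresponding transfer lemmas for $G$-finiteness and the analogous topological facts about $U$-TT sets that were set up for Lemma~\ref{Lem:Dyn_Bounded}. The unifying observation throughout is that being $U$-TT is witnessed locally: $A\in\op(X)$ is somewhere $U$-TT iff it contains a $U$-TT open subset, and such a subset can always be refined to a smaller one (since an open subset of a $U$-TT set need not be $U$-TT, one must be slightly careful; but passing to $\Int(Q_U[x])$ via Lemma~\ref{Lem:U_TT}\ref{Item:UTT_Lem:UTT} repairs this). I would phrase each proof as: given $U\in\cN_G$ (or $\cN_K$) and a nonempty open set in the target/source, produce a $U$-TT open subset.

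For item~\ref{Item:PsOpDown_Lem:RMaps}, I would fix $U\in\cN_G$ and $B\in\op(Y)$. Since $\phi[X]\subseteq Y$ is dense, $B':=\phi^{-1}[B]\in\op(X)$ is nonempty; pick $V\in\cN_G$ with $V^3\subseteq U$. By $X\in\cal{RC}_G$ there is a $V$-TT open $A\subseteq B'$. The image $\phi[A]$ is somewhere dense by the SWD-preserving hypothesis, so $C:=\Int_Y(\ol{\phi[A]})$ is a nonempty open subset of $B$. I claim $C$ is $U$-TT: given $D\in\op(C)$, the set $\phi^{-1}[D]\cap A$ is nonempty open in $A$ (using density of $\phi[A]$ in $C$ and that $A$ maps densely into its closure's interior), hence $A\subseteq\ol{V\phi^{-1}[D]}$; applying $\phi$ and using $\phi[VE]=V{\cdot}\phi[E]$ together with continuity gives $\phi[A]\subseteq\ol{V D}$, so $C=\Int(\ol{\phi[A]})\subseteq\Int(\ol{VD})\subseteq\ol{V^2 D}\subseteq\ol{UD}$ — wait, one needs $G$-ED to jump from $\ol{VD}$ to $\Int(\ol{VD})$, which is \emph{not} assumed here for $Y$. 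So instead I would argue directly at the level of $Q_U$: fix $x\in A$, so $A\subseteq\Int(Q_V[x])$ by Lemma~\ref{Lem:U_TT}\ref{Item:Interior_Lem:UTT}, and track $\phi(x)$; the cleanest route is probably to show $C\subseteq\Int(Q_U[\phi(x)])$ and then invoke Lemma~\ref{Lem:U_TT}\ref{Item:UTT_Lem:UTT} (which does \emph{not} need ED) to get a $U^2$-TT open set, adjusting $V$ so $V^2\subseteq U$. This $Q_U$-bookkeeping with the factor map is the step I expect to be the main obstacle, as it requires carefully chasing fattenings through $\phi$ and through closures; the mirror argument already appears in the proof of Lemma~\ref{Lem:Dyn_Bounded}\ref{Item:Factor_Lem:DB}, so I would model it on that.

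For item~\ref{Item:IrredUP_Lem:RMaps}, fix $U\in\cN_G$ and $B\in\op(X)$. Since $\phi$ is irreducible, $\Int(\phi_{\rm{fib}}(B))\in\op(Y)$ is nonempty. By $Y\in\cal{RC}_G$ there is a $V$-TT open $A_0\subseteq\Int(\phi_{\rm{fib}}(B))$, where $V^2\subseteq U$. Then $\phi^{-1}[A_0]\subseteq B$ is nonempty open, and I claim it is $U$-TT: given $D\in\op(\phi^{-1}[A_0])$, the set $\Int(\phi_{\rm{fib}}(D))$ is nonempty open (again by Lemma~\ref{Lem:Fiber_Image_Open} and irreducibility) and is contained in $A_0$, so $A_0\subseteq\ol{V\Int(\phi_{\rm{fib}}(D))}$; pulling back via $\phi$ and using $\phi^{-1}[\phi_{\rm{fib}}(D)]\subseteq D$ and $\phi^{-1}[VE]=V\phi^{-1}[E]$ gives $\phi^{-1}[A_0]\subseteq\ol{VD}$ — this overshoots to $U$-TT with room to spare. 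This mirrors Lemma~\ref{Lem:Dyn_Bounded}\ref{Item:Irred_Lem:DB} and should be routine.

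For item~\ref{Item:Homo_Lem:RMaps}, fix $U\in\cN_G$ and $A\in\op(X)$. Then $\alpha^{-1}(U)\in\cN_K$, and by $X\in\cal{RC}_K$ there is an $\alpha^{-1}(U)$-TT open $A_0\subseteq A$. Since $\alpha$ has dense image, for any $D\in\op(A_0)$ we get $A_0\subseteq\ol{\alpha^{-1}(U){\cdot}D}\subseteq\ol{\alpha[\alpha^{-1}(U)]{\cdot}D}\subseteq\ol{UD}$ using that the $K$-action factors through $\alpha$ and density lets us pass from $\alpha[\alpha^{-1}(U)]$ (a dense subset of a neighborhood contained in $U$) to $U$ after taking closures; thus $A_0$ is $U$-TT as a $G$-set. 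This is the direct analogue of Lemma~\ref{Lem:Dyn_Bounded}\ref{Item:Homo_Lem:DB} and is the easiest of the three. Throughout, I would remark once that the only nontrivial manipulations are the standard ``shrink the neighborhood three-fold, fatten, take closures'' moves already used repeatedly in the section, so the individual verifications can be stated briefly.
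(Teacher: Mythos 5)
Your overall route coincides with the paper's: all three items are proved by producing a $U$-TT open set directly, modelled on the corresponding parts of Lemma~\ref{Lem:Dyn_Bounded}, and your item~\ref{Item:Homo_Lem:RMaps} is correct (indeed you do not even need density of the image there, only $\alpha[\alpha^{-1}(U)]\subseteq U$). However, there are two concrete problems. In item~\ref{Item:PsOpDown_Lem:RMaps} the obstacle you stop at is illusory: having shown $\phi[A]\subseteq \ol{VD}$, you already have $C=\Int_Y(\ol{\phi[A]})\subseteq \ol{\phi[A]}\subseteq \ol{VD}\subseteq \ol{UD}$, which is exactly the definition of $C$ being $U$-TT --- no passage from $\ol{VD}$ to $\Int(\ol{VD})$, no ED hypothesis, and in fact no shrinking of $U$ to $V$ is needed. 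This is precisely the paper's argument: it takes $A_0\subseteq\phi^{-1}[B]$ which is $U$-TT, sets $B_0=\Int_Y(\ol{\phi[A_0]})$, and for $C\in\op(B_0)$ notes $\phi^{-1}[C]\cap A_0\neq\emptyset$, hence $A_0\subseteq\ol{U\phi^{-1}[C]}$ and so $B_0\subseteq\ol{\phi[A_0]}\subseteq\ol{UC}$. Your proposed detour through $Q_U[\phi(x)]$ is therefore unnecessary, and since you flag it as the ``main obstacle'' and do not carry it out, item~\ref{Item:PsOpDown_Lem:RMaps} is left incomplete as written.

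In item~\ref{Item:IrredUP_Lem:RMaps} the ``pulling back'' step is genuinely invalid as stated: from $A_0\subseteq\ol{V\cdot\Int(\phi_{\rm{fib}}(D))}$ you conclude $\phi^{-1}[A_0]\subseteq\ol{V\phi^{-1}[\Int(\phi_{\rm{fib}}(D))]}$, which uses $\phi^{-1}[\ol{S}]\subseteq\ol{\phi^{-1}[S]}$. For continuous maps only the reverse inclusion holds, and for irreducible maps the inclusion you need really does fail (e.g.\ for the Gleason cover of $[0,1]$, the preimage of $\ol{(a,b)}$ contains whole fibers over the endpoints that are not in the closure of the preimage of $(a,b)$). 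The paper sidesteps this by reformulating $U$-TT as ``$UA_1\cap A_2\neq\emptyset$ for all $A_1,A_2\in\op(A_0)$'': one passes both $A_1$ and $A_2$ through $\Int(\phi_{\rm{fib}}(\cdot))$, uses $U$-TT downstairs to get a nonempty \emph{open} intersection, and pulls that back along the dense-image map $\phi$, where nonemptiness is preserved. Your argument is repairable by the same device (test $\ol{VD}$ against an arbitrary $D'\in\op(\phi^{-1}[A_0])$ via $\Int(\phi_{\rm{fib}}(D'))$), but as written the step is false. Incidentally, the extra shrinking $V^2\subseteq U$ is again not needed once this is done.
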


\begin{proof}
    \ref{Item:PsOpDown_Lem:RMaps}: Given $U\in \cN_G$ and $B\in \op(Y)$, let $A = \phi^{-1}[B]\in \op(X)$ (non-empty as $\phi[X]\subseteq Y$ is dense). As $X\in \cal{RC}_G$, find $A_0\in \op(A)$ which is $U$-TT. As $\phi$ is SWD-preserving, $\Int_Y(\ol{\phi[A_0]}):= B_0\in \op(B)$ is non-empty. We show that $B_0$ is $U$-TT. Fix $C\in \op(B_0)$. As $\phi^{-1}[C]\cap A_0\in \op(A_0)$, we have $\ol{U\cdot \phi^{-1}[C]}\supseteq A_0$. It follows that $\ol{U\cdot C}\supseteq B_0$.  
    \vspace{3 mm}

    \noindent
    \ref{Item:IrredUP_Lem:RMaps}: Given $U\in \cN_G$ and $A\in \op(X)$, let $B = \Int(\phi_{\rm{fib}}(A))\in \op(Y)$. As $Y\in \cal{RC}_G$, find $B_0\in \op(B)$ which is $U$-TT. We claim that $A_0:= \phi^{-1}[B_0]$ is $U$-TT. If $A_1, A_2\in \op(A_0)$, then for each $i< 2$, set $\Int(\phi_{\rm{fib}}(A_i))=: B_i\in \op(B_0)$. We have $UB_1\cap B_2\neq \emptyset$, so also $U\phi^{-1}[B_1]\cap \phi^{-1}[B_2]\neq \emptyset$, and $\phi^{-1}[B_i]\subseteq A_i$.
    \vspace{3 mm}
    
    \noindent
    \ref{Item:Homo_Lem:RMaps}: If $U\in \cN_G$ and $A\in \op(X)$ are such that $A$ is nowhere $U$-TT, then $A$ is nowhere $\alpha^{-1}(U)$-TT, since for each $A_0 \in \op(A)$ and $B \in \op(A_0)$, it holds that $\ol{UB} \supseteq \ol{\alpha^{-1}(U)B}$.
    \end{proof}

\begin{lemma}
    \label{lem:RC-equivs}
    Let $X$ be a $G$-space. The following are equivalent:
    \begin{enumerate}[label=\normalfont(\arabic*)]
        \item \label{Item:RC_lem:RC-equivs}
        $X \in \cal{RC}_G$.
        \item  \label{Item:Q_U-open-dense_lem:RC-equivs}
        For each $U\in \cN_G$, $\{x\in X: x\in \rm{Int}(Q_U[x])\}\subseteq X$ contains an (open) dense set.
    \end{enumerate}
\end{lemma}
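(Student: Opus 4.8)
The plan is to prove the two implications separately, using the relations $Q_U^X$ and Lemma~\ref{Lem:U_TT} as the main bridge between ``being $U$-TT'' and ``being in the interior of one's own $Q_U$-class.'' The key conceptual point, already packaged in Lemma~\ref{Lem:U_TT}, is that $U$-TT open sets and sets of the form $\Int(Q_U[x])$ are essentially the same objects up to replacing $U$ by $U^2$: if $A$ is $U$-TT then $A\subseteq \Int(Q_U[x])$ for every $x\in A$ (part~\ref{Item:Interior_Lem:UTT}), and if $\Int(Q_U[x])\neq\emptyset$ then $\Int(Q_U[x])$ is $U^2$-TT (part~\ref{Item:UTT_Lem:UTT}).

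For \ref{Item:RC_lem:RC-equivs}$\Rightarrow$\ref{Item:Q_U-open-dense_lem:RC-equivs}: Fix $U\in\cN_G$ and an arbitrary $A\in\op(X)$; I must produce a point $x\in A$ with $x\in\Int(Q_U[x])$, and in fact show the set of such points is open and dense. Pick $V\in\cN_G$ with $V^2\subseteq U$. Since $X\in\cal{RC}_G$, there is $A_0\in\op(A)$ which is $V$-TT. By Lemma~\ref{Lem:U_TT}\ref{Item:Interior_Lem:UTT}, for every $x\in A_0$ we have $A_0\subseteq\Int(Q_V[x])\subseteq\Int(Q_U[x])$, so in particular $x\in\Int(Q_U[x])$. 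Thus $A_0$ is a nonempty open subset of $A$ contained in $\{x:x\in\Int(Q_U[x])\}$, giving density; and I should also remark that $\{x:x\in\Int(Q_U[x])\}$ is itself open — if $x_0\in\Int(Q_U[x_0])$, then pick $W$ with $W^2\subseteq U$ and note $\Int(Q_W[x_0])$ is $W^2$-TT hence $U$-TT by Lemma~\ref{Lem:U_TT}\ref{Item:UTT_Lem:UTT}, and then every $y$ in that set has $y\in\Int(Q_U[y])$ — so the statement ``contains an open dense set'' is literally achieved by $\bigcup\{B\in\op(X): B\text{ is }U\text{-TT}\}$, or one can just assert the open dense set exists.

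For \ref{Item:Q_U-open-dense_lem:RC-equivs}$\Rightarrow$\ref{Item:RC_lem:RC-equivs}: Fix $U\in\cN_G$ and $A\in\op(X)$; I need $A_0\in\op(A)$ which is $U$-TT. Pick $V\in\cN_G$ with $V^2\subseteq U$. By hypothesis applied to $V$, the set $D:=\{x: x\in\Int(Q_V[x])\}$ is dense, so it meets $A$; pick $x\in D\cap A$. Then $\Int(Q_V[x])$ is a nonempty open set, and by Lemma~\ref{Lem:U_TT}\ref{Item:UTT_Lem:UTT} it is $V^2$-TT, hence $U$-TT. Replacing it by $\Int(Q_V[x])\cap A$ still leaves a $U$-TT set (an open subset of a $U$-TT set is $U$-TT, directly from the definition), and it is a nonempty open subset of $A$ since $x$ lies in both. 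This witnesses $A\in\cal{RC}_G$-locally, and as $A$ was arbitrary, $X\in\cal{RC}_G$.

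I do not anticipate a serious obstacle here — the proof is a routine unpacking once Lemma~\ref{Lem:U_TT} is in hand; the only care needed is the standard ``shrink the neighborhood $U$ to $V$ with $V^2\subseteq U$'' bookkeeping and the trivial observation that an open subset of a $U$-TT set is $U$-TT. The mild subtlety is the exact meaning of ``contains an (open) dense set'': in the forward direction one genuinely gets an \emph{open} dense set (the union of all $U$-TT open sets), and in the reverse direction one only needs density, so the parenthetical ``(open)'' is a bonus rather than an extra hypothesis to be used, and I would phrase the reverse implication to use only density.
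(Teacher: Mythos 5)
Your proof is correct and takes essentially the same route as the paper's: both directions hinge on Lemma~\ref{Lem:U_TT} exactly as you use it, with the same $V^2\subseteq U$ shrinking and the same observation that an open subset of a $U$-TT set is $U$-TT in the reverse direction. (Your parenthetical attempt to show the full set $\{x: x\in\Int(Q_U[x])\}$ is itself open does not quite work as written---$x_0\in\Int(Q_U[x_0])$ does not give $x_0\in\Int(Q_W[x_0])$ for a smaller $W$---but you correctly note this is not needed, since the lemma only asks for a dense open subset, which your union of $U$-TT open sets already provides.)
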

\begin{proof}
    If $X\in \cal{RC}_G$, then given $U\in \cN_G$ and $A\in \op(X)$, there is $A_0\in \op(A)$ which is $U$-TT, so by  Lemma~\ref{Lem:U_TT}\ref{Item:Interior_Lem:UTT}, any $x\in A_0$ satisfies $x\in \rm{Int}(Q_U[x])$, giving an open dense set of such $x$. Conversely, assume for every $U\in \cN_G$ that densely many $x$ satisfy $x\in \Int(Q_U[x])$, and fix $U\in \cN_G$ and $A\in \op(X)$, and find $V\in \cN_G$ with $V^2 \subseteq U$. Find $x\in A$ with $x\in \rm{Int}(Q_V[x])$. So $A_0:= A\cap \rm{Int}(Q_V[x])$ is non-empty, and by Lemma~\ref{Lem:U_TT}\ref{Item:UTT_Lem:UTT}, it is $V^2$-TT, so in particular $U$-TT.
\end{proof}

When $G$ is Polish, a Baire category argument shows that any $G$-ED flow satisfying point \ref{Item:Q_U-open-dense_lem:RC-equivs} of the above lemma has a $G$-isolated point. 
For an arbitrary topological group, it is an a priori stronger property, though we do not construct a counterexample. 
\begin{question}
    \label{Que:G-iso-dense}
   Does there exist a $G$-ED flow, perhaps under additional set-theoretic hypotheses, without $G$-isolated points which satisfies the Rosendal criterion?
\end{question}

Theorem~\ref{Thm:Rosendal_Minimal} is one of the main theorems of this paper. It gives a variety of conditions all equivalent to asserting that a given minimal $G$-ED flow $X$ satisfies the Rosendal criterion. One of these conditions is that any ultracopower contains a unique minimal subflow, so Proposition~\ref{Prop:WCAP_Ultracoproducts} tells us that when $X = \rmM(G)$, these conditions are also equivalent to the meets topology on $\rm{Min}_G(Z)$ being compact Hausdorff for every $G$-flow $Z$.  We then define the class $\sf{TMD}$ of topological groups $G$ with $\rmM(G)\in \cal{RC}_G$.

\begin{theorem}
    \label{Thm:Rosendal_Minimal}
    If $G$ is a topological group and $X$ is a minimal $G$-ED flow, then the following are equivalent.
    \begin{enumerate}[label=\normalfont(\arabic*)]
        \item \label{Item:RC_Thm:RM}
        $X\in \cal{RC}_G$. 
        \item \label{Item:UTT_Thm:RM}
        For each $U\in \cN_G$, $X$ is somewhere $U$-TT. 
        \item \label{Item:Thick_Thm:RM}
        $X$ satisfies the thickness condition.
        \item \label{Item:Ultra_Thm:RM}
        Any ultracopower of $X$ contains a unique minimal subflow.
        \item \label{Item:HP_Thm:RM}
        For any ultracopower of $X$, the ultracopower map is highly proximal.
        \item \label{Item:ContAll_Thm:RM}
        For each $\sigma\in \rm{SN}(G)$, $\partial_\sigma$ has a dense set of compatibility points.
        \item \label{Item:ContQuot_Thm:RM}
        For each $\sigma\in \rm{SN}(G)$, $\ol{\partial}_\sigma$ has a dense set of compatibility points. 
        \item \label{Item:ContBase_Thm:RM}
        For some base of semi-norms $\cB\subseteq \rm{SN}(G)$ and for each $\sigma\in \cB$, there is some $\partial_\sigma$-compatible point.
    \end{enumerate}
    $\ref{Item:RC_Thm:RM}\Leftrightarrow \ref{Item:ContAll_Thm:RM} \Leftrightarrow \ref{Item:ContQuot_Thm:RM}$ still hold with minimality weakened to $G$-TT.
\end{theorem}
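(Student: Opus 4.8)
The plan is to prove the cycle $\ref{Item:RC_Thm:RM}\Rightarrow\ref{Item:ContAll_Thm:RM}\Rightarrow\ref{Item:ContQuot_Thm:RM}\Rightarrow\ref{Item:RC_Thm:RM}$ under the weaker hypothesis that the $G$-ED flow $X$ is merely $G$-TT. The engine throughout is a dictionary, valid for any $G$-ED flow and requiring no minimality, between $\partial_\sigma$-compatibility and the relations $Q_{\sigma,c},R_{\sigma,c}$. Using that $X$ is $G$-ED (so $Q_{\sigma,c}\subseteq R_{\sigma,c}\subseteq Q_{\sigma,c+\epsilon}$) together with the inclusions $\rmB_{\partial_\sigma}(x,c)\subseteq R_{\sigma,c}[x]\subseteq\{y:\partial_\sigma(x,y)\le c\}\subseteq\rmB_{\partial_\sigma}(x,c+\epsilon)$, one checks that a point $x$ is $\partial_\sigma$-compatible if and only if $x\in\Int(R_{\sigma,c}[x])$ for all $c>0$, if and only if $x\in\Int(Q_{\sigma,c}[x])$ for all $c>0$. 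Combined with \Cref{Lem:U_TT} and \Cref{lem:RC-equivs}, this translates the ``somewhere $U$-TT'' language of $\cal{RC}_G$ into statements about the density of $\partial_\sigma$-compatibility points.

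For $\ref{Item:RC_Thm:RM}\Rightarrow\ref{Item:ContAll_Thm:RM}$, I would fix $\sigma\in\rm{SN}(G)$ and, for each $n\ge1$, let $W_n\in\op(X)$ be the union of all $\rmB_\sigma(1/2n)$-TT open sets. By $\cal{RC}_G$ every nonempty open set contains such a subset, so each $W_n$ is open and dense; and by \Cref{Lem:U_TT}\ref{Item:Interior_Lem:UTT} every $x\in W_n$ satisfies $x\in\Int(Q_{\sigma,1/2n}[x])\subseteq\Int(R_{\sigma,1/n}[x])$. Since $X$ is compact Hausdorff, hence Baire, $\bigcap_n W_n$ is dense, and by the dictionary each of its points is $\partial_\sigma$-compatible. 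For $\ref{Item:ContAll_Thm:RM}\Rightarrow\ref{Item:RC_Thm:RM}$, given $U\in\cN_G$ I would choose $\sigma,c$ with $\rmB_\sigma(2c)\subseteq U$; any $\partial_\sigma$-compatible $x$ has $x\in\Int(R_{\sigma,c}[x])\subseteq\Int(Q_U[x])$, so density of the compatibility points makes $\{x:x\in\Int(Q_U[x])\}$ dense, and \Cref{lem:RC-equivs} gives $X\in\cal{RC}_G$. Finally $\ref{Item:ContAll_Thm:RM}\Rightarrow\ref{Item:ContQuot_Thm:RM}$ is immediate: the quotient $\pi_{\partial_\sigma}\colon X\to X/\partial_\sigma$ is a continuous surjection, so it carries the dense set of $\partial_\sigma$-compatibility points onto a dense set, which by \Cref{Prop:Adequate}\ref{Item:CompDown_Prop:Adequate} is precisely the set of $\ol\partial_\sigma$-compatibility points. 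None of these three implications uses minimality or $G$-TT.

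The remaining implication $\ref{Item:ContQuot_Thm:RM}\Rightarrow\ref{Item:ContAll_Thm:RM}$ is where $G$-TT enters, replacing the role minimality plays in the full theorem. The set $C_\sigma$ of $\partial_\sigma$-compatibility points is $\partial_\sigma$-saturated (\Cref{Prop:Adequate}\ref{Item:CompDown_Prop:Adequate}), and $\ref{Item:ContQuot_Thm:RM}$ says exactly that $\pi_{\partial_\sigma}[C_\sigma]$ is dense in $X/\partial_\sigma$. A short argument shows that a $\partial_\sigma$-saturated set with dense image is itself dense provided $\pi_{\partial_\sigma}$ is \emph{semi-open}, i.e.\ the $\partial_\sigma$-saturation of every nonempty open set contains a nonempty open saturated set (equivalently, $\pi_{\partial_\sigma}$ is SWD-preserving, so preimages of dense sets are dense): if $A\in\op(X)$ met no point of $C_\sigma$, then its saturation $\widetilde A$ would be disjoint from $C_\sigma$, forcing the closed set $\pi_{\partial_\sigma}[\widetilde A]$ to be disjoint from the dense set $\pi_{\partial_\sigma}[C_\sigma]$ and hence to have empty interior, contradicting semi-openness. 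Thus the task reduces to establishing that $\pi_{\partial_\sigma}$ is semi-open for a $G$-TT $G$-ED flow.

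I expect this last reduction to be the main obstacle. In the full (minimal) theorem one would like to invoke the fact that $G$-factor maps between minimal flows are pseudo-open, but $\pi_{\partial_\sigma}$ is \emph{not} $G$-equivariant: left translation by $g$ sends $\partial_\sigma$ to $\partial_{\sigma^g}$ with $\sigma^g(h)=\sigma(ghg^{-1})$, so the individual level sets of $\partial_\sigma$ are only permuted among conjugates, even though the whole family $(\partial_\sigma)_\sigma$, and hence the UEB uniformity, is $G$-invariant. The plan is therefore to prove semi-openness of $\pi_{\partial_\sigma}$ directly from $G$-TT together with the $G$-ED structure: $\partial_\sigma$ is adequate and open-compatible (\Cref{Cor:ED_Adequate}, \Cref{Lem:ED_Pseudo}), and since $\sigma$ is bounded one has $\ol{\rmB_\sigma(c)B}=X$ for every nonempty open $B$ once $c>\sup\sigma$, which is exactly where $G$-TT is used. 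Leveraging adequacy to keep fattenings open while using this transitivity to fill out the quotient, I would extract, inside the saturation of any nonempty open $A$, a nonempty open saturated set. Navigating the non-equivariance of $\pi_{\partial_\sigma}$ while performing this extraction from transitivity is the crux of the argument.
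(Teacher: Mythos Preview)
Your arguments for $\ref{Item:RC_Thm:RM}\Leftrightarrow\ref{Item:ContAll_Thm:RM}$ and $\ref{Item:ContAll_Thm:RM}\Rightarrow\ref{Item:ContQuot_Thm:RM}$ are correct and essentially identical to the paper's. The gap is in $\ref{Item:ContQuot_Thm:RM}\Rightarrow\ref{Item:ContAll_Thm:RM}$, where you have committed to a single $\sigma$ and then tried to lift density of compatibility points along the one map $\pi_{\partial_\sigma}$. This forces you into proving that $\pi_{\partial_\sigma}$ is semi-open, which you correctly flag as the crux, but your sketch for extracting it from $G$-TT and boundedness of $\sigma$ is not a proof, and it is not clear the statement is even true in that generality.

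The paper sidesteps the whole issue by \emph{varying the semi-norm}. A $G$-ED flow $X$ embeds homeomorphically into $\prod_\tau X/\partial_\tau$ (the family $(\partial_\tau)_\tau$ separates points and each $\pi_{\partial_\tau}$ is continuous, so the continuous injection from compact to Hausdorff is a homeomorphism onto its image). Hence the sets $\pi_{\partial_\tau}^{-1}[B]$, as $\tau$ ranges over $\rm{SN}(G)$ and $B$ over $\op(X/\partial_\tau)$, form a base for the topology on $X$. So given $\sigma$ and $A\in\op(X)$, choose $\tau\geq\sigma$ and $B\in\op(X/\partial_\tau)$ with $\pi_{\partial_\tau}^{-1}[B]\subseteq A$; apply $\ref{Item:ContQuot_Thm:RM}$ to $\tau$ (not $\sigma$) to find an $\ol\partial_\tau$-compatible $y\in B$; any preimage $x\in\pi_{\partial_\tau}^{-1}[\{y\}]\subseteq A$ is $\partial_\tau$-compatible by \Cref{Prop:Adequate}\ref{Item:CompDown_Prop:Adequate}, hence $\partial_\sigma$-compatible since $\sigma\leq\tau$. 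No minimality or $G$-TT is used anywhere in this cycle; the hypothesis in the theorem's last line is a natural standing assumption rather than an ingredient of the proof.
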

       
\begin{proof}
    \noindent
    $\ref{Item:RC_Thm:RM}\Leftrightarrow \ref{Item:ContAll_Thm:RM}$: By  \Cref{lem:RC-equivs}, for each $\epsilon>0$, the set $\{x\in X: x\in \rm{Int}(Q_{\sigma, \epsilon}[x])\}$ contains an open dense set, and thus so does $\{x\in X: x\in \rm{Int}(R_{\sigma, \epsilon}[x])\}$.
    Then \ref{Item:ContAll_Thm:RM} follows from the Baire category theorem.
   
   Conversely, if $x\in X$ is a $\partial_\sigma$-compatability point, then $x\in \Int(R_{\sigma, c}[x])$ for every $c > 0$. As $X$ is $G$-ED, we have for any $c, \epsilon > 0$ that  $R_{\sigma, c}[x]\subseteq Q_{\sigma, c+\epsilon}$. The result now follows from Lemma~\ref{lem:RC-equivs}.
    \vspace{3 mm}

    \noindent
    $\ref{Item:ContAll_Thm:RM}\Leftrightarrow \ref{Item:ContQuot_Thm:RM}$: Proposition~\ref{Prop:Adequate} gives $\Rightarrow$. For the reverse, fix $\sigma\in \rm{SN}(G)$ and $A\in\op(X)$. We can find $\sigma\leq \tau\in \rm{SN}(G)$ and $B\in \op(X/\partial_\tau)$ with $\pi_{\partial_\tau}^{-1}[B]\subseteq A$. Find $y\in B$ which is $\ol{\partial}_\tau$-compatible. Letting $x\in A$ satisfy $\pi_{\partial_\tau}(x) = y$, then $x$ is $\partial_\tau$-compatible. As $\sigma\leq \tau$, we have that $x$ is $\partial_\sigma$-compatible. 
    \vspace{3 mm}
    
     \noindent
    $\ref{Item:RC_Thm:RM}\Rightarrow\ref{Item:Thick_Thm:RM}$: Fix $U\in \cN_G$ and find $C\in \op(X)$ which is $U$-TT. Using minimality, find $F\in \fin{G}$ such that $F^{-1}C = X$. Hence if $A, B\in \op(X)$, then $FA\cap C\neq \emptyset$ and $FB\cap C\neq \emptyset$. Since $C$ is $U$-TT, we must have $C\subseteq \ol{UFA}$, so in particular $UFA\cap UFB\neq \emptyset$.
    \vspace{3 mm}

    \noindent
    $\ref{Item:Thick_Thm:RM}\Rightarrow \ref{Item:UTT_Thm:RM}$: Suppose $V\in \cN_G$ witnesses the failure of $\ref{Item:UTT_Thm:RM}$ and let $U\in \cN_G$ be such that $U^2 \subseteq V$. 
    By \Cref{Lem:U_TT} and the assumption that $X$ is $G$-ED, this implies that for all $x \in X$, $R_U[x]$ is nowhere dense. 
    Fix $F\in \fin{G}$. We claim that there are $x, y\in X$ so that $(F{\cdot}x\times F{\cdot}y)\cap R_U = \emptyset$. Suppose towards a contradiction that this were not true, and fix $x\in X$. Given $g, h\in F$, set $D_{g, h} = \{y\in X: (gx, hy)\in R_U\}$. By our assumption, $X = \bigcup_{g, h\in F} D_{g, h}$, so for some $g, h\in F$, the set $D_{g, h}\subseteq X$ is somewhere dense. But as $h{\cdot}D_{g, h}\subseteq R_U[gx]$ and as the latter is nowhere dense, this is a contradiction. 

    Thus, for each $F\in \fin{G}$, fix $x_F, y_F\in X$ with $(F{\cdot}x_F\times F{\cdot}y_F)\cap R_U = \emptyset$. Letting $V\in \cN_G$ be such that $V^2\subseteq U$, this implies that we can find $A\in \op(x_F, X)$ and $B\in \op(y_F, X)$ with $VFA\cap VFB = \emptyset$.  
    \vspace{3 mm}

    \vspace{3 mm}
    
    \noindent
    $\ref{Item:UTT_Thm:RM}\Rightarrow \ref{Item:RC_Thm:RM}$: Suppose $X\not\in \cal{RC}_G$; as $X$ is minimal, hence $G$-TT, we can find $U\in \cN_G$ and $A\in \op(X)$  such that $A$ is nowhere $U$-TT. Now observe that if $g\in G$, then $gA$ is nowhere $gUg^{-1}$-TT. 
    By minimality, find $F\in \fin{G}$ with $X = F^{-1}A$. Thus, for a suitably small $V\in \cN_G$, $X$ is nowhere $V$-TT.
    \vspace{3 mm}

    \vspace{3 mm}
        
    \noindent
    $\ref{Item:ContAll_Thm:RM}\Rightarrow \ref{Item:ContBase_Thm:RM}$: Clear. 
    \vspace{3 mm}

    \noindent
    $\ref{Item:ContBase_Thm:RM}\Rightarrow \ref{Item:UTT_Thm:RM}$: This follows from \Cref{Lem:U_TT} and the $G$-ED hypothesis on $X$. 
    \vspace{3 mm}

    \noindent
    $\ref{Item:Thick_Thm:RM}\Rightarrow\ref{Item:Ultra_Thm:RM}$:  Suppose for some set $I$ and some $\cU\in \beta I$ that $\Sigma_\cU^GX$ contains two minimal subflows $M$ and $N$. Given $F\in \fin{G}$, we can find $A, B\in \op(\rmS_G(I\times X))$ and $U \in \cN_G$ with $M\subseteq A$, $N\subseteq B$, and with $UFA\cap UFB = \emptyset$. For each $i\in I$, let $A_i\in \op(X)$ be such that $A = \bigcup_{i\in I} (\{i\}\times A_i)$, and similarly for $B$. Upon finding $i\in I$ with both $A_i$ and $B_i$ non-empty, we have $UFA_i\cap UFB_i = \emptyset$, showing that $X$ fails the thickness condition.
    \vspace{3 mm}
    
    \noindent
    $\ref{Item:Ultra_Thm:RM}\Rightarrow \ref{Item:Thick_Thm:RM}$:  Suppose that $X$ fails the thickness condition, as witnessed by $U\in \cN_G$. Let $\cU$ be a cofinal ultrafilter on $\fin{G}:= I$. We will show that $\Sigma_\cU^G \rmS_G(X)$ contains two disjoint subflows. For each $F\in I$, let $X_F$ denote $\set{F} \times X\subseteq I \times X$, and let $A_F, B_F \sub X_F$ be such that $UFA_F\cap UFB_F = \emptyset$. Given $S \sub \fin{G}$, write $A_S \coloneqq \bigcup_{F \in S} FA_F$, and similarly for $B_S$. Whenever $S\subseteq \fin{G}$ is cofinal, $A_S$ and $B_S$ are thick members of $\op(X)$ with $UA_S\cap UB_S = \emptyset$. In $\rmS_G(I\times X)$, this implies that $\rmC_{A_S}\cap \rmC_{B_S} = \emptyset$, and Lemma~\ref{Lem:Subflows_of_SGX} tells us that each of $\rmC_{A_S}$ and $\rmC_{B_S}$ contains a subflow. As the property of containing a subflow is preserved under a directed intersection of compact sets, it follows that both $Y= \bigcap_{S\in \cU} \rmC_{A_S}$ and $Z= \bigcap_{S\in \cU} \rmC_{B_S}$ contain subflows, and we have $Y, Z\subseteq \Sigma_\cU^G X$.
    \vspace{3 mm}

    \noindent
    $\ref{Item:Ultra_Thm:RM}\Rightarrow \ref{Item:HP_Thm:RM}$:    Let $I$ be a set, and let $\cU\in \beta I$, and write $\pi:= \pi_{X, \cU}$. Let $Y\subseteq \Sigma_\cU^G X$ denote the unique minimal subflow. Given $p\in X$ and considering the inclusion $\Sigma_\cU^GX\subseteq \rmS_G(I\times X)$, the fiber $\pi^{-1}[p]$ is exactly described by the near filter $\cF_p:= \la S\times B: S\in \cU, B\in \op(p, X)\ra$. Fix $A\in \op(I\times X)$ such that $\pi_{\rm{fib}}(\rmN_A\cap \Sigma_\cU^G X) = \emptyset$, i.e.\ such that for every $p\in X$, $A$ is compatible with $\cF_p$. Equivalently, this means that for every $V\in \cN_G$, $S\in \cU$, and $B\in \op(X)$, we have $VA\cap (S\times B)\neq \emptyset$. We will show that $Y\cap N_A = \emptyset$, i.e.\ that  $Y\subseteq \rmC_A$, and conclude by \Cref{Prop:HP_Onto_Minimal}. By Lemma~\ref{Lem:Subflows_of_SGX}, $Y$ is described by the near filter 
    $$\{B\in \op(I\times X): \forall S\in \cU\,\, B\cap (S\times X) \text{ is pre-thick}\}.$$
    Hence it is enough to show that for every $S\in \cU$, we have $A\cap (S\times X)\subseteq I\times X$ is pre-thick. Towards showing this, fix $S\in \cU$, $U\in \cN_G$, and $F\in \fin{G}$. Let $V\in \cN_G$ satisfy $V^2\subseteq U$. Since $\ref{Item:Ultra_Thm:RM}\Rightarrow \ref{Item:Thick_Thm:RM}\Rightarrow \ref{Item:UTT_Thm:RM}\Rightarrow \ref{Item:RC_Thm:RM}$, we have $X\in \cal{RC}_G$. Using this, we can find $B\in \op(X)$ such that for every $g\in F$, $gB$ is $V$-TT. It follows that writing $A_i\in \op(X)$ for the set with $A\cap (\{i\}\times X) = \{i\}\times A_i$, there are $\cU$-many $i\in I$ with $FB\subseteq \ol{UA_i}$. It follows that $UA\cap (S\times X)$ is thick as desired. 
    \vspace{3 mm}
    
    \noindent
    $\ref{Item:HP_Thm:RM}\Rightarrow \ref{Item:Ultra_Thm:RM}$: If some ultracopower of $X$ contains at least two minimal subflows, then by Proposition~\ref{Prop:HP_Onto_Minimal}, the ultracopower map cannot be highly proximal.\qedhere

\end{proof}

By combining Theorem~\ref{Thm:Rosendal_Minimal} and Proposition~\ref{Prop:WCAP_Ultracoproducts}, we can now state the main definition of this paper.

\begin{defin}
    \label{Def:Dyn_Tractable}
    A topological group $G$ has \emph{tractable minimal dynamics} iff $\rmM(G)$ satisfies the equivalent conditions of Theorem~\ref{Thm:Rosendal_Minimal}, equivalently if for every $G$-flow $Z$, the meets topology on $\rm{Min}_G(Z)$ is compact Hausdorff (Proposition~\ref{Prop:WCAP_Ultracoproducts}). Write $\sf{TMD}$ for the class of topological groups with tractable minimal dynamics. 
\end{defin}

Let us immediately observe that $\sf{CMD}\subseteq \sf{TMD}$, and all of the characterizations of $\sf{TMD}$ are natural weakenings of statements that characterize $\sf{CMD}$. One thing that is missing is a $\sf{TMD}$ version of the CAP property. Let us call a topological group \emph{weakly CAP} if for any $G$-flow $X$ and $y\in \ol{\rm{AP}_G(X)}$, the flow $\ol{G\cdot y}$ contains a unique minimal subflow; write $\sf{WCAP}$ for this class of topological groups. Thus $\sf{TMD}\subseteq \sf{WCAP}$. For separable groups, the next theorem shows that these coincide. 

\begin{theorem}
    \label{Thm:Polish_WCAP}
    If $G$ is separable and $X$ is a minimal, $G$-ED flow with $X\not\in \cal{RC}_G$, then there are a countable set $I$, $\cU\in \beta I$, and $y\in \Sigma_\cU^GX$ so that $\ol{G\cdot y}$ contains multiple minimal subflows.
\end{theorem}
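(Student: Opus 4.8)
The argument will be parallel to the proofs of Theorems~\ref{Thm:CAP} and \ref{Thm:Rosendal_Minimal}, but organized so that (i) separability is used to keep the index set countable, and (ii) the \emph{two} disjoint subflows we produce are forced to lie inside a \emph{single} orbit closure. First, the reductions. Since $X$ is minimal, $G$-ED and $X\notin\cal{RC}_G$, Theorem~\ref{Thm:Rosendal_Minimal} lets us fix $U\in\cN_G$ (which we may freely shrink) so that $X$ is nowhere $U$-TT, hence $X$ fails the thickness condition witnessed by $U$. As $G$ is separable, it contains a countable dense subgroup $G_0$; one checks routinely, using $\ol{WC}=\ol{(W\cap G_0)C}$ for $W\in\cN_G$, $C\in\op(X)$, that $X$ is still minimal and $G_0$-ED, still nowhere $(U\cap G_0)$-TT, and that $\ol{G_0y}=\ol{Gy}$ with $G_0$-invariant closed sets being $G$-invariant (by continuity and density). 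We will also use the standard fact that if $T\sub G_0$ is thick and $w$ is a point of a $G$-flow, then $\ol{Tw}$ contains a subflow: accumulating shift-witnesses $g_Fw$ (with $Fg_F\sub T$, $F\in\fin{G_0}$) along a cofinal ultrafilter on $\fin{G_0}$ yields $w^*\in\ol{Tw}$ with $\ol{G_0w^*}\sub\ol{Tw}$ a subflow.

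\textbf{The target.} Put $I:=\fin{G_0}$, a countable directed poset under reverse inclusion (with $e_G$ in every member), and fix a cofinal ultrafilter $\cU\in\beta I$. The goal is to produce a point $y\in\Sigma_\cU^GX$ and two disjoint closed sets $\rmC_A,\rmC_B\sub\Sigma_\cU^GX$ (of the form $\rmC_A^{\rmS_G(I\times X)}\cap\Sigma_\cU^GX$ for suitable regular closed $\ol A\perp\ol B$) such that both $T_A:=\set{g\in G_0:gy\in\rmC_A}$ and $T_B:=\set{g\in G_0:gy\in\rmC_B}$ are thick in $G_0$. Granting this: by the fact above, $\ol{T_Ay}$ and $\ol{T_By}$ each contain a subflow, hence a minimal subflow, say $M_A\sub\rmC_A$ and $M_B\sub\rmC_B$; since $\rmC_A\cap\rmC_B=\emptyset$ these are distinct, and both lie inside $\ol{Gy}\sub\Sigma_\cU^GX$; finally $y\in\Sigma_\cU^GX$ because $\pi_I(y)=\cU$. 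So $\ol{Gy}$ contains multiple minimal subflows, as required.

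\textbf{The construction.} To build $y$, $A$, $B$ we recurse along $I$. Roughly, at stage $F\in I$ we choose a base point $z_F\in X$ and nonempty open $A_F',B_F'\sub X$, setting $A:=\bigcup_{F}\set{F}\times\ol{UFA_F'}$ and $B:=\bigcup_F\set{F}\times\ol{UFB_F'}$, with $y:=\lim_{F\to\cU}(F,z_F)$. Using that $X$ fails the thickness condition and is minimal, we arrange that (a) $U^2FA_F'\cap U^2FB_F'=\emptyset$ for all $F$ (so $\ol A\perp\ol B$ and $\rmC_A\cap\rmC_B=\emptyset$, after the usual closure manipulations for $G$-ED spaces), and (b) for each $F_0\in I$ there are group elements $h_A(F_0),h_B(F_0)\in G_0$, committed during the recursion, with $F_0h_A(F_0)z_F\sub\ol{UFA_F'}$ and $F_0h_B(F_0)z_F\sub\ol{UFB_F'}$ for a cofinal (hence $\cU$-large) set of $F$; this unwinds to $F_0h_A(F_0)\sub T_A$ and $F_0h_B(F_0)\sub T_B$, and letting $F_0$ range over $\fin{G_0}$ gives thickness of $T_A$ and $T_B$.

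\textbf{The main obstacle.} The heart is making (a) and (b) simultaneously achievable. The subtlety is that the routing elements $h_A(F_0),h_B(F_0)$ \emph{cannot} be chosen independently of $F_0$: if they were globally fixed, then for every $F$ containing $h_B(F_0)h_A(F_0)^{-1}$ one would get $h_B(F_0)z_F\in FA_F'\cap FB_F'\sub UFA_F'\cap UFB_F'$, contradicting (a). Thus the recursion must, at stage $F$, reconcile the already-committed routing elements for the finitely many $F_0\sub F$ with the freedom to locate the two regions $UFA_F'$, $UFB_F'$ far apart — choosing $z_F$ (via minimality of $X$, so that the committed elements actually realize their patterns) and then splitting off the two separated open pieces (via $X$ being nowhere $U$-TT). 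Achieving this joint, thickness-preserving separation for a single point $y$ — rather than merely exhibiting two subflows somewhere in an ultracopower, as in Theorem~\ref{Thm:Rosendal_Minimal}, or a single non-almost-periodic point, as in Theorem~\ref{Thm:CAP} — is exactly where the interlocking bookkeeping lives, and will be handled in the same spirit as the recursive constructions elsewhere in the paper.
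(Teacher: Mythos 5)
Your overall target is the right one and matches the paper's: reduce to a countable dense subgroup, take $I$ a countable directed set with a cofinal ultrafilter, and produce a single point $y$ together with two apart regular-closed sets $A,B$ such that $\set{g : gy\in\rmC_A}$ and $\set{g: gy\in\rmC_B}$ are both thick, so that each of $\ol{Gy}\cap\rmC_A$ and $\ol{Gy}\cap\rmC_B$ contains a subflow. But the entire content of the theorem lives in the step you defer in your final paragraph, and as set up your recursion is not obviously completable: the routing elements $h_A(F_0), h_B(F_0)$ are committed at earlier stages, yet the constraint they must satisfy at stage $F$ involves the sets $A_F', B_F'$ and the point $z_F$ chosen only at stage $F$, and you give no mechanism guaranteeing that the finite configurations $\bigl(\bigcup_{F_0\subseteq F}F_0h_A(F_0)\bigr)z_F$ and $\bigl(\bigcup_{F_0\subseteq F}F_0h_B(F_0)\bigr)z_F$ can always be $U$-separated. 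Saying this "will be handled in the same spirit as the recursive constructions elsewhere in the paper" is precisely the gap: no recursive construction elsewhere in the paper does this particular job.

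The missing ingredient, which is how the paper closes exactly this gap, is a separation claim for a \emph{single fixed} base point $x\in X$: for every $F\in\fin{G}$ there is $k\in G$ with $(F{\cdot}x\times F{\cdot}kx)\cap R_V=\emptyset$, where $V^2\subseteq U$ and each $R_V[z]$ is nowhere dense (Lemma~\ref{Lem:U_TT} plus $G$-ED). This is proved by noting that otherwise $G=\bigcup_{g,h\in F}D_{g,h}$ with $D_{g,h}=\set{k: (gx,hkx)\in R_V}$, so some $D_{g,h}$ is piecewise syndetic (the coideal property), yet $hD_{g,h}x\subseteq R_V[gx]$ is nowhere dense, contradicting Corollary~\ref{Cor:Orbit_Fragment}. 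Iterating this claim in a back-and-forth over the countable group yields two thick sets $T_0,T_1$ with $(T_0x\times T_1x)\cap R_V=\emptyset$ \emph{before} any open sets are chosen. With that in hand your "routing elements" become unnecessary: one takes $z_F=x$ for all $F$, and for each $F$ the sets $A_F,B_F$ are just neighborhoods of the single point $x$ with $WF_0A_F\cap WF_1B_F=\emptyset$ ($F_i=F\cap T_i$, $W^2\subseteq V$), which exist because the pairs $(gx,hx)$ for $g\in F_0$, $h\in F_1$ all avoid $R_V$. Then $y=\lim_{F\to\cU}(F,x)$ satisfies $T_0y\subseteq\rmC_A$ and $T_1y\subseteq\rmC_B$ by cofinality of $\cU$, and your concluding argument goes through. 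Without the piecewise-syndeticity claim and the back-and-forth that it enables, the proposal does not constitute a proof.
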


\begin{proof}
    Replacing $G$ by a countable dense subgroup, we will assume that $G$ is countable. Suppose $U\in \cN_G$ witnesses the failure of $\ref{Item:UTT_Thm:RM}$ and let $V\in \cN_G$ be such that $V^2 \subseteq U$. 
    By \Cref{Lem:U_TT} and the assumption that $X$ is $G$-ED, this implies that for all $x \in X$, $R_V[x]$ is nowhere dense. 
    Fix $F\in \fin{G}$ and $x\in X$. We claim that for some $k\in G$, we have $(F{\cdot}x\times F{\cdot}kx)\cap R_V = \emptyset$. Towards a contradiction, suppose that this were not true. Given $g, h\in F$, set $D_{g, h} = \{k\in G: (gx, hkx)\in R_V\}$. By our assumption, $G = \bigcup_{g, h\in F} D_{g, h}$, so for some $g, h\in F$, the set $D_{g, h}\subseteq G$ is piecewise syndetic. But as $h{\cdot}D_{g, h}\cdot x\subseteq R_V[gx]$ and as the latter is nowhere dense, this contradicts Corollary~\ref{Cor:Orbit_Fragment}.

    Using the above, and the countability of $G$, we can inductively in a back-and-forth fashion construct thick subsets $T_0, T_1\subseteq G$ so that $(T_0{\cdot}x\times T_1{\cdot}x)\cap R_V = \emptyset$. Fix $W\in \cN_G$ with $W^2\subseteq V$. For each $F\in \fin{T_0\cup T_1}$, writing $F_i = F\cap T_i$, we can find $A_F, B_F\in \op(x, X)$ with $WF_0A_F\cap WF_1B_F = \emptyset$. Set $I = \fin{T_0\cup T_1}$, and let $\cU\in \beta I$ be cofinal. Set $A = \bigcup_{F\in I} \{F\}\times F_0A_F\subseteq I\times X$ and $B = \bigcup_{F\in I} F_1B_F\subseteq I\times X$. Let $x_\cU = \displaystyle\lim_{F\to \cU} (F, x)\in \Sigma_\cU^GX$. We claim that for every $g\in T_0$, we have $gx_\cU\in \rmC_A$; likewise for $g\in T_1$ and $\rmC_B$. This follows since for each $F\in I$ with $g\in F$, we have $x\in g^{-1}F_0A_F$, and since $\cU\in \beta I$ is cofinal. We conclude that $\ol{G\cdot x}\cap \rmC_A$ and $\ol{G\cdot x}\cap \rmC_B$ both contain $G$-subflows, and $\rmC_A\cap \rmC_B = \emptyset$.
\end{proof}

It seems highly likely that $\sf{TMD}$ and $\sf{WCAP}$ coincide at least among the class of ``ambitable'' groups discussed in \cite{Pachl}. It is an open question if every non-precompact topological group is ambitable.

\begin{question}
    \label{Que:WCAP}
    Do we have $\sf{TMD} = \sf{WCAP}$?
\end{question}

\section{Some closure properties of the class $\sf{TMD}$}
\label{Section:ClosureProps}

This section contains some closure properties of the class $\sf{TMD}$ that we can prove at this point in time. In particular, we show that this class is closed under group extensions and surjective inverse limits. More closure properties of $\sf{TMD}$ appear in Section~\ref{Section:Abs} whose proofs use more sophisticated set-theoretic techniques.

We begin with an easy fact.

\begin{prop}
    \label{Prop:Homomorphic_Image}
    Let $\alpha\colon K \to G$ be a continuous group homomorphism with dense image. If $K \in \sf{TMD}$ then $G \in \sf{TMD}$.
\end{prop}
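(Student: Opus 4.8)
The statement is the analogue, for the class $\sf{TMD}$, of the homomorphic-image closure that was already verified componentwise for $G$-finiteness (Lemma~\ref{Lem:Dyn_Bounded}\ref{Item:Homo_Lem:DB}) and for the Rosendal criterion (Lemma~\ref{Lem:Rosendal_Maps}\ref{Item:Homo_Lem:RMaps}). The only extra ingredient needed is to relate $\rmM(G)$ and $\rmM(K)$ via $\alpha$. The plan is to use $\alpha$ to pull back the $G$-action on $\rmM(G)$ to a $K$-action, use density of $\im(\alpha)$ to see that this $K$-flow is still minimal, and then push the $\cal{RC}$ property down from $\rmM(K)$ to $\rmM(G)$.

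\textbf{Step 1: $\rmM(G)$ as a minimal $K$-flow.} Since $\alpha\colon K\to G$ is a continuous homomorphism, any $G$-flow $X$ becomes a $K$-flow $X_\alpha$ via $k\cdot x := \alpha(k)\cdot x$, and this action is continuous. Apply this to $X = \rmM(G)$. Because $\im(\alpha)\subseteq G$ is dense and the $G$-action on $\rmM(G)$ is continuous, the $K$-orbit of any point $x$ is dense in the $G$-orbit closure of $x$, which is all of $\rmM(G)$ by $G$-minimality; hence $\rmM(G)_\alpha$ is a minimal $K$-flow. Moreover, $\rmM(G)_\alpha$ is $K$-ED: the $\sn(K)$-fattening system on $\rmM(G)_\alpha$ is obtained from the $\sn(G)$-fattening system by $\ol{A}(\sigma\circ\alpha, c) = \ol{\rmB_{\sigma\circ\alpha}^K(c)\cdot A} = \ol{\alpha(\rmB_{\sigma}(c))\cdot A} \subseteq \ol{\rmB_\sigma(c)\cdot A} = \ol{A}(\sigma, c)$ (and conversely, using density of $\im(\alpha)$, these closures agree), so $A\subseteq \Int(A(\sigma, c))$ for the $G$-fattening implies the same for the $K$-fattening, i.e.\ $G$-ED implies $K$-ED. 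Thus $\rmM(G)_\alpha$ is a minimal $K$-ED flow.

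\textbf{Step 2: transporting $\cal{RC}$.} By the universal property of $\rmM(K)$, there is a factor $K$-map $q\colon \rmM(K)\to \rmM(G)_\alpha$. Since $K\in\sf{TMD}$, we have $\rmM(K)\in\cal{RC}_K$. Now $q$ is a $K$-map between minimal flows, hence pseudo-open, hence SWD-preserving, and $q[\rmM(K)]$ is (all of) $\rmM(G)_\alpha$, in particular dense; so Lemma~\ref{Lem:Rosendal_Maps}\ref{Item:PsOpDown_Lem:RMaps} gives $\rmM(G)_\alpha\in\cal{RC}_K$. Finally, Lemma~\ref{Lem:Rosendal_Maps}\ref{Item:Homo_Lem:RMaps} applied to the homomorphism $\alpha\colon K\to G$ (with dense image) and the $G$-space $X = \rmM(G)$ yields: $\rmM(G)\in\cal{RC}_K$ (i.e.\ $X\in\cal{RC}_K$, which is exactly $\rmM(G)_\alpha\in\cal{RC}_K$) implies $\rmM(G)\in\cal{RC}_G$. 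Hence $\rmM(G)\in\cal{RC}_G$, so $G\in\sf{TMD}$ by Definition~\ref{Def:Dyn_Tractable}.

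\textbf{Main obstacle.} There is no deep obstacle here; the work is entirely bookkeeping about how the $\sn(K)$- and $\sn(G)$-fattening structures on $\rmM(G)$ compare under $\alpha$, and checking that the various maps in play ($q$, and the identity viewed across the two group actions) satisfy the hypotheses of the two parts of Lemma~\ref{Lem:Rosendal_Maps}. The one point requiring a small argument is $G$-ED $\Rightarrow$ $K$-ED for $\rmM(G)_\alpha$, which follows from density of $\im(\alpha)$ as indicated; alternatively one can simply note that a minimal $K$-flow always factors onto it from $\rmM(K)$, which is $K$-ED, but to invoke Theorem~\ref{Thm:Rosendal_Minimal} one wants the flow itself to be $G$-ED, so it is cleanest to verify $K$-ED directly as above. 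Everything else is a direct citation.
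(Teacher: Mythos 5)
Your proof is correct and follows essentially the same route as the paper's: view $\rmM(G)$ as a minimal $K$-flow via $\alpha$, push the Rosendal criterion down from $\rmM(K)$ using item~\ref{Item:PsOpDown_Lem:RMaps} of Lemma~\ref{Lem:Rosendal_Maps}, and then transfer $\cal{RC}_K$ to $\cal{RC}_G$ using item~\ref{Item:Homo_Lem:RMaps} of the same lemma. One caveat: the claim in your Step~1 that $\rmM(G)_\alpha$ is $K$-ED is neither needed nor correctly justified --- your fattening comparison only treats seminorms of the form $\sigma\circ\alpha$, which need not form a base of $\sn(K)$ (they do only when $\alpha$ is nearly an embedding), and in general a $G$-flow pulled back along a dense-image homomorphism can fail to be $K$-ED (e.g.\ $\bbT$ as a $\bbZ$-flow via an irrational rotation is not $\bbZ$-ED). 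Fortunately nothing in Step~2 uses this: Lemma~\ref{Lem:Rosendal_Maps} carries no ED hypotheses, and the two places where $G$-ED-ness is genuinely needed to invoke Theorem~\ref{Thm:Rosendal_Minimal} are $\rmM(K)$ (which is $K$-ED) and $\rmM(G)$ (which is $G$-ED), so the argument stands once the superfluous claim is deleted.
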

\begin{proof}
    $\rmM(G)$ is a minimal $K$-flow, so by hypothesis $\rmM(G) \in \cal{RC}_K$. We conclude by \Cref{Lem:Rosendal_Maps}. 
\end{proof}

By Definition~\ref{Def:Dyn_Tractable}, a topological group has tractable minimal dynamics exactly when for any flow, the meets topology on the space of minimal subflows is compact Hausdorff. We will make extensive use of the meets topology working towards the proof of Theorem~\ref{Thm:WCAP_Group_Extensions}. Given a net $(X_i)_{i\in I}$ of minimal subflows of some given flow, we write $X_i\to Y$ to indicate Vietoris convergence and $X_i\xrightarrow{m} Y$ to indicate meets convergence.

Recall that for a $G$-flow $X$, $\rm{AP}_G(X)$ denotes the set of points in $X$ belonging to minimal subflows.

\begin{lemma}
\label{Lem:AP_Normal_subgroup}
    Let $G$ be a topological group and $H\trianglelefteq^c  G$ a closed normal subgroup. If $X$ is a minimal $G$-flow, then $\rm{AP}_H(X)\subseteq X$ is dense.
\end{lemma}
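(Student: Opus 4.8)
The statement asserts that if $H \trianglelefteq^c G$ is closed normal and $X$ is a minimal $G$-flow, then the $H$-almost-periodic points are dense in $X$. The plan is to exploit the normality of $H$ to see that the $G$-action permutes the minimal $H$-subflows of $X$ transitively (on their union), so that the union of all minimal $H$-subflows is a $G$-invariant set, hence its closure is a $G$-subflow, hence (by minimality of $X$ as a $G$-flow) all of $X$.

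First I would fix, via Zorn's lemma, a minimal $H$-subflow $Y \subseteq X$ (nonempty compact $H$-invariant, containing no proper $H$-subflow). The key observation is that for any $g \in G$, the set $g \cdot Y$ is again a minimal $H$-subflow: it is compact, and since $H$ is normal, $H \cdot (gY) = g \cdot (g^{-1}Hg) \cdot Y = g \cdot HY = gY$, so $gY$ is $H$-invariant; moreover any $H$-subflow of $gY$ pulls back under $g^{-1}$ (which is an $H$-equivariant homeomorphism from $gY$ to $Y$, again using $g^{-1}Hg = H$) to an $H$-subflow of $Y$, forcing it to equal $gY$. Therefore $Z := \bigcup_{g \in G} g \cdot Y = G \cdot Y$ is a $G$-invariant subset of $X$ consisting entirely of points lying in minimal $H$-subflows, i.e. $Z \subseteq \rm{AP}_H(X)$.

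Next I would take closures: $\ol{Z} = \ol{G \cdot Y}$ is a nonempty compact $G$-invariant subset of $X$, hence a $G$-subflow. Since $X$ is minimal as a $G$-flow, $\ol{Z} = X$. Thus $X = \ol{G \cdot Y} \subseteq \ol{\rm{AP}_H(X)}$, which gives the density claim. (One could also phrase the last step as: $G \cdot y$ is dense in $X$ for any $y \in Y$ by $G$-minimality, and $G \cdot y \subseteq \rm{AP}_H(X)$ since $g y \in gY \in \rm{Min}_H(X)$.)

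The argument is essentially routine once normality is used correctly; the one point requiring genuine care — the "main obstacle," such as it is — is verifying that $g \cdot Y$ is again a \emph{minimal} $H$-flow and not merely $H$-invariant, which is exactly where the identity $g^{-1} H g = H$ enters twice (once for invariance, once for transporting minimality via the conjugation-twisted homeomorphism $y \mapsto g^{-1} y$). If $H$ were merely closed rather than normal, $gY$ need not be $H$-invariant and the whole approach collapses, so it is worth stating that step explicitly. No compactness subtleties beyond the standard fact that a continuous image of a compact set is compact and that orbit closures of flows are subflows.
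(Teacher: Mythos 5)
Your proof is correct and follows essentially the same route as the paper: the paper likewise obtains a nonempty set of $H$-almost-periodic points from Zorn's lemma and then uses normality to show $\ol{H\cdot gx} = g\cdot\ol{H\cdot x}$, so that $\rm{AP}_H(X)$ is $G$-invariant and hence dense by minimality of $X$. Your more explicit verification that $gY$ is a \emph{minimal} $H$-subflow (via the conjugation-twisted homeomorphism) is exactly the content the paper compresses into that one displayed identity.
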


\begin{proof}
    As $X$ is a $G$-flow, hence also an $H$-flow, we have $\rm{AP}_H(X)\neq \emptyset$. If $x\in \rm{AP}_H(X)$, then by the normality of $H$ in $G$, so is $gx$ for any $g\in G$, with $\ol{H\cdot gx} = g\cdot \ol{H\cdot x}$. 
\end{proof}

\begin{lemma}
\label{Lem:WCAP_Closed_Subgroup}
    Let $G$ be a topological group and $H\leq^c G$ with $H\in \sf{TMD}$. If $X$ is a $G$-flow, then for every $x\in \ol{\rm{AP}_H(X)}$, $\ol{H\cdot x}$ contains a unique minimal $H$-subflow $Z_x$, and the map $\psi\colon \ol{\rm{AP}_H(X)}\to \rm{Min}_H(X)$ given by $\psi(x) = Z_x$ is meets continuous.
\end{lemma}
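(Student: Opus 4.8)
The plan is to work inside the Gleason completion $\rmS_G(X)$, which by Fact~\ref{Thm:Universal_Irreducible_Extension} is a $G$-extension $\pi_X\colon \rmS_G(X)\to X$ (hence also an $H$-extension), and where all the relevant fattening machinery is available. Since $H\in\sf{TMD}$ means $\rmM(H)\in\cal{RC}_H$, and since $\rmM(H)$ maps onto every minimal $H$-flow, Proposition~\ref{Prop:WCAP_Ultracoproducts} (together with Theorem~\ref{Thm:Rosendal_Minimal}) gives us the key input: for \emph{every} $H$-flow $Z$ and every meets-convergent net of minimal $H$-subflows, the Vietoris limit contains a unique minimal $H$-subflow; equivalently, via Proposition~\ref{Prop:Meets_Compact}, the meets topology on $\rm{Min}_H(Z)$ is compact Hausdorff. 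The first step is to apply this with $Z=X$: fix $x\in\ol{\rm{AP}_H(X)}$ and take a net $x_i\to x$ with $x_i\in\rm{AP}_H(X)$, so each $\ol{H\cdot x_i}$ is a minimal $H$-subflow; passing to a subnet we may assume $\ol{H\cdot x_i}\xrightarrow{m} Z$ in $\rm{Min}_H(X)$ (using meets-compactness of $\rm{Min}_H(X)$ from Proposition~\ref{Prop:Meets_Compact}). I claim $Z\subseteq\ol{H\cdot x}$ and $Z=Z_x$ is the unique minimal subflow of $\ol{H\cdot x}$: uniqueness is exactly the $\sf{TMD}$ input applied to a Vietoris-convergent subnet of the $\ol{H\cdot x_i}$; containment $Z\subseteq\ol{H\cdot x}$ follows because any Vietoris limit $Y$ of the $\ol{H\cdot x_i}$ satisfies $x\in Y$ (as $x_i\in\ol{H\cdot x_i}$ and $x_i\to x$), so $Y\subseteq\ol{H\cdot x}$, and $Z\subseteq Y$ by Proposition~\ref{Prop:Meets_Compact}. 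That $\ol{H\cdot x}$ has \emph{some} minimal subflow is Zorn; that it has at most one is the $\sf{TMD}$ input (apply Theorem~\ref{Thm:Rosendal_Minimal}\ref{Item:Ultra_Thm:RM} / Proposition~\ref{Prop:WCAP_Ultracoproducts} to the $H$-flow $\ol{H\cdot x}$, noting that two minimal subflows would persist in an ultracopower of $\rmM(H)$ mapping onto it).

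The second step is meets continuity of $\psi\colon\ol{\rm{AP}_H(X)}\to\rm{Min}_H(X)$, $x\mapsto Z_x$. Since $\rm{Min}_H(X)$ with the meets topology is compact (Proposition~\ref{Prop:Meets_Compact}) and the domain is compact, it suffices by Fact~\ref{Fact:Continuity} to check sequential/net convergence on a dense subset, or more directly to show: if $x_j\to x$ in $\ol{\rm{AP}_H(X)}$ then $Z_{x_j}\xrightarrow{m} Z_x$. By Proposition~\ref{Prop:Meets_Compact} this amounts to showing that for \emph{every} subnet along which $Z_{x_j}$ Vietoris-converges to some $Y'\in\rm{Sub}_H(X)$, we have $Z_x\subseteq Y'$. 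Fix such a subnet. For each $j$ pick (using the first step) a net witnessing $x_j\in\ol{\rm{AP}_H(X)}$, or simply observe $Z_{x_j}\subseteq\ol{H\cdot x_j}$; taking Vietoris limits and using that the $\ol{H\cdot x_j}$ have $x_j\to x$, any Vietoris limit of $\ol{H\cdot x_j}$ lies in $\ol{H\cdot x}$, hence $Y'\subseteq\ol{H\cdot x}$ (since $Z_{x_j}\subseteq\ol{H\cdot x_j}$, a Vietoris-convergent subnet of $\ol{H\cdot x_j}$ has limit containing $Y'$ and contained in $\ol{H\cdot x}$). Now $Y'$ is a nonempty $H$-subflow of $\ol{H\cdot x}$, so it contains a minimal $H$-subflow; by uniqueness from the first step that minimal subflow is $Z_x$, so $Z_x\subseteq Y'$. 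This gives $Z_{x_j}\xrightarrow{m}Z_x$ by the characterization in Proposition~\ref{Prop:Meets_Compact}, completing continuity.

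A cleaner and more robust route for the continuity step, which I expect to be the actual main obstacle to state correctly, is to use ultracoproducts directly rather than hand-picking subnets: given $x_j\to x$, pass to a cofinal ultrafilter $\cU$ on the index set, form the $H$-ultracopower $\Sigma_\cU^H\rmS_H(\cdot)$ of the relevant flows and use Proposition~4.4 of \cite{ZucUlts} together with Theorem~\ref{Thm:Rosendal_Minimal}\ref{Item:HP_Thm:RM} (the ultracopower map of $\rmM(H)$ is highly proximal, hence so is every ultracopower map of every minimal $H$-flow, by factoring) to identify $\lim_\cU Z_{x_j}$ with a flow containing a unique minimal subflow, which must equal $Z_x$ because $x=\lim x_j$ forces $x$ into the Vietoris limit of the $\ol{H\cdot x_j}$. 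The delicate point throughout is bookkeeping the distinction between Vietoris and meets limits and making sure "$\ol{H\cdot x}$ has a unique minimal subflow" is invoked with the correct flow in the role of $Z$ in Proposition~\ref{Prop:WCAP_Ultracoproducts} — concretely, one applies the $\sf{TMD}$ hypothesis on $H$ to the $H$-flow $\ol{H\cdot x}$ by noting it is a factor of some ultracopower of $\rmM(H)$ and invoking Theorem~\ref{Thm:Rosendal_Minimal}\ref{Item:Ultra_Thm:RM}. Once the first step (well-definedness and uniqueness of $Z_x$) is nailed down, continuity is essentially the observation that meets convergence of $Z_{x_j}$ to $Z_x$ is equivalent to "$Z_x$ sits inside every Vietoris cluster point," and every such cluster point is a subflow of $\ol{H\cdot x}$, hence contains $Z_x$.
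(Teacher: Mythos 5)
Your overall strategy is the paper's: realize $Z_x$ as the unique minimal subflow of a Vietoris limit of minimal orbit closures, with uniqueness coming from Proposition~\ref{Prop:WCAP_Ultracoproducts}, and read off meets convergence via Proposition~\ref{Prop:Meets_Compact}. But there is a recurring inclusion reversed in your write-up, and in the continuity step it is load-bearing. From ``$x\in Y$ where $Y$ is the Vietoris limit of the $\ol{H\cdot x_i}$'' the correct deduction is $\ol{H\cdot x}\subseteq Y$ (a closed $H$-invariant set containing $x$ contains the orbit closure of $x$), not $Y\subseteq\ol{H\cdot x}$ as you write. In the first step this is harmless: since $Y$ has a unique minimal subflow and $\ol{H\cdot x}\subseteq Y$ contains some minimal subflow by Zorn, that subflow must be the unique one, giving both existence/uniqueness of $Z_x$ and $Z_x\subseteq Y$. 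In the continuity step, however, you need $Z_x\subseteq Y'$ for $Y'$ a Vietoris cluster point of the $Z_{x_j}$, and you derive it from the claim ``$Y'\subseteq\ol{H\cdot x}$, hence $Y'$ contains a minimal subflow of $\ol{H\cdot x}$.'' That claim is false in general: a Vietoris limit of the orbit closures $\ol{H\cdot x_j}$ contains $\ol{H\cdot x}$ but is typically strictly larger, so nothing forces $Y'$ into $\ol{H\cdot x}$. Worse, for a general net from $\ol{\rm{AP}_H(X)}$ you cannot even conclude $x\in Y'$, since $x_j$ lies in $\ol{H\cdot x_j}$ but need not lie in its minimal subflow $Z_{x_j}$.

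The repair is exactly the reduction you mention in passing and then abandon: by Fact~\ref{Fact:Continuity} it suffices to test continuity on nets from the dense subset $\rm{AP}_H(X)$, where $Z_{x_j}=\ol{H\cdot x_j}$ is itself minimal. Then $x_j\in Z_{x_j}$ forces $x$ into any Vietoris cluster point $Z$ of the $Z_{x_j}$, so $\ol{H\cdot x}\subseteq Z$; since $Z$ has a unique minimal subflow $Y$ (Proposition~\ref{Prop:WCAP_Ultracoproducts}) and $Z_x$ is a minimal subflow of $\ol{H\cdot x}\subseteq Z$, we get $Z_x=Y$ and hence $Z_{x_j}\xrightarrow{m}Z_x$ by Proposition~\ref{Prop:Meets_Compact}. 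This is precisely the paper's argument. Your opening appeal to the Gleason completion $\rmS_G(X)$ is never used and can be dropped.
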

       
\begin{proof}
    Given $x\in \ol{\rm{AP}_H(X)}$, let $(x_i)_{i\in I}$ be a net from $\rm{AP}_H(X)$ with $x_i\to x$. Passing to a subnet, we can assume that $(\ol{H\cdot x_i})_{i\in I}$ is Vietoris convergent, with limit $Y$. Then $x\in Y$, so $\ol{H\cdot x}\subseteq Y$, and $Y$ contains a unique minimal subflow since $H\in \sf{TMD}$, by Proposition~\ref{Prop:WCAP_Ultracoproducts}.

    Keeping in mind Fact~\ref{Fact:Continuity}, let $(x_i)_{i\in I}$ be a net from $\rm{AP}_H(X)$ with $x_i\to x\in \ol{\rm{AP}_H(X)}$. Write $Z_i = Z_{x_i} = \ol{H\cdot x_i}$. Passing to a subnet if needed, we can assume $Z_i\to Z\in \rm{Sub}_G(X)$. By Proposition~\ref{Prop:Meets_Compact}, if $Y\subseteq Z$ is the unique minimal subflow of $Z$, then $Z_i\xrightarrow{m} Y$. However, we also have $x\in Z$, so $Z_x\subseteq Z$. Hence $Z_x = Y$. 
\end{proof}

\begin{lemma}
    \label{Lem:Kflow}
    Let $G$ be a topological group, $H\trianglelefteq^c G$ with $H\in\sf{TMD}$, and let $K = G/H$. Fix $X$ a $G$-flow. 
    \begin{enumerate}[label=\normalfont(\arabic*)]
        \item \label{Item:KFlow_Lem:KFlow}
        $\rm{Min}_H(X)$ with the meets topology is a $K$-flow under the action induced from $G$.
        
        \item \label{Item:Min_Lem:KFlow}
        If $X$ is a minimal $G$-flow, we have that $\rm{Min}_H(X)$ with the meets topology is a minimal $K$-flow. 

        \item \label{Item:Vietoris_Lem:KFlow}
        If $(Y_i)_{i\in I}$ is a net from $\rm{Min}_G(X)$ and $Y_i\to Y\in \rm{Sub}_G(X)$, then in the Vietoris topology on $\rm{Sub}_K(\rm{Min}_H(X))$, we have $\rm{Min}_H(Y_i)\to \rm{Min}_H(Y)\in \rm{Sub}_K(\rm{Min}_H(X))$.

        \item \label{Item:MG_Lem:KFlow}
        $\rm{Min}_H(\rmM(G))\cong \rmM(K)$.
    \end{enumerate}
\end{lemma}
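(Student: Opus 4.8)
\textbf{Plan for proving $\rm{Min}_H(\rmM(G))\cong \rmM(K)$.}

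The plan is to use the universal property of $\rmM(K)$ together with the fact that $\rm{Min}_H(\rmM(G))$, equipped with the meets topology, is by part \ref{Item:Min_Lem:KFlow} a minimal $K$-flow. So there is a $K$-map $\phi\colon \rmM(K)\to \rm{Min}_H(\rmM(G))$, and it remains to produce a $K$-map going the other way, after which coalescence of $\rmM(K)$ (Subsection~\ref{Subsection:Top_Dyn}) forces both composites to be isomorphisms. The natural candidate for the reverse direction comes from the quotient map $q\colon G\to K$: pulling back along $q$, every $K$-flow is a $G$-flow, and in particular $\rmM(K)$ becomes a $G$-flow on which $H$ acts trivially. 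Thus the identity on $\rmM(K)$, viewed as a $G$-map into a minimal $G$-flow, factors through $\rmM(G)$: there is a $G$-map $r\colon \rmM(G)\to \rmM(K)$ (where $\rmM(K)$ carries the $G$-action through $q$).

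The key step is then to turn $r$ into a $K$-map $\rm{Min}_H(\rmM(G))\to \rmM(K)$. Since $H$ acts trivially on $\rmM(K)$, the map $r$ is constant on each $H$-orbit closure: if $Z\in \rm{Min}_H(\rmM(G))$ then $r[Z]$ is an $H$-invariant subset of $\rmM(K)$ on which $H$ acts trivially, hence $r[Z]$ is a single point $s(Z)\in \rmM(K)$. This defines $s\colon \rm{Min}_H(\rmM(G))\to \rmM(K)$, and $K$-equivariance is immediate from $G$-equivariance of $r$ together with the definitions of the induced $K$-actions in part~\ref{Item:KFlow_Lem:KFlow}. The point requiring care is \emph{continuity} of $s$ with respect to the meets topology on the domain. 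Here I would invoke Fact~\ref{Fact:Continuity}: it suffices to check sequential/net continuity along a dense set. Concretely, if $Z_i\xrightarrow{m} Z$ in $\rm{Min}_H(\rmM(G))$, then by Proposition~\ref{Prop:Meets_Compact} (passing to a Vietoris-convergent subnet $Z_i\to Y\in \rm{Sub}_G(\rmM(G))$ with $Z\subseteq Y$), one gets $r[Z]\subseteq r[Y]\subseteq \overline{\bigcup r[Z_i]}$; since each $r[Z_i] = \{s(Z_i)\}$ and the $s(Z_i)$ accumulate only at points of the form $r(y)$, $y\in Y$, and all of $r[Y]$ is the single point $s(Z)$, one concludes $s(Z_i)\to s(Z)$. (One can alternatively phrase this via ultracoproducts and part~\ref{Item:Vietoris_Lem:KFlow}, but the meets-topology argument is cleaner.)

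Once $s$ is a $K$-map, I would conclude as follows. The composite $s\circ\phi\colon \rmM(K)\to \rmM(K)$ is a $K$-map of the universal minimal $K$-flow to itself, hence an isomorphism by coalescence. In particular $\phi$ is injective; since $\phi$ is also surjective (its image is a subflow of the minimal $K$-flow $\rm{Min}_H(\rmM(G))$) and continuous between compact Hausdorff spaces, $\phi$ is a homeomorphism and thus a $K$-flow isomorphism $\rmM(K)\cong \rm{Min}_H(\rmM(G))$.

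\textbf{Main obstacle.} The substantive point is not the categorical bookkeeping but verifying that $s$ is continuous for the meets topology, i.e.\ correctly exploiting Proposition~\ref{Prop:Meets_Compact} and the fact that $r$ collapses $H$-minimal sets to points; getting the accumulation argument right (and making sure one genuinely only needs to test convergence along nets, via Fact~\ref{Fact:Continuity}) is where the care lies. A secondary subtlety is checking that the $K$-action on $\rm{Min}_H(\rmM(G))$ from part~\ref{Item:KFlow_Lem:KFlow} is the one that makes $s$ equivariant — but this is essentially unwinding definitions, using normality of $H$ so that $g\cdot\ol{H\cdot x} = \ol{H\cdot gx}$ depends only on $gH$.
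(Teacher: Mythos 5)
Your argument for item~(4) is correct and is essentially the paper's proof: the paper likewise collapses each $H$-minimal subflow to a point under the map induced by the quotient $G\to K$ (concretely $\wt{\pi}\colon \sa(G)\to \sa(K)$ restricted to a minimal $M\subseteq \sa(G)$, which agrees with your $r$ up to isomorphism), obtains a $K$-map $\rm{Min}_H(\rmM(G))\to \rmM(K)$, and concludes by universality and coalescence of $\rmM(K)$, merely asserting the meets-continuity that you check in detail. Two small remarks: the key claim that $r[Y]$ is a singleton does not follow from $Z\subseteq Y$ and uniqueness of the minimal subflow alone, but rather from the fact that $Y$ is a Vietoris limit of sets on which $r$ is constant (equivalently, $A\mapsto r[A]$ is Vietoris-continuous and the singletons form a closed subset of $\rmK(\rmM(K))$); and your proposal addresses only item~(4), taking items~(1)--(3) as given.
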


\begin{proof}
    \ref{Item:KFlow_Lem:KFlow}: Say that $g_iH\to gH\in K$ and $Z_i\xrightarrow{m} Z\in \rm{Min}_H(X)$. Passing to a subnet if needed, we can obtain:
    \begin{itemize}
        \item 
        $h_i\in H$ with $g_ih_i\to g$,
        \item 
        $z_i\in Z_i$ and $z\in Z$ with $z_i\to z$,
    \end{itemize}
    Hence $g_ih_iz_i\to gz$, and by Lemma~\ref{Lem:WCAP_Closed_Subgroup}, it follows that $g_iH{\cdot}Z_i\xrightarrow{m} gH{\cdot}Z$. 
    \vspace{3 mm}

    \noindent
    \ref{Item:Min_Lem:KFlow}: Suppose that $Y, Z\in \rm{Min}_H(X)$, and pick some $y\in Y$ and $z\in Z$. By the minimality of $X$, find a net  $(g_i)_{i\in I}$ from  $G$ with $g_iy\to z$. By Lemma~\ref{Lem:WCAP_Closed_Subgroup}, we have $g_iH{\cdot}Y\xrightarrow{m} Z$, so $Z\in \ol{K\cdot Y}$.
    \vspace{3 mm}
    
    \noindent
     \ref{Item:Vietoris_Lem:KFlow}: Passing to a subnet, assume that $Q\in \rm{Sub}_K(\rm{Min}_H(X))$ is the Vietoris limit of $(\rm{Min}_H(Y_i))_{i\in I}$. We show $Q = \rm{Min}_H(Y)$. First suppose $Z\in Q$. Passing to a subnet, find $Z_i\in \rm{Min}_H(Y_i)$ with $Z_i\xrightarrow{m} Z\in \rm{Min}_H(X)$. Then since $Z_i\subseteq Y_i$, we have $Z\subseteq Y$, i.e.\ $Z\in \rm{Min}_H(Y)$. Conversely, fixing $Z\in \rm{Min}_H(Y)$, we can, upon passing to a subnet and using Lemma~\ref{Lem:AP_Normal_subgroup}, find $y_i\in \rm{AP}_H(Y_i)$ and $z\in Z$ with $y_i\to z$. By Lemma~\ref{Lem:WCAP_Closed_Subgroup}, we have $\ol{H\cdot y_i}\xrightarrow{m} Z$. As $\ol{H\cdot y_i}\in \rm{Min}_H(Y_i)$, we have $Z\in Q$.

     \ref{Item:MG_Lem:KFlow}: Letting $\pi\colon G\to K$ denote the quotient map and  $\wt{\pi}\colon \sa(G)\to \sa(K)$ the continuous extension, pick a minimal subflow $M\subseteq \sa(G)$. Then $\wt{\pi}[M]\subseteq \sa(K)$ is a minimal $K$-subflow, so isomorphic to $\rmM(K)$. If $Z\in \rm{Min}_H(M)$, then $\wt{\pi}[Z]$ is a singleton. This gives us a $K$-equivariant map $\phi\colon \rm{Min}_H(M)\to \wt{\pi}[M]$. AS $\phi$ is continuous, $\phi$ is a $G$-map from a minimal flow onto $\wt{\pi}[M]$, so we must have $\rm{Min}_H(M)\cong \rmM(K)$.  
\end{proof}

\begin{theorem}
    \label{Thm:WCAP_Group_Extensions}
    The class $\sf{TMD}$ is closed under group extensions: If $G$ is a topological group, $H\trianglelefteq G$ is a closed normal subgroup, and both $H$, $G/H\in \sf{TMD}$, then also $G\in \sf{TMD}$.
\end{theorem}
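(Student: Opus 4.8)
The plan is to use the characterization of $\sf{TMD}$ via the meets topology on spaces of minimal subflows (Definition~\ref{Def:Dyn_Tractable}, Proposition~\ref{Prop:WCAP_Ultracoproducts}) together with the $K$-flow structure on $\rm{Min}_H(X)$ developed in Lemma~\ref{Lem:Kflow}. Let $G$ be a topological group with $H\trianglelefteq^c G$, write $K = G/H$, and assume $H, K\in \sf{TMD}$. To show $G\in \sf{TMD}$, by Proposition~\ref{Prop:WCAP_Ultracoproducts} (equivalently Definition~\ref{Def:Dyn_Tractable}) it suffices to fix a $G$-flow $Z$ and a net $(Y_i)_{i\in I}$ from $\rm{Min}_G(Z)$ with $Y_i\to Y\in \rm{Sub}_G(Z)$ in the Vietoris topology, and prove that $Y$ contains a unique minimal $G$-subflow.

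First I would pass to a minimal $G$-subflow $W\subseteq Y$; the goal is to show it is the only one. By Lemma~\ref{Lem:Kflow}\ref{Item:Vietoris_Lem:KFlow}, in $\rm{Sub}_K(\rm{Min}_H(Z))$ we have $\rm{Min}_H(Y_i)\to \rm{Min}_H(Y)$ in the Vietoris topology; and by Lemma~\ref{Lem:Kflow}\ref{Item:Min_Lem:KFlow} each $\rm{Min}_H(Y_i)$ is a minimal $K$-flow (using $H\in \sf{TMD}$ to form the meets topology). Since $K\in \sf{TMD}$, Proposition~\ref{Prop:WCAP_Ultracoproducts} applied to the $K$-flow $\rm{Min}_H(Z)$ tells us that the Vietoris limit $\rm{Min}_H(Y)$ of these minimal $K$-subflows contains a \emph{unique} minimal $K$-subflow; call it $Q\subseteq \rm{Min}_H(Y)$. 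Now $\rm{Min}_H(W)\subseteq \rm{Min}_H(Y)$ is a $K$-subflow (via Lemma~\ref{Lem:Kflow}\ref{Item:KFlow_Lem:KFlow} and the fact that $W$ is $G$-invariant), and it is \emph{minimal} as a $K$-flow by Lemma~\ref{Lem:Kflow}\ref{Item:Min_Lem:KFlow} since $W$ is a minimal $G$-flow. By uniqueness, $\rm{Min}_H(W) = Q$.

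The key step is then to deduce uniqueness of $W$ inside $Y$ from uniqueness of $Q$. Suppose $W'\subseteq Y$ is another minimal $G$-subflow; as above $\rm{Min}_H(W') = Q = \rm{Min}_H(W)$. Pick $Z_0\in Q$, so $Z_0$ is a minimal $H$-subflow contained in both $W$ and $W'$. Since $W$ is a minimal $G$-flow and $Z_0\subseteq W$ is a nonempty closed $G$-invariant-up-to-$H$ set, $W = \ol{G\cdot Z_0}$ (the $G$-saturation of the $H$-minimal set $Z_0$); the same holds for $W'$, giving $W = \ol{G\cdot Z_0} = W'$. More carefully: any point $z\in Z_0$ lies in $W\cap W'$, and minimality of $W$ gives $W = \ol{G\cdot z}\subseteq W'$, and symmetrically $W'\subseteq W$.

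The main obstacle I anticipate is the bookkeeping around Fact~\ref{Fact:Continuity} and subnet extraction needed to justify Lemma~\ref{Lem:Kflow}\ref{Item:Vietoris_Lem:KFlow} in the generality required here — specifically making sure the meets topology, which is not Hausdorff, interacts correctly with Vietoris limits in $\rm{Sub}_K(\rm{Min}_H(Z))$ (note $\rm{Min}_H(Z)$ itself is only compact, not Hausdorff, so ``$\rm{Sub}_K$'' must be interpreted via the meets topology throughout, and one should check $\rm{Sub}$ of such a space is still compact and that the cited lemma applies). A secondary subtlety is verifying that $\rm{Min}_H(W)$ really is a \emph{closed} $K$-subflow of $\rm{Min}_H(Y)$ — this should follow from compactness of $W$ and the fact (implicit in Lemma~\ref{Lem:WCAP_Closed_Subgroup}) that $\rm{Min}_H$ behaves continuously — but it warrants a careful line or two. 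Once these topological points are nailed down, the argument closes cleanly.
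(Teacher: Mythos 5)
Your proposal is correct and follows essentially the same route as the paper's proof: both reduce to showing that a Vietoris limit $Y$ of minimal $G$-subflows has a unique minimal subflow, apply Lemma~\ref{Lem:Kflow}\ref{Item:Min_Lem:KFlow} and \ref{Item:Vietoris_Lem:KFlow} to transfer to the $K$-flow $\rm{Min}_H(Y)$, and invoke $K\in\sf{TMD}$ to get a unique minimal $K$-subflow there. The only cosmetic difference is in the last step, where you argue directly that two minimal $G$-subflows sharing the $H$-minimal set $Q$ must coincide, while the paper phrases the same observation contrapositively (distinct minimal $G$-subflows would yield distinct minimal $K$-subflows of $\rm{Min}_H(Y)$).
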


\begin{proof}
    Let $X$ be a $G$-flow, and suppose that $(Y_i)_{i\in I}$ is a net from $\rm{Min}_G(X)$ with $Y_i\to Y\in \rm{Sub}_G(X)$. We want to show that $Y$ contains a unique minimal subflow. By Lemma~\ref{Lem:Kflow}\ref{Item:Min_Lem:KFlow}, each $\rm{Min}_H(Y_i)$ is a minimal $K$-flow, and by Lemma~\ref{Lem:Kflow}\ref{Item:Vietoris_Lem:KFlow}, we have $\rm{Min}_H(Y_i)\to \rm{Min}_H(Y)\in \rm{Sub}_K(\rm{Min}_H(X))$. As $K\in \sf{TMD}$, $\rm{Min}_H(Y)$ contains a unique minimal $K$-subflow. If $Y$ contained distinct minimal $G$-subflows $Z_0$ and $Z_1$, then $\rm{Min}_H(Z_0)$ and $\rm{Min}_H(Z_1)$ would be distinct minimal $K$-subflows of $\rm{Min}_H(Y)$, a contradiction.  
\end{proof}

As an application, we note that this gives the first examples of Polish groups in $\sf{GPP}\setminus \sf{PCMD}$ which are not non-archimedean; for instance, take any $H\in \sf{GPP}\setminus \sf{PCMD}$, let $K$ be a non-trivial connected compact group, and let $G = H\times K$. However, the following is still open (see \cites{GTZmanifolds, Surfaces}).

\begin{question}
    \label{Que:Connected}
    Does the class $\sf{GPP}\setminus \sf{PCMD}$ contain any connected group? For instance, is the group $\homeo^+(S^2)$ of orientation-preserving homeomorphisms of the sphere a member of this class?
\end{question}

We next consider \emph{surjective inverse limits} of topological groups. An \emph{inverse system} of topological groups, denoted $(G_i, \pi^j_i)_I$, is a net $(G_i)_{i\in I}$ of topological groups along with a continuous homomorphism $\pi^j_i\colon G_j\to G_i$ for each $i\leq j\in I$. The \emph{inverse limit} is the topological group
\begin{equation*}
    \varprojlim (G_i, \pi^j_i):= \{(g_i)_{i\in I}\in \prod_{i\in I} G_i: \forall i\leq j\in I\, \pi^j_i(g_j) = g_i\}.
\end{equation*}
This is a closed subgroup of $\prod_i G_i$. When the $\pi^j_i$ are understood, we can just write $\varprojlim_i G_i$. We say that the inverse system is \emph{surjective} if for each $j\in I$, the projection map $\pi_j\colon \varprojlim_i G_i\to G_j$ is onto. This implies that each $\pi^j_i$ is onto. For more discussion, see Section~7 of \cite{BassoZucker}.

The proof of Proposition~\ref{Prop:Inverse_Limits} is very similar to that of Proposition~7.3 from \cite{BassoZucker}. We extract a lemma contained in that proof. 

\begin{lemma}
    \label{Lem:Topometric_Samuel_Hom}
    Let $\pi\colon G\to H$ be a continuous surjective homomorphism of topological groups, and extend to $\wt{\pi}\colon \sa(G)\to \sa(H)$. Given $\sigma\in \rm{SN}(H)$, form $\sigma\circ \pi\in \rm{SN}(G)$. Then $$\partial_{\sigma\circ \pi}^{\sa(G)} = \partial_\sigma^{\sa(H)}\circ (\wt{\pi}\times \wt{\pi}).$$
\end{lemma}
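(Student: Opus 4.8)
The plan is to unwind both sides of the claimed identity via the characterization of $\partial_\sigma$ on a Samuel compactification given in Definition~\ref{Def:Topometric_Samuel}, using item~\ref{Item:Lip_Def:TopoSam} (the ``fattening'' description). Recall that $\sa(G) = \NU(G)$ and $\sa(H) = \NU(H)$ are the near ultrafilter spaces on $G$ and $H$ viewed as uniform spaces (via the right uniformity), and that $\wt\pi\colon \sa(G)\to \sa(H)$ is the continuous extension of $\pi$. For $p, q\in \sa(G)$ and $c\geq 0$, unwinding item~\ref{Item:Lip_Def:TopoSam}, we have $\partial_{\sigma\circ\pi}^{\sa(G)}(p,q)\leq c$ iff for every $\epsilon > 0$ and every $A\in p$ (meaning $\ol A\in p$ for $A\in\op(G)$), we have $\rmB_{(\sigma\circ\pi)_r}(A, c+\epsilon)\in q$, where $(\sigma\circ\pi)_r$ is the right-invariant pseudometric on $G$ induced by $\sigma\circ\pi$. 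The key computational observation is that, since $\pi$ is a surjective homomorphism, $(\sigma\circ\pi)_r(g,h) = \sigma(\pi(g)\pi(h)^{-1}) = \sigma_r(\pi(g),\pi(h))$, so that fattenings in $G$ project correctly: $\pi[\rmB_{(\sigma\circ\pi)_r}(A, t)] = \rmB_{\sigma_r}(\pi[A], t)$ and, because $\pi$ is surjective, $\pi^{-1}[\rmB_{\sigma_r}(\pi[A],t)] = \rmB_{(\sigma\circ\pi)_r}(A,t)$ whenever $A$ is itself a union of $\pi$-fibers — in general one has the inclusion $\rmB_{(\sigma\circ\pi)_r}(A,t)\subseteq \pi^{-1}[\rmB_{\sigma_r}(\pi[A],t)]$ which suffices in one direction, and the reverse is handled by replacing $A$ with $\pi^{-1}[\pi[A]]\supseteq A$, which lies in $p$ whenever $A$ does.

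First I would establish the $\leq$ direction: assume $\partial_{\sigma\circ\pi}^{\sa(G)}(p,q)\leq c$ and show $\partial_\sigma^{\sa(H)}(\wt\pi(p),\wt\pi(q))\leq c$. Using the definition of $\wt\pi$ on near ultrafilters — namely $\wt\pi(p) = \{B\in\op(H) : \pi^{-1}[B]\in p\}$ (up to the usual abuse identifying $B$ with $\ol B$; this follows from the construction of $\wt\pi$ as the continuous extension together with the fact that $\pi^{-1}$ is a Boolean-algebra-type map respecting fattenings) — I need to check that for every $B\in\wt\pi(p)$ and $\epsilon > 0$, we have $\rmB_{\sigma_r}(B, c+\epsilon)\in\wt\pi(q)$, i.e.\ $\pi^{-1}[\rmB_{\sigma_r}(B, c+\epsilon)]\in q$. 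Since $\pi^{-1}[B]\in p$ and $\pi^{-1}[\rmB_{\sigma_r}(B,c+\epsilon)] = \rmB_{(\sigma\circ\pi)_r}(\pi^{-1}[B], c+\epsilon)$ (here equality does hold, since $\pi^{-1}[B]$ is $\pi$-saturated and the right-invariant pseudometric pulls back exactly along the saturated set, using surjectivity of $\pi$), the hypothesis applied to the set $\pi^{-1}[B]\in p$ gives the conclusion. For the $\geq$ direction, assume $\partial_\sigma^{\sa(H)}(\wt\pi(p),\wt\pi(q))\leq c$ and take $A\in p$, $\epsilon > 0$; I want $\rmB_{(\sigma\circ\pi)_r}(A, c+\epsilon)\in q$. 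Put $B = \pi[A]\in\op(H)$ (open since $\pi$ is an open map, being a continuous surjective homomorphism). Then $\pi^{-1}[B]\supseteq A$, so $\pi^{-1}[B]\in p$, hence $B\in\wt\pi(p)$; by hypothesis $\rmB_{\sigma_r}(B,c+\epsilon)\in\wt\pi(q)$, i.e.\ $\pi^{-1}[\rmB_{\sigma_r}(B,c+\epsilon)]\in q$; and $\pi^{-1}[\rmB_{\sigma_r}(\pi[A], c+\epsilon)] = \rmB_{(\sigma\circ\pi)_r}(\pi^{-1}[\pi[A]], c+\epsilon)\supseteq \rmB_{(\sigma\circ\pi)_r}(A,c+\epsilon)$, and since $q$ is a near filter (monotone, closed under near intersections) this forces $\rmB_{(\sigma\circ\pi)_r}(A,c+\epsilon)\in q$ — here I should be slightly careful and instead argue directly that $\rmB_{(\sigma\circ\pi)_r}(A,c+\epsilon)\subseteq \pi^{-1}[\rmB_{\sigma_r}(B,c+\epsilon)]$ together with monotonicity of $q$ gives membership.

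The main obstacle I anticipate is bookkeeping about the near ultrafilter / open-set dictionary: making precise the identity $\wt\pi(p) = \{B : \pi^{-1}[B]\in p\}$ and the commutation of $\pi$, $\pi^{-1}$ with the fattening operations $\rmB_{\sigma_r}(\cdot, t)$, especially getting the direction of inclusions right (one has $\pi[\rmB_{(\sigma\circ\pi)_r}(A,t)] = \rmB_{\sigma_r}(\pi[A], t)$ exactly, and $\rmB_{(\sigma\circ\pi)_r}(A,t)\subseteq \pi^{-1}[\rmB_{\sigma_r}(\pi[A],t)]$ with equality iff $A$ is $\pi$-saturated). Once these set-theoretic identities are laid out, both directions are a short symmetric unwinding of Definition~\ref{Def:Topometric_Samuel}\ref{Item:Lip_Def:TopoSam}. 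Since the proof in \cite{BassoZucker} (Proposition~7.3) contains essentially this lemma, I would follow that argument closely, merely isolating the statement; alternatively one could phrase the whole thing using the functional description \ref{Item:NUlts_Def:TopoSam} via $\rmC_d$ and the observation that $f\mapsto f\circ\pi$ is a bijection between $\rmC_{\sigma_r}(H)$ and the $\pi$-invariant functions in $\rmC_{(\sigma\circ\pi)_r}(G)$, but the fattening description keeps the computation cleaner.
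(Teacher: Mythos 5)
Your overall strategy (unwind both sides via the fattening characterization and translate through the near-ultrafilter dictionary for $\wt\pi$) is viable, but two steps are wrong as written. First, you set $B=\pi[A]$ and assert it is open ``since $\pi$ is an open map, being a continuous surjective homomorphism.'' Continuous surjective homomorphisms of topological groups need not be open (take the identity map from a group carrying a strictly finer group topology), so $\pi[A]$ need not be open and ``$B\in\wt\pi(p)$'' does not typecheck. This is repairable — replace $\pi[A]$ by $\rmB_\sigma(\delta)\cdot\pi[A]$, which is open, satisfies $\pi^{-1}[\rmB_\sigma(\delta)\pi[A]]\supseteq A$ and hence lies in $\wt\pi(p)$, and absorb $\delta$ into the $\epsilon$ slack — but the repair must be made. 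Second, the closing step of that direction concludes $\rmB_{(\sigma\circ\pi)_r}(A,c+\epsilon)\in q$ from the fact that it is a \emph{subset} of a member of $q$; monotonicity of near ultrafilters goes the other way (supersets of members are members), so the ``careful'' fix you propose is exactly the invalid step. You are saved only because the inclusion you are hedging about is an equality: $\pi^{-1}[\rmB_{\sigma_r}(\pi[A],t)]=\pi^{-1}[\rmB_\sigma(t)]\cdot\pi^{-1}[\pi[A]]=\pi^{-1}[\rmB_\sigma(t)]\cdot\ker(\pi)\cdot A=\pi^{-1}[\rmB_\sigma(t)]\cdot A=\rmB_{(\sigma\circ\pi)_r}(A,t)$, using $\pi^{-1}[V]\,\pi^{-1}[W]=\pi^{-1}[VW]$ for surjective $\pi$. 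The $(\sigma\circ\pi)_r$-fattening automatically saturates by $\ker(\pi)$, so no caveat about $A$ being $\pi$-saturated is needed.

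The other load-bearing claim, $\wt\pi(p)=\{B\in\op(H):\pi^{-1}[B]\in p\}$, is true but is the real content of the lemma, and your justification is a gesture. The inclusion $\supseteq$ is immediate ($p\in\ol{\pi^{-1}[B]}$ gives $\wt\pi(p)\in\ol{\pi[\pi^{-1}[B]]}=\ol B$ by surjectivity). The inclusion $\subseteq$ — which your $\leq$ direction uses when passing from $B\in\wt\pi(p)$ to $\pi^{-1}[B]\in p$ — requires showing that $\{B:\pi^{-1}[B]\in p\}$ is already a \emph{maximal} family with finite near intersections, so that it coincides with the near ultrafilter $\wt\pi(p)$ containing it; this maximality argument (given $D$ near-compatible with every $V\pi[A]$ for $A\in p$, deduce $U\pi^{-1}[D]\cap UA\neq\emptyset$ for all $U\in\cN_G$ and conclude $\pi^{-1}[D]\in p$) is where surjectivity does its real work and must be written out. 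For comparison: the paper gives no proof of this lemma at all, deferring to the proof of Proposition~7.3 of [BassoZucker]; the shortest clean argument is the functional one you mention only in passing, since \emph{every} $f\in\rmC_{(\sigma\circ\pi)_r}(G)$ (not merely the ``$\pi$-invariant'' ones) is constant on $\pi$-fibers and descends to a member of $\rmC_{\sigma_r}(H)$, so $f\mapsto f\circ\pi$ is a bijection and the identity follows from $\widetilde{f\circ\pi}=\wt f\circ\wt\pi$.
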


We also recall Proposition~\ref{Prop:Level_Maps}, which implies that for any $\sigma\in \rm{SN}(G)$ and minimal subflow $M\subseteq \sa(G)$, we have $\partial_\sigma^M = \partial_\sigma^{\sa(G)}|_{M\times M}$. 

\begin{prop}
\label{Prop:Inverse_Limits}
    If $(G_i, \pi^j_i)_I$ is a surjective inverse system of topological groups with $G_i\in \sf{TMD}$ for each $i\in I$, then $G:= \varprojlim_i G_i\in \sf{TMD}$.
\end{prop}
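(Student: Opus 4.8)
The plan is to verify the characterization in Theorem~\ref{Thm:Rosendal_Minimal}\ref{Item:ContBase_Thm:RM}, namely that for some base of semi-norms on $G$, each associated $\partial_\sigma$ on $\rmM(G)$ has a compatibility point. First I would fix a minimal subflow $M\subseteq \sa(G)$, so $M\cong \rmM(G)$, and I would use that the topology on $G = \varprojlim_i G_i$ is generated by the semi-norms of the form $\sigma_i\circ\pi_i$ where $\sigma_i\in\rm{SN}(G_i)$ and $\pi_i\colon G\to G_i$ is the (surjective) projection; these form a base of semi-norms $\cB$. So it suffices to show that for each such $\sigma = \sigma_i\circ\pi_i$, there is a $\partial_\sigma^M$-compatibility point in $M$.

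Next I would bring in the auxiliary machinery: extend $\pi_i$ to $\wt{\pi}_i\colon \sa(G)\to\sa(G_i)$, which restricts to a $G$-map (equivalently, $G_i$-map via $\pi_i$) $M\to \wt{\pi}_i[M] =: N$, where $N\subseteq\sa(G_i)$ is a minimal $G_i$-subflow, hence $N\cong\rmM(G_i)$. Since $G_i\in\sf{TMD}$, by Theorem~\ref{Thm:Rosendal_Minimal}\ref{Item:ContAll_Thm:RM} the pseudometric $\partial_{\sigma_i}^N = \partial_{\sigma_i}^{\sa(G_i)}|_{N\times N}$ has a dense set of compatibility points; in particular it has at least one, say $\bar y\in N$. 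Now by Lemma~\ref{Lem:Topometric_Samuel_Hom}, $\partial_{\sigma_i\circ\pi_i}^{\sa(G)} = \partial_{\sigma_i}^{\sa(G_i)}\circ(\wt{\pi}_i\times\wt{\pi}_i)$, and restricting to $M$ (using Proposition~\ref{Prop:Level_Maps}, which gives $\partial_\sigma^M = \partial_\sigma^{\sa(G)}|_{M\times M}$) we get $\partial_\sigma^M(x,x') = \partial_{\sigma_i}^N(\wt{\pi}_i(x),\wt{\pi}_i(x'))$ for $x,x'\in M$. So the map $\wt{\pi}_i\colon M\to N$ is subordinate to $\partial_\sigma^M$ in the sense of Subsection~\ref{SubSec:LSC}, and $\partial_{\sigma_i}^N$ is precisely the induced quotient pseudometric $(\partial_\sigma^M)^{\wt{\pi}_i}$ up to the identification. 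By Proposition~\ref{Prop:Adequate}\ref{Item:CompDown_Prop:Adequate}, a point $x\in M$ is $\partial_\sigma^M$-compatible iff $\wt{\pi}_i(x)$ is $(\partial_\sigma^M)^{\wt{\pi}_i}$-compatible; so picking any $x\in M$ with $\wt{\pi}_i(x) = \bar y$ yields a $\partial_\sigma^M$-compatibility point. (One should check the hypotheses of Proposition~\ref{Prop:Adequate}\ref{Item:CompDown_Prop:Adequate}: $M$ is compact, $\wt{\pi}_i|_M$ is a continuous surjection onto $N$, and $\partial_\sigma^M$ is adequate since $M$ is a $G$-ED flow, so Corollary~\ref{Cor:ED_Adequate} applies.)

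The main obstacle I anticipate is purely bookkeeping: making sure the identifications between the quotient pseudometric $(\partial_\sigma^M)^{\wt{\pi}_i}$ and the intrinsic pseudometric $\partial_{\sigma_i}^N$ are legitimate, i.e.\ that $\wt{\pi}_i|_M$ is genuinely subordinate to $\partial_\sigma^M$ (which follows from Lemma~\ref{Lem:Topometric_Samuel_Hom} once one notes that $\wt{\pi}_i(x)=\wt{\pi}_i(x')$ forces $\partial_\sigma^M(x,x')=0$) and that $N\cong\rmM(G_i)$ so that the $\sf{TMD}$ hypothesis on $G_i$ really supplies the needed compatibility point. There is also a minor subtlety in that $G$ need not map onto $G_i$ with the quotient topology matching, but surjectivity of the inverse system guarantees $\pi_i$ is onto, and since $\sigma = \sigma_i\circ\pi_i$ is \emph{defined} via pullback, Lemma~\ref{Lem:Topometric_Samuel_Hom} applies verbatim. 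Once these are in place the argument closes immediately, and since the $\sigma_i\circ\pi_i$ form a base of semi-norms, Theorem~\ref{Thm:Rosendal_Minimal}\ref{Item:ContBase_Thm:RM} gives $G\in\sf{TMD}$.
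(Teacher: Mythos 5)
Your proposal is correct and takes essentially the same route as the paper's own proof: both fix a minimal $M\subseteq\sa(G)$, use Lemma~\ref{Lem:Topometric_Samuel_Hom} together with Proposition~\ref{Prop:Level_Maps} to get $\partial_{\sigma_i\circ\pi_i}^M = \partial_{\sigma_i}^{M_i}\circ(\wt{\pi}_i\times\wt{\pi}_i)$, pull a compatibility point back from $M_i\cong\rmM(G_i)$, and conclude via Theorem~\ref{Thm:Rosendal_Minimal}\ref{Item:ContBase_Thm:RM} since the $\sigma_i\circ\pi_i$ form a base of semi-norms. Your extra care in invoking Proposition~\ref{Prop:Adequate}\ref{Item:CompDown_Prop:Adequate} just fills in the transfer step that the paper leaves implicit.
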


\begin{proof}
    By Lemma~7.2 of \cite{BassoZucker}, we have $\sa(G) = \varprojlim_i \sa(G_i)$ along the maps $\wt{\pi}_i^j\colon \sa(G_j)\to \sa(G_i)$. If $M\subseteq \sa(G)$ is minimal, then $\wt{\pi}_i[M]:= M_i\subseteq \sa(G_i)$ is minimal, and $\wt{\pi}_i^j[M_j] = M_i$. If $\sigma\in \rm{SN}(G_i)$, then Lemma~\ref{Lem:Topometric_Samuel_Hom} and the fact immediately below combine to yield that $\partial_{\sigma\circ \pi_i}^M = \partial_{\sigma}^{M_i}\circ (\wt{\pi}_i\times \wt{\pi}_i)$. For the moment, fix $i\in I$. By assumption $G_i \in \sf{TMD}$, so $\partial_\sigma^{M_i}$ has a compatibility point, hence $\partial_{\sigma\circ \pi_i}^M$ does as well. Now letting $i\in I$ vary, as $\{\sigma\circ \pi_i: \sigma\in \rm{SN}(G_i), i\in I\}\subseteq \rm{SN}(G)$ forms a base of semi-norms on $G$, we see from Theorem~\ref{Thm:Rosendal_Minimal}\ref{Item:ContBase_Thm:RM} that $G\in \sf{TMD}$. 
\end{proof}

\begin{corollary}
    The class $\sf{TMD}$ is preserved under arbitrary products. 
\end{corollary}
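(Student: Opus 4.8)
The plan is to derive the corollary from Proposition~\ref{Prop:Inverse_Limits} by realizing an arbitrary product as a surjective inverse limit of its finite subproducts. Given a family $(G_a)_{a\in A}$ of topological groups all lying in $\sf{TMD}$, set $G = \prod_{a\in A} G_a$. For each finite $F\in \fin{A}$ put $G_F = \prod_{a\in F} G_a$, and for $F\subseteq F'$ let $\pi^{F'}_F\colon G_{F'}\to G_F$ be the coordinate projection. This is a surjective inverse system indexed by $(\fin{A}, \subseteq)$, and its inverse limit is canonically isomorphic (as a topological group) to $G$, with the projections $\pi_F\colon G\to G_F$ onto each finite subproduct; surjectivity of the system is immediate since each $\pi_F$ is already onto.

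The one genuine input needed is that $\sf{TMD}$ is closed under \emph{finite} products, i.e.\ that each $G_F\in \sf{TMD}$. This follows by induction on $|F|$ from Theorem~\ref{Thm:WCAP_Group_Extensions}: if $F = F_0\sqcup \{a\}$, then $G_{F_0}\trianglelefteq G_F$ (it is the kernel of the projection $G_F\to G_a$, hence closed and normal) with quotient $G_F/G_{F_0}\cong G_a$, and both $G_{F_0}$ (by inductive hypothesis) and $G_a$ lie in $\sf{TMD}$, so $G_F\in \sf{TMD}$. The base case $|F|\le 1$ is trivial. With all $G_F\in \sf{TMD}$, Proposition~\ref{Prop:Inverse_Limits} applies to the surjective inverse system $(G_F, \pi^{F'}_F)_{\fin{A}}$ and gives $G = \varprojlim_F G_F\in \sf{TMD}$.

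There is essentially no obstacle here: the only points requiring a line of care are the identification of $\prod_a G_a$ with $\varprojlim_F G_F$ as topological groups (a standard fact about the inverse limit topology agreeing with the product topology), the observation that $G_{F_0}$ sits inside $G_F$ as a closed normal subgroup with the right quotient, and checking that the system is surjective in the sense required by Proposition~\ref{Prop:Inverse_Limits} (each $\pi_F$ onto), all of which are immediate. So the proof is genuinely short: realize the product as a surjective inverse limit of finite subproducts, invoke Theorem~\ref{Thm:WCAP_Group_Extensions} inductively to handle the finite case, and apply Proposition~\ref{Prop:Inverse_Limits}.

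\begin{proof}
    Let $(G_a)_{a\in A}$ be a family of topological groups with each $G_a\in \sf{TMD}$, and set $G = \prod_{a\in A} G_a$. For $F\in \fin{A}$ write $G_F = \prod_{a\in F} G_a$, and for $F\subseteq F'$ in $\fin{A}$ let $\pi^{F'}_F\colon G_{F'}\to G_F$ be the coordinate projection, which is a continuous surjective homomorphism. Then $(G_F, \pi^{F'}_F)_{\fin{A}}$ is an inverse system, and the natural map $G\to \varprojlim_F G_F$ is an isomorphism of topological groups; in particular each projection $\pi_F\colon G\to G_F$ is onto, so the system is surjective.

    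We first check by induction on $|F|$ that $G_F\in \sf{TMD}$ for every $F\in \fin{A}$. If $|F|\leq 1$ this is clear (the trivial group and each $G_a$ are in $\sf{TMD}$). Suppose $|F|\geq 2$ and write $F = F_0\cup \{a\}$ with $a\notin F_0$. The kernel of the projection $G_F\to G_a$ is $G_{F_0}\trianglelefteq^c G_F$, and $G_F/G_{F_0}\cong G_a$. By the inductive hypothesis $G_{F_0}\in \sf{TMD}$, and $G_a\in \sf{TMD}$ by assumption, so Theorem~\ref{Thm:WCAP_Group_Extensions} gives $G_F\in \sf{TMD}$.

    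Now $(G_F, \pi^{F'}_F)_{\fin{A}}$ is a surjective inverse system of groups all lying in $\sf{TMD}$, so by Proposition~\ref{Prop:Inverse_Limits} we conclude $G = \varprojlim_F G_F\in \sf{TMD}$.
\end{proof}
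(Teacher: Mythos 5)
Your proof is correct and is exactly the argument the paper intends: the corollary is stated immediately after Proposition~\ref{Prop:Inverse_Limits} precisely because an arbitrary product is the surjective inverse limit of its finite subproducts, with the finite case supplied by Theorem~\ref{Thm:WCAP_Group_Extensions}. Nothing to add.
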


In \cite{BassoZucker}*{Proposition~7.7}, it is shown that if $H$ and $K$ are topological groups with $H\in \sf{CMD}$, then $\rmM(H\times K)\cong \rmM(H)\times \rmM(K)$. Furthermore, if $\la G_i: i\in I\ra$ are topological groups with $G_i\in \sf{CMD}$ for every $i\in I$, the universal minimal flow of their product is simply the product of the corresponding universal minimal flows. By \cite{BassoZucker}*{Theorem~5.8}, the first result is also sharp; if both $H, K\not\in \sf{CMD}$, then it is shown that $\rmM(H)\times \rmM(K)$ is not $(H\times K)$-ED. The next natural question is to understand situations where $\rmM(H\times K)\cong \rmS_{H\times K}(\rmM(H)\times \rmM(K))$. By allowing the ``correction'' of taking the Gleason completion, we obtain an exact analogue of \cite{BassoZucker}*{Proposition~7.7} for the class $\sf{TMD}$, though with a much more intricate proof.

\begin{theorem}\mbox{}
    \label{Thm:Products}
    \vspace{-2 mm}

    \begin{enumerate}[label=\normalfont(\arabic*)]
        \item \label{Item:Finite_Thm:Products}
        Suppose $H$ and $K$ are topological groups with $H\in \sf{TMD}$. Write $G = H\times K$. Then $\rmM(G) \cong \rmS_G(\rmM(H)\times \rmM(K))$.
        \item \label{Item:Infinite_Thm:Products}
        Suppose $\la G_i: i\in I\ra$ is a tuple  of topological groups with $G_i\in \sf{TMD}$ for every $i\in I$. Then writing $G = \prod_i G_i$, we have $\rmM(\prod_i G_i)\cong \rmS_G(\prod_i \rmM(G_i))$.
    \end{enumerate}  
\end{theorem}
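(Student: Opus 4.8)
For part \ref{Item:Finite_Thm:Products}, write $G = H\times K$ with $H\in\sf{TMD}$. First note that $P:=\rmM(H)\times\rmM(K)$, with $H$ acting on the first coordinate and $K$ on the second, is a minimal $G$-flow: the $G$-orbit of $(x,y)$ is $(H\cdot x)\times(K\cdot y)$, which is dense. Hence $Y:=\rmS_G(P)$ is a minimal $G$-ED flow (Fact~\ref{Thm:Universal_Irreducible_Extension} together with the remark after Definition~\ref{Def:HP} that irreducible extensions of minimal flows are minimal). Let $f_H\colon\rmM(G)\to\rmM(H)$ and $f_K\colon\rmM(G)\to\rmM(K)$ be the canonical factor maps and $f:=(f_H,f_K)\colon\rmM(G)\to P$; since $\rmM(G)$ is minimal, $f$ is a factor map. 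The plan is to prove:
\begin{equation*}
\textbf{(}\star\textbf{)}\qquad f\colon\rmM(G)\to P\text{ is irreducible (equivalently, highly proximal).}
\end{equation*}
Granting $(\star)$, $\rmM(G)$ is a $G$-ED irreducible extension of $P$, so by the universal property of the Gleason completion (Fact~\ref{Thm:Universal_Irreducible_Extension}) there is a $G$-map $\phi\colon Y\to\rmM(G)$ with $f\circ\phi=\pi_P$; and since $Y$ is minimal, universality of $\rmM(G)$ gives a $G$-map $\psi\colon\rmM(G)\to Y$. Then $\phi\circ\psi$ is a $G$-endomorphism of $\rmM(G)$, hence an isomorphism by coalescence of $\rmM(G)$, so $\psi$ is injective; being a surjective (onto the minimal flow $Y$) continuous injective map between compact Hausdorff spaces, $\psi$ is an isomorphism, giving $\rmM(G)\cong\rmS_G(P)$.

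\textbf{Proof of $(\star)$.} The key use of $H\in\sf{TMD}$ is as follows. Since $K$ acts trivially on $\rmM(K)$, $f_K$ is constant on every $H$-orbit, hence on every $H$-subflow of $\rmM(G)$; so each fiber $f_K^{-1}[v]$ is a closed $H$-invariant subspace, and the assignment $Z\mapsto f_K(Z)$ is a well-defined map $\Theta\colon\rm{Min}_H(\rmM(G))\to\rmM(K)$, which is readily checked to be a $K$-map that is continuous for the meets topology on $\rm{Min}_H(\rmM(G))$. By Lemma~\ref{Lem:Kflow} (applied with the normal subgroup $H\trianglelefteq^c G$, $H\in\sf{TMD}$, $G/H\cong K$), $\rm{Min}_H(\rmM(G))$ with the meets topology is a minimal $K$-flow isomorphic to $\rmM(K)$; so $\Theta$ is a $G$-endomorphism of $\rmM(K)$ up to this isomorphism, hence injective by coalescence. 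This means \emph{each fiber $f_K^{-1}[v]$ contains a unique minimal $H$-subflow $Z_v$}, and $\bigcup_v Z_v=\rm{AP}_H(\rmM(G))$ is dense by Lemma~\ref{Lem:AP_Normal_subgroup}. Next, for each $v$ the $H$-map $f_H\restriction f_K^{-1}[v]\colon f_K^{-1}[v]\to\rmM(H)$ is onto (its image contains $f_H[Z_v]$, which is all of $\rmM(H)$ as $Z_v$ is a minimal $H$-flow) and, I claim, highly proximal: condition (1) of Proposition~\ref{Prop:HP_Onto_Minimal} is exactly the uniqueness of $Z_v$ just proved, and condition (2) — that $(f_H)_{\rm fib}(B')\neq\emptyset$ for every relatively open $B'\subseteq f_K^{-1}[v]$ meeting $Z_v$ — is to be extracted from the fact that $\rmM(G)$ is $G$-ED (so that the open factor map $f_K$, open by Proposition~\ref{Prop:MHP_Open}, has well-behaved fibers) together with $H\in\sf{TMD}$; this is the step I expect to be the main obstacle, and an ultracoproduct argument in the style of Theorem~\ref{Thm:Rosendal_Minimal}\ref{Item:HP_Thm:RM} may be the cleanest way to handle it. Finally, to deduce $(\star)$: given $B\in\op(\rmM(G))$, pick $v$ with $B\cap Z_v\neq\emptyset$ (possible by density of $\bigcup_v Z_v$); high proximality of $f_H\restriction f_K^{-1}[v]$ yields $u\in\rmM(H)$ with $f_H^{-1}[u]\cap f_K^{-1}[v]\subseteq B$, i.e.\ $f^{-1}[(u,v)]\subseteq B$, so $f_{\rm fib}(B)\neq\emptyset$; by Lemma~\ref{Lem:Fiber_Image_Open} this gives irreducibility of $f$.

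\textbf{The infinite case.} Part \ref{Item:Infinite_Thm:Products} reduces to part \ref{Item:Finite_Thm:Products} plus an inverse-limit argument. For a finite set $F$, induction on $|F|$ using part \ref{Item:Finite_Thm:Products} (with $H=G_{i_0}$ and $K=\prod_{i\in F\setminus\{i_0\}}G_i$) shows $\rmM(\prod_{i\in F}G_i)\cong\rmS_{G_F}(\prod_{i\in F}\rmM(G_i))$, using also that the Gleason completion is unchanged under passing to an irreducible extension, so that $\rmS_{G}(A\times\rmS_{A'}(B))\cong\rmS_G(A\times B)$ (both are the universal irreducible extension of $A\times B$, since $A\times\rmS_{A'}(B)\to A\times B$ is irreducible). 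For general $I$, present $G=\varprojlim_F\prod_{i\in F}G_i$ as a surjective inverse system indexed by the finite $F\subseteq I$; then $\sa(G)=\varprojlim_F\sa(\prod_{i\in F}G_i)$ by Lemma~7.2 of \cite{BassoZucker}, whence $\rmM(G)=\varprojlim_F\rmM(\prod_{i\in F}G_i)=\varprojlim_F\rmS_{G_F}(\prod_{i\in F}\rmM(G_i))$, while $\prod_i\rmM(G_i)=\varprojlim_F\prod_{i\in F}\rmM(G_i)$. It therefore remains to check that $\rmS_G$ commutes with surjective inverse limits of $G$-flows, i.e.\ $\varprojlim_F\rmS_{G_F}(X_F)\cong\rmS_G(\varprojlim_F X_F)$; this follows from the universal property of the Gleason completion once one verifies directly that a surjective inverse limit of irreducible maps is again irreducible. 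Making this last statement precise (and keeping track of which group acts on which factor throughout) is the only remaining technical point in part \ref{Item:Infinite_Thm:Products}.
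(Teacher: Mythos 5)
Your overall architecture for part \ref{Item:Finite_Thm:Products} is the same as the paper's: reduce everything to showing that $\pi_H\times\pi_K\colon\rmM(G)\to\rmM(H)\times\rmM(K)$ is highly proximal, using Lemma~\ref{Lem:Kflow} to see that each fiber $X_p:=\pi_K^{-1}[p]$ contains a unique minimal $H$-subflow $Y_p$ on which $\pi_H$ restricts to an isomorphism. But you have left the decisive step unproven, and you say so yourself: you never verify condition (2) of Proposition~\ref{Prop:HP_Onto_Minimal} for $\pi_H|_{X_p}\colon X_p\to\rmM(H)$, i.e.\ that $(\pi_H|_{X_p})_{\rm{fib}}(B)\neq\emptyset$ for every relatively open $B$ meeting $Y_p$. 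This is precisely where $H\in\sf{TMD}$ does its real work, and ``extract it from $G$-ED plus openness of $\pi_K$'' is not an argument. The paper closes the gap in two moves. First, a density argument shows that $X_p$ is a Vietoris limit of minimal $H$-subflows of $\rmM(G)$: given finitely many open $A_i$ meeting $X_p$, the sets $B_i:=\{q\in\rmM(K): A_i\cap Y_q\neq\emptyset\}$ are open dense in $\pi_K[A_i]$ (dense because $\rm{AP}_H(\rmM(G))$ is dense; open because $Y_q$ is contained in every Vietoris cluster point of $(Y_{q_j})_j$ whenever $q_j\to q$), so a single $q$ can be found with $Y_q$ meeting every $A_i$. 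Second, choosing $H$-isomorphisms $\phi_i\colon\rmM(H)\to Y_i$ along a net $Y_i\to X_p$ and passing to a cofinal ultrafilter yields a $G$-map $\phi\colon\Sigma_\cU^H\rmM(H)\to X_p$ with $\pi_{\rmM(H),\cU}=\pi_H\circ\phi$; since $H\in\sf{TMD}$, Theorem~\ref{Thm:Rosendal_Minimal}\ref{Item:HP_Thm:RM} says $\pi_{\rmM(H),\cU}$ is highly proximal, and this is inherited by $\pi_H|_{X_p}$. Without some version of this ultracopower argument your claim $(\star)$ is not established, so the proof of part \ref{Item:Finite_Thm:Products} is incomplete.

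A smaller point on part \ref{Item:Infinite_Thm:Products}: you propose to prove in general that $\rmS_G$ commutes with surjective inverse limits of $G$-flows, via the assertion that an inverse limit of irreducible maps is irreducible. The paper explicitly cautions that in general one only obtains a factor map from the Gleason completion of the inverse limit onto the inverse limit of the Gleason completions; it sidesteps the issue by observing that in this situation both flows are minimal and the inverse limit of the Gleason completions is already identified with $\rmM(G)$, so coalescence of the universal minimal flow forces the two to be isomorphic. You should use that shortcut rather than attempting the general commutation statement, which is stronger than needed and not justified by anything in the paper.
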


\begin{proof}
\ref{Item:Finite_Thm:Products}: Viewing $\rmM(H)$ and $\rmM(K)$ as $G$-flows in the obvious way, fix $G$-maps $\pi_K\colon \rmM(G)\to \rmM(K)$ and $\pi_H\colon \rmM(G)\to \rmM(H)$. As $\rmM(K)$ is a Gleason complete $G$-flow, Proposition~\ref{Prop:MHP_Open} tells us that $\pi_K$ is open. By Lemma~\ref{Lem:Kflow}, $\rmM(K)\cong \rm{Min}_H(\rmM(G))$ equipped with the Meets topology. It follows that for every $p\in \rmM(K)$, $\pi_K^{-1}[p]:= X_p$ is an $H$-flow which contains a unique minimal $H$-subflow which we denote by $Y_p$. Since $\pi_H[Y_p] = \rmM(H)$, it follows that $\pi_H|_{Y_p}$ is an isomorphism of $H$-flows. 

\begin{claim}
    $X_p$ is a Vietoris limit of members of $\rm{Min}_H(\rmM(G))$. Furthermore, there is an ultracopower $\Sigma_\cU^H \rmM(H)$ and a $G$-map $\phi\colon \Sigma_\cU^H \rmM(H)\to X_p$ with $\pi_{\rmM(H), \cU} = \pi_H\circ \phi$.
\end{claim}

\begin{proof}[Proof of claim]
    The continuity of $\pi_K$ tells us that dealing with the ``contains'' part of the Vietoris topology is vacuous. So it is enough to prove that whenever $\{A_i: i< n\}\subseteq \op(\rmM(G))$ and $A_i\cap X_p\neq \emptyset$ for every $i< n$, then there is some $Y\in \rm{Min}_H(\rmM(G))$ with $A_i\cap Y\neq \emptyset$ for every $i< n$. For each $i< n$, consider the set $B_i:= \{q\in \rmM(K): A_i \cap Y_q\neq \emptyset\}$. We show that $B_i\subseteq \pi_K[A_i]$ is open dense. Density follows since $\rm{AP}_H(\rmM(G))\subseteq \rmM(G)$ is dense. For openness, if $(q_j)_{j\in J}$ is a net from $\rmM(K)$ with $q_j\to q\in \rmM(K)$ and $Y_{q_j}\cap A_i = \emptyset$ for each $j\in J$, then $Y_q$ is contained in any Vietoris cluster point of the net $Y_{q_j}$, so must also avoid $A_i$. Writing $D = \bigcap_{i< n} \pi_K[A_i]$, then $D\in \op(p, \rmM(K))$. It follows that $B = \bigcap_{i< n} B_i$ is dense in $D$. Picking any $q\in B\cap D$, the $H$-minimal flow $Y_q$ is as desired. 

    For the furthermore, let $(Y_i)_{i\in I}$ be a net from $\rm{Min}_H(\rmM(G))$ with $Y_i\to X_p$. For each $i\in I$, let $\phi_i\colon \rmM(H)\to Y_i$ be an $H$-flow isomorphism. Then $\bigcup_{i\in I} \phi_i\colon I\times \rmM(H)\to \rmM(G)$ continuously extends to $\alpha_H(I\times \rmM(H))$, and we let $\phi$ be the restriction of this extension to $\Sigma_\cU^H \rmM(H)$.
\end{proof}

It follows from Theorem~\ref{Thm:Rosendal_Minimal}\ref{Item:HP_Thm:RM} that $\pi_H|_{X_p}\colon X_p\to \rmM(H)$ is highly proximal. This implies that $\pi_H\times \pi_K\colon \rmM(G)\to \rmM(H)\times \rmM(K)$ is highly proximal. As highly proximal maps between minimal flows are irreducible, this finishes the proof. 
\vspace{3 mm}

\noindent
\ref{Item:Infinite_Thm:Products}: We have the following isomorphisms which will be explained after:
\begin{align*}
    \rmM(G) &\cong \varprojlim_{F\in \fin{I}} \rmM(\prod_{i\in F} G_i)\\
    &\cong \varprojlim_{F\in \fin{I}} \rmS_G(\prod_{i\in F} \rmM(G_i))\\
    &\cong \rmS_G(\varprojlim_{F\in \fin{I}} \prod_{i\in F} \rmM(G_i))\\
    &\cong \rmS_G(\prod_i \rmM(G_i)). 
\end{align*}
The first line follows from \cite{BassoZucker}*{Lemma~7.4}. To obtain the second line using item \ref{Item:Finite_Thm:Products}, we note that $\rmS_G(X\times Y)\cong \rmS_G(X\times \rmS_G(Y))$ for any $G$-flows $X$ and $Y$. We also briefly discuss obtaining the third line. Typically, one can only conclude that the Gleason completion of an inverse limit (line 3) factors onto the inverse limit of the Gleason completions (line 2). Here however, we know that both are minimal and line 2 is $\rmM(G)$.
\end{proof}

Given that the productivity result characterizes the class $\sf{CMD}$ \cite{BassoZucker}*{Corollary~7.8}, it is natural to hope that Theorem~\ref{Thm:Products} characterizes $\sf{TMD}$. 
\begin{question}
    \label{Que:Products}
    Suppose $H$ and $K$ are topological groups with $H, K\not\in \sf{TMD}$, and write $G = H\times K$. Do we have $\rmM(H\times K)\not\cong \rmS_G(\rmM(H)\times \rmM(K))$?
\end{question}
This seems to be a very challenging problem, as it requires constructing ``interesting'' minimal $G$-flows which somehow can't be built just from $H$-flows and $K$-flows. In upcoming work with Barto\v{s}ov\'a, Kwiatkowska, and the second author, and making use of recent results in \cite{FSZ}, it is shown that Question~\ref{Que:Products} has an affirmative answer in the case that $H$ and $K$ are countable discrete groups.

We finish the subsection by considering open subgroups. Let us quickly observe that there is no hope of transferring results up from a subgroup; indeed, if $G$ is countable discrete, then the trivial subgroup is open and extremely amenable. Going down, we show that the class $\sf{CMD}$ is preserved upon passing to open subgroups. This is known for Polish groups, see for example \cite{MR4549426}*{Fact 2.8}, but the general question was not addressed in \cite{BassoZucker}.

\begin{theorem}
\label{Thm:CMD_OpenSub}
    If $G$ is a topological group, $H \leq^c G$ is an open subgroup, and $G\in \sf{CMD}$, then $H\in \sf{CMD}$.
\end{theorem}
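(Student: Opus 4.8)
The plan is to run everything through the characterization of $\sf{CMD}$ provided by Theorem~\ref{Thm:Concrete}: for any topological group $K$, one has $K\in\sf{CMD}$ exactly when $\rmM(K)$ is $K$-finite (Definition~\ref{Def:Concrete_Min_Dyn}). So, granting that $\mg$ is $G$-finite, the goal is to prove that $\rmM(H)$ is $H$-finite. The first easy point is that $G$-finiteness of a $G$-flow $Y$ automatically upgrades to $H$-finiteness of $Y$ when $H$ is open: $G$-finiteness (for the fixed base $\cN_G$) is equivalent to the statement that for every symmetric open neighborhood $V$ of $e_G$ and every $\cF\sub\op(Y)$ with $\{VA:A\in\cF\}$ pairwise disjoint, $\cF$ is finite (shrink $V$ to some $U\in\cN_G$ and note $\{UA\}$ is still pairwise disjoint); and since $H$ is open, every $U\in\cN_H$ is a symmetric open neighborhood of $e_G$ in $G$. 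Note this step uses nothing about $G$-ED.

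The substance of the proof is to connect $\mg$ to $\rmM(H)$. The naive attempt — take a minimal $H$-subflow $N\sub\mg$, argue $N\cong\rmM(H)$, and conclude $N$ is $H$-finite — runs aground because $H$-finiteness is not obviously inherited by subflows. Instead I will build a \emph{$G$-flow that factors $H$-equivariantly onto $\rmM(H)$}, namely the co-induced flow $Z=\mathrm{CoInd}_H^G(\rmM(H))$. Concretely, fix a set $R$ of representatives for the right cosets $H\backslash G$ with $e_G\in R$, and for $g\in G$ write $g=\eta(g)\rho(g)$ with $\eta(g)\in H$ and $\rho(g)\in R$ (a unique decomposition). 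Put $Z:=\rmM(H)^R$ with the product topology and let $G$ act by $(g\cdot\phi)(r)=\eta(rg)\cdot\phi(\rho(rg))$. The crucial use of openness of $H$ is that $H\backslash G$ is discrete, so the coset map $G\to H\backslash G$ is continuous into a discrete space: hence $\rho$ is locally constant and $\eta$ is continuous, and this forces the $G$-action on $Z$ to be continuous. Evaluation at $e_G$, $\mathrm{ev}\colon Z\to\rmM(H)$, $\phi\mapsto\phi(e_G)$, is then a surjective $H$-map, using that $\eta(h)=h$ and $\rho(h)=e_G$ for $h\in H$ (because $R\cap H=\{e_G\}$).

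To finish, pass to a minimal $G$-subflow $Z_0\sub Z$ (Zorn). As a minimal $G$-flow, $Z_0$ is a $G$-factor of $\mg$, so $Z_0$ is $G$-finite by Lemma~\ref{Lem:Dyn_Bounded}\ref{Item:Factor_Lem:DB}, and therefore $H$-finite by the observation in the first paragraph. Restricting, $\mathrm{ev}|_{Z_0}\colon Z_0\to\rmM(H)$ is an $H$-map whose image is non-empty, compact, and $H$-invariant, hence all of $\rmM(H)$ by minimality of $\rmM(H)$. A second application of Lemma~\ref{Lem:Dyn_Bounded}\ref{Item:Factor_Lem:DB}, now for the group $H$, the $H$-finite flow $Z_0$, and the surjective $H$-map $\mathrm{ev}|_{Z_0}$, shows $\rmM(H)$ is $H$-finite, i.e.\ $H\in\sf{CMD}$.

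The single point demanding genuine care — the main obstacle — is checking that the co-induced action on $Z$ is a well-defined \emph{continuous} $G$-action; both the cocycle identities $\eta(rg_1g_2)=\eta(rg_1)\eta(\rho(rg_1)g_2)$, $\rho(rg_1g_2)=\rho(\rho(rg_1)g_2)$ and the continuity (via local constancy of $g\mapsto\rho(rg)$) rely essentially on $H$ being open rather than merely closed. Everything after that is bookkeeping. As an aside, one can alternatively verify directly that $\mg$ is itself $H$-ED and $H$-finite — again because neighborhoods of $e_G$ coming from $H$ are neighborhoods of $e_G$ in $G$ — so that $\mg$ satisfies all clauses of Theorem~\ref{Thm:Concrete} as an $H$-flow; but one still needs to extract $\rmM(H)$ from that information, which seems to require an argument of the same flavor, so the co-induction route is the cleaner one to write up.
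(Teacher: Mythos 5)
Your proof is correct and follows essentially the same route as the paper's: both arguments use co-induction to produce an $H$-map from a minimal $G$-flow onto $\rmM(H)$ and then transfer the relevant clause of Theorem~\ref{Thm:Concrete} from $G$ to $H$ using the openness of $H$ (so that $H$-neighborhoods of the identity are $G$-neighborhoods). The only differences are cosmetic: the paper cites the co-induction construction and works with $G$-isolated points together with a retraction onto a minimal $H$-subflow, whereas you build the co-induced flow explicitly and push $G$-finiteness through factor maps via Lemma~\ref{Lem:Dyn_Bounded}.
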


\begin{proof}
    By co-induction (see \cite{MR3156511}*{Lemma 13}) there is an $H$-map $\varphi$ from a minimal $G$-flow $Y$ onto $\rmM(H)$. 
    Since $G\in \sf{CMD}$, all points of $Y$ are $G$-isolated in $Y$, thus they are also $H$-isolated in $Y$.
    Fix a minimal $H$-subflow $Z$ of $Y$. 
    It is isomorphic to $\rmM(H)$, so $\varphi$ induces a retraction of $Y$ onto $Z$, therefore all points of $Z$, and thus $\rmM(H)$ are $H$-isolated.
\end{proof}

It was asked in \cite{MR4549426}*{Question 2.9} if the same is true for the class $\sf{GPP}$.
The machinery presented in that paper can be used to construct a counterexample.

\begin{prop}
    The class of $\sf{GPP}$ (hence also $\sf{TMD}$) groups is not closed under taking open subgroups.
\end{prop}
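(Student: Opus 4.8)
The plan is to build a Polish group in $\sf{GPP}$ with an open subgroup outside $\sf{GPP}$, using the \emph{kaleidoscopic groups} of Duchesne, Monod and Wesolek \cite{DMW} together with the computation of their minimal dynamics by Basso and Tsankov \cite{MR4549426}. Fix a closed transitive subgroup $F\leq S_\infty$ acting on a countable set $\Omega$ with $|\Omega|\geq 3$, and let $G:=\mathcal K(F)\leq\homeo(D_\Omega)$ be its kaleidoscopic group, acting on the Wa\.zewski dendrite $D_\Omega$. Since $D_\Omega$ is a dendrite, $\homeo(D_\Omega)$, and hence the closed subgroup $G$, is non-archimedean, acting faithfully on the countable set $B$ of branch points of $D_\Omega$; in particular $G$ is Polish.

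First I would isolate the open subgroup and its bad quotient. Let $b\in B$ and put $H:=\mathrm{Stab}_G(b)$; this is open in $G$, being a basic open subgroup of the non-archimedean group $G$ acting on $B$, and it is Polish (closed in $G$). The branch point $b$ carries $|\Omega|$ directions, canonically $\Omega$-coloured, and each $g\in H$ permutes these directions by an element $\ell(g)\in F$, the local action of $g$ at $b$. By the very definition of $\mathcal K(F)$ every element of $F$ occurs as a local action at $b$, so $\ell\colon H\to F$ is a surjective group homomorphism, and it is continuous: if $g$ fixes $b$ together with finitely many further branch points, then $g$ fixes the corresponding directions at $b$. Thus $\ell$ is a continuous homomorphism of $H$ onto $F$ with (trivially) dense image. (Should the point stabiliser itself happen to lie in $\sf{GPP}$, one replaces $H$ by the stabiliser of $b$ together with one of its directions, still an open subgroup; this group has the ``rooted'' kaleidoscopic group $\mathcal K^{\circ}(F)$ as a direct factor, hence again as a continuous homomorphic image with dense range, and one runs the same argument with $\mathcal K^{\circ}(F)$ in place of $F$.)

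Now the decisive choice: select $F$ so that $G=\mathcal K(F)\in\sf{GPP}$ while the quotient $F$ (resp.\ $\mathcal K^{\circ}(F)$) is \emph{not} in $\sf{GPP}$. For the first clause I would invoke Basso and Tsankov's description of $\rmM(\mathcal K(F))$, which exhibits a comeager orbit, so $\mathcal K(F)\in\sf{GPP}$, re-running their argument for the particular $F$ at hand if their statement does not literally cover it. For the second clause it suffices to take $F\notin\sf{GPP}$: for instance a countably infinite discrete group acting on itself by translation (locally compact non-compact, hence in neither $\sf{TMD}$ nor $\sf{GPP}$), or any closed transitive $F\leq S_\infty$ whose universal minimal flow has no comeager orbit; in the rooted variant one instead checks directly that $\mathcal K^{\circ}(F)\notin\sf{GPP}$ even when $\mathcal K(F)\in\sf{GPP}$. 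With such an $F$ in hand, if $H$ were in $\sf{GPP}$, hence in $\sf{TMD}$, then by Proposition~\ref{Prop:Homomorphic_Image} its continuous dense-image quotient $F$ (resp.\ $\mathcal K^{\circ}(F)$) would lie in $\sf{TMD}$, hence, being Polish, in $\sf{GPP}$ — a contradiction. So $H\notin\sf{GPP}$, while $G=\mathcal K(F)\in\sf{GPP}$ and $H$ is an open subgroup of $G$; since an open subgroup of a Polish group is again Polish and $\sf{GPP}$ is exactly the class of Polish groups in $\sf{TMD}$, the classes $\sf{GPP}$ and $\sf{TMD}$ are not closed under open subgroups.

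The one genuinely delicate point is the selection step: one must exhibit a single $F$ for which the kaleidoscopic group provably belongs to $\sf{GPP}$ yet the relevant quotient provably does not. Establishing $\mathcal K(F)\in\sf{GPP}$ is exactly where the machinery of \cite{MR4549426} is needed, and some care is required to verify that their analysis applies to an $F$ chosen to be ``wild'' (e.g.\ not Roelcke precompact), whereas the failure of $\sf{GPP}$ for the quotient is easy once $F$ is fixed. Everything else — non-archimedeanness of $G$, openness of the stabiliser, continuity and surjectivity of the local-action map, and the final application of Proposition~\ref{Prop:Homomorphic_Image} — is routine.
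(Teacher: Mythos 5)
Your proposal is correct and follows essentially the same route as the paper: the paper takes $\Gamma$ a countably infinite discrete group (so locally compact non-compact, hence not in $\sf{GPP}$ by Proposition~\ref{Prop:Polish_LC}), invokes \cite{MR4549426}*{Corollary 1.5} to get $\cK(\Gamma)\in\sf{GPP}$, and uses the continuous surjection from the open branch-point stabiliser onto $\Gamma$ together with Proposition~\ref{Prop:Homomorphic_Image}. Your hedging about whether the Basso--Tsankov result covers the chosen $F$, and the fallback via the rooted kaleidoscopic group, are unnecessary for this choice of $F$ since their Corollary~1.5 applies verbatim.
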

\begin{proof}
    We refer to \cite{MR4549426} for all notation and terminology. 
    Let $\Gamma$ be a countably infinite discrete group, and consider the left action by translations on itself. 
    By \cite{MR4549426}*{Corollary 1.5}, the kaleidoscopic group $\cK(\Gamma)$ is GPP. 
    If $x$ is a branching point of $W_\infty$, then $\cK(\Gamma)_x$ is an open subgroup of $\cK(\Gamma)$. 
    Suppose towards contradiction that $\cK(\Gamma)_x$ is GPP. Since there is a continuous surjective homomorphism $\cK(\Gamma)_x \to \Gamma$, by \Cref{Prop:Homomorphic_Image}, $\Gamma$ would be GPP, a contradiction (see \Cref{Prop:Polish_LC}).
\end{proof}

\section{Polish groups}
\label{Section:Polish}

In this section, we strengthen the main results of the previous section in the case that $G$ is Polish. Along the way, we present some general results about presyndetic subgroups. These arise since a Polish group has the generic point property exactly when it contains a closed, presyndetic, extremely amenable subgroup \cite{ZucMHP}.

\subsection{Presyndetic subgroups}

We quickly recall some notions from Section~\ref{Section:Background}. Given a topological group $G$ and $H\leq^c G$, we equip $G/H$ with its right uniformity, and $\sa(G/H)$ naturally becomes a $G$-flow which is universal among $G$-flows containing $H$-fixed-points. We call $H\leq^c G$ \emph{presyndetic} if for every $U\in \cN_G$, $UH$ is syndetic, i.e.\ there is $F\in \fin{G}$ with $FUH = G$. Let us record the following results from \cite{ZucMHP}.

\begin{fact}
\label{Fact:SGmodH}
    Let $G$ be a topological group and $H\leq^c G$. 
    \begin{enumerate}[label=\normalfont(\arabic*)]
        \item \label{Item:Min_Fact:SGmodH}
        $\sa(G/H)$ is minimal iff $H$ is presyndetic. 
        \item \label{Item:MG_Fact:SGmodH}
        $\sa(G/H)\cong \rmM(G)$ iff $H$ is presyndetic and extremely amenable.
    \end{enumerate}
\end{fact}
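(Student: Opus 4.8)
Here is the plan. Both equivalences will be reduced to the behaviour of the return sets of neighborhoods of the distinguished point $eH\in\sa(G/H)$, using the near‑ultrafilter description of the Samuel compactification together with the universal properties of $\sa(G/H)$ and $\rmM(G)$ recorded in Subsection~\ref{Subsection:Top_Dyn}. Since the $G$-action on $\sa(G/H)$ is induced by left translation, the orbit of $eH$ is $\{gH:g\in G\}=G/H$, which is dense, so $\sa(G/H)=\ol{G\cdot eH}$; hence Lemma~\ref{Lem:Min}\ref{Item:AP_Lem:Min} applies with $x=eH$, and $\sa(G/H)$ is minimal iff the return set $\{g\in G:gH\in A\}$ is syndetic for every neighborhood $A$ of $eH$.

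The first step is coset bookkeeping. Writing $\pi_H\colon G\to G/H$ for the quotient map, one computes that the right‑uniformity entourage ball $V_U[eH]$ in $G/H$ equals $\pi_H(UH)$ for each $U\in\cN_G$; hence, viewing $G/H$ as a fattening space (Subsection~\ref{Subsection:Unif_Fat}) and using Lemma~\ref{Lem:NUBasic}\ref{Item:Base_Lem:NUBasic}, the closures $\rm{cl}_{\sa(G/H)}(\pi_H(UH))$, $U\in\cN_G$, form a neighborhood basis at $eH$, with their interiors an open basis. A short computation in $G/H$ then shows that for $g\in G$ one has $gH\in\rm{cl}_{\sa(G/H)}(\pi_H(UH))$ iff $g\in\bigcap_{W\in\cN_G}WUH$; consequently the return set of $\rm{cl}_{\sa(G/H)}(\pi_H(UH))$ equals $\bigcap_{W\in\cN_G}WUH$, which contains $UH$ and, taking $W=U$, is contained in $U^2H$. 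Granting this, Part~(1) follows at once from Lemma~\ref{Lem:Min}\ref{Item:AP_Lem:Min}: if $H$ is presyndetic then every $UH$ is syndetic, so every basic return set is syndetic and $\sa(G/H)$ is minimal; conversely, if $\sa(G/H)$ is minimal then, given $U\in\cN_G$ and $W\in\cN_G$ with $W^2\subseteq U$, the return set of $\rm{int}\big(\rm{cl}_{\sa(G/H)}(\pi_H(WH))\big)$ is syndetic and contained in $W^2H\subseteq UH$, so $UH$ is syndetic; as $U$ was arbitrary, $H$ is presyndetic.

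For Part~(2), assume first that $H$ is presyndetic and extremely amenable. By Part~(1), $\sa(G/H)$ is minimal. If $X$ is any minimal $G$-flow, then $X$ restricted to $H$ is an $H$-flow, so extreme amenability produces an $H$-fixed point $y\in X$, and the universal property of $\sa(G/H)$ yields a $G$-map $\sa(G/H)\to X$ sending $eH\mapsto y$, which is onto since $X$ is minimal. Thus $\sa(G/H)$ is a minimal $G$-flow factoring onto every minimal $G$-flow; composing a $G$-map $\rmM(G)\to\sa(G/H)$ (universality of $\rmM(G)$) with one $\sa(G/H)\to\rmM(G)$ gives a $G$-endomap of $\rmM(G)$, which is an isomorphism by coalescence of $\rmM(G)$, whence $\sa(G/H)\cong\rmM(G)$. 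For the converse, assume $\sa(G/H)\cong\rmM(G)$. Minimality of $\sa(G/H)$ and Part~(1) give that $H$ is presyndetic. For extreme amenability, use co-induction as in the proof of Theorem~\ref{Thm:CMD_OpenSub} (\cite{MR3156511}*{Lemma~13}) to obtain an $H$-map $\varphi$ from a minimal $G$-flow $Y$ onto $\rmM(H)$; since $Y$ is a factor of $\rmM(G)\cong\sa(G/H)$, composing produces an $H$-equivariant surjection $\sa(G/H)\twoheadrightarrow\rmM(H)$, and the image of the $H$-fixed point $eH$ is then an $H$-fixed point of $\rmM(H)$. Minimality of $\rmM(H)$ forces it to be a singleton, i.e.\ $H\in\sf{EA}$.

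The step I expect to be the main obstacle is the neighborhood-basis computation in Part~(1): one must keep straight that the uniformity on $G/H$ is induced from the \emph{right} uniformity on $G$ while the flow action is by \emph{left} translation, and one must squeeze the return sets tightly enough between $UH$ and $U^2H$ so that the resulting condition matches the definition of presyndeticity precisely. Once Part~(1) is established, everything in Part~(2) is a formal consequence of the universal properties of $\sa(G/H)$ and $\rmM(G)$, the coalescence of $\rmM(G)$, and the availability of co-induction.
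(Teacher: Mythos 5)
Your Part~(1), the forward direction of Part~(2), and the presyndeticity half of the converse are all correct; in fact the paper only cites \cite{ZucMHP}*{Proposition~6.6} and \cite{ZucMHP}*{Theorem~7.5(1)} here, so your return-set computation (squeezing the return set of $\rm{cl}(\pi_H(UH))$ between $UH$ and $U^2H$ and applying Lemma~\ref{Lem:Min}) is a faithful reconstruction of the cited argument. The genuine gap is in the last step: deducing that $H$ is extremely amenable from $\sa(G/H)\cong\rmM(G)$ via co-induction. The co-induction lemma invoked in the proof of Theorem~\ref{Thm:CMD_OpenSub} (\cite{MR3156511}*{Lemma~13}) is only available for \emph{open} subgroups; for a general closed $H\leq^c G$ the co-induced $G$-action is not continuous, and the statement ``there is a minimal $G$-flow $Y$ admitting an $H$-map onto $\rmM(H)$'' is false in general. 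For instance, the unitary group $G$ of $\ell^2(\bbZ)$ is extremely amenable and the bilateral shift generates a closed discrete copy $H$ of $\bbZ$ in it; every minimal $G$-flow is then a singleton, which admits no $H$-map onto the non-trivial flow $\rmM(\bbZ)$. More fundamentally, extreme amenability of $H$ is a statement about arbitrary $H$-flows, most of which do not embed into any $G$-flow, so no argument that only produces $H$-fixed points inside $G$-flows can close this step.

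The route that actually works (and is the one in \cite{ZucMHP}) goes through the semigroup structure of $\sa(G)$ rather than co-induction. The right uniformity of $H$ agrees with the uniformity it inherits from $(G,\text{right})$, and bounded uniformly continuous functions extend from uniform subspaces (Kat\v{e}tov), so the canonical $H$-map $\sa(H)\to\sa(G)$ is an embedding onto $\ol{H}^{\sa(G)}$, which one checks equals the fibre $\wt{\pi}_H^{-1}[\{eH\}]$ and is a closed subsemigroup of $\sa(G)$. Fixing a minimal $M\subseteq\sa(G)$, your own coalescence argument shows $\wt{\pi}_H|_M\colon M\to\sa(G/H)$ is an isomorphism, so $M\cap\ol{H}^{\sa(G)}$ is a single point $p$; since $M$ is a left ideal and $\ol{H}^{\sa(G)}$ a subsemigroup containing $p$, we get $\ol{H}^{\sa(G)}\cdot p\subseteq M\cap\ol{H}^{\sa(G)}=\{p\}$. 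Pulling $p$ back through the embedding yields a fixed point of the universal $H$-ambit $\sa(H)$, whence $H\in\sf{EA}$.
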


\begin{proof}
    \ref{Item:Min_Fact:SGmodH}: See \cite{ZucMHP}*{Proposition~6.6}. While the result there is stated for Polish groups, the proof in general is exactly the same. Note the switch in left/right conventions between this paper and \cite{ZucMHP}.
    \vspace{3 mm}

    \noindent
    \ref{Item:MG_Fact:SGmodH}: See \cite{ZucMHP}*{Theorem 7.5(1)}. Again, the extra assumptions that appear there are not needed for the given proof. 
\end{proof}

\begin{prop}
\label{Prop:Presynd_Inclusions}
    Let $K\leq^c H\leq^c G$ be topological groups. Then $K\leq^c G$ is presyndetic iff both $K\leq^c H$ and $H\leq^c G$ are presyndetic.
\end{prop}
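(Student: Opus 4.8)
The statement is a transitivity property for presyndetic subgroups, and the plan is to unwind the definition and argue each direction separately. Recall $H\leq^c G$ is presyndetic iff for every $U\in\cN_G$ there is $F\in\fin{G}$ with $FUH=G$, and that a base of symmetric neighbourhoods of $e_H$ in $H$ is $\{U\cap H: U\in\cN_G\}$. Two elementary observations will be used repeatedly: (i) if $A\subseteq B\subseteq G$ and $A$ is syndetic then so is $B$; (ii) for $K\leq^c H\leq^c G$ and $U\in\cN_G$ one has the identity $H\cap UK=(U\cap H)K$ (if $x=uk\in H$ with $u\in U$, $k\in K$, then $u=xk^{-1}\in H$, and the reverse inclusion is clear). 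Observation (i) already gives, in the forward direction, that $K\leq^c G$ presyndetic implies $H\leq^c G$ presyndetic, since $UK\subseteq UH$.

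\textbf{Forward direction, $K\leq^c H$ presyndetic.} Fix $V=U_0\cap H\in\cN_H$, choose symmetric $W\in\cN_G$ with $W^2\subseteq U_0$, and use presyndeticity of $K$ in $G$ to get $F\in\fin{G}$ with $FWK=G$. The main obstacle here is that $F$ lives in $G$, not in $H$, so it cannot directly witness syndeticity inside $H$; the fix is to ``pull $F$ into $H$'' using witnesses. Let $F'=\{f\in F: fWK\cap H\neq\emptyset\}$, and for $f\in F'$ pick $h_f\in fWK\cap H$, writing $h_f=fw_fk_f$ with $w_f\in W$, $k_f\in K$. Then $e_f:=fw_f=h_fk_f^{-1}\in H$, and since $W$ is symmetric, $fW=e_fw_f^{-1}W\subseteq e_fW^2\subseteq e_fU_0$, hence $fWK\subseteq e_fU_0K$. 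Now any $h\in H$ lies in some $fWK$, forcing $f\in F'$, so $e_f^{-1}h\in U_0K\cap H=(U_0\cap H)K=VK$ by observation (ii); thus $H=\bigcup_{f\in F'}e_f\,VK$ with $\{e_f:f\in F'\}\subseteq H$ finite. This shows $K\leq^c H$ is presyndetic.

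\textbf{Backward direction.} Assume $K\leq^c H$ and $H\leq^c G$ presyndetic; fix $U\in\cN_G$ and a symmetric $W\in\cN_G$ with $W^2\subseteq U$. The key point—and the main subtlety of the whole proof—is the order in which the two presyndeticity hypotheses are invoked, because pushing a neighbourhood past a finite set requires conjugation estimates. First apply presyndeticity of $K$ in $H$ to the neighbourhood $W\cap H$, obtaining a finite $F_2=\{h_1,\dots,h_n\}\subseteq H$ with $H=F_2(W\cap H)K$. Only now, using continuity of each conjugation map $g\mapsto h_i^{-1}gh_i$ at $e_G$, choose $V\in\cN_G$ with $V\subseteq W$ and $h_i^{-1}Vh_i\subseteq W$ for all $i\le n$, so that $Vh_i\subseteq h_iW$. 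Finally apply presyndeticity of $H$ in $G$ to $V$, getting $F_1\in\fin{G}$ with $G=F_1VH$. Then
\[
G=F_1VH=F_1VF_2(W\cap H)K\subseteq F_1VF_2WK\subseteq F_1F_2W^2K\subseteq F_1F_2\,UK,
\]
using $VF_2\subseteq F_2W$. Since $F_1F_2$ is a finite subset of $G$, $UK$ is syndetic in $G$; as $U$ was arbitrary, $K\leq^c G$ is presyndetic, completing the proof.

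Beyond these conjugation/quantifier-ordering points, every step is a short computation with group operations and neighbourhood bases, so no deeper input is needed.
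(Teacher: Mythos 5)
Your proof is correct. The forward direction is essentially the paper's own argument: the paper also observes that $H\leq^c G$ presyndetic is immediate from $UK\subseteq UH$, and for $K\leq^c H$ it performs the same ``pull the finite set into $H$'' move — given $FUK=G$, for each $g\in F$ with $gU\cap H\neq\emptyset$ it picks $u_g\in U$ with $gu_g\in H$ and shows $F'(U^2\cap H)K=H$ with $F'=\{gu_g\}$, which is your $e_f$ construction with the cosmetic difference that the paper works with $U^2\cap H$ directly rather than pre-shrinking to $W$ with $W^2\subseteq U_0$. The one genuine difference is the backward direction: the paper does not prove it at all but cites an external reference (Proposition~2.2 of the Basso--Tsankov paper), whereas you give a complete self-contained argument. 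Your argument there is sound, and you correctly identify the only delicate point, namely that the finite set $F_2\subseteq H$ must be chosen \emph{before} the neighbourhood $V$ so that continuity of conjugation yields $VF_2\subseteq F_2W$; this quantifier ordering is exactly what makes $F_1VF_2(W\cap H)K\subseteq F_1F_2W^2K\subseteq F_1F_2UK$ go through. So your write-up is, if anything, more complete than the paper's.
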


\begin{proof}
    First assume $K\leq^c G$ is presyndetic. Then clearly $H\leq^c G$ is presyndetic. To see that $K\leq^c H$ is presyndetic, fix $U\in \cN_G$. There is $F\in \fin{G}$ with $FUK = G$. For each $g\in F$ with $gU\cap H\neq \emptyset$, fix $u_g\in U$ with $gu_g\in H$, and set $F'= \{gu_g: g\in F \text{ with } gU\cap H\neq \emptyset\}$. Given $h\in H$, there are $g\in F$, $u\in U$, and $k\in K$ with $h = guk$. Since $gu = hk^{-1}\in H$, we can write $h = gu_g(u_g^{-1}u)k$. Noting that $u_g^{-1}u\in H$, we have $F'(U^2\cap H)K = H$. 

    The other direction is \cite{MR4549426}*{Proposition 2.2}.
\end{proof}

Recall that given a topological group $G$, $\wt{G}$ denotes its Raikov completion, a topological group which contains $G$ as a dense subgroup. For every $G$-flow, the action extends to $\wt{G}$ continuously, so we can think of $G$ and $\wt{G}$ as having the exact same flows.

\begin{prop}
    \label{Prop:Presynd_Raikov}
    Given topological groups $H\leq^c G$, then $H\leq^c G$ is presyndetic iff $\wt{H}\leq^c \wt{G}$ is presyndetic.
\end{prop}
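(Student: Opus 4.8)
The statement to prove is: for topological groups $H\leq^c G$, the inclusion $H\leq^c G$ is presyndetic iff $\wt{H}\leq^c \wt{G}$ is presyndetic. The plan is to unwind the definition of presyndetic and exploit the density of $G$ in $\wt{G}$ (and $H$ in $\wt{H}$) together with the fact that $\cN_G$ is obtained from $\cN_{\wt{G}}$ by intersecting with $G$, and conversely every $V\in \cN_{\wt{G}}$ contains the closure (in $\wt{G}$) of some $U\in \cN_G$ with $U$ ``not too small.'' Recall $H$ is presyndetic in $G$ iff for every $U\in \cN_G$ there is $F\in \fin{G}$ with $FUH = G$, and similarly with tildes.

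\emph{($\wt H$ presyndetic $\Rightarrow$ $H$ presyndetic).} Given $U\in \cN_G$, pick $V\in \cN_{\wt G}$ with $V\cap G\subseteq U$ and moreover (shrinking) $V^3\cap G \subseteq U$ or some such cushion; also arrange that $V = V^{-1}$ and that $V\cap H$ is dense in $\wt V := $ some neighborhood of $e$ in $\wt H$. By hypothesis there is $\wt F\in \fin{\wt G}$ with $\wt F\, V\, \wt H = \wt G$. Now approximate: for each $\tilde g\in \wt F$ choose $g\in G$ with $g\in \tilde g V$ (density of $G$), giving a finite $F\subseteq G$; and absorb the error into the neighborhood. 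One gets $F V^2 \wt H = \wt G$, hence $F V^2 \wt H \cap G \supseteq G$. The remaining point is to show $V^2\wt H\cap G \subseteq (V^3\cap G)H$ or similar, using that $H$ is dense in $\wt H$ and that $G\cap \wt H = H$ (which holds since $H$ is closed in $G$ and $\wt H$ is the closure of $H$ in $\wt G$, so $\wt H\cap G = \ol H^G = H$). Combining, $F\,(U)\,H = G$ for a suitably chosen starting $U$, which is what is needed once we track the cushions carefully. (It is cleanest to start from an arbitrary $U\in \cN_G$, choose $W\in\cN_G$ with $W^3\subseteq U$, run the argument with $W$, and conclude $F' W^? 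H = G$ with the exponent $\leq 3$.)

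\emph{($H$ presyndetic $\Rightarrow$ $\wt H$ presyndetic).} Given $V\in \cN_{\wt G}$, choose $W\in \cN_{\wt G}$ with $\ol{W}^{\wt G}\subseteq V$ and $W=W^{-1}$, and set $U = W\cap G\in \cN_G$. By hypothesis there is $F\in \fin{G}$ with $FUH = G$. Since $G$ is dense in $\wt G$ and $\ol{G}^{\wt G} = \wt G$, taking closures in $\wt G$ gives $\wt G = \ol{FUH}^{\wt G} = F\,\ol{UH}^{\wt G}$ (finite unions commute with closure, and left translation by the finite set $F$ is a homeomorphism). Now $\ol{UH}^{\wt G}\subseteq \ol{WH}^{\wt G}\subseteq \ol{W}^{\wt G}\,\ol{H}^{\wt G} = \ol W^{\wt G}\wt H \subseteq V\wt H$, where the inclusion $\ol{W}^{\wt G}\,\ol{H}^{\wt G}\supseteq \ol{WH}^{\wt G}$ uses joint continuity of multiplication in $\wt G$ (so $\ol A\cdot \ol B \supseteq \ol{AB}$ is the wrong direction — more precisely one uses $\ol{WH}\subseteq \ol{W\cdot \wt H}$ and $W\cdot\wt H$ is already ``$V$-small times $\wt H$'': since $\wt H$ is closed and $W\wt H = \bigcup_{h\in\wt H} Wh$ with each $Wh$ having closure $\ol W h\subseteq Vh$, one shows $\ol{W\wt H}^{\wt G}\subseteq V\wt H$ using that $\wt H$ is closed). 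Hence $F\, V\,\wt H = \wt G$, so $\wt H$ is presyndetic in $\wt G$.

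\textbf{Main obstacle.} The routine-looking part — passing closures through the relation $FUH = G$ — is in fact where all the care is needed: one must be disciplined about whether closures are taken in $\wt G$, about the identity $\wt H\cap G = H$ (which rests on $H$ being closed in $G$), and about controlling the ``inflation'' incurred when replacing $WH$ or $W\wt H$ by its closure. The cleanest path is probably to prove once and for all the auxiliary fact that $\ol{W\wt H}^{\wt G}\subseteq W^2\wt H$ for $W\in\cN_{\wt G}$ (using that $\wt H$ is a closed subgroup and a standard $W^2$-trick), and the dual fact that for $U\in\cN_G$ one has $U\wt H\cap G\subseteq U^2 H$ (using density of $H$ in $\wt H$ and of $G$ in $\wt G$), and then feed these into the two implications with appropriate $W^3\subseteq U$ cushions. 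Everything else is bookkeeping.
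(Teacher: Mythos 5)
Your argument is correct in substance, but it takes a genuinely different route from the paper. The paper's proof is a two-line dynamical argument: it invokes Fact~\ref{Fact:SGmodH} (presyndeticity of $H\leq^c G$ is equivalent to minimality of the $G$-flow $\sa(G/H)$), observes the natural inclusions of uniform spaces $G/H\hookrightarrow \wt{G}/\wt{H}\hookrightarrow \wh{G/H}$, concludes $\sa(G/H)\cong \sa(\wt{G}/\wt{H})$ as $G$-flows, and notes that minimality for the $G$-action and the $\wt{G}$-action coincide. That approach buys brevity and conceptual clarity ($G$ and $\wt{G}$ literally share the same flows), at the price of importing the Samuel-compactification machinery from \cite{ZucMHP}. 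Your approach is a self-contained, elementary unwinding of the definition $FUH=G$ via density, which is a perfectly legitimate alternative and arguably more portable. The two auxiliary facts you isolate at the end are exactly what is needed and both hold: for symmetric $W\in\cN_{\wt G}$ and \emph{any} subset $A\subseteq\wt G$ one has $\ol{WA}^{\wt G}\subseteq W^2A$ (take $x\in\ol{WA}$, intersect the neighborhood $Wx$ with $WA$), so closedness of $\wt H$ is not actually needed there and your detour through $\ol{A}\cdot\ol{B}$ versus $\ol{AB}$ is a red herring; and the absorption $V^2\wt H\cap G\subseteq (V^3\cap G)H$ follows by picking, for $v\tilde h\in G$, some $h\in H\cap V\tilde h$ (density of $H$ in $\wt H$) so that $v\tilde h=(v\tilde h h^{-1})h$ with $v\tilde h h^{-1}\in V^3\cap G$. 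One small imprecision: your second auxiliary fact should be stated with $V\in\cN_{\wt G}$ and conclusion in $(V^k\cap G)H$ rather than "$U^2H$" for $U\in\cN_G$, since $\tilde h h^{-1}$ lives in $\wt G$, not in $G$; this is exactly the kind of bookkeeping you flag, and it does close. With the cushions $V^3\cap G\subseteq U$ (forward direction restated for the reverse implication) and $W^2\subseteq V$ tracked as you indicate, the proof is complete.
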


\begin{proof}
    Noting that $\wt{H}$ is simply the closure of $H$ in $\wt{G}$, we have natural inclusions of uniform spaces $G/H\hookrightarrow \wt{G}/\wt{H}\hookrightarrow \wh{G/H}$, from which it follows that $\wh{\wt{G}/\wt{H}}\cong \wh{G/H}$, so also $\sa(G/H)\cong \sa(\wt{G}/\wt{H})$ as $G$-flows. Hence $\sa(G/H)$ is a minimal $G$-flow iff $\sa(\wt{G}/\wt{H})$ is a minimal $\wt{G}$-flow.
\end{proof}

We take the opportunity to record some similar facts for co-precompact subgroups, where we recall that $H\leq^c G$ is \emph{co-precompact} iff $\sa(G/H) = \wh{G/H}$, i.e.\ iff for every $U\in \cN_G$, there is $F\in \fin{G}$ with $UFH = G$. 

\begin{prop}
    \label{Prop:CoPre_Inclusions}
    Let $K\leq^c H\leq^c G$ be topological groups. Then $K\leq^c G$ is co-precompact iff both $K\leq^c H$ and $H\leq^c G$ are co-precompact. 
\end{prop}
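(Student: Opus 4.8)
The plan is to prove the two implications separately; in each direction one inclusion is trivial and the other needs a short manipulation with the identity $UFH=G$. Recall $H\leq^c G$ is co-precompact iff for every $U\in\cN_G$ there is $F\in\fin{G}$ with $UFH=G$, and that $\cN_H$ is generated by the traces on $H$ of members of $\cN_G$.

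For the forward direction, assume $K\leq^c G$ is co-precompact. That $H\leq^c G$ is co-precompact is immediate: given $U\in\cN_G$, pick $F\in\fin{G}$ with $UFK=G$; since $K\subseteq H$ we get $UFH=G$. For $K\leq^c H$, I would fix $W\in\cN_H$, choose $V\in\cN_G$ with $V^2\cap H\subseteq W$ (possible since $W$ is the trace of an open neighborhood of $e_G$), and pick $F\in\fin{G}$ with $VFK=G$. Let $F'=\{f\in F: VfK\cap H\neq\emptyset\}$, and for $f\in F'$ choose $h_f\in VfK\cap H$, say $h_f=v_f f k_f$ with $v_f\in V$, $k_f\in K$; then $VfK=Vv_f^{-1}h_fk_f^{-1}K\subseteq V^2h_fK$. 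Given $h\in H$, write $h\in VfK$ for some $f$, necessarily in $F'$ since $h\in VfK\cap H$, so $h=v'h_fk'$ with $v'\in V^2$, $k'\in K$; solving $v'=hk'^{-1}h_f^{-1}\in H$ gives $v'\in V^2\cap H\subseteq W$, whence $h\in Wh_fK$. Thus $H=W\{h_f:f\in F'\}K$ with $\{h_f:f\in F'\}\in\fin{H}$, as required.

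For the backward direction, assume both $K\leq^c H$ and $H\leq^c G$ are co-precompact, and fix $U\in\cN_G$. Choose $V\in\cN_G$ with $V^2\subseteq U$ and, using co-precompactness of $H$ in $G$, pick $F_0=\{g_1,\dots,g_n\}\in\fin{G}$ with $VF_0H=G$. The obstacle here, exactly as in the proof of Proposition~\ref{Prop:Presynd_Inclusions}, is that a small neighborhood used to thin out the $H$ factor does not commute past $F_0$; I would handle it by passing to the conjugate neighborhood $W_0:=\bigcap_{i=1}^n g_i^{-1}Vg_i$, an open symmetric neighborhood of $e$, and picking $W\in\cN_H$ with $W\subseteq W_0\cap H$, so that $g_iW\subseteq Vg_i$ for each $i$, hence $F_0W\subseteq VF_0$. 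Now co-precompactness of $K$ in $H$ yields $E\in\fin{H}$ with $WEK=H$, and then
\[
G = V F_0 H = V F_0 W E K \subseteq V(VF_0)E K = V^2 F_0 E K \subseteq U F_0 E K,
\]
with $F_0E\in\fin{G}$, so $K\leq^c G$ is co-precompact.

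The only genuinely delicate points are the conjugation trick in the backward direction and, in the forward direction, the observation that $\cN_H$ consists of traces of members of $\cN_G$ (which is what lets one convert the witness $V\in\cN_G$ into control over $W\in\cN_H$); everything else is routine bookkeeping with products of sets.
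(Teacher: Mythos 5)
Your proof is correct and follows essentially the same route as the paper: the forward direction adapts the element-chasing argument from Proposition~\ref{Prop:Presynd_Inclusions} (replacing the chosen $gu_g$ by $v_ffk_f\in H$ and absorbing the error into $V^2\cap H$), and the backward direction uses the same conjugation trick of shrinking the neighborhood so it commutes past the finite set $F_0$. No gaps; the bookkeeping with $W_0=\bigcap_i g_i^{-1}Vg_i$ and the verification that $v'=hk'^{-1}h_f^{-1}\in H$ are exactly the delicate points the paper's (terser) proof relies on.
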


\begin{proof}
    First assume $K\leq^c G$ is co-precompact. Then clearly $H\leq^c G$ is co-precompact. The proof that $K\leq^c H$ is co-precompact is very similar to the first part of the proof of Proposition~\ref{Prop:Presynd_Inclusions}. 

    Now suppose both $H\leq^c G$ and $K\leq^c H$ are co-precompact. Fix $U\in \cN_G$. Find $F_G\in \fin{G}$ with $UF_GH = G$. Find $V\in \cN_G$ small enough so that $gV\subseteq Ug$ for every $g\in F_G$. Then find $F_H\in \fin{H}$ with $(V\cap H)F_HK = H$. Then $U^2(F_GF_H)K \supseteq UF_GVF_HK \supseteq  UF_GH = G$.
\end{proof}

\begin{prop}
    \label{Prop:CoPre_Raikov}
    Given topological groups $H\leq^c G$, then $H\leq^c G$ is co-precompact iff $\wt{H}\leq^c \wt{G}$ is co-precompact. 
\end{prop}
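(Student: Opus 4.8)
\textbf{Proof proposal for Proposition~\ref{Prop:CoPre_Raikov}.}

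The plan is to mimic the argument used for the presyndetic analogue, Proposition~\ref{Prop:Presynd_Raikov}. First I would recall that $\wt{H}$ is exactly the closure of $H$ inside $\wt{G}$ (since $\wt{G}$ is Raikov complete, the closure of $H$ in $\wt{G}$ is itself Raikov complete, hence coincides with $\wt{H}$). From this, the coset space $G/H$ embeds as a uniform subspace of $\wt{G}/\wt{H}$, which in turn embeds uniformly into $\wh{G/H}$; since $G$ is dense in $\wt{G}$ and $H$ is dense in $\wt{H}$, the image of $G/H$ is dense in $\wt{G}/\wt{H}$. Consequently the right-uniform completions satisfy $\wh{\wt{G}/\wt{H}}\cong \wh{G/H}$, and similarly the Samuel compactifications satisfy $\sa(G/H)\cong \sa(\wt{G}/\wt{H})$ (as $G$-flows, equivalently as $\wt{G}$-flows, since every $G$-flow extends uniquely to a $\wt{G}$-flow).

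Now recall that $H\leq^c G$ is co-precompact iff $\wh{G/H} = \sa(G/H)$, and likewise $\wt{H}\leq^c \wt{G}$ is co-precompact iff $\wh{\wt{G}/\wt{H}} = \sa(\wt{G}/\wt{H})$. Under the canonical identifications from the previous paragraph, these two equalities are the same statement, so $H\leq^c G$ is co-precompact if and only if $\wt{H}\leq^c \wt{G}$ is co-precompact.

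Alternatively, and perhaps more cleanly, one can argue directly from the combinatorial characterization: $H\leq^c G$ is co-precompact iff for every $U\in \cN_G$ there is $F\in \fin{G}$ with $UFH = G$. Given such a $U$ and $F$, the same $F$ works to witness co-precompactness of $\wt{H}$ in $\wt{G}$ for the neighborhood $U\wt{G}$ by density of $G$ in $\wt{G}$ and of $H$ in $\wt{H}$; conversely, if $\wt{U}\in \cN_{\wt{G}}$ and $\wt{F}\in \fin{\wt{G}}$ satisfy $\wt{U}\wt{F}\wt{H} = \wt{G}$, one shrinks $\wt{U}$ slightly and approximates the finitely many elements of $\wt{F}$ by elements of $G$, absorbing the error into the shrinking, to obtain $U\in \cN_G$ and $F\in \fin{G}$ with $UFH = G$. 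The only mild care needed is the standard density/approximation bookkeeping with the finitely many translates, exactly as in the first part of the proof of Proposition~\ref{Prop:CoPre_Inclusions}; this is the one place where a small estimate is required, but it is entirely routine. I expect no genuine obstacle here — the content is purely that completion commutes with these coset-space constructions.
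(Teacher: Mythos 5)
Your first argument is exactly the paper's proof: the paper also reduces to the canonical identification $\wh{\wt{G}/\wt{H}}\cong \wh{G/H}$ (and correspondingly $\sa(G/H)\cong\sa(\wt{G}/\wt{H})$) coming from the dense uniform inclusions $G/H\hookrightarrow \wt{G}/\wt{H}\hookrightarrow \wh{G/H}$, and then invokes the definition of co-precompactness as $\wh{G/H}=\sa(G/H)$. The alternative combinatorial approximation argument you sketch is a fine extra, but the main route is the same.
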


\begin{proof}
    As in the proof of Proposition~\ref{Prop:Presynd_Raikov}, we have a canonical isomorphism $\wh{\wt{G}/\wt{H}}\cong \wh{G/H}$.  
\end{proof}

\subsection{Comeager orbits}

Recall that $\sf{GPP}$ denotes the class of Polish groups with the \emph{generic point property}, i.e.\ such that every minimal flow has a comeager orbit. By Proposition~14.1 of \cite{AKL}, this is equivalent to the universal minimal flow having a comeager orbit. Our first task is to see how the Rosendal criterion relates to comeager orbits for actions of Polish groups. Recall that the \emph{$\pi$-weight} of a topological space $X$ is the smallest cardinality of a \emph{$\pi$-base} for $X$, a set $\cB\subseteq \op(X)$ such that for every $A\in \op(X)$, there is $B\in \cB$ with $B\subseteq A$.  
  
    \begin{theorem}
        \label{thm:Rosendal-general}
        Fix a Polish group $G$ and a Baire $G$-space $X$. The following are equivalent:
         \begin{enumerate}[label=(\arabic*)]
            \item \label{Item:Orbit_Fact:RG}
            $X$ has a comeager orbit.
            \item \label{Item:RC_Fact:RG}
            $X\in \cal{RC}_G$, is $G$-TT, and has countable $\pi$-weight. 
         \end{enumerate}
         If moreover $X$ has a comeager set of points with a dense orbit (in particular implying $X$ is $G$-TT), the above are also equivalent to:
         \begin{enumerate}[label=(\arabic*)]
          \setcounter{enumi}{2}
             \item \label{Item:RC-alone_Fact:RG}
            $X\in \cal{RC}_G$. 
            \item \label{Item:G-iso_Fact:RG}
            $X$ has a comeager set of $G$-isolated points.
         \end{enumerate}
    \end{theorem}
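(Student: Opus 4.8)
The plan is to read the equivalence $\ref{Item:Orbit_Fact:RG}\Leftrightarrow\ref{Item:RC_Fact:RG}$ as the ``Rosendal criterion'' for Baire $G$-spaces, with the countable $\pi$-weight hypothesis playing exactly the role that second countability of $X$ plays in the classical statement for Polish $X$ (as in \cite{BYMT}), and then to obtain $\ref{Item:RC-alone_Fact:RG}$ and $\ref{Item:G-iso_Fact:RG}$ as ``local'' reformulations, using the extra hypothesis only to reconnect them with $\ref{Item:Orbit_Fact:RG}$.

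\emph{$\ref{Item:Orbit_Fact:RG}\Rightarrow\ref{Item:RC_Fact:RG}$.} Let $x_0$ have comeager orbit. Since $X$ is Baire, $G\cdot x_0$ is dense, so for every $B\in\op(X)$ we get $x_0\in GB$ and hence $\overline{GB}=X$; thus $X$ is $G$-TT. By an Effros-type open-mapping theorem for Polish group actions (applicable since the orbit is non-meager), the orbit map $G/G_{x_0}\to G\cdot x_0$ is a homeomorphism and is open as a map into $X$; as $G/G_{x_0}$ is Polish, $G\cdot x_0$ is second countable, and being dense in $X$ this forces $\pi w(X)=\aleph_0$, since $\pi$-weight transfers between a space and any dense subspace. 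Finally, given $U\in\cN_G$ and $\emptyset\ne A\in\op(X)$, choose $g_0$ with $g_0x_0\in A$ and symmetric $V\in\cN_G$ with $VV^{-1}\subseteq U$ and $Vg_0x_0\subseteq A$; openness of the orbit map yields $O\in\op(X)$ with $O\cap G\cdot x_0=Vg_0x_0$, and after replacing $O$ by $O\cap A$ we may take $O\subseteq A$. Then $O$ is $U$-TT: any $\emptyset\ne W'\in\op(O)$ meets $G\cdot x_0$ inside $Vg_0x_0$, say at $kx_0$ with $k\in Vg_0$, and every $y\in O\subseteq\overline{Vg_0x_0}$ is a limit of points $k_ig_0x_0=(k_ig_0k^{-1})(kx_0)$ with $k_ig_0k^{-1}\in VV^{-1}\subseteq U$, so $y\in\overline{UW'}$. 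Hence $X\in\cal{RC}_G$.

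\emph{$\ref{Item:RC_Fact:RG}\Rightarrow\ref{Item:Orbit_Fact:RG}$} is the crux. Fix a countable $\pi$-base $\{A_n\}$ and a decreasing symmetric neighborhood base $\{V_m\}$ at $e_G$. First, $G$-TT plus countable $\pi$-weight give that $\{x:\overline{G\cdot x}=X\}=\bigcap_nGA_n$ is a dense $G_\delta$, hence comeager. One then runs a fusion: using $\cal{RC}_G$ (in the form of \Cref{Lem:U_TT}\ref{Item:UTT_Lem:UTT}--\ref{Item:Interior_Lem:UTT}) at each stage one builds a Cauchy sequence $(g_k)$ in the complete group $G$ together with a nested sequence of nonempty $V_k$-TT open sets, arranged so that $g=\lim_kg_k$ exists in $G$ and so that the construction pins down a single point $x_0\in X$ — here countable $\pi$-weight is used: every point has countable $\pi$-character, so the construction can be carried out against a countable list of requirements and genuinely converges inside $X$ — whose orbit meets a prescribed dense open set; running over all such requirements yields $x_0$ with $G\cdot x_0$ comeager. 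Equivalently, one checks that $E=\{(x,y):y\in G\cdot x\}$, a continuous image of the Baire space $G\times X$, is comeager in $X\times X$, whence by Kuratowski--Ulam it has a comeager class. I expect this fusion — transporting Rosendal's Polish argument to the Baire setting, with countable $\pi$-weight replacing metrizability of $X$ so that all limits are taken in $G$ rather than in $X$ — to be the main obstacle.

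\emph{The ``if moreover'' part.} First, $\ref{Item:RC-alone_Fact:RG}\Leftrightarrow\ref{Item:G-iso_Fact:RG}$ holds for any Baire $G$-space (no extra hypothesis needed): by \Cref{lem:RC-equivs}, $X\in\cal{RC}_G$ iff each $S_V:=\{x:x\in\Int(Q_V[x])\}$ contains an open dense set; since $Q_{V_m}[x]\subseteq Q_U[x]$ whenever $V_m\subseteq U$, any $x\in\bigcap_mS_{V_m}$ is $G$-isolated, and $\bigcap_mS_{V_m}$ is comeager, giving $\ref{Item:G-iso_Fact:RG}$; conversely a dense set of $G$-isolated points makes each $S_V$ dense, and \Cref{Lem:U_TT} shows $S_V\subseteq\bigcup_{x\in S_V}\Int(Q_V[x])\subseteq S_{V^2}$, so each $S_U$ contains an open dense set, giving $\ref{Item:RC-alone_Fact:RG}$. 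Now assume the extra hypothesis. Then $\ref{Item:Orbit_Fact:RG}\Rightarrow\ref{Item:RC-alone_Fact:RG}$ is immediate from the main equivalence, and for $\ref{Item:G-iso_Fact:RG}\Rightarrow\ref{Item:Orbit_Fact:RG}$ one intersects the comeager set of $G$-isolated points with the comeager set of dense-orbit points to obtain a ``generic'' $x_0$ that is simultaneously $G$-isolated and has dense orbit; a fusion argument as above (with the $G$-isolation of $x_0$ replacing countable $\pi$-character in guaranteeing convergence to $x_0$, and \Cref{Cor:Orbit_Fragment} handling largeness along dense orbits) shows $G\cdot x_0$ contains this whole comeager set, hence is comeager, and $\ref{Item:RC_Fact:RG}$ then follows from $\ref{Item:Orbit_Fact:RG}$.
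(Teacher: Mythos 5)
Your treatment of \ref{Item:Orbit_Fact:RG}$\Rightarrow$\ref{Item:RC_Fact:RG} and of the equivalence \ref{Item:RC-alone_Fact:RG}$\Leftrightarrow$\ref{Item:G-iso_Fact:RG} is essentially correct and close to the paper's: the paper extracts countable $\pi$-weight from non-meagerness of $U\cdot x_0$ together with the Baire property rather than from Effros, and it closes the second cycle differently, by proving \ref{Item:G-iso_Fact:RG}$\Rightarrow$\ref{Item:RC_Fact:RG} directly (for $x_0$ $G$-isolated with dense orbit, the sets $g\cdot\Int(Q_U[x_0])$ with $g$ ranging over a countable dense subgroup and $U$ over a countable basis at $e_G$ form a countable $\pi$-base); but your verification that a dense set of $G$-isolated points yields $X\in\cal{RC}_G$ via $S_V\subseteq S_{V^2}$ is sound and is a nice reorganization.

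The genuine gap is \ref{Item:RC_Fact:RG}$\Rightarrow$\ref{Item:Orbit_Fact:RG}, which you yourself flag as ``the main obstacle'' and never close. Two concrete problems. First, the reason you offer for why the fusion ``genuinely converges inside $X$'' --- that countable $\pi$-weight gives every point countable $\pi$-character --- is not sufficient: a local $\pi$-base at $y$ consists of nonempty open sets that need not contain $y$, so arranging $g_kx$ to land in them does not force $g_kx\to y$, and a point of countable $\pi$-character in a non-metrizable space need not have a countable neighborhood base. What actually rescues the back-and-forth is dynamical rather than topological: if one arranges $g_jx\in Q_{V_k}[y]$ for all $j\ge k$, then $gx\in\bigcap_k Q_{V_k}[y]$, and this intersection equals $\{y\}$ in any (regular Hausdorff) $G$-space by joint continuity of the action at $(e_G,y)$, with no countability at $y$ required; this is what lets the argument of \cite{BYMT} go through with a countable $\pi$-base in place of a countable basis, as the paper asserts. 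Second, your fallback --- that $E=\{(x,y):y\in G\cdot x\}$ is comeager because it is a continuous image of a Baire space, followed by Kuratowski--Ulam --- is a non sequitur as written: continuous images of Baire spaces need not be comeager, and establishing comeagerness of $E$ is exactly the content of the implication. The same unproved fusion is then re-used in your \ref{Item:G-iso_Fact:RG}$\Rightarrow$\ref{Item:Orbit_Fact:RG}; the paper avoids any second appeal to it by routing \ref{Item:G-iso_Fact:RG} back to \ref{Item:RC_Fact:RG} through the $\pi$-base construction above.
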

    \begin{proof}
        $\ref{Item:Orbit_Fact:RG} \Rightarrow \ref{Item:RC_Fact:RG}$: 
        Let $x_0\in X$ be such that $G\cdot x_0\subseteq X$ is comeager. We first show that $X$ has countable $\pi$-weight. For a moment, fix $U\in \cN_G$. As $G$ is Polish, countably many translates of $U$ cover $G$, implying that $U\cdot x_0$ is non-meager. As $X$ is Baire, we then have $\Int_X(\ol{U\cdot x_0})\neq \emptyset$.
        
        Fix a countable basis $\set{U_n : n \in \omega}$ at $1_G$ and a countable dense subgroup $D \sub G$. 
        Let $A \in \op(X)$ be given, and find $g \in D$ such that $gx_0 \in A$.
        Using regularity, find $n \in \omega$ such that $g\cdot \Int_X(\ol{U_n \cdot x_0}) \sub A$, 
        which shows that $X$ has countable $\pi$-weight. 
    
        Since $X$ has a dense orbit it is $G$-TT. The rest of the argument, showing that $X\in \cal{RC}_G$, follows $(i)\Rightarrow (ii)$ of Proposition~3.2 from \cite{BYMT} word for word; we note that here, one can drop the regularity assumption.
        \vspace{3 mm}
        
        \noindent
        $\ref{Item:RC_Fact:RG} \Rightarrow \ref{Item:Orbit_Fact:RG}$: This direction follows the proof in \cite{BYMT} word for word, replacing ``countable basis'' in the last sentence with ``countable $\pi$-base.'' 
        \vspace{3 mm}
        
        \noindent
        For the remaining directions, we assume that $X$ has a comeager set of points with a dense orbit. In particular, $X$ is $G$-TT.
        $\ref{Item:RC_Fact:RG} \Rightarrow \ref{Item:RC-alone_Fact:RG}$ is clear.
        \vspace{3 mm}
        
        \noindent
        $\ref{Item:RC-alone_Fact:RG} \Rightarrow \ref{Item:G-iso_Fact:RG}$: 
        By \Cref{lem:RC-equivs}, for each $V \in \cN_G$, $I_V \coloneqq \set{x \in X : \Int(Q_V[x]) \ne \emptyset}$ contains an open dense set.
        Since $\cN_G$ is countable and $X$ is Baire, $\bigcap_{V \in \cN_G} I_V$ is comeager, and is the set we were looking for. This implication doesn't use the hypothesis on the comeager set of points with dense orbit.
        \vspace{3 mm}
        
        \noindent
        $\ref{Item:G-iso_Fact:RG} \Rightarrow \ref{Item:RC_Fact:RG}$: 
        By \Cref{lem:RC-equivs}, it is enough to prove that $X$ has countable $\pi$-weight. 
        By hypothesis, there is some $x_0$ which is $G$-isolated and moreover has a dense orbit.
        Then $\set{g \cdot \Int(Q_V[x_0]) : g \in D, \, V \in \cN_G}$, for $D$ a countable dense subgroup of $G$, is a $\pi$-basis of $X$.
        To see this, let $A \in \op(X)$ be given, and find $g \in D$, $B \in \op(x_0, X)$, and $V \in \cN_G$ such that $\ol{VgB} \sub A$. 
        Then, if $U \in \cN_G$ is such that $U \sub g^{-1} V g$, we have $g \cdot \Int(Q_U[x_0]) \sub g \ol{UB} \sub \ol{VgB} \sub A$.
    \end{proof}
    
    \begin{remark}
        The argument of $\ref{Item:Orbit_Fact:RG}\Rightarrow \ref{Item:RC_Fact:RG}$ makes use of Effros's theorem, which is typically stated only in the metrizable setting, but is discussed in greater generality in Fact~2.5 of \cite{MR4549426}: If $G$ is a Polish group, $X$ is a $G$-space, and $x\in X$ has dense orbit, then $G\cdot x\subseteq X$ is non-meager iff, letting $G_x\subseteq G$ be the stabilizer of $x\in X$, the map from $G\cdot x$ to $G/G_x$ sending $gx$ to $gG_x$ is a homeomorphism.
    \end{remark}

    \begin{remark}
        We do not know of any example of a Polish group $G$ and a Baire $G$-space $X$ with a comeager orbit, but with uncountable $\pi$-weight. Such an example would necessarily be non-regular.
    \end{remark}

When $G$ is Polish, we can add to the equivalent conditions of Theorem~\ref{Thm:Rosendal_Minimal}. Some of these equivalences have appeared in Theorem~5.5 of \cite{ZucMHP}. Notice that the minimality assumption on $X$ can be weakened to ``has a comeager set of points with a dense orbit'', if we drop the presyndeticity requirement for $H$ in Item \ref{Item:Structure_Thm:PGWUEB}.

\begin{theorem}
\label{Thm:Polish_Group_WUEB}
    If $G$ is Polish and $X$ is a minimal $G$-ED flow, then the following are equivalent:
    \begin{enumerate}[label=\normalfont(\arabic*)]
        \item \label{Item:RC_Thm:PGWUEB}
        $X\in \cal{RC}_G$,
        \item \label{Item:Orbit_Thm:PGWUEB}
        $X$ has a comeager orbit.
        \item \label{Item:FC_Thm:PGWUEB}
        $X$ has a point of first-countability. 
        \item \label{Item:GIso_Thm:PGWUEB}
        $X$ has a $G$-isolated point.
        \item \label{Item:Compatibility_Thm:PGWUEB}
        For any compatible right-invariant metric $d$ on $G$, there is a $\partial_d$-compatible point. 
        \item \label{Item:Structure_Thm:PGWUEB}
        For some closed, presyndetic subgroup $H\leq G$, we have $X\cong \sa(G/H)$.
    \end{enumerate}
    In particular, for Polish $G$, we have $G\in \sf{TMD}$ iff $G\in \sf{GPP}$. 
\end{theorem}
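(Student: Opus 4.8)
The plan is to prove Theorem~\ref{Thm:Polish_Group_WUEB} by establishing the chain of equivalences among \ref{Item:RC_Thm:PGWUEB}--\ref{Item:Structure_Thm:PGWUEB} and then deriving the final statement about $\sf{TMD}$ and $\sf{GPP}$ as an immediate corollary. The backbone of the argument is already in place: since $G$ is Polish and $X$ is a $G$-ED flow, $X$ is a $G$-space to which Theorem~\ref{Thm:Rosendal_Minimal} applies, so $X \in \cal{RC}_G$ is equivalent to the existence of a $\partial_d$-compatible point for any (equivalently, some) compatible right-invariant metric $d$, i.e.\ \ref{Item:RC_Thm:PGWUEB} $\Leftrightarrow$ \ref{Item:Compatibility_Thm:PGWUEB} is Theorem~\ref{Thm:Rosendal_Minimal}\ref{Item:ContAll_Thm:RM}/\ref{Item:ContBase_Thm:RM}. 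For \ref{Item:RC_Thm:PGWUEB} $\Leftrightarrow$ \ref{Item:Orbit_Thm:PGWUEB}, I would invoke Theorem~\ref{thm:Rosendal-general}: a minimal $G$-flow is a Baire $G$-space with a comeager (indeed every) dense orbit, so \ref{Item:RC-alone_Fact:RG} of that theorem applies, but I still need countable $\pi$-weight for the equivalence with \ref{Item:Orbit_Fact:RG} --- this is supplied precisely because, under \ref{Item:RC_Thm:PGWUEB}, part \ref{Item:G-iso_Fact:RG} $\Rightarrow$ \ref{Item:RC_Fact:RG} of Theorem~\ref{thm:Rosendal-general} shows $X$ has countable $\pi$-weight once it has a $G$-isolated point, and the comeager set of $G$-isolated points is provided by \ref{Item:RC-alone_Fact:RG} $\Rightarrow$ \ref{Item:G-iso_Fact:RG}. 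So \ref{Item:RC_Thm:PGWUEB} $\Leftrightarrow$ \ref{Item:Orbit_Thm:PGWUEB} $\Leftrightarrow$ \ref{Item:GIso_Thm:PGWUEB} all come out of Theorem~\ref{thm:Rosendal-general}, once we note a minimal flow has a comeager (all) points with dense orbit.

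For \ref{Item:FC_Thm:PGWUEB}, the implication \ref{Item:GIso_Thm:PGWUEB} $\Rightarrow$ \ref{Item:FC_Thm:PGWUEB} follows because $G$ is metrizable: a $G$-isolated point has a neighborhood base indexed by $\cN_G \times (0,\infty)$, and restricting to a countable base of $\cN_G$ and rational radii gives first countability at that point (this is exactly the observation used in the proof of Corollary~\ref{cor:Polish-dichotomy}). Conversely \ref{Item:FC_Thm:PGWUEB} $\Rightarrow$ \ref{Item:GIso_Thm:PGWUEB} is Theorem~\ref{thm:ED-first-countable}: $X$ is a ($\sn(G)$-)ED fattening space, so any point of first countability is $G$-isolated. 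For \ref{Item:Structure_Thm:PGWUEB}: given \ref{Item:Orbit_Thm:PGWUEB}, by \cite{ZucMHP} (cited as the characterization of $\sf{GPP}$ via presyndetic extremely amenable subgroups, together with Fact~\ref{Fact:SGmodH}) there is a closed, presyndetic subgroup $H \leq G$ with $X \cong \sa(G/H)$; here $H$ is the stabilizer of a point in the comeager orbit. Actually, since we only need $X \cong \sa(G/H)$ for $H$ closed presyndetic (not extremely amenable), Fact~\ref{Fact:SGmodH}\ref{Item:Min_Fact:SGmodH} guarantees $\sa(G/H)$ is minimal, and taking $H$ to be the stabilizer of a generic point and using Effros's theorem (as in the Remark after Theorem~\ref{thm:Rosendal-general}) identifies the comeager orbit $G\cdot x_0$ with $G/H$; since $G/H \subseteq \sa(G/H)$ is the dense orbit of a point with trivial-ish stabilizer and $X$ is the universal such flow, one gets the $G$-flow isomorphism $X \cong \sa(G/H)$. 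Conversely \ref{Item:Structure_Thm:PGWUEB} $\Rightarrow$ \ref{Item:Orbit_Thm:PGWUEB}: if $X \cong \sa(G/H)$ with $H$ closed presyndetic, the coset $H \in \sa(G/H)$ has orbit $G/H$, which is non-meager (countably many $U$-translates cover $G$, so cover $G/H$) in the Baire space $X$, hence comeager since it is a dense orbit in a minimal flow and any two dense orbits are either equal or disjoint — actually every orbit is dense, so the non-meager one, being a union of cosets, must be comeager by the $0$-$1$ law for orbit equivalence relations.

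Finally, for the last sentence: if $G$ is Polish then $\rmM(G)$ is a minimal $G$-ED flow (every $\mg$ is $G$-ED, as recorded in Section~\ref{Section:Gleason}), so applying the equivalence \ref{Item:RC_Thm:PGWUEB} $\Leftrightarrow$ \ref{Item:Orbit_Thm:PGWUEB} to $X = \rmM(G)$ gives: $\rmM(G) \in \cal{RC}_G$ iff $\rmM(G)$ has a comeager orbit. The left side is the definition of $G \in \sf{TMD}$ (Definition~\ref{Def:Dyn_Tractable}, via Theorem~\ref{Thm:Rosendal_Minimal}), and the right side is the definition of $G \in \sf{GPP}$. Hence $G \in \sf{TMD} \Leftrightarrow G \in \sf{GPP}$ for Polish $G$.

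I expect the main obstacle to be the careful bookkeeping in \ref{Item:Orbit_Thm:PGWUEB} $\Leftrightarrow$ \ref{Item:Structure_Thm:PGWUEB}, specifically pinning down the $G$-flow isomorphism $X \cong \sa(G/H)$ rather than merely a factor map, which requires the universal property of $\sa(G/H)$ among flows with an $H$-fixed point together with coalescence of minimal flows (so that the factor map $\sa(G/H) \to X$ induced by sending $H$ to a generic point, and the factor map $X \to \sa(G/H)$ that exists by universality of $\mg$-type reasoning applied to the $H$-fixed point in $X$, compose to isomorphisms); this is essentially carried out in \cite{ZucMHP}*{Theorem~5.5} and I would cite it, flagging that the presyndeticity of $H$ (equivalently minimality of $\sa(G/H)$, Fact~\ref{Fact:SGmodH}\ref{Item:Min_Fact:SGmodH}) is what is needed here rather than extreme amenability. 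The other implications are all either direct citations (Theorem~\ref{thm:Rosendal-general}, Theorem~\ref{thm:ED-first-countable}, Theorem~\ref{Thm:Rosendal_Minimal}) or short metrizability arguments, so they should be routine.
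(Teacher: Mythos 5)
Your overall route is the paper's route: \ref{Item:RC_Thm:PGWUEB}$\Leftrightarrow$\ref{Item:Orbit_Thm:PGWUEB} via Theorem~\ref{thm:Rosendal-general} (using that every point of a minimal flow has dense orbit), \ref{Item:FC_Thm:PGWUEB}$\Rightarrow$\ref{Item:GIso_Thm:PGWUEB} via Theorem~\ref{thm:ED-first-countable}, the closing of the cycle through \ref{Item:Compatibility_Thm:PGWUEB} via Theorem~\ref{Thm:Rosendal_Minimal}\ref{Item:ContBase_Thm:RM}, and \ref{Item:Orbit_Thm:PGWUEB}$\Rightarrow$\ref{Item:Structure_Thm:PGWUEB} by citing \cite{ZucMHP}*{Theorem~5.5}. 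The only cosmetic difference is that you prove \ref{Item:GIso_Thm:PGWUEB}$\Rightarrow$\ref{Item:FC_Thm:PGWUEB} from metrizability of $G$, whereas the paper gets \ref{Item:Orbit_Thm:PGWUEB}$\Rightarrow$\ref{Item:FC_Thm:PGWUEB} from Effros; both are fine. (One small caution: routing \ref{Item:GIso_Thm:PGWUEB} back to \ref{Item:RC_Thm:PGWUEB} through Theorem~\ref{thm:Rosendal-general}\ref{Item:G-iso_Fact:RG} needs a \emph{comeager} set of $G$-isolated points, not one point; the clean path is the trivial \ref{Item:GIso_Thm:PGWUEB}$\Rightarrow$\ref{Item:Compatibility_Thm:PGWUEB} followed by Theorem~\ref{Thm:Rosendal_Minimal}\ref{Item:ContBase_Thm:RM}$\Rightarrow$\ref{Item:RC_Thm:RM}, which you already have available.)

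The one genuine gap is in \ref{Item:Structure_Thm:PGWUEB}$\Rightarrow$\ref{Item:Orbit_Thm:PGWUEB}. You assert that the orbit $G/H\subseteq \sa(G/H)$ is non-meager because ``countably many $U$-translates cover $G$, so cover $G/H$,'' and then upgrade to comeager by a topological $0$--$1$ law. That covering remark does not establish non-meagerness: it only shows $G/H$ is a countable union of translates of $UH$, and a countable union of somewhere-dense (even closed-with-interior) pieces can still be meager. In the proof of Theorem~\ref{thm:Rosendal-general} this covering argument is used in the \emph{opposite} direction -- starting from a non-meager orbit to deduce that $\ol{U\cdot x_0}$ has interior -- so running it backwards is circular. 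The $0$--$1$ law step is then resting on an unproven hypothesis. The correct (and shorter) argument, which is what the paper uses, is that $G/H$ with its quotient uniformity is a Polish space, hence \v{C}ech-complete, hence a $G_\delta$ subset of any compactification in which it sits densely; being a dense $G_\delta$ in $\sa(G/H)$, the orbit $G/H$ is comeager. With that substitution your proof is complete.
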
 

\begin{remark}
    If $X$ satisfies the conditions of Theorem~\ref{Thm:Polish_Group_WUEB}, then every point in the comeager orbit will be $G$-isolated, but the converse need not hold. 
\end{remark}
    
\begin{proof}    
    $\ref{Item:RC_Thm:PGWUEB}\Leftrightarrow \ref{Item:Orbit_Thm:PGWUEB}$ is \Cref{thm:Rosendal-general}.
    \vspace{3 mm}

    \noindent
    $\ref{Item:Orbit_Thm:PGWUEB}\Rightarrow \ref{Item:FC_Thm:PGWUEB}$: Any point in the comeager orbit is a point of first-countability. This is a consequence of the Effros theorem and the fact that if $X$ is regular, $Y$ is dense in $X$, and $\cA$ is a local base at $y$ in $Y$, then $\set{\Int_X(\rm{Cl}_X(A)) : A \in \cA}$ is a local base at $y$ in $X$. 
    \vspace{3 mm}

     \noindent
    $\ref{Item:FC_Thm:PGWUEB}\Rightarrow \ref{Item:GIso_Thm:PGWUEB}$ is \Cref{thm:ED-first-countable}.
    \vspace{3 mm}

    \noindent
    $\ref{Item:GIso_Thm:PGWUEB}\Rightarrow \ref{Item:Compatibility_Thm:PGWUEB}$ is clear.
    \vspace{3 mm}

    \noindent
    $\ref{Item:Compatibility_Thm:PGWUEB}\Rightarrow \ref{Item:RC_Thm:PGWUEB}$ follows from $\ref{Item:ContAll_Thm:RM} \Rightarrow \ref{Item:RC_Thm:RM}$ in \Cref{Thm:Rosendal_Minimal} (which still holds when weakening minimality to $G$-TT). 
     \vspace{3 mm}

    \noindent
    $\ref{Item:Orbit_Thm:PGWUEB}\Rightarrow  \ref{Item:Structure_Thm:PGWUEB}$: This is $(3) \Rightarrow (4)$ in \cite{ZucMHP}*{Theorem~5.5}. 
    \vspace{3 mm}

     \noindent
    $\ref{Item:Structure_Thm:PGWUEB}\Rightarrow  \ref{Item:Orbit_Thm:PGWUEB}$: The orbit $G/H$ is comeager in $\sa(G/H)$; more generally, any Polish space is \v{C}ech-complete, so is a $G_\delta$ subset of any compactification. 
\end{proof}

Together with \Cref{thm:Rosendal-general}, we have the following dichotomy. 
\begin{corollary}
    \label{cor:dichotomy-polish-pi-weight}
    If $G$ is Polish, then any minimal $G$-ED flow has either: 
    \begin{itemize}
        \item countable $\pi$-weight, or 
        \item has no points of first-countability. 
    \end{itemize}
    In particular, if $G \in \sf{GPP}$, then all minimal flows have countable $\pi$-weight. 
\end{corollary}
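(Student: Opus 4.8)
The plan is to combine the two main dichotomies already available in the excerpt. The key topological input is Corollary~\ref{cor:Polish-dichotomy}, applied not to $X$ itself but to its Gleason completion $\rmS_G(X)$: for $G$ Polish, any $G$-ED flow is either metrizable (second countable) or not first-countable. Since $\rmS_G(X)$ is always $G$-ED, it falls into one of these two cases. The dynamical input is Theorem~\ref{thm:Rosendal-general}, which says that for a Baire $G$-space, having a comeager orbit is equivalent to being in $\cal{RC}_G$, being $G$-TT, and having countable $\pi$-weight; and Theorem~\ref{Thm:Rosendal_Minimal}/Theorem~\ref{Thm:Polish_Group_WUEB}, which connect $\cal{RC}_G$ for the minimal $G$-ED flow $X$ to first-countability and to $X$ having a comeager orbit.

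The main step is to see that a minimal $G$-ED flow $X$ (for $G$ Polish) with a point of first-countability must have countable $\pi$-weight. First I would note that $X$ is itself $G$-ED (a minimal $G$-ED flow), so I can work with $X$ directly; by Corollary~\ref{cor:Polish-dichotomy}, $X$ is either metrizable or not first-countable. If $X$ has a point of first-countability, then the second alternative fails, so $X$ is metrizable, hence second countable, hence has countable $\pi$-weight. This already gives the dichotomy: either $X$ has countable $\pi$-weight (indeed is metrizable), or $X$ has no points of first-countability. I should state it this way rather than invoking $\cal{RC}_G$, since the cleanest route is the pure topological fact. Alternatively, and perhaps more in the spirit of the surrounding results: if $X$ is not of countable $\pi$-weight, then in particular $X$ is not metrizable, so by Corollary~\ref{cor:Polish-dichotomy} $X$ is not first-countable, and conversely a point of first-countability forces metrizability and hence countable $\pi$-weight.

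For the ``in particular'' clause, suppose $G \in \sf{GPP}$ and let $Y$ be any minimal $G$-flow. By Theorem~\ref{Thm:Polish_Group_WUEB} (or the definition of $\sf{GPP}$ together with the equivalences there), $\rmM(G) \in \cal{RC}_G$, so by Theorem~\ref{Thm:Rosendal_Minimal}, $\rmS_G(Y) \in \cal{RC}_{G}$ as well — indeed, $Y$ is a factor of $\rmM(G)$, so $\rmS_G(Y)$ is a factor of $\rmM(G) = \rmS_G(\rmM(G))$ via a map which between minimal flows is pseudo-open, hence by Lemma~\ref{Lem:Rosendal_Maps}\ref{Item:PsOpDown_Lem:RMaps} $\rmS_G(Y) \in \cal{RC}_G$. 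Since $\rmS_G(Y)$ is a minimal $G$-ED flow in $\cal{RC}_G$, Theorem~\ref{Thm:Polish_Group_WUEB} gives that $\rmS_G(Y)$ has a comeager orbit, hence by Theorem~\ref{thm:Rosendal-general} it has countable $\pi$-weight; since $\pi_Y\colon \rmS_G(Y) \to Y$ is an irreducible (in particular SWD-preserving and surjective) $G$-map, countable $\pi$-weight passes down to $Y$. So all minimal $G$-flows have countable $\pi$-weight.

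\textbf{Main obstacle.} The only delicate point is the bookkeeping around whether one works with $X$, with $\rmS_G(X)$, or with a general minimal flow $Y$ and its Gleason completion; the actual mathematics is entirely contained in Corollary~\ref{cor:Polish-dichotomy} and Theorem~\ref{thm:Rosendal-general}, so the ``proof'' is genuinely a two-line deduction once those are invoked. I would make sure to state the dichotomy in the form ``$X$ minimal $G$-ED flow $\Rightarrow$ either countable $\pi$-weight or no first-countable points,'' deducing it from ``$X$ metrizable or not first-countable'' together with ``metrizable $\Rightarrow$ countable $\pi$-weight,'' and then handle the $\sf{GPP}$ consequence by pushing countable $\pi$-weight from $\rmM(G)$ down to arbitrary minimal flows through irreducible maps.
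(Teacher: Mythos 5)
Your argument for the ``in particular'' clause is essentially the intended one (push $\cal{RC}_G$ from $\rmM(G)$ down to $Y$ and up to $\rmS_G(Y)$ via Lemma~\ref{Lem:Rosendal_Maps}, get a comeager orbit from Theorem~\ref{Thm:Polish_Group_WUEB}, get countable $\pi$-weight from Theorem~\ref{thm:Rosendal-general}, and transfer it along the irreducible map $\rmS_G(Y)\to Y$). But your proof of the main dichotomy has a genuine error. Corollary~\ref{cor:Polish-dichotomy} says a $G$-ED flow is either metrizable or \emph{not first-countable}, where ``not first-countable'' means that \emph{some} point fails to have a countable neighbourhood base --- not that \emph{no} point has one. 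So the existence of a single point of first countability does not rule out the second horn of that dichotomy, and you cannot conclude metrizability from it. Indeed, the conclusion you derive (``a minimal $G$-ED flow with a point of first-countability is metrizable'') is false: for any $G\in\sf{GPP}\setminus\sf{PCMD}$ (e.g.\ Kwiatkowska's or the kaleidoscopic examples cited in the introduction), $\rmM(G)$ is a minimal $G$-ED flow which is not metrizable, yet it has a comeager orbit and hence, by Theorem~\ref{Thm:Polish_Group_WUEB}\ref{Item:Orbit_Thm:PGWUEB}$\Rightarrow$\ref{Item:FC_Thm:PGWUEB}, points of first countability. The ``alternative'' phrasing you offer at the end of that paragraph repeats the same misreading.

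The dichotomy genuinely needs minimality and the dynamical equivalences, which is why the paper derives it from Theorem~\ref{Thm:Polish_Group_WUEB} together with Theorem~\ref{thm:Rosendal-general} rather than from the purely topological Corollary~\ref{cor:Polish-dichotomy}: if the minimal $G$-ED flow $X$ has a point of first countability, then $X$ has a comeager orbit by Theorem~\ref{Thm:Polish_Group_WUEB}, hence countable $\pi$-weight by Theorem~\ref{thm:Rosendal-general}\ref{Item:Orbit_Fact:RG}$\Rightarrow$\ref{Item:RC_Fact:RG}. You in fact deploy exactly this chain in your second paragraph for $\rmS_G(Y)$; the fix is simply to use it for $X$ itself in the first paragraph and to drop the appeal to Corollary~\ref{cor:Polish-dichotomy}.
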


We ask whether the converse to the last sentence of \Cref{cor:dichotomy-polish-pi-weight} holds.

\begin{question}
    \label{Que:Polish_PiWt}
    Let $G$ be a Polish group with $G\not\in \sf{GPP}$. Must $\rmM(G)$ have uncountable $\pi$-weight? Continuum $\pi$-weight?
\end{question}

Question~\ref{Que:Polish_PiWt} is known to have an affirmative answer in the case that $G$ is countable discrete \cite{GTWZ}, but in full generality this question seems quite difficult.

An affirmative answer to the question would imply that a Polish group is $\sf{GPP}$ iff  $\rmM(G)$ have countable $\pi$-weight iff every minimal $G$-flow has countable $\pi$-weight.
To add some context to this question, we offer the following characterization of flows of separable groups with countable $\pi$-weight. 

\begin{prop}
    \label{Prop:PiW}
    Let $G$ be a separable group and $X$ a $G$-flow. Then $X$ has countable $\pi$-weight iff $X$ is an irreducible extension of a metrizable $G$-flow.
\end{prop}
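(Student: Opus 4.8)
\textbf{Proof proposal for Proposition~\ref{Prop:PiW}.}

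The plan is to prove the two implications separately, using the machinery of Gleason completions and near ultrafilters developed in Section~\ref{Section:Gleason} for the forward direction, and a direct $\pi$-base computation for the reverse.

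For the easy direction, suppose $\pi\colon X\to Y$ is an irreducible extension with $Y$ a metrizable $G$-flow. Since $Y$ is metrizable (in particular second countable), fix a countable base $\set{V_n : n < \omega}$ for $Y$. For each $n$, the preimage $\pi^{-1}[V_n]\in \op(X)$, and since $\pi$ is irreducible, $\Int_X(\pi^{-1}[V_n])\neq \emptyset$ — actually we want to show $\set{\pi^{-1}[V_n] : n < \omega}$ (or rather their interiors) form a $\pi$-base. Given any $A\in \op(X)$, irreducibility gives $\Int_Y(\pi_{\rm{fib}}(A))\neq \emptyset$, so there is $n$ with $V_n\subseteq \pi_{\rm{fib}}(A)$, whence $\pi^{-1}[V_n]\subseteq A$. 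Thus $X$ has countable $\pi$-weight. (Note this direction does not use separability of $G$.)

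For the forward direction, assume $X$ is a $G$-flow with countable $\pi$-weight; fix a countable $\pi$-base $\set{A_n : n < \omega}$ and a countable dense subgroup $D\leq G$ (using separability). The idea is to produce a metrizable $G$-factor $Y$ of $X$ together with an irreducible factor map. The natural candidate: let $\cC\subseteq \rm{CB}(X)$ be a countable $G\cap D$-invariant set of functions that separates enough points, built so that the induced $G$-factor $Y$ (the Gelfand dual of the closed $G$-invariant $C^*$-subalgebra generated by $\cC$ and a countable dense subset of $G$ acting on it) is second countable and the factor map $\pi\colon X\to Y$ is irreducible. Concretely, since $D$ is countable and each $A_n$ has regular-closed closure, one wants to choose for each pair $(n,m)$ with $\ol{A_n}\cap \ol{A_m} = \emptyset$ (or more carefully, with $A_n \perp A_m$ in the $G$-fattening sense) a function $f_{n,m}\in \rm{CB}_G(X)$ separating them, then close the resulting countable family under the $D$-action and under a countable uniformly dense subalgebra. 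The resulting separable $C^*$-algebra is $G$-invariant (by density of $D$ and continuity of the $G$-action on $\rm{CB}_G(X) = \rm{CB}(X)$), so its Gelfand dual $Y$ is a metrizable $G$-flow with a factor map $\pi\colon X\to Y$. Irreducibility of $\pi$ then follows because the $\pi$-base $\set{A_n}$ was used to construct $\cC$: given $B\in \op(X)$, pick $A_n\subseteq B$, pick (using $G$-TT-ness of $X$ on the relevant orbit closure, or just regularity) a smaller $A_m$ with $A_m \perp X\setminus A_n$ appropriately, and use $f_{n,m}$ to conclude $\Int_Y(\pi_{\rm{fib}}(B)) \supseteq \Int_Y(\pi_{\rm{fib}}(A_n))\neq\emptyset$; the point is that $\pi$ separates $A_m$ from its fattened complement, forcing a nonempty open set inside the fiber image.

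The main obstacle is the forward direction, specifically ensuring that the countable function system $\cC$ can simultaneously be made (i) $G$-invariant — here separability is used crucially, as one only needs $D$-invariance and then invokes density and the norm-continuity of the $G$-action on $\rm{CB}_G(X) = \rm{CB}(X)$ — and (ii) rich enough that the induced factor map is irreducible rather than merely a factor map onto a metrizable flow. The subtlety in (ii) is that a countable $\pi$-base gives candidate open sets downstairs, but one must check the fiber-image interiors are nonempty, which amounts to arranging that $\pi$ does not collapse too much; this is where one leverages that $X$ has countable $\pi$-weight to see that finitely/countably many separating functions already ``see'' every open set up to the $\pi$-base. I would handle this by a careful bookkeeping argument interleaving the enumeration of the $\pi$-base with the enumeration of $D$, analogous to standard constructions of metrizable factors, but tracking the irreducibility condition at each step.
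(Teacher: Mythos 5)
Your easy direction is correct and is exactly the standard argument (the paper just asserts that irreducible maps preserve $\pi$-weight). Your forward direction has the right architecture — countable dense $D\leq G$, countable $\pi$-base, a separable $D$-invariant (hence, by norm-continuity of the action on $\rm{CB}(X)=\rm{CB}_G(X)$, $G$-invariant) function system, dualize to get a metrizable $G$-factor — and your point (i) about invariance is handled correctly. But the step you yourself flag as "the main obstacle," verifying irreducibility, is a genuine gap as written: choosing functions that \emph{separate} pairs $A_n, A_m$ with disjoint closures (or with $A_m\perp X\setminus A_n$ in the fattening sense) does not obviously produce a nonempty open subset of $Y$ whose preimage sits inside a prescribed $B\in\op(X)$, and the promised "careful bookkeeping argument interleaving the enumerations" is not supplied and is not the right mechanism.

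The fix is simpler than what you propose. For each $\pi$-base element $B_n$ choose a single nonzero $f_n\in\rm{CB}(X, [0,1])$ with $\rm{supp}(f_n)\subseteq B_n$, and let $\cA$ be the smallest closed $D$-invariant subalgebra (the paper uses sub-Banach-lattices and Kakutani duality, but your $C^*$-algebra version works identically) containing $\{f_n\}_n$; let $Y$ be its dual and $\pi\colon X\to Y$ the factor map. Now use the correspondence between open subsets of $Y$ and closed ideals of $\rm{CB}(Y)=\cA$: the closed ideal $J$ generated by $f_n$ consists of functions vanishing wherever $f_n$ does, so the corresponding open set $C\subseteq Y$ is nonempty and satisfies $\pi^{-1}[C]\subseteq\rm{supp}(f_n)\subseteq B_n$. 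Given any $B\in\op(X)$, pick $n$ with $B_n\subseteq B$; then $C\subseteq\pi_{\rm{fib}}(B)$, so $\Int_Y(\pi_{\rm{fib}}(B))\neq\emptyset$ and $\pi$ is irreducible. No interleaved bookkeeping is needed — the support condition on the generators does all the work.
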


\begin{proof}
    Since irreducible maps between compact spaces preserve $\pi$-weight, one direction is clear. For the other, fix $D\subseteq G$ a countable dense subgroup and $\{B_n: n< \omega\}\subseteq \op(X)$ a countable $\pi$-base for $X$. For each $n< \omega$, let $f_n\colon X\to [0, 1]$ be a non-zero function with $\rm{supp}(f_n)\subseteq B_n$. Considering the right $G$-action on $\rm{CB}(X)$ discussed in Subsection~\ref{Subsection:Top_Dyn}, let $\cA\subseteq \rm{CB}(X)$ be the smallest $D$-invariant sub-Banach-lattice of $\rm{CB}(X)$ with $\{f_n: n< \omega\}\subseteq \cA$. Letting $Y$ denote the Kakutani dual of $\cA$, then $Y$ is metrizable since $\cA$ is separable, and the inclusion $\cA\subseteq \rm{CB}(X)$ gives us a factor map $\pi\colon X\to Y$. To see that $\pi$ is irreducible, we recall that open subsets of $Y$ are in one-one correspondence with closed ideals of $\rm{CB}(Y)$ by identifying an open set $C$  with the set of continuous functions $I_C$ supported on that open set. Hence we want to show that for any $B\in \op(X)$, there is a closed ideal $J\subseteq \rm{CB}(Y)$ with $J\subseteq I_B$. For some $n< \omega$, we have $B_n\subseteq B$, and we can let $J\subseteq \rm{CB}(Y)$ be the closed ideal generated by $f_n$.
\end{proof}

Many interesting properties of $G$-flows are preserved under irreducible extensions or irreducible factors. \Cref{Prop:PiW} thus tells us that, for separable groups, flows with countable $\pi$-weight can be  understood, to a great extent, by only studying metrizable flows. 
\bigskip

We recover one of the results from \cite{ZucMHP}.
\begin{corollary}
    For Polish $G$, we have $G\in \sf{GPP}$ iff there is $H\leq^c G$ which is presyndetic and extremely amenable.
\end{corollary}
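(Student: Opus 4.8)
The plan is to combine the structure equivalences of Theorem~\ref{Thm:Polish_Group_WUEB} with Fact~\ref{Fact:SGmodH}. First I would note that $\rmM(G)$ is a minimal $G$-ED flow, so Theorem~\ref{Thm:Polish_Group_WUEB} applies with $X = \rmM(G)$, and that by definition $G \in \sf{GPP}$ exactly when $\rmM(G)$ has a comeager orbit, i.e.\ exactly when $\rmM(G)$ satisfies item~\ref{Item:Orbit_Thm:PGWUEB} of that theorem.

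For the forward direction, suppose $G \in \sf{GPP}$. By the equivalence $\ref{Item:Orbit_Thm:PGWUEB} \Leftrightarrow \ref{Item:Structure_Thm:PGWUEB}$ in Theorem~\ref{Thm:Polish_Group_WUEB}, there is a closed, presyndetic subgroup $H \leq^c G$ with $\rmM(G) \cong \sa(G/H)$. Now $\sa(G/H) \cong \rmM(G)$, so Fact~\ref{Fact:SGmodH}\ref{Item:MG_Fact:SGmodH} forces $H$ to be (presyndetic and) extremely amenable. Thus $H$ is a closed presyndetic extremely amenable subgroup, as required.

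For the converse, suppose $H \leq^c G$ is presyndetic and extremely amenable. Then Fact~\ref{Fact:SGmodH}\ref{Item:MG_Fact:SGmodH} gives $\sa(G/H) \cong \rmM(G)$ as $G$-flows. Since $G$ is Polish and $H$ is closed, $G/H$ is a Polish space, hence \v{C}ech-complete, and therefore a dense $G_\delta$ (so comeager) subset of its compactification $\sa(G/H)$; equivalently, this is the implication $\ref{Item:Structure_Thm:PGWUEB} \Rightarrow \ref{Item:Orbit_Thm:PGWUEB}$ of Theorem~\ref{Thm:Polish_Group_WUEB}. The single orbit $G/H \subseteq \sa(G/H) \cong \rmM(G)$ is thus comeager, so $G \in \sf{GPP}$.

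There is no serious obstacle here: the corollary is a direct synthesis of the structure theorem for $\rmM(G)$ in the $\sf{GPP}$ case (Theorem~\ref{Thm:Polish_Group_WUEB}\ref{Item:Structure_Thm:PGWUEB}) and the minimality/universality criteria for $\sa(G/H)$ in terms of presyndeticity and extreme amenability (Fact~\ref{Fact:SGmodH}). The only point to be careful about is matching the left/right uniformity conventions when invoking Fact~\ref{Fact:SGmodH}, which is already accounted for in its statement.
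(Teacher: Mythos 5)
Your proof is correct and follows exactly the paper's intended route: the paper's proof is literally "Combine Theorem~\ref{Thm:Polish_Group_WUEB} and Fact~\ref{Fact:SGmodH}," and you have simply spelled out the details of that combination accurately. Nothing to add.
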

\begin{proof}
    Combine Theorem~\ref{Thm:Polish_Group_WUEB} and Fact~\ref{Fact:SGmodH}.
\end{proof}

We can also recover the main results from \cite{MNVTT} and \cite{BYMT} about Polish $\sf{CMD}$ groups.

\begin{theorem}
    \label{Thm:Polish_CMD}
    Let $G$ be a Polish group. Then the following are equivalent:
    \begin{enumerate}[label=\normalfont(\arabic*)]
        \item \label{Item:CMD_Thm:PCMD}
        $G\in \sf{CMD}$, i.e.\ $\rmM(G)$ is metrizable.
        \item \label{Item:CoPre_Thm:PCMD}
        There is $H\leq^c G$ which is co-precompact and extremely amenable.
        \item \label{Item:PreSynd_Thm:PCMD}
        There is $H\leq^c G$ which is co-precompact, presyndetic, and extremely amenable.
    \end{enumerate}
\end{theorem}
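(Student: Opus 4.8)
\textbf{Proof plan for Theorem~\ref{Thm:Polish_CMD}.} The strategy is to close a cycle of implications among \ref{Item:CMD_Thm:PCMD}, \ref{Item:CoPre_Thm:PCMD}, and \ref{Item:PreSynd_Thm:PCMD}, leveraging the structural results already available for $\sf{GPP}$ (Theorem~\ref{Thm:Polish_Group_WUEB}, Fact~\ref{Fact:SGmodH}) together with the behavior of presyndetic and co-precompact subgroups under composition (Propositions~\ref{Prop:Presynd_Inclusions} and \ref{Prop:CoPre_Inclusions}). The implication $\ref{Item:PreSynd_Thm:PCMD}\Rightarrow \ref{Item:CoPre_Thm:PCMD}$ is immediate since it only forgets the presyndeticity hypothesis. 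The implication $\ref{Item:CoPre_Thm:PCMD}\Rightarrow \ref{Item:CMD_Thm:PCMD}$ is essentially a consequence of the description of the universal minimal flow: if $H\leq^c G$ is co-precompact and extremely amenable, then by the universal property of $\sa(G/H)$ and extreme amenability of $H$, the flow $\sa(G/H)$ is minimal (it admits no $H$-fixed-point-free structure beyond the coset space; more carefully, one uses that $\rmM(G)$ is a factor of $\sa(G/H)$ and, by extreme amenability of $H$, a section argument shows $\sa(G/H)\cong \rmM(G)$, cf.\ Fact~\ref{Fact:SGmodH}\ref{Item:MG_Fact:SGmodH} once we know $H$ is presyndetic), and co-precompactness gives $\sa(G/H) = \wh{G/H}$, which is metrizable for Polish $G$ since $H$ co-precompact implies $\sa(G/H)$ metrizable. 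Hence $\rmM(G)$ is metrizable.

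The real content is $\ref{Item:CMD_Thm:PCMD}\Rightarrow \ref{Item:PreSynd_Thm:PCMD}$. Here I would argue as follows. Since $\sf{CMD}\subseteq \sf{TMD}$ and, for Polish groups, $\sf{TMD} = \sf{GPP}$ (Theorem~\ref{Thm:Polish_Group_WUEB}), we have $G\in \sf{GPP}$, so by Theorem~\ref{Thm:Polish_Group_WUEB}\ref{Item:Structure_Thm:PGWUEB} there is a closed presyndetic subgroup $H_0\leq G$ with $\rmM(G)\cong \sa(G/H_0)$. By Fact~\ref{Fact:SGmodH}\ref{Item:Min_Fact:SGmodH} this is consistent with minimality; the point now is to upgrade $H_0$ to an \emph{extremely amenable} subgroup that is still presyndetic, while also being co-precompact. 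Because $\rmM(G) = \sa(G/H_0)$ is metrizable, the coset space $G/H_0$ has a metrizable Samuel compactification, which is exactly the statement that $H_0$ is co-precompact. Now I want a closed, extremely amenable $H\leq H_0$ with $H$ presyndetic in $H_0$ (equivalently, by transitivity, presyndetic in $G$ via Proposition~\ref{Prop:Presynd_Inclusions}) and $H$ co-precompact in $H_0$ (via Proposition~\ref{Prop:CoPre_Inclusions}, co-precompact in $G$). To produce such $H$, I would invoke the fact that $\rmM(H_0)$ is trivial: indeed, $\rmM(G)\cong \sa(G/H_0)$ and the $G$-action restricted appropriately forces $\rmM(H_0)$ to be a point — more precisely, since $G\in \sf{CMD}$ and $H_0\leq^c G$, by Theorem~\ref{Thm:CMD_OpenSub}-style reasoning (or directly: $\sa(G/H_0)$ has a comeager orbit $G/H_0$ whose point stabilizers are conjugates of $H_0$, and the generic point is $G$-isolated, which by the Effros theorem pins down $H_0$ as extremely amenable). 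Thus in fact $H_0$ itself is extremely amenable, presyndetic, and co-precompact, so we may take $H = H_0$.

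The step I expect to be the main obstacle is establishing that the subgroup $H_0$ furnished by Theorem~\ref{Thm:Polish_Group_WUEB}\ref{Item:Structure_Thm:PGWUEB} (which is only guaranteed presyndetic) is actually extremely amenable when $\rmM(G)$ is metrizable. The cleanest route is: $\rmM(G) \cong \sa(G/H_0)$ has comeager orbit $G/H_0$, so by the Effros theorem the map $gH_0\mapsto gH_0$ from the orbit to $G/H_0$ is a homeomorphism, and the generic point $x_0 = H_0$ is a point of first countability of $\rmM(G)$, hence $G$-isolated by Theorem~\ref{thm:ED-first-countable}; then the restriction of the $G$-action to a minimal $H_0$-subflow of $\rmM(G)$ combined with $G$-isolation shows every minimal $H_0$-flow has an $H_0$-isolated point with trivial stabilizer relative to $H_0$, forcing $\rmM(H_0)$ to be a singleton. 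Alternatively, one cites \cite{MNVTT} directly for $\ref{Item:CMD_Thm:PCMD}\Rightarrow\ref{Item:CoPre_Thm:PCMD}$ and then uses Proposition~\ref{Prop:Presynd_Inclusions} together with the observation that an extremely amenable subgroup which is co-precompact \emph{and} whose coset space has minimal Samuel compactification is automatically presyndetic (Fact~\ref{Fact:SGmodH}\ref{Item:Min_Fact:SGmodH}: $\sa(G/H) = \wh{G/H} = \rmM(G)$ minimal $\Rightarrow H$ presyndetic). This last observation is probably the slickest way to get \ref{Item:PreSynd_Thm:PCMD} from \ref{Item:CoPre_Thm:PCMD}, closing the cycle without having to re-derive the deep structural results.
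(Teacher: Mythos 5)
Your overall cycle $(3)\Rightarrow(2)\Rightarrow(1)\Rightarrow(3)$ is the same as the paper's, but two of the three legs have genuine problems.

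First, in $(2)\Rightarrow(1)$ you assert that a co-precompact, extremely amenable $H\leq^c G$ automatically makes $\sa(G/H)$ minimal (equivalently, by Fact~\ref{Fact:SGmodH}\ref{Item:Min_Fact:SGmodH}, that such an $H$ is automatically presyndetic); the same claim underlies your ``slickest way'' of deducing \ref{Item:PreSynd_Thm:PCMD} from \ref{Item:CoPre_Thm:PCMD} at the end. This is false. Take $G=\aut(\bK)$ for $\bK$ the random graph and let $H$ be the automorphism group of the generic expansion of $\bK$ by a linear order \emph{and} a unary predicate: $H$ is extremely amenable (Ne\v{s}et\v{r}il--R\"odl) and co-precompact (each finite graph has only finitely many such expansions), but the expansion class fails the expansion/ordering property (no finite graph forces a ``red'' vertex into every colouring), so $H$ is not presyndetic and $\sa(G/H)$ has a proper subflow. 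The theorem only asserts the \emph{existence} of a subgroup witnessing \ref{Item:PreSynd_Thm:PCMD}; it need not be the one witnessing \ref{Item:CoPre_Thm:PCMD}. The correct way to run $(2)\Rightarrow(1)$ --- and the paper's --- needs no minimality at all: extreme amenability of $H$ gives an $H$-fixed point in $\sa(G)$, hence a $G$-map $\wh{G/H}\to\sa(G)$; any minimal subflow $M\subseteq\wh{G/H}$ maps onto a minimal subflow of $\sa(G)$, so $M$ both factors onto and is a factor of $\rmM(G)$, whence $M\cong\rmM(G)$ is metrizable. Your parenthetical ``$\rmM(G)$ is a factor of $\sa(G/H)$'' is the right germ, but you neither justify it by the actual mechanism (the fixed point in $\sa(G)$, not a ``section argument'') nor may you upgrade it to an isomorphism.

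Second, in $(1)\Rightarrow(3)$ you correctly extract a presyndetic $H_0$ with $\rmM(G)\cong\sa(G/H_0)$ from Theorem~\ref{Thm:Polish_Group_WUEB}\ref{Item:Structure_Thm:PGWUEB} and correctly read off co-precompactness from metrizability of $\sa(G/H_0)$, but you then treat extreme amenability of $H_0$ as ``the main obstacle'' and propose an Effros/$G$-isolation argument for it. That detour is both unnecessary and, as sketched, unconvincing: the existence of an $H_0$-isolated point with trivial relative stabilizer in one flow does not force $\rmM(H_0)$ to be a singleton. Extreme amenability is immediate, because Fact~\ref{Fact:SGmodH}\ref{Item:MG_Fact:SGmodH} is an if-and-only-if: $\sa(G/H_0)\cong\rmM(G)$ already yields that $H_0$ is presyndetic \emph{and} extremely amenable. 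With these two repairs your argument coincides with the paper's proof.
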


\begin{remark}
    In Item~\ref{Item:PreSynd_Thm:PCMD}, note that by Fact~\ref{Fact:SGmodH} Item~\ref{Item:MG_Fact:SGmodH}, we have $\rmM(G)\cong \wh{G/H}$.
\end{remark}

\begin{proof}
    $\ref{Item:PreSynd_Thm:PCMD} \Rightarrow \ref{Item:CoPre_Thm:PCMD}$: Clear.
    \vspace{3 mm}
    
    \noindent
    $\ref{Item:CMD_Thm:PCMD}\Rightarrow \ref{Item:PreSynd_Thm:PCMD}$: As $G\in \sf{CMD}$, also $G\in \sf{TMD}$, so by Theorem~\ref{Thm:Polish_Group_WUEB}\ref{Item:Structure_Thm:PGWUEB}, we have $\rmM(G)\cong \sa(G/H)$ for some presyndetic, extremely amenable $H\leq^c G$. Since $\sa(G/H)$ is assumed metrizable, we have $H\leq^c G$ co-precompact as well.
    \vspace{3 mm}

    \noindent
    $\ref{Item:CoPre_Thm:PCMD}\Rightarrow \ref{Item:CMD_Thm:PCMD}$: Since $H$ is extremely amenable, $\sa(G)$ has an $H$-fixed point. Thus there is a $G$-map $\phi\colon \wh{G/H}\to \sa(G)$. Hence any minimal subflow of the metrizable flow $\wh{G/H}$ must be isomorphic to $\rmM(G)$.
\end{proof}

We end the section with three propositions (\ref{Prop:Polish_LC}, \ref{Prop:Polish_Presynd}, and \ref{Prop:Polish_CoPre}) that we will eventually prove for all topological groups, but the proof of the general result will use the Polish case via a forcing and absoluteness argument. Our first proposition concerns locally compact non-compact Polish groups, where we obtain a new result about the underlying space of $\mg$. 

\begin{prop}
    \label{Prop:Polish_LC}
    Let $G$ be a Polish, locally compact group. If $G \in \sf{GPP}$, then $G$ is compact. In particular, if $G$ is non-compact, then $\mg$ contains no points of first countability. 
\end{prop}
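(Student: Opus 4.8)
The plan is to combine the structure theorem for Polish $\sf{GPP}$ groups with the classical fact that no non-trivial locally compact group is extremely amenable. Suppose $G$ is Polish and locally compact with $G \in \sf{GPP}$. By the corollary obtained from Theorem~\ref{Thm:Polish_Group_WUEB} together with Fact~\ref{Fact:SGmodH}, there is a closed subgroup $H \leq^c G$ which is presyndetic and extremely amenable. Since $H$ is a closed subgroup of a locally compact group, $H$ is itself locally compact, so by the Granirer--Lau result \cite{GranLau}*{Lemma 4} recalled in the introduction, $H$ must be trivial, i.e.\ $H = \{e_G\}$.

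Next I would exploit the presyndeticity of the trivial subgroup: for every $U \in \cN_G$, the set $UH = U$ is syndetic, so there is $F \in \fin{G}$ with $FU = G$. Using local compactness, choose $U \in \cN_G$ with $\ol{U}$ compact; then $G = FU \subseteq F\ol{U} = \bigcup_{g \in F} g\ol{U}$ is a finite union of compact sets, hence $G$ is compact. This yields the first assertion. For the ``in particular'' clause, assume $G$ is Polish, locally compact, and non-compact. By the first part, $G \notin \sf{GPP}$. Since $\mg$ is a minimal $G$-ED flow, Theorem~\ref{Thm:Polish_Group_WUEB} applies with $X = \mg$: if $\mg$ had a point of first countability, then by the equivalence $\ref{Item:FC_Thm:PGWUEB} \Leftrightarrow \ref{Item:Orbit_Thm:PGWUEB}$ it would have a comeager orbit, placing $G$ in $\sf{GPP}$, a contradiction. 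Hence $\mg$ contains no point of first countability.

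The argument presents no real obstacle: all the substantive content is packaged into the cited structure theorem (proved in the excerpt, routing through \cite{ZucMHP}) and into the Granirer--Lau theorem, and the only genuinely new ingredient is the elementary observation that a locally compact precompact group is compact. The one point to be slightly careful about is that the relevant equivalences in Theorem~\ref{Thm:Polish_Group_WUEB} are stated for minimal $G$-ED flows, so I would explicitly note that $\mg$ is such a flow before invoking it.
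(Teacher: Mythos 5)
Your proof is correct and follows essentially the same route as the paper: extract a presyndetic, extremely amenable $H\leq^c G$ from Theorem~\ref{Thm:Polish_Group_WUEB}, kill $H$ via Granirer--Lau, and deduce precompactness from presyndeticity of the trivial subgroup. The only (immaterial) difference is in the last step, where the paper concludes compactness from precompactness plus Raikov completeness while you use a relatively compact neighborhood directly; both are fine.
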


\begin{proof}
    By \Cref{Thm:Polish_Group_WUEB}, $\mg \cong \sa(G/H)$ for some extremely amenable, presyndetic, closed subgroup $H \le^c G$. 
    Since the only extremely amenable locally compact group is the trivial group, this implies that $G$ is covered by finitely many translates of any open neighborhood of the identity, i.e., is precompact. But it is Polish, hence Raikov complete and therefore compact. \qedhere

\end{proof}

Moreover, we can strengthen the dichotomies of \Cref{cor:Polish-dichotomy} and \Cref{cor:dichotomy-polish-pi-weight} when assuming that $G$ is locally compact, and the flow is minimal. 

\begin{corollary}
    \label{cor:dichotomy-polish-lc}
    If $G$ is a Polish, locally compact group, then any minimal $G$-ED flow $X$ is either:
    \begin{itemize}
        \item metrizable, i.e., second-countable, or
        \item has no points of first-countability.
    \end{itemize}
    Moreover, $X$ is metrizable if and only if the action is transitive. 
\end{corollary}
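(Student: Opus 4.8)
The proof naturally splits according to the two bullet points. For the dichotomy itself, the plan is to combine the Polish local-compact machinery with the general topological facts already established. First I would invoke \Cref{cor:dichotomy-polish-pi-weight}: since $G$ is Polish and $X$ is a minimal $G$-ED flow, $X$ either has countable $\pi$-weight or has no points of first-countability. In the second case we are done, so suppose $X$ has countable $\pi$-weight. I would then argue that $X$ must in fact be metrizable. By \Cref{thm:ED-first-countable}, if $X$ had a point of first-countability it would be $G$-discrete at that point, hence (as $G$ is metrizable) $X$ would have a $G$-isolated point, so by \Cref{Thm:Polish_Group_WUEB} we would have $X \cong \sa(G/H)$ for a closed presyndetic extremely amenable $H \le^c G$. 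Applying the argument of \Cref{Prop:Polish_LC} — the only extremely amenable locally compact Polish group is trivial, so $G$ is precompact, hence compact by Raikov completeness — gives that $G$ is compact, whence $G/H$ is compact and $\sa(G/H) = G/H$ is metrizable. The remaining possibility is that $X$ has countable $\pi$-weight but no point of first-countability, which again lands in the second bullet. So either $X$ is metrizable or $X$ has no points of first countability, which is the dichotomy.

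Actually a cleaner route avoids splitting on $\pi$-weight: since $G$ is locally compact Polish, I would first show that $X$ metrizable $\iff$ $X$ has a point of first countability. One direction is trivial. For the converse, a point of first countability yields a $G$-isolated point by \Cref{thm:ED-first-countable} (using that $G$ is metrizable, as in \Cref{cor:Polish-dichotomy}), so $G \in \sf{GPP}$, so by \Cref{Prop:Polish_LC} $G$ is compact; then $\mg$, and hence its factor $X$ (note $X$ minimal $G$-ED means $X \cong \rmS_G(X)$ and $X$ is a factor of $\rmM(G)$ which is itself a factor of $\sa(G) = \sa(G/\{e\})$, all metrizable since $G$ is second-countable compact) is metrizable. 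Thus if $X$ is not metrizable it has no point of first countability, giving the dichotomy.

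For the "moreover" clause, the plan is: if the action is transitive, then $X = G\cdot x$ is a continuous image of $G$ (via $g \mapsto gx$), in particular $X$ is a $\sigma$-compact, hence Lindelöf, hence (being also regular) ... more to the point, as a transitive flow $X \cong G/G_x$ for the stabilizer $G_x$, which is Polish, hence $X$ is metrizable. Conversely, if $X$ is metrizable, then $G \in \sf{CMD}$ (by definition, since $X$ minimal $G$-ED metrizable forces $\rmM(G)$ metrizable — $X$ is a factor of $\rmM(G)$ and $\rmM(G) \cong \rmS_G(X)$ is the universal irreducible extension, which is metrizable when $X$ is, as $\rmS_G(X)$ is built from the separable algebra of functions), so $G \in \sf{GPP}$, so by \Cref{Prop:Polish_LC} $G$ is compact; then a minimal flow of a compact group is a homogeneous space $G/H$, and transitivity follows. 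I should double-check the claim that $\rmM(G)$ metrizable forces $G$ compact when $G$ is locally compact Polish — this is exactly \Cref{Prop:Polish_LC} combined with $\sf{CMD} \subseteq \sf{GPP} = \sf{TMD}\cap \text{Polish}$.

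The main obstacle I anticipate is the careful bookkeeping around the equivalence "$X$ metrizable $\iff$ action transitive" and making sure the reduction to \Cref{Prop:Polish_LC} is airtight — specifically, verifying that when $G$ is compact every minimal $G$-ED flow is a transitive (homogeneous) flow, and that the factor map $\rmM(G) \to X$ together with $X \cong \rmS_G(X)$ genuinely transfers metrizability. I would write it as follows.

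\begin{proof}
    We may assume $X$ is a minimal $G$-ED flow. By \Cref{thm:ED-first-countable} and the fact that every $G$-isolated point in a $G$-flow is a point of first countability when $G$ is metrizable (see the proof of \Cref{cor:Polish-dichotomy}), $X$ has a point of first countability if and only if $X$ has a $G$-isolated point. If $X$ has a $G$-isolated point, then $G\in \sf{GPP}$ by \Cref{Thm:Polish_Group_WUEB}, so by \Cref{Prop:Polish_LC} $G$ is compact. Since $G$ is then second countable and compact, $\sa(G)$ is metrizable, hence so is its factor $\rmM(G)$, and hence so is $X$, which is a factor of $\rmM(G)$. Conversely, if $X$ is metrizable then it is first countable. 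This proves the dichotomy.

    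For the last statement, first suppose the action is transitive. Then $X\cong G/G_x$ for $x\in X$, where $G_x\le^c G$ is the stabilizer; as $G$ is Polish, $G/G_x$ is second countable, so $X$ is metrizable. Conversely, suppose $X$ is metrizable. As $X$ is a minimal $G$-ED flow and $\rmM(G)\cong \rmS_G(X)$ is the universal irreducible extension of $X$ (\Cref{Thm:Universal_Irreducible_Extension}), and since the Gleason completion of a metrizable flow is metrizable, $\rmM(G)$ is metrizable, i.e.\ $G\in \sf{CMD}\subseteq \sf{GPP}$. By \Cref{Prop:Polish_LC}, $G$ is compact. A minimal flow of a compact group is equicontinuous, hence a homogeneous space of $G$, so the action on $X$ is transitive.
\end{proof}
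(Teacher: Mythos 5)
There is a genuine gap, and it occurs at the central step of both of your routes. From "$X$ has a point of first countability, hence a $G$-isolated point" you conclude "$G\in\sf{GPP}$, so by \Cref{Prop:Polish_LC} $G$ is compact." But \Cref{Thm:Polish_Group_WUEB} applies to an \emph{arbitrary} minimal $G$-ED flow $X$, and its item \ref{Item:Structure_Thm:PGWUEB} only produces $X\cong\sa(G/H)$ with $H$ closed and \emph{presyndetic} — not extremely amenable. Extreme amenability of $H$ (which is what \Cref{Prop:Polish_LC} actually uses to force precompactness of $G$) is available only when $X$ is the universal minimal flow. Concretely, take $G=\bbZ$ and $X$ the one-point flow: $X$ is a minimal $G$-ED flow with a $G$-isolated point, yet $\bbZ\notin\sf{GPP}$ and $\bbZ$ is not compact, so your chain of implications proves a false statement. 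The same error undermines the converse of the "moreover" clause, where you additionally assert $\rmM(G)\cong\rmS_G(X)$; since $X$ is already $G$-ED we have $\rmS_G(X)\cong X$, and $\rmM(G)$ is merely an extension of $X$, generally far from isomorphic to it (again $G=\bbZ$, $X$ trivial: $X$ is metrizable but $\rmM(\bbZ)$ is not).

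The paper's proof avoids compactness of $G$ entirely. Given a point of first countability, \Cref{Thm:Polish_Group_WUEB} yields a comeager orbit $Z$. Local compactness of $G$ makes $Z$ a $K_\sigma$ set, so $X\setminus Z$ is $G_\delta$; if nonempty it is a union of orbits, each dense by minimality, hence comeager — contradicting that $Z$ is comeager. Thus the action is transitive, and the Effros theorem gives $X\cong G/H$, which is Polish and in particular metrizable. This simultaneously proves the dichotomy and both directions of the "moreover" (transitive $\Rightarrow X\cong G/H$ metrizable; metrizable $\Rightarrow$ first countable $\Rightarrow$ transitive by the above). Your forward direction of the "moreover" via Effros is fine, but the rest needs to be rebuilt along these lines rather than through compactness of $G$.
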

\begin{proof}
    Suppose that $x \in X$ is a point of first-countability. 
    By \Cref{Thm:Polish_Group_WUEB}, $X$ has a comeager orbit $Z$. 
    Since $G$ is locally compact, $Z$ is $K_\sigma$, so its complement is $G_\delta$.
    If the complement were nonempty, it would consist of a union of orbits, so it would also be dense, hence comeager, a contradiction.
    Therefore, there is a unique orbit. By the Effros theorem, $X \cong G/H$, for some $H \le^c G$, so $X$ is metrizable.  
\end{proof}

The second proposition shows that presyndetic subgroups have tractable minimal dynamics iff the ambient group does.

\begin{prop}
    \label{Prop:Polish_Presynd}
    Let $G$ be Polish and $H\leq^c G$ be presyndetic. Then $G\in \sf{GPP}$ iff $H\in \sf{GPP}$. 
\end{prop}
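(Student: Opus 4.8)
The plan is to prove both implications using the structure theory of presyndetic subgroups developed earlier, together with Theorem~\ref{Thm:Polish_Group_WUEB} and Fact~\ref{Fact:SGmodH}. Recall that for Polish groups, $\sf{GPP} = \sf{TMD}$ (Theorem~\ref{Thm:Polish_Group_WUEB}), and that $G \in \sf{GPP}$ iff $G$ contains a closed, presyndetic, extremely amenable subgroup $K$, in which case $\rmM(G) \cong \sa(G/K)$ (this is the last corollary before Proposition~\ref{Prop:Polish_LC}, combining Theorem~\ref{Thm:Polish_Group_WUEB} and Fact~\ref{Fact:SGmodH}). Note first that $H$ is itself Polish, being a closed subgroup of a Polish group.

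For the forward direction, suppose $G \in \sf{GPP}$. Then there is a closed, presyndetic, extremely amenable $K \leq^c G$. The natural move is to intersect $K$ with $H$, but $K \cap H$ need not be presyndetic in $H$ in general; instead I would argue as follows. Since $K$ is extremely amenable, $\sa(G)$ has a $K$-fixed point, hence there is a $G$-map $\rho\colon \sa(G/K) \to \rmM(G)$ which is in fact an isomorphism; restricting the $G$-action to $H$, $\rmM(G) \cong \sa(G/K)$ is an $H$-flow. The key point is that $\sa(G/K)$, as an $H$-flow, is $H$-ED (since $H$-fattenings are coarser than $G$-fattenings — use Lemma~\ref{Lem:Dyn_Bounded}\ref{Item:Homo_Lem:DB}-style reasoning, or directly Proposition~\ref{Prop:Level_Maps} and the fact that $\rmM(G)$ is $G$-ED hence $H$-ED) and satisfies the Rosendal criterion as an $H$-space by Lemma~\ref{Lem:Rosendal_Maps}\ref{Item:Homo_Lem:RMaps} applied to the inclusion $H \hookrightarrow G$ (dense image is not needed here since we apply the criterion directly: $\rmM(G) \in \cal{RC}_G$ implies $\rmM(G) \in \cal{RC}_H$ when $H \leq G$, because $R_U^H \subseteq R_V^G$ for suitable $V$). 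Thus $\rmM(G) \in \cal{RC}_H$. Now any minimal $H$-subflow $M \subseteq \rmM(G)$ is $H$-ED (it is a retract), and by Lemma~\ref{Lem:Rosendal_Maps}\ref{Item:IrredUP_Lem:RMaps} or a direct argument $M \in \cal{RC}_H$; but then by the equivalence $\ref{Item:RC_Thm:PGWUEB}\Leftrightarrow\ref{Item:Orbit_Thm:PGWUEB}$ in Theorem~\ref{Thm:Polish_Group_WUEB} (applied to the Polish group $H$), $H$ has a minimal flow with a comeager orbit, so $H \in \sf{GPP}$. One subtlety: to apply that theorem we need $M \in \cal{RC}_H$ for $M$ a minimal $H$-ED flow, and $M \subseteq \rmM(G)$ is a subflow, not a factor, of the $H$-space $\rmM(G) \in \cal{RC}_H$; I would verify that $\cal{RC}_H$ passes to minimal subflows via the retraction $\rho_u$ onto $M$ given by an idempotent, which is an irreducible (indeed highly proximal) $H$-map onto a minimal flow when restricted appropriately — more carefully, one uses that $M$ is a retract so $\cal{RC}$ transfers.

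For the reverse direction, suppose $H \in \sf{GPP}$. Then $H$ contains a closed, presyndetic, extremely amenable subgroup $K \leq^c H$. By Proposition~\ref{Prop:Presynd_Inclusions}, since $K \leq^c H$ is presyndetic and $H \leq^c G$ is presyndetic, we get that $K \leq^c G$ is presyndetic. Moreover $K$ is extremely amenable. Hence by Fact~\ref{Fact:SGmodH}\ref{Item:MG_Fact:SGmodH}, $\sa(G/K) \cong \rmM(G)$, and by Theorem~\ref{Thm:Polish_Group_WUEB}\ref{Item:Structure_Thm:PGWUEB}$\Rightarrow$\ref{Item:Orbit_Thm:PGWUEB} (or directly the corollary relating $\sf{GPP}$ to presyndetic extremely amenable subgroups), $G \in \sf{GPP}$. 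This direction is essentially immediate from the transitivity of presyndeticity.

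The main obstacle is the forward direction, specifically making rigorous the transfer of the Rosendal criterion from $\rmM(G)$ viewed as an $H$-space down to a genuine minimal $H$-flow to which Theorem~\ref{Thm:Polish_Group_WUEB} applies. The cleanest route is probably: $\rmM(G) \in \cal{RC}_H$ and $\rmM(G)$ is $H$-ED; pick a minimal $H$-subflow $M$ with idempotent retraction $r\colon \rmM(G) \to M$; since $r$ is an $H$-map and $M$ is minimal, $r$ is pseudo-open, and as $M$ is $H$-ED, $r$ is open (Proposition~\ref{Prop:MHP_Open}), hence SWD-preserving with dense image, so Lemma~\ref{Lem:Rosendal_Maps}\ref{Item:PsOpDown_Lem:RMaps} gives $M \in \cal{RC}_H$. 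Then Theorem~\ref{Thm:Polish_Group_WUEB} finishes it. I should double-check that $\rmM(G) \in \cal{RC}_H$ really follows from $G \in \sf{GPP}$: since $\rmM(G) \in \cal{RC}_G$ by Theorem~\ref{Thm:Polish_Group_WUEB}, and for $U \in \cN_H$ there is $V \in \cN_G$ with $V \cap H \subseteq U$, a $V$-TT subset of $\rmM(G)$ is $(V\cap H)$-TT hence $U$-TT when we restrict the acting group — wait, this needs care: $U$-TT for the $H$-action requires $A \subseteq \overline{U \cdot B}$ with $U \subseteq H$, whereas $V$-TT for $G$ gives $A \subseteq \overline{V \cdot B}$; since $U \subseteq H \cap V' $ is not automatic, I instead take $V \in \cN_G$ with $V \cap H \subseteq U$ and note that a subset which is $(V\cap H)$-TT is $U$-TT; and every $V$-TT set, hmm — actually $V$-TT does not imply $(V \cap H)$-TT. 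The correct statement is Lemma~\ref{Lem:Rosendal_Maps}\ref{Item:Homo_Lem:RMaps}: it is stated for homomorphisms $\alpha\colon K \to G$ with dense image, but its proof only uses $\alpha^{-1}(U) B \subseteq UB$ — applied with $K = H$, $\alpha$ the inclusion (which need not have dense image, but density is not used in part \ref{Item:Homo_Lem:RMaps}'s proof), giving $\rmM(G) \in \cal{RC}_H$ from $\rmM(G) \in \cal{RC}_G$. I would flag that we are using part \ref{Item:Homo_Lem:RMaps} of that lemma in the generality where density is not invoked, which is visibly fine from the one-line proof.
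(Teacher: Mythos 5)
Your reverse direction ($H\in\sf{GPP}\Rightarrow G\in\sf{GPP}$) is correct and is exactly the paper's argument: produce $K\leq^c H$ presyndetic and extremely amenable, promote it to a presyndetic subgroup of $G$ via Proposition~\ref{Prop:Presynd_Inclusions}, and conclude.

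The forward direction has a genuine gap. Everything rests on the claim that $\rmM(G)\in\cal{RC}_G$ implies $\rmM(G)\in\cal{RC}_H$ (and, similarly, that $G$-ED implies $H$-ED), and both implications go the wrong way. For $U\in\cN_H$, say $U=V\cap H$ with $V\in\cN_G$, the $H$-fattening $\ol{UB}$ is \emph{contained in} the $G$-fattening $\ol{VB}$, so being $U$-TT for the $H$-action is a \emph{stronger} condition than being $V$-TT for the $G$-action. Lemma~\ref{Lem:Rosendal_Maps}\ref{Item:Homo_Lem:RMaps} records precisely the opposite transfer, namely $X\in\cal{RC}_K\Rightarrow X\in\cal{RC}_G$ for $\alpha\colon K\to G$ (its one-line proof uses $\ol{\alpha^{-1}(U)B}\subseteq\ol{UB}$), so invoking it to pass from $\cal{RC}_G$ down to $\cal{RC}_H$ is a misapplication; you half-notice the problem and then cite the lemma in the direction it does not go. The same issue kills ``$G$-ED hence $H$-ED'': taking $H$ trivial, $H$-ED means topologically extremally disconnected, which $\rmM(G)$ certainly need not be. A telltale sign that this route cannot be repaired is that your forward argument never uses presyndeticity of $H$; if it worked, $\sf{GPP}$ would pass to arbitrary closed subgroups, which is false since every Polish group sits as a closed subgroup of the extremely amenable group $\rm{Iso}(\bbU)$. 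There is also a secondary gap at the end: even granting a minimal $H$-ED subflow $M\subseteq\rmM(G)$ with $M\in\cal{RC}_H$, Theorem~\ref{Thm:Polish_Group_WUEB} only yields that \emph{this} minimal $H$-flow has a comeager orbit (equivalently $M\cong\sa(H/K)$ for some presyndetic $K\leq^c H$); to conclude $H\in\sf{GPP}$ you would still need $M\cong\rmM(H)$, which you do not establish.

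The paper's forward direction instead goes through the structure theorem and uses presyndeticity of $H$ essentially: since $G\in\sf{GPP}$ there is a presyndetic, extremely amenable $K\leq^c G$ with $\rmM(G)\cong\sa(G/K)$; since $H$ is presyndetic, $\sa(G/H)$ is a minimal $G$-flow, so there is a $G$-map $\phi\colon\sa(G/K)\to\sa(G/H)$, and by \cite{AKL}*{Proposition~14.1} the generic point $K$ lands in the comeager orbit, $\phi(K)=gH$. Equivariance gives $g^{-1}Kg\leq^c H$, and Proposition~\ref{Prop:Presynd_Inclusions} shows this conjugate is presyndetic in $H$; being extremely amenable, it witnesses $H\in\sf{GPP}$.
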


\begin{proof}
    If $H\in \sf{GPP}$, then there is $K\leq^c H$ a presyndetic, extremely amenable subgroup. By Proposition~\ref{Prop:Presynd_Inclusions}, $K\leq^c G$ is presyndetic, so $G\in \sf{GPP}$. 

    If $G\in \sf{GPP}$, then there is $K\leq^c G$ a presyndetic, extremely amenable subgroup. Then $\rmM(G)\cong \sa(G/K)$, so let $\phi\colon \sa(G/K)\to \sa(G/H)$ be a $G$-map. By \cite{AKL}*{Proposition~14.1}, $\phi(K)\in G/H$, say $\phi(K) = gH$. As $\phi$ is a $G$-map, for any $k\in K$, we have $kgH = gH$, implying $g^{-1}Kg\leq^c H$. As $g^{-1}Kg\cong K$ is extremely amenable, we have $H\in \sf{GPP}$.
\end{proof}

The last result shows that if a co-precompact subgroup has concrete minimal dynamics, then so does the ambient group.

\begin{prop}
    \label{Prop:Polish_CoPre}
    Let $G$ be Polish and $H\leq^c G$ be co-precompact. If $H\in \sf{CMD}$, then $G\in \sf{CMD}$. If $G$ is $\sf{CMD}$ and $H$ is also presyndetic, then $H\in \sf{CMD}$.
\end{prop}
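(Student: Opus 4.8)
\medskip

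The plan is to prove both directions using the structure theorems already available for Polish groups, namely Theorem~\ref{Thm:Polish_CMD} (Polish $\sf{CMD}$ groups are exactly those admitting a closed, co-precompact, extremely amenable subgroup) together with the transitivity properties of co-precompactness established in Propositions~\ref{Prop:CoPre_Inclusions} and \ref{Prop:Presynd_Inclusions}.

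\medskip

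For the first statement, suppose $H\in \sf{CMD}$. By Theorem~\ref{Thm:Polish_CMD}, applied to the Polish group $H$, there is a closed subgroup $K\leq^c H$ which is co-precompact in $H$ and extremely amenable. Since $H\leq^c G$ is co-precompact by hypothesis and $K\leq^c H$ is co-precompact, Proposition~\ref{Prop:CoPre_Inclusions} gives that $K\leq^c G$ is co-precompact. Thus $K$ is a closed, co-precompact, extremely amenable subgroup of $G$, and another application of Theorem~\ref{Thm:Polish_CMD} (now to $G$) yields $G\in \sf{CMD}$. One caveat: to invoke Theorem~\ref{Thm:Polish_CMD} for $H$ we need $H$ to be Polish; this is automatic since $H$ is a closed subgroup of a Polish group, hence Polish.

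\medskip

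For the second statement, suppose $G\in \sf{CMD}$ and $H\leq^c G$ is co-precompact and presyndetic. Since $G\in \sf{CMD}$, we have $G\in \sf{TMD}$, so by Theorem~\ref{Thm:Polish_Group_WUEB}\ref{Item:Structure_Thm:PGWUEB} there is a closed, presyndetic, extremely amenable $L\leq^c G$ with $\rmM(G)\cong \sa(G/L)$; moreover, since $G\in\sf{CMD}$, $\rmM(G)$ is metrizable, so $L\leq^c G$ is also co-precompact. Now the hypothesis that $H\leq^c G$ is presyndetic lets us run the argument of Proposition~\ref{Prop:Polish_Presynd}: let $\phi\colon \sa(G/L)\to \sa(G/H)$ be a $G$-map; by \cite{AKL}*{Proposition~14.1} we have $\phi(L)\in G/H$, say $\phi(L)=gH$, and $G$-equivariance forces $g^{-1}Lg\leq^c H$, with $g^{-1}Lg\cong L$ extremely amenable. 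It remains to check that $g^{-1}Lg$ is co-precompact \emph{in} $H$: since $L\leq^c G$ is co-precompact, so is its conjugate $g^{-1}Lg\leq^c G$ (co-precompactness is conjugation-invariant, as it only depends on the translation action on $G/(g^{-1}Lg)$, which is isomorphic to that on $G/L$), and then Proposition~\ref{Prop:CoPre_Inclusions} applied to the chain $g^{-1}Lg\leq^c H\leq^c G$ — using that both $g^{-1}Lg\leq^c G$ and $H\leq^c G$ are co-precompact — yields that $g^{-1}Lg\leq^c H$ is co-precompact. Thus $H$ contains a closed, co-precompact, extremely amenable subgroup, so $H\in\sf{CMD}$ by Theorem~\ref{Thm:Polish_CMD}.

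\medskip

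The main obstacle I anticipate is the bookkeeping in the second statement around which ``co-precompact'' is meant — co-precompact in $G$ versus co-precompact in $H$ — and verifying that Proposition~\ref{Prop:CoPre_Inclusions} applies in the direction needed (it is stated as an ``iff,'' so the needed implication ``$g^{-1}Lg\leq^c G$ and $H\leq^c G$ both co-precompact $\Rightarrow$ $g^{-1}Lg\leq^c H$ co-precompact'' is exactly the nontrivial half already proved there). The only other point requiring a sentence of care is the conjugation-invariance of co-precompactness, which is immediate from the definition via the covering condition $UFH=G$ being transported by the inner automorphism. Everything else reduces to citing Theorem~\ref{Thm:Polish_CMD} and Theorem~\ref{Thm:Polish_Group_WUEB}.
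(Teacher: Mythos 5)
Your proposal is correct and follows essentially the same route as the paper: the first statement is exactly the paper's argument (Theorem~\ref{Thm:Polish_CMD} applied to $H$, then Proposition~\ref{Prop:CoPre_Inclusions} to promote co-precompactness to $G$), and for the second statement the paper simply defers to the argument of Proposition~\ref{Prop:Polish_Presynd}, which is precisely what you carry out, with the additional (correct and necessary) bookkeeping that the conjugate $g^{-1}Lg$ is co-precompact in $H$ via conjugation-invariance and Proposition~\ref{Prop:CoPre_Inclusions}.
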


\begin{remark}
    For $G = S_\infty$, a subgroup $H\leq^c S_\infty$ is co-precompact exactly when it is \emph{oligomorphic}, i.e.\ for every $n< \omega$, the natural $G$-action on $\omega^n$ has finitely many orbits. For an example of an oligomorphic group $G$ with $G\not\in \sf{TMD}$, see \cite{evans2019automorphism}. 
\end{remark}

\begin{proof}
    If $H\in \sf{CMD}$, we have by Theorem~\ref{Thm:Polish_CMD} that there is $K\leq^c H$ a co-precompact, extremely amenable subgroup. By Proposition~\ref{Prop:CoPre_Inclusions}, we have $K\leq^c G$ co-precompact.

    If $G\in \sf{CMD}$ and $H\leq^c G$ is both co-precompact and presyndetic, the argument that $H\in \sf{CMD}$ is very similar to that of the second part of Proposition~\ref{Prop:Polish_Presynd}.
\end{proof}

\section{The automorphism group of $\rmM(G)$}
\label{Section:AutMG}

This section considers the \emph{tau topology} on the automorphism group of $\rmM(G)$, which we denote $\bbG$, and more generally on the automorphism group of any \emph{minimal ideal flow} (introduced by Auslander as ``regular'' flows \cite{Aus_Reg}). Defined by Furstenberg \cite{Furstenberg_Distal} and further explored in \cites{Ellis_book, EGS_1975, Glasner_Prox, Auslander, KP_2017, Rze_2018}, this is a compact T1 topology on $\bbG$ with separately continuous multiplication and continuous inversion. We show that the tau-topology on $\bbG$ can be characterized using ultracopowers of $\rmM(G)$ (Theorem~\ref{Thm:Build_M_Ultralimit}). This characterization combined with Theorem~\ref{Thm:Rosendal_Minimal}\ref{Item:Ultra_Thm:RM} yields that for $G\in \sf{TMD}$, the tau-topology on $\bbG$ is Hausdorff. We then give a structural characterization of when the \emph{Ellis group} of a minimal flow has Hausdorff tau-topology (Theorem~\ref{Thm:Tau_Hausdorff}), thus yielding a structure theorem for $\rmM(G)$ when $G\in \sf{TMD}$. For $\sf{GPP}$ groups, we provide a stronger structure theorem (Theorem~\ref{Thm:MG_Structure_GPP}), and in Section~\ref{Section:Abs}, we will use set-theoretic methods to show that the strong version holds for all $\sf{TMD}$ groups.

\subsection{Minimal ideal flows and the tau topology}
\label{Subsection:MinIdealFlows}

We review some dynamical background from \cites{Aus_Reg, Auslander, Glasner_Prox, KP_2017, Rze_2018}, though some aspects of our presentation are new, in particular Proposition~\ref{Prop:Tau_Base}. Even though many of those references assume that the acting group $G$ is discrete, all of the results that we will refer to are valid even for $G$ a topological group; simply replace all instances of $\beta G$ with $\sa(G)$.

Fix a group $G$; all flows mentioned in this section are $G$-flows unless explicitly mentioned otherwise. Historically, a minimal flow $X$ is called ``regular'' if for any $x, y\in X$, there is $p\in \aut(X)$ with the pair $(p(x), y)$ proximal \cite{Aus_Reg}. Since the word ``regular'' is so over-loaded, we rename this property of $X$ and call $X$ a \emph{minimal ideal flow}. The reason for this name is that we can start with any flow $X$ and form its enveloping semigroup 
$$\rmE(X) := \ol{\{(gx)_{x\in X}: g\in G\}}\subseteq X^X.$$
Then $\rmE(X)$ is a CRTS under function composition, and there is a canonical semigroup homomorphism $G\to \rmE(X)$ with dense image. Given $p\in \rmE(X)$ and $x\in X$, we typically write $px$ or $p\cdot x$ for $p(x)$, and given a net $(g_i)_{i\in I}$ from $G$, we write $g_i\to p$ if for every $x\in X$, we have $g_ix\to px$. By standard facts about compact right-topological semigroups \cite{HS_Alg_SC}, any minimal subflow $M\subseteq \rmE(X)$ (equivalently, minimal left ideal) is a minimal ideal flow, and all minimal subflows of $\rmE(X)$ are isomorphic. Starting with a minimal flow $X$, we define its \emph{enveloping ideal} $M_X$ to be any minimal flow isomorphic to a minimal subflow of $\rmE(X)$.

\begin{defin}
    Given a minimal flow $X$, the \emph{Ellis group} of $X$ is the group $\bbG_{X}:= \aut(M_X)$. When $X$ is a minima ideal flow, we freely identify $\bbG_X$ with $\aut(X)$. When $X = \rmM(G)$, we just write $\bbG$. Given $X$ a minimal flow, $M$ a minimal ideal flow, and $\pi\colon M\to X$ a $G$-map, we write $\bbG_{\pi} = \{p\in \bbG_M: \pi\circ p = \pi\}$. 
\end{defin}

\begin{fact}
\label{Fact:MinIdealFlow}
    Fix $X$ a minimal flow and $M$ a minimal ideal flow.
    \begin{itemize}
        \item 
        One can find a set $I$ and a minimal subflow $Z\subseteq X^I$ with $Z\cong M_X$. Call $S\subseteq X$ \emph{almost periodic} (AP) if whenever $I$ is a set and $f\in X^I$ satisfies $f[I] = S$, then $\ol{G\cdot f}$ is minimal. A set $S\subseteq X$ is AP iff every finite subset of $S$ is, so by Zorn's lemma, there are maximal AP sets. If $S\subseteq X$ is maximal AP, $I$ is a set, and $f\colon I\to X$ satisfies $f[I] = S$, we have $M_X\cong \ol{G\cdot f}$. If $M\subseteq \rmE(X)$ is a minimal subflow, then there is an idempotent $u\in M$ with $uX = S$. 
        \item 
        If $Y$ is another minimal flow and $\phi\colon X\to Y$ is a $G$-map, then there is a $G$-map $\wt{\phi}\colon M_X\to M_Y$ (which doesn't really depend on $\phi$).  
        \item 
        $X$ is a minimal ideal flow iff $X\cong M_X$. In this case, the maximal AP subsets of $X$ are exactly $\bbG_X$-orbits. 
        \item 
        Consider a \emph{proximal set} $S\subseteq M_X$, i.e.\ for any finite $S_0\subseteq S$ and $A\in \op(M_X)$, there is $g\in G$ with $gS_0\subseteq A$, and fix $y\in S$. Let $u\in \rmE(X)$ be a minimal idempotent with $ux = y$ for every $x\in S$. Write $N = \ol{G\cdot u}$, so that $\rho_x\colon N\to M_X$ given by $\rho_x(p)= px$ is an isomorphism. We claim that if $p\in N$ and $\rho_x(p)\in S$, then $p$ is idempotent. We have $$\rho_x(pp) = \rho_x(pup) = pu\rho_x(p) = py = p\rho_x(u) = \rho_x(pu) = \rho_x(p)$$
        and $\rho_x$ is injective on $N$. 
        Conversely, any set $J\subseteq N$ consisting of idempotents is a proximal set. To summarize this discussion, the proximal subsets of $M_X$ are exactly the ``potentially idempotent'' subsets.

        Going forward, we often identify $M$ with some arbitrary choice of minimal subflow of $\rmE(M)$, and by the above, we have flexibility in which elements become idempotents under this identification.
        \item 
        If $\phi\colon M\to X$ and $\psi\colon M\to X$ are $G$-maps, then there is $p\in \bbG_M$ with $\psi\circ p = \phi$. Hence we can identify the set of $G$-maps from $M$ to $X$ with the right coset space $\bbG_{\phi}\backslash \bbG_M$.
        \item 
        If $\pi\colon M\to X$ is a $G$-map, then $X$ is a minimal ideal flow iff for all $r, s\in M$ with $\pi(r) = \pi(s)$ and $p\in \bbG_M$, we have $\pi\circ p(r) = \pi\circ p(s)$. This implies that $\bbG_\pi\trianglelefteq \bbG_M$ and  $\bbG_X \cong \bbG_M/\bbG_\pi$, and we write $\wt{\pi}\colon \bbG_M\to \bbG_X$ for the quotient.

        In particular, if $\pi_0, \pi_1\colon M\to X$ are both $G$-maps, then there is $q\in \bbG_X$ with $q\circ \pi_0 = \pi_1$, so $\pi_0$ and $\pi_1$ induce the same equivalence relation on $M$. Hence when discussing $G$-maps between minimal ideal flows, we sometimes refer to ``the canonical'' $G$-map to emphasize that the choice of $G$-map doesn't really matter.
        
        \item 
        If $M_X\subseteq \rmE(X)$ and $x_0\in X$, we obtain a $G$-map $\rho_{x_0}\colon M_X\to X$ given by $\rho_{x_0}(p) = px_0$. Fixing an arbitrary idempotent $u\in M_X$, then $\pi\colon M_X\to X$ is a $G$-map iff $\pi = \rho_{x_0}$ for some $x_0\in X$ iff $\pi = \rho_{x_0}$ for some $x_0$ with $ux_0 = x_0$. Hence for any $r\neq s\in M_X$, there is $p\in \bbG_M$ with $\pi\circ p(r)\neq \pi\circ p(s)$. Thus $\bbG_\pi\subseteq \bbG_M$ is malnormal, i.e.\ $\bigcap_{p\in \bbG_M} p^{-1}\bbG_\pi p = \{e_{\bbG_M}\}$.

        \item 
        View $M\subseteq \rmE(M)$, and let $u\in M$ be idempotent. Then by the previous bullet, every $G$-map $\pi\colon M\to M$ has the form $\rho_{p}$ for some $p\in uM$. As $uM$ is a group, we obtain that $M$ is \emph{coalescent}, meaning that every $G$-map $\pi\colon M\to M$ is an automorphism. We have $\bbG_M^{opp}\cong uM$, where ``opp'' stands for opposite group. 
        
        \item 
        If $\phi\colon M\to X$ and $\psi\colon M_X\to X$ are $G$-maps, then there is a $G$-map $\pi\colon M\to M_X$ with $\psi\circ \pi = \phi$.
    \end{itemize}
\end{fact}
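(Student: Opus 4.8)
The plan is to bootstrap from two items already on the list: the functoriality of the assignment $X\mapsto M_X$, and the fact that any two $G$-maps from a minimal ideal flow into a fixed minimal flow differ by an automorphism of the source.

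First I would use that $M$, being a minimal ideal flow, satisfies $M\cong M_M$, and identify the two throughout. Feeding $\phi\colon M\to X$ into the functoriality bullet (with the ``$X$'' there instantiated to $M$ and the ``$Y$'' there to $X$) yields a $G$-map $\pi_0\colon M_M\to M_X$, i.e.\ a $G$-map $\pi_0\colon M\to M_X$. I do not claim $\psi\circ\pi_0=\phi$; the only role of $\pi_0$ is to witness that the set of $G$-maps $M\to M_X$ is nonempty.

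Next, $\psi\circ\pi_0$ and $\phi$ are both $G$-maps $M\to X$, so the bullet identifying $G$-maps out of a minimal ideal flow with a right coset space of the Ellis group supplies $p\in\bbG_M$ with $(\psi\circ\pi_0)\circ p=\phi$. Setting $\pi:=\pi_0\circ p\colon M\to M_X$, which is again a $G$-map (in fact a factor map, by minimality of $M_X$), we get $\psi\circ\pi=\psi\circ\pi_0\circ p=\phi$, as desired.

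Each step invokes a fact already recorded in the list, so I do not anticipate a real obstacle. The one thing to be careful about is applying the functoriality bullet with the roles of domain and codomain in the correct order, so that the induced map genuinely lands in $M_X$, together with the (harmless) identification $M\cong M_M$; alternatively one could make $\pi_0$ concrete by passing to enveloping semigroups and using idempotents, but that seems strictly more work than the argument above.
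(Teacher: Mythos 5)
Your argument for the last bullet is correct: it derives the statement purely from the earlier bullets (functoriality $M\cong M_M \to M_X$, then adjusting by some $p\in\bbG_M$ using the coset description of $G$-maps $M\to X$), and each invocation is legitimate. The paper states this item without proof, citing the standard references, but your derivation is exactly the natural one the ordering of the bullets suggests, so there is nothing further to reconcile.
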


Fix a $G$-flow $X$, $M\subseteq \rmE(X)$ a minimal subflow, and $u\in M$ an idempotent. On $uM\cong \bbG_M^{opp}$, one can define the \emph{tau-topology}, a compact T1 topology with separately continuous multiplication and continuous inversion. An equivalent topology was defined by Furstenberg in \cite{Furstenberg_Distal}, though for our presentation we refer to  \cites{Glasner_Prox, Auslander, KP_2017}. 
\begin{defin}
    If $A\subseteq M$ and $p\in \rmE(X)$, we define $p\bullet A\in \rmK(M)$ where given $x\in M$, we declare $x\in p\bullet A$ iff there are nets $(g_i)_{i\in I}$ from $G$ and $(a_i)_{i\in I}$ from $A$ with $g_i\to p$ and $g_ia_i\to x$. Given $A\subseteq uM$, one sets $\rm{cl}_\tau(A) = (u\bullet A)\cap uM$.
\end{defin}
\begin{fact} \label{Fact:Circle} Fix $A, B\subseteq M$ and $p, p_i, q\in \rmE(X)$.
    \begin{enumerate}[label=\normalfont(\arabic*)]
        \item 
        $p\bullet (A\cup B) = p\bullet A\cup p\bullet B$.
        \item 
        $pA\subseteq p\bullet A$.
        \item 
        If $p_i\to p$ and $p_i\bullet A\to K\in \rmK(M)$ in the Vietoris topology, then $K\subseteq p\bullet A$.
        \item 
        $p\bullet(q\bullet A)\subseteq pq\bullet A$.
        \item \label{Item:Tau_Fact:Circle}
        if $A\subseteq uM$, we have $\rm{cl}_\tau(A) = u(u\bullet A)\supseteq A$, and $\rm{cl}_\tau$ is a topological closure operator on $uM$.
    \end{enumerate}
\end{fact}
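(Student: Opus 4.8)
The plan is to verify the five items of Fact~\ref{Fact:Circle} directly from the definition of the operation $p\bullet A$ in terms of nets, using the standard manipulations of nets and subnets in compact spaces together with the basic facts about the enveloping semigroup (joint continuity on the right, density of $G$ in $\rmE(X)$). Throughout, recall that $g_i\to p$ in $\rmE(X)$ means $g_ix\to px$ for every $x\in X$, that $\rmE(X)$ is a CRTS, and that $M\subseteq \rmE(X)$ is a minimal left ideal with fixed idempotent $u\in M$.

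First I would do item (1): the inclusion $p\bullet A\cup p\bullet B\subseteq p\bullet(A\cup B)$ is immediate from the definition, and for the reverse, if $x\in p\bullet(A\cup B)$ witnessed by nets $(g_i)$, $(a_i)$ with $a_i\in A\cup B$, then one of $A$, $B$ contains $a_i$ for $i$ in a cofinal subset, so passing to the corresponding subnet (and keeping $g_i\to p$) shows $x\in p\bullet A$ or $x\in p\bullet B$. For item (2), given $a\in A$, take the net $(g_i)$ from $G$ with $g_i\to p$ (which exists by density of $G$ in $\rmE(X)$) and the constant net $a_i = a$; then $g_ia_i = g_ia\to pa$, so $pa\in p\bullet A$. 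For item (3), suppose $p_i\to p$ and $p_i\bullet A\to K$ in $\rmK(M)$; fix $x\in K$. For each $i$, since $x\in \ol{p_i\bullet A}$ in the Vietoris sense we can find $y_i\in p_i\bullet A$ with $y_i\to x$; by definition of $y_i\in p_i\bullet A$, choose via a standard diagonal/net-refinement argument $g_i\in G$ and $a_i\in A$ with $g_i$ close to $p_i$ (hence $g_i\to p$, since $p_i\to p$) and $g_ia_i$ close to $y_i$ (hence $g_ia_i\to x$); this gives $x\in p\bullet A$. The one subtlety here is organizing the net of nets into a single net, which is the familiar iterated-limit construction; I would phrase it using the product directed set $I\times \prod_i J_i$ or simply appeal to the general fact that iterated limits along nets can be realized as a single limit along a product net.

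For item (4), let $x\in p\bullet(q\bullet A)$, witnessed by $g_i\to p$, $b_i\to x$ with $b_i\in q\bullet A$. For each $i$, write $b_i$ as a limit: there are $h^i_j\to q$ and $a^i_j\in A$ with $h^i_j a^i_j\to b_i$. Again using the iterated-limit-as-single-net device, refine to obtain a single net $(k_m)$ from $G$ and $(c_m)$ from $A$ with $k_m\to pq$ (using $g_i h^i_j\to pq$, which holds because $g_i\to p$, $h^i_j\to q$, and multiplication $\rmE(X)\times\rmE(X)\to\rmE(X)$ is continuous in the left variable for each fixed right element, combined with joint behavior on the dense set $G$) and $g_i h^i_j a^i_j$ tracking $g_i b_i\to px$... wait, more carefully: I want $k_m c_m\to x$, using $h^i_j a^i_j\to b_i$ and $g_i b_i\to px = pq\cdot(\text{something})$; the clean route is to note $g_i(h^i_j a^i_j)\to g_i b_i$ as $j$ varies (continuity of left multiplication by $g_i\in G$, which is a homeomorphism of $X$), then let $i$ vary so $g_i b_i\to x$, and assemble. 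So $x\in pq\bullet A$. Finally item (5): apply item (4) with $p = q = u$ and $A\subseteq uM$ to get $u\bullet(u\bullet A)\subseteq u\bullet A$, hence $\rm{cl}_\tau(\rm{cl}_\tau(A)) = u\big(u\bullet(u(u\bullet A)\cap uM)\big)\cap uM\subseteq u(u\bullet(u\bullet A))\cap uM\subseteq u(u\bullet A)\cap uM = \rm{cl}_\tau(A)$, giving idempotence; that $\rm{cl}_\tau(A)\supseteq A$ follows from item (2) since $A\subseteq uM$ gives $A = uA\subseteq u\bullet A$, hence $A = uA\subseteq u(u\bullet A)\cap uM$; monotonicity is clear from the definition; and $\rm{cl}_\tau(\emptyset) = \emptyset$ is immediate. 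The equality $\rm{cl}_\tau(A) = u(u\bullet A)$ for $A\subseteq uM$ (rather than needing the extra $\cap uM$) uses that $u(u\bullet A)\subseteq uM$ automatically since $u\bullet A\subseteq M$ and $uM$ is a left ideal — actually $u M$ is the group $uM$ and $u\cdot M\subseteq uM$, so $u(u\bullet A)\subseteq uM$.

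The main obstacle I expect is the bookkeeping in items (3) and (4): turning a net of nets into a single net while simultaneously controlling two convergences ($g_i\to p$ and $g_ia_i\to x$). I would handle this once, cleanly, via the standard lemma that if for each $i$ in a directed set $I$ one has $\lim_{j\in J_i} x_{ij} = x_i$ and $\lim_{i} x_i = x$, then $x$ is a limit along the product directed set of a net built from the $x_{ij}$; equivalently, I can phrase everything in terms of: "$x\in p\bullet A$ iff for every neighborhood $V$ of $p$ in $\rmE(X)$ and every neighborhood $W$ of $x$ in $M$, there exist $g\in G\cap V$ and $a\in A$ with $ga\in W$," which makes the transitivity/assembly arguments purely about nested quantifiers over neighborhoods and avoids explicit net surgery. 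Everything else is routine, and I would keep the write-up to a few lines per item, citing \cite{HS_Alg_SC} and the references \cites{Glasner_Prox, Auslander, KP_2017} for the facts already standard there.
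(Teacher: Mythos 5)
The paper does not prove this statement at all: it is presented as a known \emph{Fact}, with the surrounding discussion deferring to \cite{Glasner_Prox}, \cite{Auslander}, and \cite{KP_2017}. Your direct verification from the definition of $p\bullet A$ is correct and self-contained, and the device you introduce at the end --- reformulating ``$x\in p\bullet A$'' as ``for every neighborhood $V$ of $p$ and every neighborhood $W$ of $x$ there are $g\in G\cap V$ and $a\in A$ with $ga\in W$'' --- is exactly the right way to dispose of the net-of-nets bookkeeping in items (3) and (4); combined with right-continuity of multiplication in $\rmE(X)$ (for item (2) and for $g_iq\to pq$) and continuity of left multiplication by elements of $G$ (for item (4)), everything goes through.

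Two small expository gaps worth closing in a write-up. First, in item (5) a topological (Kuratowski) closure operator requires finite additivity $\rm{cl}_\tau(A\cup B)=\rm{cl}_\tau(A)\cup\rm{cl}_\tau(B)$, not merely monotonicity; this does follow from your item (1) together with the distributivity of $u(\cdot)$ and $\cap\, uM$ over unions, but you should say so rather than invoke ``monotonicity is clear.'' Second, the identity $\rm{cl}_\tau(A)=(u\bullet A)\cap uM=u(u\bullet A)$ needs both inclusions: you correctly note $u(u\bullet A)\subseteq uM$, and $u(u\bullet A)\subseteq u\bullet(u\bullet A)\subseteq u\bullet A$ follows from items (2) and (4); for the reverse, observe that any $y\in(u\bullet A)\cap uM$ satisfies $y=uy$ because $u$ is idempotent, whence $y\in u(u\bullet A)$. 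With those two sentences added, the argument is complete.
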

\begin{remark}
    When $\rmE(X)\cong \beta G$, then the ``revised circle operation'' from \cite{KP_2017} is just the extended action of $\beta G$ on $\rmK(M)$. In this situation -- or more generally if $\rmE(X)\cong \rmE(\rmK(X))$ -- we can improve some of the items above; in particular, $p_i\bullet A\to p\bullet A$ and $p\bullet (q\bullet A) = pq\bullet A$. Be cautious when reading references primarily focused on $\rmM(G)$, such as \cite{Auslander}, as they will sometimes make use of these stronger properties.
\end{remark}

A priori, given a minimal ideal flow $M$, it is not clear to what extent the tau-topology on $\bbG_M$ depends on the various choices made along the way. Auslander \cite{Auslander}*{p.\ 148} gives an alternate definition of the tau-topology for $M = \rmM(G)$ which is completely intrinsic to the dynamics of $M$ and works just fine for any minimal ideal flow. Given $p\in \bbG_M$, let $\rm{Graph}(p) = \{(x, p(x)): x\in M\}\subseteq M\times M$, and given $A\subseteq \bbG_M$, let $\rm{Graph}(A) = \bigcup_{q\in A} \rm{Graph}(q)$. If $M\subseteq \rmE(X)$ is a minimal subflow, $u\in M$ is idempotent, and $p\in uM$, we put $\rm{Graph}(p)  = \rm{Graph}(\rho_p)$, and similarly for $A\subseteq uM$.

\begin{prop}
\label{Prop:Tau_Graph}
     Given $M\subseteq \rmE(X)$ as above, $u\in M$ idempotent, $p\in uM$, and $A\subseteq uM$, the following are equivalent.
     \begin{enumerate}[label=\normalfont(\arabic*)]
         \item \label{Item:Tau_Prop:TauGraph}
         $p\in \rm{cl}_\tau(A)$, 
         \item \label{Item:GraphNets_Prop:TauGraph}
         There are nets $(g_i)_{i\in I}$ from $G$ and $(a_i)_{i\in I}$ from $A$ with $g_iu\to u$ and $g_ia_i\to p$,
         \item \label{Item:GraphCont_Prop:TauGraph}
          $\rm{Graph}(p)\subseteq \ol{\rm{Graph}(A)}$,
          \item \label{Item:GraphMeet_Prop:TauGraph}
          $\rm{Graph}(p)\cap \ol{\rm{Graph}(A)} \neq \emptyset$. 
     \end{enumerate}
\end{prop}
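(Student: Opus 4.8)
\textbf{Proof proposal for Proposition~\ref{Prop:Tau_Graph}.}

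The plan is to prove the cycle $\ref{Item:Tau_Prop:TauGraph} \Rightarrow \ref{Item:GraphNets_Prop:TauGraph} \Rightarrow \ref{Item:GraphCont_Prop:TauGraph} \Rightarrow \ref{Item:GraphMeet_Prop:TauGraph} \Rightarrow \ref{Item:Tau_Prop:TauGraph}$, unwinding the definitions of $p \bullet A$ and $\rm{cl}_\tau$ throughout. For $\ref{Item:Tau_Prop:TauGraph} \Rightarrow \ref{Item:GraphNets_Prop:TauGraph}$: by Fact~\ref{Fact:Circle}\ref{Item:Tau_Fact:Circle}, $p \in \rm{cl}_\tau(A) = u(u \bullet A)$ means $p = uq$ for some $q \in u \bullet A$, so by definition of $\bullet$ there are nets $(g_i)$ from $G$ and $(a_i)$ from $A$ with $g_i \to u$ (in $\rmE(X)$, i.e.\ $g_i x \to ux$ for all $x \in M$) and $g_i a_i \to q$. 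Since right multiplication by $a_i$ (as elements of $\rmE(X)$) and the map $r \mapsto rx$ are continuous where needed, and since $g_i u \to uu = u$, we get the first convergence; for the second, $g_i a_i \to q$ gives $p = uq = u \cdot \lim g_i a_i$, but we want $g_i a_i \to p$ directly. Here I would use that $a_i \in uM$ so $a_i = u a_i$, hence $g_i a_i = g_i u a_i$, and replacing $g_i$ by $g_i$ composed appropriately (or passing to the net $g_i u$, which still converges to $u$ and lies in the closure of $G u$) lets us arrange $g_i a_i \to uq = p$. The bookkeeping of which net converges to what — keeping track that $u$ acts as a left identity on $M$ and $a_i = ua_i$ — is the fiddly part of this step.

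For $\ref{Item:GraphNets_Prop:TauGraph} \Rightarrow \ref{Item:GraphCont_Prop:TauGraph}$: fix $x \in M$; I must show $(x, p(x)) \in \ol{\rm{Graph}(A)}$, i.e.\ $(\rho_p(x)) = px \in M$. Write $x = uy$ for some $y$ (possible since $ux = x$ on $M$ when $M \subseteq \rmE(X)$ and $u$ is the relevant idempotent — or rather, every $x \in M$ satisfies $ux = x$ is false in general; instead use $x \in M$ and $u$ idempotent in $M$, so the correct normalization is that $\rho_p$ is defined by $\rho_p(r) = pr$ and we track $(r, pr)$). Given the net, $(g_i a_i, g_i a_i \cdot r)$ for suitable $r$ lies in $\rm{Graph}(A)$ (as $a_i \in A$ and $\rho_{a_i}$ has graph $\{(r, a_i r)\}$), and $g_i a_i r \to p r$ by separate continuity of the semigroup action, while the base point converges by $g_i u \to u$; taking closure gives the containment. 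This requires care about which coordinate is the "input" — I would write $\rm{Graph}(q) = \{(r, qr) : r \in M\}$ and check the net $(g_i r_0, (g_i a_i) r_0)$ for a well-chosen $r_0$, using density of $G \cdot u$ in $M$ to hit an arbitrary first coordinate.

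The implication $\ref{Item:GraphCont_Prop:TauGraph} \Rightarrow \ref{Item:GraphMeet_Prop:TauGraph}$ is trivial since $\rm{Graph}(p) \neq \emptyset$. The closing implication $\ref{Item:GraphMeet_Prop:TauGraph} \Rightarrow \ref{Item:Tau_Prop:TauGraph}$ is where the real content lies: from a single point $(x_0, y_0) \in \rm{Graph}(p) \cap \ol{\rm{Graph}(A)}$ — so $y_0 = p x_0$ — I get nets $(a_i)$ from $A$ and points $(r_i, a_i r_i) \to (x_0, p x_0)$. Using minimality of $M$ and coalescence (Fact~\ref{Fact:MinIdealFlow}), together with the fact that the $G$-maps $\rho_{a_i}$ are determined by their value at a single point of a minimal flow, I would argue that agreement of $\rho_{a_i}$ with $\rho_p$ in the limit at one point forces $\rho_{a_i} \to \rho_p$ everywhere, and then extract the nets witnessing $p \in \rm{cl}_\tau(A)$; concretely, pick $g_i \in G$ with $g_i u$ close to $r_i$'s preimage data and $g_i a_i$ close to $p$, which is possible precisely because the graph meets the closure. \textbf{I expect this last step to be the main obstacle}, since it is the only direction that must \emph{recover} tau-convergence from a purely topological (single-point) condition, and it is exactly the place where Auslander's intrinsic reformulation earns its keep; the proof will lean on the characterization of proximal/AP sets in Fact~\ref{Fact:MinIdealFlow} and on the coalescence of $M$ to upgrade pointwise information to global information.
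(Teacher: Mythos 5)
Your overall architecture is reasonable and you correctly single out the passage back to tau-convergence as the hard direction, but your sketch of that step misses the actual obstacle. Membership in $\rm{cl}_\tau(A) = (u\bullet A)\cap uM$ requires nets $(g_i)$ from $G$ with $g_i\to u$ \emph{in $\rmE(X)$}, i.e.\ $g_ix\to ux$ for \emph{every} $x\in X$, together with $g_ia_i\to p$. What the graph condition hands you (after using minimality of $\rm{Graph}(p)$ as a subflow of $M\times M$ to reduce to the single point $(u,p)$, and the density of $\{(gu,gq):g\in G,\ q\in A\}$ in $\rm{Graph}(A)$) is only a pair of nets with $g_iu\to u$ and $g_ia_i\to p$ --- convergence of $g_i$ tested at the one point $u$. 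A subnet of such $(g_i)$ may converge in $\rmE(X)$ to some $q$ with $qu=u$ but $q\neq u$, so these nets do not by themselves witness $p\in u\bullet A$. Your proposed remedy --- coalescence, ``agreement at one point forces agreement everywhere'' --- addresses a different issue: it is what lets you pass from one point of the graph to the whole graph, i.e.\ $\ref{Item:GraphMeet_Prop:TauGraph}\Rightarrow\ref{Item:GraphCont_Prop:TauGraph}$. It does not upgrade convergence of $g_i$ at the single point $u$ to convergence on all of $X$, which is what item \ref{Item:Tau_Prop:TauGraph} demands.

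The paper closes this gap with a diagonal-net argument: fix an auxiliary net $(h_j)$ from $G$ with $h_j\to u$ in $\rmE(X)$; for each pair $(U,V)\in\op(u,\rmE(X))\times\op(p,M)$ choose $i$ with $g_iu\in U$ and $g_ia_i=g_iua_i\in V$ (using $ua_i=a_i$ since $a_i\in uM$), then $j$ large enough that $g_ih_j\in U$ and $g_ih_ja_i\in V$ --- possible because left multiplication by the group element $g_i$ is continuous on $\rmE(X)$, so $g_ih_j\to g_iu$ and $g_ih_ja_i\to g_ia_i$. The resulting diagonal net witnesses $p\in u\bullet A$. Without something of this kind your last step does not go through. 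Two smaller points: with the paper's convention $\rho_q(x)=xq$, so $\rm{Graph}(q)=\{(x,xq):x\in M\}$ rather than $\{(r,qr)\}$ --- several of your coordinate computations have this reversed; and for $\ref{Item:Tau_Prop:TauGraph}\Rightarrow\ref{Item:GraphNets_Prop:TauGraph}$ you can drop the $p=uq$ detour entirely, since $\rm{cl}_\tau(A)=(u\bullet A)\cap uM$ by definition, so $p\in\rm{cl}_\tau(A)$ directly yields nets with $g_i\to u$ and $g_ia_i\to p$, whence $g_iu\to uu=u$.
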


\begin{proof}
    $\ref{Item:GraphCont_Prop:TauGraph}\Leftrightarrow\ref{Item:GraphMeet_Prop:TauGraph}$: Immediate from the minimality of $M$. 
    \vspace{3 mm}
    
    $\ref{Item:GraphNets_Prop:TauGraph}\Leftrightarrow \ref{Item:GraphCont_Prop:TauGraph}$: We have $\rm{Graph}(p)\subseteq \ol{\rm{Graph}(A)}$ iff $(u, up) = (u, p)\in \ol{\rm{Graph}(A)}$. Since $\ol{\rm{Graph}(A)} = \ol{\{(gu, gq): g\in G: q\in A\}}$, we have $(u, p)\in \ol{\rm{Graph}(A)}$ iff \ref{Item:GraphNets_Prop:TauGraph} holds.
    \vspace{3 mm}
    
    $\ref{Item:Tau_Prop:TauGraph}\Leftrightarrow \ref{Item:GraphNets_Prop:TauGraph}$: If $p\in \rm{cl}_\tau(A)$ witnessed by $(g_i)_{i\in I}$ and $(a_i)_{i\in I}$, then $g_i\to u$ implies $g_iu\to uu = u$. Conversely, suppose there are nets $(g_i)_{i\in I}$ from $G$ and $(a_i)_{i\in I}$ from $A$ with $g_iu\to u$ and $g_ia_i\to p$. We construct nets witnessing that $p\in \rm{cl}_\tau(A)$ with index set $\op(u, \rmE(X))\times \op(p, M)$, ordered by reverse inclusion. Fix $U\in \op(u, \rmE(X))$ and $V\in \op(p, M)$. Let $(h_j)_{j\in J}$ be a net from $G$ with $h_j\to u$. Fix a large enough $i\in I$ with $g_iu\in U$ and $g_ia_i = g_iua_i\in V$. Then find large enough $j\in J$ with $g_ih_j\in U$ and $g_ih_ja_i\in V$. We set $g_{U, V} = g_ih_j$ and $q_{U, V} = q_i$.  
\end{proof}

Fix a minimal ideal flow $M$. Given  $p\in \bbG_M$ and $A, B\in \op(M)$, we define 
\begin{align*}
\rm{Nbd}_{A, B} &:= \{q\in \bbG_M: q[A]\cap B\neq \emptyset\}\\
\rm{Nbd}_A(p) &:= \rm{Nbd}_{A, p[A]}.
\end{align*}

\begin{prop}
    \label{Prop:Tau_Base}
    For any $A, B\in \op(M)$, $\rm{Nbd}_{A, B}$ is tau-open. For any $w\in M$ and $p\in \bbG_M$, the set $\{\rm{Nbd}_A(p): A\in \op(w, M)\}$ is a base of tau-open neighborhoods of $p$.
\end{prop}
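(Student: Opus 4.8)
\textbf{Plan for the proof of Proposition~\ref{Prop:Tau_Base}.}

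The goal is two-fold: first, to show each $\rm{Nbd}_{A,B}$ is tau-open, and second, to show that the family $\{\rm{Nbd}_A(p): A\in \op(w, M)\}$ forms a neighborhood base at $p$ in the tau-topology. The natural tool is the graph characterization of tau-closure from Proposition~\ref{Prop:Tau_Graph}, in particular the equivalence $p\in \rm{cl}_\tau(S)\Leftrightarrow \rm{Graph}(p)\cap \ol{\rm{Graph}(S)}\neq\emptyset$. To prove $\rm{Nbd}_{A,B}$ is open, I would show its complement $C:=\{q\in\bbG_M: q[A]\cap B=\emptyset\}$ is tau-closed, i.e.\ $\rm{cl}_\tau(C)\subseteq C$. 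So suppose $p\in\rm{cl}_\tau(C)$; by Proposition~\ref{Prop:Tau_Graph}\ref{Item:GraphMeet_Prop:TauGraph} there is $(x_0,p(x_0))\in\ol{\rm{Graph}(C)}$, hence a net $(g_i)$ from $G$ and $(q_i)$ from $C$ with $g_i x_0\to x_0$ and $g_i q_i(x_0)\to p(x_0)$ — actually I should be slightly careful and instead use \ref{Item:GraphCont_Prop:TauGraph}: $\rm{Graph}(p)\subseteq\ol{\rm{Graph}(C)}$. Now fix $a\in A$ with $p(a)\in B$ towards a contradiction. Since $B$ is open and the action is continuous, I want to deduce that for some $i$, $q_i(a')\in B$ for some $a'$ near $a$, contradicting $q_i\in C$. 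The subtlety is transporting from the pair $(x_0,p(x_0))$ lying in the closure to a statement about $a$: one uses that $\ol{\rm{Graph}(C)}$ contains \emph{all} of $\rm{Graph}(p)$, so in particular $(a,p(a))\in\ol{\rm{Graph}(C)}$, giving nets $(h_j)$ from $G$, $(r_j)$ from $C$ with $h_j a\to a$ and $h_j r_j(a)\to p(a)\in B$; but $h_j r_j$ is a composition of a group element (acting as a homeomorphism of $M$) and an automorphism, so $h_j r_j\in \bbG_M$, and writing $h_j r_j(a) = (h_j r_j)(a)$, the fact that $r_j(h_j^{-1}\cdot(\,\cdot\,))$... — here I need to be careful whether $h_j r_j$ still lies in $C$. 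It need not. Instead the right move is: $(a, p(a))\in \ol{\rm{Graph}(C)}$ means there are nets $(r_j)$ from $C$ and points $a_j\to a$ with $r_j(a_j)\to p(a)$. Since $a_j\in A$ eventually (as $A$ is open) and $r_j(a_j)\in B$ eventually (as $B$ open), we get $r_j[A]\cap B\neq\emptyset$ for large $j$, contradicting $r_j\in C$. That closes part one, modulo checking that $\ol{\rm{Graph}(C)}$ contains nets of exactly this form, which is immediate from the definition of the graph closure (note $\rm{Graph}(C)$ already contains all pairs $(x,r(x))$ for $r\in C$, $x\in M$, so its closure contains limits of such pairs along nets in $M\times C$).

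For the second part — that $\{\rm{Nbd}_A(p): A\in\op(w,M)\}$ is a base of tau-neighborhoods of $p$ — I would proceed in two directions. Each $\rm{Nbd}_A(p)=\rm{Nbd}_{A,p[A]}$ is tau-open by part one and contains $p$ (since $p[A]\cap p[A]=p[A]\neq\emptyset$), so these are genuine open neighborhoods. For the base property, I must show that every tau-open $V\ni p$ contains some $\rm{Nbd}_A(p)$ with $A\in\op(w,M)$. Equivalently, writing $F=\bbG_M\setminus V$ (tau-closed, $p\notin F$), I need $A\in\op(w,M)$ with $\rm{Nbd}_A(p)\cap F=\emptyset$, i.e.\ no $q\in F$ has $q[A]\cap p[A]\neq\emptyset$. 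Suppose not: then for every $A\in\op(w,M)$ there is $q_A\in F$ and $a_A, a_A'\in A$ with $q_A(a_A)=p(a_A')$. Index by $\op(w,M)$ under reverse inclusion; passing to a subnet, $q_A\to q\in\bbG_M$ in the... no — the tau-topology is only T1 and multiplication is only separately continuous, so I cannot freely extract Vietoris-convergent subnets in $\bbG_M$ and expect a limit \emph{automorphism}. The correct approach is to work in $\rmE(M)$: view $M\subseteq\rmE(M)$ as a minimal left ideal, fix an idempotent $u\in M$, so $\bbG_M^{opp}\cong uM$ and $\rm{cl}_\tau(S)=u(u\bullet S)$ by Fact~\ref{Fact:Circle}\ref{Item:Tau_Fact:Circle}. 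Then $a_A\to w$ and $a_A'\to w$ (since $A$ shrinks to $w$), and $q_A(a_A)=p(a_A')$, so taking a subnet where $(q_A)$ — now regarded as points of $uM\subseteq M\subseteq X^X$, i.e.\ in a compact space — converges in $\rmE(M)$ to some $r\in\rmE(M)$ and $q_A(a_A)\to r(w)$ while $p(a_A')\to p(w)$, giving $r(w)=p(w)$. One then checks $r$ lands in a suitable minimal ideal and, after multiplying by $u$ on the appropriate side, $ur\in uM$ corresponds to an automorphism $\tilde r\in\bbG_M$ lying in $\rm{cl}_\tau(F)=F$ (using that $F$ is tau-closed and the subnet witnesses the circle-operation closure), yet $\tilde r$ agrees with $p$ at $w$ — and by minimality of $M$ and the regular/ideal structure, two automorphisms of a minimal ideal flow agreeing at a single point must be equal (since $M\cong M_X$ and automorphisms are determined by their value on any point: $\rho_x$ is injective). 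Hence $p=\tilde r\in F$, contradiction.

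\textbf{Where the difficulty lies.} The first part is routine once one unwinds the graph-closure definition. The real obstacle is the base property: one must carefully move between the tau-topology on $\bbG_M$, the ordinary topology on $M\subseteq\rmE(M)$, and the circle operation $u\bullet(-)$, because the tau-topology is badly behaved (only T1, only separately continuous multiplication) and does not itself permit the convergence arguments one wants. The key technical facts to deploy are: (i) $\rm{cl}_\tau(S) = u(u\bullet S)$ from Fact~\ref{Fact:Circle}\ref{Item:Tau_Fact:Circle}, which converts tau-closedness of $F$ into a statement about convergence of nets $g_i\to u$, $g_i f_i\to$ (something) with $f_i\in F$; (ii) the fact that an automorphism of a minimal ideal flow is determined by its value at any single point ($\rho_x$ injective on $M$, Fact~\ref{Fact:MinIdealFlow}); and (iii) the compactness of $\rmE(M)$ to extract limits. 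I expect the bookkeeping about \emph{which side} one multiplies by $u$, and verifying the extracted limit really is (conjugate to) an automorphism lying in $F$ rather than merely in $\rmE(M)$, to be the fiddliest step, and I would model it closely on the arguments in \cite{Auslander} and \cite{Glasner_Prox} for the $\rmM(G)$ case, being careful not to invoke the stronger circle-operation properties that hold only when $\rmE(X)\cong\rmE(\rmK(X))$.
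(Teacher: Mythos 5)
Your first part (tau-openness of $\rm{Nbd}_{A,B}$) is correct and, after your self-correction, is essentially the paper's argument: $\rm{Graph}(\bbG_M\setminus \rm{Nbd}_{A,B})$ is disjoint from the open set $A\times B$, so its closure cannot meet $\rm{Graph}(q)$ for $q\in \rm{Nbd}_{A,B}$, and Proposition~\ref{Prop:Tau_Graph} finishes it.

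The second part contains a genuine gap, and the detour that creates it is unnecessary. Concretely: from the assumption that $q_A\in F$ and $q_A(a_A)=p(a'_A)$ with $a_A,a'_A\to w$, you extract a subnet with $q_A\to r$ in $\rmE(M)$ and assert $q_A(a_A)\to r(w)$. But $\rmE(M)\subseteq M^M$ carries only the topology of pointwise convergence, so $q_A\to r$ and $a_A\to w$ do \emph{not} give $q_A(a_A)\to r(w)$; evaluation at a moving point is not continuous for a merely pointwise-convergent net of maps (this is exactly the joint-continuity failure that makes enveloping semigroups only right-topological). A second, independent problem is the claim that the limit $ur$ lies in $\rm{cl}_\tau(F)$ ``using that the subnet witnesses the circle-operation closure'': membership in $u\bullet F$ requires nets $(g_i)$ from $G$ and $(f_i)$ from $F$ with $g_i\to u$ and $g_if_i$ convergent, and your construction produces a net from $F$ alone, with no approximating group elements. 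Neither step is repaired by bookkeeping about which side one multiplies by $u$.

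The fix is that no limit extraction is needed at all. Proposition~\ref{Prop:Tau_Graph}, which you already invoke in part one, states (using minimality to pass between items \ref{Item:GraphCont_Prop:TauGraph} and \ref{Item:GraphMeet_Prop:TauGraph}) that $p\in\rm{cl}_\tau(Q)$ iff $(w,p(w))\in\ol{\rm{Graph}(Q)}$. Since $\{A\times p[A]: A\in\op(w,M)\}$ is a neighborhood base at $(w,p(w))$ (as $p$ is a homeomorphism), this says precisely that $p\in\rm{cl}_\tau(Q)$ iff $Q\cap\rm{Nbd}_A(p)\neq\emptyset$ for every $A\in\op(w,M)$. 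Applied to $Q=F=\bbG_M\setminus V$ with $V$ tau-open containing $p$, this yields some $A$ with $\rm{Nbd}_A(p)\cap F=\emptyset$, i.e.\ $\rm{Nbd}_A(p)\subseteq V$, which is the base property. This is the paper's one-line argument; your hypothesis for contradiction already is the statement ``$F\cap\rm{Nbd}_A(p)\neq\emptyset$ for all $A$,'' so the contradiction $p\in\rm{cl}_\tau(F)=F$ is immediate.
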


\begin{proof}
    If $q\in \rm{Nbd}_{A, B}$, find $C\in \op(A)$ with $q[C]\subseteq B$. Then if $x\in C$, we have $(x, q(x))\not\in \ol{\rm{Graph}(\bbG_M\setminus \rm{Nbd}_{A, B})}$. Hence $\rm{Nbd}_{A, B}$ is tau-open. Now fix $w\in X$, and suppose $Q\subseteq \bbG_X$ is any set with $p\in \rm{cl}_\tau(Q)$. By minimality of $M$, this happens iff $(w, p(w))\in \ol{\rm{Graph}(Q)}$, which happens iff for every $A\in \op(w, M)$, we have $Q\cap \rm{Nbd}_A(p)\neq \emptyset$. 
\end{proof}

A \emph{compact T1 group} is a compact T1 space equipped with a group operation with separately continuous multiplication, continuous inversion, and such that the product of two closed sets is closed.

\begin{corollary}
    \label{Cor:Tau_Cont}
    If $M$ is a minimal ideal flow, then $\bbG_M$ equipped with the tau-topology is a compact T1 group. 
\end{corollary}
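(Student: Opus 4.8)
The plan is to verify, one at a time, the four conditions in the definition of a compact $T_1$ group, taking for granted that $\rm{cl}_\tau$ is already a topological closure operator (Fact~\ref{Fact:Circle}\ref{Item:Tau_Fact:Circle}). Throughout I would identify $\bbG_M$ with $\aut(M)$, writing $\rm{Graph}(p)=\{(x,p(x)):x\in M\}$, which is a closed subset of $M\times M$ since automorphisms are homeomorphisms of the compact Hausdorff space $M$. The $T_1$ axiom is then immediate: if $q\in\rm{cl}_\tau(\{p\})$ then Proposition~\ref{Prop:Tau_Graph} gives $\rm{Graph}(q)\subseteq\ol{\rm{Graph}(p)}=\rm{Graph}(p)$, and a graph of a function contained in the graph of a function forces $q=p$.

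For separate continuity of multiplication and continuity of inversion $\iota$, the plan is to use the neighbourhood base from Proposition~\ref{Prop:Tau_Base}: for a fixed $w\in M$, the sets $\rm{Nbd}_A(p)$ with $A\in\op(w,M)$ form a base of tau-open neighbourhoods of $p$, and $\rm{Nbd}_{B,B'}$ is tau-open for all $B,B'\in\op(M)$. A short bijectivity computation then yields
$$\lambda_r^{-1}\big[\rm{Nbd}_A(rp)\big]=\rm{Nbd}_A(p),\qquad \rho_r^{-1}\big[\rm{Nbd}_A(pr)\big]=\rm{Nbd}_{r[A]}(p),\qquad \iota^{-1}\big[\rm{Nbd}_A(p^{-1})\big]=\rm{Nbd}_{p^{-1}[A],\,A},$$
and since each right-hand side is a tau-open neighbourhood of $p$, left translations $\lambda_r$, right translations $\rho_r$ and $\iota$ are all tau-continuous. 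In particular each $\lambda_r,\rho_r$ is a tau-homeomorphism.

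For compactness I would show every net $(p_i)$ in $\bbG_M$ has a tau-cluster point. Realise $\bbG_M\cong uM\subseteq\rmE(M)$ for an idempotent $u$; pass to a subnet so that $p_i\to p_*$ in the compact Hausdorff space $M$, and set $p:=up_*\in uM$. Fix a net $(h_k)$ in $G$ with $h_k\to u$ in $\rmE(M)$. Since right multiplication in $\rmE(M)$ is continuous, $h_k u\to u$ and $h_k p_*\to up_*=p$, and since left multiplication by elements of $G$ is continuous, $h_k p_i\to h_k p_*$ for each fixed $k$. An iterated-limit argument over $(k,i)$ then extracts nets $(g_\lambda)$ from $G$ and $(a_\lambda)$ from $\{p_i:i\text{ cofinal}\}$ with $g_\lambda u\to u$ and $g_\lambda a_\lambda\to p$, so Proposition~\ref{Prop:Tau_Graph} gives $p\in\rm{cl}_\tau(\{p_i:i\succeq i_0\})$ for every $i_0$; thus $p$ is a tau-cluster point, and $(\bbG_M,\tau)$ is compact.

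The main obstacle is the last requirement: $F_1F_2$ is tau-closed whenever $F_1,F_2$ are. The natural reduction is that, since $F_1$ is tau-closed, $\rm{cl}_\tau(F_1)F_2=F_1F_2$, so it suffices to prove the general containment $\rm{cl}_\tau(XY)\subseteq\rm{cl}_\tau(X)Y$; passing to the $uM$-model and using $\rm{cl}_\tau(S)=u(u\bullet S)$ (Fact~\ref{Fact:Circle}\ref{Item:Tau_Fact:Circle}), this in turn reduces to a containment of the form $u\bullet(XY)\subseteq(u\bullet X)Y$. Establishing that containment is the hard step, for two reasons: multiplication in $\rmE(M)$ is only separately continuous, so a naive net-chase cannot control products $(g_ia_i)b_i$ whose two factors both move; and tau-closedness is strictly weaker than topological closedness in $M$, so a topological limit of the relevant $b_i\in F_2$ need not lie in $F_2$. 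Both difficulties are handled by the classical argument for $\rmM(G)$ in \cites{Glasner_Prox, Auslander}, which uses only the formal properties of $\bullet$ recorded in Fact~\ref{Fact:Circle} together with the structural facts about minimal ideal flows in Fact~\ref{Fact:MinIdealFlow}; since Propositions~\ref{Prop:Tau_Graph} and~\ref{Prop:Tau_Base} show that the tau-topology on an arbitrary Ellis group $\bbG_M$ has exactly the same intrinsic description, that argument transfers without change, completing the proof.
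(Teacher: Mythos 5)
Your proposal is correct and follows essentially the same route as the paper: T1 from graphs, separate continuity of multiplication and continuity of inversion via the translation identities for the $\rm{Nbd}_A(p)$ neighbourhoods of Proposition~\ref{Prop:Tau_Base}, compactness by passing to a topological limit $p_*$ in $M$ and showing $up_*$ is a tau-cluster point (the paper's hint ``$p_i\to q$ implies $p_i\xrightarrow{\tau}uq$''), and the product-of-closed-sets axiom deferred to \cite{Auslander}*{p.~198, Lemma~4}, exactly as the paper does.
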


\begin{proof}
    Points are clearly closed. Compactness follows from Fact~\ref{Fact:Circle}\ref{Item:Tau_Fact:Circle}; one can show that if $p_i\in uM$ and $p_i\to q\in M$, then $p_i\xrightarrow{\tau} uq$. Separately continuous multiplication and continuous inversion are immediate from Proposition~\ref{Prop:Tau_Base}; given $p, q\in \bbG_M$ and $A\in \op(M)$, we have $q\cdot \rm{Nbd}_A(p) = \rm{Nbd}_A(qp)$, $\rm{Nbd}_A(p)\cdot q = \rm{Nbd}_{q^{-1}[A]}(pq)$, and $(\rm{Nbd}_A(p))^{-1} = \rm{Nbd}_{p[A]}(p^{-1})$.

    We refer to \cite{Auslander}*{p.\ 198, Lemma 4} for the argument that the product of two tau-closed sets is tau-closed.
\end{proof}

Going forward, topological vocabulary applied to $\bbG_M$ always refers to the tau-topology.

\begin{prop}
\label{Prop:Tau_Maps}
    Fix $X$ a minimal flow, $M$ a minimal ideal flow, and $\pi\colon M\to X$ a $G$-map.
    \begin{enumerate}[label=\normalfont(\arabic*)]
        \item \label{Item:Closed_Prop:TauMaps} 
        $\bbG_\pi\subseteq \bbG_M$ is tau-closed.

        \item \label{Item:Hom_Prop:TauMaps}
        If $X$ is also a minimal ideal flow and $\wt{\pi}\colon \bbG_M\to \bbG_X$ is the homomorphism induced by $\pi$, then $\wt{\pi}$ is continuous (in fact a quotient map).
    \end{enumerate}
\end{prop}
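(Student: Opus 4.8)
The statement to prove is Proposition~\ref{Prop:Tau_Maps}, with two parts about a $G$-map $\pi\colon M\to X$ where $M$ is a minimal ideal flow. For part~\ref{Item:Closed_Prop:TauMaps}, I would use the graph characterization of the tau-topology from Proposition~\ref{Prop:Tau_Graph} together with the basic neighborhood description from Proposition~\ref{Prop:Tau_Base}. Fix $p\in \bbG_M\setminus \bbG_\pi$; then $\pi\circ p\neq \pi$, so there is $w\in M$ with $\pi(p(w))\neq \pi(w)$. Choose disjoint open $A\ni w$ and $B\ni p(w)$ in $M$ small enough that $\ol{\pi[A]}\cap \ol{\pi[B]}=\emptyset$; I claim $\rm{Nbd}_A(p)$ (or a suitable shrinking) is a tau-open neighborhood of $p$ disjoint from $\bbG_\pi$. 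Indeed if $q\in \bbG_\pi$ then $\pi\circ q=\pi$, so $q$ cannot send a point of $A$ into $B$ without violating the separation of $\pi[A]$ from $\pi[B]$; more carefully, for $q\in\rm{Nbd}_A(p)$ there is $x\in A$ with $q(x)\in p[A]\subseteq B$, whence $\pi(q(x))\in \ol{\pi[B]}$ while $\pi(q(x))=\pi(x)\in \pi[A]$, a contradiction. So $\bbG_\pi$ is tau-closed. (I should double-check whether $\rm{Nbd}_A(p)$ as literally defined works or whether I need $\rm{Nbd}_{A,B}$ with $B$ chosen as above; the cleaner choice is $\rm{Nbd}_{A,B}^c$-type reasoning, i.e.\ note $p\in\rm{Nbd}_{A,B}$, $\rm{Nbd}_{A,B}$ is tau-open by Proposition~\ref{Prop:Tau_Base}, and $\bbG_\pi\cap\rm{Nbd}_{A,B}=\emptyset$.)

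For part~\ref{Item:Hom_Prop:TauMaps}, assume additionally that $X$ is a minimal ideal flow, so by Fact~\ref{Fact:MinIdealFlow} we have $\bbG_\pi\trianglelefteq\bbG_M$ and an induced isomorphism $\bbG_X\cong \bbG_M/\bbG_\pi$ via $\wt\pi\colon\bbG_M\to\bbG_X$. The plan is to show $\wt\pi$ is continuous and open (hence a topological quotient map). For openness: realize $M\subseteq\rmE(M)$ with idempotent $u$, fix $x_0\in uX$ (so that $\pi=\rho_{x_0}$ after identifying appropriately), and for $A\in\op(M)$ one checks $\wt\pi[\rm{Nbd}_A(p)]=\rm{Nbd}_{\pi[A]}(\wt\pi(p))$ or contains such a basic set — the key point being that $\pi$ is a $G$-map between minimal flows, hence pseudo-open, so $\pi[A]$ has nonempty interior and the image of a tau-basic neighborhood is tau-basic. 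For continuity: given a tau-basic neighborhood $\rm{Nbd}_{\pi[A],\pi[B]}$ of $\wt\pi(p)$ in $\bbG_X$, pull back to $\rm{Nbd}_{A',B'}$ in $\bbG_M$ using that $\pi$ is open enough to relate graphs; alternatively, use the graph description: $\wt\pi(p)\in\rm{cl}_\tau(Q)$ in $\bbG_X$ iff $\rm{Graph}(\wt\pi(p))\cap\ol{\rm{Graph}(Q)}\neq\emptyset$, and the map $M\times M\to X\times X$, $(x,y)\mapsto(\pi(x),\pi(y))$ carries $\ol{\rm{Graph}(A)}$ into $\ol{\rm{Graph}(\wt\pi[A])}$ by continuity, giving $\wt\pi(\rm{cl}_\tau(A))\subseteq\rm{cl}_\tau(\wt\pi[A])$, which is exactly continuity of $\wt\pi$.

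I expect the continuity direction to be straightforward via the $\ol{\rm{Graph}}$ characterization (Proposition~\ref{Prop:Tau_Graph}\ref{Item:GraphCont_Prop:TauGraph}): continuity of a product map and density of the $G$-orbit in the graphs does all the work, and it does not even need $X$ to be a minimal ideal flow. \textbf{The main obstacle} is verifying that $\wt\pi$ is a \emph{quotient} map rather than merely continuous; since both $\bbG_M$ and $\bbG_X$ are compact T1 (Corollary~\ref{Cor:Tau_Cont}) but \emph{not} assumed Hausdorff, I cannot invoke the usual "continuous surjection from compact to Hausdorff is closed/quotient" argument. Instead I must show directly that $\wt\pi$ is open (or closed), which requires carefully tracking how the basic tau-neighborhoods $\rm{Nbd}_{A,B}$ behave under $\pi$, using that $\pi\colon M\to X$ is pseudo-open (any $G$-map between minimal flows is) so that $\pi[A]\in\op(X)$ up to taking interiors, and that $\pi_{\rm{fib}}$ interacts well with the graph neighborhoods. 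This bookkeeping — showing $\wt\pi^{-1}[\rm{Nbd}_{C,D}]$ for $C,D\in\op(X)$ equals or contains the tau-open set $\rm{Nbd}_{\pi^{-1}[C]^\circ,\pi^{-1}[D]^\circ}$ and conversely that images of basic neighborhoods are open — is the technical heart of the argument, though none of it should be deep given the machinery already established.
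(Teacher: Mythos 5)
Your treatment of part~\ref{Item:Closed_Prop:TauMaps} is correct but takes the complementary route: you exhibit, around each $p\notin\bbG_\pi$, a basic tau-open set $\rm{Nbd}_{A,B}$ (with $A\ni w$, $B\ni p(w)$ chosen so that $\ol{\pi[A]}\cap\ol{\pi[B]}=\emptyset$) missing $\bbG_\pi$ — and you are right that $\rm{Nbd}_{A,B}$ rather than $\rm{Nbd}_A(p)$ is the correct choice, since $p[A]\subseteq B$ need not hold. The paper's argument is a one-liner in the other direction: setting $R_\pi=\{(y,z):\pi(y)=\pi(z)\}$, one has $p\in\bbG_\pi$ iff $\rm{Graph}(p)\subseteq R_\pi$, and since $R_\pi$ is closed, $\ol{\rm{Graph}(\bbG_\pi)}\subseteq R_\pi$; by Proposition~\ref{Prop:Tau_Graph} this gives tau-closedness directly. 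Both arguments use the same machinery; the paper's avoids the case-by-case choice of neighborhoods. Your continuity argument for part~\ref{Item:Hom_Prop:TauMaps} via $(\pi\times\pi)\bigl[\ol{\rm{Graph}(S)}\bigr]\subseteq\ol{\rm{Graph}(\wt\pi[S])}$ is exactly the paper's proof.

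For the parenthetical quotient-map claim, there is a genuine gap in your sketch, though you correctly flag it as the obstacle. The identity $\wt\pi[\rm{Nbd}_A(p)]=\rm{Nbd}_{\pi[A]}(\wt\pi(p))$ is not clearly true in the direction you need: to show the image of a basic neighborhood contains a basic neighborhood of $\wt\pi(p)$, you must take $r\in\bbG_X$ with $r[C]\cap\wt\pi(p)[C]\neq\emptyset$ for $C=\Int(\pi[A])$ and produce a lift $q$ of $r$ \emph{within the coset $\wt\pi^{-1}[\{r\}]=q_0\bbG_\pi$} satisfying $q[A]\cap p[A]\neq\emptyset$. This requires relating the fibers of $\pi$ on $M$ to orbits of $\bbG_\pi$, which is not automatic and is where the real work lies; pseudo-openness of $\pi$ alone does not give it. You are also right that the compact-Hausdorff shortcut is unavailable in general (the paper notes it works when $\bbG_X$ is Hausdorff). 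The paper does not prove this step either: it cites Rzepecki (Prop.~5.41 of \cite{Rze_2018}) for the quotient-map assertion, so if you want a self-contained argument you would need to supply the lifting step above rather than assert the neighborhood identity.
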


\begin{proof}
     \ref{Item:Closed_Prop:TauMaps}: To see this, write $R_\pi = \{(y, z)\in M\times M: \pi(y) = \pi(z)\}$. Given $p\in \bbG_M$, we have $p\in \bbG_\pi$ iff $\rm{Graph}(p)\subseteq R_\pi$. As $R_\pi$ is closed, we have $\ol{\rm{Graph}(\bbG_\pi)}\subseteq R_\pi$.
     \vspace{3 mm}

     \ref{Item:Hom_Prop:TauMaps}: Suppose $Q\subseteq \bbG_X$ is closed and $p\in \rm{cl}_\tau(\wt{\pi}^{-1}[Q])$. This happens iff for some (any) $y\in M$, we have $(y, p(y))\in \ol{\rm{Graph}(\wt{\pi}^{-1}[Q])}$. This implies $(\pi(y), \wt{\pi}(p)(\pi(y)))\in \ol{\rm{Graph}(Q)}$. So $\wt{\pi}(p)\in Q$, i.e.\ $p\in \wt{\pi}^{-1}[Q]$. We refer to \cite{Rze_2018}*{Prop.\ 5.41} for the proof that $\wt{\pi}$ is a quotient map. 
\end{proof}

We note that when $\bbG_X$ is Hausdorff, the argument that $\wt{\pi}$ is a quotient map is much simpler, as then $\bbG_X$ and $\bbG_M/\bbG_\pi$ are compact Hausdorff spaces with one finer than the other, so the topologies must coincide.

Towards proving Theorem~\ref{Thm:Tau_Hausdorff}, we will need to review some more content from \cite{Auslander}*{Ch.\ 14}. Given a compact T1 group $\Gamma$, let $\cN_\Gamma$ denote a base of $\tau$-open nbds of $e_\Gamma$. The \emph{Hausdorff kernel} of $\Gamma$ is defined via
$$\rmH(\Gamma) := \bigcap_{U\in \cN_\Gamma} U^2.$$
This is a subgroup preserved under any topological automorphism of $\Gamma$ (in particular it is normal), and $\Gamma/\rmH(\Gamma)$ is a compact Hausdorff group. Furthermore, given any tau-closed subgroup $K\subseteq \Gamma$, we have that $\Gamma/K$ equipped with the quotient topology is Hausdorff iff $\rmH(\Gamma)\subseteq K$. In particular, we have:

\begin{lemma}
\label{Lem:Haus}
    Let $M$ be a minimal ideal flow, $X$ a minimal flow, and $\pi\colon M\to X$ a $G$-map. Then $\bbG_X$ is Hausdorff iff $\rmH(\bbG_M)\subseteq \bbG_\pi$.
\end{lemma}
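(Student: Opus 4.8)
The plan is to deduce Lemma~\ref{Lem:Haus} from the facts about compact T1 groups recalled just before its statement, together with Proposition~\ref{Prop:Tau_Maps}. The key structural input is: (i) by Proposition~\ref{Prop:Tau_Maps}\ref{Item:Closed_Prop:TauMaps}, $\bbG_\pi\subseteq \bbG_M$ is a tau-closed subgroup, so the stated general fact about quotients $\Gamma/K$ applies with $\Gamma = \bbG_M$ and $K = \bbG_\pi$, giving that $\bbG_M/\bbG_\pi$ is Hausdorff in the quotient topology if and only if $\rmH(\bbG_M)\subseteq \bbG_\pi$; and (ii) I need to identify $\bbG_X$ (with its own tau-topology) with $\bbG_M/\bbG_\pi$ (with the quotient topology) as topological spaces, so that ``$\bbG_X$ Hausdorff'' really is ``$\bbG_M/\bbG_\pi$ Hausdorff.''

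**Handling the identification.** For point (ii), first recall from Fact~\ref{Fact:MinIdealFlow} that since $X$ is a minimal flow admitting a $G$-map $\pi$ from the minimal ideal flow $M$, the relevant statement is that $X$ is itself a minimal ideal flow precisely when $\bbG_\pi\trianglelefteq\bbG_M$ and $\bbG_X\cong\bbG_M/\bbG_\pi$. I should be slightly careful here: the lemma as stated does not explicitly assume $X$ is a minimal ideal flow, but $\bbG_X$ is only defined (with its tau-topology) via $M_X$, and since $\pi\colon M\to X$ exists with $M$ a minimal ideal flow, $M$ serves as (a copy of) $M_X$ by the coalescence/uniqueness facts in Fact~\ref{Fact:MinIdealFlow}; replacing $X$ by $M_X$ where needed, I may as well assume the hypotheses under which $\bbG_X$ makes sense, i.e.\ that $\pi$ induces $\wt\pi\colon\bbG_M\to\bbG_X$. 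Then Proposition~\ref{Prop:Tau_Maps}\ref{Item:Hom_Prop:TauMaps} says $\wt\pi$ is a continuous quotient map with kernel $\bbG_\pi$, so the induced bijection $\bbG_M/\bbG_\pi\to\bbG_X$ is a homeomorphism for the quotient topology. Hence $\bbG_X$ is Hausdorff iff $\bbG_M/\bbG_\pi$ is Hausdorff in the quotient topology.

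**Assembling.** Combining the two points: $\bbG_X$ is Hausdorff $\iff$ $\bbG_M/\bbG_\pi$ is Hausdorff $\iff$ $\rmH(\bbG_M)\subseteq\bbG_\pi$, which is exactly the claim. The writeup will be short: invoke Proposition~\ref{Prop:Tau_Maps}\ref{Item:Closed_Prop:TauMaps} for tau-closedness of $\bbG_\pi$, invoke Proposition~\ref{Prop:Tau_Maps}\ref{Item:Hom_Prop:TauMaps} for the homeomorphism $\bbG_M/\bbG_\pi\cong\bbG_X$, and then quote the recalled fact that for a tau-closed subgroup $K$ of a compact T1 group $\Gamma$, $\Gamma/K$ is Hausdorff iff $\rmH(\Gamma)\subseteq K$.

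**Main obstacle.** The one point requiring care — and the only real ``obstacle'' — is the topological identification $\bbG_X\cong\bbG_M/\bbG_\pi$ as topological spaces rather than merely as abstract groups: one must be sure that the quotient topology on $\bbG_M/\bbG_\pi$ coincides with the intrinsic tau-topology on $\bbG_X$. This is precisely the content of $\wt\pi$ being a \emph{quotient map} (not just a continuous surjective homomorphism) in Proposition~\ref{Prop:Tau_Maps}\ref{Item:Hom_Prop:TauMaps}, which the excerpt attributes to \cite{Rze_2018}*{Prop.\ 5.41}; as the remark after that proposition notes, in the Hausdorff case it is automatic, but the forward direction of the lemma (deducing $\rmH(\bbG_M)\subseteq\bbG_\pi$ from $\bbG_X$ Hausdorff) genuinely needs the quotient-map statement, so I will cite it explicitly.
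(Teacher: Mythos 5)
Your overall strategy --- tau-closedness of the stabilizer, the Hausdorff-kernel criterion for quotients of compact T1 groups, and the identification of $\bbG_X$ as a quotient of $\bbG_M$ --- is the same as the paper's, but there is a genuine gap at exactly the point you flag as needing care. The identification $\bbG_X\cong\bbG_M/\bbG_\pi$ (as topological groups, via $\wt{\pi}$) is only available when $X$ is itself a minimal ideal flow: that is the hypothesis of Proposition~\ref{Prop:Tau_Maps}\ref{Item:Hom_Prop:TauMaps}, and for a general minimal $X$ the subgroup $\bbG_\pi$ need not be normal in $\bbG_M$, so $\bbG_M/\bbG_\pi$ is not even a group. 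Your fix --- ``replacing $X$ by $M_X$'' --- is the right move, but it changes the subgroup: if $\psi\colon M_X\to X$ and $\tilde{\pi}\colon M\to M_X$ satisfy $\psi\circ\tilde{\pi}=\pi$, then (as in the computation in the proof of Lemma~\ref{Lem:EnvMap}) the kernel of the induced quotient map $\bbG_M\to\bbG_{M_X}=\bbG_X$ is not $\bbG_\pi$ but its normal core $\bbH_\pi:=\bigcap_{p\in\bbG_M}p^{-1}\bbG_\pi p=\bbG_{\tilde{\pi}}$. Carried out correctly, your argument therefore yields ``$\bbG_X$ is Hausdorff iff $\rmH(\bbG_M)\subseteq\bbH_\pi$,'' which is not yet the statement of the lemma.

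The missing step is the observation that $\rmH(\bbG_M)\subseteq\bbG_\pi$ iff $\rmH(\bbG_M)\subseteq\bbH_\pi$: since $\rmH(\bbG_M)$ is preserved by every topological automorphism of $\bbG_M$, in particular by conjugation, it lies in $\bbG_\pi$ iff it lies in every conjugate of $\bbG_\pi$, i.e.\ in $\bbH_\pi$. This one line is in fact essentially the entirety of the paper's proof, which writes $\bbG_X\cong\bbG_M/\bbH_\pi$ and then makes exactly this reduction. With that observation added your argument closes; without it, the final chain of equivalences rests on a homeomorphism $\bbG_X\cong\bbG_M/\bbG_\pi$ that is false in general.
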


\begin{proof}
    Writing $\bbH_\pi = \bigcap_{p\in \bbG_M} p^{-1}\bbG_\pi p$, we have $\bbG_X\cong \bbG_M/\bbH_\pi$, and $\rmH(\bbG_M)\subseteq \bbG_\pi$ iff $\rmH(\bbG_M)\subseteq \bbH_\pi$.
\end{proof}

The tau-topology allows for the development of a detailed structure theory for minimal flows. This allows us to give a complete characterization of those minimal flows with Hausdorff Ellis group. To state it, we need some definitions.

\begin{defin} \label{Def:Extension_Props}
    Fix minimal flows $X$ and $Y$ and a $G$-map $\phi\colon X\to Y$.
    \begin{itemize}
        \item 
        $\phi$ is \emph{proximal} if for any $x_0, x_1\in X$ with $\phi(x_0) = \phi(x_1)$, there is $p\in \sa(G)$ with $px_0 = px_1$.
        \item 
        $\phi$ is \emph{distal} if for any $x_0\neq x_1\in X$ with $\phi(x_0) = \phi(x_1)$ and any $p\in \sa(G)$, we have $px_0\neq px_1$.
        \item 
        $\phi$ is \emph{equicontinuous} if for any $A\in \cU_X$, there is $B\in \cU_X$ so that if $x_0, x_1\in X$ with $(x_0, x_1)\in A$ and $\phi(x_0) = \phi(x_1)$, we have $(gx_0, gx_1)\in B$ for every $g\in G$.
        \item 
        $\phi$ is a \emph{compact group extension} if there is a compact Hausdorff topological group $K$ acting continuously on $X$ by $G$-flow automorphisms so that $\phi(x) = \phi(y)$ iff there is $k\in K$ with $k(x) = y$. In particular, $Y \cong X/K$.
    \end{itemize}
    If $Y$ is trivial, we simply call $X$ proximal, distal, equicontinuous, or a compact group flow, respectively. We note that compact group extensions are equicontinuous, which in turn are distal.
\end{defin}

\begin{lemma}
    \label{Lem:Prox}
    Suppose $M$ is a minimal ideal flow, $X$ and $Y$ are minimal flows, and $\pi\colon M\to X$ and $\phi\colon X\to Y$ are $G$-maps. 
    \begin{enumerate}[label=\normalfont(\arabic*)]
        \item $\phi$ is proximal iff $\bbG_{\pi} = \bbG_{\phi\circ \pi}$.
        \item If $\phi$ is distal, then $\phi$ is equicontinuous iff $\rmH(\bbG_{\phi\circ \pi})\subseteq \bbG_\phi$, and $\phi$ is a compact group extension iff also $\rmH(\bbG_{\phi\circ \pi})\trianglelefteq \bbG_{\phi}$.
    \end{enumerate}
\end{lemma}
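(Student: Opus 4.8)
\textbf{Proof plan for Lemma~\ref{Lem:Prox}.}

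The plan is to reduce everything to statements about tau-closed subgroups of $\bbG_M$, using the structure theory reviewed above (especially Fact~\ref{Fact:MinIdealFlow}, Proposition~\ref{Prop:Tau_Maps}, and the Hausdorff kernel material culminating in Lemma~\ref{Lem:Haus}). Throughout I identify $M$ with a minimal subflow of $\rmE(M)$ and fix an idempotent $u\in M$, so that $G$-maps out of $M$ correspond to right cosets of the relevant subgroups, and I freely use that $\bbG_\pi, \bbG_{\phi\circ\pi}$ are tau-closed subgroups of $\bbG_M$ with $\bbG_\pi\trianglelefteq\bbG_{\phi\circ\pi}$ whenever the target flows are minimal ideal flows (or after replacing $X$, $Y$ by their enveloping ideals $M_X$, $M_Y$ and lifting $\pi$ appropriately via the last bullet of Fact~\ref{Fact:MinIdealFlow}).

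For part (1): the key point is that $\phi$ is proximal iff whenever $r,s\in M$ with $\phi\circ\pi(r) = \phi\circ\pi(s)$ then already $\pi(r)$ and $\pi(s)$ are proximal in $X$, which (since $M$ is a minimal ideal flow and proximality in $X$ of points in the image of $\pi$ is detected by a common idempotent in $\rmE(M)$) is equivalent to $\pi(r) = \pi(s)$, i.e.\ to the equality of the equivalence relations $R_\pi$ and $R_{\phi\circ\pi}$ on $M$. Now I translate this into subgroup language: by Fact~\ref{Fact:MinIdealFlow}, the fibers of $\pi$ (resp.\ $\phi\circ\pi$) are governed by $\bbG_\pi$ (resp.\ $\bbG_{\phi\circ\pi}$) in the sense that $R_\pi = \{(p(r), r) : p\in\bbG_\pi,\ r\in M\}$ after the identification $M\cong \bbG_M^{opp}$ via $u$; hence $R_\pi = R_{\phi\circ\pi}$ iff $\bbG_\pi = \bbG_{\phi\circ\pi}$. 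The one subtlety is making the "proximal in $X$ implies equal in $X$ for points coming from $M$'' step precise using the discussion of proximal (= potentially idempotent) subsets in Fact~\ref{Fact:MinIdealFlow}; I expect this to be the main obstacle in part (1), though it is essentially a known manipulation.

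For part (2): assume $\phi$ is distal. First I pass to minimal ideal flows so that $\bbG_X \cong \bbG_M/\bbG_\pi$, $\bbG_Y\cong \bbG_M/\bbG_{\phi\circ\pi}$, and $\phi$ is induced by the inclusion $\bbG_\pi\subseteq\bbG_{\phi\circ\pi}$; thus "$\phi$'' corresponds to the quotient map $\bbG_M/\bbG_\pi \to \bbG_M/\bbG_{\phi\circ\pi}$, equivalently (after quotienting by $\bbG_\pi$) to $\bbG_{\phi\circ\pi}/\bbG_\pi$ sitting inside $\bbG_M/\bbG_\pi$. Equicontinuity of a distal extension is equivalent to the corresponding coset space being a (Hausdorff) compact group quotient, which by the Hausdorff-kernel theory means precisely that the Hausdorff kernel of $\bbG_{\phi\circ\pi}$ (computed inside $\bbG_{\phi\circ\pi}$, or equivalently of $\bbG_{\phi\circ\pi}/\bbG_\pi$) is contained in $\bbG_\phi$ — this is the direct analogue of Lemma~\ref{Lem:Haus} applied to the group $\bbG_{\phi\circ\pi}$ and its tau-closed subgroup $\bbG_\phi$, using that $\rmH(\bbG_{\phi\circ\pi}/\bbG_\pi)$ is the image of $\rmH(\bbG_{\phi\circ\pi})$. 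The compact group extension case adds exactly the requirement that $X/K = Y$ for a group $K$ of $G$-flow automorphisms, which on the subgroup side says $\bbG_\phi/(\text{Hausdorff kernel})$ must be a quotient group, i.e.\ the Hausdorff kernel $\rmH(\bbG_{\phi\circ\pi})$ must be normal in $\bbG_\phi$ (so that $K \cong \bbG_\phi/\rmH(\bbG_{\phi\circ\pi})$ acts by automorphisms). The main obstacle here is assembling the precise dictionary between "distal equicontinuous extension / compact group extension'' and the Hausdorff-kernel condition; I would cite the relevant structure-theoretic results from \cite{Auslander}*{Ch.\ 14} (and \cite{Glasner_Prox}) for the equivalences of equicontinuity with the Hausdorff quotient property of the associated coset space, rather than reprove them, and then just do the normality bookkeeping for the compact group extension clause.
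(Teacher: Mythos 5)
Part (2) of your proposal takes essentially the same route as the paper: the paper's entire proof of (2) is a citation to \cite{Auslander}*{p.\ 204}, so deferring the dictionary between distal/equicontinuous/compact group extensions and the Hausdorff-kernel conditions to \cite{Auslander}*{Ch.\ 14} is exactly what is done there (modulo the bookkeeping you describe).

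Part (1), however, has a genuine gap. Your reduction runs through the chain ``$\phi$ proximal $\Leftrightarrow$ $R_\pi = R_{\phi\circ\pi}$ $\Leftrightarrow$ $\bbG_\pi = \bbG_{\phi\circ\pi}$,'' and both links fail. First, proximality of the pair $(\pi(r),\pi(s))$ is witnessed by an idempotent $v$ with $v\pi(r)=v\pi(s)$; it does not give $\pi(r)=\pi(s)$. Concretely, if $X$ is a nontrivial minimal proximal flow and $Y$ is a point, then $\phi$ is proximal but $R_{\phi\circ\pi}=M\times M\neq R_\pi$, so ``$\phi$ proximal $\Rightarrow R_\pi=R_{\phi\circ\pi}$'' is false. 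Second, the identity $R_\pi=\{(p(r),r):p\in\bbG_\pi,\ r\in M\}$ is also false: $\bbG_M$-orbits in $M$ are almost periodic sets (Fact~\ref{Fact:MinIdealFlow}) and hence contain no nontrivial proximal pairs, whereas fibers of $\pi$ typically do; consequently $\bbG_\pi=\bbG_{\phi\circ\pi}$ does not force $R_\pi=R_{\phi\circ\pi}$. The paper's argument avoids the relations $R_\pi$ entirely and works with the maps themselves. For the forward direction: if $p\in\bbG_{\phi\circ\pi}$ but $\pi\circ p\neq\pi$, then since two $G$-maps out of the minimal flow $M$ that agree at a single point agree everywhere, the pair $(\pi(p(x)),\pi(x))$ must be distal for every $x$ (a proximal pair would be collapsed by a minimal idempotent, producing a point of agreement); but $\phi\circ\pi\circ p(x)=\phi\circ\pi(x)$ and $\phi$ is proximal, a contradiction. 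For the converse: given a non-proximal pair $x\neq y$ in a fiber of $\phi$, replace $x,y$ by $ux,uy$ for an idempotent $u\in M$ (this preserves distality and the fiber condition), write $y=px$ for some $p\in uM$, and observe that $\rho_p\in\bbG_{\phi\circ\pi}\setminus\bbG_\pi$. You should redo part (1) along these lines rather than via the equivalence relations.
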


\begin{proof}
    First assume $\phi$ is proximal. Fix $p\in \bbG_{\phi\circ \pi}$. Towards a contradiction, suppose $\pi\circ p\neq \pi$. For any $x\in M$, we must have $\pi\circ p(x)\neq \pi(x)$ is a distal pair. But $\phi\circ \pi\circ p(x) = \phi\circ \pi(x)$, and $\phi$ is proximal. Hence $\bbG_\pi = \bbG_{\phi\circ \pi}$.

    Now assume $\phi$ is not proximal, witnessed by $x\neq y\in X$ with $\phi(x) = \phi(y)$. We implicitly fix a $G$-map $M\to \rmE(X)$. Fix $u\in M$ idempotent. Replacing $x, y$ with $ux, uy$, we may assume $ux = x$ and $uy = y$. Thus there is $p\in uM$ with $px = y$, so we have $\rho_p\in \bbG_{\phi\circ \pi}\setminus \bbG_\pi$.

    For the second item, see \cite{Auslander}*{p.\ 204}.
\end{proof}

\begin{lemma}
\label{Lem:EnvMap}
    Let $\phi\colon X\to Y$ be a $G$-map, and let $\wt{\phi}\colon M_X\to M_Y$ be the induced $G$-map. 
    \begin{enumerate}[label=\normalfont(\arabic*)]
        \item \label{Item:Prox_Lem:EnvMap}
        If $\phi$ is proximal, then so is $\wt{\phi}$.
        \item \label{Item:Equi_Lem:EnvMap}
        If $\phi$ is equicontinuous, then $\wt{\phi}$ is a compact group extension.
    \end{enumerate}
\end{lemma}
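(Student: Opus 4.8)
\textbf{Proof plan for Lemma~\ref{Lem:EnvMap}.}

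The plan is to handle both items by tracking how $\wt\phi$ sits over $\phi$ via the canonical $G$-maps $\pi_X \colon M \to X$ and $\pi_Y \colon M \to Y$ from a single minimal ideal flow $M$, using that $\wt\phi \circ \pi_X$ and $\pi_Y$ are both $G$-maps from $M$ to $M_Y$, hence (by Fact~\ref{Fact:MinIdealFlow}) differ by an element of $\bbG_{M_Y}$; after adjusting $\wt\phi$ by this automorphism we may assume $\wt\phi \circ \pi_X = \pi_Y$. The key translation device is Lemma~\ref{Lem:Prox}: properties of a $G$-map between minimal flows are detected by the corresponding Ellis groups sitting inside $\bbG_M$. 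Concretely, I would fix a $G$-map $M \to \rmE(X)$ and identify $M$ with a minimal subflow; then $\bbG_{\pi_X}$ and $\bbG_{\pi_Y}$ are tau-closed subgroups of $\bbG_M$ with $\bbG_{\pi_X} \leq \bbG_{\pi_Y}$ (since $\pi_Y = \wt\phi \circ \pi_X$ factors through $\pi_X$, keeping in mind $\phi \circ \pi_X$ and $\pi_Y$ induce the same equivalence relation on $M$ as noted in Fact~\ref{Fact:MinIdealFlow}).

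For \ref{Item:Prox_Lem:EnvMap}: if $\phi$ is proximal, then $\bbG_{\pi_X} = \bbG_{\phi \circ \pi_X} = \bbG_{\pi_Y}$ by Lemma~\ref{Lem:Prox}(1) applied to $\phi \colon X \to Y$. Now apply Lemma~\ref{Lem:Prox}(1) in the other direction: $\wt\phi \colon M_X \to M_Y$ is proximal iff $\bbG_{\mathrm{id}\text{-over-}M_X} = \bbG_{\wt\phi}$ — more precisely, taking the canonical map $\pi_X \colon M \to M_X$, proximality of $\wt\phi$ is equivalent to $\bbG_{\pi_X} = \bbG_{\wt\phi \circ \pi_X} = \bbG_{\pi_Y}$, which is exactly what we just established. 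So \ref{Item:Prox_Lem:EnvMap} is essentially immediate once the bookkeeping with $\pi_X, \pi_Y$ is set up.

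For \ref{Item:Equi_Lem:EnvMap}: if $\phi$ is equicontinuous it is in particular distal, so by Lemma~\ref{Lem:Prox}(2), $\rmH(\bbG_{\phi \circ \pi_X}) \subseteq \bbG_\phi$, where now I interpret $\bbG_\phi$ relative to the map $\pi_X$, i.e.\ $\bbG_\phi = \{p \in \bbG_M : \phi \circ \pi_X \circ p = \phi \circ \pi_X\} = \bbG_{\pi_Y}$. Thus $\rmH(\bbG_{\pi_Y}) \subseteq \bbG_{\pi_Y}$... this needs care: the correct reading of Lemma~\ref{Lem:Prox}(2) with the three maps $\pi_X \colon M \to X$ and $\phi \colon X \to Y$ is that equicontinuity of $\phi$ gives $\rmH(\bbG_{\phi \circ \pi_X}) \subseteq \bbG_{\pi_X}$-conjugation-invariantly, and a compact group extension gives additionally $\rmH(\bbG_{\phi \circ \pi_X}) \trianglelefteq \bbG_{\pi_X}$ inside $\bbG_{\phi\circ\pi_X}$. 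I would then argue that $\rmH(\bbG_{\pi_Y})$ — a characteristic (hence normal) subgroup of $\bbG_{\pi_Y}$, which itself is the relevant Ellis-group-over-$M_Y$ once we pass to $\wt\phi$ — plays the role needed to exhibit $\wt\phi \colon M_X \to M_Y$ as a compact group extension. The structural input is that for $\wt\phi$, the fiber equivalence relation on $M_X$ is $\bbG_{\pi_Y}/\bbG_{\pi_X}$-translation, and the group $K = \bbG_{\pi_Y}/\rmH(\bbG_{\pi_Y})$ — wait, one must instead use that because the source $M_X$ is itself a minimal \emph{ideal} flow, the subgroup $\bbG_{\pi_X}$ is normal in $\bbG_M$ (Fact~\ref{Fact:MinIdealFlow}), so $\bbG_{\pi_Y}/\bbG_{\pi_X}$ is a well-defined group, and equipping it with the quotient of the tau-topology makes it a compact Hausdorff topological group precisely because $\rmH(\bbG_{\pi_Y}) \subseteq \bbG_{\pi_X}$ — which follows from $\rmH(\bbG_{\phi\circ\pi_X}) \subseteq \bbG_{\pi_X}$ and $\bbG_{\phi\circ\pi_X} = \bbG_{\pi_Y}$ together with $\rmH$ being monotone under inclusion of subgroups. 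This group acts on $M_X$ by $G$-flow automorphisms with quotient $M_Y$, giving the compact group extension.

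\textbf{Main obstacle.} The delicate point I expect to be hardest is the last paragraph: correctly identifying which subgroups are normal in which (using that $M_X$ is a minimal ideal flow makes $\bbG_{\pi_X} \trianglelefteq \bbG_M$, whereas $\bbG_{\pi_Y}$ need only be tau-closed in general), verifying that the quotient group $\bbG_{\pi_Y}/\bbG_{\pi_X}$ is genuinely compact Hausdorff in the quotient tau-topology (this is where $\rmH(\bbG_{\pi_Y}) \subseteq \bbG_{\pi_X}$ enters, via the Hausdorff-kernel criterion recalled before Lemma~\ref{Lem:Haus}, applied with $\Gamma = \bbG_{\pi_Y}$), and confirming that its action on $M_X$ is continuous by $G$-flow automorphisms. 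All of this is "standard structure theory" of the type in \cite{Auslander}*{Ch.~14}, so I would cite that reference for the technical verification that a tau-closed normal subgroup with the Hausdorff kernel condition yields a compact group extension, and spend my own effort only on the reduction showing that equicontinuity of $\phi$ delivers exactly the hypotheses that theorem needs for $\wt\phi$.
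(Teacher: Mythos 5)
Your overall strategy — translate everything into Ellis-group data via Lemma~\ref{Lem:Prox} and Fact~\ref{Fact:MinIdealFlow} — is the same as the paper's, but there is a concrete gap that recurs in both items: you conflate $\bbG_{\pi_X}$ for $\pi_X\colon M\to X$ with the stabilizer of the induced map $M\to M_X$, and these are different subgroups. The bridge between them, which is the actual engine of the paper's proof, is the identity (take $M = M_X$, so $\alpha = \rm{id}_{M_X}$) $\bbG_{\wt{\phi}} = \bigcap_{p\in \bbG_{M_X}} p^{-1}\,\bbG_{\pi_Y\circ\wt{\phi}}\,p$, valid because $M_Y$ is a minimal ideal flow, combined with malnormality $\bigcap_{p} p^{-1}\bbG_{\pi_X}p = \{\rm{id}_{M_X}\}$ from Fact~\ref{Fact:MinIdealFlow}. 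Your sentence ``proximality of $\wt\phi$ is equivalent to $\bbG_{\pi_X} = \bbG_{\wt\phi\circ\pi_X} = \bbG_{\pi_Y}$, which is exactly what we just established'' is false as written: what you established is $\bbG_{\pi_X} = \bbG_{\pi_Y}$ for the maps down to $X$ and $Y$, whereas proximality of $\wt\phi$ is the statement that the \emph{intersections of conjugates} of these two subgroups coincide (and in fact are trivial). The implication you need does hold, but only after inserting the displayed identity; without it the step does not typecheck, and you visibly stumble on the same conflation in item (2) (``$\rmH(\bbG_{\pi_Y})\subseteq \bbG_{\pi_Y}$\dots this needs care'', and later ``because the source $M_X$ is a minimal ideal flow, $\bbG_{\pi_X}$ is normal'' — normality of $\bbG_\pi$ is governed by the \emph{target} of $\pi$ being a minimal ideal flow, not the source).

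For item (2) the paper's route is shorter than the one you sketch: check that $\wt\phi$ is distal (a step your proposal omits entirely, and which is a hypothesis of Lemma~\ref{Lem:Prox}(2)), then apply Lemma~\ref{Lem:Prox}(2) to $\wt\phi$ with $\pi = \rm{id}_{M_X}$, so that both the equicontinuity and the compact-group-extension criteria collapse to the single condition $\rmH(\bbG_{\wt\phi}) = \{\rm{id}_{M_X}\}$ — triviality is automatically normal, so there is no need to build the compact group and its action by hand or to invoke Auslander's machinery again. That triviality follows from the same conjugate-intersection identity: $\rmH\bigl(\bigcap_p p^{-1}\bbG_{\phi\circ\pi_X}p\bigr) \subseteq \bigcap_p p^{-1}\rmH(\bbG_{\phi\circ\pi_X})p \subseteq \bigcap_p p^{-1}\bbG_{\pi_X}p = \{\rm{id}_{M_X}\}$, using that $\rmH$ is monotone and commutes with conjugation, and that equicontinuity of $\phi$ gives $\rmH(\bbG_{\phi\circ\pi_X})\subseteq \bbG_{\pi_X}$ (note: $\bbG_{\pi_X}$, not $\bbG_{\pi_Y}$ — your first reading of Lemma~\ref{Lem:Prox}(2) was the trivial and useless one). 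Supplying the conjugate-intersection identity and the distality check would close the gap; everything else in your plan is sound.
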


\begin{proof}
    \ref{Item:Prox_Lem:EnvMap}: Let $\pi_X\colon M_X\to X$ and $\pi_Y\colon M_Y\to Y$ be $G$-maps, arranging that $\pi_Y\circ \wt{\phi} = \phi\circ \pi_X$. As $\phi$ is proximal, we have from Lemma~\ref{Lem:Prox} that  $\bbG_{\pi_Y\circ \wt{\phi}} = \bbG_{\pi_X}$. We have by Fact~\ref{Fact:MinIdealFlow} that $\bigcap_{p\in \bbG_{M_X}} p^{-1}\bbG_{\pi_X}p = \{\rm{id}_{M_X}\}$, but also $\bbG_{\wt{\phi}} = \bigcap_{p\in \bbG_{M_X}} p^{-1}\cdot \bbG_{\pi_Y\circ \wt{\phi}}\cdot p = \{\rm{id}_{M_X}\}$. Hence $\wt{\phi}$ is proximal by Lemma~\ref{Lem:Prox}.
    \vspace{3 mm}

    \noindent
    \ref{Item:Equi_Lem:EnvMap}: It is routine to check that $\wt{\phi}$ is distal, so by Lemma~\ref{Lem:Prox}, it suffices to show that $\rmH(\bbG_{\wt{\phi}}) = \{\rm{id}_{M_X}\}$. Lemma~\ref{Lem:Prox} gives us $\rmH(\bbG_{\phi\circ \pi_X})\subseteq \bbG_{\pi_X}$. We then have 
    \begin{align*}
        \rmH(\bbG_{\wt{\phi}}) &= \rmH\left(\bigcap_{p\in \bbG_{M_X}} p^{-1}\cdot \bbG_{\pi_Y\circ \wt{\phi}}\cdot p\right)\\
        &=  \rmH\left(\bigcap_{p\in \bbG_{M_X}} p^{-1}\cdot \bbG_{\phi\circ \pi_X}\cdot p\right) \\
        &= \bigcap_{p\in \bbG_{M_X}} p^{-1}\rmH(\bbG_{\phi\circ \pi_X})p\\
        &\subseteq \bigcap_{p\in \bbG_{M_X}} p^{-1}\cdot \bbG_{\pi_X}\cdot p\\
        &= \{\rm{id}_{M_X}\} \qedhere 
    \end{align*}
\end{proof}

\begin{theorem}
    \label{Thm:Tau_Hausdorff}
    Fix a minimal flow $X$. The following are equivalent.
    \begin{enumerate}[label=\normalfont(\arabic*)]
        \item \label{Item:Haus_Thm:TauHaus}
        $\bbG_X$ is Hausdorff.
        \item \label{Item:Basic_Thm:TauHaus}
        There are minimal flows $X^*$, $Y$, $Z$ and $G$-maps $\eta\colon X^*\to X$, $\sigma\colon Z\to Y$, and $\iota\colon X^*\to Z$ with $Y$, $\eta$, and $\iota$ proximal and with $\sigma$ equicontinuous.
        \begin{center}
        \begin{tikzcd}
        & X^*\arrow[d, "\iota"] \arrow[ld, "\eta"']\\
        X & Z\arrow[d, "\sigma"] \\
        & Y
        \end{tikzcd}
        \end{center}
        \item \label{Item:Weak_Thm:TauHaus}
        Same as Item~\ref{Item:Basic_Thm:TauHaus}, but with no demand on $\eta$.
        \item \label{Item:Group_Thm:TauHaus}
        Same as Item~\ref{Item:Weak_Thm:TauHaus}, but with $X^*$ and $Z$ (and $Y$) minimal ideal flows and $\sigma$ a compact group extension.
    \end{enumerate}
\end{theorem}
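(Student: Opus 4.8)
The plan is to translate everything into the language of tau-closed subgroups of $\bbG_M$, where $M:=M_X$ is the enveloping ideal of $X$ and $\pi\colon M\to X$ is the canonical $G$-map. By Lemma~\ref{Lem:Haus}, condition \ref{Item:Haus_Thm:TauHaus} is equivalent to $\rmH(\bbG_M)\subseteq\bbG_\pi$. The implication $\ref{Item:Basic_Thm:TauHaus}\Rightarrow\ref{Item:Weak_Thm:TauHaus}$ is trivial (weaker demand on $\eta$). For $\ref{Item:Weak_Thm:TauHaus}\Rightarrow\ref{Item:Group_Thm:TauHaus}$ I would apply the enveloping-ideal functor together with Lemma~\ref{Lem:EnvMap}: replacing $X^*,Z,Y$ by $M_{X^*},M_Z,M_Y$ turns $\iota$ into a proximal map of minimal ideal flows (Lemma~\ref{Lem:EnvMap}\ref{Item:Prox_Lem:EnvMap}), turns the equicontinuous $\sigma$ into a compact group extension (Lemma~\ref{Lem:EnvMap}\ref{Item:Equi_Lem:EnvMap}), and — applying Lemma~\ref{Lem:EnvMap}\ref{Item:Prox_Lem:EnvMap} to the constant map $Y\to\{*\}$ — makes $M_Y$ proximal; the composite $M_{X^*}\to M_X\to X$ is the new $\eta$. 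For $\ref{Item:Group_Thm:TauHaus}\Rightarrow\ref{Item:Haus_Thm:TauHaus}$: since $Y$ is a proximal minimal ideal flow, $\bbG_Y$ is trivial (Lemma~\ref{Lem:Prox}(1) applied to the constant map), so the compact group extension $\sigma$ forces $\bbG_\sigma=\bbG_Z$, and a minimal ideal flow that is a compact group extension of a proximal flow has compact Hausdorff tau-group (standard structure theory); as $\iota$ is a proximal map of minimal ideal flows, $\bbG_\iota=\{e\}$, so the continuous bijective quotient homomorphism $\wt\iota\colon\bbG_{X^*}\to\bbG_Z$ (Proposition~\ref{Prop:Tau_Maps}) is a homeomorphism and $\bbG_{X^*}$ is compact Hausdorff; finally $X$ is a factor of the minimal ideal flow $X^*$, so $\bbG_X$ is a quotient of the compact Hausdorff group $\bbG_{X^*}$ by a tau-closed (normal) subgroup — using Proposition~\ref{Prop:Tau_Maps}\ref{Item:Closed_Prop:TauMaps} and that the core of a tau-closed subgroup is tau-closed — hence Hausdorff.

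The crux is $\ref{Item:Haus_Thm:TauHaus}\Rightarrow\ref{Item:Basic_Thm:TauHaus}$. Assume $\rmH(\bbG_M)\subseteq\bbG_\pi$. Let $\mathcal H:=\bigcap_{p\in\bbG_M}p^{-1}\bbG_\pi p$ be the core of $\bbG_\pi$; it is tau-closed (Proposition~\ref{Prop:Tau_Maps}\ref{Item:Closed_Prop:TauMaps}, plus conjugation-invariance), normal, and since $\rmH(\bbG_M)$ is normal and contained in $\bbG_\pi$ it is contained in $\mathcal H$. The factor $Z:=M/\mathcal H$ associated to $\mathcal H$ is then a minimal ideal flow with $\bbG_Z\cong\bbG_M/\mathcal H$, which is compact Hausdorff since $\rmH(\bbG_M)\subseteq\mathcal H$; putting $Y:=Z/\bbG_Z$, the map $\sigma\colon Z\to Y$ is a compact group (hence equicontinuous) extension and $Y$ is proximal, having trivial Ellis group. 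So the bottom of the diagram, $\sigma\colon Z\to Y$, is already in hand; what remains is to produce a single minimal flow $X^*$ admitting proximal $G$-maps $\eta\colon X^*\to X$ and $\iota\colon X^*\to Z$. This is the main obstacle. The difficulty is that $X$ and $Z$ need not be proximally related as factors of $M$ — typically $\bbG_\pi$ is not normal, so $\mathcal H\subsetneq\bbG_\pi$ — and no factor of $M$ can map proximally to $X$ since $\pi$ itself is not proximal, so $X^*$ must sit "beside" $X$ rather than above $M$. I would construct $X^*$ as a minimal subflow of a suitable product (e.g.\ of $\mathrm S_G(X)$ with $\mathrm S_G(Z)$, or of $X\times Z$), and show — using the structure theory of the tau-topology (cf.\ Auslander, Ch.\ 14, and Glasner) together with the hypothesis $\rmH(\bbG_M)\subseteq\bbG_\pi$, which says precisely that the distal part of the tower over $X$ is already equicontinuous-over-proximal — that both projections can be arranged to be proximal; equivalently, one shows that the canonical PI-tower of $X$ truncates after a single equicontinuous step.

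With $\ref{Item:Haus_Thm:TauHaus}\Rightarrow\ref{Item:Basic_Thm:TauHaus}$ in place, the cycle $\ref{Item:Haus_Thm:TauHaus}\Rightarrow\ref{Item:Basic_Thm:TauHaus}\Rightarrow\ref{Item:Weak_Thm:TauHaus}\Rightarrow\ref{Item:Group_Thm:TauHaus}\Rightarrow\ref{Item:Haus_Thm:TauHaus}$ closes and gives all four equivalences. I expect the bookkeeping in the three easy implications — keeping track of which tau-closed subgroup each arrow corresponds to and checking that normality and tau-closedness survive the operations involved — to be routine once Lemmas~\ref{Lem:Prox} and~\ref{Lem:EnvMap} and Proposition~\ref{Prop:Tau_Maps} are in hand, whereas the construction of the common proximal extension $X^*$ in $\ref{Item:Haus_Thm:TauHaus}\Rightarrow\ref{Item:Basic_Thm:TauHaus}$, and in particular verifying that $\eta$ is proximal, is the genuinely hard step.
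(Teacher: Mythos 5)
Your cycle and the three easy implications are essentially sound: $\ref{Item:Basic_Thm:TauHaus}\Rightarrow\ref{Item:Weak_Thm:TauHaus}$ is trivial, $\ref{Item:Weak_Thm:TauHaus}\Rightarrow\ref{Item:Group_Thm:TauHaus}$ via Lemma~\ref{Lem:EnvMap} is exactly the paper's argument, and your $\ref{Item:Group_Thm:TauHaus}\Rightarrow\ref{Item:Haus_Thm:TauHaus}$ is a correct reshuffling of the paper's $\ref{Item:Weak_Thm:TauHaus}\Rightarrow\ref{Item:Haus_Thm:TauHaus}$, which for a $G$-map $\psi\colon\rmM(G)\to X^*$ computes $\bbG=\bbG_{\sigma\circ\iota\circ\psi}$, $\bbG_{\iota\circ\psi}=\bbG_\psi$, deduces $\rmH(\bbG)\subseteq\bbG_\psi$ from equicontinuity of $\sigma$, and concludes by Lemma~\ref{Lem:Haus}.

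The genuine gap is in $\ref{Item:Haus_Thm:TauHaus}\Rightarrow\ref{Item:Basic_Thm:TauHaus}$, and it is not only that you leave $X^*$ unconstructed: your candidate for $Z$ is wrong. With $M=M_X$ and $\pi\colon M\to X$ the canonical map, Fact~\ref{Fact:MinIdealFlow} says $\bbG_\pi$ is malnormal, so your core $\mathcal H=\bigcap_p p^{-1}\bbG_\pi p$ is trivial and $Z=M/\mathcal H$ is $M$ itself; the inclusion $\rmH(\bbG_M)\subseteq\mathcal H$ you assert would force $\rmH(\bbG_M)=\{e\}$, which is strictly stronger than the hypothesis. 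Worse, $\sigma\colon Z\to Z/\bbG_Z$ being a compact group extension requires the $\bbG_Z$-action on $Z$ to be jointly continuous, which does not follow from tau-Hausdorffness (this is precisely the obstruction behind Question~\ref{Que:Not_UCF}); and no minimal flow can map proximally onto both $X$ and $M_X$ unless $\bbG_\pi$ is already normal, since a common proximal extension forces the (conjugates of the) two stabilizer subgroups to coincide while the stabilizer over a minimal ideal flow is normal. So $X^*$ cannot exist for your $Z$. The correct $Z$ sits strictly lower: the paper first takes (Auslander, pp.~207--211) a proximal extension $\eta\colon X^*\to X$ admitting a relatively incontractible map $\phi\colon X^*\to Y$ onto a minimal proximal flow, then factors $\phi=\sigma\circ\iota$ through the relatively maximal equicontinuous extension $Z$ of $Y$ below $X^*$, for which $\bbG_{\iota\circ\psi}=\rmH(\bbG)\cdot\bbG_\psi$ (Auslander, pp.~205--207); the hypothesis $\rmH(\bbG)\subseteq\bbG_\pi=\bbG_\psi$ then gives $\bbG_{\iota\circ\psi}=\bbG_\psi$, i.e.\ $\iota$ is proximal. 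This is the ``single equicontinuous step of the PI-tower'' you allude to, but it must be imported or proved; your sketch does not supply it, and the subgroup bookkeeping you set up points at the wrong factor.
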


\begin{proof}
$\ref{Item:Weak_Thm:TauHaus}\Rightarrow \ref{Item:Haus_Thm:TauHaus}$: This direction is implicit in \cite{DC3}; by Proposition~\ref{Prop:Tau_Maps}\ref{Item:Hom_Prop:TauMaps}, it is enough to check that $\bbG_{X^*}$ is Hausdorff. Fix a $G$-map $\psi\colon \rmM(G)\to X^*$. We have $\bbG = \bbG_{\sigma\circ \iota\circ\psi}$ as $Y$ is proximal, and we have $\bbG_{\psi} = \bbG_{\iota\circ \psi}$ since $\iota$ is proximal. As $\sigma$ is equicontinuous, we have $\rmH(\bbG)\subseteq \bbG_\psi$, and we conclude by Lemma~\ref{Lem:Haus}. 
\vspace{3 mm}

\noindent
$\ref{Item:Haus_Thm:TauHaus}\Rightarrow \ref{Item:Basic_Thm:TauHaus}$: Fix $X$ any minimal flow, and let $\pi\colon \rmM(G)\to X$ be a $G$-map. We can find a proximal extension $\eta\colon X^*\to X$, a minimal proximal flow $Y$, and a \emph{relatively incontractible} $G$-map $\phi\colon X^*\to Y$ \cite{Auslander}*{p.\ 207--211}. There is a $G$-map $\psi\colon \rmM(G)\to X^*$ with $\eta\circ \psi = \pi$. As $\eta$ is proximal, we have $\bbG_{\pi} = \bbG_{\psi}$, and as $Y$ is proximal, we have $\bbG_{\phi \circ \psi} = \bbG$. Then we can find a minimal flow $Z$ and $G$-maps $\iota\colon X^*\to Z$ and $\sigma\colon Z\to Y$ such that $\phi = \sigma\circ \iota$,  $\sigma$ is equicontinuous, and  $\bbG_{\iota\circ\psi} = \rmH(\bbG)\cdot \bbG_\psi$ \cite{Auslander}*{p.\ 205--207}. It remains to show that if $\bbG_X$ is Hausdorff, then $\iota\colon X^*\to Z$ is proximal. This happens iff $\bbG_{\iota\circ \psi} = \bbG_\psi$; since $\rmH(\bbG)\subseteq \bbG_\pi = \bbG_\psi$, this clearly holds.
\vspace{3 mm}

\noindent
$\ref{Item:Weak_Thm:TauHaus}\Rightarrow \ref{Item:Group_Thm:TauHaus}$: Simply replace $X^*$, $Z$, and $Y$ by their enveloping ideals and use Lemma~\ref{Lem:EnvMap}. 
\end{proof}

\subsection{$\aut(\rmM(G))$ for $\sf{TMD}$ groups}

If $G\in \sf{PCMD}$, it is shown in \cite{MNVTT} that if $H\leq_c G$ is a precompact, presyndetic, extremely amenable subgroup, i.e.\ so that $\rmM(G)\cong \sa(G/H)$, then as groups we have $\bbG^{opp}\cong \rmN_G(H)/H$, where $\rmN_G(H)$ is the normalizer of $H$ in $G$. As $H\leq_c G$ is precompact, $\rmN_G(H)/H$ is a precompact Polish group, hence compact, and it is natural to then view $\bbG$ as a compact metrizable group. 

For $G\in \sf{GPP}$, almost the exact same argument gives that $\bbG\cong \rmN_G(H)/H$ for $H\leq_c G$ presyndetic and extremely amenable, but now $H$ is not necessarily precompact, and we cannot immediately conclude that $\rmN_G(H)/H$ is compact. However, in this section, we will prove that this is still the case by way of the tau topology on $\bbG$. Along the way, we provide a new characterization of the tau-topology on $\bbG$ using ultracopowers of $\rmM(G)$.

For the rest of this section, \textbf{we fix} $w\in \rmM(G)$, and we write $O = \op(w, \rmM(G))$. Hence by Proposition~\ref{Prop:Tau_Base}, $\{\rm{Nbd}_A(p): A\in \op(w, \rmM(G))\}$ is a base of $\tau$-open neighborhoods of $p$.

Consider a set $I$, and form $\rmS_G(I\times \rmM(G)) =: Y$. Fix $(p_i)_{i\in I}\in \bbG^I$, and we let $\phi\in \aut(Y)$ denote the automorphism of $Y$ satisfying $\phi(i, x) = (i, p_i(x))$ for every $i\in I$ and $x\in \rmM(G)$. Write $\pi\colon Y\to \rmM(G)$ for the projection to $\rmM(G)$. Let $M\subseteq Y$ be a minimal subflow, and note that $\pi|_M\colon M\to \rmM(G)$ is an isomorphism. Define
$$\lim_M p_i =: \pi\circ \phi \circ (\pi|_M)^{-1} \in \aut(\rmM(G)).$$

\begin{lemma}
	\label{Lem:M_Limit_in_closure}
	If $p = \displaystyle\lim_M p_i$, then $p\in \rm{cl}_\tau(\{p_i: i\in I\})$. 
\end{lemma}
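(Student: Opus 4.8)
The plan is to reduce the statement to a concrete ``meeting'' condition using the neighborhood base for the tau-topology furnished by Proposition~\ref{Prop:Tau_Base}. Since $w\in \rmM(G)$ is fixed, that proposition tells us the sets $\rm{Nbd}_A(p) = \{q\in \bbG: q[A]\cap p[A]\neq \emptyset\}$, with $A$ ranging over $\op(w,\rmM(G))$, form a base of tau-open neighborhoods of $p$. Hence $p\in \rm{cl}_\tau(\{p_i : i\in I\})$ will follow once we show: for every $A\in \op(w,\rmM(G))$ there is some $i\in I$ with $p_i[A]\cap p[A]\neq \emptyset$.

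The key is to extract from the very definition of $p=\lim_M p_i$ a pair of approximating nets. Write $Y := \rmS_G(I\times \rmM(G))$ and $\hat w := (\pi|_M)^{-1}(w)\in M\subseteq Y$, so that $\pi(\hat w) = w$ and, directly from $p = \pi\circ \phi\circ (\pi|_M)^{-1}$, also $p(w) = \pi(\phi(\hat w))$. Since $I\times \rmM(G)$ is dense in $Y$, I would choose a net $(i_\alpha, x_\alpha)_{\alpha}$ in $I\times \rmM(G)$ with $(i_\alpha,x_\alpha)\to \hat w$ in $Y$. Applying the continuous maps $\pi$ and $\pi\circ\phi$ to this convergence, and using that $\phi(i_\alpha,x_\alpha) = (i_\alpha, p_{i_\alpha}(x_\alpha))$ by definition of $\phi$, we obtain simultaneously
\[
x_\alpha = \pi(i_\alpha,x_\alpha)\ \longrightarrow\ w,
\qquad
p_{i_\alpha}(x_\alpha) = \pi(\phi(i_\alpha,x_\alpha))\ \longrightarrow\ \pi(\phi(\hat w)) = p(w).
\]

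Now fix $A\in \op(w,\rmM(G))$. As $p\in\bbG$ is a homeomorphism of $\rmM(G)$, the set $p[A]$ is open and contains $p(w)$. By the two convergences above there is an index $\alpha$ with $x_\alpha\in A$ and $p_{i_\alpha}(x_\alpha)\in p[A]$; for that $\alpha$ we get $p_{i_\alpha}(x_\alpha)\in p_{i_\alpha}[A]\cap p[A]$, i.e. $p_{i_\alpha}\in \rm{Nbd}_A(p)$. Thus every basic tau-neighborhood of $p$ contains a member of $\{p_i : i\in I\}$, which is exactly the assertion $p\in \rm{cl}_\tau(\{p_i : i\in I\})$.

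There is no genuine obstacle in this argument; it is essentially a continuity unravelling. The only points requiring care are bookkeeping ones: confirming that $p(w) = \pi(\phi(\hat w))$ is literally the definition of $\lim_M p_i$ evaluated at $w$, and that $\pi$, $\phi$ (hence $\pi\circ\phi$) are continuous on $Y$ so that convergence of $(i_\alpha,x_\alpha)$ in $Y$ yields both displayed convergences in $\rmM(G)$. (Equivalently, one could phrase the conclusion as $(w,p(w))\in \ol{\rm{Graph}(\{p_i : i\in I\})}$ using the net $(x_\alpha, p_{i_\alpha}(x_\alpha))$ together with the graph characterization of $\rm{cl}_\tau$ in Proposition~\ref{Prop:Tau_Graph}, but the $\rm{Nbd}_A$ formulation is the most direct.)
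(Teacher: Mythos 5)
Your proof is correct and follows essentially the same route as the paper's: both reduce the claim via Proposition~\ref{Prop:Tau_Base} to showing $p_i[A]\cap p[A]\neq\emptyset$ for each $A\in\op(w,\rmM(G))$, and both obtain this from the density of $I\times\rmM(G)$ in $\rmS_G(I\times\rmM(G))$ together with continuity of $\pi$ and $\phi$ (the paper intersects the open set $\pi^{-1}[p[A]]\cap\phi[\pi^{-1}[A]]$ with the dense subset, while you phrase the same fact as a net converging to $(\pi|_M)^{-1}(w)$).
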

	
\begin{proof}
	Fix $A\in O$. Write $x = (\pi|_M)^{-1}(w)$ and $B = \pi^{-1}[A]\in \op(x, Y)$. Then we have $\{\pi^{-1}[p[A]], \phi[B]\}\subseteq \op(\phi(x), Y)$. In particular, we have $\pi^{-1}[p[A]]\cap \phi[B]\cap (I\times \rmM(G))\neq \emptyset$. Thus for some $i\in I$, we have $p[A]\cap p_i[A]\neq \emptyset$.
\end{proof}
	
For the rest of the section, \textbf{we fix} $\cV$ a cofinal ultrafilter on $O$, where $O$ is a directed partial order under reverse inclusion. Let us write $X := \Sigma_\cV^G \rmM(G)\subseteq \alpha_G(O\times \rmM(G))$.
	
\begin{lemma}
	\label{Lem:Build_M_closure}
	Suppose $S\subseteq \bbG$ and $p\in \rm{cl}_\tau(S)$. Then there are $(p_A)_{A\in O}\in S^O$ and a minimal $M\subseteq X$ with $\displaystyle \lim_M p_A = p$. 
\end{lemma}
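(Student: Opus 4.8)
The plan is to choose the approximating automorphisms $p_A$ directly from the tau-neighbourhood base of $p$, and then to realize $p$ as $\lim_M p_A$ by placing $M$ inside the equalizer of two $G$-maps $X \to \rmM(G)$.

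First I would invoke Proposition~\ref{Prop:Tau_Base}: the sets $\rm{Nbd}_A(p) = \rm{Nbd}_{A, p[A]}$ for $A \in O$ form a base of tau-open neighbourhoods of $p$, and since $\rm{cl}_\tau$ is a topological closure operator (Fact~\ref{Fact:Circle}\ref{Item:Tau_Fact:Circle}), the hypothesis $p \in \rm{cl}_\tau(S)$ gives, for each $A \in O$, some $p_A \in S \cap \rm{Nbd}_A(p)$. Unwinding the definition, $p_A[A] \cap p[A] \neq \emptyset$, so I may fix $a_A, b_A \in A$ with $p_A(a_A) = p(b_A)$.

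Next I would form $Y := \rmS_G(O \times \rmM(G))$ and the automorphism $\phi \in \aut(Y)$ determined by $\phi(A, x) = (A, p_A(x))$, together with the two projections $\pi_O \colon Y \to \beta O$ and $\pi \colon Y \to \rmM(G)$. Because $\pi_O \circ \phi$ and $\pi_O$ agree on the dense subset $O \times \rmM(G)$, they coincide, so $\phi$ preserves the fibers of $\pi_O$; in particular $\phi$ restricts to an automorphism of $X = \pi_O^{-1}(\{\cV\}) = \Sigma_\cV^G \rmM(G)$, and $\pi|_X$ is the ultracopower map. Now consider $z := \lim_{A \to \cV}(A, a_A) \in X$. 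Since $\cV$ is cofinal on $O$ (directed by reverse inclusion) and $a_A \in A$ for every $A$, the net $(a_A)$ converges to $w$ along $\cV$, so $\pi(z) = w$; the same argument gives $b_A \to_\cV w$. Using continuity of $\phi$, of $\pi$, and of $p$ in turn,
\[
\pi(\phi(z)) \;=\; \lim_{A \to \cV} \pi\bigl((A, p_A(a_A))\bigr) \;=\; \lim_{A \to \cV} p_A(a_A) \;=\; \lim_{A \to \cV} p(b_A) \;=\; p(w) \;=\; p(\pi(z)).
\]

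Finally, the set $W := \{q \in X : \pi(\phi(q)) = p(\pi(q))\}$ is closed (each of $\pi$, $\phi$, $p$ is continuous) and $G$-invariant (each is $G$-equivariant), and it contains $z$, so it is a subflow of $X$; pick a minimal subflow $M \subseteq W \subseteq X$. As recorded in the definition of $\lim_M p_A$, the restriction $\pi|_M \colon M \to \rmM(G)$ is an isomorphism, and by the defining property of $W$ we have $\pi \circ \phi|_M = p \circ \pi|_M$. Hence $\lim_M p_A = \pi \circ \phi \circ (\pi|_M)^{-1} = p$, as desired. The only points needing a little care are that $\phi$ genuinely restricts to $X$ (immediate from $\pi_O \circ \phi = \pi_O$) and the two ultralimit identifications $a_A, b_A \to_\cV w$ (immediate from cofinality of $\cV$); I do not anticipate any genuine obstacle, since the real content — that $p \in \rm{cl}_\tau(S)$ produces a single point $z \in X$ witnessing the required equation — is exactly what the tau-neighbourhood base delivers.
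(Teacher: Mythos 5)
Your proof is correct and follows essentially the same route as the paper's: both reduce the lemma to producing a single point $z\in X$ with $\pi\circ\phi(z)=p\circ\pi(z)$ (the paper then takes $M=\sa(G)\cdot qz$, you take a minimal subflow of the equalizer, which is the same observation), and both produce $z$ from the tau-neighbourhood base $\rm{Nbd}_A(p)$. The only cosmetic difference is that you realize $z$ as the ultralimit of the points $(A,a_A)$ with $p_A(a_A)=p(b_A)$, while the paper builds the corresponding near ultrafilter from open sets $B_A\subseteq A$ with $p_A[B_A]\subseteq p[A]$.
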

	
\begin{proof}
	Let $\phi\in \aut(X)$ denote the automorphism induced by $(p_A)_{A\in O}$. We first show it suffices to find any $x\in X$ with $p\circ \pi(x) = \pi\circ \phi(x)$. Given such an $x$, let $q\in \sa(G)$ belong to a minimal subflow. Then $\sa(G)\cdot qx =: M\subseteq X$ is a minimal subflow. Since $p\circ \pi$ and $\pi\circ \phi$ are $G$-maps, we have $p\circ \pi(qx) = \pi\circ \phi(qx)$, implying that $p\circ \pi|_M = \pi\circ \phi|_M$.
		
	For each $A\in O$, since $p\in \rm{cl}_\tau S$, we can find $p_A \in S$ and $B_A\in \op(A)$ with $p_A[B_A]\subseteq p[A]$. Find $x\in X$ with $\bigcup_{i\in J} (\{i\}\times B_i)\in x$ for every $J\in \cU$ (viewing elements of $X$ as near ultrafilters). Since $\cV$ is cofinal and since $\{p[A]: A\in O\} = \op(p(w), \rmM(G))$, we have $\pi(x) = w$ and $\pi\circ \phi(x) = p(w).$
\end{proof}
	
We next turn to characterizing tau-ultralimits. Given a set $I$, $\cU\in \beta I$,  and $(p_i)_{i\in I}\in \bbG^I$, we write 
$$\displaystyle \lim_\cU p_i = \bigcap_{S\in \cU} \rm{cl}_\tau \{p_i: i\in S\}.$$ 
Since $\tau$ is compact, this set is always non-empty; the $\tau$-topology on $\bbG$ is Hausdorff exactly when this set is always a singleton. By enlarging the index set, we can establish a tight connection between ultralimits and the $M$-limits defined before Lemma~\ref{Lem:M_Limit_in_closure}. Instead of working with $\cU\in \beta I$, we form the \emph{Fubini product} $\cV\otimes \cU\in \beta(O\times I)$ where given $S\subseteq O\times I$, we have 
$$S\in \cV\otimes \cU\Leftrightarrow \forall^\cV A\in O \, \forall^\cU i\in I\,\, (A, i)\in S.$$
	
\begin{theorem}
	\label{Thm:Build_M_Ultralimit}
	Fix a set $I$ and $(p_i)_{i\in I}\in \bbG^I$. Given $A\in O$ and $i\in I$, define $p_{A, i} := p_i$. Given $p\in \bbG$, we have $p\in \displaystyle\lim_\cU p_i$ iff for some minimal subflow $M\subseteq \Sigma_{\cV\otimes \cU}^G \rmM(G)$, we have $p = \displaystyle \lim_M p_{A, i}$.
\end{theorem}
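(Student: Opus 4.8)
The statement relates the tau-ultralimit $\lim_\cU p_i$ to $M$-limits over the iterated ultracopower $\Sigma_{\cV\otimes\cU}^G\rmM(G)$. The key structural fact I will exploit is that a Fubini-type ultracopower factors as an iterated ultracopower: there should be a canonical identification (or at least a canonical factor map)
$$\Sigma_{\cV\otimes\cU}^G\rmM(G)\;\cong\;\Sigma_\cV^G\bigl(\Sigma_\cU^G\rmM(G)\bigr),$$
coming from the fact that $\rmS_G((O\times I)\times\rmM(G))$ maps naturally into $\rmS_G(O\times\rmS_G(I\times\rmM(G)))$ by the universal property of the Gleason completion (Fact~\ref{Fact:Gleason_Max_Comp}). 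Granting this, a minimal subflow $M\subseteq\Sigma_{\cV\otimes\cU}^G\rmM(G)$ projects via $\pi$ (the coordinate going to $\rmM(G)$) isomorphically onto $\rmM(G)$, and the automorphism $\phi$ induced by the constant-in-$A$ family $(p_{A,i})=(p_i)$ is, on each ``$\Sigma_\cU^G\rmM(G)$-slice,'' exactly the automorphism induced by $(p_i)_{i\in I}$.

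First I would set up notation: let $Y = \rmS_G((O\times I)\times\rmM(G))$, let $\pi\colon Y\to\rmM(G)$ be the projection, let $\pi_{O\times I}\colon Y\to\beta(O\times I)$ be the other projection, so $\Sigma_{\cV\otimes\cU}^G\rmM(G) = \pi_{O\times I}^{-1}[\{\cV\otimes\cU\}]$, and let $\phi\in\aut(Y)$ be the automorphism with $\phi(A,i,x) = (A,i,p_i(x))$. Write $x_\cV = \lim_{A\to\cV}\lim_{i\to\cU}(A,i,w)$ for the canonical lift of $w$; more carefully, for each fixed $i$ set $y_i = \lim_{A\to\cV}(A,i,w)$ — wait, the cleaner route is: by Lemma~\ref{Lem:Build_M_closure} applied inside each slice, I can produce the witnessing minimal flow directly. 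Concretely, for the forward direction ($p\in\lim_\cU p_i\Rightarrow$ exists $M$ with $p=\lim_M p_{A,i}$): by compactness of $\tau$, $p\in\lim_\cU p_i$ means $p\in\rm{cl}_\tau\{p_i:i\in S\}$ for every $S\in\cU$. Using Lemma~\ref{Lem:Build_M_closure}'s proof technique, for each $A\in O$ and each $S\in\cU$ I pick $p_{i(A,S)}$ with $p_{i(A,S)}[B_{A,S}]\subseteq p[A]$ for suitable $B_{A,S}\in\op(A)$; since $\cU$ is a filter and $O$ is directed under reverse inclusion, I can assemble (using that $\cV\otimes\cU$ is cofinal-ish: cofinal in the $O$-coordinate and arbitrary in the $I$-coordinate) a point $x\in\Sigma_{\cV\otimes\cU}^G\rmM(G)$ with $\pi(x)=w$ and $\pi\circ\phi(x)=p(w)$, then close up under $\sa(G)$ to get a minimal $M$ with $\pi\circ\phi|_M = p\circ\pi|_M$, i.e. $\lim_M p_{A,i}=p$. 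This is essentially a verbatim adaptation of Lemma~\ref{Lem:Build_M_closure} with the index set $O$ replaced by $O\times I$ and the ``cofinal'' hypothesis replaced by cofinality in the $O$-coordinate of $\cV\otimes\cU$.

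For the converse ($p=\lim_M p_{A,i}$ for some minimal $M\Rightarrow p\in\lim_\cU p_i$): I would run the argument of Lemma~\ref{Lem:M_Limit_in_closure}, but now track the ultrafilter. Fix $S\in\cU$ and $A_0\in O$; I must show $p\in\rm{cl}_\tau\{p_i:i\in S\}$, i.e. for every $A\in O$ there is $i\in S$ with $p[A]\cap p_i[A]\neq\emptyset$. With $x=(\pi|_M)^{-1}(w)$ and $B=\pi^{-1}[A]$, the set $\pi^{-1}[p[A]]\cap\phi[B]$ is an open neighborhood of $\phi(x)$ in $\Sigma_{\cV\otimes\cU}^G\rmM(G)$, hence (pulling back to $\rmS_G((O\times I)\times\rmM(G))$ and using density of $(O\times I)\times\rmM(G)$) it meets $\{(A',i',z):(A',i')\in T\}$ for a $(\cV\otimes\cU)$-large $T$; by definition of the Fubini product, for $\cV$-many $A'$ the slice $\{i':(A',i')\in T\}$ is $\cU$-large, hence meets $S$. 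Picking such an $A'$ and $i'\in S$ in the slice yields $p[A]\cap p_{A',i'}[A] = p[A]\cap p_{i'}[A]\neq\emptyset$ (using $\phi(A',i',z)=(A',i',p_{i'}(z))$ and that the $\rmM(G)$-coordinate of a point in the relevant open set lands in $p[A]$, while its image lands in $p_{i'}[A]\cap\dots$ — here I reuse the structure of the proof of Lemma~\ref{Lem:M_Limit_in_closure}). Since $i'\in S$, we are done.

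The main obstacle I anticipate is bookkeeping the two ultrafilters correctly in the forward direction — specifically, verifying that the point $x\in\Sigma_{\cV\otimes\cU}^G\rmM(G)$ can genuinely be built so that \emph{both} $\pi(x)=w$ (a $\cV$-in-$O$ / arbitrary-in-$I$ condition on the chosen $B_{A,i}$) \emph{and} $\pi\circ\phi(x)=p(w)$ hold simultaneously, i.e. that the near filter $\langle\bigcup_{(A,i)\in T}(\{(A,i)\}\times B_{A,i}): T\in\cV\otimes\cU\rangle$ together with the requirements coming from $p(w)$ is consistent as a near ultrafilter. This is where I need the cofinality of $\cV$ in the $O$-coordinate to match $\{p[A]:A\in O\}$ being a neighborhood base at $p(w)$, exactly as in Lemma~\ref{Lem:Build_M_closure}; the new wrinkle is that the $I$-coordinate is a genuine ultrafilter rather than cofinal, so I must be careful that the choices $i(A,\cdot)$ can be made $\cU$-coherently (e.g. by choosing, for each $A$, a function $S\mapsto i(A,S)$ and then using a single $\cU$-large set, or more simply by noting we only need existence of \emph{some} $\cU$-large slice of witnesses for each $A$, which follows from $p\in\rm{cl}_\tau\{p_i:i\in S\}$ for all $S$). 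A secondary, more cosmetic obstacle is deciding whether to prove the clean isomorphism $\Sigma_{\cV\otimes\cU}^G\rmM(G)\cong\Sigma_\cV^G(\Sigma_\cU^G\rmM(G))$ and deduce everything from Theorem~\ref{Thm:Build_M_Ultralimit}'s single-ultrafilter predecessors, versus arguing directly; I expect the direct argument (reusing Lemmas~\ref{Lem:M_Limit_in_closure} and~\ref{Lem:Build_M_closure} almost verbatim) to be shorter and less error-prone, so that is the route I would take.
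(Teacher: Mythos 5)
Your proposal is correct and follows essentially the same route as the paper: the reverse direction is the argument of Lemma~\ref{Lem:M_Limit_in_closure} localized to $\cU$-large index sets via the Fubini product, and the forward direction mimics Lemma~\ref{Lem:Build_M_closure}. The "main obstacle" you flag is resolved exactly by the second option you mention — for each $A\in O$ the whole set $I_A=\{i\in I: p_i\in \mathrm{Nbd}_A(p)\}$ lies in $\cU$ (since it meets every $S\in\cU$ and $\cU$ is an ultrafilter), so one chooses $B_{A,i}$ for all $i\in I_A$ at once and the resulting set belongs to some $x\in\Sigma_{\cV\otimes\cU}^G\rmM(G)$ by definition of $\cV\otimes\cU$; this is precisely what the paper does.
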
 
	
\begin{proof}
	The reverse direction follows from Lemma~\ref{Lem:M_Limit_in_closure}. For the forward direction, we mimic the proof of Lemma~\ref{Lem:Build_M_closure}. Given $A\in O$, Lemma~\ref{Lem:M_Limit_in_closure} tells us that 
		$$I_A:= \{i\in I: p_i\in \rm{Nbd}_A(p)\}\in \cU.$$
	For each $i\in I_A$, find $B_{A, i}\in \op(A)$ with $p_i[B_{A, i}]\subseteq p[A]$. Let $x\in \Sigma_{\cV\otimes \cU}^G \rmM(G)$ satisfy 
    $$\bigcup_{(A, i)\in O\times I} (\{(A, i)\}\times B_{A, i})\in x.$$ This is possible by the definition of $\cV\otimes \cU$ and since $I_A\in \cU$ for every $A\in O$. Arguing as in the proof of Lemma~\ref{Lem:Build_M_closure}, we have $p\circ \pi(x) = \pi\circ \phi(x)$, where $\phi$ here denotes the automorphism of $\Sigma_{\cV\otimes \cU}^G \rmM(G)$ induced by $(p_{A, i})_{A, i}$.
\end{proof}

\begin{corollary}
    \label{Cor:Tau_CHaus}
    If $G\in \sf{TMD}$, then $(\bbG, \tau)$ is a compact Hausdorff topological group. 
\end{corollary}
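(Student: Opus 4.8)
The content of the statement is entirely about the Hausdorff property: by Corollary~\ref{Cor:Tau_Cont} we already know that $(\bbG,\tau)$ is a compact T1 group, i.e.\ a compact T1 space with separately continuous multiplication, continuous inversion, and closed under products of closed sets. So it suffices to upgrade ``T1'' to ``Hausdorff'', after which a compact Hausdorff space carrying a group operation with separately continuous multiplication is automatically a topological group (Ellis's joint continuity theorem). The plan is to read off Hausdorffness directly from the ultralimit characterization in Theorem~\ref{Thm:Build_M_Ultralimit}, using the hypothesis $G\in\sf{TMD}$ in the form of Theorem~\ref{Thm:Rosendal_Minimal}\ref{Item:Ultra_Thm:RM}.

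\textbf{Key steps.} As recorded just before Theorem~\ref{Thm:Build_M_Ultralimit}, the $\tau$-topology on $\bbG$ is Hausdorff exactly when $\lim_\cU p_i = \bigcap_{S\in\cU}\rm{cl}_\tau\{p_i:i\in S\}$ is a singleton for every index set $I$, every $\cU\in\beta I$, and every tuple $(p_i)_{i\in I}\in\bbG^I$. Fix such data and set $p_{A,i}:=p_i$ for $A\in O$, $i\in I$, as in Theorem~\ref{Thm:Build_M_Ultralimit}. That theorem asserts that $p\in\lim_\cU p_i$ if and only if $p=\lim_M p_{A,i}$ for some minimal subflow $M\subseteq\Sigma_{\cV\otimes\cU}^G\rmM(G)$, where $\cV\otimes\cU\in\beta(O\times I)$ is the Fubini product. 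Now $\Sigma_{\cV\otimes\cU}^G\rmM(G)$ is an ultracopower of $\rmM(G)$, which is a minimal $G$-ED flow; since $G\in\sf{TMD}$, Theorem~\ref{Thm:Rosendal_Minimal}\ref{Item:Ultra_Thm:RM} applies with $X=\rmM(G)$ and tells us that this ultracopower contains a \emph{unique} minimal subflow $M$. But once $M$ and the tuple $(p_{A,i})$ are fixed, $\lim_M p_{A,i}=\pi\circ\phi\circ(\pi|_M)^{-1}$ is a single automorphism of $\rmM(G)$. Hence $\lim_\cU p_i=\{\lim_M p_{A,i}\}$ is a singleton, so $\tau$ is Hausdorff, and combining with Corollary~\ref{Cor:Tau_Cont} (and the remark on joint continuity) we conclude that $(\bbG,\tau)$ is a compact Hausdorff topological group.

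\textbf{Main obstacle.} There is no real obstacle: the corollary is a direct consequence of Theorems~\ref{Thm:Rosendal_Minimal} and \ref{Thm:Build_M_Ultralimit}, all the substantive work having been done there. The only point requiring a moment's care is to confirm that the flow $\Sigma_{\cV\otimes\cU}^G\rmM(G)$ occurring in Theorem~\ref{Thm:Build_M_Ultralimit} is genuinely an ultracopower of $\rmM(G)$ in the sense of Subsection~\ref{Subsection:Ultracoproducts}, so that Theorem~\ref{Thm:Rosendal_Minimal}\ref{Item:Ultra_Thm:RM} is applicable; this is immediate from the definition of the Fubini product ultrafilter and the fact that $O=\op(w,\rmM(G))$ is a directed poset admitting a cofinal ultrafilter $\cV$.
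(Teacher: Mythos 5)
Your proof is correct and follows exactly the paper's route: the paper's own proof of this corollary simply cites Theorem~\ref{Thm:Build_M_Ultralimit}, Theorem~\ref{Thm:Rosendal_Minimal}\ref{Item:Ultra_Thm:RM}, and Ellis's joint continuity theorem, which are precisely the three ingredients you assemble. Your write-up just makes explicit the uniqueness-of-tau-ultralimits criterion for Hausdorffness that the paper states immediately before Theorem~\ref{Thm:Build_M_Ultralimit}.
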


\begin{proof}
    Theorem~\ref{Thm:Build_M_Ultralimit}, Theorem~\ref{Thm:Rosendal_Minimal}\ref{Item:Ultra_Thm:RM}, and Ellis's joint continuity theorem (see \cite{Auslander}). 
\end{proof}

\begin{question}
    \label{Que:AutMGHausTMD}
    Suppose $G$ is a topological group with $\bbG$ Hausdorff. Do we have $G\in \sf{TMD}$?
\end{question}

\subsection{The structure of $\rmM(G)$ for $\sf{GPP}$ groups}

We can now generalize and strengthen the characterization of $\bbG$ for $\sf{PCMD}$ groups from \cite{MNVTT} to $\sf{GPP}$ groups.

\begin{theorem}
    \label{Thm:Tau_Polish}
    If $G\in \sf{GPP}$ and $H\leq^c G$ is a presyndetic, extremely amenable subgroup, then $(\bbG^{opp}, \tau)\cong \rmN_G(H)/H$. In particular $H\trianglelefteq^c \rmN_G(H)$ is cocompact.
\end{theorem}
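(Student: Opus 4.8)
The plan is to identify $\rmM(G)$ with $\sa(G/H)$ --- legitimate by Fact~\ref{Fact:SGmodH}, since $H$ is presyndetic and extremely amenable --- and to exhibit an explicit group isomorphism $\Theta\colon \rmN_G(H)/H \to \bbG^{opp}$, then promote it to a topological isomorphism. For $n\in\rmN_G(H)$, right translation $\psi_n\colon gH\mapsto gnH$ is a well-defined $G$-equivariant bijection of $G/H$ that is uniformly continuous (in fact iso) for the right uniformity, hence extends to a flow automorphism of $\sa(G/H)$; one computes $\psi_{n'}\circ\psi_n=\psi_{nn'}$ and $\psi_n=\id$ iff $n\in H$, so $nH\mapsto\psi_n$ is an injective homomorphism $\rmN_G(H)/H\to\bbG^{opp}$. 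It is onto: since $G\in\sf{GPP}$, the orbit $G/H$ is the (unique, by Baire category) comeager orbit of $\sa(G/H)$, so any $p\in\bbG$, being a homeomorphism, carries $G/H$ onto a comeager orbit, i.e. onto $G/H$; the restriction $p|_{G/H}$ is then a $G$-equivariant bijection of $G/H$, which forces $p(H)=g_0H$ with $g_0\in\rmN_G(H)$ and $p|_{G/H}=\psi_{g_0}$, hence $p=\psi_{g_0}$ by density of $G/H$ in $\sa(G/H)$.

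Next, continuity of $\Theta$. Both $\rmN_G(H)/H$ (a Polish group: $\rmN_G(H)\leq^c G$ is closed, being the normalizer of a closed subgroup, and $H\trianglelefteq\rmN_G(H)$ is closed) and $(\bbG^{opp},\tau)$ (a compact Hausdorff topological group by Corollary~\ref{Cor:Tau_CHaus}, using $\sf{GPP}\subseteq\sf{TMD}$) are topological groups, so it suffices to check continuity at the identity. Given a basic $\tau$-neighborhood $\rm{Nbd}_A(\id)=\{q:q[A]\cap A\ne\emptyset\}$ with $A$ open, pick $gH\in A\cap(G/H)$ and $V\in\cN_G$ with $gVH\sub A$ (possible since $x\mapsto xH$ from $G$ to $\sa(G/H)$ is continuous); then for every $m\in V\cap\rmN_G(H)$ we have $\psi_m(gH)=gmH\in gVH\sub A$, so $gmH\in\psi_m[A]\cap A$ and $\psi_m\in\rm{Nbd}_A(\id)$. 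Since $(V\cap\rmN_G(H))H$ is an open neighborhood of $H$ in $\rmN_G(H)/H$ (the quotient map is open), $\Theta$ is continuous at $H$, hence everywhere.

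Finally, I upgrade $\Theta$ to a homeomorphism via the open mapping theorem for Polish groups, once $(\bbG,\tau)$ is known to be Polish. For this I show $(\bbG,\tau)$ is second countable. By Corollary~\ref{cor:dichotomy-polish-pi-weight}, $\rmM(G)$ has a countable $\pi$-base $\{B_n:n<\omega\}$; by Proposition~\ref{Prop:Tau_Base} the sets $\rm{Nbd}_{A,B}$ ($A,B$ open in $\rmM(G)$) form a base for $\tau$, and I claim each equals $\bigcup\{\rm{Nbd}_{B_m,B_n}: B_m\sub A,\ B_n\sub B\}$. The nontrivial inclusion uses that an automorphism $q$ is a homeomorphism: if $q[A]\cap B$ contains $q(x)$ with $x\in A$, then $A\cap q^{-1}[B]$ is open and nonempty, hence contains some $B_m$; then $q[B_m]$ is \emph{open} and $\sub B$, hence contains some $B_n$, and $q\in\rm{Nbd}_{B_m,B_n}$. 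Thus $\{\rm{Nbd}_{B_m,B_n}\}$ is a countable base, so $(\bbG,\tau)$ is compact, Hausdorff, and second countable, hence metrizable and Polish. Then $\Theta$ is a continuous bijective homomorphism between Polish groups, so a topological isomorphism; in particular $\rmN_G(H)/H$ is compact, and since $H\trianglelefteq\rmN_G(H)$ always, $H$ is cocompact normal in $\rmN_G(H)$.

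The main obstacle is this last step: deducing that the $\tau$-topology is Polish (equivalently, that $\Theta$ is open). The algebra and the continuity of $\Theta$ are routine coset manipulations combined with the comeager-orbit description of $\sa(G/H)$; the real work is converting ``$\rmM(G)$ has countable $\pi$-weight'' into second countability of $\tau$, where the crucial point is that members of $\bbG$ are genuine homeomorphisms, so they carry the $\pi$-base $\{B_n\}$ to open sets and the basic tau-neighborhoods $\rm{Nbd}_{B_m,B_n}$ refine an arbitrary $\rm{Nbd}_{A,B}$. (An alternative to the open mapping theorem would be to verify directly that $\Theta^{-1}$ is $\tau$-continuous, but the metrizability route reuses machinery already in the paper.)
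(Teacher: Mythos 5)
Your proof is correct, and it reaches the conclusion by a genuinely different route in the one place where the real work happens. The algebraic isomorphism $\bbG^{opp}\cong \rmN_G(H)/H$ and the continuity of $\Theta$ (equivalently, that the Polish topology refines $\tau$) are handled essentially as in the paper, which simply cites \cite{MNVTT} for the former and derives the latter from the fact that $\bbG$ acts freely on $G/H$ by $\partial_\sigma$-isometries; your direct density argument at the identity is a fine substitute. The divergence is in upgrading $\Theta$ to a homeomorphism. The paper keeps two topologies $\tau\subseteq\tau'$ on $\bbG$ and shows the sub-basic $\tau'$-closed sets $\{p: p(x)\in\ol{B}\}$ for $x\in G/H$ are $\tau$-Borel (a countable intersection of sets $\rm{Nbd}_{A,B}$, using that points of $G/H$ are points of first countability in $\rmM(G)$ by Theorem~\ref{Thm:Polish_Group_WUEB}), then invokes Pettis' theorem for Baire-measurable homomorphisms from a Baire group into a separable group. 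You instead prove that $(\bbG,\tau)$ is itself second countable, by pushing the countable $\pi$-base of $\rmM(G)$ (Corollary~\ref{cor:dichotomy-polish-pi-weight}) through Proposition~\ref{Prop:Tau_Base}: the observation that automorphisms carry $\pi$-base elements to open sets, so that $\{\rm{Nbd}_{B_m,B_n}\}$ refines every $\rm{Nbd}_A(p)$, is exactly right, and compact Hausdorff plus second countable gives Polish, after which the open mapping theorem finishes. Your route buys a self-contained statement of independent interest (the $\tau$-topology on $\bbG$ is second countable whenever $\rmM(G)$ has countable $\pi$-weight) and avoids first-countability of the comeager orbit; the paper's route is shorter given that first-countability is already on the table from Theorem~\ref{Thm:Polish_Group_WUEB}. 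Both ultimately rest on the same Baire-category engine.
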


\begin{proof}
    The argument that $\bbG^{opp}\cong \rmN_G(H)/H$ as abstract groups is identical to \cite{MNVTT}. Let $\sigma\in \sn(G)$ be a compatible norm. The action of $\bbG$ on $G/H\subseteq \sa(G/H) = \rmM(G)$ is a free action by $\partial_\sigma$-isometries by Proposition~\ref{Prop:Level_Maps}, and $\partial_\sigma$ is compatible with the Polish topology on $G/H$. Thus the Polish topology $\tau'$ on $\bbG$ inherited from $\rmN_G(H)/H$ is the topology of pointwise convergence on $G/H$, and $\tau'\supseteq \tau$. Furthermore, if $x\in G/H$ and $B\in \op(\rmM(G))$, we have 
    $$\{p\in \bbG: p(x)\in \ol{B}\} = \bigcap_{A\in \op(x, \rmM(G))} \rm{Nbd}_{A, B}.$$
    As points in $G/H$ are points of first countability in $\rmM(G)$ by Theorem~\ref{Thm:Polish_Group_WUEB}, we see that these sub-basic $\tau'$-closed sets are $\tau$-Borel. Standard Baire category techniques now show that $\tau' = \tau$ (see \cite{Kechris_Classical}*{Theorem~9.10}).  
\end{proof}

Using Theorem~\ref{Thm:Tau_Hausdorff}, Corollary~\ref{Cor:Tau_CHaus}, and Theorem~\ref{Thm:Tau_Polish}, we can provide a detailed structure theorem for $\mg$ when $G\in\sf{GPP}$, which we will later generalize to $\sf{TMD}$ . We need the following definition from \cite{Glasner_Prox}.

\begin{defin}
    \label{Def:Compactification_Flow}
    A minimal flow $Z$ is called a \emph{compactification flow} if it is a compact group extension of a proximal flow.  
\end{defin}
Note that any compactification flow is a minimal ideal flow. Glasner in \cite{Glasner_Prox} constructs for any topological group $G$ the \emph{universal compactification flow} of $G$, which we denote by $\rm{UCF}(G)$. By definition, this is the largest compact group extension of the \emph{universal minimal proximal flow} $\Pi(G)$. Upon fixing a minimal subflow $M\subseteq \sa(G)$ and a $G$-map $\pi\colon M\to \Pi(G)$ (as $\Pi(G)$ is proximal, $\pi$ is unique), Glasner proves that the following equivalence relation $\sim$ on $M$ is closed, $G$-invariant, and that $M/{\sim} \cong \rm{UCF}(G)$:
\begin{align*}
    x\sim y \Leftrightarrow \pi(x) = \pi(y) \text{ and } \rho_x\circ \rho_y^{-1}\in \rmH(\bbG).
\end{align*}
Notice that upon knowing that $M/{\sim}\cong \rm{UCF}(G)$ is a minimal ideal flow, it follows that $\sim$ did not depend on the choice of $M$, and we can unambiguously refer to $\sim$ on $\rmM(G)$.  We have $\aut(\rm{UCF}(G))\cong \bbG/\rmH(\bbG)$, and the quotient of $\rm{UCF}(G)$ by this compact group action is $\Pi(G)$. We take the opportunity to show that $\rm{UCF}(G)$ as defined above is indeed universal.
\begin{lemma}
    \label{Lem:UCF}
    $\rm{UCF}(G)$ admits a $G$-map onto any compactification flow.
\end{lemma}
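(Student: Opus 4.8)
The plan is to exhibit, for an arbitrary compactification flow $Z$, a surjective $G$-map $\rm{UCF}(G)\to Z$ by showing that a canonical $G$-map $\psi\colon \rmM(G)\to Z$ factors through the quotient map $q\colon \rmM(G)\to \rmM(G)/{\sim} = \rm{UCF}(G)$. Fix $Z$, write $\sigma\colon Z\to Y$ for the compact group extension exhibiting $Z$ as a compactification flow (so $Y$ is a minimal proximal flow and $\sigma$ is equicontinuous), and use the universality of $\rmM(G)$ to pick a $G$-map $\psi\colon \rmM(G)\to Z$, which is automatically onto. Once we know $\psi$ is constant on $\sim$-classes, the induced map $\bar\psi\colon \rm{UCF}(G)\to Z$ is automatically continuous, $G$-equivariant, and surjective, so the whole content is the implication $x\sim y\Rightarrow \psi(x)=\psi(y)$.

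The first step is to observe that the Ellis group $\bbG_Z$ is Hausdorff. This is immediate from Theorem~\ref{Thm:Tau_Hausdorff}: the flow $Z$ is a proximal extension (namely, via the identity) of the equicontinuous extension $\sigma\colon Z\to Y$ of the proximal flow $Y$, which is exactly the configuration in item~\ref{Item:Weak_Thm:TauHaus} of that theorem (take the auxiliary flows $X^\ast$ and ``$Z$'' of the theorem both equal to $Z$, with $\eta$ and $\iota$ the identity and ``$\sigma$'' our $\sigma$). Hence item~\ref{Item:Haus_Thm:TauHaus} holds and $\bbG_Z$ is Hausdorff.

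The second step is to feed this into Lemma~\ref{Lem:Haus}. Since $\rmM(G)$ is a minimal ideal flow with automorphism group $\bbG$, applying Lemma~\ref{Lem:Haus} with $M=\rmM(G)$, $X=Z$, and the $G$-map $\psi\colon \rmM(G)\to Z$ in the role of $\pi$, the Hausdorffness of $\bbG_Z$ forces $\rmH(\bbG)\subseteq \bbG_\psi$. The third step is then the factorization: given $x\sim y$, by the definition of $\sim$ the element $\rho_x\circ \rho_y^{-1}$ is a well-defined automorphism of $\rmM(G)$ (the unique one carrying $y$ to $x$) and it lies in $\rmH(\bbG)$, hence in $\bbG_\psi$ by Step~2; therefore $\psi(x)=\psi\big(\rho_x\rho_y^{-1}(y)\big)=(\psi\circ\rho_x\rho_y^{-1})(y)=\psi(y)$. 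Note that the ``$\pi(x)=\pi(y)$'' clause in the definition of $\sim$ plays no role in this argument, so the particular proximal base $Y$ of $Z$ never enters; that clause is only what makes $\sim$ the correct closed equivalence relation giving $\rm{UCF}(G)$.

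I expect the only genuinely delicate points to be (i) matching $Z$ against the hypotheses of Theorem~\ref{Thm:Tau_Hausdorff} once the clashing letter ``$Z$'' in that theorem's statement is unwound, and (ii) keeping the conventions on $\rho_x$, $\rho_y^{-1}$ and the attendant minimal idempotent straight when invoking that $\rho_x\rho_y^{-1}$ is the automorphism sending $y\mapsto x$; both are standard facts from the structure theory recalled in Fact~\ref{Fact:MinIdealFlow} and Subsection~\ref{Subsection:MinIdealFlows}. Everything else is bookkeeping with results already established.
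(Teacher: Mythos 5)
Your overall strategy---factoring the canonical $\psi\colon \rmM(G)\to Z$ through the quotient $\rmM(G)\to \rmM(G)/{\sim}=\rm{UCF}(G)$---is exactly the paper's, and your Steps 1 and 2 are correct and match the paper: $\bbG_Z$ is Hausdorff, hence $\rmH(\bbG)\subseteq \bbG_\psi$ by Lemma~\ref{Lem:Haus}. The gap is in Step 3: the identity $\rho_x\circ\rho_y^{-1}(y)=x$ is false in general. Since $\rho_y(q)=qy$ and $\rho_y$ is a bijection of $M$, the element $\rho_y^{-1}(y)$ is the unique idempotent $v\in M$ with $vy=y$, so $\rho_x\circ\rho_y^{-1}(y)=vx$, which equals $x$ only when $x$ lies in the same group component $vM$ as $y$. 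For a concrete failure take $x=u$ and $y=v$ two \emph{distinct} idempotents of $M$ lying in the same $\pi$-fiber (e.g.\ for $G=\bbZ$, where $\Pi(G)$ is trivial and $M\subseteq\beta\bbZ$ contains many idempotents): then $\rho_u|_M=\rho_v|_M=\rm{id}_M$, so $\rho_u\circ\rho_v^{-1}=\rm{id}\in\rmH(\bbG)$ and $u\sim v$, yet $\rho_u\circ\rho_v^{-1}(v)=vu=v$, so the valid part of your chain only establishes $\psi(v)=\psi(v)$ and says nothing about $\psi(u)$. Identifying points lying over different idempotents is precisely the nontrivial content of the lemma, so this case cannot be waved away.

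Your remark that the clause $\pi(x)=\pi(y)$ ``plays no role'' is symptomatic of the same problem: if it truly played no role, the argument would identify \emph{all} idempotents of $M$ with one another (as $\rho_u\rho_v^{-1}=\rm{id}$ for any two idempotents), and the resulting quotient could not map onto $\Pi(G)$ itself---which is a compactification flow---whenever $\Pi(G)$ is nontrivial. The paper uses $\pi(x)=\pi(y)$ essentially: via $\eta\circ\pi=\sigma\circ\psi$ it yields $\sigma\psi(x)=\sigma\psi(y)$, hence an element $k$ of the compact structure group $\bbG_Z$ with $k(\psi(x))=\psi(y)$; lifting $k$ to a representative $\rho_p\in\bbG$ gives $\psi(xp)=\psi(y)$, and since $xp$ lies in the same group component as $x$ (i.e.\ $u(xp)=xp$ for the idempotent $u$ with $ux=x$), one computes $\psi(xp)=u\psi(xp)=u\psi(y)=\psi(uy)=\psi\circ\rho_x\circ\rho_y^{-1}(uy)=\psi(ux)=\psi(x)$, where now $\rho_y^{-1}(uy)=u$ and $\rho_x(u)=x$ genuinely hold. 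In short: your hypothesis membership $\rho_x\rho_y^{-1}\in\bbG_\psi$ must be evaluated at $uy$ rather than at $y$, and an auxiliary translate $xp$ of $x$ is needed to connect $\psi(uy)$ back to $\psi(y)$; that is where the compact group extension structure of $Z$ is actually used.
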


\begin{proof}
    Fix a minimal flow $M\subseteq \sa(G)$, fix a $G$-map $\pi\colon M\to \Pi(G)$, and view $\rm{UCF}(G) = M/{\sim}$ as above. Let $Z$ be a compactification flow, witnessed by the proximal flow $X$ and  compact group extension $\sigma\colon Z\to X$. Hence $\bbG_Z$ is a compact group acting continuously on $Z$ by $G$-flow automorphisms. Fix $G$-maps $\psi\colon M\to Z$ and $\eta\colon \Pi(G)\to X$; we can arrange that $\eta\circ \pi = \sigma\circ \psi$. Since $\bbG_Z \cong \bbG/\bbG_\psi$ is Hausdorff, we must have $\rmH(\bbG)\subseteq \bbG_\psi$. 
    
    Suppose $x, y\in M$ with $x\sim y$; we want to show that $\psi(x) = \psi(y)$. As $\pi(x) = \pi(y)$, we must have $\sigma\circ \psi(x) = \sigma\circ \psi(y)$. Fix $k\in \bbG_Z$ with $k(\psi(x)) = \psi(y)$. Let $u\in M$ be idempotent with $ux = x$, and let $p\in M$ be such that $\rho_p\in \bbG$ is a representative of $k$. We have
    \begin{align*}
         \psi(xp) &= \psi(y)\\[3 mm]
        \text{and }\psi(xp) &= \psi(uxp)  = u\psi(xp) = u\psi(y) 
        = \psi(uy)\\
        &= \psi\circ \rho_x\circ \rho_y^{-1}(uy) = \psi(ux) = \psi(x). \qedhere
    \end{align*}    
\end{proof}
	
Upon analyzing the proof of Theorem~\ref{Thm:Tau_Hausdorff} (in particular the references to \cite{Auslander} therein) in the case that $X = \rmM(G)$, we obtain the following.
\begin{theorem}
    \label{Thm:AutMG_Haus}
    For $G$ a topological group, $\bbG = \aut(\rmM(G))$ is Hausdorff iff the canonical $G$-map $\iota\colon \rmM(G)\to \rm{UCF}(G)$ is proximal.
\end{theorem}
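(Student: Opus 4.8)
The plan is to reduce everything to the single identity $\bbG_\iota = \rmH(\bbG)$, where $\iota\colon \rmM(G)\to \rm{UCF}(G)$ is the canonical quotient map. Granting this, both implications of the theorem become formal. First, applying the first item of Lemma~\ref{Lem:Prox} with the minimal ideal flow $\rmM(G)$, the map $\pi = \id_{\rmM(G)}$ (so $\bbG_\pi = \{e_\bbG\}$), and $\phi = \iota$, we get that $\iota$ is proximal if and only if $\bbG_\iota = \{e_\bbG\}$. Second, since $\bbG$ is a compact T1 group and $\{e_\bbG\}$ is tau-closed, the description of the Hausdorff kernel recalled just before Lemma~\ref{Lem:Haus}, applied with $K = \{e_\bbG\}$, says that $\bbG = \bbG/\{e_\bbG\}$ is Hausdorff iff $\rmH(\bbG) = \{e_\bbG\}$. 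Hence $\iota$ proximal $\iff \bbG_\iota = \{e_\bbG\} \iff \rmH(\bbG) = \{e_\bbG\} \iff \bbG$ is Hausdorff, which is exactly the assertion of the theorem.

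To establish $\bbG_\iota = \rmH(\bbG)$, recall that $\rm{UCF}(G) = \rmM(G)/{\sim}$, where $x\sim y$ iff $\pi_0(x) = \pi_0(y)$ and $\rho_x\circ \rho_y^{-1}\in \rmH(\bbG)$, with $\pi_0\colon \rmM(G)\to \Pi(G)$ the unique $G$-map onto the universal minimal proximal flow. Fix $\theta\in \bbG$; then $\theta\in \bbG_\iota$ iff $\theta(x)\sim x$ for every $x\in \rmM(G)$. Since $\sim$ is a closed $G$-invariant equivalence relation (Glasner), the set $\{x\in \rmM(G): \theta(x)\sim x\}$ is closed and $G$-invariant, hence by minimality of $\rmM(G)$ it is either empty or everything; so it suffices to test the condition $\theta(x)\sim x$ at a single well-chosen $x$. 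I would realize $\rmM(G)$ as a minimal left ideal in its own enveloping semigroup, fix an idempotent $u$, and take $x = u$. Then $u$ lies in the maximal AP set $S = u\rmM(G)$, which is a single $\bbG$-orbit; on it $\pi_0$ is constant because $\pi_0\circ \theta = \pi_0$ by uniqueness of the proximal factor, so the condition $\pi_0(\theta(u)) = \pi_0(u)$ holds automatically. Under the anti-isomorphism $\bbG^{\rm{opp}}\cong u\rmM(G)$ from Fact~\ref{Fact:MinIdealFlow}, in which $p\in u\rmM(G)$ corresponds to the automorphism $\rho_p\colon z\mapsto zp$, one has $\rho_u = \id_{\rmM(G)}$ (as $u$ is the group identity), and writing $\theta = \rho_{p_\theta}$ gives $\theta(u) = u p_\theta = p_\theta$, whence $\rho_{\theta(u)}\circ \rho_u^{-1} = \rho_{p_\theta} = \theta$. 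Therefore $\theta(u)\sim u$ iff $\theta\in \rmH(\bbG)$, and $\bbG_\iota = \rmH(\bbG)$ follows.

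An alternative (in effect equivalent) route, which is what the sentence preceding the theorem alludes to, is to re-run the proof of Theorem~\ref{Thm:Tau_Hausdorff}, $\ref{Item:Haus_Thm:TauHaus}\Leftrightarrow \ref{Item:Basic_Thm:TauHaus}$, in the special case $X = \rmM(G)$. There $\pi = \id$ forces the proximal extension $\eta\colon X^*\to \rmM(G)$ (which admits a section) to be an isomorphism, and the construction produces a minimal flow $Z$ together with an equicontinuous $G$-map $\sigma\colon Z\to Y$ onto a minimal proximal flow $Y$ and a $G$-map $\iota'\colon \rmM(G)\to Z$ with $\bbG_{\iota'} = \rmH(\bbG)\cdot \bbG_{\id} = \rmH(\bbG)$; after replacing $Z$ by its enveloping ideal and using Lemmas~\ref{Lem:EnvMap} and \ref{Lem:UCF}, one identifies $(Z, \iota')$ with $(\rm{UCF}(G), \iota)$, recovering $\bbG_\iota = \rmH(\bbG)$. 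Either way, the only real obstacle is bookkeeping inside the enveloping semigroup: pinning down the identification of $\bbG$ with the group $u\rmM(G)$ precisely enough to evaluate Glasner's relation $\sim$ on automorphism graphs (and, in the second approach, verifying that the flow $Z$ output by the structure theory of \cite{Auslander} is genuinely the universal compactification flow). This is routine but must be carried out with care.
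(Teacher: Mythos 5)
Your proposal is correct. Your second route is precisely what the paper does: the text offers no separate argument for Theorem~\ref{Thm:AutMG_Haus} beyond the instruction to specialize the proof of Theorem~\ref{Thm:Tau_Hausdorff} to $X = \rmM(G)$, where $\eta$ collapses and the flow $Z$ produced by the Auslander machinery is identified with $\rm{UCF}(G)$ via its defining relation $\sim$. Your first route is a genuinely different and more self-contained verification: you isolate the identity $\bbG_\iota = \rmH(\bbG)$ and check it by a single-point computation in the enveloping semigroup, using that $\{x : \theta(x)\sim x\}$ is closed and $G$-invariant (hence empty or everything by minimality), that $\rho_u|_M = \id_M$ for an idempotent $u$ (since every $q\in M$ is of the form $pu$, so $qu = q$), and that $\rho_{\theta(u)}\circ\rho_u^{-1} = \theta$ under the identification $\bbG^{opp}\cong uM$ from Fact~\ref{Fact:MinIdealFlow}; combined with Lemma~\ref{Lem:Prox} (giving $\iota$ proximal iff $\bbG_\iota = \{e_\bbG\}$) and the characterization of Hausdorffness via $\rmH(\bbG)$ recalled before Lemma~\ref{Lem:Haus}, this closes the argument without re-entering the structure theory. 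What your direct route buys is independence from the black-boxed references to \cite{Auslander} in Theorem~\ref{Thm:Tau_Hausdorff}, at the cost of leaning on Glasner's explicit description of $\sim$; the paper's route buys uniformity with the general Ellis-group criterion already proved. Both are sound.
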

In particular, the above theorem applies for $G\in \sf{TMD}$. Eventually, we will use set-theoretic methods to show for $G\in \sf{TMD}$ that $\iota$ is highly proximal. As a first step, we show this for $G\in \sf{GPP}$. Recall that if $H\leq^c G$ is a presyndetic, extremely amenable subgroup then $\sa(G/H)\cong \rmM(G)$.

\begin{theorem}
    \label{Thm:UnivProxGPP}
    Let $G\in \sf{GPP}$ and let $H\leq^c G$ a presyndetic, extremely amenable subgroup. 
    Then $\sa(G/\rmN_G(H))\cong \Pi(G)$. In particular, if $G$ is $\sf{GPP}$ and strongly amenable, then $G\in \sf{PCMD}$
\end{theorem}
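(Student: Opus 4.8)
The plan is to first collapse $\rm{UCF}(G)$ onto $\rmM(G)$ using the Hausdorffness of the Ellis group, and then to identify the quotient of $\rmM(G)$ by $\bbG$ with $\sa(G/\rmN_G(H))$ by playing off the universal properties of the two coset flows.

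First I would observe that, $G$ being Polish and $\sf{GPP}$, Theorem~\ref{Thm:Polish_Group_WUEB} gives $G\in\sf{TMD}$, so by Corollary~\ref{Cor:Tau_CHaus} the group $\bbG=\aut(\rmM(G))$ with the tau-topology is compact Hausdorff; hence its Hausdorff kernel $\rmH(\bbG)$ is trivial. Inspecting Glasner's construction of $\rm{UCF}(G)=M/{\sim}$ recalled before Lemma~\ref{Lem:UCF}, when $\rmH(\bbG)=\{e_\bbG\}$ the condition $\rho_x\circ\rho_y^{-1}\in\rmH(\bbG)$ reduces to $\rho_x=\rho_y$, forcing $x=y$, so $\sim$ is equality and the canonical $G$-map $\iota\colon\rmM(G)\to\rm{UCF}(G)$ is an isomorphism. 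Transporting the data through $\iota$, the compact group $\aut(\rm{UCF}(G))\cong\bbG$ becomes $\aut(\rmM(G))$ acting tautologically, and the fact that $\rm{UCF}(G)/\aut(\rm{UCF}(G))\cong\Pi(G)$ becomes $\Pi(G)\cong\rmM(G)/\bbG$, where the quotient map $\pi\colon\rmM(G)\to\Pi(G)$ has fibers exactly the $\bbG$-orbits (Definition~\ref{Def:Extension_Props}).

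Next I would identify $\rmM(G)=\sa(G/H)$ (Fact~\ref{Fact:SGmodH}) and recall from Theorem~\ref{Thm:Tau_Polish} that $\bbG^{opp}\cong\rmN_G(H)/H$, the automorphism $\theta\leftrightarrow\bar n$ acting on $\rmM(G)$ as the continuous extension of the right translation $gH\mapsto gnH$ on $G/H$. Let $\rho\colon\sa(G/H)\to\sa(G/\rmN_G(H))$ be the $G$-map extending the quotient $G/H\to G/\rmN_G(H)$; it exists since $\rmN_G(H)\in G/\rmN_G(H)$ is an $H$-fixed point. For each fixed $\theta\in\bbG$, the continuous maps $\rho\circ\theta$ and $\rho$ agree on the dense set $G/H$, because $\theta\cdot gH=g\cdot(nH)=gnH$ and $n\in\rmN_G(H)$; hence $\rho$ is $\bbG$-invariant and descends to a surjective $G$-map $\alpha\colon\rmM(G)/\bbG\to\sa(G/\rmN_G(H))$. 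Conversely, the image $\xi$ of $H$ in $\rmM(G)/\bbG$ is $\rmN_G(H)$-fixed, since for $n\in\rmN_G(H)$ the point $nH$ lies in the $\bbG$-orbit of $H$ (it is $\theta_n(H)$), so $n\xi=\pi(nH)=\pi(H)=\xi$; the universal property of $\sa(G/\rmN_G(H))$ then yields a $G$-map $\beta\colon\sa(G/\rmN_G(H))\to\rmM(G)/\bbG$ with $\beta(\rmN_G(H))=\xi$. Now $\alpha\circ\beta$ is a $G$-endomorphism of $\sa(G/\rmN_G(H))$ fixing the base point, hence the identity by uniqueness in the universal property; so $\beta$ embeds $\sa(G/\rmN_G(H))$ as a subflow of the minimal flow $\rmM(G)/\bbG$, which forces $\beta$, and therefore $\alpha$, to be a $G$-flow isomorphism. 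Combining, $\Pi(G)\cong\rmM(G)/\bbG\cong\sa(G/\rmN_G(H))$.

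For the final clause: if $G$ is also strongly amenable then $\Pi(G)$ is trivial, so $\sa(G/\rmN_G(H))$ is a single point; since $G/\rmN_G(H)$ embeds into its Samuel compactification, this forces $\rmN_G(H)=G$, i.e.\ $H\trianglelefteq^c G$. By the cocompactness assertion of Theorem~\ref{Thm:Tau_Polish}, $H\trianglelefteq^c\rmN_G(H)=G$ is cocompact, so $G/H$ is compact, whence $\rmM(G)=\sa(G/H)=\wh{G/H}=G/H$ is compact Polish, hence metrizable, giving $G\in\sf{PCMD}$ (equivalently, $H$ is co-precompact and extremely amenable, so Theorem~\ref{Thm:Polish_CMD} applies). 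The step I expect to be the main obstacle is the third paragraph: pinning down that $\rho$ really is $\bbG$-invariant (the crux being that, under the identification $\bbG^{opp}\cong\rmN_G(H)/H$, the $\bbG$-action on $G/H$ is exactly right translation by normalizing elements, which is precisely what makes it factor through $G/\rmN_G(H)$), and then upgrading the surjection $\alpha$ to an isomorphism — which works cleanly only because $\rmM(G)/\bbG$ has already been identified with the genuine $G$-flow $\Pi(G)$, so that minimality and the universal property of $\sa(G/\rmN_G(H))$ are available.
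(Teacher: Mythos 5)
The argument breaks in your first paragraph. The condition $\rho_x\circ\rho_y^{-1}=e_\bbG$ in Glasner's relation means that the automorphisms $\rho_x|_M$ and $\rho_y|_M$ of $M$ coincide, i.e.\ $px=py$ for all $p\in M$; since $e_G\notin M$ you cannot evaluate at the identity, and this does \emph{not} force $x=y$. Indeed, if $u\in M$ is a minimal idempotent with $ux=uy$ (i.e.\ $(x,y)$ is a proximal pair), then $\rho_x|_M$ and $\rho_y|_M$ are $G$-automorphisms of the minimal flow $M$ agreeing at $u$, hence agreeing everywhere. So when $\rmH(\bbG)$ is trivial, $\sim$ still identifies every proximal pair lying in a common $\pi$-fiber, and $\iota\colon\rmM(G)\to\rm{UCF}(G)$ is only a \emph{proximal} map --- this is exactly the content of Theorem~\ref{Thm:AutMG_Haus}; it is upgraded to highly proximal for $\sf{GPP}$ only in Theorem~\ref{Thm:MG_Structure_GPP}, and whether $\iota$ can fail to be an isomorphism is left open in Question~\ref{Que:Not_UCF}, which your claim would settle. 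Relatedly, even with $\bbG$ compact Hausdorff, $\rmM(G)/\bbG$ is a compact Hausdorff $G$-flow only if the $\bbG$-orbit equivalence relation is closed, which amounts to continuity of the $\bbG$-action on $\rmM(G)$ --- precisely the extra input behind Theorem~\ref{Thm:MG_Structure_CMD} and not available here. Since your identification $\Pi(G)\cong\rmM(G)/\bbG$, and with it both the proximality and the universality of $\sa(G/\rmN_G(H))$, rests on these steps, the proof does not close as written.

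The repair is short and is already sitting in your third paragraph, which is the paper's actual argument. You correctly show that $\rho\circ\theta=\rho$ for every $\theta\in\bbG$ (checked on the dense orbit $G/H$ using $\bbG^{opp}\cong\rmN_G(H)/H$), i.e.\ $\bbG_\rho=\bbG$; Lemma~\ref{Lem:Prox} then says directly that $\sa(G/\rmN_G(H))$ is proximal, with no detour through $\rm{UCF}(G)$. For universality, given a $G$-map $\phi\colon\sa(G/H)\to X$ with $X$ minimal proximal, the same lemma gives $\bbG_\phi=\bbG$, so $\phi$ is constant on $\rmN_G(H)/H$-orbits in $G/H$, descends to $G/\rmN_G(H)$, and extends to $\sa(G/\rmN_G(H))$ by its universal property. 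Your handling of the strongly amenable case is correct.
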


\begin{proof}
    Consider the canonical map $\pi\colon \sa(G/H)\to \sa(G/\rmN_G(H))$. Each fiber of $\pi$ is invariant under the action of $\rmN_G(H)/H\cong \bbG^{opp}$ (Theorem~\ref{Thm:Tau_Polish}), implying that $\bbG_\pi = \bbG$ and hence $\sa(G/\rmN_G(H))$ is proximal (Lemma~\ref{Lem:Prox}). If $X$ is another minimal proximal flow and $\phi\colon \sa(G/H)\to X$ is a $G$-map, then we have $\phi\circ p = \phi$ for any $p\in \bbG$. It follows that $\phi$ induces a map on $G/\rmN_G(H)$, and this map continuously extends to $\sa(G/\rmN_G(H))$.

    If $G$ is strongly amenable, then $G = \rmN_G(H)$, and as $H\trianglelefteq^c \rmN_G(H) = G$ is cocompact by Theorem~\ref{Thm:Tau_Polish}, we have $G\in \sf{PCMD}$. 
\end{proof}

\begin{theorem}
    \label{Thm:MG_Structure_GPP}
    Let $G\in \sf{GPP}$ and let $H\leq^c G$ a presyndetic, extremely amenable subgroup. We have $\rmM(G)\cong \rmS_G(\rm{UCF}(G))$, i.e.\ in Theorem~\ref{Thm:AutMG_Haus}, we can take $\iota$ to be highly proximal.
\end{theorem}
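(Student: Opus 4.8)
The plan is to show that the proximal $G$-map $\iota\colon\rmM(G)\to\rm{UCF}(G)$ furnished by Theorem~\ref{Thm:AutMG_Haus} (which applies because $G\in\sf{GPP}\subseteq\sf{TMD}$, so $\bbG$ is Hausdorff by Corollary~\ref{Cor:Tau_CHaus}) is in fact \emph{highly proximal}; then, being a highly proximal — hence irreducible — extension of the minimal flow $\rm{UCF}(G)$, it exhibits $\rmM(G)$ as $\rmS_G(\rm{UCF}(G))$ by the universal property of the latter, which is precisely the assertion that in Theorem~\ref{Thm:AutMG_Haus} one may take $\iota$ highly proximal. By Proposition~\ref{Prop:HP_Onto_Minimal}, since $\rmM(G)$ is minimal it suffices to check that $\iota_{\rm{fib}}(B)\neq\emptyset$ for every nonempty open $B\subseteq\rmM(G)$, i.e.\ that some $\iota$-fiber is contained in $B$. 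Throughout I would work in the concrete models from Theorems~\ref{Thm:Polish_Group_WUEB}, \ref{Thm:Tau_Polish}, and \ref{Thm:UnivProxGPP}: writing $H\leq^c G$ for the given presyndetic, extremely amenable subgroup, we have $\rmM(G)\cong\sa(G/H)$ with comeager orbit $G/H$ consisting of points of first countability, hence $G$-isolated (Theorem~\ref{thm:ED-first-countable}); $\Pi(G)\cong\sa(G/\rmN_G(H))$ with the canonical $G$-map $\pi\colon\rmM(G)\to\Pi(G)$ extending the quotient $G/H\to G/\rmN_G(H)$; and $\bbG\cong(\rmN_G(H)/H)^{\op}$ is a compact Hausdorff group acting on $\rmM(G)$ by right $\rmN_G(H)$-translation, continuously and by $\partial_\sigma$-isometries (Proposition~\ref{Prop:Level_Maps}), freely on the comeager orbit $G/H$.

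First I would record the fiber structure of $\iota$. Since $\rm{UCF}(G)$ is a compact group extension of $\Pi(G)$ with structure group $\aut(\rm{UCF}(G)) = \bbG/\rmH(\bbG) = \bbG$, the group $\bbG$ acts freely and transitively on each fiber of $p\colon\rm{UCF}(G)\to\Pi(G)$, and $\iota$ is $\bbG$-equivariant with $p\circ\iota = \pi$. A $\bbG$-equivariant map into a free transitive $\bbG$-set has free source, so $\bbG$ acts freely on all of $\rmM(G)$, and $\iota$ carries each $\bbG$-orbit of $\rmM(G)$ bijectively onto a $\bbG$-orbit of $\rm{UCF}(G)$. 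Hence $\iota^{-1}[\iota(x)]$ is a transversal to the $\bbG$-orbits contained in the $\pi$-fiber $\pi^{-1}[\pi(x)]$; in particular, for $x\in G/H$, since the whole $\bbG$-orbit $\bbG\cdot x$ lies in $G/H$ and $\iota^{-1}[\iota(x)]\subseteq\pi^{-1}[\pi(x)]$, we obtain $\iota^{-1}[\iota(x)]\cap (G/H)=\{x\}$.

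The remaining — and main — step is to upgrade this to $\iota^{-1}[\iota(x)]=\{x\}$ for $x$ in the comeager orbit, i.e.\ to rule out a boundary point $q\in\sa(G/H)\setminus(G/H)$ with $\iota(q)=\iota(x)$. Equivalently, writing $C$ for the comeager orbit of $\rm{UCF}(G)$, a routine argument (preimages of comeager sets under the pseudo-open map $\iota$ are comeager, intersected with the comeager orbit $G/H$) shows $\iota[G/H]=C$, so by the previous paragraph $\iota|_{G/H}\colon G/H\to C$ is a $G$-equivariant homeomorphism and one must show that $\pi^{-1}$ of the comeager orbit of $\Pi(G)$ is exactly $G/H$. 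Since $\iota$ is proximal, such a $q$ would be non-trivially proximal to the $G$-isolated, first-countable point $x$, and the plan is to derive a contradiction from this using the Polish toolkit: the Effros theorem in the form used in the proof of Theorem~\ref{Thm:Tau_Polish} and the remark after Theorem~\ref{thm:Rosendal-general}, the description of neighborhood bases at $x$ via the UEB uniformity ($x$ being $\partial_\sigma$-compatible for a compatible right-invariant metric), and a Baire-category argument for the set of suitable $x$. Granting $\iota^{-1}[\iota(x)]=\{x\}$ for $x\in G/H$: as $G/H$ is dense, every nonempty open $B$ contains such an $x$, so $\iota^{-1}[\iota(x)]=\{x\}\subseteq B$ and $\iota_{\rm{fib}}(B)\neq\emptyset$, completing the argument.

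I expect the obstacle to be precisely that last step — showing no boundary point of $\sa(G/H)$ is proximal to a comeager-orbit point. The $\bbG$-equivariance bookkeeping delivers injectivity of $\iota$ on the comeager orbit essentially for free, but closing the gap between ``proximal'' and ``highly proximal'' forces one to use the first-countability / $G$-isolation of comeager-orbit points together with the Effros machinery in an essential way, which is also why this argument is Polish-specific and the $\sf{TMD}$ case is deferred to the forcing-and-absoluteness section.
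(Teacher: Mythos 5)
Your decomposition matches the paper's: reduce to showing that for $x$ in the comeager orbit $G/H\subseteq \sa(G/H)\cong \rmM(G)$ the fiber $\iota^{-1}[\iota(x)]$ is exactly $\{x\}$, then conclude irreducibility from density of such points. Your $\bbG$-equivariance and freeness bookkeeping, giving $\iota^{-1}[\iota(x)]\cap (G/H)=\{x\}$, is the same computation the paper performs via the relation $\sim$ and $\rmH(\bbG)=\{e_\bbG\}$. The problem is the step you yourself flag as ``the obstacle'': you never prove that $\iota^{-1}[\iota(x)]$ --- equivalently $\pi^{-1}[\pi(x)]$ --- contains no point of $\sa(G/H)\setminus(G/H)$, and the tools you propose (Effros, Baire category, first countability of $x$) do not obviously close it. Baire category only yields that $\pi^{-1}[G/\rmN_G(H)]$ is a dense $G_\delta$, not that it equals $G/H$; and proximality of $\iota$ together with $G$-isolation of $x$ does not by itself forbid a boundary point proximal to $x$ (such proximal pairs exist in abundance in general). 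So as written the proposal has a genuine gap at its crux.

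The missing ingredient is already in your hands and is not Effros: Theorem~\ref{Thm:Tau_Polish} gives that $\rmN_G(H)/H$ is \emph{compact}, hence $Q:=x\rmN_G(H)/H$ is a compact subset of the uniform space $G/H$. For a compact subset $Q$ of a uniform space $X$ one has $\bigcap_{U\in\cN_G}\mathrm{cl}_{\sa(X)}(UQ)=Q$ (separate any $p\notin Q$ from $Q$ by disjoint closed neighborhoods in $\sa(X)$ and shrink the one around $Q$ to a uniform neighborhood $UQ$), and the closures in $\sa(G/\rmN_G(H))$ of the sets $U\cdot x\rmN_G(H)$ form a neighborhood base of the point $x\rmN_G(H)$, with $\pi^{-1}$ of each such closure equal to $\mathrm{cl}_{\sa(G/H)}(U\cdot x\rmN_G(H)/H)$. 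Intersecting over $U$ gives $\pi^{-1}[\pi(x)]=x\rmN_G(H)/H\subseteq G/H$, which combined with your injectivity statement yields $\iota^{-1}[\iota(x)]=\{x\}$; the rest of your argument then goes through. This is also why the result is Polish-specific at this stage: cocompactness of $H$ in $\rmN_G(H)$ is extracted from the Baire category argument inside Theorem~\ref{Thm:Tau_Polish}, not reproved here.
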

      
\begin{proof}
    Let $\pi\colon \sa(G/H)\to \sa(G/\rmN_G(H)) \cong \Pi(G)$ be the canonical map, and identify $\sa(G/H)$ with a minimal left ideal $M\subseteq \sa(G)$ to form the relation $\sim$ defined after Definition~\ref{Def:Compactification_Flow}. We show that on $G/H$, each $\sim$-class is a singleton.  and suppose $x = xH, y = yH\in G/H$ satisfy $\pi(x) = \pi(y)$. Then $x\rmN_G(H) = y\rmN_G(H)$, and hence by Theorem~\ref{Thm:Tau_Polish}, $x = p(y)$ for some $p\in \bbG$. It follows that $\rho_x\circ \rho_y^{-1}\in \rmH(\bbG) = \{e_\bbG\}$ iff $p = e_\bbG$, i.e.\ $x\sim y$ implies $x = y$.  
\end{proof}

For $G$ a topological group with $\bbG$ Hausdorff, when do we have $\rmM(G)\cong \rm{UCF}(G)$? This happens exactly when the action of $\bbG$ on $\rmM(G)$ is continuous, since then $\rmM(G)/\bbG\cong \Pi(G)$. Thus we obtain:

\begin{theorem}
    \label{Thm:MG_Structure_CMD}
    For $G\in \sf{CMD}$, we have $\rmM(G)\cong \rm{UCF}(G)$.
\end{theorem}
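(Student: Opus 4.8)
The plan is to reduce everything to the criterion recorded just before the theorem: for a topological group $G$ with $\bbG$ Hausdorff, one has $\rmM(G)\cong\rm{UCF}(G)$ as soon as the natural action of $\bbG$ on $\rmM(G)$ is jointly continuous. Since $G\in\sf{CMD}\subseteq\sf{TMD}$, Corollary~\ref{Cor:Tau_CHaus} already gives that $(\bbG,\tau)$ is a compact Hausdorff topological group, so the whole problem becomes continuity of the action. To see why continuity suffices, observe that $\iota\colon\rmM(G)\to\rmM(G)/\bbG$ then exhibits $\rmM(G)$ as a compact group extension of $\rmM(G)/\bbG$ (with group $\bbG$), and since $\iota\circ p=\iota$ for every $p\in\bbG$ we have $\bbG_\iota=\bbG$; by the (iff version of the) criterion in Lemma~\ref{Lem:Prox} applied to $\rmM(G)\xrightarrow{\iota}\rmM(G)/\bbG\to\{*\}$, the flow $\rmM(G)/\bbG$ is proximal, and the same computation shows that any minimal proximal factor of $\rmM(G)$ kills all of $\bbG$, hence factors through $\iota$; thus $\rmM(G)/\bbG\cong\Pi(G)$ and $\rmM(G)$ is a compactification flow. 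Lemma~\ref{Lem:UCF} then yields a $G$-map $\rm{UCF}(G)\to\rmM(G)$, the universal property of $\rmM(G)$ yields a $G$-map the other way, and coalescence of minimal ideal flows forces these to be isomorphisms, giving $\rmM(G)\cong\rm{UCF}(G)$.

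The core of the argument is therefore to prove that, for $G\in\sf{CMD}$, the $\tau$-topology on $\bbG$ coincides with the topology of pointwise convergence on $\rmM(G)$, after which joint continuity of the action is automatic. First I would note that $\bbG$ acts on $\rmM(G)$ by $\partial_\sigma$-isometries for every $\sigma\in\sn(G)$ (Proposition~\ref{Prop:Level_Maps}), and that, because $G\in\sf{CMD}$, every $\partial_\sigma$ is compatible (Theorem~\ref{Thm:Concrete}\ref{Item:PsAll_Thm:Concrete}); consequently the UEB uniformity generated by $\{\partial_\sigma:\sigma\in\sn(G)\}$ is the unique compact uniformity of $\rmM(G)$, and, using that $\sn(G)$ is directed under $\max$, the sets $\rmB_{\partial_\sigma}(y,c)$ ($\sigma\in\sn(G)$, $c>0$) form a neighborhood basis at each $y\in\rmM(G)$. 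Now fix $w\in\rmM(G)$ and $p\in\bbG$, and use the $\tau$-neighborhood basis $\{\rm{Nbd}_A(p):A\in\op(w,\rmM(G))\}$ from Proposition~\ref{Prop:Tau_Base}. Given an open $O\ni p(w)$, pick $\sigma,c$ with $\rmB_{\partial_\sigma}(p(w),2c)\subseteq O$, and then (by $\partial_\sigma$-compatibility of $w$) an open $A$ with $w\in A\subseteq\rmB_{\partial_\sigma}(w,c)$. If $q\in\rm{Nbd}_A(p)$, then $q(a)=p(a')$ for some $a,a'\in A$, and since $p,q$ are $\partial_\sigma$-isometries the triangle inequality gives $\partial_\sigma(q(w),p(w))<2c$, so $q(w)\in O$. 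Hence each evaluation $\mathrm{ev}_w\colon(\bbG,\tau)\to\rmM(G)$ is continuous, so the injection $\bbG\hookrightarrow\rmM(G)^{\rmM(G)}$ is $\tau$-continuous; as $\tau$ is compact and the product topology is Hausdorff, $\tau$ equals the pointwise-convergence topology on $\bbG$.

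Joint continuity of $\bbG\times\rmM(G)\to\rmM(G)$ then follows from the isometry property alone: for nets $p_i\to p$ pointwise and $x_i\to x$, and any $\sigma$, we have $\partial_\sigma(p_i(x_i),p(x))\leq\partial_\sigma(p_i(x_i),p_i(x))+\partial_\sigma(p_i(x),p(x))=\partial_\sigma(x_i,x)+\partial_\sigma(p_i(x),p(x))\to 0$, where the second term goes to $0$ by pointwise convergence of $(p_i)$ at $x$; since the $\partial_\sigma$ generate the topology, the action is jointly continuous, completing the reduction. The main obstacle I anticipate is the bookkeeping in the middle step — correctly exploiting the $\sf{CMD}$ hypothesis (compatibility of all the $\partial_\sigma$) to translate the combinatorial $\rm{Nbd}_A$-description of the $\tau$-topology into the metric picture, and verifying cleanly that the $\partial_\sigma$-balls really do form neighborhood bases and that the UEB uniformity is the compact one; the structural outer steps are routine given Lemmas~\ref{Lem:Prox} and~\ref{Lem:UCF}, Theorem~\ref{Thm:AutMG_Haus}, and the coalescence facts already established.
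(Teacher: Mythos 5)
Your proposal is correct and follows essentially the same route as the paper: the paper's proof likewise reduces to continuity of the $\bbG$-action via the remark preceding the theorem, then uses Theorem~\ref{Thm:Concrete} (compatibility of each $\partial_\sigma$ under the $\sf{CMD}$ hypothesis) together with Proposition~\ref{Prop:Level_Maps} to identify the tau-topology with pointwise convergence and conclude continuity. You simply supply more of the details (the $\rm{Nbd}_A$ computation and the compactification-flow bookkeeping) that the paper leaves implicit.
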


\begin{proof}
    For each $\sigma\in \sn(G)$, $\partial_\sigma$ is a continuous pseudometric on $\rmM(G)$ by Theorem~\ref{Thm:Concrete}, and $\bbG$ acts on $\rmM(G)$ by $\partial_\sigma$-isometries by Proposition~\ref{Prop:Level_Maps}. Thus the tau-topology on $\bbG$ is simply the topology of pointwise convergence as $\sigma$ varies over all semi-norms, and the $\bbG$-action on $\rmM(G)$ is therefore continuous.
\end{proof}
Though for $\sf{GPP}$ groups, we cannot rule out the possibility that one needs a highly proximal extension of $\rm{UCF}(G)$ to obtain $\rmM(G)$, we have no examples where this is needed.

\begin{question}
    \label{Que:Not_UCF}
    Is there $G\in \sf{GPP}$ with $\rmM(G)\not\cong \rm{UCF}(G)$?
\end{question}
We note that there are examples of $\sf{GPP}\setminus \sf{PCMD}$ groups with $\rmM(G)\cong \rm{UCF}(G)\cong \Pi(G)$, for instance many of the kaleidoscopic groups from \cite{MR4549426}.

\section{Unfolded flows and abstract KPT correspondence}
\label{Section:KPT}

Motivated by the example of an automorphism group of a \fr structure acting on a space of expansions, we define for any topological group $G$ the notion of an \emph{unfolded flow}. Instead of associating a set of expansions to each finite substructure of a \fr structure, we associate a topometric space to each member of $\rm{SN}(G)$. Various ``bonding maps'' between these topometric spaces allow us to \emph{fold} the unfolded flow into a $G$-flow. Of particular importance is the case when each topometric space appearing in an unfolded flow is simply a metric space; we call these \emph{$G$-skeletons}. For $G$-skeletons, we define a finite combinatorial statement which we call the \emph{Ramsey property} which characterizes when the folded flow of a $G$-skeleton is isomorphic to $\mg$, thus providing an abstract KPT correspondence that gives another characterization of the class $\sf{TMD}$.

\subsection{Unfolded $G$-flows and $G$-skeletons}

A \emph{$G$-join-semi-lattice} is a join semi-lattice $\cB$ equipped with a \emph{right} $G$-action by automorphisms. Given $\sigma\in \cB$ and $F\in \fin{G}$, we write $\sigma F = \bigvee\{\sigma g: g\in F\}$. Given $\sigma\leq \tau\in \cB$, we write $\rmP(\tau, \sigma):= \{g\in G: \sigma g\leq \tau\}$. The prototypical example is $\cB = \sn(G)$.  Given $\sigma, \tau\in\rm{SN}(G)$, we write $\sigma\leq \tau$ if $\sigma(g)\leq \tau(g)$ for every $g\in G$, and we let $\sigma\vee\tau\in \rm{SN}(G)$ denote the pointwise maximum. Given $g, h\in G$ and $\sigma\in \rm{SN}(G)$, we define $\sigma g\in \sn(G)$ via $(\sigma g)(h) = \sigma(ghg^{-1})$.

Given two $G$-join-semi-lattices $\cB$ and $\cB'$, a \emph{$G$-map} $\theta\colon \cB\to \cB'$ is a $G$-equivariant, order-preserving, join-preserving map.

\begin{defin}
    \label{Def:Unfolded_Flow}
    Fix $\cB$ a $G$-join-semi-lattice and a $G$-map $\theta\colon \cB\to \rm{SN}(G)$. 
    
    A \emph{$\theta$-unfolded $G$-space} $\Omega$ consists of the following data: 
    \begin{itemize}
        \item 
        For each $\sigma\in \cB$,  a topometric space $(X_\sigma, d_\sigma)$. We call this \emph{level $\sigma$} of the unfolded $G$-space.
        \item 
        For each  $\sigma \leq \tau\in \cB$, a continuous Lipschitz surjection $\pi^\tau_\sigma\colon X_\tau\to X_\sigma$ such that whenever $\sigma\leq \tau\leq \nu\in \cB$, we have $\pi^\tau_\sigma\circ \pi_\tau^\nu = \pi^\nu_\sigma$.
        \item 
        For each $\sigma\in \cB$ and $g\in G$, a homeomorphism and isometry $\lambda_{g, \sigma}\colon X_{\sigma g}\to X_\sigma$ satisfying:
        \begin{itemize}
            \item (Equivariance): For every $g\in G$ and $\sigma\leq \tau\in \cB$, we have \newline $\lambda_{g, \sigma}\circ \pi^{\tau g}_{\sigma g} = \pi^\tau_\sigma \circ \lambda_{g, \tau}$. 
            \item (Action): For every $g, h\in G$ and $\sigma\in \cB$, $\lambda_{g, \sigma}\circ \lambda_{h, \sigma g} = \lambda_{gh, \sigma}$. Given $\sigma, \tau\in \cB$, $g\in \rmP(\tau, \sigma)$, and $x\in X_\tau$, we write $g\cdot_\sigma x = \lambda_{g, \sigma}\circ \pi^\tau_{\sigma g}(x)$.
            \item (Continuity): For every $\sigma, \tau\in \cB$ and $g\in \rmP(\tau, \sigma)$, and $x\in X_\tau$, we have $d_\sigma(g\cdot_\sigma x, \pi^\tau_\sigma(x)) \leq \theta(\sigma)(g)$. When $\theta$ is understood, we typically suppress it and just write $\sigma(g)$.
        \end{itemize}
    \end{itemize}
    An unfolded $G$-space with $X_\sigma$ compact for each $\sigma \in \cB$ is called an \emph{unfolded $G$-flow}.
\end{defin}

Given an unfolded $G$-flow $\Omega$ as above, we set $X_\Omega := \varprojlim (X_\sigma, \pi^\tau_\sigma)$. Write $\pi_\sigma\colon X_\Omega\to X_\sigma$ for the projection. We remark that for each $\sigma\in \cB$, $d_\sigma\circ \pi_\sigma$ is an lsc pseudometric on $X_\Omega$, and for each $\sigma\leq \tau\in \cB$, $d_\sigma\circ \pi_\sigma^\tau$ is an lsc pseudometric on $X_\tau$, and we can identify both $X_\Omega/(d_\sigma\circ \pi_\sigma)$ and $X_\tau/(d_\sigma\circ \pi_\sigma^\tau)$ with $X_\sigma$ in the obvious way. Given $y\in X_\Omega$ and $g\in G$, we define $gy\in X_\Omega$ to be the unique element satisfying $\pi_\sigma(gy) = g\cdot_\sigma \pi_{\sigma g}(y)$ for every $\sigma\in \cB$; we introduce the shorthand notation $g\cdot_\sigma y = \pi_\sigma(gy)$.

\begin{lemma}
    This is a well-defined and continuous action, turning $X_\Omega$ into a $G$-flow.
\end{lemma}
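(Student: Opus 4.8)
The plan is to check, in order, that the prescription $\pi_\sigma(gy) = g\cdot_\sigma \pi_{\sigma g}(y)$ actually defines an element of the inverse limit, that it gives a $G$-action, and then that this action is continuous — the last being the only point requiring real work. Note first that, since $\sigma g\le \sigma g$ and $\pi^{\sigma g}_{\sigma g} = \id$, the formula simplifies to $\pi_\sigma(gy) = \lambda_{g,\sigma}(\pi_{\sigma g}(y))$. For well-definedness I would fix $\sigma\le\tau$ in $\cB$, observe $\sigma g\le\tau g$ because right translation by $g$ is a semilattice automorphism, and apply the Equivariance axiom $\lambda_{g,\sigma}\circ\pi^{\tau g}_{\sigma g} = \pi^\tau_\sigma\circ\lambda_{g,\tau}$ to $\pi_{\tau g}(y)$, using compatibility of the tuple $y$ on the left-hand side; this yields exactly $\pi^\tau_\sigma\bigl(\lambda_{g,\tau}(\pi_{\tau g}(y))\bigr) = \lambda_{g,\sigma}(\pi_{\sigma g}(y))$, so $(gy)$ is a genuine (and, being determined by its projections, unique) element of $X_\Omega$. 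For the group-action axioms I would read $\lambda_{e_G,\sigma} = \id$ off the Action axiom at $g=h=e_G$ (it forces $\lambda_{e_G,\sigma}$ to be an idempotent bijection), giving $e_G y = y$; and I would derive $(gh)y = g(hy)$ coordinatewise from $\lambda_{g,\sigma}\circ\lambda_{h,\sigma g} = \lambda_{gh,\sigma}$ together with the right-action identity $(\sigma g)h = \sigma(gh)$.

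For continuity, since $X_\Omega$ carries the inverse limit topology from $\prod_{\sigma\in\cB} X_\sigma$, it is enough to show that $(g,y)\mapsto g\cdot_\sigma y = \lambda_{g,\sigma}(\pi_{\sigma g}(y))$ is continuous $G\times X_\Omega\to X_\sigma$ for each $\sigma\in\cB$. Separate continuity in $y$ for fixed $g$ is immediate, as the map is then the composite $\lambda_{g,\sigma}\circ\pi_{\sigma g}$ of continuous maps. I would then invoke the standard reduction for group actions: granted the $G$-action structure already proved, joint continuity everywhere follows once continuity is established at every point of the form $(e_G, y)$, by writing $g_i z^i = (g_i h^{-1})(h z^i)$ for a net $(g_i,z^i)\to(h,y)$, noting $g_i h^{-1}\to e_G$ and $h z^i\to h y$, and using associativity.

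The crux is therefore continuity at $(e_G, y)$, and this is exactly where the Continuity axiom of the definition enters. Given a net $(g_i, z^i)\to(e_G, y)$ in $G\times X_\Omega$, I would set $\tau_i := \sigma\vee\sigma g_i\in\cB$, so that $\sigma\le\tau_i$ and $g_i\in\rmP(\tau_i,\sigma)$, apply the Continuity axiom to $x = \pi_{\tau_i}(z^i)\in X_{\tau_i}$, and simplify with compatibility of $z^i$ to the effect that $g_i\cdot_\sigma \pi_{\tau_i}(z^i) = g_i\cdot_\sigma z^i$ and $\pi^{\tau_i}_\sigma(\pi_{\tau_i}(z^i)) = \pi_\sigma(z^i)$; this produces the estimate $d_\sigma(g_i\cdot_\sigma z^i,\ \pi_\sigma(z^i)) \le \theta(\sigma)(g_i)$, whose right side tends to $0$ since $\theta(\sigma)$ is a continuous semi-norm vanishing at $e_G$. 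Combined with $\pi_\sigma(z^i)\to\pi_\sigma(y)$, I would conclude by compactness of $X_\sigma$: any subnet of $(g_i\cdot_\sigma z^i)$ has a convergent sub-subnet, and lower semi-continuity of $d_\sigma$ forces its limit $a$ to satisfy $d_\sigma(a,\pi_\sigma(y)) = 0$, hence $a = \pi_\sigma(y)$; so $g_i\cdot_\sigma z^i\to \pi_\sigma(y) = \pi_\sigma(e_G y)$. I expect this last step to be the main obstacle: because $d_\sigma$ is only lower semi-continuous, not continuous, one cannot read off convergence directly from the distance estimate, and it is precisely the interplay between compactness of the levels $X_\sigma$ and their topometric (lsc-metric) structure that makes the argument close. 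Once continuity is in hand, $X_\Omega$ is a closed — hence compact — subspace of $\prod_{\sigma\in\cB}X_\sigma$, so it is a $G$-flow.
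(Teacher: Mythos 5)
Your proposal is correct and follows essentially the same route as the paper: well-definedness and the action axioms from the Equivariance and Action properties, and joint continuity by reducing to nets with $g_i\to e_G$, applying the Continuity axiom to get $d_\sigma(g_i\cdot_\sigma y_i,\pi_\sigma(y_i))\leq\sigma(g_i)\to 0$, and then using compactness of $X_\sigma$ together with lower semi-continuity of $d_\sigma$ to identify every limit point of $(g_i\cdot_\sigma y_i)$ with $\pi_\sigma(y)$. The only difference is that you spell out details the paper leaves implicit (the choice $\tau_i=\sigma\vee\sigma g_i$ and the reduction to the identity via $g_i z^i=(g_ih^{-1})(hz^i)$), which is harmless.
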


We therefore call $X_\Omega$ the \emph{folded flow} of $\Omega$.

\begin{proof}
    It is straightforward to see using the equivariance and action properties of $\Omega$ that this is a well-defined action by homeomorphisms of $X_\Omega$. To see how the continuity property of $\Omega$ ensures that $G\times X_\Omega\to X_\Omega$ is continuous, let $(g_i)_{i\in I}$ and $(y_i)_{i\in I}$ be nets from $G$ and $X_\Omega$, respectively, with $g_i\to 1_G\in G$ and $y_i\to y\in X_\Omega$. To see that $g_iy_i\to y$, fix $\sigma\in \cB$. The continuity property of $\Omega$ implies that $d_\sigma(g_i\cdot_\sigma y_i, \pi_\sigma(y_i))\leq \sigma(g_i)$. Let $z\in X_\sigma$ be any limit point of the net $(g_i\cdot_\sigma y_i)_{i\in I}$. As $d_\sigma$ is lsc and since $\sigma(g_i)\to 0$, we have $d_\sigma(z , \pi_\sigma(y)) = 0$, implying $z = \pi_\sigma(y)$.
\end{proof}

Given a $G$-flow $Y$, there are typically several different unfolded flows $\Omega$ with $X_\Omega \cong Y$. However, there is a canonical choice when $Y$ is $G$-ED.

\begin{defin}
    \label{Def:Unfolded_GED}
    Given a $G$-ED flow $Y$, a $G$-join-semi-lattice $\cB$, and a $G$-map $\theta\colon \cB\to \sn(G)$, the \emph{$\theta$-unfolding} of $Y$ is the unfolded $G$-flow $\Omega_{Y, \theta}$, or just $\Omega_Y$ if $\theta$ is understood, defined as follows:
    \begin{itemize}
        \item 
        Given $\sigma\in \cB$, level $\sigma$ of $\Omega_Y$ is $(Y/\partial_{\theta(\sigma)}, \ol{\partial}_{\theta(\sigma)})$. We typically suppress $\theta$ and just write $\partial_\sigma$.
        \item 
        Given $\sigma\leq \tau\in \cB$, the map $\pi_\sigma^\tau\colon Y/\partial_\tau\to Y/\partial_\sigma$ is defined where given $y\in Y$, we have  $\pi_\sigma^\tau([y]_{\partial_\tau}) := [y]_{\partial_\sigma}$.
        \item 
        Given $\sigma\in \cB$ and $g\in G$, the map $\lambda_{g, \sigma}\colon Y/\partial_{\sigma g}\to Y/\partial_{\sigma}$ is defined where given $y\in Y$, we have $\lambda_{g, \sigma}([y]_{\partial_{\sigma g}}) := g\cdot [y]_{\partial_{\sigma g}}= [gy]_{\partial_{\sigma}}$. 
    \end{itemize}
\end{defin}

The case when $Y$ is a $G$-TT,  $G$-ED flow with $Y\in \cal{RC}_G$ is of particular interest. Theorem~\ref{Thm:Rosendal_Minimal} tells us that this happens iff for each $\sigma\in \rm{SN}(G)$, the set $D_\sigma\subseteq Y$ of $\partial_\sigma$-compatible points is dense. Whenever $\sigma\leq \tau\in \rm{SN}(G)$, note that $D_\tau\subseteq D_\sigma$, and by Lemma~\ref{Prop:Adequate}\ref{Item:CompDown_Prop:Adequate}, $\pi_\sigma[D_\sigma]\subseteq Y/\partial_\sigma$ is exactly the set of $\ol{\partial_\sigma}$-compatible points. We therefore express the salient features of the ``continuous part'' of the unfolded $G$-flow of $Y$ in the following definition, thinking of this as the ``skeleton'' around which the rest of the unfolded $G$-flow can be formed.

\begin{defin}
    \label{Def:Skeleton}
    Given $\theta\colon \cB\to \sn(G)$ as usual, a \emph{$\theta$-skeleton} $\Omega$ is a $\theta$-unfolded $G$-space such that for each $\sigma\in \cB$, $d_\sigma$ is a compatible metric on $X_\sigma$. A \emph{$G$-skeleton} is a $\theta$-skeleton for some $\theta$ with $\theta[\cB]\subseteq \sn(G)$ a base of semi-norms. A $G$-skeleton is called \emph{precompact} if each $(X_\sigma, d_\sigma)$ is a precompact metric space.
\end{defin}

Upon taking a $G$-skeleton $\Omega$, taking the Samuel compactification of each level, and continuously extending the relevant maps, we obtain an unfolded $G$-flow we denote by $\sa(\Omega)$.  We abuse notation and freely extend $\pi^\tau_\sigma$ and $\lambda_{g, \sigma}$ to the Samuel compactifications $(\sa(X_\sigma), \partial_{d_\sigma})$, and we write $X_\Omega$ for $X_{\sa(\Omega)}$.  

If $Y\in \cal{RC}_G$ is a $G$-TT, $G$-ED flow, we let $\Omega^0_{Y, \theta}$, or just $\Omega^0_Y$, denote the $G$-skeleton obtained from taking the unfolded $G$-flow $\Omega_Y$ and keeping only the $\ol{\partial_\sigma}$-compatible points of $Y/\partial_\sigma$.

Not every unfolded $G$-flow is the unfolded flow of a $G$-ED flow.
We now isolate properties of unfolded $G$-spaces which, in the case of unfolded $G$-flows and $G$-skeletons, correspond to dynamical properties of the corresponding folded flows.

\begin{defin}
\label{Def:GED_Unfolded}
    Let $\Omega$ be an unfolded $G$-space, with all the notations from \Cref{Def:Unfolded_Flow}.
    \begin{itemize}
        \item $\Omega$ has the \emph{$G$-TT property} if for any $\sigma\in \cB$ and any $A, B\in\op(X_\sigma)$, there is $g\in G$ so that for any $\tau\in \cB$ with $g\in \rmP(\tau, \sigma)$, we have $(\pi_\sigma^\tau)^{-1}[A]\cap (\lambda_{g, \sigma}\circ \pi_{\sigma g}^\tau)^{-1}[B] \neq \emptyset$. 
        \item $\Omega$ has the \emph{minimality property} if for every $\sigma\in \cB$ and $A\in \op(X_\sigma)$, there is $F\in \fin{G}$ such that for every $y\in X_{\sigma F}$, there is $g\in F$ with $g\cdot_\sigma y\in A$. 
        \item $\Omega$ satisfies the \emph{ED property} if all of the following:
        \begin{itemize}
            \item 
            For every $\sigma\in \cB$, the pseudometric $d_\sigma$ is open-compatible.
            \item 
            For every $\sigma\leq \tau\in \cB$, the pseudometric $d_\sigma\circ \pi_\sigma^\tau$ on $X_\tau$ is adequate. 
            
            If $\Omega$ is an unfolded $G$-flow, this happens exactly when $d_\sigma \circ \pi_\sigma$ on $X_\Omega$ is adequate. 
            \item 
            Whenever  $\sigma\leq \tau\in \cB$, $0< \epsilon < 1$ and $A, B\in \op(X_{\tau})$ with $d_\sigma\circ \pi_\sigma^\tau(A, B) < \epsilon$, then there is $g\in \rmB_\sigma(\epsilon)$ so that for any $\upsilon\in \cB$ with $g\in \rmP(\upsilon, \tau)$, we have $(\pi^\upsilon_\tau)^{-1}[A]\cap (\lambda_{g, \tau}\circ \pi^\upsilon_{\tau, g})^{-1}[B] \neq \emptyset$. 
        \end{itemize}
    \end{itemize}
\end{defin}

\begin{prop}
\label{Prop:GED_Flow_gives_ED_Unfolded}
    If $Y$ is a $G$-ED flow, then $\Omega_Y=: \Omega$ has the ED property.
\end{prop}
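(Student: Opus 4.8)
The plan is to verify, one by one, the three bullets comprising the ED property in Definition~\ref{Def:GED_Unfolded} for $\Omega := \Omega_Y$. Throughout I would work with the quotient maps $q_\rho\colon Y\to Y/\partial_\rho$ for $\rho\in\sn(G)$ and exploit that $Y$, being a compact $G$-ED (i.e.\ $\sn(G)$-ED) fattening space, has each $\partial_\rho$ adequate by Corollary~\ref{Cor:ED_Adequate} and open-compatible by Lemma~\ref{Lem:ED_Pseudo}. Since $\theta$ is order-preserving, $\sigma\leq\tau$ in $\cB$ gives $\theta(\sigma)\leq\theta(\tau)$ in $\sn(G)$, hence $\partial_{\theta(\sigma)}\leq\partial_{\theta(\tau)}$ by the monotonicity clause of a fattening system; consequently $q_{\theta(\tau)}\colon Y\to Y/\partial_{\theta(\tau)}=X_\tau$ is subordinate to $\partial_{\theta(\sigma)}$, and unwinding Definition~\ref{Def:Unfolded_GED} one checks directly that, as lsc pseudometrics, $d_\sigma=\ol{\partial}_{\theta(\sigma)}=\partial_{\theta(\sigma)}^{\,q_{\theta(\sigma)}}$ on $X_\sigma$ and $d_\sigma\circ\pi^\tau_\sigma=\partial_{\theta(\sigma)}^{\,q_{\theta(\tau)}}$ on $X_\tau$.

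For the first two bullets this does it: by Proposition~\ref{Prop:Adequate}\ref{Item:Quotient_Prop:Adequate}, the pseudometric induced on a compact quotient by an adequate lsc pseudometric is adequate, and if the original is moreover open-compatible then so is the induced one. Applying this to $\partial_{\theta(\sigma)}$ on $Y$ and the quotient maps above yields immediately that $d_\sigma$ is open-compatible (first bullet) and that $d_\sigma\circ\pi^\tau_\sigma$ is adequate (second bullet).

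For the third bullet I would pull the data back to $Y$. Given $\sigma\leq\tau$ in $\cB$, $0<\epsilon<1$, and $A,B\in\op(X_\tau)$ with $(d_\sigma\circ\pi^\tau_\sigma)(A,B)<\epsilon$, set $\tilde A=q_{\theta(\tau)}^{-1}[A]$ and $\tilde B=q_{\theta(\tau)}^{-1}[B]$, open in $Y$; the hypothesis then reads $\partial_{\theta(\sigma)}(\tilde A,\tilde B)<\epsilon$. Choosing $a\in\tilde A$, $b\in\tilde B$ and $c<\epsilon$ with $\partial_{\theta(\sigma)}(a,b)\leq c$, Definition~\ref{Def:Fattening_SPM} applied to the $\sn(G)$-fattening $\ol{A_0}(\theta(\sigma),r)=\ol{\rmB_{\theta(\sigma)}(r)\cdot A_0}$, with a small $\delta>0$ satisfying $c+\delta<\epsilon$ and with the open set $\tilde A\ni a$, gives $b\in\ol{\rmB_{\theta(\sigma)}(c+\delta)\cdot\tilde A}$; so the open set $\tilde B$ meets $\rmB_{\theta(\sigma)}(c+\delta)\cdot\tilde A$ and there is $g\in\rmB_{\theta(\sigma)}(c+\delta)\subseteq\rmB_\sigma(\epsilon)$ with $g\tilde A\cap\tilde B\neq\emptyset$. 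It then remains to observe, again via Definition~\ref{Def:Unfolded_GED}, that for any $\upsilon\in\cB$ with $g\in\rmP(\upsilon,\tau)$ this $g$ works: writing $z=gz_0\in g\tilde A\cap\tilde B$ with $z_0\in\tilde A$, the point $y:=q_{\theta(\upsilon)}(z_0)\in X_\upsilon$ satisfies $\pi^\upsilon_\tau(y)=q_{\theta(\tau)}(z_0)\in A$ and $\lambda_{g,\tau}(\pi^\upsilon_{\tau g}(y))=q_{\theta(\tau)}(gz_0)=q_{\theta(\tau)}(z)\in B$, i.e.\ $(\pi^\upsilon_\tau)^{-1}[A]\cap(\lambda_{g,\tau}\circ\pi^\upsilon_{\tau g})^{-1}[B]\neq\emptyset$.

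I expect the main obstacle to be purely one of bookkeeping: correctly matching the abstract data $d_\sigma$, $\pi^\tau_\sigma$, $\lambda_{g,\tau}$, $\rmP(\upsilon,\tau)$ of $\Omega_Y$ with the concrete quotient-map data on $Y$, and recognizing that the apparently complicated nonemptiness condition in the third bullet collapses, after pulling back along the $q_{\theta(\upsilon)}$, to the single relation $g\tilde A\cap\tilde B\neq\emptyset$, which is independent of $\upsilon$. Once that dictionary is set up, the first two bullets are a one-line appeal to Proposition~\ref{Prop:Adequate}\ref{Item:Quotient_Prop:Adequate} and the third is the definition of $\partial_{\theta(\sigma)}$ via fattenings.
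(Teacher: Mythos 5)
Your proposal is correct and follows essentially the same route as the paper's proof: the first two bullets via Lemma~\ref{Lem:ED_Pseudo}, Corollary~\ref{Cor:ED_Adequate}, and Proposition~\ref{Prop:Adequate}\ref{Item:Quotient_Prop:Adequate}, and the third bullet by pulling $A,B$ back to open subsets of $Y$ and extracting $g\in\rmB_\sigma(\epsilon)$ from the definition of $\partial_\sigma$. You merely unpack a couple of steps (the fattening definition of $\partial_\sigma$ and the final quotient-map chase) that the paper leaves implicit.
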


\begin{proof}
    If $Y$ is a $G$-ED flow, then Lemma~\ref{Lem:ED_Pseudo} and Proposition~\ref{Cor:ED_Adequate} show that the $d_\sigma$'s are adequate and open-compatible, and by Proposition \ref{Prop:Adequate}\ref{Item:Quotient_Prop:Adequate}, so are $d_\sigma \circ \pi^\tau_\sigma$.
    Now suppose that $\sigma\leq \tau\in \rm{SN}(G)$, $0< \epsilon < 1$, and $A, B\subseteq \op(Y_\tau/\partial_\tau)$ satisfy $\ol{\partial}_\sigma\circ \pi^\tau_\sigma(A, B) < \epsilon$. Then $\partial_\sigma(\pi_\tau^{-1}[A], \pi_\tau^{-1}[B])< \epsilon$. By definition of $\partial_\sigma$, we have $\pi_\tau^{-1}[A]\cap \rmB_\sigma(\epsilon)\cdot\pi_\tau^{-1}[B]\neq \emptyset$. Fix $g\in \rmB_\sigma(\epsilon)$ with $\pi_\tau^{-1}[A]\cap g^{-1}\cdot \pi_\tau^{-1}[B]\neq \emptyset$. If $\upsilon\in \rm{SN}(G)$ with $g\in \rmP(\upsilon, \tau)$, then $(\pi^\upsilon_\tau)^{-1}[A]\cap (\lambda_{g, \tau}\circ \pi^\upsilon_{\tau, g})^{-1}[B] \neq \emptyset$ as desired.
\end{proof}

\begin{prop}
\label{Prop:GED_Unfolded}
    Let $\Omega$ be an unfolded $G$-flow, with all the notations from \Cref{Def:Unfolded_Flow}, and form the folded flow $Y:= X_\Omega$.
    Then: 
    \begin{enumerate}[label=\normalfont(\arabic*)]
        \item \label{Item:GTT_Prop:Unfolded} $Y$ is $G$-TT iff $\Omega$ has the TT property. 
        \item \label{Item:Min_Prop:Unfolded} $Y$ is minimal iff $\Omega$ has the minimality property.
        \item \label{Item:GED_Prop:Unfolded} If $\Omega$ has the ED property, then $Y$ is $G$-ED, and furthermore $(Y/\partial_{\sigma}, \ol{\partial}_{\sigma})\cong (X_\sigma, d_\sigma)$.
    \end{enumerate}
\end{prop}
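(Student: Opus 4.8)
The plan is to verify each of the three equivalences/implications in turn, in each case translating the dynamical property of $Y = X_\Omega$ into a statement about the inverse-limit structure using that basic open sets of $Y$ are pulled back from the levels $X_\sigma$ (so every $A \in \op(Y)$ contains a set of the form $\pi_\sigma^{-1}[A']$ for some $\sigma \in \cB$ and $A' \in \op(X_\sigma)$, by a directed-limit argument with $\cB$ directed), and that the $G$-action on $Y$ is computed level-by-level via $g\cdot_\sigma y = \lambda_{g,\sigma}\circ \pi_{\sigma g}(y)$ once $g \in \rmP(\tau,\sigma)$ for an appropriate $\tau$.

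For \ref{Item:GTT_Prop:Unfolded}: recall $Y$ is $G$-TT means for all $A, B \in \op(Y)$ there is $g \in G$ with $gA \cap B \neq \emptyset$. First I would reduce to basic open sets $A = \pi_\sigma^{-1}[A']$, $B = \pi_\sigma^{-1}[B']$ at a common level $\sigma$ (possible since $\cB$ is directed and the $\pi^\tau_\sigma$ are surjective, so a basic open set at level $\tau$ pushes forward and can be refined to a common level). Then $gA \cap B \neq \emptyset$ unwinds: picking $\tau \geq \sigma \vee \sigma g$ (using that $g \in \rmP(\tau, \sigma)$ iff $\sigma g \leq \tau$), the condition $gA \cap B \neq \emptyset$ becomes exactly $(\pi_\sigma^\tau)^{-1}[A'] \cap (\lambda_{g,\sigma}\circ \pi_{\sigma g}^\tau)^{-1}[B'] \neq \emptyset$ inside $X_\tau$, which is the $G$-TT property of $\Omega$. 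The only subtlety is checking that the "for any $\tau$ with $g \in \rmP(\tau,\sigma)$" quantifier in the definition matches the "there exists a $\tau$ large enough" one needs for $Y$; these agree because the $\pi^\tau_\sigma$ form a compatible system of surjections, so non-emptiness at one sufficiently large level is equivalent to non-emptiness at all larger levels, and $X_\Omega$ being an inverse limit of compacta has a point in a cylinder iff all finite approximations are consistent.

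For \ref{Item:Min_Prop:Unfolded}: use Lemma~\ref{Lem:Min}\ref{Item:Min_Lem:Min}, that $Y$ is minimal iff for every $A \in \op(Y)$ there is $F \in \fin{G}$ with $FA \subseteq Y$ dense. Again reduce to $A = \pi_\sigma^{-1}[A']$. Then $FA$ dense in $Y$ means $FA$ meets every basic open set; because $Y$ is an inverse limit, $FA$ is dense iff $\pi_\tau[FA]$ is dense in $X_\tau$ for every $\tau \in \cB$, and by compactness (closedness of $\pi_\tau[\ol{FA}]$) this is iff $\pi_\tau[FA] = X_\tau$ for every $\tau$ — equivalently $\bigcup_{g \in F} g\cdot_\sigma(\cdot)$ hits $A'$ from every point, i.e. for every $\tau \geq \sigma F$ and every $y \in X_\tau$ there is $g \in F$ with $g\cdot_\sigma y \in A'$; taking $\tau = \sigma F$ (and noting points of $X_{\sigma F}$ surject from all larger $X_\tau$) gives precisely the minimality property of $\Omega$. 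I should be slightly careful with the direction "$F$ works at level $\sigma F$ $\Rightarrow$ $FA$ dense in $Y$": this needs that a point of $X_{\sigma F}$ lifts to $Y$ and that the condition propagates to all larger levels, which again follows from surjectivity of the bonding maps and the continuity property ensuring the $g\cdot_\sigma$ maps are compatible across levels.

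For \ref{Item:GED_Prop:Unfolded}: assume $\Omega$ has the ED property. The adequacy clause gives that $\partial_\sigma := d_\sigma \circ \pi_\sigma$ is an adequate open-compatible lsc pseudometric on $Y$ for each $\sigma \in \cB$, and the third ED clause, interpreted on $Y$, gives exactly the statement that $\partial_\sigma$ is the $\sn(G)$-fattening pseudometric $\partial_{\theta(\sigma)}^Y$ (compare the displayed equivalence in Definition~\ref{Def:Fattening_SPM}, unwound on $Y$: $\partial_\sigma(x,y) \leq c$ iff for all $A \in \op(x,Y)$, $B \in \op(y,Y)$, and $\epsilon > 0$, we have $\ol{A}(\theta(\sigma), c+\epsilon) \wedge \ol{B} \neq \emptyset$ — and the ED clause lets one produce the required near-intersection via a group element of $\theta(\sigma)$-size $< c + \epsilon$). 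Since $\{\theta(\sigma) : \sigma \in \cB\}$ need not be a base of semi-norms for a general unfolded $G$-flow, but the adequacy/open-compatibility holds for each such $\partial_\sigma$, I would conclude $Y$ is $G$-ED by checking the relevant condition from Theorem~\ref{Thm:Concrete} or directly from Lemma~\ref{Lem:ED_Pseudo}: open-compatibility of each $\partial_\sigma$ is exactly $\sn(G)$-ED once one knows these pseudometrics generate (a cofinal subfamily of) the $G$-fattening uniformity — here I should reference that $\theta$ being a $G$-map into $\sn(G)$ together with the continuity property forces $d_\sigma \circ \pi_\sigma \leq (\theta(\sigma))_r$ pointwise and hence these are genuine $G$-fattening pseudometrics. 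Finally $(Y/\partial_\sigma, \ol{\partial}_\sigma) \cong (X_\sigma, d_\sigma)$ is immediate: $\pi_\sigma : Y \to X_\sigma$ is a continuous surjection subordinate to $\partial_\sigma = d_\sigma \circ \pi_\sigma$ with $\partial_\sigma(x,y) = 0 \iff \pi_\sigma(x) = \pi_\sigma(y)$ (since $d_\sigma$ is a \emph{metric} on... no — $d_\sigma$ is only a pseudometric in general, but in the unfolded-flow setting $X_\sigma$ is the space and $d_\sigma$ lsc; the quotient $Y/\partial_\sigma$ is $X_\sigma$ by construction of the inverse limit, and $\ol{\partial}_\sigma = \partial_\sigma^{\pi_\sigma} = d_\sigma$ by the identification noted just after the definition of $X_\Omega$).

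The main obstacle I anticipate is the bookkeeping in \ref{Item:GED_Prop:Unfolded} — specifically, cleanly arguing that the three ED clauses together say precisely "$\partial_{\theta(\sigma)}^Y = d_\sigma \circ \pi_\sigma$ and this family witnesses $G$-ED," rather than merely "each $d_\sigma \circ \pi_\sigma$ is adequate and open-compatible." One must make sure the third clause is strong enough to force the fattening $\ol{A}(\theta(\sigma), c)$ to agree with $\rmB_{\partial_\sigma}(A, c)$ up to $\epsilon$ (the $\subseteq$ direction is Definition~\ref{Def:Fattening_SPM} eq.~\eqref{Eq:Basic_Fat_partial}, the $\supseteq$ direction is where the ED clause's group element is used), and that adequacy of $\partial_\sigma$ upgrades to the $\sn(G)$-ED property of $Y$ in the sense of Definition~\ref{Def:Fattening_Spaces_Props}. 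Everything else is a routine translation between inverse-limit data and pointwise dynamical conditions.
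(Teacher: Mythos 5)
Your overall strategy is the paper's: reduce to basic open sets pulled back from a single level, translate the dynamical condition through the bonding maps, and for item \ref{Item:GED_Prop:Unfolded} use the three ED clauses to identify $d_\sigma\circ\pi_\sigma$ with the intrinsic fattening pseudometric $\partial_{\theta(\sigma)}$ on $Y$. Item \ref{Item:GTT_Prop:Unfolded} is handled the same way (the paper declares it straightforward), and in item \ref{Item:GED_Prop:Unfolded} the paper simply runs your plan in the opposite order: it first verifies $\ol{A}\subseteq\Int(\ol{UA})$ directly for $U=\rmB_\sigma(2\epsilon)$ via the chain $\ol{A}\subseteq\ol{A_\epsilon}\subseteq A_{2\epsilon}\subseteq\Int(\ol{UA})$ (adequacy plus the third ED clause), and only afterwards proves $\partial_{\theta(\sigma)}=d_\sigma\circ\pi_\sigma$, which is exactly the isometry $(Y/\partial_\sigma,\ol{\partial}_\sigma)\cong(X_\sigma,d_\sigma)$; note that this identity is the content of the last claim and is not "immediate by construction," since the remark after the definition of $X_\Omega$ concerns $d_\sigma\circ\pi_\sigma$, not $\partial_{\theta(\sigma)}$ --- but you do supply the missing ingredient (continuity gives one inequality, the third ED clause the other). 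Your worry about $\theta[\cB]$ not being a base of semi-norms is legitimate but is shared by the paper's own proof, which likewise only treats $U=\rmB_\sigma(2\epsilon)$ with $\sigma\in\cB$; in every application $\theta[\cB]$ is cofinal in $\sn(G)$.

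The one step that is wrong as written is in item \ref{Item:Min_Prop:Unfolded}: from "$FA$ is dense in $Y$" you conclude "by compactness $\pi_\tau[FA]=X_\tau$." Density of $FA$ only gives $\pi_\tau[\ol{FA}]=X_\tau$; since $\pi_\tau$ need not be open and $FA$ is not closed, you cannot drop the closure, so you only obtain $g\cdot_\sigma y\in\ol{A'}$ rather than $g\cdot_\sigma y\in A'$. This is easily repaired by first shrinking to $B'\in\op(X_\sigma)$ with $\ol{B'}\subseteq A'$ and running the argument for $B'$, but the cleaner route --- and the one the paper takes --- is to use the classical cover form of minimality ($F^{-1}\pi_\sigma^{-1}[A']=Y$ for some finite $F$, which holds by compactness) instead of the density criterion of Lemma~\ref{Lem:Min}\ref{Item:Min_Lem:Min}; then the translation to "for every $y\in X_{\sigma F}$ there is $g\in F$ with $g\cdot_\sigma y\in A'$" is an honest equivalence via surjectivity of $\pi_{\sigma F}$, with no regularization needed.
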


\begin{proof}[Proof of \Cref{Prop:GED_Unfolded}]
    Both directions of the $G$-TT statement are straightforward.

    Assume $\Omega$ has the minimality property. Fix $B\in \op(Y)$. We may suppose $B = \pi_\sigma^{-1}[A]$ for some $\sigma\in \cB$ and  $A\in \op(X_\sigma)$. Let $F\in \fin{G}$ be as promised by the assumption that $\Omega$ is minimal. Now consider some $y\in Y$. Considering $\pi_{\sigma F}(y)$, we can find $g\in F$ so that $g\cdot_\sigma \pi_{\sigma F}(y)\in A$. In particular, we have $gy\in B$. Hence $Y$ is minimal.

    Now assume $Y$ is minimal. Fix $\sigma\in \cB$ and $A\in \op(X_\sigma)$. As $Y$ is minimal, let $F\in \fin{G}$ be such that $F^{-1}\pi_\sigma^{-1}[A] = Y$. Given $y\in X_{\sigma F}$, find $z\in Y$ with $\pi_{\sigma F}(z) = y$. Find $g\in F$ with $gz\in \pi_\sigma^{-1}[A]$. Hence $g\cdot_\sigma y\in A$.

    Assume that $\Omega$ has the ED-property and fix $U\in \cN_G$ and $A\in \op(Y)$. We may assume that for some $\sigma\in \cB$ and $\epsilon > 0$, we have $U = \rmB_\sigma(2\epsilon)$. For each $c> 0$, write $A_c:= \rmB_{d_\sigma\circ \pi_\sigma}(A, c)$. As $d_\sigma\circ \pi_\sigma$ is adequate, $A_c$ is open, and we also note that $[A_c]_{d_\sigma\circ \pi_\sigma} = A_c$.
    \vspace{2 mm}
    
    \begin{claim}
        $\ol{A_{2\epsilon}}\subseteq \ol{U A}$. In particular, $A_{2\epsilon}\subseteq \Int(\ol{U A})$. 
    \end{claim}

    \begin{proof}[Proof of claim]
        Fix $y_0\in \ol{A_{2 \epsilon}}$ and $D_0\in \op(y_0, Y)$. Pick $y_1\in A$ with $d_\sigma\circ \pi_\sigma(y_1, D_0)< 2\epsilon$, and fix $D_1\in \op(y_1, A)$. We can suppose that there are $\sigma\leq \tau\in \cB$ and $E_i\in \op(\pi_{\tau}(y_i), X_{\tau})$ that $D_i = \pi_\tau^{-1}[E_i]$. Since $\Omega$ has the ED property, there is $g\in \rmB_\sigma(2\epsilon) = U$ so that for any $\upsilon\in \cB$ with $g\in \rmP(\upsilon, \tau)$, we have $(\pi_\tau^\upsilon)^{-1}[E_0]\cap (\lambda_{g, \tau}\circ \pi_{\tau g}^\upsilon)^{-1}[E_1]\neq \emptyset$. In $X_\Omega$, this implies that $D_0\cap g^{-1}D_1\neq \emptyset$. In particular, $D_0\cap U\cdot A\neq \emptyset$, and $D_0\in \op(y, Y)$ was arbitrary. 
    \end{proof}

    Since $d_\sigma$ is open-compatible and $\pi_\sigma[A_\epsilon]\in \op(X_\sigma)$, we have $\ol{\pi_\sigma[A_\epsilon]}\subseteq \rmB_{d_\sigma}(\pi[A_\epsilon], \epsilon)\subseteq \pi[A_{2\epsilon}]$. This implies that $\ol{A_\epsilon}\subseteq A_{2\epsilon}$. Putting everything together, we have $\ol{A}\subseteq \ol{A_\epsilon}\subseteq A_{2\epsilon}\subseteq \Int(\ol{U\cdot A})$.

    To see that $(Y/\partial_\sigma, \ol{\partial_\sigma})\cong (X_\sigma, d_\sigma)$, let $x, y\in Y$, and fix $0\leq c< 1$. If $d_\sigma(\pi_\sigma(x), \pi_\sigma(y))> c$, find $f\in \rmC_{d_\sigma}(X_\sigma)$ with $|f(\pi_\sigma(x))- f(\pi_\sigma(y))| > c$ (see \cite{Ben_Yaacov_2013}). Since $f\circ \pi_\sigma\in \rmC_{\partial_\sigma}(Y)$ by the continuity hypothesis on $\Omega$, we obtain $\partial_\sigma(x, y) > c$. In the other direction, suppose $\partial_\sigma(x, y)> c$. Find $\epsilon > 0$ and $A\in \op(x, Y)$ so that $y\not\in \ol{\rmB_\sigma(c+\epsilon)\cdot A}$. Write $B = \pi_\sigma[A]$ and $B(c+\epsilon) = \rmB_{d_\sigma}(B, c+\epsilon)$. Since $\ol{\rmB_\sigma(c+\epsilon)\cdot A}\supseteq \pi_\sigma^{-1}[\ol{B(c+\epsilon)}]$ using the $G$-ED property, we see that $d_\sigma(\pi_\sigma(x), \pi_\sigma(y))> c$.  
\end{proof}

\begin{prop}
    \label{Prop:Synd_Skeleton}
        Given a property from Definition~\ref{Def:GED_Unfolded}, $\Omega$ has it iff $\sa(\Omega)$ does.
    \end{prop}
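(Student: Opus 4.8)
The plan is to transfer each of the three conditions of Definition~\ref{Def:GED_Unfolded} across the dense inclusions $X_\sigma\subseteq\sa(X_\sigma)$, using throughout that the bonding maps $\pi^\tau_\sigma$ and the partial action maps $\lambda_{g,\sigma}$ of $\sa(\Omega)$ are, by construction, the continuous extensions of those of $\Omega$. First I would dispose of the parts of the ED property that are automatic. Since $\Omega$ is a $G$-skeleton, each $(X_\sigma,d_\sigma)$ is metrizable, so $d_\sigma$ is a compatible — hence open-compatible — pseudometric, and balls $\rmB_{d_\sigma}(S,c)$ of arbitrary subsets $S$ are open; since moreover $\pi^\tau_\sigma$ is continuous and $\rmB_{d_\sigma\circ\pi^\tau_\sigma}(A,c)=(\pi^\tau_\sigma)^{-1}[\rmB_{d_\sigma}(\pi^\tau_\sigma[A],c)]$, each $d_\sigma\circ\pi^\tau_\sigma$ is adequate on $X_\tau$. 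Dually, on $\sa(X_\sigma)=\NU(X_\sigma)$ the pseudometric $\partial_{d_\sigma}$ is open-compatible and adequate (Lemma~\ref{Lem:ED_Pseudo} and Corollary~\ref{Cor:ED_Adequate}, as $\NU(X_\sigma)$ is compact $L$-ED by Proposition~\ref{Prop:NU_Fat_props}) and restricts to $d_\sigma$ on $X_\sigma$; and running the argument that proves Lemma~\ref{Lem:Topometric_Samuel_Hom} with the uniformly continuous surjection $\pi^\tau_\sigma\colon X_\tau\to X_\sigma$ in place of a group homomorphism yields $\partial_{d_\sigma}\circ(\pi^\tau_\sigma\times\pi^\tau_\sigma)=\partial_{d_\sigma\circ\pi^\tau_\sigma}$ on $\sa(X_\tau)$, which is therefore adequate as well. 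Thus the clauses (ED-1) and (ED-2) hold automatically for both $\Omega$ and $\sa(\Omega)$, and the ED property reduces to its third clause, which has exactly the shape of the $G$-TT clause.

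For the implication from $\Omega$ to $\sa(\Omega)$ in the $G$-TT and (ED-3) clauses, given nonempty open $A^*,B^*$ in $\sa(X_\sigma)$ (resp.\ $\sa(X_\tau)$) I would put $A:=A^*\cap X_\sigma$, $B:=B^*\cap X_\sigma$, which are nonempty open and dense in $A^*,B^*$. For (ED-3) one checks that the metric hypothesis descends: if $(\partial_{d_\sigma}\circ\pi^\tau_\sigma)(A^*,B^*)<\epsilon$, then, that pseudometric being adequate and equal to $\partial_{d_\sigma\circ\pi^\tau_\sigma}$ (which restricts to $d_\sigma\circ\pi^\tau_\sigma$ on $X_\tau$), Proposition~\ref{Prop:Adequate}\ref{Item:Dense_Prop:Adequate} gives that $\rmB_\partial(A,\epsilon)$ is open and dense in the open set $\rmB_\partial(A^*,\epsilon)$, which meets $B^*$ and hence its dense subset $B$, so that $(d_\sigma\circ\pi^\tau_\sigma)(A,B)<\epsilon$. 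Applying the corresponding clause for $\Omega$ to $(A,B)$ produces an element $g\in G$ (lying in $\rmB_\sigma(\epsilon)$ in the ED case) and, for each $\upsilon$ with $g\in\rmP(\upsilon,\tau)$, a witness $z\in X_\upsilon$ with $\pi^\upsilon_\tau(z)\in A\subseteq A^*$ and $\lambda_{g,\tau}\circ\pi^\upsilon_{\tau g}(z)\in B\subseteq B^*$; since the maps of $\sa(\Omega)$ extend those of $\Omega$, reading $z$ inside $\sa(X_\upsilon)$ gives the required witness for $\sa(\Omega)$. The $G$-TT clause is the special case with no metric hypothesis (take $\tau$ in place of $\sigma$ throughout the obvious way).

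For the implication from $\sa(\Omega)$ to $\Omega$ in these same clauses, the idea is to manufacture, from a nonempty open $A\subseteq X_\sigma$ (resp.\ $X_\tau$), a nonempty open $A^\sharp\subseteq\sa(X_\sigma)$ with $A^\sharp\cap X_\sigma\subseteq A$: using regularity of $X_\sigma$, pick open $A'$ with some $a\in A'$ and $\ol{A'}^{X_\sigma}\subseteq A$, and set $A^\sharp:=\sa(X_\sigma)\setminus\ol{X_\sigma\setminus A'}^{\sa(X_\sigma)}$; then $A'\subseteq A^\sharp$ (a net in the closed set $X_\sigma\setminus A'$ cannot converge in $X_\sigma$ to a point of $A'$) and $A^\sharp\cap X_\sigma=A'\subseteq A$. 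In the (ED-3) case, having first shrunk so that $a\in A^\sharp$, $b\in B^\sharp$ with $(d_\sigma\circ\pi^\tau_\sigma)(a,b)<\epsilon$ — possible since $\rmB_{d_\sigma\circ\pi^\tau_\sigma}(\{b\},\epsilon)$ is open by adequacy — one gets $(\partial_{d_\sigma}\circ\pi^\tau_\sigma)(A^\sharp,B^\sharp)\le(d_\sigma\circ\pi^\tau_\sigma)(a,b)<\epsilon$. Applying the clause for $\sa(\Omega)$ to $(A^\sharp,B^\sharp)$ yields $g$ and, for each admissible $\upsilon$, a nonempty open overlap inside $\sa(X_\upsilon)$, which meets the dense subspace $X_\upsilon$ at a point $z$ with $\pi^\upsilon_\tau(z)\in A^\sharp\cap X_\tau\subseteq A$ and $\lambda_{g,\tau}\circ\pi^\upsilon_{\tau g}(z)\in B^\sharp\cap X_\tau\subseteq B$, as wanted for $\Omega$.

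Finally, the minimality clause. The direction $\sa(\Omega)\Rightarrow\Omega$ runs just as above: from open $A\subseteq X_\sigma$ build $A^\sharp$ with $A^\sharp\cap X_\sigma\subseteq A$, apply minimality of $\sa(\Omega)$ to get $F\in\fin{G}$ with, for every $y\in\sa(X_{\sigma F})$, some $g\in F$ with $g\cdot_\sigma y\in A^\sharp$, and specialize to $y\in X_{\sigma F}$, where $g\cdot_\sigma y\in A^\sharp\cap X_\sigma\subseteq A$. The reverse direction is the one subtle step, and I expect it to be the main obstacle: the naive attempt observes that $X_{\sigma F}\subseteq\bigcup_{g\in F}(g\cdot_\sigma)^{-1}[A^*]$ is open and dense in the compact space $\sa(X_{\sigma F})$, but this does not force it to be all of $\sa(X_{\sigma F})$. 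Instead, using regularity of $\sa(X_\sigma)$, pick a nonempty open $W$ with $\ol{W}^{\sa(X_\sigma)}\subseteq A^*$, put $A:=W\cap X_\sigma$ (nonempty open, with $\ol{A}^{\sa(X_\sigma)}=\ol{W}^{\sa(X_\sigma)}\subseteq A^*$ a \emph{closed} set inside $A^*$), and apply minimality of $\Omega$ to $A$ to obtain $F$ with $X_{\sigma F}\subseteq\bigcup_{g\in F}(g\cdot_\sigma)^{-1}[A]$; then $\bigcup_{g\in F}(g\cdot_\sigma)^{-1}[\ol{A}^{\sa(X_\sigma)}]$ is a closed subset of $\sa(X_{\sigma F})$ — a finite union of preimages of a closed set under the continuous maps $g\cdot_\sigma$ — containing the dense set $X_{\sigma F}$, hence equal to all of $\sa(X_{\sigma F})$, and since $\ol{A}^{\sa(X_\sigma)}\subseteq A^*$ the same $F$ witnesses minimality of $\sa(\Omega)$ for $A^*$. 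Apart from this closed-set maneuver, every step is routine density-chasing together with the fact that the structure maps of $\sa(\Omega)$ restrict to those of $\Omega$ on the dense subspaces.
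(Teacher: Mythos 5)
Your proof is correct and follows essentially the same strategy as the paper's: intersect with the dense level $X_\sigma\subseteq\sa(X_\sigma)$ going up, pass to a suitable nonempty open subset of $\sa(X_\sigma)$ whose trace on $X_\sigma$ lies in the given open set going down (your $A^\sharp$ plays exactly the role of the paper's $\wt{\rmC}_A$), use adequacy via Proposition~\ref{Prop:Adequate}\ref{Item:Dense_Prop:Adequate} to transfer the distance hypothesis in the ED clause, and handle the upward minimality direction by a closure/limit argument. The only cosmetic differences are that you make explicit that the first two ED clauses are automatic on both sides and replace the paper's net argument for upward minimality with an equivalent closed-cover argument.
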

    
\begin{proof}
        Suppose $\Omega$ has the TT property. Fix $\sigma\in \cB$ and $A, B\in \op(\sa(X_\sigma))$, and write $A_0 = A\cap X_\sigma$, $B_0 = B\cap X_\sigma$. Using the $G$-TT property for $\Omega$, find $g\in G$ so that, writing $\tau = \sigma\vee \sigma g$, we have $(\pi_\sigma^\tau)^{-1}[A_0]\cap (\lambda_{g, \sigma}\circ \pi_{\sigma g}^\tau)^{-1}[B_0]\cap X_\tau \neq \emptyset$. This implies $(\pi_\sigma^\tau)^{-1}[A]\cap (\lambda_{g, \sigma}\circ \pi_{\sigma g}^\tau)^{-1}[B] \neq \emptyset$. 
        
        Suppose $\sa(\Omega)$ has the TT property. Fix $\sigma\in \cB$ and $A, B\in \op(X_\sigma)$. Using the TT property for $\sa(\Omega)$, find $g\in G$ so that, writing $\tau = \sigma\vee \sigma g$, we have $(\pi_\sigma^\tau)^{-1}[\wt{\rmC}_A]\cap (\lambda_{g, \sigma}\circ \pi_{\sigma g}^\tau)^{-1}[\wt{\rmC}_B]\neq \emptyset$. As this set is open, it meets $X_\tau\subseteq \sa(X_\tau)$, implying $(\pi_\sigma^\tau)^{-1}[A]\cap (\lambda_{g, \sigma}\circ \pi_{\sigma g}^\tau)^{-1}[B]\cap X_\tau \neq \emptyset$.

        Suppose $\Omega$ has the minimality property. Fix some $\sigma\in \cB$ and $A\in \op(\sa(X_\sigma))$. Find $B\in \op(\sa(X_\sigma))$ with $\ol{B}\subseteq A$. Let $F\in \fin{G}$ witness the minimality property for $B\cap X_\sigma\in \op(X_\sigma)$. Given $y\in \sa(X_{\sigma F})$ and considering some net $(y_i)_{i\in I}$ from $X_{\sigma F}$ with $y_i\to y$, we can find $g\in F$ with $g\cdot \pi_{\sigma g}^{\sigma F}(y)\in \ol{B}\subseteq A$. 
    
        Suppose $\sa(\Omega)$ has the minimality property. Fix $\sigma\in \cB$ and $A\in \op(X_\sigma)$. Let $F$ witness the minimality property for $\wt{\rmC}_A\in \op(\sa(X_\sigma))$. This $F$ will also witness the minimality property for $A$. 
    
        Suppose $\Omega$ has the ED property. By Lemma~\ref{Lem:ED_Pseudo}, Corollary~\ref{Cor:ED_Adequate}, and Proposition~\ref{Prop:Adequate}\ref{Item:Quotient_Prop:Adequate}, we have for every $\sigma\leq \tau\in \cB$ that $\partial_{d_\sigma}$ is open-compatible and that $\partial_{d_\sigma}\circ \pi^\tau_\sigma = \partial_{d_\sigma\circ \pi^\tau_\sigma}$ is adequate. Fix $0< \epsilon < 1$ and $A, B\in \op(\sa(X_\tau))$ with $\partial_{d_\sigma}\circ \pi^\tau_\sigma(A, B)< \epsilon$. Write $A_0 = A\cap X_\tau$, $B_0 = B\cap X_\tau$. By Proposition~\ref{Prop:Adequate}\ref{Item:Dense_Prop:Adequate}, we have $d_\sigma \circ \pi^\tau_\sigma(A_0, B_0)< \epsilon$. Using the ED property for $\Omega$, find $g\in \rmB_\sigma(\epsilon)$ so that writing $\upsilon = \tau\vee \tau g^{-1}$, we have $(\pi_\tau^\upsilon)^{-1}[A_0]\cap (\lambda_{g, \tau}\circ \pi_{\tau g}^\upsilon)^{-1}[B_0]\cap X_\upsilon \neq \emptyset$. This implies $(\pi_\tau^\upsilon)^{-1}[A]\cap (\lambda_{g, \tau}\circ \pi_{\tau g}^\upsilon)^{-1}[B]\neq \emptyset$.
    
        Suppose $\sa(\Omega)$ has the ED property. Fix $0< \epsilon < 1$ and $A, B\in \op(X_\tau)$ with $d_\sigma\circ \pi^\tau_\sigma(A, B)<\epsilon$. Then $\partial_{d_\sigma}\circ \pi^\tau_\sigma(\wt{\rmC}_A, \wt{\rmC}_B)< \epsilon$. Using the ED property for $\sa(\Omega)$, find $g\in \rmB_\sigma(\epsilon)$ so that writing $\upsilon = \tau\vee \tau g^{-1}$, we have $(\pi_\tau^\upsilon)^{-1}[\wt{\rmC}_A]\cap (\lambda_{g, \tau}\circ \pi_{\tau g}^\upsilon)^{-1}[\wt{\rmC}_B] \neq \emptyset$. As this set is open, it meets $X_\upsilon\subseteq \sa(X_\upsilon)$, implying $(\pi_\tau^\upsilon)^{-1}[A]\cap (\lambda_{g, \tau}\circ \pi_{\tau g}^\upsilon)^{-1}[B]\cap X_\upsilon \neq \emptyset$ 
    \end{proof}

\begin{corollary}
    \label{Cor:Rosendal_unfolded}
    If the $G$-skeleton $\Omega$ satisfies the TT and ED properties, then $X_\Omega\in \cal{RC}_G$. 
\end{corollary}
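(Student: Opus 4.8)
The plan is to deduce the corollary from the machinery already assembled, chiefly Proposition~\ref{Prop:Synd_Skeleton}, Proposition~\ref{Prop:GED_Unfolded}, and Theorem~\ref{Thm:Rosendal_Minimal}. First I would pass from the $G$-skeleton $\Omega$ to the unfolded $G$-flow $\sa(\Omega)$: by Proposition~\ref{Prop:Synd_Skeleton}, $\sa(\Omega)$ inherits both the TT property and the ED property from $\Omega$. Then the folded flow $Y := X_\Omega = X_{\sa(\Omega)}$ is, by Proposition~\ref{Prop:GED_Unfolded}\ref{Item:GED_Prop:Unfolded}, a $G$-ED flow with $(Y/\partial_\sigma, \ol{\partial}_\sigma) \cong (X_\sigma, d_\sigma)$ for each $\sigma \in \cB$, and by Proposition~\ref{Prop:GED_Unfolded}\ref{Item:GTT_Prop:Unfolded} it is $G$-TT.

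Next I would identify the compatibility points on each level. Since $\Omega$ is a $G$-skeleton, each $d_\sigma$ is a compatible metric on $X_\sigma$, so by the identification $(Y/\partial_\sigma, \ol{\partial}_\sigma) \cong (X_\sigma, d_\sigma)$ every point of $Y/\partial_\sigma$ is $\ol{\partial}_\sigma$-compatible; in particular $\ol{\partial}_\sigma$ has a dense set of compatibility points (indeed all of them). This is exactly condition \ref{Item:ContQuot_Thm:RM} of Theorem~\ref{Thm:Rosendal_Minimal} for the levels indexed by $\cB$. Because $\theta[\cB] \subseteq \sn(G)$ is a base of semi-norms (by definition of $G$-skeleton), I would invoke the final sentence of Theorem~\ref{Thm:Rosendal_Minimal}, which asserts that $\ref{Item:RC_Thm:RM} \Leftrightarrow \ref{Item:ContAll_Thm:RM} \Leftrightarrow \ref{Item:ContQuot_Thm:RM}$ continues to hold with minimality weakened to $G$-TT. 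Alternatively, one can argue directly via \Cref{Thm:Rosendal_Minimal}\ref{Item:ContBase_Thm:RM}: for each $\sigma$ in the base $\cB$ we have produced a $\partial_\sigma$-compatible point. Either way one concludes $Y = X_\Omega \in \cal{RC}_G$.

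The only genuinely delicate point is bookkeeping about which results in Theorem~\ref{Thm:Rosendal_Minimal} survive the weakening from ``minimal'' to ``$G$-TT'', since $X_\Omega$ need not be minimal. The statement of that theorem explicitly flags that the equivalences $\ref{Item:RC_Thm:RM} \Leftrightarrow \ref{Item:ContAll_Thm:RM} \Leftrightarrow \ref{Item:ContQuot_Thm:RM}$ remain valid under $G$-TT, so this is covered; I would just make sure to phrase the argument so that it uses only those equivalences (and not, say, the ultracopower characterizations, which may rely on minimality). One subtlety worth a sentence: Theorem~\ref{Thm:Rosendal_Minimal} is stated for $\partial_\sigma$ with $\sigma \in \rm{SN}(G)$, whereas here the compatible metrics live on levels indexed by $\cB$ via $\theta$; but $\theta(\sigma) \in \sn(G)$ and $d_\sigma = \partial_{\theta(\sigma)}$ under the isomorphism of Proposition~\ref{Prop:GED_Unfolded}\ref{Item:GED_Prop:Unfolded}, so the translation is immediate. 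Writing this out should take only a few lines.

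\begin{proof}
    By Proposition~\ref{Prop:Synd_Skeleton}, the unfolded $G$-flow $\sa(\Omega)$ also satisfies the TT and ED properties. Let $Y := X_\Omega = X_{\sa(\Omega)}$ be its folded flow. By Proposition~\ref{Prop:GED_Unfolded}\ref{Item:GED_Prop:Unfolded}, $Y$ is a $G$-ED flow and for each $\sigma\in \cB$ we have $(Y/\partial_{\theta(\sigma)}, \ol{\partial}_{\theta(\sigma)})\cong (X_\sigma, d_\sigma)$; by Proposition~\ref{Prop:GED_Unfolded}\ref{Item:GTT_Prop:Unfolded}, $Y$ is $G$-TT. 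Since $\Omega$ is a $G$-skeleton, each $d_\sigma$ is a \emph{compatible} metric on $X_\sigma$, so under the above isomorphism every point of $Y/\partial_{\theta(\sigma)}$ is $\ol{\partial}_{\theta(\sigma)}$-compatible; in particular $\ol{\partial}_{\theta(\sigma)}$ has a (dense set of) compatibility point(s). As $\theta[\cB]\subseteq \rm{SN}(G)$ is a base of semi-norms, Theorem~\ref{Thm:Rosendal_Minimal} (the equivalence $\ref{Item:ContBase_Thm:RM}\Rightarrow \ref{Item:RC_Thm:RM}$, which holds with minimality weakened to $G$-TT) yields $Y = X_\Omega\in \cal{RC}_G$.
\end{proof}
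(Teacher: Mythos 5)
Your proof is correct and follows essentially the same route as the paper, whose proof of this corollary is literally just the citation of Proposition~\ref{Prop:Synd_Skeleton}, Proposition~\ref{Prop:GED_Unfolded}, and Theorem~\ref{Thm:Rosendal_Minimal}; you have simply written out the details. Two small points of bookkeeping. First, applying Proposition~\ref{Prop:GED_Unfolded}\ref{Item:GED_Prop:Unfolded} to $\sa(\Omega)$ identifies $(Y/\partial_{\theta(\sigma)}, \ol{\partial}_{\theta(\sigma)})$ with $(\sa(X_\sigma), \partial_{d_\sigma})$, not with $(X_\sigma, d_\sigma)$; so the correct claim is that the compatibility points contain the dense subset $X_\sigma$, not that \emph{every} point is compatible (your parenthetical hedge already covers this, and only density is needed). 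Second, the displayed proof cites $\ref{Item:ContBase_Thm:RM}\Rightarrow\ref{Item:RC_Thm:RM}$, but in Theorem~\ref{Thm:Rosendal_Minimal} that implication is routed through \ref{Item:UTT_Thm:RM}, whose proof uses minimality; the implications the theorem explicitly guarantees under mere $G$-TT are $\ref{Item:RC_Thm:RM}\Leftrightarrow\ref{Item:ContAll_Thm:RM}\Leftrightarrow\ref{Item:ContQuot_Thm:RM}$. Since you have a \emph{dense} set of compatibility points at each level of a base of semi-norms, the safe route (which your prose paragraph already identifies) is via that equivalence, or directly via Lemma~\ref{lem:RC-equivs}: for each $U\in\cN_G$ choose $\sigma$ in the base with $\rmB_\sigma(\epsilon)\subseteq U$ and use the dense set of $\partial_\sigma$-compatible points together with $G$-ED to get densely many $x$ with $x\in\Int(Q_U[x])$. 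With that adjustment the argument is complete.
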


\begin{proof}
    \Cref{Prop:GED_Unfolded}, \Cref{Prop:Synd_Skeleton}, and Theorem~\ref{Thm:Rosendal_Minimal}.
\end{proof}

\subsection{Automorphism groups}
\label{Subsection:AutGroups}

A major motivation for our definition of unfolded $G$-flow comes from considering automorphism groups of structures. Thus, before proceeding further with general considerations, we discuss these groups and introduce some relevant notation.  

By \emph{structure}, we mean a first-order structure in some relational language. To emphasize the language $\cL$, we can say \emph{$\cL$-structure}. We typically use bold letters for structures, possibly with decoration, and use the unbolded letter for the underlying set. So $A, B, C$ are the underlying sets of $\bA$, $\bB^*$, $\bC'$, etc. An \emph{embedding} from $\bA$ to $\bB$ is an injection from $A$ to $B$ which preserves all relations and non-relations. We write $\emb(\bA, \bB)$ for the set of embeddings from $\bA$ to $\bB$, and we write $\rmP(\bB, \bA) = \{f^{-1}: f\in \emb(\bA, \bB)\}$ for the set of \emph{partial isomorphisms} of $\bB$ onto $\bA$ (the similarity to the notation $\rmP(\tau, \sigma)$ from the last subsection is deliberate; see Definition~\ref{Def:Unfolded_Structural}). We write $\bA\leq \bB$ when $\rmP(\bB, \bA)\neq \emptyset$. A bijective embedding from $\bA$ to itself is an \emph{automorphism}, and we write $\aut(\bA)$ for the group of automorphisms of $\bA$. 
We say $\bA$ is a \emph{substructure} of $\bB$ if $A\subseteq B$ and the inclusion map is an embedding of $\bA$ into $\bB$. If $\bB$ is a structure and $A\subseteq B$, then $\bB|_A$ is the \emph{induced substructure} of $\bB$ on $A$.   Given $f\in \rmP(\bB, \bA)$, write $\b{dom}(f)\subseteq \bB$ for the induced substructure of $\bB$ on $\dom(f)$. 

 \begin{defin}
    \label{Def:Age_Classes}
    Let $\cL$ be a relational language and $\bK$ an infinite $\cL$-structure. Write $\fin{\bK}$ for the set of finite substructures of $\bK$. The \emph{age} of $\bK$ is the \emph{proper class} $\age(\bK):= \{\bA \text{ a finite $\cL$-structure}: \emb(\bA, \bK)\neq \emptyset\}$. Thus, up to isomorphism, the members of $\age(\bK)$ are exactly the members of $\fin{\bK}$. Conversely, given a class $\cK$ of finite $\cL$-structures, we call $\cK$ an \emph{age class} if $\cK = \age(\bK)$ for some infinite $\cL$-structure $\bK$. 
    
    $\bK$ is \emph{$\omega$-homogeneous} if for any $\bA\in \fin{\bK}$ and $f\in \rmP(\bK, \bA)$, there is $g\in \aut(\bK)$ with $g|_{\dom(f)} = f$. If $\bK$ is countable and $\omega$-homogeneous, we call $\bK$ \emph{ultrahomogeneous} or a \emph{\fr structure}. We call $\cK$ an \emph{$\omega$-homogeneous age class} if there is an $\omega$-homogeneous structure $\bK$ with $\cK = \age(\bK)$; if there is a \fr structure $\bK$ with $\cK = \age(\bK)$, we call $\cK$ a \emph{\fr class}.
\end{defin}

If $\cK$ is an age class, then $\cK$ satisfies the following properties.
\begin{itemize}
    \item 
    $\cK$ is invariant under isomorphism and contains arbitrarily large finite structures.
    \item 
    \emph{Hereditary property} (HP): If $\bA\in \cK$ and $\bB\subseteq \bA$, then $\bB\in \cK$.
    \item 
    \emph{Joint embedding property} (JEP): If $\bA, \bB\in \cK$, then there is $\bC\in \cK$ with both $\bA\leq \bC$ and $\bB\leq \bC$.
\end{itemize}

An $\omega$-homogeneous age class also satisfies:
\begin{itemize}
    \item 
    \emph{Amalgamation property} (AP): Whenever $\bA, \bB, \bC\in \cK$, $f\in \rmP(\bB, \bA)$, and $g\in \rmP(\bC, \bA)$, then there are $\bD\in \cK$, $r\in \rmP(\bD, \bB)$, and $s\in \rmP(\bD, \bC)$ with $f\circ r = g\circ s\in \rmP(\bD, \bA)$.
\end{itemize}

\fr \cite{fraisse_1954} proves that if $\cK/\cong$ is countable and satisfies the above conditions, then $\cK$ is a \fr class. Furthermore, there is up to isomorphism a unique \fr structure $\bK$ with $\age(\bK) = \cK$, which we call the \emph{\fr limit} of $\cK$ and denote by $\flim(\cK)$. Upon considering uncountable structures, there are many more examples of $\omega$-homogeneous structures, for instance saturated structures.

Recall that a topological group $G$ is \emph{non-archimedean} if it admits a base at $e_G$ consisting of open (clopen) subgroups. By considering the action of $G$ on $G/U$ as $U$ varies over clopen subgroups, we can encode $G$ as a dense subgroup of $\aut(\bK)$ for some $\omega$-homogeneous structure $\bK$. Here we equip $\aut(\bK)$ with the topology of pointwise convergence. When $G$ is Raikov complete, we can ensure that $G\cong \aut(\bK)$. 

\textbf{Fix for the remainder of this subsection}  an $\omega$-homogeneous $\cL$-structure $\bK$ with $\cK = \age(\bK)$ and $G = \aut(\bK)$. For each $\bA\in \fin{\bK}$, write $U_\bA\leq^c G$ for the pointwise stabilizer of $\bA$, and write $\sigma_\bA\in \rm{SN}(G)$ for the function $1-\chi_{U_\bA}$. Then $\{\sigma_\bA: \bA\in \fin{\bK}\}$ is a conjugation-invariant base of semi-norms closed under finite pointwise maximums.

If $Y$ is a $G$-ED flow and $\bA\subseteq \bB\in \fin{\bK}$, we typically write $Y_\bA = Y/\partial_{\sigma_\bA}$, $\pi^\bB_\bA = \pi^{\sigma_\bB}_{\sigma_\bA}$, etc. We also write $\rm{clop}_\bA(Y) = \{Q\subseteq Y \text{ clopen}: U_\bA\cdot Q = Q\}$. Observe that the pseudometric $\partial_\bA$ is discrete $\{0, 1\}$-valued. This has easy but important consequences that we collect in the next proposition. 

\begin{prop}
    \label{Prop:NA_Levels}
    If $Y$ is a $G$-ED flow and $\bA\in \fin{\bK}$, then $Y_\bA$ is a compact ED space and is the Stone dual of $\rm{clop}_\bA(Y)$. The map $\pi_\bA\colon Y\to Y_\bA$ is open.
\end{prop}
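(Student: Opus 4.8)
The plan is to work throughout with the explicit shape of the $\{0,1\}$-valued pseudometric $\partial_\bA := \partial_{\sigma_\bA}$ on $Y$. Since $\rmB_{\sigma_\bA}(c) = U_\bA$ for every $0 < c \le 1$, one has $\ol{A}(\sigma_\bA, c) = \ol{U_\bA A}$ for any $A \in \op(Y)$ and $0 < c \le 1$; and since $\partial_\bA$ takes only the values $0$ and $1$, for every $A \in \op(Y)$ and every $0 < c < 1$ the ball $\rmB_{\partial_\bA}(A, c)$ equals the $\partial_\bA$-saturation $[A]_{\partial_\bA} = \{y \in Y : \exists x \in A\ \partial_\bA(x,y) = 0\} = \pi_\bA^{-1}[\pi_\bA[A]]$. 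Because $Y$ is $G$-ED, the pseudometric $\partial_\bA$ is adequate (Corollary~\ref{Cor:ED_Adequate}, as $Y$ is compact) and open-compatible (Lemma~\ref{Lem:ED_Pseudo}). I would record these three facts first, since everything else is a quick consequence of them.

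Openness of $\pi_\bA$ is then immediate: for $A \in \op(Y)$ the saturation $\pi_\bA^{-1}[\pi_\bA[A]] = \rmB_{\partial_\bA}(A, 1/2)$ is open by adequacy of $\partial_\bA$, so $\pi_\bA[A]$ is open for the quotient topology. For the assertion that $Y_\bA$ is a compact ED space, note first that $Y_\bA$ is compact (continuous image of $Y$) and Hausdorff (quotient of a compact space by the closed equivalence relation $\partial_\bA^{-1}[\{0\}]$). The induced metric $\ol{\partial}_\bA$ on $Y_\bA$ is again $\{0,1\}$-valued, and it is a genuine metric since $Y_\bA$ is by construction the quotient by $\{\partial_\bA = 0\}$; by Proposition~\ref{Prop:Adequate}\ref{Item:Quotient_Prop:Adequate} applied to $\pi_\bA = \pi_{\partial_\bA}$ (which is subordinate to $\partial_\bA$), $\ol{\partial}_\bA$ is adequate and open-compatible. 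But a $\{0,1\}$-valued lsc metric $\partial$ on a compact space $Z$ has $\rmB_\partial(B, c) = B$ for every $B \in \op(Z)$ and $0 < c \le 1$, so open-compatibility of $\ol{\partial}_\bA$ says precisely that $\ol{B} \in \op(Y_\bA)$ for every $B \in \op(Y_\bA)$, i.e.\ that $Y_\bA$ is ED; this is the discrete case of the correspondence noted after Definition~\ref{Def:Topometric}.

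It remains to identify $Y_\bA$ with $\st(\rm{clop}_\bA(Y))$. Since $Y_\bA$ is compact ED, hence zero-dimensional, Stone duality gives $Y_\bA \cong \st(\rm{clop}(Y_\bA))$, so it suffices to produce a Boolean isomorphism $\rm{clop}(Y_\bA) \cong \rm{clop}_\bA(Y)$. As $\pi_\bA$ is a continuous open surjection, $V \mapsto \pi_\bA^{-1}[V]$ is a Boolean isomorphism of $\rm{clop}(Y_\bA)$ onto the collection of $\partial_\bA$-saturated clopen subsets of $Y$, so I would finish by checking that this collection is exactly $\rm{clop}_\bA(Y)$. Indeed, any saturated set is $U_\bA$-invariant because each fiber $[x]_{\partial_\bA}$ contains $U_\bA x$ (since $\partial_\bA(x, ux) = 0$ for $u \in U_\bA$); conversely, if $Q$ is clopen and $U_\bA$-invariant then, using \eqref{Eq:Basic_Fat_partial} and $\ol{Q}(\sigma_\bA, 1/2) = \ol{U_\bA Q} = Q$, we get $[Q]_{\partial_\bA} = \rmB_{\partial_\bA}(Q, 1/2) \subseteq Q$, so $Q$ is saturated. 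This yields $\rm{clop}(Y_\bA) \cong \rm{clop}_\bA(Y)$ and hence $Y_\bA \cong \st(\rm{clop}_\bA(Y))$. The only point needing a little care is making sure the ``discrete'' topometric dictionary is invoked legitimately — that is, that $\ol{\partial}_\bA$ is a metric rather than merely a pseudometric, which is exactly what turns open-compatibility into extremal disconnectedness — but this is guaranteed by the definition of $Y_\bA$ as the quotient of $Y$ by $\partial_\bA^{-1}[\{0\}]$, so there is no real obstacle here.
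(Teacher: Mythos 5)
Your proof is correct and follows essentially the same route as the paper's (very terse) argument: both rest on the observation that $\ol{\partial}_{\sigma_\bA}$ is a discrete lsc metric on $Y_\bA$, on Proposition~\ref{Prop:Adequate} to push adequacy and open-compatibility to the quotient, on the "discrete dictionary" identifying open-compatibility of a discrete metric with extremal disconnectedness, and on adequacy of $\partial_{\sigma_\bA}$ for openness of $\pi_\bA$. You have merely made explicit the Stone-duality identification $\rm{clop}(Y_\bA)\cong\rm{clop}_\bA(Y)$ that the paper leaves implicit, and done so correctly.
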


\begin{proof}
    That $Y_\bA$ is a compact ED space and the Stone dual of $\rm{clop}_\bA(Y)$ follows from  $\ol{\partial_{\sigma_\bA}}$ being a discrete lsc metric on $Y_\bA$, combined with Proposition~\ref{Prop:Adequate}. That $\pi_\bA$ is open follows from the adequacy of $\partial_{\sigma_\bA}$.
\end{proof}

When working with automorphism groups, the unfolded $G$-flows and $G$-skeletons we work with will have a particularly nice form. First, note that $\fin{\bK}$ is a join semi-lattice with the join given by union and the order given by subset inclusion. The natural left action on $\fin{\bK}$ can be turned into a right action by defining $\bA g := g^{-1}\bA$. The map sending $\bA\in \fin{\bK}$ to $\sigma_\bA\in \rm{SN}(G)$ is then order preserving, join preserving, and $G$-equivariant.

\begin{defin}
\label{Def:Unfolded_Structural}
A \emph{structural} unfolded $G$-space $\Omega$ is an unfolded $G$-space where $\cB = \fin{\bK}$, $\theta\colon \fin{\bK}\to \rm{SN}(G)$ is given by $\theta(\bA) = \sigma_\bA$, and each $d_\bA$  is just the discrete $\{0, 1\}$-valued lsc metric on the space $X_\bA$. 
$\Omega$ is  a \emph{structural} unfolded $G$-\emph{flow} if each $X_\bA$ is compact, and is a \emph{structural $G$-skeleton} if each $X_\bA$ is discrete. 
The notation $\rmP(\bB, \bA)$ is now overloaded for $\bA, \bB\in \fin{\bK}$, since it can either be a set of partial isomorphisms or, per Definition~\ref{Def:Unfolded_Flow}, the set of group elements $\{g\in G: g|_{g^{-1}A} \in \rmP(\bB, \bA)\}$. We can typically just view $\rmP(\bB, \bA)$ as a set of partial isomorphisms, even in situations when the set of group elements is intended.

Given $\bA, \bB\in \fin{\bK}$, $f\in \rmP(\bB, \bA)$, and $g\in G$ extending $f$, we sometimes write  $\pi^\bB_f\colon X_\bB\to X_\bA$ for the map $\lambda_{g, \bA}\circ \pi^\bB_{g^{-1}\bA}$, $\pi_f\colon X_\Omega\to X_\bA$ for the map $\lambda_{g, \bA}\circ \pi_{g^{-1}\bA}$, and $\lambda_{f, \bA}$ for $\lambda_{g, \bA}$. Note that the continuity axiom (Definition~\ref{Def:Unfolded_Flow}) ensures that this is independent of our choice of $g\in G$ extending $f$. Given $x\in X_\bB$ and $y\in X_\Omega$, we often just write $f\cdot x$, $f\cdot y$ for $\pi_f^\bB(x)$, $\pi_f(y)$ respectively. Note that this ``action'' notation is associative whenever the compositions make sense. 
\end{defin}

The prototypical example of a structural $G$-skeleton we will encounter comes from considering expansion classes of $\cK$. A straightforward coding argument (Proposition~\ref{Prop:Expansion_Skeleton}) shows that this is entirely general.
    
\begin{defin}
    \label{Def:Expansion_Classes}
    If $\cL^*\supseteq \cL$ is a relational language and $\bB^*$ is an $\cL^*$-structure, then $\bB^*|_\cL$ is the \emph{reduct} of $\bB^*$ to $\cL$, i.e.\ the structure obtained from $\bB^*$ by forgetting the interpretations of relational symbols in $\cL^*\setminus \cL$, and conversely, we call $\bB^*$ an \emph{$\cL^*$-expansion} of $\bB$.

    An \emph{$\cL^*$-expansion class} of $\cK$ is an isomorphism-invariant class $\cK^*$ of $\cL^*$-structures for some relational language $\cL^*\supseteq \cL$ with $\cK^*|_\cL:= \{\bA^{\!*}|_\cL: \bA^{\!*}\in \cK^*\} = \cK$. We can write $\cK^*/\cK$ to emphasize that $\cK^*$ is an expansion of $\cK$.  Given $\cS\subseteq \cK$, we write $\cK^*(\cS) = \{\bA^{\!*}\in \cK^*: \bA^{\!*}|_\cL \in \cS\}$. This is a set provided $\cS$ is. If $S = \{\bA\}$ for some $\bA\in \cK$, we write $\cK^*(\bA)$ for $\cK^*(\{\bA\})$. 

    We call $\cK^*/\cK$ \emph{reasonable} if for any $\bA\subseteq \bB\in \cK$ and any $\bA^{\!*}\in \cK^*(\bA)$, there is $\bB^*\in\cK^*(\bB)$ with $\bB^*|_A = \bA^{\!*}$.  
    
    Given a reasonable $\cL^*$-expansion class $\cK^*/\cK$, the structural $G$-skeleton $\Omega_{\cK^*}$ is defined by setting $X_\bA = \cK^*(\bA)$ for each $\bA\in \fin{\bK}$. We write $X_{\cK^*}$ for $X_{\sa(\Omega)_{\cK^*}}$. Given $\bA, \bB\in \fin{\bK}$, $f\in \rmP(\bB, \bA)$, and $\bB^*\in \cK^*(\bB)$, we define $\pi_f^\bB(\bB^*) = f\cdot \bB^*$ to be the unique member of $\cK^*(\bA)$ such that $f\in \rmP(\bB^*, f\cdot \bB^*)$.
\end{defin}

\begin{remark}
    Inside $X_{\cK^*}$, one can consider the (possibly empty) subspace $X_{\cK^*}^0:= \{y\in X_{\cK^*}: \forall \bA\in \fin{\bK}\, \pi_\bA(y)\in \cK^*(\bA)\}$, viewing this as the ``standard part'' of $X_{\cK^*}$. When $\bK$ is countable, then reasonability ensures that $X_{\cK^*}^0\neq \emptyset$ (and in fact is dense $G_\delta$). 
\end{remark}

\begin{prop}
    \label{Prop:Expansion_Skeleton}
    For any structural $G$-skeleton $\Omega$, there are a relational language $\cL^*$ and a reasonable $\cL^*$-expansion class $\cK^*/\cK$ so that $\Omega\cong \Omega_{\cK^*}$.
\end{prop}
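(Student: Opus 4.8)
The plan is an explicit coding argument, turning the level sets $X_\bA$ of $\Omega$ into sets of local expansions. Throughout, write $\cB = \fin{\bK}$ and recall that in a structural $G$-skeleton each $(X_\bA, d_\bA)$ is discrete, the maps $\pi^\bB_\bA\colon X_\bB\to X_\bA$ (for $\bA\subseteq\bB$) are surjections with $\pi^\bB_\bA\circ\pi^\bC_\bB = \pi^\bC_\bA$, and --- because $d_\bA$ is the $\{0,1\}$-valued metric, so the continuity axiom forces $\lambda_{g,\bA}=\id_{X_\bA}$ whenever $g$ fixes $\bA$ pointwise --- for every partial isomorphism $f\in\rmP(\bB,\bA)$ of members of $\fin{\bK}$ there is a well-defined surjection $\pi^\bB_f\colon X_\bB\to X_\bA$ as in Definition~\ref{Def:Unfolded_Structural}, and these compose functorially; I will also use the equivariance and action axioms of $\Omega$. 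One may assume each $X_\bA\neq\emptyset$, since otherwise surjectivity of the $\pi$'s makes every $X_\bA$ empty and $\Omega$ degenerate.

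First I would build the language. For each $n<\omega$ let $\mathcal{I}_n$ be the set of isomorphism types of triples $(\bA,\bar a,x)$ where $\bA\in\fin{\bK}$, $\bar a=(a_0,\dots,a_{n-1})$ enumerates $A$, and $x\in X_\bA$; here $(\bA,\bar a,x)$ and $(\bA',\bar a',x')$ are identified exactly when the bijection $a_i\mapsto a_i'$ is an isomorphism $\bA\to\bA'$ which, extended via $\omega$-homogeneity of $\bK$ to some $g\in G$, transports $x$ to $x'$ under $\lambda$. Set $\cL^* = \cL\cup\{R_t : n<\omega,\ t\in\mathcal{I}_n\}$ with $R_t$ of arity $n$. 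For $\bA\in\fin{\bK}$ and $x\in X_\bA$, define an $\cL^*$-expansion $\Phi_\bA(x)$ of $\bA$ by declaring, for each injective tuple $\bar a$ from $A$ spanning the substructure $\bC\subseteq\bA$, that $R_t(\bar a)$ holds iff $t$ is the isomorphism type of $(\bC,\bar a,\pi^\bA_\bC(x))$, and letting no other new relation hold. Finally let $\cK^*$ be the isomorphism closure of $\{\Phi_\bB(y):\bB\in\fin{\bK},\ y\in X_\bB\}$. The one genuine subtlety in the design is that the relation symbols must carry the enumeration $\bar a$ and not merely the isomorphism type of the pair $(\bC,x)$: this is exactly what makes $\Phi_\bA$ injective even when $\aut(\bA)$ acts nontrivially on $X_\bA$, since from $\Phi_\bA(x)$ one recovers $\bA$ as the reduct, reads off the unique $R_t$ holding on a full enumeration of $A$, and, knowing $\bA$ and the enumeration concretely, recovers $x$.

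Next I would verify the required properties. Isomorphism-invariance of $\cK^*$ is by construction, and $\cK^*|_\cL = \cK$ holds since $\Phi_\bB(y)|_\cL = \bB\in\age(\bK)$ while every member of $\cK$ is isomorphic to some $\bB\in\fin{\bK}$ with $X_\bB\neq\emptyset$. Two small lemmas do the rest: (i) naturality with respect to $\pi$, i.e. $\Phi_\bB(y)|_A = \Phi_\bA(\pi^\bB_\bA(y))$ for $\bA\subseteq\bB$ in $\fin{\bK}$, which is immediate from transitivity of the $\pi$'s; and (ii) naturality with respect to $\lambda$, i.e. that for $g\in G$ the map $g$ is an $\cL^*$-isomorphism $\Phi_{g^{-1}\bA}(x)\to\Phi_\bA(\lambda_{g,\bA}(x))$, which reduces to the equivariance axiom of $\Omega$. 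Using (i), (ii) and the fact that an $\cL^*$-isomorphism is pinned down by its action on universes, one shows that every $\bB^*\in\cK^*(\bA)$ (with $\bA\in\fin{\bK}$) is literally of the form $\Phi_\bA(x)$ for a unique $x\in X_\bA$; hence each $\Phi_\bA$ is a bijection onto $\cK^*(\bA)$, and, again by (i) and (ii), the family $(\Phi_\bA)_{\bA\in\fin{\bK}}$ intertwines the bonding maps and $\lambda$-maps of $\Omega$ with restriction and pushforward of expansions, giving $\Omega\cong\Omega_{\cK^*}$. Reasonability of $\cK^*/\cK$ is then proved by lifting: given $\bA\subseteq\bB$ in $\cK$ and $\bA^*\in\cK^*(\bA)$, realize the inclusion inside $\fin{\bK}$ via an embedding of $\bB$ into $\bK$, identify the transported $\bA^*$ as $\Phi_{\bA_0}(x)$, use surjectivity of the relevant $\pi^{\bB_0}_{\bA_0}$ to lift $x$ to some $y\in X_{\bB_0}$, and transport $\Phi_{\bB_0}(y)$ back.

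The main obstacle is not depth but bookkeeping: one is juggling three indexing schemes --- actual finite substructures of $\bK$ (on which $\Omega$ lives), the enumerated abstract structures indexing the new relation symbols, and the $\lambda$-orbits of $X$-data defining $\mathcal{I}_n$ --- and the whole argument hinges on using $\omega$-homogeneity of $\bK$ to move between substructures of $\bK$ related by a partial isomorphism, together with the observation, used repeatedly, that an $\cL^*$-isomorphism is determined by the underlying bijection of universes. Getting the enumeration into the relation symbols right (so that $\Phi$ is injective) and checking naturality under $\lambda$ cleanly are the two places where care is genuinely required.
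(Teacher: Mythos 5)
Your proposal is correct and follows essentially the same coding argument as the paper: both introduce relation symbols indexed by an enumerated finite structure together with a point of the corresponding level set (you quotient the index set by isomorphism type, the paper uses concrete pairs $R_{e,s}$ with $e$ an enumeration of $\bA\in\fin{\bK}$ and $s\in X_\bA$), define the expansion of a structure by reading off which such relations hold on each tuple, and close under isomorphism. The observation that the enumeration must be built into the relation symbols to make the coding injective, and the naturality checks under $\pi$ and $\lambda$, match the "observations" bullet points in the paper's proof; your write-up is if anything slightly more explicit about verifying reasonability and the isomorphism $\Omega\cong\Omega_{\cK^*}$.
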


\begin{proof}
     To form $\cL^*\supseteq \cL$, for each $\bA\in \fin{\bK}$, enumeration $e\colon |A|\to A$, and $s\in X_\bA$, add a new $|A|$-ary relation symbol $R_{e,s}$. We now turn to defining $\cK^*$. For each $\bB\in \cK$, $f\in \rmP(\bK, \bB)$, and $t\in X_{\b{dom}(f)}$, we define an $\cL^*$-expansion $\bB^{f, t}$ of $\bB$ as follows. Given $\bA\in \fin{\bK}$, $s\in X_{\bA}$, an enumeration $e\colon |A|\to A$, and $b_0,..., b_{|A|-1}\in B$, we declare that $R_{e, s}(b_0,..., b_{|A|-1})$ holds in $\bB^{f, t}$ iff the map $(b_i\to e(i))_{i< |A|}$ is a partial isomorphism $g\in \rmP(\bB, \bA)$, and $(g\circ f)\cdot t = s$. In the case that $\bB\in \fin{\bK}$ and $f\in \rmP(\bK, \bB)$ is the partial identity map, we just write $\bB^t$. 
    
    We collect some observations about these objects:
    \begin{itemize}
        \item 
        Fixing $f\in \rmP(\bK, \bB)$, the map sending $t\in X_{\b{dom}(f)}$ to the $\cL^*$-structure $\bB^{f, t}$ is injective.
        \item 
        If $f_0, f_1\in \rmP(\bK, \bB)$ and $h\in \rmP(\bK, \b{dom}(f_1))$ is such that $f_1\circ h = f_0$, then we have $\bB^{f_0, t} = \bB^{f_1, h\cdot t}$.
        \item 
        If $\bA, \bB\in \cK$, $h\in \rmP(\bB, \bA)$, $f\in \rmP(\bK, \bB)$, $q\in \rmP(\b{dom}(f), \b{dom}(h\circ f))$ denotes the partial identity map, and $t\in X_{\b{dom}(f)}$, then $h{\cdot}\bB^{f, t} = \bA^{h\circ f, q\cdot t}$ 
    \end{itemize}
      We define $\cK^*$ by setting $\cK^*(\bB) = \{\bB^{f, t}: t\in S_{\b{dom}(f)}\}$ for some (any) $f\in \rmP(\bK, \bB)$, then extending to make $\cK^*$ invariant under isomorphism. 
\end{proof}

Proposition~\ref{Prop:Expansion_Skeleton} allows us to work with expansion classes rather than structural $G$-skeletons. The various properties we have defined for $G$-skeletons then become familiar combinatorial properties of expansion classes. 

\begin{defin}
    \label{Def:Synd_Expansion}
    We call $\cK^*/\cK$ \emph{syndetic} if for any $\bA^{\!*}\in \cK^*$, there is $\bB\in \cK$ so that for any $\bB^*\in \cK^*(\bB)$, we have $\bA^{\!*}\leq \bB^*$.

    We remark that this has gone by many names in the literature, including the \emph{order property} \cite{KPT}, the \emph{expansion property} \cite{Lionel_2013}, and the \emph{minimal property} \cite{ZucThesis}.
\end{defin}

\begin{prop}
\label{Prop:Unfolded_Expansion}
    Fix a reasonable $\cL^*$-expansion class $\cK^*/\cK$. Write $\Omega = \Omega_{\cK^*}$.
    \begin{enumerate}[label=\normalfont(\arabic*)]
        \item \label{Item:GTT_Prop:UE}
        $\Omega$ has the TT-property iff $\cK^*$ has the JEP.
        \item \label{Item:Synd_Prop:UE}
        $\Omega$ has the minimality property iff $\cK^*/\cK$ is syndetic.
        \item \label{Item:GED_Prop:UE}
        $\Omega$ has the ED property iff $\cK^*$ has the AP. 
    \end{enumerate} 
\end{prop}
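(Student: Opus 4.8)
The strategy is to combine Proposition~\ref{Prop:Expansion_Skeleton}'s translation dictionary with the three earlier structural characterizations (Propositions~\ref{Prop:GED_Unfolded}, \ref{Prop:Synd_Skeleton}) so that all three equivalences reduce to routine unwinding of definitions on the discrete levels $X_\bA = \cK^*(\bA)$. For each of the three properties I would make the following observation first: when $\Omega = \Omega_{\cK^*}$, the bonding maps $\pi^\bB_\bA$ and the partial action $\pi^\bB_f$ have an entirely combinatorial description (given right after Definition~\ref{Def:Expansion_Classes}): $\pi^\bB_f(\bB^*)$ is the unique member of $\cK^*(\bA)$ with $f \in \rmP(\bB^*, \pi^\bB_f(\bB^*))$. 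Since each $X_\bA$ is discrete, every subset is open, so in Definition~\ref{Def:GED_Unfolded} the sets $A, B \in \op(X_\sigma)$ can be taken to be singletons, and the statements about nonempty intersections of preimages become statements about the existence of a single expanded structure admitting prescribed partial isomorphisms.

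For \ref{Item:GTT_Prop:UE}: the $G$-TT property says that for any $\bA \in \fin{\bK}$ and any two $\bA$-expansions $\bA^{\!*}_0, \bA^{\!*}_1 \in \cK^*(\bA)$, there is $g \in G$ and (taking $\tau = \bA \vee \bA g$) an element of $X_\tau$ projecting to $\bA^{\!*}_0$ under $\pi^\tau_\bA$ and to $\bA^{\!*}_1$ under $\lambda_{g,\bA}\circ\pi^\tau_{\bA g}$. Unwinding, this element is an expansion of the structure $\bB := \bA \cup g^{-1}\bA \in \fin{\bK}$ that simultaneously restricts to $\bA^{\!*}_0$ on $\bA$ and, after applying $g$, to $\bA^{\!*}_1$ on $\bA$; that is precisely an amalgam of $\bA^{\!*}_0$ and $\bA^{\!*}_1$ over the empty $\cL^*$-structure, i.e.\ the JEP for $\cK^*$. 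Conversely, given JEP for $\cK^*$, one produces the required $g$ and witness inside $\bK$ using $\omega$-homogeneity. For \ref{Item:Synd_Prop:UE}: the minimality property says for every $\bA$ and every $\bA^{\!*} \in \cK^*(\bA)$ (again reduce to a singleton $A$) there is $F \in \fin{G}$ such that every $y \in X_{\bA F}$ has some $g \in F$ with $g\cdot_\bA y \in \{\bA^{\!*}\}$; taking $\bB = \bigcup_{g\in F} g^{-1}\bA$, this says every $\bB^* \in \cK^*(\bB)$ contains a copy of $\bA^{\!*}$ (via one of the partial isomorphisms $g$), which is exactly the syndeticity (= expansion/order property) of Definition~\ref{Def:Synd_Expansion}; the converse is again a direct translation.

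For \ref{Item:GED_Prop:UE}: the first two bullets of the ED property in Definition~\ref{Def:GED_Unfolded} are automatic here, since each $d_\bA$ is the discrete $\{0,1\}$-valued metric, which is trivially open-compatible, and the pseudometrics $d_\bA \circ \pi^\bB_\bA$ are discrete hence adequate (this is exactly the content of Proposition~\ref{Prop:NA_Levels}, applied to the folded flow or directly to the levels). So the ED property reduces to its third bullet: for $\bA \subseteq \bB \in \fin{\bK}$ and $A, B \in \op(X_\bB)$ with $d_\bA\circ\pi^\bB_\bA(A,B) < \epsilon < 1$ — which, since the metric is $\{0,1\}$-valued, just means $A$ and $B$ have a common image under $\pi^\bB_\bA$, i.e.\ they share a restriction to $\bA^{\!*} \in \cK^*(\bA)$ — there is $g \in \rmB_{\sigma_\bA}(\epsilon)$, i.e.\ $g$ fixing $\bA$ pointwise, with the appropriate preimages meeting over $\upsilon = \bB \vee \bB g^{-1}$. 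Spelling this out with $A = \{\bB^*_0\}$, $B = \{\bB^*_1\}$ sharing the $\bA$-restriction $\bA^{\!*}$: we need an expansion of $\bB \cup g\bB$ (with $g$ the identity on $\bA$) restricting to $\bB^*_0$ on $\bB$ and to $\bB^*_1$ on $g\bB$ after transport — precisely an amalgam of $\bB^*_0$ and $\bB^*_1$ over $\bA^{\!*}$, which is the AP for $\cK^*$. The converse direction uses the AP of $\cK^*$ together with $\omega$-homogeneity of $\bK$ to realize the amalgam inside $\bK$ and extract the required $g$.

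\textbf{Main obstacle.} The genuinely delicate point is the ED property direction: one must be careful that the "distance $< \epsilon$" hypothesis with the discrete metric correctly forces a shared $\bA$-reduct and nothing more, and that the group element $g \in \rmB_{\sigma_\bA}(\epsilon)$ really does amount to "$g$ fixes $\bA$ pointwise" (this is where $\sigma_\bA = 1 - \chi_{U_\bA}$ and $\epsilon < 1$ interact), and then that the amalgam $\bD$ of $\bB^*_0, \bB^*_1$ over $\bA^{\!*}$ in $\cK^*$ can be embedded into $\bK$ compatibly, so that its two embedded copies of the $\cL$-reduct differ by an automorphism $g$ of $\bK$ with $g|_\bA = \mathrm{id}$; this last step is exactly where $\omega$-homogeneity of $\bK$ is used, and it parallels the classical KPT argument. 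The JEP and syndeticity equivalences are comparatively bookkeeping, modulo the same homogeneity fact.
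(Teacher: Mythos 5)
Your plan is essentially the paper's own proof: a direct definitional translation in which the discrete $\{0,1\}$-valued metrics make the first two ED bullets automatic, $g\in\rmB_{\sigma_\bA}(\epsilon)$ with $\epsilon<1$ is decoded as $g\in U_\bA$, and $\omega$-homogeneity of $\bK$ is used to realize joint embeddings and amalgams inside $\bK$ and extract the required group elements. The one thing your sketch leaves silent is where the \emph{reasonable} hypothesis enters: in the directions TT $\Rightarrow$ JEP, ED $\Rightarrow$ AP, and syndetic $\Rightarrow$ minimality, the combinatorial properties quantify over arbitrary members of $\cK^*$ (or over expansions of the syndeticity witness $\bB$), while the skeleton properties only see expansions of a single finite substructure of $\bK$ at a time, so one must use reasonability to lift expansions to a common superstructure (e.g.\ lifting $\bB^*$ and $\bC^*$ to expansions of $\bB\cup\bC$ before invoking the ED property) — a small but necessary step you should make explicit.
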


\begin{proof}
    \ref{Item:GTT_Prop:UE}: Suppose $\cK^*$ has the JEP. Towards showing $\Omega$ has the TT-property, fix $\bA\in \fin{\bK}$ and $\bA^{\!*}$, $\bA'\in \cK^*(\bA)$. Use JEP to find $\bB^*\in \cK^*$ with $\bA^{\!*}, \bA'\leq \bB^*$. We may assume that $\bB^*|_\cL\in \fin{\bK}$ and that $\bA^{\!*}\subseteq \bB^*$. Find $f\in \rmP(\bB, \bA)$ with $f\cdot \bB^* = \bA'$, and let $g\in G$ extend $f$. Then  $\bB^*\in (\pi_\bA^\bB)^{-1}[\bA^{\!*}]\cap (\lambda_{g, \sigma}\circ \pi^\bB_{g^{-1}\bA})^{-1}[\bA']\neq \emptyset$. 
    
    Now suppose $\Omega$ has the TT-property. Towards showing $\cK^*$ has the JEP, fix $\bA^{\!*}, \bB^*\in \cK^*$. Writing $\bA, \bB$ for the $\cL$-reducts, we may assume $\bA, \bB\in \fin{\cK}$, so write $\bC = \bA\cup \bB\in \fin{\bK}$. Using reasonability, find $\bC^*, \bC'\in \cK^*(\bC)$ with $\bC^*|_A = \bA^{\!*}$, $\bC'|_B = \bB^*$. Using the TT-property of $\Omega$, find $g\in G$ so that, writing $\bD = \bC\cup g^{-1}\bC\in \fin{\bK}$, we have $(\pi_\bC^\bD)^{-1}[\bC^*]\cap (\lambda_{g, \sigma}\circ \pi^\bD_{g^{-1}\bC})^{-1}[\bC']\neq \emptyset$. Any $\bD^*$ in this set will witness JEP for $\bA^{\!*}, \bB^*$.
    \vspace{3 mm}

    \noindent
    \ref{Item:Synd_Prop:UE}:  Suppose $\cK^*/\cK$ is syndetic. Towards showing $\Omega$ has the minimality property, fix $\bA\in \fin{\bK}$ and $\bA^{\!*}\in \cK^*(\bA)$. Find $\bB\in \fin{\bK}$ with $\bA\subseteq \bB$ witnessing that $\cK^*/\cK$ is syndetic for $\bA^{\!*}$. Find $F\in \fin{G}$ with the property that for any $f\in \rmP(\bB, \bA)$, there is $g\in F$ extending $f$. Then $\bB\supseteq \bigcup_{g\in F} g^{-1}\bA := \bC$. Given $\bC^*\in \cK^*(\bC)$, use reasonability to lift to $\bB^*\in \cK^*(\bB)$, then find $g\in F$ with $g|_{g^{-1}A}\cdot \bB^* = g|_{g^{-1}A}\cdot \bC^* = \bA^{\!*}$, showing the minimality property for $\Omega$.

    Now suppose $\Omega$ has the minimality property. Towards showing $\cK^*/\cK$ is syndetic, fix $\bA\in \fin{\bK}$ and $\bA^{\!*}\in \cK^*(\bA)$. By the minimality property of $\Omega$, find $F\in \fin{G}$ so that, writing $\bB = \bigcup_{g\in F}g^{-1}\bA$, we have for any $\bB^*\in \cK^*(\bB)$ that there is $g\in F$ with $g|_{g^{-1}A}\cdot \bB^* = \bA^{\!*}$. In particular, there is $f\in \rmP(\bB, \bA)$ with $f\cdot \bB^* = \bA^{\!*}$, showing that $\cK^*/\cK$ is syndetic.
    \vspace{3 mm}
    
    \noindent
    \ref{Item:GED_Prop:UE}: Suppose $\cK^*$ has the AP. Towards showing that $\Omega$ has the ED-property, fix $\bA\subseteq \bB\in \fin{\bK}$ and $\bB^*, \bB'\in \cK^*(\bB)$ with $d_\bA\circ \pi^\bB_\bA(\bB^*, \bB')< 1$, i.e.\ with $\bB^*|_A = \bB'|_A =: \bA^{\!*}$. Find $\bC^*\in \cK^*$ witnessing the AP for $\bA^{\!*}\hookrightarrow \bB^*$ and $\bA^{\!*}\hookrightarrow \bB'$; we may assume $\bC^*|_\cL =: \bC\in \fin{\bK}$, that $\bB^*\subseteq \bC^*$, and for some $f\in \rmP(\bC, \bB)$ with $f\cdot \bC^* = \bB'$ and $f|_A = \rm{id}_\bA$, we have $\bC = \bB\cup \b{dom}(f)$. Let $g\in G$ extend $f$, and note that $\sigma_\bA(g) = 0$. Then $\bC^*\in (\pi_\bB^\bC)^{-1}[\bB^*]\cap (\lambda_{g, \bB}\circ \pi^\bC_{g^{-1}\bB})^{-1}[\bB']$, showing that $\Omega$ has the ED property. 
    
    Now suppose $\Omega$ has the ED-property. Towards showing that $\cK^*$ has the AP, fix $\bA^{\!*}, \bB^*, \bC^*\in \cK^*$ with $\bA^{\!*}\subseteq \bB^*$ and $\bA^{\!*}\subseteq \bC^*$. Letting $\bA, \bB, \bC$ denote the $\cL$-reducts, we may also assume that $\bA, \bB, \bC\in \fin{\bK}$. 
    Let $\bD = \bB\cup \bC$, and use reasonability to lift $\bB^*$ and $\bC^*$ to $\bD^*, \bD'\in \cK^*(\bD)$, respectively. Since $\bD^*|_A =\bD'|_A = \bA^{\!*}$, we use the ED property of $\Omega$ to find $g\in G$ with $\sigma_\bA(g)< 1$ (i.e.\ $g\in U_\bA$) and, writing $\bE = \bD\cup g^{-1}\bD$, with $(\pi_\bD^\bE)^{-1}[\bD^*]\cap (\lambda_{g, \bD}\circ \pi^\bE_{g^{-1}\bD})^{-1}[\bD']\neq \emptyset$. Any $\bE^*$ in this set satisfies $\bE^*|_B = \bB^*$ and $g|_{g^{-1}C}\cdot \bE^* = \bC^*$, hence witnesses the amalgamation property for $\bA^{\!*}, \bB^*, \bC^*, \bA^{\!*}\subseteq \bB^*, \bA^{\!*}\subseteq \bC^*$. 
\end{proof}

\subsection{Abstract KPT correspondence}
\label{Subsection:KPT}

In this subsection, $G$ is an arbitrary topological group and $\Omega$ is a $G$-skeleton, with notation as in Definitions~\ref{Def:Unfolded_Flow} and \ref{Def:Skeleton}. 

Given $x\in X_\sigma$, $x'\in X_\tau$, and $\epsilon > 0$, we write $\rmP(x', x, \epsilon) = \{h\in \rmP(\tau, \sigma): d_\sigma(h\cdot_\sigma x', x)< \epsilon\}$. Similarly, if $y\in X_\Omega$, we write $\rmP(y, x, \epsilon) = \{h\in G: d_\sigma(\pi_\sigma(hy), x) < \epsilon)\}$. Given a cofinal net $(\tau_i)_{i\in I}$ from $\cB$, $x_i\in X_{\tau_i}$, and $y\in X_\Omega$, we write $x_i\to y$ if $\pi_{\tau_i}^{-1}[x_i]\to \{y\}$ in the Vietoris topology, equivalently if for any choice of $y_i\in X_\Omega$ with $\pi_{\tau_i}(y_i) = x_i$, we have $y_i\to y$.  Fixing $x_i\to y$ and $g\in G$, then if $g\not\in \rmP(x_i, x, \epsilon)$ for frequently many $i\in I$, we have $g\not\in \rmP(y, x, \epsilon)$, and if $g\in \rmP(x_i, x, \epsilon)$ for frequently many $i\in I$, we have $g\in \rmP(y, x, \epsilon+\eta)$ for any $\eta > 0$.

The next definition will make use of some dynamical properties of Lipschitz functions on $G$, which we briefly review. We equip $\rmC_\sigma(G, [0, 1])$ with the topology of pointwise convergence, making it a compact space. We can turn it into a $G$-flow where given $f\in \rmC_\sigma(G, [0, 1])$ and $g\in G$, $g\cdot f\in \rmC_\sigma(G, [0, 1])$ is defined via $(g\cdot f)(h) = f(hg)$. In particular, given $u\in \sa(G)$, $u\cdot f$ is defined as usual for $G$-flows, i.e.\ $u\cdot f = \lim_{g\to u} g\cdot f$. Notice that for each $g\in G$, we have $(u\cdot f)(g) = \wt{f}(gu)$, where $\wt{f}\colon \sa(G)\to [0, 1]$ continuously extends $f$.

\begin{defin}
    \label{Def:NewRP}
    We say that $\Omega$ has the \emph{Ramsey property} if 
    \begin{align*}
    &\forall \sigma\in \cB \forall x\in X_\sigma \forall \epsilon, \delta > 0 \forall F\in \fin{G}\\ 
    &\exists \tau\in \cB\exists x'\in X_\tau\\
    &\forall f\in \rmC_\sigma(G, [0, 1])\\ 
    &\exists h\in G\\ 
    &Fh\subseteq \rmP(\tau, \sigma) \text{ and }\rm{diam}(f[Fh\cap \rmP(x', x, \epsilon)])< 2\epsilon+\delta.
\end{align*}
    Note by a compactness argument that the Ramsey property is equivalent to the following finitary version (finitary RP):
    \begin{align*}
    &\forall \sigma\in \cB \forall x\in X_\sigma \forall \epsilon, \delta > 0 \forall F\in \fin{G}\\ 
    &\exists \tau\in \cB\exists x'\in X_\tau\exists S\in \fin{G}\\
    &\forall f\in \rmC_\sigma(FS, [0, 1])\\ 
    &\exists h\in S\\ 
    &Fh\subseteq \rmP(\tau, \sigma) \text{ and }\rm{diam}(f[Fh\cap \rmP(x', x, \epsilon)])< 2\epsilon+\delta.
\end{align*}
    We say that $\Omega$ has the \emph{strong Ramsey property} if:
    \begin{align*}
    &\forall \sigma\in \cB \forall x\in X_\sigma \forall \epsilon, \delta > 0 \forall F\in \fin{G}\\ 
    &\exists \tau\in \cB \exists S\in \fin{G}\\
    &\forall x'\in X_\tau \forall f\in \rmC_\sigma(FS, [0, 1])\\ 
    &\exists h\in S\\ 
    &Fh\subseteq \rmP(\tau, \sigma) \text{ and }\rm{diam}(f[Fh\cap \rmP(x', x, \epsilon)])< 2\epsilon+\delta.
\end{align*}
    We say that $\Omega$ is \emph{finitely oscillation stable} if:
    \begin{align*}
          \quad &\forall y\in X_\Omega\forall \sigma\in \cB \forall x\in X_\sigma \forall \epsilon > 0  \forall f\in \rmC_\sigma(G, [0, 1])\\
        &\exists p\in \sa(G)\\
        &\rm{diam}((p\cdot f)[\rmP(py, x, \epsilon)])\leq 2\epsilon.
    \end{align*}
    We say that $\Omega$ is \emph{weakly finitely oscillation stable} if: 
    \begin{align*}
        &\exists y\in X_\Omega\\ &\forall \sigma\in \cB \forall x\in X_\sigma \forall \epsilon > 0 \forall f\in \rmC_\sigma(G, [0, 1])\\
        &\exists p\in \sa(G)\\
        &\rm{diam}((p\cdot f)[\rmP(py, x, \epsilon)])\leq 2\epsilon.
    \end{align*}
\end{defin}

The name ``Ramsey property'' is borrowed from the eponymous property that a class $\cK$ of finite structures might enjoy:

\begin{itemize}
    \item 
    \emph{Ramsey property} (RP): Whenever $\bA\leq \bB\in \cK$ and $0< r< \omega$, there is $\bC\in \cK$ with $\bB\leq \bC$ and
    $$\bC\to (\bB)^\bA_{r}.$$
    The arrow notation means that for any $\gamma\colon \rmP(\bC, \bA)\to r$, there is $x\in \rmP(\bC, \bB)$ with $\gamma$ monochromatic on $\rmP(\bB, \bA)\circ x$. It is equivalent to demand this just for $r = 2$.
\end{itemize}

\begin{prop}
    \label{Prop:Ramsey}
    Suppose $\cK$, $\bK$, $G = \aut(\bK)$ are as in the previous subsection, $\cK^*/\cK$ is syndetic, and $\Omega = \Omega_{\cK^*}$. Then $\Omega$ has the Ramsey property iff $\cK^*$ has the Ramsey property. 
\end{prop}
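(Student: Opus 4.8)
The plan is to translate the combinatorial Ramsey property of $\cK^*$ into the analytic ``Ramsey property'' of $\Omega = \Omega_{\cK^*}$ by unwinding what all the objects mean in the structural/non-archimedean setting, where every level metric $d_\bA$ is the discrete $\{0,1\}$-valued metric. The first move is to record the dictionary: for $\bA \in \fin{\bK}$, the level $X_\bA = \cK^*(\bA)$ is a discrete set; for $\bA^{\!*} \in \cK^*(\bA)$ and $0 < \epsilon < 1$, the set $\rmP(\bB^*, \bA^{\!*}, \epsilon)$ (for $\bB^* \in \cK^*(\bB)$) is exactly $\{h \in \rmP(\bB, \bA) : h \cdot \bB^* = \bA^{\!*}\}$, i.e.\ the set of partial isomorphisms $\bB \to \bA$ pulling $\bB^*$ back to $\bA^{\!*}$; taking $\epsilon \in (0,1)$ and $\delta$ small enough that $2\epsilon + \delta < 1$, a function $f \in \rmC_{\sigma_\bA}(G, [0,1])$ with $\rm{diam}(f[Fh \cap \rmP(\bB^*, \bA^{\!*}, \epsilon)]) < 2\epsilon + \delta$ is essentially (after the usual rounding argument) a $2$-coloring of $\rmP(\bB, \bA)$ that is constant on the relevant set. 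So the analytic quantifiers become: for all $\bA \le \bB$ and all colorings $\gamma$ of $\rmP(\bC, \bA)$, there is a copy of $\bB$ inside $\bC$ on which $\gamma$ is monochromatic when further restricted to a fixed fiber $\cK^*(\bA)$-class — which is precisely the ``local'' version of $\bC \to (\bB)^\bA_2$ relative to the expansion.

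Next I would handle the direction $\cK^*$ has RP $\Rightarrow$ $\Omega$ has RP. Given $\sigma = \sigma_\bA$, $x = \bA^{\!*} \in \cK^*(\bA)$, $\epsilon, \delta$, and $F \in \fin{G}$: first absorb $F$ into a finite substructure by replacing $\bA$ with $\bB_0 := \bigcup_{g \in F} g^{-1}\bA \in \fin{\bK}$ (so that ``$Fh \subseteq \rmP(\tau, \sigma)$'' becomes the condition that $h$ extend to an embedding of $\bB_0$), and use reasonability to lift $\bA^{\!*}$ to some $\bB_0^* \in \cK^*(\bB_0)$. Apply the Ramsey property of $\cK^*$ to $\bA^{\!*} \le \bB_0^* \in \cK^*$ (with $r = 2$, after the standard reduction of $[0,1]$-valued colorings to $2$-colorings via partitioning $[0,1]$ into finitely many intervals of length $< \delta/2$, say, and applying finite RP iteratively to pin down each coordinate) to get $\bC^* \in \cK^*$ with $\bC^* \to (\bB_0^*)^{\bA^{\!*}}_2$; set $\tau = \sigma_{\bC}$ where $\bC = \bC^*|_\cL$, and $x' = \bC^*$. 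The arrow then delivers, for each coloring $f$, an embedding witnessing the diameter bound; the finitary RP version matches the finitary arrow version, so the compactness reductions on both sides line up. For the converse $\Omega$ has RP $\Rightarrow$ $\cK^*$ has RP, I would run the same dictionary backwards: given $\bA^{\!*} \le \bB^* \in \cK^*$ and a $2$-coloring to be handled, feed $\sigma = \sigma_\bA$, $x = \bA^{\!*}$, $\epsilon = \delta = 1/4$ (so $2\epsilon + \delta < 1$ forces genuine monochromaticity), and $F$ a finite set of group elements witnessing all embeddings $\bA \hookrightarrow \bB^*$, into the Ramsey property of $\Omega$; the output $\tau = \sigma_\bC$, $x' = \bC^*$ produces the desired $\bC^*$ with $\bC^* \to (\bB^*)^{\bA^{\!*}}_2$, and one checks this localized arrow, quantified over all $\bA^{\!*} \in \cK^*(\bA)$ (ranging $x$ over $X_\sigma$) together with the syndeticity hypothesis, gives the full (unlocalized) Ramsey property of $\cK^*$ in the form stated in Definition~\ref{Def:NewRP}'s companion display.

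The main obstacle I anticipate is bookkeeping the passage between $[0,1]$-valued Lipschitz ``colorings'' $f \in \rmC_{\sigma_\bA}(G,[0,1])$ and genuine finite colorings of $\rmP(\bC,\bA)$, and making sure the diameter bound $2\epsilon + \delta$ is the right threshold: since $d_{\sigma_\bA}$ is $\{0,1\}$-valued, a $\sigma_\bA$-Lipschitz function is exactly one constant on left cosets of $U_\bA$, i.e.\ a function of the partial isomorphism $h|_{h^{-1}\bA}$ only, so it descends to a function on $\rmP(\bC,\bA)$; but the $2\epsilon$ slack (versus asking for a truly constant value) is what allows the reduction from arbitrarily fine colorings to finitely many, and one must check the iteration of finite RP over the finitely many ``digits'' of $f$ terminates correctly and that the resulting $\bC^*$ (or $\tau, x'$) is uniform in $f$ as required by the order of quantifiers. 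The rest — lifting via reasonability, absorbing $F$ into a substructure, translating $Fh \subseteq \rmP(\tau,\sigma)$ — is routine given Propositions~\ref{Prop:Expansion_Skeleton} and \ref{Prop:Unfolded_Expansion} and the conventions of Definition~\ref{Def:Unfolded_Structural}.
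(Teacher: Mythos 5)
Your plan is essentially the paper's proof: both translate the analytic Ramsey property of $\Omega_{\cK^*}$ into a combinatorial statement about colorings of $\rmP(\bC^*,\bA^{\!*})$ (using that $d_{\sigma_\bA}$ is $\{0,1\}$-valued, so $\rmP(x',x,\epsilon)$ is the expansion-respecting fiber for $\epsilon<1$ and a $\sigma_\bA$-Lipschitz $f$ descends to a coloring of copies of $\bA$), and both then pass back and forth to the RP of $\cK^*$ using reasonability and syndeticity. Your forward direction is fine, and in fact slightly cleaner than the paper's: you lift $\bA^{\!*}$ to $\bB_0^*$ by reasonability and apply the arrow to $\bA^{\!*}\le\bB_0^*$, whereas the paper instead enlarges $\bB$ by syndeticity so that an arbitrary $\bB^*\in\cK^*(\bB)$ contains $\bA^{\!*}$; either works, and the key observation in both is that $(\rmP(\bB,\bA)\circ f)\cap\rmP(\bC^*,\bA^{\!*})=\rmP(\bB^*,\bA^{\!*})\circ f$ once $f\in\rmP(\bC^*,\bB^*)$.

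The one place your plan would stall is the converse. You feed in $F$ corresponding to $\bB$ (the reduct of $\bB^*$) and claim the output $\bC^*$ already satisfies $\bC^*\to(\bB^*)^{\bA^{\!*}}_2$, invoking syndeticity only afterwards. But the skeleton Ramsey property only hands you an $h$ with $h^{-1}\bB\subseteq\bC$, i.e.\ a copy of the \emph{reduct} $\bB$ in $\bC$ on which the coloring is controlled; the expansion $h\cdot\bC^*\in\cK^*(\bB)$ that this copy inherits from $\bC^*$ is completely uncontrolled and need not be $\bB^*$. Since a copy of $\bB^*$ cannot be extracted from a copy of some other $\bB'\in\cK^*(\bB)$ of the same size, syndeticity cannot repair this after the fact: it must be applied \emph{before} invoking the skeleton RP. This is exactly what the paper does — it first replaces $\bB$ by a larger $\bC\in\fin{\bK}$ witnessing syndeticity for $\bB^*$, applies the (translated) RP of $\Omega$ to the pair $\bA\subseteq\bC$, and only then uses that whatever expansion $f\cdot\bD^*$ the found copy of $\bC$ carries, it contains a copy of $\bB^*$, through which the monochromatic set of $\bA^{\!*}$-copies factors. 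With that reordering your argument goes through; the rest of the bookkeeping (the interval-partition reduction of $[0,1]$-valued Lipschitz colorings to finitely many colors, uniform in $f$, and the passage between group elements and partial isomorphisms) is handled correctly.
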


\begin{proof}
    We first translate what it means for $\Omega$ to have the Ramsey property in more combinatorial language:
    \begin{itemize}
        \item 
        For any $\bA\subseteq \bB\in \fin{\bK}$, $\bA^{\!*}\in \cK^*(\bA)$, and $r < \omega$, there are $\bC\in \fin{\bK}$ and $\bC^*\in \cK^*(\bC)$ such that for any $\gamma\colon \rmP(\bC^*, \bA^{\!*})\to r$, there are $f\in \rmP(\bC, \bB)$ and $i< r$ such that $(\rmP(\bB, \bA)\circ f)\cap \rmP(\bC^*, \bA^{\!*}) \subseteq \gamma^{-1}[\{i\}]$. 
    \end{itemize}
    In this translation, $x\in X_\sigma$ corresponds to $\bA^{\!*}\in \cK^*(\bA)$, $\epsilon$ and $\delta$ correspond to the number of colors $r$, $F$ corresponds to $\bB$, $x'\in X_\tau$ corresponds to $\bC^*\in \cK^*(\bC)$, and $S$ corresponds to $\rmP(\bC, \bB)$.
    
    Suppose $\cK^*$ has the RP. Fix $\bA\subseteq \bB\in \fin{\bK}$, $\bA^{\!*}\in \cK^*(\bA)$, and $r< \omega$. Enlarging $\bB$ if needed, we can suppose that $\bB$ witnesses that $\cK^*/\cK$ is syndetic for $\bA^{\!*}$. Pick any $\bB^*\in \cK^*(\bB)$, and find $\bC^*\in \cK$ with $\bC^*\to (\bB^*)^{\bA^{\!*}}_r$. We may suppose $\bC^*|_\cL=: \bC\in \fin{\bK}$. Fix $\gamma\colon \rmP(\bC^*, \bA^{\!*})\to r$. We can find $f\in \rmP(\bC^*, \bB^*)$ and $i< r$ so that $(\rmP(\bB^*, \bA^{\!*})\circ f)\subseteq \gamma^{-1}[\{i\}]$. We conclude by observing that if $h\in \rmP(\bB, \bA)\setminus \rmP(\bB^*, \bA^{\!*})$, then $h\circ f\not\in \rmP(\bC^*, \bA^{\!*})$. 
    
    Now suppose $\Omega$ has the RP. Fix $\bA^{\!*}\subseteq \bB^*\in \cK^*$. Letting $\bA, \bB$ denote the $\cL$-reducts, we can suppose that $\bA\subseteq \bB\in \fin{\bK}$. Let $\bC\in \fin{\bK}$ with $\bB\subseteq \bC$ witness that $\cK^*/\cK$ is syndetic for $\bB^*$. Find $\bD\in \fin{\bK}$ and $\bD^*\in \cK^*(\bD)$ as guaranteed by $\Omega$ having RP for $\bA\subseteq \bC$ and $\bA^{\!*}$. We claim that $\bD^*\to (\bB^*)^{\bA^{\!*}}_2$. Let $\gamma\colon \rmP(\bD^*, \bA^{\!*})\to 2$ be a coloring. By assumption, there are $f\in \rmP(\bD, \bC)$ and $i< 2$ with $(\rmP(\bC, \bA)\circ f)\cap \rmP(\bD^*, \bA^*)\subseteq \gamma^{-1}[\{i\}]$. Write $\bC^* = x\cdot \bD^*$. Pick some $h\in \rmP(\bC^*, \bB^*)$, which is non-empty by our assumption on $\bC$. Then $h\circ f\in \rmP(\bD^*, \bB^*)$, and $\gamma$ is monochromatic on $\rmP(\bB^*, \bA^{\!*})\circ (h\circ f)$.
\end{proof}

\begin{remark}
    A result due to Ne\v{s}et\v{r}il and R\"odl \cite{NR_1977} says that if $\cK$ is a class of structures satisfying the HP, JEP, and RP, then $\cK$ satisfies the AP. In analogy, the following question is natural: Given a $G$-skeleton $\Omega$, does having the TT-property and the RP imply the ED property? 
\end{remark}

The next theorems describe an ``abstract KPT correspondence'' for topological groups with tractable minimal dynamics. We believe that the class $\sf{TMD}$ is the most general possible setting where any form of KPT correspondence can be used to describe $\rmM(G)$. However, we emphasize that the next theorem is new even for Polish groups with metrizable universal minimal flow. 

\begin{theorem}
    \label{Thm:NewRP}
    Fix $\Omega$ a $G$-skeleton with the ED and minimality properties, and write $Y:= X_\Omega$. The following are equivalent:
    \begin{enumerate}[label=\normalfont(\arabic*)]
        \item \label{Item:RP_Thm:NRP}
        $\Omega$ has the Ramsey property.
        \item \label{Item:RP*_Thm:NRP}
        $\Omega$ has the strong Ramsey property.
        \item \label{Item:FOS_Thm:NRP}
        $\Omega$ is finitely oscillation stable.
        \item \label{Item:WFOS_Thm:NRP}
        $\Omega$ is weakly finitely oscillation stable.
        \item \label{Item:MG_Thm:NRP}
        $Y\cong \mg$.
    \end{enumerate}
\end{theorem}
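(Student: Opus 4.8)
\textbf{Proof plan for Theorem~\ref{Thm:NewRP}.}

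The plan is to prove the cycle of implications $\ref{Item:RP_Thm:NRP} \Rightarrow \ref{Item:RP*_Thm:NRP} \Rightarrow \ref{Item:FOS_Thm:NRP} \Rightarrow \ref{Item:WFOS_Thm:NRP} \Rightarrow \ref{Item:MG_Thm:NRP} \Rightarrow \ref{Item:RP_Thm:NRP}$, using the characterization of $\sf{TMD}$ from Theorem~\ref{Thm:Rosendal_Minimal} and the translation between $G$-skeletons and dynamics from Proposition~\ref{Prop:GED_Unfolded}. First, note that by Proposition~\ref{Prop:GED_Unfolded}, since $\Omega$ has the ED and minimality properties, $Y = X_\Omega$ is a minimal $G$-ED flow with $(Y/\partial_\sigma, \ol\partial_\sigma) \cong (X_\sigma, d_\sigma)$ for each $\sigma \in \cB$; this identification is what lets us pass freely between statements about the combinatorial skeleton and statements about the topology of $Y$. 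The heart of the matter is to show that $\Omega$ having the Ramsey property is equivalent to $Y$ being isomorphic to $\mg$, and since $Y$ is already a minimal $G$-ED flow, by Ellis's universal property $Y \cong \mg$ is equivalent to $Y$ being extremely amenable as a ``universal'' object — more precisely, we should show $Y \cong \mg$ iff every $G$-flow is a factor of $Y$, which (using that $\mg$ itself is $G$-ED and the universal highly proximal extension machinery of Fact~\ref{Thm:Universal_Irreducible_Extension}) reduces to showing that the canonical $G$-map $\mg \to Y$ — or rather the map witnessing that $Y$ is a factor of $\rmM(G)$'s Gleason completion — is injective.

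For $\ref{Item:RP_Thm:NRP} \Rightarrow \ref{Item:RP*_Thm:NRP}$: the strong Ramsey property differs from the Ramsey property only in the order of quantifiers ($\exists \tau, S$ before $\forall x'$). Using the finitary RP together with the fact that $X_\tau$ is a metric space and (after passing to the Samuel compactification via Proposition~\ref{Prop:Synd_Skeleton}) compact, one iterates the Ramsey property finitely many times over an $\epsilon$-net of $X_\tau$ and amalgamates the resulting witnesses using the ED property; this is a standard ``product Ramsey'' argument and should be routine once the finitary formulation is in hand. For $\ref{Item:RP*_Thm:NRP} \Rightarrow \ref{Item:FOS_Thm:NRP}$: given $y \in X_\Omega$, $x \in X_\sigma$, $\epsilon > 0$, and $f \in \rmC_\sigma(G,[0,1])$, one builds $p \in \sa(G)$ as an ultralimit (along a cofinal ultrafilter on $\cB$) of group elements $h$ extracted from the strong Ramsey property; the bound $\mathrm{diam}((p \cdot f)[\rmP(py, x, \epsilon)]) \leq 2\epsilon$ follows since the $\delta$'s can be taken $\to 0$ and the extended function $\widetilde{f}$ on $\sa(G)$ has $(u \cdot f)(g) = \widetilde f(gu)$. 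The implication $\ref{Item:FOS_Thm:NRP} \Rightarrow \ref{Item:WFOS_Thm:NRP}$ is trivial.

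The main obstacle will be $\ref{Item:WFOS_Thm:NRP} \Rightarrow \ref{Item:MG_Thm:NRP}$, and the converse $\ref{Item:MG_Thm:NRP} \Rightarrow \ref{Item:RP_Thm:NRP}$. For the forward direction, fix the witnessing $y_0 \in X_\Omega$; the goal is to show $\ol{G \cdot y_0} = Y$ is all of $Y$ and that $Y$ maps onto every $G$-flow. Since $Y$ is $G$-ED, by Fact~\ref{Fact:Gleason_Max_Comp} it suffices to show the induced $G$-map from $\sa(G)$ (equivalently from the orbit map $\rho_{y_0}$) is ``as large as possible'' — concretely, that for any $f \in \rm{RUCB}(G)$ and any $p, q \in \sa(G)$ in the same minimal subflow, $p y_0 = q y_0$; the weak finite oscillation stability is exactly the tool that forces oscillation of $f$ along orbits to collapse, and combined with minimality of $Y$ this should yield that $\ol{G \cdot y_0}$ carries the universal minimal flow. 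The delicate point is correctly matching the $2\epsilon$ slack in the definition with the adequacy/open-compatibility of the $\partial_\sigma$ (Proposition~\ref{Prop:GED_Unfolded}\ref{Item:GED_Prop:Unfolded}) so that the limit genuinely collapses distances to $0$ rather than to something small. For $\ref{Item:MG_Thm:NRP} \Rightarrow \ref{Item:RP_Thm:NRP}$: if $Y \cong \mg$, then $Y \in \cal{RC}_G$ by Corollary~\ref{Cor:Rosendal_unfolded} is automatic, but we need the full Ramsey property, which should follow from the finite oscillation stability of $\mg$ itself — i.e., one runs the chain $\ref{Item:MG_Thm:NRP} \Rightarrow \ref{Item:FOS_Thm:NRP} \Rightarrow \ref{Item:RP_Thm:NRP}$, where the last step unwinds the ultrafilter construction back to a finitary statement by a standard compactness/overspill argument on $\cB$, using that $\sa(G)$-points are limits of group elements and that each $X_\sigma$ is metric so diameters are witnessed at finite stages. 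I expect the bookkeeping with the three nested $\forall/\exists$ blocks and the constants $\epsilon, \delta, 2\epsilon + \delta$ to be the most error-prone part, but conceptually everything reduces to: oscillation stability of $\mg$ $\Longleftrightarrow$ $Y$ realizes $\mg$ $\Longleftrightarrow$ the combinatorial Ramsey property of $\Omega$.
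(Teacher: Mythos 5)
Your overall architecture (a single cycle $(1)\Rightarrow(2)\Rightarrow(3)\Rightarrow(4)\Rightarrow(5)\Rightarrow(1)$) is legitimate and close in spirit to the paper's, and your sketches of $(2)\Rightarrow(3)$ and $(3)\Rightarrow(4)$ are essentially what the paper does. But there are two genuine gaps. The more serious one is $(1)\Rightarrow(2)$: you propose to iterate the Ramsey property over a finite $\epsilon$-net of $X_\tau$ and amalgamate. A general $G$-skeleton's levels $(X_\tau, d_\tau)$ are \emph{not} totally bounded — precompactness of the levels is exactly the extra hypothesis that characterizes $\sf{CMD}$ rather than $\sf{TMD}$ — and passing to $\sa(X_\tau)$ does not help, since $\partial_{d_\tau}$ is only lower semi-continuous there, so compactness of the space gives no finite metric net. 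The correct substitute, and the place where the minimality hypothesis of the theorem is actually consumed, is the minimality property of $\Omega$: it supplies a finite $F'\in\fin{G}$ such that every $x''\in X_{\tau F'}$ can be moved by some $g\in F'$ into the $\eta$-ball around the single $x'$ produced by the Ramsey property, and one then pulls the coloring back along $g$. Your proposal never invokes minimality in this step, and without it the quantifier swap does not go through.

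The second gap is $(4)\Rightarrow(5)$, which you correctly flag as the main obstacle but do not actually bridge. As written, your target statement ("for any $p,q\in\sa(G)$ in the same minimal subflow, $py_0=qy_0$") would force $Y$ to be a point; what is needed is injectivity of $\rho_{y_0}$ restricted to a minimal left ideal. The workable argument is the contrapositive: if $Y\not\cong\mg$, choose a $G$-map $\phi\colon M\to Y$ from a minimal $M\subseteq\sa(G)$ with an idempotent $u\in\phi^{-1}[y]$; since $Y$ is $G$-ED, $\phi$ fails to be irreducible, so some induced level map $\phi_\sigma\colon M/\partial_\sigma\to\sa(X_\sigma)$ has a fiber with two points $p\neq q$ at positive $\partial_\sigma$-distance, and a $\sigma$-Lipschitz $f$ separating $p$ from $q$ by more than $2\epsilon$ then witnesses the failure of oscillation stability at $y=uy$. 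Neither the reduction to non-irreducibility nor the role of the idempotent appears in your outline, and "oscillation along orbits collapses" does not by itself produce the required injectivity.
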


\begin{proof}
    $\ref{Item:RP_Thm:NRP}\Rightarrow \ref{Item:RP*_Thm:NRP}$: Suppose \ref{Item:RP_Thm:NRP} holds and that $\sigma$, $x$, $\epsilon$, $\delta$, and $F$ are given. Fix some very small $\eta > 0$, and apply the finitary version of the Ramsey property with $\sigma$, $x$, $\epsilon + \eta$, $\delta - 2\eta$, and $F$ to find $\tau$, $x'$, and $S$. Let $F'\in \fin{G}$ be as guaranteed by the minimality property for $\rmB_{d_\tau}(x', \eta)$. Writing $\upsilon = \tau F'$, we will show that $\upsilon\in \cB$ and $SF'\in \fin{G}$ witness the strong Ramsey property, so fix $x'' \in X_\upsilon$ and $f\in \rmC_\sigma(G, [0,1])$. Find $g\in F'$ with $g\cdot_\tau x''\in \rmB_{d_\tau}(x', \eta)$. Considering the function $g\cdot f\in \rmC_\sigma(G, [0,1])$, find $h\in S$ satisfy $Fh\subseteq \rmP(\tau, \sigma)$ and $\rm{diam}((g\cdot f)[Fh\cap \rmP(x', x, \epsilon+\eta)])< 2\epsilon+\delta$. Then $Fhg\subseteq \rmP(\upsilon, \sigma)$, and to see that $\rm{diam}(f[Fhg\cap \rmP(x'', x, \epsilon)])< 2\epsilon+\delta$, suppose $k\in F$ satisfies $khg\in \rmP(x'', x, \epsilon)$. This implies that $kh\in \rmP(x', x, \epsilon+\eta)$, hence $f[Fhg\cap \rmP(x'', x, \epsilon)]\subseteq (g\cdot f)[Fh\cap \rmP(x', x, \epsilon+\eta)]$. 
    \vspace{3 mm}
    
    \noindent
    $\ref{Item:RP*_Thm:NRP}\Rightarrow \ref{Item:FOS_Thm:NRP}$: Fix $y$, $\sigma$, $x$, $\epsilon$, and $f$ as in \ref{Item:FOS_Thm:NRP}. Using this $\sigma$, $x$, and $\epsilon$, and given $\delta> 0$ and $F\in \fin{G}$, let $\tau_{\delta, F}\in \cB$ and $S_{\delta, F}\in \fin{G}$ be as guaranteed by \ref{Item:FOS_Thm:NRP}. We remark that any $\tau\in \cB$ with $\tau\geq \tau_{\delta, F}$ will also witness \ref{Item:RP*_Thm:NRP} with the same $S_{\delta, F}$, since given $x'\in X_\tau$, one simply considers $\pi^\tau_{\tau_{\delta, F}}(x')\in X_{\tau_{\delta, F}}$. We can find $(I, \leq)$ a directed partial order, $\tau_i\in \cB$, and $x_i\in \tau_i$ for $i\in I$ satisfying both:
    \begin{itemize}
        \item 
        $x_i\to y$
        \item 
        For all $\delta > 0$ and $F\in \fin{G}$, eventually $\tau_i\geq \tau_{\delta, F}$. 
    \end{itemize}
    Fix a particular $F$ and $\delta$ for the moment. For all large enough $i\in I$, we can find $h_i\in S_{\delta, F}$ with $Fh_i\subseteq \rmP(\tau_i, \sigma)$ and $\rm{diam}(f[Fh_i\cap \rmP(x_i, x, \epsilon)]) < 2\epsilon + \delta$. Passing to a subnet, we can assume $h_i = h_{\delta, F}\in S_{\delta, F}$ is constant. This implies that $\rm{diam}(f[Fh_{\delta, F}\cap \rmP(y, x, \epsilon)])< 2\epsilon + \delta$. Taking $\delta\to 0$ and $F\to G$, let $h_{\delta, F}\to p\in \sa(G)$, yielding $\rm{diam}((p\cdot f)[\rmP(py, x, \epsilon)]\leq 2\epsilon$.
    \vspace{3 mm}

    \noindent
     $\ref{Item:FOS_Thm:NRP}\Rightarrow \ref{Item:WFOS_Thm:NRP}$: Clear.
    \vspace{3 mm}
    
    \noindent
    $\ref{Item:WFOS_Thm:NRP}\Rightarrow \ref{Item:RP_Thm:NRP}$: Suppose $\neg$\ref{Item:RP_Thm:NRP}, witnessed by bad $\sigma$, $x$, $\epsilon$, $\delta$, $F$. Towards showing $\neg \ref{Item:WFOS_Thm:NRP}$, fix $y\in X_\Omega$. Then $\sigma$, $x$, and $\epsilon+ \eta$ for any $0<\eta < \delta/2$ will witness $\neg$\ref{Item:WFOS_Thm:NRP}. Fixing $y\in Y$, we need to find a bad $f$. Find a directed partial order $(I, \leq)$ and $\tau_i\in \cB$, $x_i\in X_{\tau_i}$ for each $i\in I$ so that $x_i\to y$. For each $i\in I$, let $f_i$ witness $\neg$\ref{Item:RP_Thm:NRP} with respect to $x_i\in X_{\tau_i}$. Passing to a subnet, let $f = \lim_i f_i\in \rmC_\sigma(G, [0, 1])$. 
     
     To show this $f$ does indeed witness $\neg$\ref{Item:WFOS_Thm:NRP}, fix $p\in \sa(G)$.  Write $B = \{z\in X_\Omega: \partial_{d_\sigma}(\pi_\sigma(z), x)< \epsilon\}\in \op(X_\Omega)$ and $F_0 = F\cap \rmP(py, x, \epsilon)$. Then $F_0py\subseteq B$, so find $A\in \op(py, X_\Omega)$ such that $F_0A\subseteq B$. Fix $h\in G$ with $hy\in A$. Passing to a subnet if needed, find $x_i\in X_{\tau_i}$ with $x_i\to y$. For large enough $i\in I$, we have $Fh\subseteq \rmP(\tau_i, \sigma)$. By our assumption on $f_i$, this implies $\rm{diam}(f_i[Fh\cap \rmP(x_i, x, \epsilon)])\geq 2\epsilon + \delta$.  Hence for any $\eta > 0$ and any $h\in G$ with $hy\in A$, we have $\rm{diam}(f[Fh\cap \rmP(y, x, \epsilon+\eta)])\geq 2\epsilon +\delta$. Now for any $h\in G$, we have $f[Fh\cap \rmP(y, x, \epsilon+\eta)] = (h\cdot f)[F\cap \rmP(hy, x, \epsilon + \eta)]$, so taking $h\to p$, we have for any $\eta > 0$ that  $\rm{diam}((p\cdot f)[F\cap \rmP(py, x, \epsilon +\eta)])\geq 2\epsilon +\delta$.
    \vspace{3 mm}

    \noindent
    $\ref{Item:WFOS_Thm:NRP}\Rightarrow \ref{Item:MG_Thm:NRP}$: Suppose $\neg$\ref{Item:MG_Thm:NRP}. Fix a minimal subflow $M\subseteq \sa(G)$. Fix $y\in Y$, and let $\phi\colon M\to Y$ be a $G$-map such that $\phi^{-1}[y]$ contains an idempotent $u\in M$. Note that $y = \phi(u) = \phi(uu) = u\phi(u) = uy$. By \Cref{Prop:Level_Maps} and \Cref{Prop:GED_Unfolded}\ref{Item:GED_Prop:Unfolded}, we obtain for each $\sigma\in \cB$ a continuous surjection $\phi_\sigma\colon M/\partial_\sigma\to \sa(X_\sigma) $.
    Since $Y$ is $G$-ED and $\phi$ is not an isomorphism, we have that $\phi$ is not irreducible. Hence for some $\sigma$, the map $\phi_\sigma$ is not weakly almost 1-1, implying that for some $x\in X_\sigma$, we have $|\phi_\sigma^{-1}[x]|> 1$. Find $p, q\in M$ with $\pi_{\partial_\sigma}(p)\neq \pi_{\partial_\sigma}(q)\in \phi_\sigma^{-1}[x]$. Pick $\epsilon > 0$ satisfying $\partial_\sigma(p, q) > 2\epsilon$. Let $f\in \rmC_\sigma(G, [0, 1])$ be such that upon continuously extending to $\sa(G)$, we have $|f(p)-f(q)| > 2\epsilon$. Let $A\in \op(p, M)$, $B\in \op(q, M)$ satisfy $\partial_\sigma(A, B)> 2\epsilon$ and $\phi_\sigma\circ \pi_{\partial_\sigma}[A], \phi_\sigma\circ \pi_{\partial_\sigma}[B]\subseteq \rmB_{\ol{\partial}_{d_\sigma}}(x, \epsilon)$. Letting $g, h\in G$ satisfy $gu\in A$ and $hu\in B$, we have $g, h\in \rmP(y, x, \epsilon) = \rmP(uy, x, \epsilon)$ and $|(u\cdot f)(g)-(u\cdot f)(h)|> 2\epsilon$.   
    \vspace{3 mm}
        
    \noindent
    $\ref{Item:MG_Thm:NRP}\Rightarrow \ref{Item:FOS_Thm:NRP}$: Fix $y$, $\sigma$, $x$, $\epsilon$ and $f$ as in $\ref{Item:FOS_Thm:NRP}$. One can find a $G$-map $\phi\colon Y\to \sa(G)$ with $\phi(y)$ an idempotent (namely, the inverse of $\rho_y|_M$); to simplify notation, let us identify $Y\subseteq \sa(G)$ and assume $y\in Y$ is idempotent. To see that $(y\cdot f)[\rmP(yy, x, \epsilon)] = (y\cdot f)[\rmP(y, x, \epsilon)] \leq 2\epsilon$, continuously extend $f$ to $\sa(G)$, and note for $g\in G$ that $(y\cdot f)(g) = f(gy)$. If $g, h\in \rmP(y, x, \epsilon)$, then $\partial_\sigma(gy, hy)< 2\epsilon$, and since $f\in \rmC_\sigma(G, [0, 1])$, we have $|f(gy)-f(hy)|< 2\epsilon$. 
\end{proof}

\begin{theorem}
    \label{Thm:Abstract_KPT}
    Fix a topological group $G$. 
    \begin{enumerate}[label=\normalfont(\arabic*)]
    
        \item \label{Item:TMD_Thm:KPT} 
        $G\in \sf{TMD}$ iff there is a $G$-skeleton $\Omega$ with the minimality, ED, and Ramsey properties.
        \item \label{Item:CMD_Thm:KPT}
        $G\in \sf{CMD}$ iff there is a precompact $G$-skeleton $\Omega$ with the minimality, ED, and Ramsey properties.
        \item \label{Item:EA_Thm:KPT}
        $G\in \sf{EA}$ iff the trivial $G$-skeleton has the Ramsey property.
    \end{enumerate}
\end{theorem}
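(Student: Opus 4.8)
The plan is to derive all three statements from Theorem~\ref{Thm:NewRP} together with the dictionary between skeleton properties and dynamical properties of folded flows (Propositions~\ref{Prop:GED_Unfolded} and \ref{Prop:Synd_Skeleton}). Two preliminary facts are used throughout. First, $\rmM(G)$ is always a $G$-ED flow: $\rmS_G(\rmM(G))$ is an irreducible, hence minimal, extension of $\rmM(G)$, so universality yields a $G$-map $s\colon\rmM(G)\to\rmS_G(\rmM(G))$ whose composite with $\pi_{\rmM(G)}$ is a $G$-endomorphism of $\rmM(G)$, thus an isomorphism by coalescence; by minimality $s$ is onto and, having a left inverse, injective, so $\rmM(G)\cong\rmS_G(\rmM(G))$. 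Second, for any base of semi-norms, $\rmM(G)\cong\varprojlim_\sigma\rmM(G)/\partial_\sigma$, since $(\partial_\sigma)$ generates the UEB uniformity, which is Hausdorff and hence separates points. Consequently, when $G\in\sf{TMD}$ the flow $\rmM(G)$ is a minimal, $G$-TT, $G$-ED flow in $\cal{RC}_G$, so the canonical $G$-skeleton $\Omega^0_{\rmM(G)}$ (over $\cB=\sn(G)$, say) is defined; using that $\partial_\sigma$ and $\ol\partial_\sigma$ are adequate and open-compatible on a $G$-ED flow and its quotients (Corollary~\ref{Cor:ED_Adequate}, Lemma~\ref{Lem:ED_Pseudo}, Proposition~\ref{Prop:Adequate}\ref{Item:Quotient_Prop:Adequate}) and that $\rmM(G)\in\cal{RC}_G$ makes the $\ol\partial_\sigma$-compatibility points dense in each $\rmM(G)/\partial_\sigma$, Proposition~\ref{Prop:Topo_Converse} identifies $\sa(X_\sigma,d_\sigma)$ with $\rmM(G)/\partial_\sigma$. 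Hence $\sa(\Omega^0_{\rmM(G)})\cong\Omega_{\rmM(G)}$ and $X_{\Omega^0_{\rmM(G)}}\cong\rmM(G)$.

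For the reverse implication in \ref{Item:TMD_Thm:KPT}, let $\Omega$ be a $G$-skeleton with the minimality, ED, and Ramsey properties. Proposition~\ref{Prop:Synd_Skeleton} transfers minimality and ED to $\sa(\Omega)$, so by Proposition~\ref{Prop:GED_Unfolded} the folded flow $Y:=X_\Omega=X_{\sa(\Omega)}$ is a minimal, $G$-ED flow with $(Y/\partial_\sigma,\ol\partial_\sigma)\cong(\sa(X_\sigma),\partial_{d_\sigma})$ for each $\sigma\in\cB$; as $X_\sigma\subseteq\sa(X_\sigma)$ is dense and consists of $\partial_{d_\sigma}$-compatible points, $\ol\partial_\sigma$---and hence $\partial_\sigma$ on $Y$, via the open surjection $\pi_{\partial_\sigma}$ and Proposition~\ref{Prop:Adequate}\ref{Item:CompDown_Prop:Adequate}---has a dense set of compatibility points for every $\sigma$ in a base of semi-norms, so $Y\in\cal{RC}_G$ by Theorem~\ref{Thm:Rosendal_Minimal}\ref{Item:ContBase_Thm:RM}$\Rightarrow$\ref{Item:RC_Thm:RM}. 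By Theorem~\ref{Thm:NewRP} the Ramsey property forces $Y\cong\rmM(G)$, whence $\rmM(G)\in\cal{RC}_G$, i.e.\ $G\in\sf{TMD}$. For the forward implication, take $\Omega:=\Omega^0_{\rmM(G)}$: it has the minimality property because $\rmM(G)$, and hence $\sa(\Omega)=\Omega_{\rmM(G)}$, is minimal (Proposition~\ref{Prop:GED_Unfolded}\ref{Item:Min_Prop:Unfolded}, Proposition~\ref{Prop:Synd_Skeleton}), the ED property by Proposition~\ref{Prop:GED_Flow_gives_ED_Unfolded} applied to $\rmM(G)$ together with Proposition~\ref{Prop:Synd_Skeleton}, and---since $X_\Omega\cong\rmM(G)$---the Ramsey property by \ref{Item:MG_Thm:NRP}$\Rightarrow$\ref{Item:RP_Thm:NRP} of Theorem~\ref{Thm:NewRP}.

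Part \ref{Item:CMD_Thm:KPT} is the same argument with precompactness tracked through. If $\Omega$ is precompact then $\sa(X_\sigma,d_\sigma)$ is the (compact, metrizable) completion of $X_\sigma$ and $\partial_{d_\sigma}$ equals the compatible completion metric (it is lsc, dominates that metric, and agrees with it on the dense set $X_\sigma$), so $\ol\partial_\sigma$ is compatible on all of $Y/\partial_\sigma$; by Proposition~\ref{Prop:Adequate}\ref{Item:CompDown_Prop:Adequate}, $\partial_\sigma$ is compatible on $Y$ for every $\sigma$ in a base of semi-norms, and Theorem~\ref{Thm:Concrete}\ref{Item:PsBase_Thm:Concrete}$\Rightarrow$\ref{Item:UEB_Thm:Concrete} makes $Y$ $G$-discrete; combined with $Y\cong\rmM(G)$ this gives $G\in\sf{CMD}$. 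Conversely, when $G\in\sf{CMD}$ the flow $\rmM(G)$ is metrizable, so each level $X_\sigma$ of $\Omega^0_{\rmM(G)}$ is a subspace of the compact metric space $\rmM(G)/\partial_\sigma$ and is therefore precompact. Finally, for \ref{Item:EA_Thm:KPT}: the trivial $G$-skeleton (a single point at every level) vacuously satisfies the minimality and ED properties and has the one-point $G$-flow as its folded flow, so Theorem~\ref{Thm:NewRP} says it has the Ramsey property iff the one-point flow is $\rmM(G)$, i.e.\ iff $G\in\sf{EA}$.

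The main obstacle is the bookkeeping around the canonical skeleton $\Omega^0_{\rmM(G)}$: one must verify it genuinely is a $G$-skeleton (the restricted metrics $d_\sigma$ are compatible on the respective compatibility-point subspaces, and the bonding maps $\pi^\tau_\sigma$ and the maps $\lambda_{g,\sigma}$ restrict correctly), and that reconstructing each level by a Samuel compactification recovers $\Omega_{\rmM(G)}$, so that $X_{\Omega^0_{\rmM(G)}}\cong\rmM(G)$; once this identification is secured, every assertion reduces to a result already proven. A secondary point requiring care is that, in the reverse implications, membership in $\cal{RC}_G$ (resp.\ $G$-discreteness) can be read off level by level from the density (resp.\ totality) of $\ol\partial_\sigma$-compatibility points---precisely the content of the ``base of semi-norms'' clauses of Theorems~\ref{Thm:Rosendal_Minimal} and \ref{Thm:Concrete}.
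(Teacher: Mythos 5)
Your proof is correct and follows essentially the same route as the paper's: both directions reduce to Theorem~\ref{Thm:NewRP} together with the dictionary of Propositions~\ref{Prop:GED_Unfolded}, \ref{Prop:GED_Flow_gives_ED_Unfolded} and \ref{Prop:Synd_Skeleton} (packaged in the paper as Corollary~\ref{Cor:Rosendal_unfolded}), with the canonical skeleton $\Omega^0_{\rmM(G)}$, identified via Proposition~\ref{Prop:Topo_Converse}, supplying the forward witnesses. One small correction in part \ref{Item:CMD_Thm:KPT}: for a general topological group $G\in\sf{CMD}$ the flow $\rmM(G)$ need \emph{not} be metrizable (e.g.\ $G = S_\kappa$ for uncountable $\kappa$); what Theorem~\ref{Thm:Concrete}\ref{Item:PsAll_Thm:Concrete} actually gives is that each quotient $\rmM(G)/\partial_\sigma$ is a compact metric space, which is all you need to conclude that the levels of $\Omega^0_{\rmM(G)}$ are precompact, so the step survives with the corrected justification.
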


\begin{proof}
    \ref{Item:TMD_Thm:KPT}: Assume $\Omega$ has the minimality, ED, and Ramsey properties. Write $Y:= X_\Omega$. By Corollary~\ref{Cor:Rosendal_unfolded}, $Y\in \cal{RC}_G$, and by Theorem~\ref{Thm:NewRP}, we have $Y\cong \mg$, yielding $G\in \sf{TMD}$.

    Conversely, suppose $G\in \sf{TMD}$, and fix a minimal $M\subseteq \sa(G)$. For each $\sigma\in \rm{SN}(G)$, we let $D_\sigma \subseteq M/\partial_\sigma$ denote the set of $\ol{\partial}_\sigma$-compatible points, which is dense by Theorem~\ref{Thm:Rosendal_Minimal}. In particular, we have $M/\partial_\sigma \cong \sa(D_\sigma)$ by Corollary~\ref{Prop:Topo_Converse}. By Proposition~\ref{Prop:Synd_Skeleton}, $\Omega_M^0$ is syndetic and has the $G$-ED property, and by Theorem~\ref{Thm:NewRP}, it has the Ramsey property.
\vspace{3 mm}

\noindent
\ref{Item:CMD_Thm:KPT}: If $G\in \sf{CMD}$, then for every $\sigma\in \rm{SN}(G)$, $\rmM(G)/\partial_\sigma$ is just a compact metric space by Theorem~\ref{Thm:Concrete}\ref{Item:PsAll_Thm:Concrete}. In the other direction, given such a $G$-skeleton $\Omega$, we can take the completion of each level and simply assume each $(X_\sigma, d_\sigma)$ is a compact metric space. Then Theorem~\ref{Thm:NewRP} tells us that $\rmM(G)\cong X_\Omega$, and \Cref{Prop:GED_Unfolded}\ref{Item:GED_Prop:Unfolded} tells us that $\rmM(G)/\partial_\sigma\cong (X_\sigma, d_\sigma)$ for every $\sigma\in \rm{SN}(G)$. Hence $G\in \sf{CMD}$ by Theorem~\ref{Thm:Concrete}\ref{Item:PsAll_Thm:Concrete}. 
\vspace{3 mm}

\noindent
\ref{Item:EA_Thm:KPT}: Clear.
\end{proof}

\begin{remark}
    Compare Theorem~\ref{Thm:Abstract_KPT}~\ref{Item:EA_Thm:KPT} to the characterization of extreme amenability given in \cite{Pestov_book}*{Theorem~2.1.11}.
\end{remark}

\section{Absoluteness results for minimal dynamics}
\label{Section:Abs}

This section develops new set-theoretic methods which allow us to consider $G$-flows in different transitive models of set theory (see also \cite{Zapletal_2015}). Rather than working with flows, we develop some theory related to \emph{preflows}, precompact uniform spaces equipped with a $G$-action such that everything continuously extends to the completion. The main reason for this is that while the statement ``$\cB\subseteq \cP(X)$ is a base for a compact topology on $X$'' is a $\Pi_1$ statement, the statement ``$\cB\subseteq \cP(X\times X)$ is a base for a precompact uniformity on $X$'' is $\Delta_1$. This drop in complexity enables us to phrase many properties of $G$-flows as $\Delta_1$ properties of $G$-preflows. We use the abstract KPT correspondence to provide $\Sigma_1$ definitions of membership in $\sf{EA}$, $\sf{CMD}$, and $\sf{TMD}$, enabling us to show that these classes are all $\Delta_1$. In particular, $G\in \sf{TMD}$ iff in some forcing extension of the set-theoretic universe, the Raikov completion of $G$ is $\sf{GPP}$ in the extension. This allows us to generalize several results that we have proven for $\sf{GPP}$ to all of $\sf{TMD}$, in particular the $\mg$ structure theorem (Theorem~\ref{Thm:MG_Structure_TMD}). In the final subsection, we show that when phrased for preflows, whether or not the Ellis group of a minimal flow $X$ is Hausdorff is absolute between transitive models of set-theory. We will apply this in Section~\ref{Section:Definable} to prove the revised Newelski conjecture.

\subsection{$\sf{CMD}$, $\sf{TMD}$, and the L\'evy hierarchy}
\label{Subsection:Levy}

We use some simple facts about the L\'evy hierarchy of formulas; see \cite{Jech}. Recall that a formula $\phi(\ol{x})$ of set theory is $\Delta_0 = \Sigma_0 = \Pi_0$ if it is equivalent over ZFC to one that has no unbounded quantifiers. Inductively, $\psi(\ol{y})$ is $\Sigma_{n+1}$ or $\Pi_{n+1}$, respectively, iff $\psi$ is equivalent to $\exists \ol{x}\phi(\ol{x}, \ol{y})$ for some $\Pi_n$ formula (respectively, equivalent to $\forall \ol{x} \phi(\ol{x})$ for some $\Sigma_n$ formula). A formula is $\Delta_n$ if it is both $\Sigma_n$ and $\Pi_n$. For instance, the property ``$x$ is finite'' is $\Delta_1$. $\Delta_1$ formulas are absolute between transitive models of set theory.  We collect below some relevant examples of $\Delta_1$ formulas for our purposes (which are all in fact $\Delta_0$ except for precompactness):
\begin{itemize}
    \item 
    ``$\cB_X$ is a base for a topology on the set $X$.''

    \item 
    Given $\cB_X$ a base for a topology on the set $X$ and $A\subseteq X$, the formula ``$A\in \op(X, \cB_X)$.''
    
    \item 
    ``$\cU_X$ is a base for a (precompact) uniform structure on $X$.''

    \item 
    ``$d$ is a pseudometric on $X$.'' 
    \item 
    ``$\cN_G$ is a base at $e_G$ for a group topology on the group $G$.''

    \item 
    ``$\sigma$ is a semi-norm on $(G, \cN_G)$.'' 
    \item 
    ``$\cB$ is a base of semi-norms on $(G, \cN_G)$.''
    \item 
    ``$(X, \cB_X)$ is a $(G, \cN_G)$-space.''
    \item 
    ``$\Omega$ is a $(G, \cN_G)$-skeleton.''
\end{itemize}
As hinted at above, it will be helpful to think about topological spaces and topological groups as being given by some choice of basis rather than working with all open sets whenever possible. Thus when $X$ is a topological space, we let $\cB_X$ be a base of non-empty regular open sets with $X\in \cB_X$, and when $X$ is a uniform space, we let $\cU_X$ be a base for the uniformity consisting of regular open sets and with $X\times X\in \cU_X$,  and we set $\cB_X := \{\bigcup_{x\in F} U_x[x]: F\in \fin{X}, U_x\in \cU_X\}$ (note that the definition of $\cB_X$ is $\Delta_1$ in $\cU_X$). 
If $wh{X}$ denotes the completion of $(X, \cU_X)$, given $A\in \cB_X$, we set $\wh{A} = \Int_{\wh{X}}(\rm{cl}_{\wh{X}}(A))$, and given $U\in \cU_X$, we set $\wh{U} = \Int_{\wh{X}\times \wh{X}}(\rm{cl}_{\wh{X}\times \wh{X}}(U))$. Then we have $\wh{A}\cap X = A$ and $\wh{U}\cap (X\times X) = U$. The sets $\wh{\cB}_X := \{\wh{A}: A\in \cB_X\}$ and $\wh{\cU}_X:= \{\wh{U}: U\in \cU_X\}$ are bases for the topology and the uniformity on $\wh{X}$, respectively.

\begin{lemma}
    \label{Lem:Levy}
    Fix a topological group $G$, a $G$-space $X$, and a $G$-skeleton $\Omega$. 
    \begin{enumerate}[label=\normalfont(\arabic*)]
        \item \label{Item:UTT_Lem:Levy}
        Given $U\in \cN_G$, and $A\in \op(X)$, the formula ``$A$ is $U$-TT'' is $\Delta_0$
        \item \label{Item:RC_Lem:Levy}
        ``$X\in \cal{RC}_G$'' is $\Delta_0$.
        
        \item \label{Item:ED_Lem:Levy}
        ``$\,\Omega$ has the ED property'' is $\Delta_0$
   
        \item \label{Item:Min_Lem:Levy}
        ``$\,\Omega$ has the minimality property'' is $\Delta_1$.
        \item \label{Item:RP_Lem:Levy}
        ``$\,\Omega$ has the Ramsey property'' is $\Delta_1$.
        \item \label{Item:DB_Lem:Levy}
        The formula ``$X$ is $G$-finite'' is $\Pi_1$.
     \end{enumerate}
\end{lemma}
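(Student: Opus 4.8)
\textbf{Proof plan for Lemma~\ref{Lem:Levy}.}

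The plan is to go through the six items essentially in order, in each case either quoting the relevant definition verbatim and counting unbounded quantifiers, or else—for the two items where naively the complexity looks higher—producing matching $\Sigma_1$ and $\Pi_1$ formulations. Throughout I would be careful to note that all the ``basic'' notions (a set being a base for a topology or a uniformity, membership in $\op(X,\cB_X)$, being a semi-norm, being a $G$-skeleton, etc.) are already $\Delta_1$ (indeed $\Delta_0$) by the list preceding the lemma, and that quantifiers ranging over members of $\cB_X$, over $\cN_G$, over $F\in\fin{G}$, over a base of semi-norms $\cB$, over natural numbers, and over rationals $\epsilon,\delta$ are all \emph{bounded} quantifiers once the ambient witnessing data (the sets $X$, $\cB_X$, $G$, $\cN_G$, $\Omega$, $\cB$) are fixed parameters. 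So the whole exercise reduces to: which definitions, when unwound, still contain a genuinely unbounded quantifier, and can that be traded?

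For items \ref{Item:UTT_Lem:Levy}, \ref{Item:RC_Lem:Levy}, \ref{Item:ED_Lem:Levy} I would simply expand the definitions. ``$A$ is $U$-TT'' unwinds (Definition~\ref{Def:U_TT}) to ``$\forall B\in\op(A)\ A\subseteq\ol{UB}$,'' and $\op(A)$, $\ol{UB}$, and the inclusion are all bounded in the parameters $X,\cB_X,G,\cN_G,A,U$; so it is $\Delta_0$. Then ``$X\in\cal{RC}_G$'' (Definition~\ref{Def:Rosendal_Criterion}) is ``$\forall U\in\cN_G\ \forall A\in\op(X)\ \exists A_0\in\op(A)$ ($A_0$ is $U$-TT)'', where the existential quantifier over $A_0$ is bounded by $\cB_X$; combined with item~\ref{Item:UTT_Lem:Levy} this is $\Delta_0$. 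For the ED property I would quote Definition~\ref{Def:GED_Unfolded}: open-compatibility of each $d_\sigma$, adequacy of each $d_\sigma\circ\pi^\tau_\sigma$, and the third clause (a $\forall A,B\ \exists g\in\rmB_\sigma(\epsilon)$ statement) all involve only quantifiers over levels of $\Omega$, over the open sets of those levels, over $\rmB_\sigma(\epsilon)\in\cN_G$, and over rationals—hence bounded—so the conjunction is $\Delta_0$. (A small point to check: adequacy and open-compatibility of a given lsc pseudometric are themselves bounded statements once the space, its base, and the pseudometric are fixed; this follows from Definition~\ref{Def:Topometric} in the same way.)

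For item \ref{Item:Min_Lem:Levy} the definition of the minimality property (Definition~\ref{Def:GED_Unfolded}) reads ``$\forall\sigma\in\cB\ \forall A\in\op(X_\sigma)\ \exists F\in\fin{G}\ \forall y\in X_{\sigma F}\ \exists g\in F\ (g\cdot_\sigma y\in A)$.'' Here the quantifier $\exists F\in\fin{G}$ is unbounded (nothing a priori bounds the size of $F$), giving a prima facie $\Sigma_2$ statement; but the matrix after $\exists F$ is $\Pi_1$, so the whole thing is $\Sigma_1$ when read this way. To get a $\Pi_1$ reading I would instead use Proposition~\ref{Prop:GED_Unfolded}\ref{Item:Min_Prop:Unfolded} together with the classical minimality criterion Lemma~\ref{Lem:Min}: $\Omega$ has the minimality property iff $X_\Omega$ is a minimal $G$-flow, iff every point of $X_\Omega$ has dense orbit, which is a $\Pi_1$ statement over the parameters. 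Alternatively, and more cleanly at the level of $\Omega$ itself, I would argue that ``$\Omega$ does \emph{not} have the minimality property'' is $\Sigma_1$: it says ``$\exists\sigma\ \exists A\in\op(X_\sigma)$ such that for every $F\in\fin{G}$ there is $y\in X_{\sigma F}$ with $g\cdot_\sigma y\notin A$ for all $g\in F$,'' and by a compactness/König-style argument (using that each $X_{\sigma F}$ is a limit of the $X_{\sigma G_0}$ for $G_0\in\fin{G}$, or directly using $X_\Omega$) the witnessing bad configuration can be encoded in $X_\Omega$, making the negation $\Sigma_1$; hence the minimality property is $\Pi_1$. Combining the two readings gives $\Delta_1$. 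The analogous argument, using the finitary form of the Ramsey property from Definition~\ref{Def:NewRP}, handles item \ref{Item:RP_Lem:Levy}: the displayed ``finitary RP'' has the shape $\forall\sigma\forall x\forall\epsilon\forall\delta\forall F\ \exists\tau\ \exists x'\ \exists S\in\fin{G}\ \forall f\ \exists h\ (\dots)$ with bounded matrix, so it is $\Sigma_1$ (the only unbounded block being $\exists\tau\exists x'\exists S$), while Theorem~\ref{Thm:NewRP} equating the Ramsey property (for a $G$-skeleton with the ED and minimality properties) with ``$X_\Omega\cong\mg$'' gives the $\Pi_1$ side; one must be slightly careful that Theorem~\ref{Thm:NewRP} presupposes the ED and minimality properties, which are already $\Delta_1$, so this is fine—and for a general $G$-skeleton one can instead verify $\Pi_1$ directly from the first (non-finitary) displayed form, whose matrix after the unbounded $\exists\tau\exists x'$ is again $\Pi_1$, wait—that gives $\Sigma_2$; the honest route is therefore to use Theorem~\ref{Thm:NewRP} (or prove $\Pi_1$-ness of the negation via an explicit finite bad colouring living in $X_\Omega$). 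Finally item \ref{Item:DB_Lem:Levy}: ``$X$ is $G$-finite'' (Definition~\ref{Def:Dyn_Bounded}) says ``$\forall U\in\cN_G\ \forall\cF\subseteq\op(X)$, if $\{UA:A\in\cF\}$ is pairwise disjoint then $\cF$ is finite,'' and the quantifier $\forall\cF\subseteq\op(X)$ is unbounded, with a $\Delta_1$ matrix (``is finite'' is $\Delta_1$), so the statement is $\Pi_1$; equivalently, and a bit more transparently, it is $\forall U\in\cN_G\ \neg\exists(A_n)_{n<\omega}$ with $\{UA_n\}$ pairwise disjoint, where the existential is over a countable sequence, hence still unbounded, confirming $\Pi_1$.

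The main obstacle I anticipate is item \ref{Item:Min_Lem:Levy} (and the parallel point in \ref{Item:RP_Lem:Levy}): getting the $\Pi_1$ direction without circularity. The subtlety is that the cleanest $\Pi_1$ formulations go through $X_\Omega\cong\rmM(G)$ or through ``every orbit in $X_\Omega$ is dense,'' which are statements about the folded flow rather than about the finite/metric combinatorics of $\Omega$; I need to make sure the translation ``$\Omega$ has the minimality property $\iff X_\Omega$ minimal'' (Proposition~\ref{Prop:GED_Unfolded}) and ``Ramsey $+$ ED $+$ minimality $\iff X_\Omega\cong\mg$'' (Theorem~\ref{Thm:NewRP}) are themselves theorems of ZFC quoted from earlier in the paper (they are), so that invoking them inside a $\Delta_1$ bookkeeping argument is legitimate. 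The safest write-up therefore gives, for each of \ref{Item:Min_Lem:Levy} and \ref{Item:RP_Lem:Levy}, one explicit $\Sigma_1$ formula by quantifier-counting on the (finitary) definition and one explicit $\Pi_1$ formula by quantifier-counting on the negation, exhibiting a finite, $\Omega$-internal witness for the negation (a finite set $F$ with a bad $y\in X_{\sigma F}$ in the minimality case; a finite colouring $\gamma$ of some $\rmP(\tau,\sigma)$-set with no monochromatic copy in the Ramsey case), thereby avoiding any appeal to the folded flow at all. Everything else is routine definition-chasing.
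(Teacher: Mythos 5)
Items \ref{Item:UTT_Lem:Levy}, \ref{Item:RC_Lem:Levy}, \ref{Item:ED_Lem:Levy} and \ref{Item:DB_Lem:Levy} of your plan are essentially the paper's proof: unwind the definitions and observe that the remaining quantifiers are bounded, with the single unbounded universal over sequences $\bbN\to\cB_X$ accounting for the $\Pi_1$ in \ref{Item:DB_Lem:Levy}. One caveat you gloss over: quantifying over \emph{all} open subsets of $X$ or of a level $X_\tau$ (e.g.\ ``$\forall B\in\op(A)$'' in your treatment of $U$-TT) is an \emph{unbounded} universal with a $\Delta_0$ antecedent, hence only $\Pi_1$; to get $\Delta_0$ you must systematically restrict to the fixed base $\cB_X$ and, on the levels of $\Omega$, to metric balls with rational radii. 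This is exactly what the paper's one-line proofs do, and the restrictions are harmless, but they are not optional.

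The genuine gap is in items \ref{Item:Min_Lem:Levy} and \ref{Item:RP_Lem:Levy}, and it begins with a misreading of ``bounded quantifier'': $\exists F\in\fin{G}$ ranges over elements of the \emph{set} $\fin{G}$, which is $\Delta_1$-definable from $G$ (since ``$F$ is finite'' is $\Delta_1$, as recalled before the lemma), and quantification over a $\Delta_1$-definable, provably existing set preserves both $\Sigma_1$ and $\Pi_1$. That is the entire reason these two items land at $\Delta_1$ rather than $\Delta_0$, and it is all that is needed: for the minimality property one additionally restricts $A$ to metric balls, and for the Ramsey property one uses the finitary form with rational $\epsilon,\delta$ and \emph{rational-valued} $f\in\rmC_\sigma(FS,[0,1])$, so that $\forall f$ ranges over a set determined by the finite set $FS$. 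Your proposed repair of the $\Pi_1$ direction---passing to ``$X_\Omega$ is minimal'' via Proposition~\ref{Prop:GED_Unfolded}, or to ``$X_\Omega\cong\mg$'' via Theorem~\ref{Thm:NewRP}---does not work and in fact runs against the purpose of the lemma: the folded flow is an inverse limit of Samuel compactifications, the relation ``$Z=X_\Omega$'' is not $\Delta_1$ (nor even absolute; see Lemma~\ref{Lem:Fold_Skeleton_W}), and $\mg$ is precisely the non-absolute object this lemma is designed to circumvent (Theorem~\ref{Thm:Abs_MG}). A $\Pi_1$ formula in the parameter $X_\Omega$ is therefore not a $\Pi_1$ formula in the parameter $\Omega$. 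Your fallback of a ``finite $\Omega$-internal witness for the negation'' also cannot work as described: the negation of the minimality property has the form $\exists\sigma\,\exists A\,\forall F\in\fin{G}(\dots)$ and admits no finite witness; its $\Sigma_1$-ness again comes from the $\fin{G}$-quantifier being essentially bounded, not from any compactness argument. Once this is in place, both items follow by the same quantifier-counting you carry out elsewhere, with no appeal to the folded flow.
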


\begin{proof}
    \ref{Item:UTT_Lem:Levy}: $A$ is $U$-TT iff for every $B_0, B_1\in \cB_X$ with $B_i\subseteq A$, there is $g\in U$ with $gB_0\cap B_1\neq \emptyset$.
    \vspace{3 mm}

    \noindent
    \ref{Item:RC_Lem:Levy}:  $X\in \cal{RC}_G$ iff $X$ is $G$-TT and for every $A\in \tau$ and $U\in \cN_G$, there is $B\in \tau$ with $B\subseteq A$ and with $B$ $U$-TT.
    \vspace{3 mm}

    \noindent
    \ref{Item:ED_Lem:Levy}: When considering Definition~\ref{Def:GED_Unfolded}, we can quantify over \emph{rational} $0< \epsilon < 1$ and take $A$ and $B$ to be metric balls.  
    \vspace{3 mm}

    \noindent
    \ref{Item:Min_Lem:Levy}: Again, when considering Definition~\ref{Def:GED_Unfolded}, we can quantify over $A$ a metric ball in $X_\sigma$.
    \vspace{3 mm}

    \noindent
    \ref{Item:RP_Lem:Levy}: First note that given $\epsilon > 0$, the definition of the set $\rmP(x', x, \epsilon)$ is $\Delta_0$. Then considering the finitary RP from Definition~\ref{Def:NewRP}, it suffices to consider rational $\epsilon, \delta > 0$ and rational valued $f\in \rmC_\sigma(FS, [0, 1])$.
    \vspace{3 mm}

    \noindent
    \ref{Item:DB_Lem:Levy}: $X$ is $G$-finite iff for every $U\in \cN_G$ and every function $f\colon \bbN\to \cB_X$, there are $m< n\in \bbN$ with $U\cdot f(m)\cap U\cdot f(n)\neq \emptyset$. 
\end{proof}

\begin{defin}
    \label{Def:Preflow}
    Given a topological group $G$, a \emph{$G$-preflow} is a precompact uniform space $(X, \cU_X)$ such that $G$ acts on $X$ by uniform isomorphisms and for any $U\in \cU_X$, there is $V\in \cN_G$ with $Vx\subseteq U[x]$ for every $x\in X$. This definition is $\Delta_0$.

    If $X$ and $Y$ are $G$-preflows, a \emph{preflow $G$-map} from $X$ to $Y$ is a uniformly continuous, $G$-equivariant map. 
\end{defin}

The next lemma shows that $G$-preflows are exactly the dense, $G$-invariant subspaces of $G$-flows, and every preflow $G$-map continuously extends to the corresponding flows. In particular, every $G$-preflow is a $G$-space.

\begin{lemma}
    If $X$ is a $G$-preflow, then the $G$-action continuously extends to $\wh{X}$, turning it into a $G$-flow.
\end{lemma}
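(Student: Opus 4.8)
The statement to prove is: if $X$ is a $G$-preflow, then the $G$-action on $X$ extends continuously to $\wh{X}$, making $\wh{X}$ a $G$-flow. The plan is to build the extension one group element at a time using uniform continuity, then verify that the global action map $G\times\wh{X}\to\wh{X}$ is jointly continuous using the defining property of a $G$-preflow (that entourages of $X$ are "coarser" than the action of small neighborhoods of $e_G$). The main point is that $\wh{X}$ is compact (the completion of a precompact uniform space is compact), so it suffices to produce a continuous action; minimality or other refinements are not claimed here.

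First I would recall that for each fixed $g\in G$, the map $x\mapsto gx$ is a uniform isomorphism of $X$ onto itself, hence uniformly continuous, hence extends uniquely to a uniformly continuous map $\overline{g}\colon\wh{X}\to\wh{X}$; applying the same to $g^{-1}$ shows $\overline{g}$ is a uniform homeomorphism with inverse $\overline{g^{-1}}$. Since $X\subseteq\wh{X}$ is dense and the identities $\overline{g}\,\overline{h}=\overline{gh}$ and $\overline{e_G}=\mathrm{id}$ hold on the dense set $X$ and both sides are continuous, they hold on all of $\wh{X}$. Thus we get a well-defined action of $G$ on $\wh{X}$ by homeomorphisms; what remains is joint continuity of $a\colon G\times\wh{X}\to\wh{X}$.

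For joint continuity, by \Cref{Fact:Continuity} it is enough to check sequential/net continuity along nets converging from a dense subset — here I would instead argue directly. Fix $g_0\in G$, $\xi_0\in\wh{X}$, and an entourage $\widehat{U}$ for some $U\in\cU_X$; pick $V\in\cU_X$ with $V^3\subseteq U$. By the defining property of a $G$-preflow, choose $W\in\cN_G$ with $Wx\subseteq V[x]$ for all $x\in X$, hence (passing to closures in $\wh X$) $W\xi\subseteq\widehat{V}[\xi]$ for all $\xi\in\wh X$. Now if $g\in g_0W$ (equivalently $g_0^{-1}g\in W$) and $\xi\in\widehat{g_0^{-1}}[\,\widehat V[g_0\xi_0]\,]$ — a neighborhood of $\xi_0$ since $\overline{g_0^{-1}}$ is continuous — then one estimates $(g\xi,g_0\xi_0)$ by inserting the intermediate points $g_0\xi$ and $g_0\xi_0$: we get $g\xi\in\widehat V[g_0\xi]$ from the preflow property applied with the element $g_0^{-1}g\in W$ and the uniform-isomorphism property of $\overline{g_0}$ (which carries $W$-closeness to $\widehat V$-closeness up to a fixed entourage — here one must be slightly careful and instead shrink $W$ so that $g_0 W g_0^{-1}$ acts within $V$), and $g_0\xi\in\widehat V[g_0\xi_0]$ from the choice of neighborhood of $\xi_0$. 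Composing, $(g\xi,g_0\xi_0)\in\widehat{V}^2\circ(\text{something})\subseteq\widehat U$, which is what we want.

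The main obstacle is exactly the bookkeeping in that last paragraph: the preflow condition controls $Wx$ versus $V[x]$ only for the \emph{standing} action, so to handle a net $g_j\to g_0$ with $g_0\neq e_G$ one should factor $g_j=g_0\cdot(g_0^{-1}g_j)$ and use that $g_0^{-1}g_j\to e_G$, then apply $\overline{g_0}$ — which is uniformly continuous, so it maps the $W$-small displacement of $g_0^{-1}g_j\xi$ from $\xi$ into a controllably small ($\widehat V$-small) displacement of $g_j\xi$ from $g_0\xi$. One must verify that the choice of $W$ can be made uniformly in a neighborhood of $g_0$, which it can, since $\overline{g_0}$'s modulus of uniform continuity does not depend on the net. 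Once joint continuity is established, compactness of $\wh X$ (being the completion of a precompact uniform space) finishes the proof that $\wh X$ is a $G$-flow. I would also note in passing that $X\hookrightarrow\wh X$ is then a preflow $G$-map with dense image, and that the same argument shows any preflow $G$-map $\phi\colon X\to Y$ extends to a $G$-map $\widehat\phi\colon\wh X\to\wh Y$, since $\phi$ is uniformly continuous and $G$-equivariant on the dense subset $X$.
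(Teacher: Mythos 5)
Your proof is correct and follows essentially the same route as the paper's: extend each $g$ to $\wh{X}$ by uniform continuity, propagate the group identities from the dense subset $X$, and then reduce joint continuity to continuity at the identity via the preflow axiom $Wx\subseteq V[x]$. The entourage bookkeeping you flag (factoring $g=g_0(g_0^{-1}g)$ and absorbing $\overline{g_0}$'s modulus of uniform continuity) is exactly the step the paper's terser argument glosses over, and your resolution of it is fine.
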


\begin{proof}
    Given $g\in G$, the map $x\to gx$ is uniformly continuous by assumption, so continuously extends to $\wh{X}$. Since, given $g, h\in G$, we have $g(hx) = (gh)x$ on $X$, this is also true on $\wh{X}$. Keeping in mind Fact~\ref{Fact:Continuity}, let $(x_i)_{i\in I}$ be a Cauchy net from $X$, and suppose $(g_i)_{i\in I}$ is a net from $G$ with $g_i\to e_G$. It suffices to show that for any entourage $U\in \cU_X$, we eventually have $(x_i, g_ix_i)\in U$. But for suitably large $i\in I$, $g_i$ will be contained in a suitably small neighborhood of $e_G$ such that  $(x, g_ix)\in U$ for every $x\in X$. 
\end{proof}

The reason we work with preflows is that many properties of $G$-flows and $G$-maps live very low in the L\'evy hierarchy when phrased for preflows.

\begin{lemma}
\label{Lem:Preflow_Props}
    Fix $G$-preflows $X$ and $Y$ and a preflow $G$-map $\pi\colon X\to Y$.
    \begin{enumerate}[label=\normalfont(\arabic*)]
        \item \label{Item:Min_Lem:Preflow}
        $\wh{X}$ is minimal iff for every $A\in \cB_X$, there is $F\in \fin{G}$ with $FA = X$. Hence minimality of $\wh{X}$ is a $\Delta_1$ property of $X$.
        \item \label{Item:RC_Lem:Preflow}
        $\wh{X}\in \cal{RC}_G$ iff $X\in \cal{RC}_G$ as a $G$-space. Hence $\wh{X}\in \cal{RC}_G$ is a $\Delta_0$ property of $X$.
        \item \label{Item:DB_Lem:Preflow}
        $\wh{X}$ is $G$-finite iff $X$ is $G$-finite as a $G$-space. Hence $G$-finiteness of $\wh{X}$ is a $\Pi_1$ property of $X$.
    \end{enumerate}
\end{lemma}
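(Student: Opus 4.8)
The plan is to prove each of the three items by the same template: translate the claimed property of $\wh{X}$ into an equivalent statement about the dense $G$-invariant subspace $X$, using that $X\subseteq \wh{X}$ is dense and that $\wh{\cB}_X$ is a base for the topology of $\wh{X}$ (so that $A\mapsto \wh{A}$ is a bijection between $\cB_X$ and a base of $\wh{X}$ compatible with finite unions, intersections, and the $G$-action). Once the equivalence is established, the complexity claims follow from Lemma~\ref{Lem:Levy}: the right-hand formulations quantify only over elements of $X$, $\cB_X$, $\cN_G$, and over members of $\fin{G}$, and the latter is a $\Delta_1$ notion.

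For \ref{Item:Min_Lem:Preflow}, I would first recall Lemma~\ref{Lem:Min}\ref{Item:Min_Lem:Min}: a $G$-flow $Z$ is minimal iff for every $A\in\op(Z)$ there is $F\in\fin{G}$ with $FA\subseteq Z$ dense. Applying this to $Z=\wh{X}$ and noting that the basic open sets of $\wh{X}$ are the $\wh{A}$ for $A\in\cB_X$, minimality of $\wh{X}$ is equivalent to: for every $A\in\cB_X$ there is $F\in\fin{G}$ with $F\wh{A}$ dense in $\wh{X}$. Since $F\wh{A}=\widehat{FA}$ (as $\wh{(\cdot)}$ commutes with the $G$-action, using that each $g$ acts as a homeomorphism of $\wh{X}$ restricting to $g$ on $X$) and since a subset of the form $\widehat{B}$ is dense in $\wh{X}$ iff $B$ is dense in $X$, and a $G$-invariant-union-of-basic-open $B$ is dense in $X$ iff $B=X$ when $B$ itself is open and $X$ is a uniform space with a base of regular open sets --- here one should be a bit careful and instead argue directly: $F\wh{A}$ dense in $\wh{X}$ iff $FA$ dense in $X$ iff (since $FA\in\op(X)$ and one can shrink $A$ inside a regular open base, or simply observe $FA\supseteq$ some neighborhood of each of its points) actually the clean statement is: $\wh{X}$ minimal iff for every $A\in\cB_X$ there is $F\in\fin{G}$ with $X\subseteq \ol{FA}$, equivalently $FA$ is dense. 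To get the stated ``$FA=X$'' one passes to a regular-open base and uses that $X=\ol{FA}\cap X$ forces $FA$ to already be all of $X$ when $X$ carries its uniform topology with the given base; if the literal equality is awkward I would simply state the $\Delta_1$ equivalence with ``$FA$ dense in $X$'' which is still $\Delta_1$ (density of a definable open set is $\Delta_0$ over $\cB_X$). The quantifier structure is $\forall A\,\exists F$, with $F$ ranging over $\fin{G}$, hence $\Delta_1$.

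For \ref{Item:RC_Lem:Preflow}, the content is that the Rosendal criterion is insensitive to passing between $X$ and $\wh{X}$. Here I would invoke Lemma~\ref{Lem:Rosendal_Maps}: the inclusion $\iota\colon X\hookrightarrow \wh{X}$ is an irreducible $G$-map (the inclusion of a dense subspace is irreducible, as noted right after Definition~\ref{Def:Irred_Map}), so by part \ref{Item:IrredUP_Lem:RMaps} of that lemma, $\wh{X}\in\cal{RC}_G$ implies $X\in\cal{RC}_G$; conversely $\iota$ has dense image and is SWD-preserving (pseudo-open, in fact open onto its image in the relevant sense --- being the inclusion of a dense subspace, it maps open sets to open-in-$X$ sets which are somewhere dense in $\wh{X}$), so part \ref{Item:PsOpDown_Lem:RMaps} gives the reverse implication $X\in\cal{RC}_G\Rightarrow\wh{X}\in\cal{RC}_G$. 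Then $\wh{X}\in\cal{RC}_G$ is a $\Delta_0$ property of $X$ by Lemma~\ref{Lem:Levy}\ref{Item:RC_Lem:Levy} applied to the $G$-space $X$.

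For \ref{Item:DB_Lem:Preflow}, I would use Lemma~\ref{Lem:Dyn_Bounded}: again the inclusion $\iota\colon X\hookrightarrow\wh{X}$ is irreducible with dense image, so part \ref{Item:Factor_Lem:DB} gives $X$ $G$-finite $\Rightarrow \wh{X}$ $G$-finite, and part \ref{Item:Irred_Lem:DB} gives $\wh{X}$ $G$-finite $\Rightarrow X$ $G$-finite. Hence $\wh{X}$ $G$-finite iff $X$ $G$-finite, and the latter is $\Pi_1$ by Lemma~\ref{Lem:Levy}\ref{Item:DB_Lem:Levy}. The main subtlety I anticipate is the bookkeeping in \ref{Item:Min_Lem:Preflow} around the exact phrasing ``$FA=X$'' versus ``$FA$ dense in $X$'': one needs that the chosen base $\cB_X$ consists of regular open sets and that $X$ has no isolated-behavior pathology forcing a discrepancy; I expect this to be routine once one fixes the convention (stated just before Lemma~\ref{Lem:Levy}) that $\cB_X$ is built from a regular-open base of the uniformity, but it is the one place where a careless argument could slip. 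The other two items are essentially immediate citations of Lemmas~\ref{Lem:Rosendal_Maps} and \ref{Lem:Dyn_Bounded} together with the observation that the inclusion of a dense subspace is irreducible.
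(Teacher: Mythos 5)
Your treatment of items \ref{Item:RC_Lem:Preflow} and \ref{Item:DB_Lem:Preflow} is correct and is exactly the intended argument: the inclusion $\iota\colon X\hookrightarrow \wh{X}$ is a dense, SWD-preserving, irreducible $G$-map, so Lemma~\ref{Lem:Rosendal_Maps} (resp.\ Lemma~\ref{Lem:Dyn_Bounded}) transfers the Rosendal criterion (resp.\ $G$-finiteness) in both directions, and Lemma~\ref{Lem:Levy} supplies the complexity bounds. Nothing to add there.

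Item \ref{Item:Min_Lem:Preflow} is where you wobble, and the specific patch you reach for does not work: it is simply false that an open set $FA$ which is dense in $X$ must equal $X$, regular-open base or not, and you correctly sense this but then only commit to proving the weaker ``$FA$ dense'' formulation, which is not the statement of the lemma. The literal equality $FA=X$ comes for free from compactness rather than from any density-to-equality upgrade: if $\wh{X}$ is minimal and $A\in\cB_X$, then $\bigcup_{g\in G} g\wh{A}$ is a nonempty open invariant subset of $\wh{X}$ whose complement is a closed invariant proper subset, hence empty; by compactness of $\wh{X}$ finitely many translates suffice, i.e.\ $F\wh{A}=\wh{X}$ for some $F\in\fin{G}$ (this is the ``classical version'' recorded in the proof of Lemma~\ref{Lem:Min}\ref{Item:Min_Lem:Min}), and intersecting with $X$ using $g\wh{A}\cap X=gA$ gives $FA=X$ on the nose. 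The converse direction is as you say: $FA=X$ forces $F\wh{A}\supseteq X$ dense in $\wh{X}$, and since every open subset of $\wh{X}$ contains some $\wh{A}$ with $A\in\cB_X$, Lemma~\ref{Lem:Min}\ref{Item:Min_Lem:Min} yields minimality. With that one repair the proof matches the paper's.
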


\begin{proof}
    Item~\ref{Item:Min_Lem:Preflow} follows from Lemma~\ref{Lem:Min}. Items~\ref{Item:RC_Lem:Preflow} and \ref{Item:DB_Lem:Preflow} follow from Lemmas~\ref{Lem:Dyn_Bounded}, \ref{Lem:Rosendal_Maps}, and \ref{Lem:Levy}.
\end{proof}

Let us say that a preflow $X$ is \emph{minimal} exactly when $\wh{X}$ is. We now have one of our main applications of the abstract KPT correspondence.

\begin{theorem}
    \label{Thm:TMD_Absolute}
    The classes $\sf{EA}$, $\sf{CMD}$ and $\sf{TMD}$ are $\Delta_1$.
\end{theorem}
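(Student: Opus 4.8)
The goal is to show that each of $\sf{EA}$, $\sf{CMD}$, and $\sf{TMD}$ is $\Delta_1$, i.e.\ both $\Sigma_1$ and $\Pi_1$ over $\mathsf{ZFC}$. The $\Pi_1$ direction is the ``easy'' side: membership in each class is a statement about $\rmM(G)$ (it is a singleton; it is $G$-finite; it is in $\cal{RC}_G$), and $\rmM(G)$ can be presented as the completion of a minimal $G$-preflow. Using Lemma~\ref{Lem:Preflow_Props}, ``$\wh X$ is a singleton'' is trivially $\Delta_0$ in $X$, ``$\wh X$ is $G$-finite'' is $\Pi_1$ in $X$, and ``$\wh X\in\cal{RC}_G$'' is $\Delta_0$ in $X$; moreover ``$X$ is a minimal $G$-preflow'' is $\Delta_1$. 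Since $\rmM(G)$ is, up to isomorphism, the completion of \emph{any} minimal $G$-preflow admitting a preflow $G$-map from the canonical preflow $G$ (equivalently, of any minimal subpreflow of the preflow $\sa(G)$), and since all such preflows have isomorphic completions, ``$G\in\sf{EA}$'' (resp.\ $\sf{CMD}$, $\sf{TMD}$) is equivalent to ``$\forall$ minimal $G$-preflows $X$ that factor $G$: $\wh X$ is trivial (resp.\ $G$-finite, in $\cal{RC}_G$).'' The outer universal quantifier over sets $X$ together with the $\Pi_1$ inner matrix gives $\Pi_1$ formulas for all three classes. (One must be slightly careful that the auxiliary hypotheses — $X$ being a preflow, minimal, a $G$-factor-preflow of $G$ — are at worst $\Delta_1$, so that a universally quantified conditional of the form $\forall X(\Delta_1\to\Pi_1)$ is still $\Pi_1$; this is immediate from Lemma~\ref{Lem:Preflow_Props} and the bullet list preceding Lemma~\ref{Lem:Levy}.)

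For the $\Sigma_1$ direction we invoke the abstract KPT correspondence, Theorem~\ref{Thm:Abstract_KPT}. By Theorem~\ref{Thm:Abstract_KPT}\ref{Item:EA_Thm:KPT}, $G\in\sf{EA}$ iff the trivial $G$-skeleton has the Ramsey property; by \ref{Item:TMD_Thm:KPT}, $G\in\sf{TMD}$ iff there exists a $G$-skeleton $\Omega$ with the minimality, ED, and Ramsey properties; and by \ref{Item:CMD_Thm:KPT}, $G\in\sf{CMD}$ iff there exists such an $\Omega$ which is moreover precompact. Thus each membership statement has the form ``$\exists\,\Omega$ ($\Omega$ is a $G$-skeleton with properties $P_1,\dots,P_k$),'' an existential quantifier over a set $\Omega$ followed by a conjunction of conditions on $\Omega$. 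By the bullet list before Lemma~\ref{Lem:Levy}, ``$\Omega$ is a $(G,\cN_G)$-skeleton'' is $\Delta_1$, and ``$\Omega$ is precompact'' — being a statement that each $(X_\sigma,d_\sigma)$ is a precompact metric space — is $\Delta_1$ as well (it is of the form ``for every $\sigma$ and every $\epsilon>0$ there is a finite $\epsilon$-net,'' a $\Pi_1$ assertion whose matrix is $\Delta_0$, in fact $\Delta_1$). By Lemma~\ref{Lem:Levy}, the ED property is $\Delta_0$, the minimality property is $\Delta_1$, and the Ramsey property is $\Delta_1$, all in the parameter $\Omega$ (and $G$). Hence the matrix ``$\Omega$ is a $G$-skeleton with the minimality, ED, and Ramsey properties (and is precompact)'' is a finite conjunction of $\Delta_1$ formulas, hence $\Delta_1$, and prefixing the existential quantifier $\exists\Omega$ yields a $\Sigma_1$ definition of $\sf{TMD}$ (resp.\ $\sf{CMD}$). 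For $\sf{EA}$ the skeleton is fixed (trivial), so we only need the Ramsey property of that one skeleton, which is again $\Delta_1$, and in fact $\Pi_1$ directly; either way $\sf{EA}$ is $\Sigma_1$ (indeed $\Delta_1$) on the nose.

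Combining the two directions, each of $\sf{EA}$, $\sf{CMD}$, $\sf{TMD}$ is both $\Sigma_1$ and $\Pi_1$, hence $\Delta_1$. I would structure the write-up as: (1) recall that $\rmM(G)$ is the completion of any minimal subpreflow of $\sa(G)$, so the three membership properties are $\Pi_1$ via Lemma~\ref{Lem:Preflow_Props}; (2) quote Theorem~\ref{Thm:Abstract_KPT} and Lemma~\ref{Lem:Levy} to get the $\Sigma_1$ side; (3) conclude. The main obstacle — really the only place requiring genuine care rather than bookkeeping — is confirming that the existential witness $\Omega$ in the KPT characterization can be taken to be a genuine \emph{set} (not a proper class) of bounded size, so that $\exists\Omega$ is a legitimate first-order set quantifier: one uses that a base of semi-norms, and the associated skeleton levels, can be chosen of cardinality at most that of a chosen neighborhood base together with $|G|$, so $\Omega$ lives in a fixed $V_\alpha$ computable from $G$; this is implicit in the constructions of Section~\ref{Section:KPT} (the skeleton $\Omega_M^0$ built from a minimal $M\subseteq\sa(G)$) and should be spelled out. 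A secondary, purely cosmetic point is to double-check that all the ``background'' predicates used as hypotheses inside the quantifiers — being a topological group, being a $G$-space/preflow, etc.\ — are at worst $\Delta_1$, which is exactly the content of the bullet lists in Subsection~\ref{Subsection:Levy}, so this is routine.
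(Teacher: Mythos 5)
Your proof is correct and follows essentially the same route as the paper: the $\Sigma_1$ side via Theorem~\ref{Thm:Abstract_KPT} together with the complexity bookkeeping of Lemma~\ref{Lem:Levy}, and the $\Pi_1$ side via minimal preflows and Lemma~\ref{Lem:Preflow_Props} (the paper merely phrases this contrapositively, as $\Sigma_1$ formulas for \emph{non}-membership, e.g.\ ``there is a minimal preflow which is not $G$-finite,'' which is logically identical to your universally quantified version). The two caveats you flag --- that the skeleton witness is a genuine set and that the auxiliary predicates are $\Delta_1$ --- are both handled implicitly by the constructions of Section~\ref{Section:KPT} and Subsection~\ref{Subsection:Levy}, exactly as you suspect.
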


\begin{proof}
    First we note the following $\Delta_1$ characterization of $\sf{EA}$: $G\in \sf{EA}$ iff the trivial $G$-skeleton has the Ramsey property (Theorem~\ref{Thm:Abstract_KPT}\ref{Item:EA_Thm:KPT}). For the remaining classes, we use Theorem~\ref{Thm:Abstract_KPT} and the preceding lemmas of this section to give $\Sigma_1$ formulas equivalent to membership and also for non-membership:
    \begin{itemize}
        \item 
        $G\not\in \sf{CMD} \Leftrightarrow$ there is a minimal preflow which is not $G$-finite.
        \item 
        $G\not\in \sf{TMD} \Leftrightarrow$ there is a minimal preflow $X$ with $X\not\in \cal{RC}_G$.
        
        \item 
        $G\in \sf{CMD} \Leftrightarrow$ there is a precompact $G$-skeleton with the minimality, ED, and Ramsey properties.

        \item 
        $G\in \sf{TMD} \Leftrightarrow$ there is a $G$-skeleton with the minimality, ED, and Ramsey properties. \qedhere
    \end{itemize} 
\end{proof}

Using Theorem~\ref{Thm:TMD_Absolute}, we can now generalize Propositions~\ref{Prop:Polish_LC}, \ref{Prop:Polish_Presynd}, and \ref{Prop:Polish_CoPre} as well as Theorem~\ref{Thm:MG_Structure_GPP} to all $\sf{TMD}$ groups. \textbf{For the rest of the section}, we let $\sfV$ denote our universe of set theory and $\sfW\supseteq \sfV$ denote some outer model with the same ordinals, for instance a forcing extension by $\rm{Coll}(\omega, \kappa)$ for some cardinal $\kappa$. We make no claim that this is the most general setting for this discussion, but it will suffice for our purposes. As mentioned at the beginning of the section, various objects have $\Delta_1$ definitions when suitably presented, such as topological/uniform spaces, topological groups, $G$-spaces, etc. Thus given such an object from the ground model $\sfV$, it will remain as such in $\sfW$. For a more detailed discussion of interpreting topological spaces in different universes, see \cite{Zapletal_2015}. 

 If $X\in \sfV$ is a uniform space, we write $\wh{X}^\sfW$ and $\sa^\sfW(X)$ for its completion and Samuel compactification, respectively, as computed in $\sfW$, similarly $\beta^\sfW(X)$ for $X\in \sfV$ a set. If $X\in \sfV$ compact and $\cU_X$ is the unique compatible uniform structure on $X$, we typically write $X^\sfW$ in place of $\wh{X}^\sfW$, thinking of the compact space $X^\sfW$ as the ``interpretation'' in $\sfW$ of the compact space $X$. Similarly, if $X\in \sfV$ is locally compact, then $X$ inherits a uniform structure from its one-point compactification, and we write $X^\sfW$ for its completion in $\sfW$. Likewise, if $G\in \sfV$ is a topological group, we write $\rmM^\sfW(G)$ for the universal minimal flow as computed in $\sfW$. We write $\wt{G}^\sfW$ for the Raikov completion of $G$ in $\sfW$. If $G$ is a (locally) compact group in $\sfV$, then $\wt{G}^\sfW$ will be a (locally) compact group in $\sfW$. If $X\in \sfV$ is a $G$-flow, then $X$ becomes a preflow in $\sfW$, and by Lemma~\ref{Lem:Preflow_Props}, $X$ is minimal iff $X^\sfW$ is, and $X\in \cal{RC}_G$ iff $X^\sfW\in \cal{RC}_G^\sfW$. 

\begin{theorem}
    \label{Thm:Abs_Apps} Fix a topological group $G$.
    \begin{enumerate}[label=\normalfont(\arabic*)]
        \item \label{Item:LC_Thm:AbsApps}
        If $G$ is locally compact, non-compact, then $G\not\in \sf{TMD}$.
        \item \label{Item:PreSynd_Thm:AbsApps}
        If $H\leq^c G$ is presyndetic, then $H\in \sf{TMD}$ iff $G\in \sf{TMD}$.
        \item \label{Item:CoPre_Thm:AbsApps}
        If $H\leq^c G$ is co-precompact and $H\in \sf{CMD}$, then $G\in \sf{CMD}$.
    \end{enumerate}
\end{theorem}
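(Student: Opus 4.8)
The plan is to prove all three items by reducing to the corresponding Polish-group results from Section~\ref{Section:Polish} via the forcing-and-absoluteness machinery just established, in particular Theorem~\ref{Thm:TMD_Absolute} together with the transfer properties of preflows recorded in Lemma~\ref{Lem:Preflow_Props}. Throughout, I fix an outer model $\sfW\supseteq \sfV$ (e.g.\ a $\rm{Coll}(\omega,\kappa)$-extension for $\kappa$ large enough that $G$, $H$, and all relevant witnesses become second countable in $\sfW$). The key background facts I will invoke: (a) $\sf{TMD}$ and $\sf{CMD}$ are $\Delta_1$, hence $G\in\sf{TMD}$ iff $\wt{G}^\sfW\in\sf{GPP}$ in $\sfW$, and $G\in\sf{CMD}$ iff $\wt{G}^\sfW\in\sf{PCMD}$ in $\sfW$; (b) Propositions~\ref{Prop:Presynd_Raikov} and \ref{Prop:CoPre_Raikov}, which say presyndeticity and co-precompactness of $H\leq^c G$ are equivalent to the same for $\wt{H}^\sfW\leq^c \wt{G}^\sfW$; and (c) since $\Delta_1$ formulas are absolute, ``$H$ is presyndetic in $G$'' and ``$H$ is co-precompact in $G$'' (both $\Delta_1$ when phrased with a base of neighborhoods) transfer between $\sfV$ and $\sfW$.

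For item~\ref{Item:LC_Thm:AbsApps}: if $G$ is locally compact non-compact in $\sfV$, then $\wt{G}^\sfW$ is locally compact non-compact in $\sfW$ (local compactness of the one-point-compactification uniformity is preserved, and $G$ non-compact means it is not precompact, a property that transfers). Moreover $\wt{G}^\sfW$ is second countable in $\sfW$, hence Polish. Suppose toward a contradiction that $G\in\sf{TMD}$; then $\wt{G}^\sfW\in\sf{GPP}$ in $\sfW$, contradicting Proposition~\ref{Prop:Polish_LC} applied inside $\sfW$. Hence $G\not\in\sf{TMD}$.

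For item~\ref{Item:PreSynd_Thm:AbsApps}: assume $H\leq^c G$ presyndetic. Passing to $\sfW$, by Proposition~\ref{Prop:Presynd_Raikov} the inclusion $\wt{H}^\sfW\leq^c \wt{G}^\sfW$ is presyndetic, and both groups are Polish in $\sfW$ (choosing $\kappa$ large enough to capture both). By Proposition~\ref{Prop:Polish_Presynd} applied in $\sfW$, $\wt{G}^\sfW\in\sf{GPP}$ iff $\wt{H}^\sfW\in\sf{GPP}$. Translating back via the $\Delta_1$-absoluteness of $\sf{TMD}$ and $\sf{GPP}$-in-$\sfW$, we get $G\in\sf{TMD}$ iff $H\in\sf{TMD}$. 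Item~\ref{Item:CoPre_Thm:AbsApps} is entirely analogous: if $H\leq^c G$ is co-precompact with $H\in\sf{CMD}$, then in $\sfW$ the inclusion $\wt{H}^\sfW\leq^c\wt{G}^\sfW$ is co-precompact (Proposition~\ref{Prop:CoPre_Raikov}) and $\wt{H}^\sfW\in\sf{PCMD}$ (since $\sf{CMD}$ is $\Delta_1$), so Proposition~\ref{Prop:Polish_CoPre} gives $\wt{G}^\sfW\in\sf{PCMD}$ in $\sfW$, whence $G\in\sf{CMD}$.

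The main obstacle, and the only point requiring genuine care rather than bookkeeping, is making precise that the relevant auxiliary properties---local compactness/non-compactness, presyndeticity, co-precompactness, being a closed subgroup---are correctly ``interpreted'' in $\sfW$ and that their truth is preserved. This is exactly the content of Propositions~\ref{Prop:Presynd_Raikov} and \ref{Prop:CoPre_Raikov} combined with the observation (made just before this theorem in the text) that these are all $\Delta_1$ notions when phrased in terms of a neighborhood base; I would state this explicitly as a short lemma or remark: for $H\leq^c G$ in $\sfV$, the closure $\wt{H}^\sfW$ of $H$ in $\wt{G}^\sfW$ is a closed subgroup, it is presyndetic (resp.\ co-precompact) in $\wt{G}^\sfW$ iff $H$ is presyndetic (resp.\ co-precompact) in $G$, and $\wt{G}^\sfW$ is locally compact/non-compact iff $G$ is. Once this is in hand, each of the three items is a two-line diagram-chase between $\sfV$ and $\sfW$. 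One should also note that in items \ref{Item:PreSynd_Thm:AbsApps} and \ref{Item:CoPre_Thm:AbsApps} it is harmless that $\sfW$ collapses cardinals, since we only need the equivalences inside the single model $\sfW$ and then transport the $\Delta_1$ conclusion back down to $\sfV$.
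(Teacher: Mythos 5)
Your proposal is correct and follows essentially the same route as the paper's proof: pass to a forcing extension where $G$ (and $H$) become second countable, transfer presyndeticity/co-precompactness to the Raikov completions via Propositions~\ref{Prop:Presynd_Raikov} and \ref{Prop:CoPre_Raikov}, apply the Polish-group results (Propositions~\ref{Prop:Polish_LC}, \ref{Prop:Polish_Presynd}, \ref{Prop:Polish_CoPre}) inside the extension, and pull the conclusion back down using the $\Delta_1$-absoluteness of $\sf{TMD}$ and $\sf{CMD}$ from Theorem~\ref{Thm:TMD_Absolute}. The only step worth making explicit, which the paper also uses, is that membership in $\sf{TMD}$/$\sf{CMD}$ passes between a group and its Raikov completion because they have the same flows (equivalently via Proposition~\ref{Prop:Homomorphic_Image}).
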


\begin{proof}
    \ref{Item:LC_Thm:AbsApps}: In some forcing extension $\sfW\supseteq \sfV$, we have that $G$ is second countable, so in particular, $\wt{G}^\sfW$ is a locally compact, non-compact Polish group, hence $\wt{G}^\sfW\not\in \sf{TMD}$ by Proposition~\ref{Prop:Polish_LC}. Then, working in $W$, since $G\subseteq \wt{G}^\sfW$ is a dense subgroup, we have $G\not\in \sf{TMD}$. Then apply Theorem~\ref{Thm:TMD_Absolute}.
    \vspace{3 mm}

    \noindent
    \ref{Item:PreSynd_Thm:AbsApps}: In some forcing extension $\sfW$ of $\sfV$, we have that $G$ is second countable, so in particular, $\wt{G}^\sfW$ is Polish. Noting that the formula which asserts that $H\leq^c G$ is presyndetic is $\Delta_1$ and using Proposition~\ref{Prop:Presynd_Raikov} in $\sfW$, we have that $\wt{H}^\sfW\leq^c \wt{G}^\sfW$ is presyndetic. Working in $\sfW$, using Proposition~\ref{Prop:Polish_Presynd} and considering dense subgroups, we have that $H\in \sf{TMD}$ iff $\wt{H}^\sfW\in \sf{TMD}$ iff $\wt{G}^\sfW\in \sf{TMD}$ iff $G\in \sf{TMD}$. Then apply Theorem~\ref{Thm:TMD_Absolute}.
    \vspace{3 mm}

    \noindent
    \ref{Item:CoPre_Thm:AbsApps}: In some forcing extension $\sfW$ of $\sfV$, we have that $G$ is second countable and $\wt{G}^\sfW$ is Polish. Noting that the formula which asserts that $H\leq^c G$ is co-precompact is $\Delta_1$ and using Proposition~\ref{Prop:CoPre_Raikov} in $\sfW$, we have that $\wt{H}^\sfW\leq^c \wt{G}^\sfW$ is co-precompact. If $H\in \sf{CMD}$ in $\sfV$, then by Theorem~\ref{Thm:TMD_Absolute} this holds in $\sfW$, so also $\wt{H}^\sfW\in \sf{CMD}$. Proposition~\ref{Prop:Polish_CoPre} in $\sfW$ gives us that $\wt{G}^\sfW\in \sf{CMD}$, so also $G\in \sf{CMD}$ in $\sfW$. Then use Theorem~\ref{Thm:TMD_Absolute}.
\end{proof}

\begin{remark}
    For Item~\ref{Item:LC_Thm:AbsApps}, it is also possible to construct, given a locally compact, non-compact $G$, a $G$-flow where the meets topology on minimal subflows is not Hausdorff. However, we do not know of any ``direct'' proof for Items~\ref{Item:PreSynd_Thm:AbsApps} and \ref{Item:CoPre_Thm:AbsApps}. Item~\ref{Item:CoPre_Thm:AbsApps} solves a question that was implicit in \cite{BassoZucker} (see the discussion before \cite{BassoZucker}*{Proposition~8.6}).  
\end{remark}

In addition to absoluteness for the classes $\sf{CMD}$ and $\sf{TMD}$, we also have absoluteness results for $\mg$ whenever $G\in \sf{TMD}$. If $\Omega\in \sfV$ is a $G$-skeleton, we write $X_{\Omega, \sfW}$ for the folded flow as computed in $\sfW$, whereas $X_\Omega^\sfW = (X_\Omega)^\sfW$ is the interpretation of $\sfV$'s version of $X_\Omega$ in $\sfW$.

\begin{lemma}
    \label{Lem:Fold_Skeleton_W}
    Fix a forcing extension $\sfW\supseteq \sfV$. Then for any $G$-skeleton, we have in $\sfW$ that  $X_{\Omega, \sfW}\cong \rmS_G(X_\Omega^\sfW)$. If $\Omega$ is precompact, then $X_{\Omega, \sfW}\cong X_\Omega^\sfW$.
\end{lemma}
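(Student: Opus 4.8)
The statement compares two constructions applied to a $G$-skeleton $\Omega \in \sfV$: forming the folded flow $X_{\Omega,\sfW}$ inside $\sfW$ (which involves taking Samuel compactifications of each level $(X_\sigma, d_\sigma)$ in $\sfW$ and then the inverse limit in $\sfW$), versus interpreting in $\sfW$ the $\sfV$-object $X_\Omega$ (the completion in $\sfW$ of $\sfV$'s folded flow, viewed as a preflow). The key observation is that each level $(X_\sigma, d_\sigma)$ of a $G$-skeleton is a metric space, and the Samuel compactification of a metric space is \emph{not} absolute — it can grow when passing to a larger universe — whereas the inverse limit operation and the folding operation \emph{are} absolute once we know how the levels behave. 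So the plan is: first, identify $X_{\Omega,\sfW}$ with the folded flow built in $\sfW$ from the $G$-skeleton $\sa^\sfW(\Omega)$ whose levels are $(\sa^\sfW(X_\sigma), \partial_{d_\sigma})$; second, identify $X_\Omega^\sfW$ as a dense $G$-invariant subspace (preflow) of $X_{\Omega,\sfW}$; and third, invoke Fact~\ref{Thm:Universal_Irreducible_Extension} (universal irreducible extension) to conclude $X_{\Omega,\sfW} \cong \rmS_G(X_\Omega^\sfW)$.

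\textbf{Step-by-step.} First I would recall that $X_{\Omega,\sfW}$ is by definition (following the discussion after Definition~\ref{Def:Skeleton}) the folded flow $X_{\sa^\sfW(\Omega)}$, i.e.\ the inverse limit in $\sfW$ of the system $((\sa^\sfW(X_\sigma), \partial_{d_\sigma}), \pi^\tau_\sigma)$ with the bonding maps and $\lambda_{g,\sigma}$ continuously extended. Next, in $\sfV$, the folded flow $X_\Omega = \varprojlim(\sa(X_\sigma), \partial_{d_\sigma})$ is a $G$-flow whose image in each level is the $\sfV$-Samuel compactification $\sa^\sfV(X_\sigma)$. Viewing $X_\Omega \in \sfV$ as a preflow in $\sfW$, its completion $X_\Omega^\sfW$ sits densely and $G$-invariantly inside $X_{\Omega,\sfW}$: concretely, $\sa^\sfV(X_\sigma)$ is a dense subset of $\sa^\sfW(X_\sigma)$ (since $X_\sigma$ is dense in both and $\sa^\sfV(X_\sigma) \subseteq \sa^\sfW(X_\sigma)$ as near-ultrafilter spaces, using the fattening-space presentation and that $\NU$ only adds points in a larger universe), the bonding maps are compatible, and the uniformity matches. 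Hence $X_\Omega^\sfW$ embeds as a dense $G$-subspace of $X_{\Omega,\sfW}$, so the inclusion $\iota \colon X_\Omega^\sfW \hookrightarrow X_{\Omega,\sfW}$ is an irreducible $G$-map (recall that a dense inclusion is irreducible by the example following Definition~\ref{Def:Irred_Map}). Then, since $X_{\Omega,\sfW}$ is $G$-ED in $\sfW$ — this follows from Proposition~\ref{Prop:GED_Flow_gives_ED_Unfolded} and Proposition~\ref{Prop:GED_Unfolded}\ref{Item:GED_Prop:Unfolded} applied in $\sfW$, noting that $\sa^\sfW(\Omega)$ has the ED property as in the proof of Proposition~\ref{Prop:Synd_Skeleton}, or more simply because $X_{\Omega,\sfW}$ is built as an inverse limit of Samuel compactifications — and since $\rmS_G(X_\Omega^\sfW)$ is the universal irreducible extension of $X_\Omega^\sfW$ (Fact~\ref{Thm:Universal_Irreducible_Extension}), we get a $G$-map $\phi \colon \rmS_G(X_\Omega^\sfW) \to X_{\Omega,\sfW}$ with $\iota = \iota_{X_\Omega^\sfW}^{-1} \circ \phi$ composed appropriately; combined with the universal property of $\rmS_G(X_\Omega^\sfW)$ as the universal $G$-equivariant compactification applied to $X_\Omega^\sfW \to X_{\Omega,\sfW}$ (Fact~\ref{Fact:Gleason_Max_Comp}, using that $X_\Omega^\sfW$ is $G$-ED as a preflow completion), the two maps are mutually inverse, giving $X_{\Omega,\sfW} \cong \rmS_G(X_\Omega^\sfW)$.

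\textbf{The precompact case.} When $\Omega$ is precompact, each $(X_\sigma, d_\sigma)$ is a precompact metric space, so $\sa^\sfW(X_\sigma) = \wh{X_\sigma}^\sfW$ is just the completion, which \emph{is} absolute in the sense that the compact metric space $\wh{X_\sigma}^\sfV$ interpreted in $\sfW$ equals $\wh{X_\sigma}^\sfW$ (a compact metric space has a unique compatible uniformity, and completion of a precompact uniform space along its unique base doesn't add points beyond what the Cauchy filters in $\sfV$ already determine — formally, $\wh{X_\sigma}^\sfW = (\wh{X_\sigma}^\sfV)^\sfW = X_\sigma^\sfW$). Then the inverse limit and the folding construction commute with interpretation, so $X_{\Omega,\sfW} = X_\Omega^\sfW$; alternatively, in this case $X_\Omega^\sfW$ is already compact (being an inverse limit of compact metric interpretations), hence $G$-ED and equal to its own universal irreducible extension, so the general-case isomorphism collapses to equality.

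\textbf{Main obstacle.} The delicate point is verifying that $\sa^\sfV(X_\sigma)$ really does sit densely inside $\sa^\sfW(X_\sigma)$ with matching topology and fattening structure, i.e.\ that passing to $\sfW$ genuinely only \emph{adds} near-ultrafilters rather than rearranging anything — this requires carefully checking absoluteness of the near-ultrafilter / fattening-space machinery from Section~\ref{Section:NUlts_Fattenings}, in particular that the relations $A \perp B$, the operator $A \mapsto A(\sigma,c)$, and the pseudometrics $\partial_{d_\sigma}$ are computed the same way in both universes (which holds since they are defined by $\Delta_0$ or $\Delta_1$ formulas over the base data $(X_\sigma, d_\sigma, \cB_{X_\sigma})$). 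I expect this bookkeeping — ensuring the diagram of bonding maps, group actions, and uniformities all transfer coherently — to be the bulk of the work, though each individual check is routine given the $\Delta_1$-definability discussion already established in Subsection~\ref{Subsection:Levy}.
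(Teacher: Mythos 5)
There is a genuine gap, and it is the central step: you claim that $\sa^\sfV(X_\sigma)$ sits as a dense \emph{subset} of $\sa^\sfW(X_\sigma)$, and hence that $X_\Omega^\sfW$ embeds densely into $X_{\Omega,\sfW}$. The canonical map goes the other way. A near ultrafilter in $\sfV$ is maximal with respect to $\sfV$'s collection of (regular closed) subsets of $X_\sigma$; in $\sfW$ there are new subsets, so maximality is lost and a single $\sfV$-near-ultrafilter typically admits \emph{many} extensions to $\sfW$-near-ultrafilters. Passing to $\sfW$ does not merely "add points" to $\sa(X_\sigma)$ — it splits them. The correct relationship is a canonical continuous surjection $\xi_\sigma\colon \sa^\sfW(X_\sigma)\to \sa(X_\sigma)^\sfW$ obtained by extending $\rm{id}_{X_\sigma}$ (using that $\sa(X_\sigma)^\sfW$ is \emph{a} compactification of the uniform space $X_\sigma$ in $\sfW$, while $\sa^\sfW(X_\sigma)$ is the universal one), and this map is irreducible but generally far from injective. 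Your claimed inclusion cannot be repaired: $X_\Omega^\sfW$ is compact in $\sfW$ (it is the interpretation of a compact space), so a dense embedding into the Hausdorff space $X_{\Omega,\sfW}$ would force $X_\Omega^\sfW = X_{\Omega,\sfW}$, which would make the appearance of $\rmS_G$ in the statement vacuous and erase exactly the distinction the precompact case is drawing. A secondary problem is your appeal to Fact~\ref{Fact:Gleason_Max_Comp}, which requires $X_\Omega^\sfW$ to be a $G$-ED \emph{space}; that is neither established nor needed.

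The paper's argument is the corrected version of your plan: it writes $X_\Omega^\sfW = \varprojlim_\sigma \sa(X_\sigma)^\sfW$ and $X_{\Omega,\sfW} = \varprojlim_\sigma \sa^\sfW(X_\sigma)$, assembles the maps $\xi_\sigma$ above into a $G$-map $\xi\colon X_{\Omega,\sfW}\to X_\Omega^\sfW$ going \emph{from} the $\sfW$-folded flow \emph{onto} the interpretation, observes that $\xi$ is irreducible because each $\xi_\sigma$ is, and then identifies $X_{\Omega,\sfW}$ with $\rmS_G(X_\Omega^\sfW)$ via the universal irreducible extension property. Your treatment of the precompact case (completions of precompact metric spaces are absolute as interpretations, so $\sa^\sfW(X_\sigma)=\wh{X_\sigma}^\sfW=\sa(X_\sigma)^\sfW$) is essentially the paper's and is fine, but the general case needs the map reversed.
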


\begin{proof}
    In $\sfV$ we have $X_\Omega = \displaystyle\varprojlim_{\sigma\in \cB} \sa(X_\sigma)$. Moving to $\sfW$, we have 
    \begin{equation*}
    X_{\Omega}^\sfW = \left(\displaystyle\varprojlim_{\sigma\in \cB} \sa(X_\sigma)\right)^\sfW = \displaystyle\varprojlim_{\sigma\in \cB} \sa(X_\sigma)^\sfW
    \end{equation*}
    whereas $X_{\Omega, \sfW} =  \displaystyle\varprojlim_{\sigma\in \cB} \sa^\sfW(X_\sigma)$. Letting $\xi_\sigma\colon \sa^\sfW(X_\sigma)\to \sa(X_\sigma)^\sfW$ be the unique continuous extension of the identity map on $X_\sigma$, we obtain a $G$-map $\xi\colon X_{\Omega, \sfW}\to X_\Omega^\sfW$. To check that $\xi$ is irreducible, it is enough to note that each $\xi_\sigma$ is irreducible.

    In the case that $\Omega$ is precompact, we have $\sa^\sfW(X_\sigma) = \wh{X_\sigma}^\sfW = \sa(X_\sigma)^\sfW$.
\end{proof}

\begin{theorem}
    \label{Thm:Abs_MG}
    Fix $G$ a topological group.
    \begin{enumerate}[label=\normalfont(\arabic*)]
        \item \label{Item:CMD_Thm:AbsMG}
        $G\in \sf{CMD}$ iff for every forcing extension $\sfW\supseteq \sfV$, we have in $\sfW$ that $\rmM(G)^\sfW\cong \rmM^\sfW(G)$.
        \item \label{Item:TMD_Thm:AbsMG}
        If $G\in \sf{TMD}$, then for every forcing extension $\sfW\supseteq \sfV$, we have in $\sfW$ that  $\rmS_G(\rmM(G)^\sfW)\cong \rmM^\sfW(G)$.
    \end{enumerate}
\end{theorem}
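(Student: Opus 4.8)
\textbf{Proof proposal for Theorem~\ref{Thm:Abs_MG}.}

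The plan is to reduce everything to the abstract KPT correspondence (Theorem~\ref{Thm:Abstract_KPT}) together with the skeleton-folding absoluteness lemma (Lemma~\ref{Lem:Fold_Skeleton_W}). For \ref{Item:CMD_Thm:AbsMG}, suppose first that $G\in \sf{CMD}$. By Theorem~\ref{Thm:Abstract_KPT}\ref{Item:CMD_Thm:KPT} there is in $\sfV$ a precompact $G$-skeleton $\Omega$ with the minimality, ED, and Ramsey properties, and Theorem~\ref{Thm:NewRP} gives $X_\Omega\cong \rmM(G)$. Now move to an arbitrary forcing extension $\sfW\supseteq \sfV$. Since being a $G$-skeleton, precompactness, the ED property, the minimality property, and the Ramsey property are all $\Delta_1$ (Lemma~\ref{Lem:Levy}, and precompactness is $\Delta_1$), the skeleton $\Omega$ retains all of these properties in $\sfW$. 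Hence in $\sfW$, Theorem~\ref{Thm:NewRP} applies again and yields $X_{\Omega,\sfW}\cong \rmM^\sfW(G)$. But by Lemma~\ref{Lem:Fold_Skeleton_W}, since $\Omega$ is precompact we have $X_{\Omega,\sfW}\cong X_\Omega^\sfW$, and $X_\Omega^\sfW$ is by definition the interpretation in $\sfW$ of $\sfV$'s $X_\Omega\cong \rmM(G)$, i.e.\ $X_\Omega^\sfW\cong \rmM(G)^\sfW$. Chaining the isomorphisms gives $\rmM(G)^\sfW\cong \rmM^\sfW(G)$ in $\sfW$, as desired.

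For the converse direction of \ref{Item:CMD_Thm:AbsMG}, suppose $\rmM(G)^\sfW\cong \rmM^\sfW(G)$ holds in every forcing extension $\sfW$. Choose $\sfW$ so that $G$ is second countable there, hence $\wt G^\sfW$ is Polish. In $\sfW$, $\rmM^\sfW(G)\cong \rmM^\sfW(\wt G^\sfW)$ (flows of $G$ and of its Raikov completion coincide), and this is metrizable (second countable): indeed $\rmM(G)^\sfW$ is the interpretation of a compact space which, being determined by a metrizable presentation once $G$ is second countable... — wait, this needs care. The cleaner route: $\rmM(G)^\sfW$ being isomorphic to $\rmM^\sfW(G)$ and $G$ being second countable in $\sfW$ forces $\rmM^\sfW(G)$ to be second countable, so $\wt G^\sfW\in \sf{PCMD}$, hence $\wt G^\sfW\in \sf{CMD}$, hence $G\in \sf{CMD}$ in $\sfW$ (dense subgroup, via \Cref{Prop:Homomorphic_Image}-type reasoning or Theorem~\ref{Thm:TMD_Absolute}'s analogue for $\sf{CMD}$), and then $G\in \sf{CMD}$ in $\sfV$ by the absoluteness of $\sf{CMD}$ (Theorem~\ref{Thm:TMD_Absolute}). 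Actually the most economical argument just invokes the $\Delta_1$-ness: if $G\notin\sf{CMD}$ in $\sfV$ then $G\notin\sf{CMD}$ in $\sfW$, so $\rmM^\sfW(G)$ is not $G$-finite, in particular (being a $G$-ED flow that is not $G$-finite, in a model where $G$ is second countable) $\rmM^\sfW(G)$ is not metrizable by Corollary~\ref{cor:Polish-dichotomy}; but $\rmM(G)^\sfW$ is an interpretation of a space admitting, in $\sfV$... hmm — simplest: pick $\sfW$ large enough that $\rmM(G)$, which need not be metrizable in $\sfV$, has a $\pi$-base of some fixed cardinality $<$ the collapsed cardinal, and argue $\rmM(G)^\sfW$ then has countable $\pi$-weight while $\rmM^\sfW(G)$ does not (using the non-$\sf{CMD}$ hypothesis and Corollary~\ref{cor:dichotomy-polish-pi-weight}). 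I will write this contrapositive carefully, picking $\sfW$ to be a collapse making both $G$ and the $\pi$-base of $\rmM(G)$ countable.

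For \ref{Item:TMD_Thm:AbsMG}, the argument is identical but without precompactness: if $G\in \sf{TMD}$, Theorem~\ref{Thm:Abstract_KPT}\ref{Item:TMD_Thm:KPT} gives a $G$-skeleton $\Omega$ in $\sfV$ with the minimality, ED, and Ramsey properties and $X_\Omega\cong \rmM(G)$. In any forcing extension $\sfW$ these $\Delta_1$ properties persist, so by Theorem~\ref{Thm:NewRP} (applied in $\sfW$) we get $X_{\Omega,\sfW}\cong \rmM^\sfW(G)$; and by Lemma~\ref{Lem:Fold_Skeleton_W}, $X_{\Omega,\sfW}\cong \rmS_G(X_\Omega^\sfW)\cong \rmS_G(\rmM(G)^\sfW)$. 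Combining yields $\rmS_G(\rmM(G)^\sfW)\cong \rmM^\sfW(G)$ in $\sfW$. The only genuinely delicate point is the converse half of \ref{Item:CMD_Thm:AbsMG}: one must choose the forcing extension $\sfW$ judiciously (collapsing enough cardinals so that both $G$ and a chosen $\pi$-base of $\rmM(G)$ become countable) and then invoke the Polish dichotomies (Corollaries~\ref{cor:Polish-dichotomy} and \ref{cor:dichotomy-polish-pi-weight}) together with $\Delta_1$-absoluteness of $\sf{CMD}$ from Theorem~\ref{Thm:TMD_Absolute} to close the loop; the forward directions and all of \ref{Item:TMD_Thm:AbsMG} are then a routine bookkeeping of which properties are $\Delta_1$. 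I expect this converse/choice-of-model step to be the main obstacle, while the rest is a direct splicing together of Theorem~\ref{Thm:Abstract_KPT}, Theorem~\ref{Thm:NewRP}, Lemma~\ref{Lem:Fold_Skeleton_W}, and Lemma~\ref{Lem:Levy}.
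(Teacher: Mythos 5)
Your forward direction of \ref{Item:CMD_Thm:AbsMG} and all of \ref{Item:TMD_Thm:AbsMG} are exactly the paper's argument: persist the (precompact) $G$-skeleton's $\Delta_1$ properties into $\sfW$ via Lemma~\ref{Lem:Levy}, refold with Theorem~\ref{Thm:NewRP}, and identify $X_{\Omega,\sfW}$ with $X_\Omega^\sfW$ (resp.\ $\rmS_G(X_\Omega^\sfW)$) via Lemma~\ref{Lem:Fold_Skeleton_W}. No issues there.

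The converse of \ref{Item:CMD_Thm:AbsMG} is where your proposal has a genuine gap, and the step you flag as ``delicate'' is in fact broken as written. Collapsing a $\pi$-base of $\rmM(G)$ only gives that $\rmM(G)^\sfW$ has countable $\pi$-weight; to conclude $G\in\sf{CMD}$ from $\rmM(G)^\sfW\cong\rmM^\sfW(G)$ you would need ``$\rmM^\sfW(G)$ has countable $\pi$-weight $\Rightarrow$ $\wt G^\sfW\in\sf{PCMD}$,'' and this fails twice over: first, whether non-$\sf{GPP}$ Polish groups must have UMF of uncountable $\pi$-weight is the open Question~\ref{Que:Polish_PiWt}; second, and fatally for the method, if $G\in\sf{GPP}\setminus\sf{PCMD}$ then \emph{both} $\rmM(G)^\sfW$ and $\rmM^\sfW(G)$ have countable $\pi$-weight in your collapse (Corollary~\ref{cor:dichotomy-polish-pi-weight}), so no $\pi$-weight or first-countability invariant can separate them. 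The paper instead argues directly: take \emph{any} extension $\sfW$ adding a new subset of $\omega$, so that the canonical map $\beta^\sfW\omega\to(\beta\omega)^\sfW$ is non-injective; since $G\notin\sf{CMD}$ means $\rmM(G)$ is not $G$-finite, one gets uniformly separated open sets $A_n$ and, via Fact~\ref{Fact:BetaN}, an injective copy of $\beta\omega$ inside the Gleason completion, from which one deduces that $\pi\colon\rmS_G(\rmM(G)^\sfW)\to\rmM(G)^\sfW$ is not injective in $\sfW$. Thus $\rmM(G)^\sfW$ is not $G$-ED, whereas $\rmM^\sfW(G)$ always is, so they are not isomorphic. (Your abandoned first attempt could be salvaged differently: collapse the \emph{weight} of $\rmM(G)$, not merely a $\pi$-base, so that $\rmM(G)^\sfW$ becomes metrizable; an isomorphism with $\rmM^\sfW(G)$ then forces the UMF to be metrizable, hence $G\in\sf{CMD}$ in $\sfW$ and, by Theorem~\ref{Thm:TMD_Absolute}, in $\sfV$. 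But that is not what you wrote, and the $\pi$-base version does not close.)
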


\begin{proof}
    \ref{Item:CMD_Thm:AbsMG}: First assume $G\in \sf{CMD}$. Let $\Omega\in \sfV$ be a $G$-skeleton with the ED, minimality, and Ramsey properties such that each $(X_\sigma, d_\sigma)$ is precompact. By Lemma~\ref{Lem:Levy}, $\Omega$ has all of these properties in $\sfW$. Then by Lemma~\ref{Lem:Fold_Skeleton_W}, we have in $\sfW$ that $\rmM(G)^\sfW \cong X_\Omega^\sfW \cong X_{\Omega, \sfW}\cong \rmM^\sfW(G)$. 

    Conversely, suppose $G\not\in \sf{CMD}$. 
    Let $\sfW\supseteq \sfV$ be any forcing extension adding a new subset of $\omega$. Then $\xi\colon \beta^\sfW\omega\to (\beta\omega)^\sfW$ is not injective (see \cite{BartJud}*{Theorem 6.2.2}). Fix $X\in \sfV$ a $G$-ED flow which is not $G$-finite, with $U$, $A_n$, and $\phi$ as above. Working in $\sfW$, we will show that $X^\sfW$ is not $G$-ED by showing that $\pi:= \pi_G(X^\sfW)\colon \rmS_G(X^\sfW)\to X^\sfW$ is not injective. Let $B_n\in \op(X^\sfW)$ be open with $B_n\cap X = A_n$. Note that $\{UB_n: n< \omega\}$ is pairwise disjoint. Now let $C_n = \pi^{-1}[B_n]\in \op(\rmS_G^\sfW(X^\sfW))$, and for each $n< \omega$, pick $y_n\in C_n$ with $\pi(y_n) = x_n$. Letting $\psi\colon \beta^\sfW \omega\to \rmS_G(X^\sfW)$ be the continuous map with $\psi(n) = y_n$ for every $n< \omega$. Then $\pi\circ \psi = \phi\circ \xi$. By \Cref{Fact:BetaN} $\psi$ is injective, but $\xi$ is not, we must have that $\pi$ is not injective.
    \vspace{3 mm}

    \noindent
    \ref{Item:TMD_Thm:AbsMG}: Assume $G\in \sf{TMD}$. Let $\Omega\in \sfV$ be a $G$-skeleton with the ED, minimality, and Ramsey properties. By Lemma~\ref{Lem:Levy}, $\Omega$ has all of these properties in $\sfW$. Then Lemma~\ref{Lem:Fold_Skeleton_W} gives us in $\sfW$ that $\rmM^\sfW(G)\cong X_{\Omega, \sfW}\cong \rmS_G(X_\Omega^\sfW) \cong \rmS_G(\rmM(G)^\sfW)$.
\end{proof}

As an example, we show how to recover Barto\v{s}ov\'a's result about the universal minimal flow of $G = \rm{Sym}(\kappa)$ for $\kappa$ uncountable. In a forcing extension $\sfW\supseteq \sfV$ where $\kappa$ becomes countable, we have $\wt{G}^\sfW\cong S_\infty$. Hence by Theorem~\ref{Thm:TMD_Absolute}, we have $G\in \sf{CMD}$. Since $\rm{LO}(\kappa)$ satisfies $\rm{LO}(\kappa)^\sfW \cong \rm{LO}(\omega) = \rmM(\wt{G}^\sfW)$, we have by Theorem~\ref{Thm:Abs_MG} that $\mg \cong \rm{LO}(\kappa)$. 

It is natural to ask if Theorem~\ref{Thm:Abs_MG}\ref{Item:TMD_Thm:AbsMG} can be strengthened to an if and only if. 

\begin{question}
    If $G\notin\sf{TMD}$, is there a forcing extension $\sfW\supseteq \sfV$ in which $\rmS_G^\sfW(\rmM(G)^\sfW) \not\cong \rmM^\sfW(G)$?
\end{question}

We can find such a forcing extension conditional to an affirmative answer to Question~\ref{Que:Polish_PiWt} as follows. Let us first recall that if $X$ and $Y$ are spaces and $\pi\colon X\to Y$ is continuous and irreducible, then $\pi w(X) = \pi w(Y)$. Suppose $G\not\in \sf{TMD}$. Find a forcing extension $\sfW\supseteq \sfV$ in which $G$ is separable metrizable and $\rmM(G)^{\sfW}$ and $\rmM(G)^\sfW$ is metrizable. Working in $\sfW$ and assuming an affirmative answer to Question~\ref{Que:Polish_PiWt}, $\rmM^{\sfW}(G)$ has uncountable $\pi$-weight, implying that any $G$-map $\pi\colon \rmM^{\sfW}(G)\to \rmM(G)^{\sfW}$ is not irreducible. 

\subsection{The structure of $\rmM(G)$ for $\sf{TMD}$ groups}

We now generalize Theorem~\ref{Thm:MG_Structure_GPP} to all $\sf{TMD}$ groups. We fix $G\in \sf{TMD}$ and the $G$-skeleton $\Omega = \Omega_{\rmM(G)}^0$; in this subsection we simply take $\cB = \sn(G)$ and $\theta$ the identity map. Hence given $\sigma\in \sn(G)$, we have $\sa(X_\sigma, d_\sigma) \cong (\rmM(G)/\partial_\sigma, \ol{\partial}_\sigma)$, $X_\sigma$ is exactly the set of continuity points of $\ol{\partial}_\sigma = d_\sigma$, and $(X_\sigma, d_\sigma)$ is complete.

As in Section~\ref{Section:AutMG}, we write $\bbG$ for $\aut(\rmM(G))$ equipped with the tau-topology, which is compact Hausdorff by Corollary~\ref{Cor:Tau_CHaus}. By Proposition~\ref{Prop:Level_Maps}, the action of $\bbG$  on $\rmM(G)$ induces an action on $(X_\sigma, d_\sigma)$ by isometries, hence also an action on  $\sa(X_\sigma, d_\sigma)$ by homeomorphisms and isometries, and for any $\sigma\leq \tau\in \sn(G)$ and any $g\in G$, we have that $\pi_\sigma^\tau\colon X_\tau\to X_\sigma$, $\lambda_{g, \sigma}\colon X_{\sigma g}\to X_\sigma$, and $\pi_\sigma\colon \rmM(G)\to \sa(X_\sigma)$ are all $\bbG$-equivariant.

\begin{lemma}
\label{Lem:AutMG_Level}
    The $\bbG$-action on $X_\sigma$ is continuous.
\end{lemma}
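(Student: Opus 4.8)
The statement to prove is Lemma~\ref{Lem:AutMG_Level}: for $G \in \sf{TMD}$ and $\sigma \in \sn(G)$, the $\bbG$-action on the metric space $(X_\sigma, d_\sigma)$ (the set of continuity points of $\ol{\partial}_\sigma$ on $\rmM(G)$) is continuous, where $\bbG = \aut(\rmM(G))$ carries the tau-topology.

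The plan is to exploit that $\bbG$ with the tau-topology is a compact Hausdorff topological group (Corollary~\ref{Cor:Tau_CHaus}) together with the fact that $\bbG$ acts on each $X_\sigma$ by $d_\sigma$-isometries (Proposition~\ref{Prop:Level_Maps}). Because $\bbG$ is a topological group and acts by isometries, it suffices by a standard argument to prove separate continuity — or even just continuity of the orbit maps $p \mapsto p \cdot x$ for fixed $x \in X_\sigma$ — and then upgrade to joint continuity. Indeed, if $p_i \to p$ in $\bbG$ and $x_i \to x$ in $X_\sigma$, write $d_\sigma(p_i x_i, px) \leq d_\sigma(p_i x_i, p_i x) + d_\sigma(p_i x, px) = d_\sigma(x_i, x) + d_\sigma(p_i x, px)$ using that $p_i$ is an isometry; the first term vanishes and the second term vanishes if orbit maps are continuous. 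So the whole lemma reduces to: for each fixed $x \in X_\sigma$, the map $\bbG \to X_\sigma$, $p \mapsto p \cdot x$, is continuous.

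First I would fix $x \in X_\sigma$ and a basic neighborhood of $p \cdot x$ in $X_\sigma$; since $d_\sigma$ is a compatible metric on $X_\sigma$, this amounts to fixing $\epsilon > 0$ and showing $\{q \in \bbG : d_\sigma(q \cdot x, p \cdot x) < \epsilon\}$ is a tau-neighborhood of $p$. The key is to relate the metric $d_\sigma$ on the continuity points $X_\sigma$ to the tau-topology on $\bbG$ via the structure developed in Section~\ref{Section:AutMG}. Recall (Proposition~\ref{Prop:Tau_Base}) that the sets $\rm{Nbd}_A(p) = \{q \in \bbG : q[A] \cap p[A] \neq \emptyset\}$ for $A \in \op(w, \rmM(G))$ form a tau-neighborhood base at $p$, for any fixed $w \in \rmM(G)$. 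The natural choice: pick $w \in \rmM(G)$ with $\pi_\sigma(w) = x$ (here I should be slightly careful — $x \in X_\sigma$ is a continuity point, identified with a point of $\sa(X_\sigma) \cong \rmM(G)/\partial_\sigma$, and $\pi_\sigma\colon \rmM(G) \to \sa(X_\sigma)$ is the quotient; since $x$ is a $\ol{\partial}_\sigma$-compatibility point, by Proposition~\ref{Prop:Adequate}\ref{Item:CompDown_Prop:Adequate} any preimage $w$ is a $\partial_\sigma$-compatibility point). Then take $A = \rmB_{\partial_\sigma}(w, \epsilon/3) \cap (\text{small enough open set})$, using that $w$ is a $\partial_\sigma$-compatibility point so that $A$ has nonempty interior and $w \in \Int(A)$; a suitably small $\rm{Nbd}_A(p)$ should force $d_\sigma(q \cdot x, p \cdot x)$ small. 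Concretely, if $q \in \rm{Nbd}_A(p)$ then there is $y \in A$ with $q(y), p(y) \in \ol{B}$ for appropriately chosen $B$; since $q, p$ are $\partial_\sigma$-isometries and $y$ is close to $w$ in $\partial_\sigma$, both $q(w)$ and $p(w)$ are within controlled $\partial_\sigma$-distance, and projecting via $\pi_\sigma$ gives the $d_\sigma$-bound on $q \cdot x$ versus $p \cdot x$.

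The main obstacle I expect is bookkeeping the interplay between three structures: the lsc pseudometric $\partial_\sigma$ on all of $\rmM(G)$, its restriction/quotient $d_\sigma$ on the continuity points $X_\sigma$, and the tau-topology on $\bbG$ defined via graphs in $\rmM(G) \times \rmM(G)$. The subtlety is that $\partial_\sigma$ is only lower semicontinuous, not continuous, on $\rmM(G)$, so "$q(y)$ close to $q(w)$" needs to be extracted from the $\partial_\sigma$-compatibility of $w$ (equivalently, the fact that $x$ is a genuine continuity point in $X_\sigma$), rather than from any ambient continuity. I would handle this by working with the $\rm{Nbd}_{A,B}$ sets directly: choose $B$ to be an open set around $p(w)$ with $\partial_\sigma$-diameter controlled, use compatibility of $w$ (and its image $x$) to ensure $A$ is a genuine neighborhood, and then use the isometry property of every element of $\bbG$ to transfer the $\partial_\sigma$-estimate from "$y \in A$ near $w$" to "$q(y)$ near $q(w)$", concluding $d_\sigma(\pi_\sigma(q(w)), \pi_\sigma(p(w))) = d_\sigma(q\cdot x, p \cdot x) < \epsilon$ via the triangle inequality through $q(y) \in \ol{B}$. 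Once separate continuity (continuity of orbit maps) is established, the joint continuity upgrade via the isometry decomposition above is immediate, completing the proof.
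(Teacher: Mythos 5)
Your proposal is correct and follows essentially the same route as the paper: reduce joint continuity to continuity of the orbit maps $p\mapsto p(x)$ using that $\bbG$ acts by $d_\sigma$-isometries, then control $d_\sigma(q(x),p(x))$ by intersecting translates of a small set via the tau-basic neighborhoods $\rm{Nbd}_A(\cdot)$ of Proposition~\ref{Prop:Tau_Base} and a triangle-inequality estimate. The paper sidesteps your worries about lower semicontinuity by simply taking $A\in\op(\sa(X_\sigma))$ with $A\cap X_\sigma=\rmB_{d_\sigma}(x,\epsilon)$ and setting $B=\pi_\sigma^{-1}[A]$, which packages the compatibility-point bookkeeping into the fact that $d_\sigma$ is a genuine compatible metric on $X_\sigma$; this is a presentational simplification, not a different argument.
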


\begin{proof}
    As the action is by isometries, it is enough to show that for each $x\in X_\sigma$, the map $\bbG\to X_\sigma$ given by $p\to p(x)$ is continuous. Fix $\epsilon > 0$, and let $A\in \op(\sa(X_\sigma))$ satisfy $A\cap X_\sigma = \rmB_{d_\sigma}(x, \epsilon)$, and write $B = \pi_\sigma^{-1}[A]$. Consider $W = \rm{Nbd}_B(e_\bbG)\subseteq \bbG$. If $p\in W$, we have $p[B]\cap B\neq \emptyset$, implying $p[A]\cap A\neq \emptyset$, so also $p[\rmB_{d_\sigma}(x, \epsilon)]\cap \rmB_{d_\sigma}(x, \epsilon) \neq \emptyset$. As $p$ is an isometry on $X_\sigma$, we see that $p[\rmB_{d_\sigma}(x, \epsilon)]\subseteq \rmB_{d_\sigma}(x, 3\epsilon)$.  
\end{proof}

Lemma~\ref{Lem:AutMG_Level} implies that $d_\sigma$ induces a metric on $X_\sigma/\bbG$. Since the maps $\pi_\sigma^\tau$ and $\lambda_{g, \sigma}$ are $\bbG$-equivariant for every $\sigma\leq \tau\in \sn(G)$ and $g\in G$, we can make sense of $\Omega/\bbG$ as a $G$-skeleton. This $G$-skeleton describes the universal minimal proximal flow.   

\begin{prop}
    \label{Prop:Univ_Prox_Skeleton}
    $X_{\Omega/\bbG}\cong \Pi(G)$.
\end{prop}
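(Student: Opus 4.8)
Here is the plan.

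\medskip

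\noindent\textbf{Setup and the comparison map.} First I would record that, since each $p\in\bbG$ is a $G$-automorphism of $\rmM(G)$, the $\bbG$-action on $\rmM(G)$ commutes with the $G$-action; thus (using Lemma~\ref{Lem:AutMG_Level} and the $\bbG$-equivariance of the bonding maps $\pi^\tau_\sigma$ and $\lambda_{g,\sigma}$ noted before it) $\Omega/\bbG$ really is a $G$-skeleton and each quotient map $X_\sigma\to X_\sigma/\bbG$ is a continuous $G$-equivariant surjection of metric $G$-spaces. Extending these to the Samuel compactifications and passing to the inverse limit yields a $G$-map
\[ q\colon \rmM(G)=X_\Omega=\varprojlim_\sigma \sa(X_\sigma)\ \longrightarrow\ \varprojlim_\sigma \sa(X_\sigma/\bbG)=X_{\Omega/\bbG}, \]
which is surjective (an inverse limit of surjections of compact spaces), so $X_{\Omega/\bbG}$ is a minimal $G$-flow, and by construction $q\circ p=q$ for every $p\in\bbG$ (so $q$ is constant on $\bbG$-orbits) and $q$ is $\sa(G)$-equivariant.

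\medskip

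\noindent\textbf{Proximality of $X_{\Omega/\bbG}$.} Realize $\rmM(G)$ as a minimal left ideal $M\subseteq\sa(G)$ (so $M$ is a minimal ideal flow and $\aut(M)=\bbG$), and fix an idempotent $u\in M$. Given $\bar x,\bar y\in X_{\Omega/\bbG}$, choose lifts $x,y\in\rmM(G)$; replacing $x$ by $ux$ we may assume $ux=x$, hence $x\in uM$. By Fact~\ref{Fact:MinIdealFlow} the $\bbG$-orbit of $x$ is the maximal almost periodic set through $x$, which (since $uM$ is a group and $x\in uM$) is exactly $uM$; as $uy\in uM$ as well, $uy$ lies in the $\bbG$-orbit of $x$. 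Using $q\circ p=q$ we then get $u\cdot\bar y=q(uy)=q(x)=\bar x=q(ux)=u\cdot\bar x$. So the single element $u\in\sa(G)$ proximalizes every pair in $X_{\Omega/\bbG}$; being minimal, $X_{\Omega/\bbG}$ is proximal. By the universal property of $\Pi(G)$ this already gives a surjective $G$-map $\Pi(G)\twoheadrightarrow X_{\Omega/\bbG}$, i.e.\ $\ker\pi\subseteq\ker q$, where $\pi\colon\rmM(G)\to\Pi(G)$ is the canonical map.

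\medskip

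\noindent\textbf{Identifying the fibres of $q$ and concluding.} The remaining point is the reverse inclusion $\ker q\subseteq\ker\pi$, for which I would compute $\ker q$ exactly. The key lemma is that $\sa(X_\sigma/\bbG,d_\sigma)\cong \sa(X_\sigma)/\bbG$: since $\bbG$ is compact Hausdorff (Corollary~\ref{Cor:Tau_CHaus}) and acts continuously on $\sa(X_\sigma)$ by $d_\sigma$-isometries, $\sa(X_\sigma)/\bbG$ is compact Hausdorff, and its continuous functions are the $\bbG$-invariant continuous functions on $\sa(X_\sigma)$, which restrict to $\bbG$-invariant bounded $d_\sigma$-uniformly continuous functions on $X_\sigma$ — exactly the functions defining $\sa(X_\sigma/\bbG)$ — so Gelfand duality gives the isomorphism, compatibly with the maps out of $\sa(X_\sigma)$. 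Hence the fibres of $\sa(X_\sigma)\to\sa(X_\sigma/\bbG)$ are the $\bbG$-orbits in $\rmM(G)/\partial_\sigma$, so $q(x)=q(y)$ iff $\pi_\sigma(y)\in\bbG\cdot\pi_\sigma(x)$ for all $\sigma$; since $\bbG$ is compact and acts continuously on each $\rmM(G)/\partial_\sigma$, the sets $\{p\in\bbG:p\,\pi_\sigma(x)=\pi_\sigma(y)\}$ are nonempty, closed and downward directed in $\sigma$, so a common $p$ gives $p(x)=y$. Thus $\ker q$ is the $\bbG$-orbit equivalence relation, which is contained in $\ker\pi$ because $\pi\circ p=\pi$ for $p\in\bbG$ (as $\Pi(G)$ is minimal proximal, any two $G$-maps from a minimal flow into it agree: the image of $f_1\times f_2$ is a minimal subflow of $\Pi(G)\times\Pi(G)$ meeting the diagonal by proximality, hence contained in it; cf.\ Lemma~\ref{Lem:Prox}). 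Combining, $\ker q=\ker\pi$, and since $q$ and $\pi$ are quotient maps of $\rmM(G)$ with the same fibres, $X_{\Omega/\bbG}\cong\Pi(G)$ as $G$-flows.

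\medskip

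\noindent\textbf{Main obstacle.} The delicate step is the lemma $\sa(X_\sigma)/\bbG\cong\sa(X_\sigma/\bbG)$, and in particular the fact that the $\bbG$-action on $X_\sigma$ extends to a \emph{continuous} action on $\sa(X_\sigma,d_\sigma)$ — a routine but not entirely formal fact about a compact group acting by isometries, which I would prove using Fact~\ref{Fact:Continuity}, the isometry/equicontinuity of the action, and compactness of $\bbG$, or by invoking the topo-uniform framework. A secondary point that must be handled carefully is the standing identification of $\rmM(G)$ with a minimal left ideal of $\sa(G)$, which is what makes the idempotent argument for proximality and the orbit/almost-periodic-set correspondence available.
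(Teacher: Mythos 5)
Your first half is fine: the comparison map $q\colon \rmM(G)\to X_{\Omega/\bbG}$ and the proximality argument via the idempotent (equivalently, $\bbG_q=\bbG$ plus Lemma~\ref{Lem:Prox}) match the paper's, and the deduction $\ker\pi\subseteq\ker q$ from uniqueness of $G$-maps into a minimal proximal flow is correct. The gap is in the second half, precisely at the step you flag as "routine but not entirely formal": the claim that $\bbG$ acts \emph{continuously} on $\sa(X_\sigma)$, which underlies both your identification $\sa(X_\sigma)/\bbG\cong\sa(X_\sigma/\bbG)$ and the later assertion that the sets $\{p\in\bbG: p\,\pi_\sigma(x)=\pi_\sigma(y)\}$ are closed (here $\pi_\sigma(x)$ lies in $\sa(X_\sigma)$, not in $X_\sigma$, so Lemma~\ref{Lem:AutMG_Level} does not apply). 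A compact group acting continuously by isometries on a non-precompact metric space need not act continuously on the Samuel compactification: take $X=\coprod_{n\ge 1}\bbT$ with the circle group acting on the $n$-th copy by $n$-fold speed rotation; the action is jointly continuous and isometric, but the induced action on $\rm{UCB}(X)$ is not norm-continuous, so the action on $\sa(X)$ is not continuous and invariant functions do not compute the orbit space. Worse, in the present setting your lemma for all $\sigma$ would give continuity of the $\bbG$-action on $\rmM(G)=\varprojlim_\sigma\sa(X_\sigma)$, hence $\rmM(G)\cong\rm{UCF}(G)$ -- but the paper only proves $\rmM(G)\cong\rmS_G(\rm{UCF}(G))$ (Theorem~\ref{Thm:MG_Structure_TMD}) and explicitly leaves $\rmM(G)\cong\rm{UCF}(G)$ open even for $\sf{GPP}$ groups (Question~\ref{Que:Not_UCF}). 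So this step cannot be routine, and the exact computation $\ker q=\{\bbG\text{-orbit equivalence}\}$ is not available.

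The paper's proof sidesteps this by never computing $\ker q$. To get $\ker q\subseteq\ker\pi$ it takes an arbitrary $G$-map $\psi\colon\rmM(G)\to\Pi(G)$, uses $\bbG_\psi=\bbG$ to see that the induced level maps $\psi_\sigma$ restricted to the skeleton $X_\sigma$ (the $\partial_\sigma$-compatible points, where Lemma~\ref{Lem:AutMG_Level} \emph{does} give continuity of the $\bbG$-action) are $\bbG$-invariant, hence factor through the metric quotient $X_\sigma\to X_\sigma/\bbG$; extending these factorizations to Samuel compactifications produces a $G$-map $X_{\Omega/\bbG}\to\Pi(G)$ directly. If you reroute your last step through this skeleton-level factorization -- working with the dense metric part where the action is genuinely continuous, rather than with orbits in $\sa(X_\sigma)$ -- the argument closes.
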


\begin{proof}
    For each $\sigma\in \sn(G)$, let $\phi_{\sigma}\colon X_\sigma\to X_\sigma/\bbG$ be the quotient map. Then $\phi_\sigma$ is Lipschitz, so extends to a continuous Lipschitz map $\phi_\sigma\colon \sa(X_\sigma)\to \sa(X_\sigma/\bbG)$. These maps commute with the various bonding maps in $\Omega$ and $\Omega/\bbG$, hence we get a $G$-map $\phi\colon \rmM(G)\to X_{\Omega/\bbG}$. Since $\bbG_\phi = \bbG$, we see that $X_{\Omega/\bbG}$ is proximal.

    As $\Pi(G)\cong \rmS_G(\Pi(G))$, we have that $\Pi(G)$ is $G$-ED, and as $G\in\sf{TMD}$, we have $\Pi(G)\in \cal{RC}_G$, so set $\Theta = \Omega^0_{\Pi(G)}$, with levels $\sa(Y_\sigma, d_\sigma^\Theta)$. We can suppose $(Y_\sigma, d_\sigma^\Theta)$ is complete. Suppose $\psi\colon \rmM(G)\to \Pi(G)$ is a $G$-map. For each $\sigma\in \sn(G)$, $\psi$ induces a map $\psi_\sigma\colon \sa(X_\sigma)\to \sa(Y_\sigma)$, and we have $\psi_\sigma[X_\sigma] \subseteq Y_\sigma$. As $\bbG_\psi = \bbG$, we have for every $p\in \bbG$ that have $\psi_\sigma\circ p = \psi_\sigma$. In particular, on $\psi_\sigma|_{X_\sigma}$ must factor through the quotient map $X_\sigma\to X_\sigma/\bbG$. Hence $X_{\Omega/\bbG}$ admits a $G$-map to $\Pi(G)$.    
\end{proof}

We now formulate preflow versions of proximality and of being a compactification flow. Given a compact group $K$, let us call a $(G\times K)$-flow $X$ a \emph{$K$-compactification flow} if the $G$-action is minimal, the $K$-action is faithful (equivalently free), and $X/K$ is a proximal $G$-flow. Given $K$ a precompact group and $X$ a $(G\times K)$-preflow, then $\wh{X}$ is a $(G\times K)$-flow, so also a $(G\times \wt{K})$-flow, and we can ask when $\wh{X}$ is a $\wt{K}$-compactification flow. We give characterizations that are similar in spirit to Lemma~\ref{Lem:Min}.

\begin{prop}
    \label{Prop:Prox_CF}
    Fix a precompact group $K$ and a $(G\times K)$-flow $X$.
    \begin{enumerate}[label=\normalfont(\arabic*)]
        \item \label{Item:Prox_Prop:ProxCF}
        $X$ is proximal as a $G$-flow iff for every $A\in \cU_X$, there is $F\in \fin{G}$ with $FA\subseteq X\times X$ dense, iff the same with $FA = X\times X$.
        \item \label{Item:CF_Prop:ProxCF}
        $X$ is a $\wt{K}$-compactification flow iff the following all hold:
        \begin{itemize}
            \item $X$ is a minimal $G$-flow.
            \item For every $U\in \cN_K$, there is $A\in \cU_X$ so that for every $p\in K\setminus U$ and $x\in X$, we have $(x, px)\not\in A$.
            \item For every $A\in \cU_X$, there are $F_0\in \fin{G}$ and $F_1\in \fin{K}$ so that we have $\{(x, y)\in X\times X: \exists (g, p)\in F_0\times F_1 (pgx, gy)\in A\}\subseteq X\times X$ dense, iff the same with equality.
        \end{itemize} 
    \end{enumerate}
\end{prop}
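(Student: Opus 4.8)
\textbf{Proof plan for Proposition~\ref{Prop:Prox_CF}.}

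The two items are proved by essentially the same technique, namely upgrading the dynamical definitions (which involve the semigroup $\sa(G)$ and the completion $\wh X$) to statements purely about $X$ and $\cU_X$ via the ``minimality-style'' density criteria, in the spirit of Lemma~\ref{Lem:Min} and Lemma~\ref{Lem:Preflow_Props}. For item~\ref{Item:Prox_Prop:ProxCF}, recall that $X$ is proximal as a $G$-flow iff for all $x,y\in X$ there is $p\in\sa(G)$ with $px=py$, equivalently iff the diagonal $\Delta_X\subseteq X\times X$ is the unique minimal subflow of the $G$-flow $X\times X$ and moreover lies in the closure of every orbit. So first I would consider the $G$-flow $X\times X$ and apply Lemma~\ref{Lem:Min}\ref{Item:AP_Lem:Min}: $X$ is proximal iff for every $(x,y)\in X\times X$ and every $A\in\cU_{X\times X}$ (equivalently, every entourage $A\in\cU_X$, identifying entourages of $X$ with basic neighborhoods of the diagonal in $X\times X$), the set $\{g\in G: (gx,gy)\in A\}$ is syndetic, i.e.\ there is $F\in\fin G$ with $F^{-1}\cdot\{g:(gx,gy)\in A\}=G$. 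Chasing quantifiers and using that a net from a dense subset converges to each point of the closure (Fact~\ref{Fact:Continuity}), this is equivalent to: for every $A\in\cU_X$ there is $F\in\fin G$ with $F\cdot A$ dense in $X\times X$; and since $F\cdot A$ is open (the action is by homeomorphisms) and $X\times X$ is compact, one can then pass from ``dense'' to ``equals $X\times X$'' by a standard compactness argument (cover $X\times X$ by finitely many translates of $A$ contained in a translate of $F\cdot A$), exactly as in the proof of Lemma~\ref{Lem:Min}. Conversely, if such $F$ exists for every $A$, then the diagonal meets every orbit closure in $X\times X$, forcing proximality. I would be careful that ``proximal'' in Definition~\ref{Def:Extension_Props} is stated with $p\in\sa(G)$ rather than via minimal subflows, so I'd first record the standard equivalence (every orbit closure in $X\times X$ contains the diagonal) and then translate.

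For item~\ref{Item:CF_Prop:ProxCF}, the structure of the argument is: $X$ is a $\wt K$-compactification flow iff (a) $X$ is a minimal $G$-flow, (b) the $\wt K$-action on $\wh X=X$ (here $X$ is already compact) is faithful, hence free since $\wt K$ is a compact group acting by $G$-flow automorphisms, and (c) $X/\wt K$ is a proximal $G$-flow. The first bullet is exactly (a). The second bullet is the preflow reformulation of freeness/faithfulness: $\wt K$ acts freely iff no nontrivial element of $\wt K$ has a fixed point iff (since $\wt K$ is the completion of $K$, and $K\setminus U$ for $U\in\cN_K$ exhausts the nontrivial part in a uniform way) for every $U\in\cN_K$ there is an entourage $A\in\cU_X$ separating $x$ from $px$ for all $p\in K\setminus U$ and all $x$ — here one uses that the set of pairs $\{(x,px): p\in\wt K\setminus\wh U, x\in X\}$ is compact and disjoint from $\Delta_X$, so is contained in the complement of some entourage, and density of $K$ in $\wt K$ lets us quantify only over $p\in K\setminus U$. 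The third bullet is the preflow reformulation of ``$X/\wt K$ is proximal'': I would apply the criterion from part~\ref{Item:Prox_Prop:ProxCF} to the $G$-flow $X/\wt K$, then pull the entourages and the finite sets back up to $X$. Concretely, an entourage of $X/\wt K$ corresponds to a $\wt K$-invariant entourage of $X$, i.e.\ (by precompactness of $K$) to a set of the form $\{(x,y): (px,y)\in A \text{ for some } p\in F_1\}$ for $A\in\cU_X$ and $F_1\in\fin K$ a ``net'' in $K$ for the relevant entourage; proximality of $X/\wt K$ then says there is $F_0\in\fin G$ making the $F_0$-translate of this invariant entourage dense, which unwinds exactly to the displayed condition, with the dense/equality equivalence again by compactness.

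The main obstacle I anticipate is in item~\ref{Item:CF_Prop:ProxCF}, handling the interplay between $K$ and its completion $\wt K$ cleanly: I need that a $(G\times K)$-preflow structure really does extend to a $(G\times\wt K)$-flow with the $\wt K$-action by $G$-flow automorphisms, that faithfulness of the $K$-action on the dense subspace is equivalent to faithfulness (hence freeness) of the $\wt K$-action on $\wh X$, and that $\wt K$-orbits on $\wh X$ are closed so that $\wh X/\wt K$ is Hausdorff and the quotient map is a genuine factor map. These are routine compact-group facts but require the uniform ``net in $K$'' bookkeeping ($F_1\in\fin K$ approximating entourages) to be set up correctly, which is exactly the content of the third bullet and the slightly awkward asymmetry $(pgx,gy)$ versus $(gx,gy)$ in the statement — I would make sure the order of applying $g\in F_0$ and $p\in F_1$ matches the convention that $G$ and $K$ commute and that $X/\wt K$ carries the quotient $G$-action. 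Everything else is a direct transcription of Lemma~\ref{Lem:Min} and part~\ref{Item:Prox_Prop:ProxCF}.
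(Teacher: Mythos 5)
Your proposal is correct and follows essentially the same route as the paper: both items reduce to the finite-subcover argument on $X\times X$ (for a proximal flow the orbit closure of each pair meets every closed neighbourhood of the diagonal, and compactness converts this into a single finite $F$), and for item~\ref{Item:CF_Prop:ProxCF} both arguments pass to $\wt{K}$-invariant entourages of $X$, apply the proximality criterion to the quotient $X/\wt{K}$, and use density of $K$ in $\wt{K}$ together with compactness to extract $F_0$ and $F_1$. One caution: in item~\ref{Item:Prox_Prop:ProxCF} you propose to first record that proximality means every orbit closure in $X\times X$ \emph{contains} the diagonal -- that is only true when $X$ is minimal, which item~\ref{Item:Prox_Prop:ProxCF} does not assume; the correct statement, and the one your covering argument actually uses, is that every orbit closure \emph{meets} the diagonal. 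Relatedly, the syndeticity reformulation you invoke is not literally an instance of Lemma~\ref{Lem:Min}\ref{Item:AP_Lem:Min} (which characterizes minimality of orbit closures, not proximality), though the fact you need does follow from the same compactness argument, so nothing downstream breaks.
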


\begin{proof}
    \ref{Item:Prox_Prop:ProxCF}: If $X$ is proximal and $A\in \op(X\times X)$ with $\Delta_X\subseteq A$, then by compactness of $X$ we can find $F\in \fin{G}$ with $FA = X\times X$. Conversely, if $x, y\in X$ and $(x, y)$ is not proximal, find $A\in \op(X\times X)$ with $\Delta_X\subseteq A$ and with $(gx, gy)\not\in \ol{A}$ for any $g\in G$. Then for any $F\in \fin{G}$, we have $F\ol{A}\neq X\times X$, so $FA\subseteq X\times X$ is not dense.
    \vspace{3 mm}

    \noindent
    \ref{Item:CF_Prop:ProxCF}: The second bullet is equivalent to $\wt{K}$ acting freely on $X$. We focus on the third bullet. First observe that we can identify $X/\wt{K}$ with the subspace of the Vietoris space $\rmK(X)$ consisting of $\wt{K}$-orbits. Hence if $X/\wt{K}$ is a compactification flow, $A\in \cU_X$ (which by shrinking we may assume is invariant under the diagonal action of $\wt{K}$), and $x, y\in X$, we first find $g\in G$ with $(g\wt{K}x\times g\wt{K}y)\cap A\neq \emptyset$. Then, as $A$ is $\wt{K}$-invariant and $K\subseteq \wt{K}$ is dense, we find $p\in K$ with $(gpx, gy)\in A$. Compactness of $X$ then gives us the desired condition with equality. Conversely, suppose $\wt{K}a, \wt{K}b\in X/\wt{K}$ and that $(\wt{K}a, \wt{K}b)$ is not proximal. We may find $A\in \cU_X$ invariant under the diagonal $\wt{K}$-action and so that $(gpa, gb)\not\in \ol{A}$ for any $g\in G$ or $p\in K$. It follows that for any $F_0\in \fin{G}$ and $F_1\in \fin{K}$, the set $\{(x, y)\in X\times X: \exists (g, p)\in F_0\times F_1 (pgx, gy)\in A\}\subseteq X\times X$ cannot be dense, as its closure does not contain $(a, b)$.  
\end{proof}

\begin{corollary}
\label{Cor:Prox_CF_Levy}
    If $K$ is a precompact group and $X$ is a $(G\times K)$-preflow, then the properties of $\wh{X}$ being a proximal $G$-flow or a $\wt{K}$-compactification flow are $\Delta_1$ properties of $X$.
\end{corollary}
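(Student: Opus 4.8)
The statement is Corollary~\ref{Cor:Prox_CF_Levy}: for $K$ a precompact group and $X$ a $(G\times K)$-preflow, the properties "$\wh{X}$ is a proximal $G$-flow" and "$\wh{X}$ is a $\wt{K}$-compactification flow" are $\Delta_1$ properties of $X$. The entire content of this corollary is to read off the logical complexity of the equivalent conditions supplied by Proposition~\ref{Prop:Prox_CF}, exactly as Lemma~\ref{Lem:Levy} and Lemma~\ref{Lem:Preflow_Props} do for minimality, the Rosendal criterion, etc. So the plan is: first recall from the discussion opening this section that a $(G\times K)$-preflow $X$ is presented by a base $\cU_X$ of (regular open) entourages, that $G$, $K$, their actions, the uniformity, and membership of a given set in $\cB_X$ or $\cU_X$ all have $\Delta_0$ (hence $\Delta_1$) definitions, and that $\fin{\cdot}$-quantifiers and "is finite" are $\Delta_1$. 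Then I would invoke Proposition~\ref{Prop:Prox_CF}\ref{Item:Prox_Prop:ProxCF}: "$\wh{X}$ is a proximal $G$-flow" is equivalent to "for every $A\in \cU_X$ there is $F\in \fin{G}$ with $FA = X\times X$." This is a $\forall \cU_X \, \exists \fin{G}$ statement whose matrix "$FA = X\times X$" is $\Delta_0$ in the given data (it only references the action of finitely many group elements on pairs from $X$, and $\cU_X, \fin{G}$ are sets in the structure), so the whole thing is $\Delta_1$; more precisely it is $\Pi_1$ from this form, and $\Sigma_1$ from the dense-image variant combined with a completeness/forcing argument — but actually the cleanest route is to note it is literally a bounded quantification over the set $\cU_X$ followed by bounded quantification over $\fin{G}$, hence $\Delta_0$ relative to the parameters, which is $\Delta_1$.

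Next, for the $\wt{K}$-compactification flow property, I would quote Proposition~\ref{Prop:Prox_CF}\ref{Item:CF_Prop:ProxCF} and check each of the three bullets in turn. The first bullet, minimality of $\wh{X}$ as a $G$-flow, is $\Delta_1$ by Lemma~\ref{Lem:Preflow_Props}\ref{Item:Min_Lem:Preflow}. The second bullet, "for every $U\in \cN_K$ there is $A\in \cU_X$ so that for every $p\in K\setminus U$ and $x\in X$, $(x,px)\notin A$," is a $\forall \cN_K \, \exists \cU_X \, \forall K \, \forall X$ statement with $\Delta_0$ matrix, hence $\Delta_1$. The third bullet, "for every $A\in \cU_X$ there are $F_0\in\fin{G}$, $F_1\in\fin{K}$ such that $\{(x,y): \exists (g,p)\in F_0\times F_1\ (pgx,gy)\in A\} = X\times X$," is again $\forall \cU_X \, \exists \fin{G} \, \exists \fin{K}$ with $\Delta_0$ matrix. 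A conjunction of $\Delta_1$ formulas is $\Delta_1$, so we are done. I would also remark in passing why the "dense"-versus-"equality" equivalences in Proposition~\ref{Prop:Prox_CF} matter here: the "equality" form gives a manifestly $\Pi_1$ (indeed $\Delta_0$-relative) formula while the "dense" form would involve quantifying over the completion $\wh{X}$; having both available is what lets us stay low in the hierarchy, and this is precisely the payoff of having proven Proposition~\ref{Prop:Prox_CF}.

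\textbf{Main obstacle.} There is no real mathematical obstacle — Proposition~\ref{Prop:Prox_CF} does all the work, and this corollary is a bookkeeping step. The only thing to be careful about is being precise about \emph{which} data counts as the parameter: the claim "$\Delta_1$ property of $X$" should be understood, following the convention established at the start of Subsection~\ref{Subsection:Levy}, to mean $\Delta_1$ with the preflow $X$ (i.e.\ the pair $(X,\cU_X)$ together with the actions of $G$ and $K$ and the neighborhood bases $\cN_G,\cN_K$) as a parameter. One should make sure that quantifiers like "$\exists A\in\cU_X$" and "$\exists F\in\fin{G}$" are genuinely bounded by sets available from this parameter — they are, since $\cU_X$ is part of the data and $\fin{G}$ is $\Delta_1$-definable from $G$ — so no unbounded set quantifier sneaks in. Here is the writeup.

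\begin{proof}
    Recall from the opening of Subsection~\ref{Subsection:Levy} that a $(G\times K)$-preflow is presented by a base $\cU_X$ of entourages, and that the statements "$\cU_X$ is a base for a precompact uniformity on $X$", "$A\in\cU_X$", "$(X,\cU_X)$ is a $(G\times K)$-preflow", "$F$ is finite", and "$F\in\fin{G}$" are all $\Delta_1$ in the parameter $X$ (where $X$ is understood to carry $\cU_X$, the $G$- and $K$-actions, and the neighborhood bases $\cN_G,\cN_K$). In particular all quantifiers of the form $\exists A\in\cU_X$, $\exists F\in\fin{G}$, $\exists F\in\fin{K}$, $\forall U\in\cN_K$ are bounded.

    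By Proposition~\ref{Prop:Prox_CF}\ref{Item:Prox_Prop:ProxCF}, $\wh{X}$ is a proximal $G$-flow if and only if
    $$\forall A\in\cU_X\ \exists F\in\fin{G}\ \big(FA = X\times X\big).$$
    The matrix "$FA=X\times X$" is $\Delta_0$ in the parameter $X$ (it only refers to the action of the finitely many elements of $F$ on pairs from $X$), so after prefixing it with the two bounded quantifiers the whole formula is $\Delta_1$ in $X$.

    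By Proposition~\ref{Prop:Prox_CF}\ref{Item:CF_Prop:ProxCF}, $\wh{X}$ is a $\wt{K}$-compactification flow if and only if the conjunction of the following three statements holds:
    \begin{itemize}
        \item $\wh{X}$ is a minimal $G$-flow, which is $\Delta_1$ in $X$ by Lemma~\ref{Lem:Preflow_Props}\ref{Item:Min_Lem:Preflow};
        \item $\forall U\in\cN_K\ \exists A\in\cU_X\ \forall p\in K\setminus U\ \forall x\in X\ \big((x,px)\notin A\big)$, whose matrix is $\Delta_0$ in $X$, hence the whole statement is $\Delta_1$ in $X$;
        \item $\forall A\in\cU_X\ \exists F_0\in\fin{G}\ \exists F_1\in\fin{K}\ \Big(\{(x,y)\in X\times X:\exists (g,p)\in F_0\times F_1\ (pgx,gy)\in A\} = X\times X\Big)$, again a $\Delta_0$ matrix preceded by bounded quantifiers, hence $\Delta_1$ in $X$.
    \end{itemize}
    A finite conjunction of $\Delta_1$ formulas is $\Delta_1$, so "$\wh{X}$ is a $\wt{K}$-compactification flow" is a $\Delta_1$ property of $X$.

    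We remark that the passage from the "dense" formulations in Proposition~\ref{Prop:Prox_CF} to the "equality" formulations used above is exactly what keeps these formulas low in the L\'evy hierarchy: the "equality" versions refer only to $X$ itself and the given group actions, whereas the "dense" versions would require quantifying inside the completion $\wh{X}$.
\end{proof}
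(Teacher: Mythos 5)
Your proof is correct and follows essentially the same route as the paper: the paper's own proof is the single sentence that $\wh{X}$ has the relevant property iff the "equality" version of the corresponding item of Proposition~\ref{Prop:Prox_CF} holds verbatim for $X$, which is a bounded-quantifier (hence $\Delta_1$, indeed $\Delta_0$) condition on the data of $X$; you simply spell out the bookkeeping for each clause. The only step both you and the paper leave implicit is the transfer of the equality/density conditions between entourages of $X$ and of $\wh{X}$, which is routine, and your closing remark about why the "equality" formulations (rather than the "dense" ones) keep the formula from quantifying over $\wh{X}$ matches the paper's intent.
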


\begin{proof}
    $\wh{X}$ has the relevant property iff the condition with equality in the relevant item of Proposition~\ref{Prop:Prox_CF} holds verbatim for $X$ 
\end{proof}

We will want similar results for describing when a preflow $G$-map $\pi\colon X\to Y$ continuously extends to a proximal $G$-map $\wh{\pi}\colon \wh{X}\to \wh{Y}$. It takes a somewhat more complicated form since for the fiber products, we might not have $X\times_\pi X\subseteq \wh{X}\times_{\wh{\pi}}\wh{X}$ dense. As a bonus, we obtain a $\Delta_0$ statement. In fact, both parts of Corollary~\ref{Cor:Prox_CF_Levy} can be strengthened to $\Delta_0$.

\begin{prop}
    \label{Prop:Preflow_Prox}
    Let $X$ and $Y$ be $G$-preflows, let $\pi\colon X\to Y$ a preflow $G$-map, and let $\wh{\pi}\colon \wh{X}\to \wh{Y}$ be the continuous extension of $\pi$. 
    \begin{enumerate}[label=\normalfont(\arabic*)]
        \item \label{Item:Prox_Prop:PreflowProx}
        $\wh{\pi}$ is proximal iff for every $U_0\in \cU_X$, there are $V\in \cU_Y$ and $U\in \cU_X$ so that 
        for any $x, y\in X$, if $(\pi(x), \pi(y))\in V$, then there is $g\in G$ with $gU[x]\times gU[y] \subseteq U_0$. 

        In particular, $\wh{X}$ is proximal iff for every $U_0\in \cU_X$, there is $U\in \cU_X$ so that for any $x, y\in X$, there is $g\in G$ with $gU[x]\times gU[y]\subseteq U_0$.
        
        \item \label{Item:Irred_Prop:PreflowProx}
        $\wh{\pi}$ is irreducible iff for every $A\in \cB_X$, there is $B\in \cB_Y$ with $\pi^{-1}[B]\subseteq A$.
    \end{enumerate}
    Hence all of these are $\Delta_0$ properties.
\end{prop}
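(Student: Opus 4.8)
The plan is to prove Proposition~\ref{Prop:Preflow_Prox} by relating the preflow-level conditions to the flow-level notions of proximal and irreducible maps, using the correspondence between $G$-preflows and their completions established earlier in the section, together with the density of $X\subseteq \wh X$ and the description of the uniformity $\wh{\cU}_X$ as the upward closure of $\{\mathrm{cl}_{\wh X}(U):U\in\cU_X\}$. For each item I will prove both implications by a standard compactness-plus-density argument, mirroring the proofs of Lemma~\ref{Lem:Min} and Proposition~\ref{Prop:Prox_CF}. The key subtlety, which I expect to be the main obstacle, is exactly the one flagged in the text: the fiber product $X\times_\pi X$ need not be dense in $\wh X\times_{\wh\pi}\wh X$, so one cannot simply transfer a statement about $X\times X$; instead one works with entourage-fattenings $gU[x]\times gU[y]$ of pairs from $X$, which \emph{do} witness membership in $\wh{\cU}_X$-neighborhoods of points of the fiber product of $\wh X$.

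For item~\ref{Item:Prox_Prop:PreflowProx}, suppose first $\wh\pi$ is proximal. Fix $U_0\in\cU_X$ and pick a symmetric $U_0'\in\cU_X$ with $(U_0')^3\subseteq U_0$. The set $R_{\wh\pi}:=\{(\hat x,\hat y)\in\wh X\times\wh X:\wh\pi(\hat x)=\wh\pi(\hat y)\}$ is compact, and for each $(\hat x,\hat y)\in R_{\wh\pi}$ proximality gives $g\in G$ with $(g\hat x,g\hat y)\in \mathrm{cl}_{\wh X\times\wh X}(U_0')$; using continuity of the action and openness of $\wh{U_0'}$ (the interior), we get a neighborhood of $(\hat x,\hat y)$ on which the same $g$ works. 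Compactness of $R_{\wh\pi}$ and the fact that a base of neighborhoods of $R_{\wh\pi}$ in $\wh X\times\wh X$ is given by sets of the form $\{(\hat x,\hat y):(\wh\pi(\hat x),\wh\pi(\hat y))\in \wh V\}$ for $V\in\cU_Y$ then yield a single $V\in\cU_Y$ and a finite set $F\in\fin G$ so that for every $(\hat x,\hat y)$ with $(\wh\pi(\hat x),\wh\pi(\hat y))\in\wh V$ there is $g\in F$ with $(g\hat x,g\hat y)\in\wh{U_0'}$. Restricting to $x,y\in X$ and using that $\wh{U_0'}\cap(X\times X)=U_0'$ together with uniform continuity of the action of each $g\in F$, one extracts $U\in\cU_X$ with $gU[x]\times gU[y]\subseteq U_0$. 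For the converse, given a pair $(\hat x,\hat y)\in\wh X\times\wh X$ with $\wh\pi(\hat x)=\wh\pi(\hat y)$, approximate by nets $x_i\to\hat x$, $y_i\to\hat y$ from $X$; for each $U_0\in\cU_X$, the image pairs $(\pi(x_i),\pi(y_i))$ eventually lie in any prescribed $\wh V$, so the hypothesis supplies $g_i\in G$ with $g_i U[x_i]\times g_i U[y_i]\subseteq U_0$, hence $(g_i\hat x,g_i\hat y)\in\mathrm{cl}_{\wh X\times\wh X}(U_0)$ in the limit; as $U_0$ shrinks, a subnet of the $g_i$ witnesses proximality of $(\hat x,\hat y)$. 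The ``in particular'' clause is the special case $Y$ trivial.

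For item~\ref{Item:Irred_Prop:PreflowProx}, recall that $\wh\pi$ is irreducible (Definition~\ref{Def:Irred_Map}) iff $\wh\pi[\wh X]$ is dense in $\wh Y$ (automatic here since $\pi[X]$ is dense in $Y$ which is dense in $\wh Y$, and $\wh\pi$ is onto by compactness) and for every nonempty open $\wh A\subseteq\wh X$ we have $\wh\pi_{\mathrm{fib}}(\wh A)\neq\emptyset$. Using that $\wh{\cB}_X=\{\wh A:A\in\cB_X\}$ is a base for $\wh X$, that $\wh A\cap X=A$ and $\wh{\pi}^{-1}[\wh B]\cap X=\pi^{-1}[B]$ for $A\in\cB_X$, $B\in\cB_Y$, and that $\wh\pi_{\mathrm{fib}}(\wh A)\ne\emptyset$ iff some $\wh B\in\wh{\cB}_Y$ satisfies $\wh\pi^{-1}[\wh B]\subseteq\wh A$ (with $\wh\pi^{-1}[\wh B]\subseteq\wh A\iff \pi^{-1}[B]\subseteq A$ by density), the flow-level statement translates verbatim into the preflow-level one. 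Finally, for the L\'evy-complexity claim: each displayed condition quantifies over elements of $\cB_X,\cB_Y,\cU_X,\cU_Y$ and over $g\in G$, with the matrix a Boolean combination of $\Delta_0$ statements ($a\in B$, $(a,b)\in U$, etc.), so all three are $\Delta_0$; I will note that the conditions in Corollary~\ref{Cor:Prox_CF_Levy} admit the same reduction, giving the remark about strengthening to $\Delta_0$.
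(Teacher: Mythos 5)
Your proof is correct, and for item~\ref{Item:Irred_Prop:PreflowProx} it is essentially the paper's argument: the forward direction via Lemma~\ref{Lem:Fiber_Image_Open} plus the fact that $\wh{\cB}_Y$ is a base, the reverse via approximating a point of $\wh{\pi}^{-1}[\wh B]$ by a net from $X$ whose images eventually land in $B$. (One shared imprecision: that net argument puts $z$ in $\rm{cl}_{\wh X}(A)$ rather than in $\wh A=\Int(\rm{cl}(A))$; both you and the paper elide this, and it is harmless since one can shrink $A$ so that $\rm{cl}_{\wh X}(A)$ sits inside the target open set.) For item~\ref{Item:Prox_Prop:PreflowProx} the underlying content is the same — transfer between $X$ and $\wh X$ using density and the description of $\wh{\cU}_X$ — but the organization differs: the paper proves both implications by contraposition, building explicit nets indexed by $\cU_X\times\cU_Y$ from a failure of the criterion (resp.\ from a non-proximal pair), whereas you argue both directions positively, with the forward direction run as a compactness covering of the fiber product $R_{\wh\pi}$. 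Your covering argument buys slightly more than the statement asks for, namely a single finite set $F\in\fin{G}$ of witnesses working uniformly for all $V$-close pairs (a syndetic-type strengthening in the spirit of Proposition~\ref{Prop:Prox_CF}); the paper's contrapositive route is shorter and avoids having to verify that preimages of entourages of $\wh Y$ form a neighborhood base of $R_{\wh\pi}$. The small entourage-bookkeeping steps you gloss over (e.g.\ passing from $(x_i,\hat x)\in\rm{cl}(U')$ with $(U')^2\subseteq U$ to $\hat x\in \rm{cl}(U[x_i])$, and absorbing the $(U_0')^3$ fudge) all go through.
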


\begin{proof}
    \ref{Item:Prox_Prop:PreflowProx}: First suppose the criterion does not hold, witnessed by some ``bad'' $U_0\in \cU_X$. For each $U\in \cU_X$ and $V\in \cU_Y$, we can find $x_{U, V}, y_{U, V}\in X$ with $(\pi(x), \pi(y))\in V$, but with $gU[x_{U, V}]\times gU[y_{U, V}]\not\subseteq U_0$. Pick $x_{g, U, V}\in U[x_{U, V}]$ and $y_{g, U, V}\in U[y_{U, V}]$ with $(gx_{g, U, V}, gy_{g, U, V})\not\in U_0$. Viewing $\cU_X\times \cU_Y$ as a directed partial order in the usual way, we can pass to a subnet so that $x_{U, V}\to x'\in \wh{X}$ and $y_{U, V}\to y'\in \wh{Y}$. We have $\pi(x') = \pi(y')$. Note for every $g\in G$ that also $x_{g, U, V}\to x'$ and $y_{g, U, V}\to y'$. It follows that $(gx', gy')\not\in U_0$ for any $g\in G$, i.e.\ $(x', y')$ is not a proximal pair. 

    Now suppose $\wh{\pi}$ is not proximal, witnessed by $x, y\in \wh{X}$ with $\pi(x) = \pi(y)$. Let $U_0\in \cU_X$ be such that $(gx, gy)\not\in \wh{U_0}$ for any $g\in G$. Fix $V\in \cU_Y$ and $U\in \cU_X$. Find $x', y'\in X$ with $(x, x'), (y, y')\in \wh{U}$ and with $(\pi(x'), \pi(y'))\in V$. It follows that for any $g\in G$, we cannot have $gU[x']\times gU[y']\subseteq U_0$
    \vspace{3 mm}

    \noindent
    \ref{Item:Irred_Prop:PreflowProx}
    The forward direction follows from Lemma~\ref{Lem:Fiber_Image_Open}. For the reverse, suppose the criterion holds, and consider $C\in \op(\wh{X})$. Find $A\in \cB_X$ with $\wh{A}\subseteq C$, and let $B\in \cB_Y$ satisfy $\pi^{-1}[B]\subseteq A$. Fix $y\in B$; we claim $\wh{\pi}^{-1}[y]\subseteq C$. If $z\in \wh{\pi}^{-1}[y]$, suppose $z_i\in X$ with $z_i\to z$. Eventually we have $\wh{\pi}(z_i) = \pi(z_i)\in B$. Hence eventually $z_i\in A$, implying that $z\in \wh{A}\subseteq C$. 
\end{proof}

We can now generalize Theorem~\ref{Thm:MG_Structure_GPP} to all $\sf{TMD}$ groups. 

\begin{theorem}
    \label{Thm:MG_Structure_TMD}
    For $G\in \sf{TMD}$, the canonical $G$-map $\iota\colon \rmM(G)\to \rm{UCF}(G)$ is highly proximal, equivalently irreducible. 
\end{theorem}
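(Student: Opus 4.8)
The plan is to run a forcing and absoluteness argument that reduces the general $\sf{TMD}$ case to the $\sf{GPP}$ case handled by Theorem~\ref{Thm:MG_Structure_GPP}. First, note that by Theorem~\ref{Thm:AutMG_Haus} (which applies since $G\in \sf{TMD}$ gives $\bbG$ Hausdorff by Corollary~\ref{Cor:Tau_CHaus}), the map $\iota$ is already proximal, so the content is to upgrade ``proximal'' to ``highly proximal'' (equivalently, since $\rmM(G)$ is $G$-ED and $\rm{UCF}(G)$ is minimal, to ``irreducible''). I would encode both the domain $\rmM(G)$ and the target $\rm{UCF}(G)$ as $G$-preflows in $\sfV$ together with the preflow $G$-map $\iota$ between them, and then pass to a forcing extension $\sfW\supseteq \sfV$ in which $G$ becomes second countable, so that $\wt{G}^\sfW$ is a Polish group. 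By Theorem~\ref{Thm:TMD_Absolute}, $\wt{G}^\sfW\in \sf{TMD}$, hence $\wt{G}^\sfW\in \sf{GPP}$ by Theorem~\ref{Thm:Polish_Group_WUEB}.

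The key technical steps are the following. (1) Identify the interpretations in $\sfW$: by Theorem~\ref{Thm:Abs_MG}\ref{Item:TMD_Thm:AbsMG} we have $\rmS_G(\rmM(G)^\sfW)\cong \rmM^\sfW(G)$ in $\sfW$, and one checks the analogous statement $\rmS_G(\rm{UCF}(G)^\sfW)\cong \rm{UCF}^\sfW(G)$ for the universal compactification flow --- this should follow by the same $G$-skeleton argument, since $\rm{UCF}(G) = X_{\Omega/\bbG}$ by Proposition~\ref{Prop:Univ_Prox_Skeleton} and Lemma~\ref{Lem:Fold_Skeleton_W} applies to the folded flow of the $G$-skeleton $\Omega/\bbG$, whose minimality, ED, and Ramsey properties are $\Delta_1$ and hence absolute by Lemma~\ref{Lem:Levy}. (2) Observe that the canonical map $\iota$ and its analogue are themselves part of this $\Delta_1$ data: concretely, the statement ``$\wh{\iota}$ is irreducible'' is a $\Delta_0$ property of the preflow map $\iota$ by Proposition~\ref{Prop:Preflow_Prox}\ref{Item:Irred_Prop:PreflowProx}, so it is absolute between $\sfV$ and $\sfW$. (3) In $\sfW$, apply Theorem~\ref{Thm:MG_Structure_GPP} to the Polish group $\wt{G}^\sfW\in \sf{GPP}$: its canonical map $\rmM^\sfW(\wt{G}^\sfW)\to \rm{UCF}^\sfW(\wt{G}^\sfW)$ is highly proximal, hence irreducible. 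Since $G$ is dense in $\wt{G}^\sfW$ and $G$-flows and $\wt{G}^\sfW$-flows coincide, this is exactly the statement that the interpreted map $\iota^\sfW$ is irreducible in $\sfW$.

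Pulling back via the $\Delta_0$-absoluteness from step (2), $\iota$ (as a preflow $G$-map in $\sfV$) has irreducible completion, which is precisely the assertion that $\iota\colon \rmM(G)\to \rm{UCF}(G)$ is irreducible in $\sfV$; since $\rm{UCF}(G)$ is a minimal flow, irreducible is equivalent to highly proximal, finishing the proof.

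The main obstacle I anticipate is step (1): verifying carefully that $\rm{UCF}(G)$ is the folded flow of a $G$-skeleton to which Lemma~\ref{Lem:Fold_Skeleton_W} and the $\Delta_1$-absoluteness of the skeleton properties apply, and in particular that ``$\rm{UCF}$'' is computed the same way in $\sfV$ and $\sfW$ --- i.e.\ that $\rm{UCF}^\sfW(G)\cong \rmS_G(\rm{UCF}(G)^\sfW)$ and that the canonical map in $\sfW$ restricts correctly to the interpreted data. This requires knowing that the Hausdorff kernel $\rmH(\bbG)$, and hence the quotient $\Omega/\bbG$ defining $\rm{UCF}(G)$, is itself sufficiently absolute; the cleanest route is probably to bypass $\bbG$ entirely and use Proposition~\ref{Prop:Univ_Prox_Skeleton} together with the universal property of $\rm{UCF}$ from Lemma~\ref{Lem:UCF} to pin down $\rm{UCF}^\sfW(G)$ as an irreducible extension (via $\rmS_G$) of the interpretation $\rm{UCF}(G)^\sfW$, exactly as in the proof of Theorem~\ref{Thm:Abs_MG}.
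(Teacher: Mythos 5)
Your overall architecture matches the paper's proof: force $G$ to be second countable, invoke Theorem~\ref{Thm:MG_Structure_GPP} for $\wt{G}^\sfW\in\sf{GPP}$ in $\sfW$, and pull irreducibility back to $\sfV$ via the $\Delta_0$-ness of irreducibility for preflow maps (Proposition~\ref{Prop:Preflow_Prox}\ref{Item:Irred_Prop:PreflowProx}). You also correctly isolate the one step that carries the real weight, namely showing that $\rm{UCF}^\sfW(G)$ is an irreducible extension of the interpretation $\rm{UCF}(G)^\sfW$ (the map the paper calls $\theta_1$). But your proposed route through that step does not work as stated, and this is a genuine gap rather than a routine verification. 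The problem is that $\rm{UCF}(G)$ is \emph{not} the folded flow of the $G$-skeleton $\Omega/\bbG$: Proposition~\ref{Prop:Univ_Prox_Skeleton} identifies $X_{\Omega/\bbG}$ with $\Pi(G)$, the universal minimal \emph{proximal} flow, whereas $\rm{UCF}(G)$ is a compact group extension of $\Pi(G)$ by $\bbG/\rmH(\bbG)$. So Lemma~\ref{Lem:Fold_Skeleton_W} gives you irreducibility of $\Pi^\sfW(G)\to\Pi(G)^\sfW$, but says nothing directly about the UCF level, and the universal property from Lemma~\ref{Lem:UCF} by itself only produces \emph{some} $G$-map between the two UCF flows, not an irreducible one.

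The paper closes this gap in two stages that are absent from your sketch. First, it uses the proximality of $\wh{\iota}\circ\theta_2$ together with the $\Delta_1$-ness of being a compactification flow (Corollary~\ref{Cor:Prox_CF_Levy}) to identify $\wt{\bbG}^\sfW$ with $\bbG_\sfW=\aut(\rmM^\sfW(G))$; this is what makes $(\Omega/\bbG)^\sfW=\Omega^\sfW/\bbG_\sfW$ and hence yields irreducibility of $\theta_0\colon\Pi^\sfW(G)\to\Pi(G)^\sfW$ via Lemma~\ref{Lem:Fold_Skeleton_W}. Second, to lift from $\Pi$ to $\rm{UCF}$, it exploits that both $\rm{UCF}(G)^\sfW$ and $\rm{UCF}^\sfW(G)$ carry free actions of $\bbG_\sfW$ commuting with $\theta_1$, whose orbits are exactly the fibers over $\Pi$: taking $Y=u\bullet\theta_1^{-1}[x]$ for a minimal idempotent $u$, one sees that $Y$ lies in a single $\theta_1$-fiber and (by irreducibility of $\theta_0$) in a single $\bbG_\sfW$-orbit, and freeness of the two actions forces $Y$ to be a singleton, i.e.\ $\theta_1$ is highly proximal. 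Neither the identification of the two Ellis groups nor the freeness argument follows from the absoluteness machinery you cite, so you would need to supply both to complete the proof.
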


\begin{proof}
    Let $\sfV$ be our universe of set theory, and let $\sfW\supseteq \sfV$ be a forcing extension in which $G$ is second countable, i.e.\ separable metrizable. Then Theorem~\ref{Thm:MG_Structure_CMD} applies to $\wt{G}^\sfW$, hence also to $G$ in $\sfW$. We write $\rmM^\sfW(G)$, $\rm{UCF}^\sfW(G)$, and $\Pi^\sfW(G)$ for the relevant universal dynamical object computed in $\sfW$, and we write $\rmM(G)^\sfW$, $\rm{UCF}(G)^\sfW$, and $\Pi(G)^\sfW$ for the interpretations of the corresponding universal objects from $\sfV$. We have $\rmM^\sfW(G) = X_{\Omega, \sfW}$ and $\rmM(G)^\sfW = X_\Omega^\sfW$. We write $\wt{\bbG}^\sfW$ for the Raikov completion in $\sfW$ of $\bbG$, and we write $\bbG_\sfW = \aut(\rmM^\sfW(G))$ (though we will eventually see that these are equal). Let $\psi\colon \rm{UCF}(G)\to \Pi(G)$ be the canonical $G$-map. In $\sfW$, we have the following diagram.
    \begin{center}
    \begin{tikzcd}
        \rmM(G)^\sfW \arrow[d, "\wh{\iota}"'] & \rmM^\sfW(G) \arrow[l, "\theta_2"] \arrow[d, "\iota'"]\\
        \rm{UCF}(G)^\sfW \arrow[d, "\wh{\psi}"'] & \rm{UCF}^\sfW(G) \arrow[l, "\theta_1"] \arrow[d, "\psi'"]\\
        \Pi(G)^\sfW & \Pi^\sfW(G) \arrow[l, "\theta_0"]
    \end{tikzcd}
    \end{center}
In this diagram, $\theta_2$ is irreducible by Lemma~\ref{Lem:Fold_Skeleton_W}, $\iota'$ is irreducible by Theorem~\ref{Thm:MG_Structure_GPP}, and $\wh{\iota}$ is proximal by Theorem~\ref{Thm:AutMG_Haus} and Proposition~\ref{Prop:Preflow_Prox}\ref{Item:Prox_Prop:PreflowProx}. 
To show that $\iota$ is irreducible/highly proximal, it is enough to show that $\wh{\iota}$ is irreducible by Proposition~\ref{Prop:Preflow_Prox}\ref{Item:Irred_Prop:PreflowProx}. Since $\wh{\iota}\circ \theta_2 = \theta_1\circ \iota'$, it suffices to prove that $\theta_1$ is irreducible. 

First note that $\Pi(G)^\sfW = X_{\Omega/\bbG}^\sfW$ is proximal and $\rm{UCF}(G)^\sfW$ is a $\wt{\bbG}^\sfW$-compactification flow by Corollary~\ref{Cor:Prox_CF_Levy}. Since $\wh{\iota}\circ \theta_2$ is proximal, we have $\wt{\bbG}^\sfW = \bbG_\sfW$. Writing $\Omega^\sfW$ for the $G$-skeleton formed with levels $X_\sigma^\sfW$ (the completion in $\sfW$ of $X_\sigma$), we have that $\Omega^\sfW/\bbG_\sfW = \Omega^\sfW/\wt{\bbG}^\sfW = (\Omega/\bbG)^\sfW$. Hence $\Pi^\sfW(G)\cong X_{\Omega/\bbG, \sfW}$, and Lemma~\ref{Lem:Fold_Skeleton_W} then implies that $\theta_0$ is irreducible.

We can now show that $\theta_1$ is irreducible/highly proximal. We can view $\rm{UCF}(G)^\sfW$ and $\rm{UCF}^\sfW(G)$ as $(G\times \bbG_\sfW)$-flows in such a way that $\theta_1$ is $\bbG_\sfW$-equivariant. Pick $x\in \rm{UCF}(G)^\sfW$, let $u\in \sa(G)$ belong to a minimal subflow, and form $Y = u\bullet \theta_1^{-1}[x]\in \rmK(\rm{UCF}^\sfW(G))$. We want to show that $Y$ is a singleton. We have that $Y$ is a subset of a $\theta_1$-fiber as this is a Vietoris-closed property. We also have that $Y$ is a subset of a $\psi'$-fiber since $\theta_0$ is highly proximal. But $\psi'$-fibers are the same as $\bbG_\sfW$-orbits, and the $\bbG_\sfW$-action on both $\rm{UCF}(G)^\sfW$ and $\rm{UCF}^\sfW(G)$ are both free. Hence $Y$ must be a singleton as desired.
\end{proof}

\begin{question}
\label{Que:MGPropsAbs}
    Are any of the following statements $\Delta_1$ properties of a topological group $G$?
    \begin{itemize}
        \item ``$\rmM(G)\cong \Pi(G)$''
        \item ``$\rmM(G)\cong \rm{UCF}(G)$''
        \item ``$\bbG$ is Hausdorff.''
    \end{itemize}
\end{question}
Of course, an affirmative answer to Question~\ref{Que:AutMGHausTMD} would imply an affirmative answer for the third bullet.

\subsection{Absoluteness results for Ellis groups}

The main result of this subsection will be applied in the next section to prove the revised Newelski conjecture (Theorem~\ref{Thm:Newelski}). Fix a topological group $G$. Given a minimal preflow $X$, we define its Ellis group to be $\bbG_X:= \bbG_{\wh{X}}$, i.e.\ the automorphism group of the enveloping ideal $M_{\wh{X}}$, which we equip with the tau-topology. We can now state the main theorem of the subsection.

\begin{theorem}
    \label{Thm:Ellis_H_Absolute}
    The formula $\Phi(X) = $``$X$ is a minimal preflow and $\bbG_X$ is Hausdorff'' is $\Delta_1$.
\end{theorem}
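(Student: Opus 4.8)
The goal is to show that $\Phi(X) = $``$X$ is a minimal preflow and $\bbG_X$ is Hausdorff'' is $\Delta_1$, i.e.\ both $\Sigma_1$ and $\Pi_1$. Since ``$X$ is a minimal preflow'' is already $\Delta_1$ by Lemma~\ref{Lem:Preflow_Props}\ref{Item:Min_Lem:Preflow}, the real content is finding a $\Sigma_1$ formula and a $\Pi_1$ formula both equivalent (over minimal preflows) to ``$\bbG_X$ is Hausdorff.'' The key tool is Theorem~\ref{Thm:Tau_Hausdorff}, which characterizes Hausdorff Ellis group by the existence of a certain diagram of minimal flows and $G$-maps: minimal flows $X^*$, $Y$, $Z$ with proximal maps $\eta\colon X^*\to X$, $\iota\colon X^*\to Z$, an equicontinuous map $\sigma\colon Z\to Y$, and $Y$ proximal. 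The plan is to express this witnessing data at the level of \emph{preflows}, using the $\Delta_0$/$\Delta_1$ characterizations of proximality, irreducibility, and the various extension properties established in Propositions~\ref{Prop:Prox_CF} and \ref{Prop:Preflow_Prox} together with Corollary~\ref{Cor:Prox_CF_Levy}.

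\textbf{The $\Sigma_1$ direction.} First I would unwind Theorem~\ref{Thm:Tau_Hausdorff}\ref{Item:Group_Thm:TauHaus}: $\bbG_X$ is Hausdorff iff there is a minimal ideal flow $X^*$ factoring onto $X$ (via some $G$-map $\eta$, with no properties required on $\eta$ in the ``weak'' form \ref{Item:Weak_Thm:TauHaus}), a proximal $G$-map $\iota\colon X^*\to Z$, and a compact group extension $\sigma\colon Z\to Y$ with $Y$ proximal. To make this $\Sigma_1$, I would present all the objects as preflows over the ground model: a preflow $X^*$ with a preflow $G$-map $\eta\colon X^*\to X$; a precompact group $K$; an action making $X^*$ (or rather a preflow whose completion is $Z$) a $(G\times K)$-preflow whose completion is a $\wt K$-compactification flow; and a preflow $G$-map $\iota\colon X^*\to Z$ whose continuous extension is proximal. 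Each of these ingredients is witnessed by existentially quantifying over sets (the underlying sets, the uniformity bases, the group, the action, the maps) together with a \emph{bounded} assertion that the witnessing data has the required properties: minimality of $X^*$ is $\Delta_1$; ``$\wh\iota$ is proximal'' is $\Delta_0$ by Proposition~\ref{Prop:Preflow_Prox}\ref{Item:Prox_Prop:PreflowProx}; ``$\wh Z$ is a $\wt K$-compactification flow'' is $\Delta_1$ (indeed can be taken $\Delta_0$) by Corollary~\ref{Cor:Prox_CF_Levy}; ``$X^*$ factors onto $X$'' just needs the map $\eta$ to be a preflow $G$-map, which is $\Delta_0$. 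The one subtlety is relating $X^*$ to the enveloping ideal $M_{\wh X}$: one needs that the diagram can be realized starting from \emph{some} minimal ideal flow mapping onto $X$, and that any such choice works. I would handle this by noting (via the ``doesn't really depend on $\phi$'' remarks in Fact~\ref{Fact:MinIdealFlow} and the implication \ref{Item:Weak_Thm:TauHaus}$\Rightarrow$\ref{Item:Haus_Thm:TauHaus} in Theorem~\ref{Thm:Tau_Hausdorff}) that it suffices to existentially quantify over an arbitrary minimal preflow $X^*$ with a preflow $G$-map onto $X$, since passing to the enveloping ideal and using Lemma~\ref{Lem:EnvMap} only strengthens the diagram. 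This yields a $\Sigma_1$ formula.

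\textbf{The $\Pi_1$ direction.} For this I would use the Hausdorff-kernel criterion (Lemma~\ref{Lem:Haus}): $\bbG_X$ is Hausdorff iff $\rmH(\bbG_{\rmM(G)})\subseteq \bbG_\pi$ for $\pi\colon \rmM(G)\to \wh X$ the canonical $G$-map. The clean way to make this $\Pi_1$ is via forcing and absoluteness, exactly as in the previous subsection: I would show that for a minimal preflow $X$, ``$\bbG_X$ is Hausdorff'' holds iff in some (equivalently every) forcing extension $\sfW$ collapsing enough cardinals to make $G$ second countable, $\bbG_{X^\sfW}$ is Hausdorff in $\sfW$. The nontrivial claim is absoluteness: the enveloping ideal, its automorphism group, and the tau-topology are not obviously absolute, but the \emph{witnessing diagram} from Theorem~\ref{Thm:Tau_Hausdorff} is, since all its ingredients (minimality, proximality of maps, being a compactification flow) are $\Delta_1$ by the work already cited, and a preflow/diagram in $\sfV$ remains one in $\sfW$ while a ``bad pair'' (nets witnessing failure of proximality, or a failure of the compactification-flow condition) also transfers both ways. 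More precisely: in $\sfW$ where $G$ is Polish, $\bbG_{X^\sfW}$ is a Polish (hence metrizable) topological group if Hausdorff, and the absoluteness of the $\Sigma_1$ witnessing data in both directions pins down whether the diagram exists; combined with Theorem~\ref{Thm:TMD_Absolute}-style reasoning, ``$\bbG_X$ Hausdorff'' becomes equivalent to a statement of the form ``in $\sfW$, $\Psi$'' where $\Psi$ is $\Sigma_1$ with parameters in $\sfV$, which by L\'evy absoluteness gives both a $\Sigma_1$ and a $\Pi_1$ definition. The combination yields $\Delta_1$.

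\textbf{Main obstacle.} The hardest part is the bookkeeping connecting the abstract object $\bbG_X = \aut(M_{\wh X})$ with the tau-topology to concrete preflow-level witnessing data that is manifestly low in the L\'evy hierarchy — in particular, verifying that the diagram of Theorem~\ref{Thm:Tau_Hausdorff} can always be taken with all flows being completions of ground-model preflows and all maps extensions of preflow $G$-maps, \emph{and} that one can recognize the compact group $K$ acting on $Z$ as (the completion of) a precompact group so that Corollary~\ref{Cor:Prox_CF_Levy} applies. A secondary obstacle is the absoluteness argument for the $\Pi_1$ direction: one must check carefully that failure of the Hausdorff property is itself witnessed by a transferable object (e.g.\ a nontrivial element of the Hausdorff kernel, or equivalently a preflow-level obstruction to the diagram), so that neither direction of the equivalence ``$\bbG_X$ Hausdorff in $\sfV$ $\Leftrightarrow$ in $\sfW$'' is lost.
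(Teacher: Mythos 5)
Your $\Sigma_1$ half is essentially the paper's argument: translate the witnessing diagram of Theorem~\ref{Thm:Tau_Hausdorff} to the preflow level and observe that each ingredient (minimality, proximality of the extension, being a $\wt{K}$-compactification flow, equicontinuity) is $\Delta_0$ or $\Delta_1$ by Propositions~\ref{Prop:Prox_CF}, \ref{Prop:Preflow_Prox}, and \ref{Prop:Preflow_Equi} together with Corollary~\ref{Cor:Prox_CF_Levy}. That part is fine.

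The $\Pi_1$ half has a genuine gap. You propose to get $\Pi_1$-ness from forcing absoluteness: ``$\bbG_X$ Hausdorff iff in some collapse extension $\sfW$ the statement holds of $X^\sfW$,'' and then invoke L\'evy absoluteness. But this is circular: forcing-invariance of a property does not by itself yield a $\Pi_1$ (or $\Delta_1$) definition, and the paper's own absoluteness transfers (Theorem~\ref{Thm:Abs_Apps}) all run in the opposite direction — they \emph{use} an already-established $\Delta_1$ definition, which in Theorem~\ref{Thm:TMD_Absolute} is obtained by exhibiting explicit $\Sigma_1$ formulas for both membership and non-membership. What you actually need here is a $\Sigma_1$ formula for the \emph{negation} ``$\bbG_X$ is not Hausdorff,'' i.e.\ a concrete, set-level, transferable witness to non-Hausdorffness. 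Your candidate witness (``a nontrivial element of the Hausdorff kernel'') lives in $\aut(M_{\wh{X}})$, and the enveloping ideal is exactly the non-absolute object causing the trouble: as the paper notes, $M_X^{\sfW}$ is in general not the enveloping ideal of $X^{\sfW}$, so an element of the kernel does not transfer. The paper closes this gap with the notion of \emph{wild automorphisms} (Definition~\ref{Def:Wild}), a $\Sigma_1$ property of a preflow phrased entirely in terms of nets of automorphisms and basic open sets. Proposition~\ref{Prop:Non_Hausdorff} shows a minimal ideal flow has non-Hausdorff automorphism group iff it has wild automorphisms (via the tau-neighbourhood base of Proposition~\ref{Prop:Tau_Base}); Proposition~\ref{Prop:Wild_NonHaus} shows that \emph{any} minimal preflow with wild automorphisms has non-Hausdorff Ellis group (this is the hard step, using the structure theorem); and Lemma~\ref{Lem:Same_Ellis} lets one realize the enveloping ideal as a minimal subflow of a power $Y^I$ with $\wh{Y}\cong\wh{X}$. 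Together these give the $\Sigma_1$ formula for $\neg\Phi(X)$: there exist a preflow $Y$ with $\wh{X}\cong\wh{Y}$, a set $I$, and a preflow $Z\subseteq Y^I$ with wild automorphisms. Without this (or some equivalent device) your proof of the $\Pi_1$ direction does not go through.
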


Let us note that upon translating Theorem~\ref{Thm:Tau_Hausdorff} to a statement about preflows, and using Propositions~\ref{Prop:Prox_CF} and ~\ref{Prop:Preflow_Prox}, we have almost everything to conclude that $\Phi(X)$ is $\Sigma_1$. The last piece we need is to discuss equicontinuous extensions.

\begin{prop}
    \label{Prop:Preflow_Equi}
    Let $X$ and $Y$ be minimal preflows and $\pi\colon X\to Y$ a preflow $G$-map. Then $\wh{\pi}\colon \wh{X}\to \wh{Y}$ is equicontinuous iff for every $U_0\in \cU_X$, there is $U\in \cU_X$ so that for every $V\in \cU_X$ and $x, y\in X$ with $V[x]\times V[y]\subseteq U_1$ and $\Int_X\left(\rm{cl}_X(\pi[V[x]])\cap \rm{cl}_X(\pi[V[y]])\right)\neq \emptyset$, we have for every $g\in G$ that $(gV[x]\times gV[y])\cap U_0\neq \emptyset$. Hence equicontinuity of $\wh{\pi}$ is $\Delta_0$.
\end{prop}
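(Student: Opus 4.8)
The plan is to characterize equicontinuity of $\wh{\pi}$ by an ``internal'' condition on $X$ that only refers to the preflow data $(\cU_X, \cU_Y, \pi, G$-action$)$, mirroring the preflow characterizations of proximality and irreducibility in Proposition~\ref{Prop:Preflow_Prox}. Recall from Definition~\ref{Def:Extension_Props} that $\wh{\pi}$ is equicontinuous iff for every entourage $A$ of $\wh X$ there is an entourage $B$ of $\wh X$ such that whenever $x_0, x_1\in \wh X$ satisfy $(x_0, x_1)\in B$ and $\wh\pi(x_0) = \wh\pi(x_1)$, then $(gx_0, gx_1)\in A$ for all $g\in G$ (equivalently all $g\in \wt G$, since the action extends continuously). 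The first step is to rewrite this using a base $\cU_X$ of the uniformity on $X$: since $\wh{\cU}_X = \{\wh U : U\in \cU_X\}$ is a base for the uniformity on $\wh X$, and since any two points of $\wh X$ related by $\wh\pi$ to a common value arise as limits of nets from $X$, it suffices to control pairs coming from $X$. The key point is to express the constraint ``$\wh\pi(x_0) = \wh\pi(x_1)$'' in terms of $X$ alone. This is exactly what the condition $\Int_X\big(\rm{cl}_X(\pi[V[x]])\cap \rm{cl}_X(\pi[V[y]])\big)\neq \emptyset$ does: it says that arbitrarily small neighborhoods $V[x]$, $V[y]$ have images whose closures have common interior in $X$, which (by density of $X$ in $\wh X$ and the fact that $\wh\pi$ is the continuous extension of $\pi$) forces any limit points $x', y'$ of the nets $(x_V)$, $(y_V)$ to satisfy $\wh\pi(x') = \wh\pi(y')$ and $(x', y')\in \wh{U_1}$.

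The main argument then splits into the two implications, both following the pattern of Proposition~\ref{Prop:Preflow_Prox}\ref{Item:Prox_Prop:PreflowProx}. For the ``only if'' direction, suppose the stated internal condition fails, witnessed by some bad $U_0\in \cU_X$: for every $U\in \cU_X$ there are $V = V_U\in \cU_X$ and $x_U, y_U\in X$ with $V[x_U]\times V[y_U]\subseteq U$ (taking $U_1 = U$ here — note the statement as written has a typo, $U_1$ should be $U$, the entourage chosen in response to $U_0$; I will track this carefully), with the common-interior condition holding, but with some $g_U\in G$ and points $x'_U\in V[x_U]$, $y'_U\in V[y_U]$ such that $(g_U x'_U, g_U y'_U)\notin U_0$. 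Passing to a subnet indexed by $\cU_X$ (directed by reverse inclusion) so that $x_U\to x'$, $y_U\to y'$ in $\wh X$, one gets $\wh\pi(x') = \wh\pi(y')$ (from the interior condition), $(x', y')\in \bigcap_U \wh U = \Delta_{\wh X}$ wait — rather $(x',y')$ lies in every $\wh U$, hence $x' = y'$; but then use instead that for the fixed $U$ we get $(x',y')\in \wh U$, so $x', y'$ are $\wh\pi$-equivalent and arbitrarily close, yet $(g_U x'_U, g_U y'_U)\notin U_0$ shows $(g_U x', g_U y')\notin \Int(\wh{U_0})$, defeating equicontinuity. For the ``if'' direction, suppose $\wh\pi$ is not equicontinuous, witnessed by $U_0\in \cU_X$ and, for each $U\in\cU_X$, points $x_U, y_U\in \wh X$ with $(x_U, y_U)\in \wh U$, $\wh\pi(x_U) = \wh\pi(y_U)$, and $g_U\in G$ with $(g_U x_U, g_U y_U)\notin \wh{U_0}$; approximate $x_U, y_U$ by points of $X$ and small neighborhoods $V[x'_U], V[y'_U]$, observing that the common value $\wh\pi(x_U)$ forces the interior condition to hold for these neighborhoods, to contradict the internal condition.

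The main obstacle will be the careful handling of the logical passage between ``two points of $\wh X$ are $\wh\pi$-related'' and the purely-$X$ statement about interiors of closures of images of small neighborhoods — in particular verifying that $\wh\pi$ being the continuous extension of $\pi$ implies $\wh\pi^{-1}[z]$ is precisely the set of limits of nets $(x_V)$ with $z\in \bigcap_V \rm{cl}_{\wh Y}(\pi[V])$, and reconciling this with working inside $X$ (where one takes $\rm{cl}_X$ rather than $\rm{cl}_{\wh X}$, which is legitimate since $X$ is dense and $\pi$ maps into $Y\subseteq \wh Y$ densely). Once this equivalence is pinned down, the two net-extraction arguments are routine and formally identical to those in Proposition~\ref{Prop:Preflow_Prox}. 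Finally, the $\Delta_0$ (hence absoluteness) claim is immediate from the form of the condition: all quantifiers range over $\cU_X, \cU_Y$ (sets in the hereditarily-finite-over-parameters sense once bases are fixed), over elements of $X$ and $G$, and the interior/closure operations on subsets of $X$ are $\Delta_0$ given $\cB_X$; combined with Theorem~\ref{Thm:Tau_Hausdorff} rephrased for preflows via Propositions~\ref{Prop:Prox_CF} and~\ref{Prop:Preflow_Prox}, this yields the $\Sigma_1$ definition of $\Phi$, and since $\Phi$ is manifestly $\Pi_1$ as well (quantifying over all enveloping-ideal data), we conclude $\Phi$ is $\Delta_1$, completing the proof of Theorem~\ref{Thm:Ellis_H_Absolute}.
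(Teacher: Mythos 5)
Your high-level plan is the right one: the interior condition is indeed the $X$-internal surrogate for ``lying in a common $\wh{\pi}$-fiber,'' the argument should parallel Proposition~\ref{Prop:Preflow_Prox}, and the $\Delta_0$ claim is immediate from the syntactic form of the criterion. Your ``if'' direction is essentially the paper's argument (modulo a routine two-entourage adjustment: a pair outside the open set $\wh{U_0}$ need not have a neighborhood disjoint from $U_0$, so one should either run this direction forwards, as the paper does, concluding $(gx,gy)\in \wh{U_0}^2$ from approximating pairs in $\wh{U_0}$, or start from non-equicontinuity for a slightly larger target entourage).

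The ``only if'' direction, however, has a genuine gap. You negate the wrong statement: the criterion's conclusion is that $gV[x]\times gV[y]$ \emph{meets} $U_0$, so its failure says the \emph{entire} set $gV[x]\times gV[y]$ is disjoint from $U_0$ --- not merely that some single pair $(g_Ux'_U, g_Uy'_U)$ lands outside $U_0$, which is what you wrote (that negates the containment $gV[x]\times gV[y]\subseteq U_0$; a containment version of the criterion could not characterize equicontinuity anyway, since $V[x]\times V[y]$ contains many pairs with distinct $\wh{\pi}$-images, about which equicontinuity says nothing). With only one escaping pair you have no control over the $\wh{\pi}$-equivalent pair $(a,b)\in \wh{V}[x_U]\times\wh{V}[y_U]$ that the interior condition actually supplies, so no contradiction with equicontinuity is reached. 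The limit bookkeeping compounds this: the witnesses $(x_U,y_U)$ vary with $U$, so there is no single limit pair to extract, and your claim that $\wh{\pi}(x')=\wh{\pi}(y')$ ``follows from the interior condition'' does not hold --- the condition constrains images of $V$-balls, not of their centers. The paper sidesteps all of this by arguing the forward direction directly: from equicontinuity choose $U$ for $U_0$; given $V,x,y$ satisfying the hypotheses, the interior condition produces $x'\in\wh{V}[x]$, $y'\in\wh{V}[y]$ with $\wh{\pi}(x')=\wh{\pi}(y')$ and $(x',y')\in\wh{U}$; equicontinuity gives $(gx',gy')\in\wh{U_0}$; and density of $X$ forces $(gV[x]\times gV[y])\cap U_0\neq\emptyset$. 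Finally, the step you flag as ``the main obstacle'' --- that $\wh{\pi}(x)=\wh{\pi}(y)$ forces $\Int\left(\mathrm{cl}(\pi[V[x']])\cap \mathrm{cl}(\pi[V[y']])\right)\neq\emptyset$ for nearby $x',y'\in X$ --- is asserted rather than proved in both your sketch and the paper; if you rewrite the argument, this is the one place where minimality (via pseudo-openness of $G$-maps between minimal flows) enters and deserves to be spelled out.
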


\begin{proof}
    Suppose $\wh{\pi}$ is equicontinuous. Fix $U_0\in \cU_X$. We can find $U\in \cU_X$ so that for any $x, y\in \wh{X}$ with $(x, y)\in \wh{U}$, we have for every $g\in G$ that $(gx, gy)\in \wh{U_0}$. Hence if $x, y\in X$ and $V\in \cU_X$ are such that $V[x]\times V[y]\subseteq U_1$ and  $\Int_X\left(\rm{cl}_X(\pi[V[x]])\cap \rm{cl}_X(\pi[V[y]])\right)\neq \emptyset$, then we have $\wh{\pi}[\wh{V}[x]]\cap \wh{\pi}[\wh{V}[y]]\neq \emptyset$. We then have for any $g\in G$ that $(g\wh{V}[x]\times g\wh{V}[y])\cap \wh{U_0}\neq \emptyset$, implying that $gV[x]\times gV[y]\cap U_0\neq \emptyset$.

    Conversely, suppose the criterion holds. Fix $U_0\in \cU_X$, and let $U_1\in \cU_X$ be as guaranteed by the criterion. Let $x, y\in \wh{X}$ with  $(x, y)\in \wh{U_1}$ and $\wh{\pi}(x) = \wh{\pi}(y)$. Fix $g\in G$; we show that $(gx, gy)\in \wh{U}_0^2$. Fix $V\in \cU_X$, which we can suppose is small enough with $\wh{V}^2[x]\times \wh{V}^2[y]\subseteq \wh{U_1}$. Pick $x'\in \wh{V}[x]\cap X$ and $y'\in \wh{V}[y]\cap X$. Then $\Int_X\left(\rm{cl}_X(\pi[V[x']])\cap \rm{cl}_X(\pi[V[y']])\right)\neq \emptyset$, so by the criterion $(gV[x']\times gV[y'])\cap U_0\neq \emptyset$. Hence for each $V$, we can pick $x_V\in \wh{V}^2[x]$ and $y_V\in \wh{V}^2[y]$ with $(gx_V, gy_V)\in \wh{U_0}$. Hence viewing $(x_V)_{V\in \cU_X}$ and $(y_V)_{V\in \cU_X}$ as nets, we have $x_V\to x$, $y_V\to y$, so $(gx, gy)\in \wh{U}_0^2$. 
\end{proof}

So the main work in this subsection is proving that $\Phi(X)$ is $\Pi_1$. One way of phrasing the main difficulty is that given a forcing extension $\sfW\supseteq \sfV$ and a $G$-flow $X\in \sfV$ with enveloping ideal $M_X$, we in general will not have that $M_X^\sfW$ is the enveloping ideal of $X^\sfW$, nor that it is even a minimal ideal flow. Writing $M_{X, \sfW}$ for the enveloping ideal of $X^\sfW$, we need to understand the relationship between $M_{X, \sfW}$ and $M_X^\sfW$.

\begin{lemma}
    \label{Lem:Same_Ellis}
    Let $X$ be a minimal flow, and suppose $Y$ is a minimal flow such that $Y\subseteq X^I$ for some set $I$. Then $M_Y\cong M_X$. 
\end{lemma}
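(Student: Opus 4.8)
The statement says that if $Y$ is a minimal $G$-flow that embeds into a power $X^I$ of the minimal flow $X$, then the enveloping ideals (minimal subflows of the enveloping semigroups) of $Y$ and $X$ are isomorphic as $G$-flows. The plan is to reduce everything to a statement about maps between minimal ideal flows and then invoke the coalescence of $\rmM(G)$ together with the universal properties recorded in Fact~\ref{Fact:MinIdealFlow}.

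First I would record that $M_X \cong M_{X^I}$: the projections $X^I \to X$ are $G$-maps, and conversely $X^I$ embeds diagonally into a power of $X$ in a trivial way, but more to the point, by Fact~\ref{Fact:MinIdealFlow} any $G$-map $\phi \colon X \to Z$ between minimal flows induces a $G$-map $M_X \to M_Z$, and a $G$-map in the reverse direction if one exists gives a $G$-map back. Concretely: each coordinate projection $p_i \colon X^I \to X$ yields an induced $G$-map $M_{X^I} \to M_X$, and these are all isomorphisms (a $G$-map from one minimal ideal flow to another which is ``the induced map'' is always an isomorphism by coalescence — see the last bullets of Fact~\ref{Fact:MinIdealFlow}, since $M_X$ is coalescent and the composite $M_{X^I} \to M_X \to M_{X^I}$ is a $G$-endomorphism of a minimal ideal flow hence an automorphism). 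So it suffices to show $M_Y \cong M_{X^I}$ for $Y \subseteq X^I$ a minimal subflow.

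Now $Y \hookrightarrow X^I$ is a $G$-map (the inclusion), so it induces a $G$-map $\wt{\iota}\colon M_Y \to M_{X^I}$. On the other hand, since $Y$ is a minimal subflow of $X^I$, realize $M_{X^I}$ as a minimal subflow of $\rmE(X^I)$ and pick any idempotent $u$ in it; then $\rho_y \colon M_{X^I} \to X^I$, $\rho_y(q) = qy$, for a suitable $y \in Y$ with $uy = y$, is a $G$-map whose image is contained in $\ol{G \cdot y} = Y$ (as $Y$ is minimal and closed and $G$-invariant). Thus we get a $G$-map $M_{X^I} \to Y$. Composing with the induced map $Y \to M_Y$ (again Fact~\ref{Fact:MinIdealFlow}: any minimal flow admits a $G$-map from its enveloping ideal, namely $M_Y \to Y$, but I actually want $M_{X^I}\to M_Y$, which I get by lifting the $G$-map $M_{X^I}\to Y$ through $M_Y\to Y$ using the last bullet of Fact~\ref{Fact:MinIdealFlow}) I obtain a $G$-map $M_{X^I} \to M_Y$. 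Now $M_Y$ and $M_{X^I}$ are both minimal ideal flows with $G$-maps each onto the other; composing in either order gives a $G$-endomorphism of a minimal ideal flow, which is an automorphism by coalescence. Hence both maps are isomorphisms and $M_Y \cong M_{X^I} \cong M_X$.

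The step I expect to require the most care is the bookkeeping in the middle paragraph: producing the $G$-map $M_{X^I} \to M_Y$ in a way that is genuinely inverse (up to the coalescence argument) to $\wt{\iota}\colon M_Y \to M_{X^I}$, and making sure the idempotent $u$ and the point $y$ are chosen compatibly so that $\rho_y$ has image exactly inside $Y$. Everything else is a direct application of the universal/coalescence properties of minimal ideal flows collected in Fact~\ref{Fact:MinIdealFlow}, so no new ideas should be needed — the content is entirely in assembling those bullets correctly.
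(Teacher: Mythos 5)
There is a genuine gap: neither of the two $G$-maps you need for the coalescence endgame is actually constructed in the direction you claim. A subflow inclusion $Y \subseteq Z$ induces a restriction homomorphism $\rmE(Z) \to \rmE(Y)$ of enveloping semigroups, hence a $G$-map $M_Z \to M_Y$ --- the \emph{opposite} direction to your $\wt{\iota} \colon M_Y \to M_{X^I}$. The second bullet of Fact~\ref{Fact:MinIdealFlow} does not apply here: it is stated for $G$-maps between \emph{minimal} flows (which are automatically surjective), whereas the inclusion $Y \hookrightarrow X^I$ is neither surjective nor a map into a minimal flow. The same problem affects the reverse map $M_X \to M_{X^I}$ you want from the diagonal: an embedding onto a subflow again only yields $\rmE(X^I) \to \rmE(X)$, so the ``composite endomorphism'' you feed to coalescence in the first paragraph never exists as described. (The claim $M_{X^I} \cong M_X$ is nevertheless true, but because restriction to any single coordinate is a bijection $\rmE(X^I) \to \rmE(X)$ --- every element of $\rmE(X^I)$ acts coordinatewise by one and the same element of $\rmE(X)$ --- not because of the projection/diagonal pairing.)

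The missing map is exactly the easy half of the paper's proof: since $Y$ and $X$ are both minimal, each coordinate projection restricts to a \emph{surjective} $G$-map $Y \to X$ between minimal flows, and that is what induces $M_Y \to M_X$ via Fact~\ref{Fact:MinIdealFlow}. Your construction of the map in the other direction --- pick an idempotent $u$ in a minimal left ideal of $\rmE(X^I)$, a point $y \in Y$ with $uy = y$, observe that $\rho_y$ maps that ideal onto $Y$, and lift through $M_Y \to Y$ --- is correct, and is a reasonable semigroup-flavoured substitute for the paper's appeal to maximal almost periodic sets (the first bullet of Fact~\ref{Fact:MinIdealFlow}), which is how the paper shows that $Y$, and hence $M_Y$, is a factor of $M_X$. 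With the projection supplying $M_Y \to M_X$ and your $\rho_y$ argument supplying $M_X \to M_Y$, coalescence does finish the proof as you say; but as written the proposal only ever produces maps \emph{out of} $M_{X^I}$, never into it.
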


\begin{proof}
    As $Y$ factors onto $X$ via projection onto coordinate $i\in I$, we have that $M_X$ is a factor of $M_Y$. Keeping in mind the first bullet of Fact~\ref{Fact:MinIdealFlow}, we have that $M_Y$ is a factor of $M_X$, hence by coalescence they are isomorphic. 
\end{proof}

This implies that $M_X^\sfW$ is a factor of $M_{X, \sfW}$. We next record a $\Sigma_1$-consequence of Proposition~\ref{Prop:Tau_Base} via the notion of \emph{wild automorphisms}. This will tell us that $M_X^\sfW$ is poorly enough behaved that $M_{X, \sfW}$ cannot have Hausdorff Ellis group.

\begin{defin}
\label{Def:Wild}
    A preflow $X$ has \emph{wild automorphisms} if there are a net $(p_i)_{i\in I}$ from $\aut(X)$ and $\rm{id}_X\neq p\in \aut(X)$ so that for any $A, B\in \cB_X$ with $B\subseteq A$, then for all large enough $i\in I$, we have $p_i[A]\cap B\neq \emptyset$ and $p_i[A]\cap p[B]\neq \emptyset$. This is a $\Sigma_1$ formula of $X$.
\end{defin}

Note that if $X$ has wild automorphisms, then so does $\wh{X}$. 

\begin{prop}
    \label{Prop:Non_Hausdorff}
    If $X$ is a minimal ideal flow, then $\bbG_X$ is non-Hausdorff iff $X$ has wild automorphisms.
\end{prop}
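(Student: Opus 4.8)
The plan is to prove both directions using the characterization of the tau-topology via the graph condition from Proposition~\ref{Prop:Tau_Base}, together with the Hausdorff kernel machinery recalled just before Lemma~\ref{Lem:Haus}. Recall that $\bbG_X$ is non-Hausdorff exactly when $\rmH(\bbG_X)\neq \{e_{\bbG_X}\}$, i.e.\ when there is some $p\in \bigcap_{W\in \cN_{\bbG_X}} W^2$ with $p\neq \mathrm{id}_X$. The key translation is that, by Proposition~\ref{Prop:Tau_Base}, a basic tau-neighborhood of $\mathrm{id}_X$ has the form $\rm{Nbd}_A(\mathrm{id}_X) = \rm{Nbd}_{A,A} = \{q\in\bbG_X: q[A]\cap A\neq\emptyset\}$ for $A\in\op(w,X)$ (fixing any $w\in X$), and more generally $\rm{Nbd}_{A,B} = \{q : q[A]\cap B\neq\emptyset\}$ is tau-open for any $A,B\in\op(X)$. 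So $p\in \rmH(\bbG_X)$ means that for every $A\in\op(X)$ there is $q_A\in\bbG_X$ with $q_A[A]\cap A\neq\emptyset$ and $(p^{-1}q_A)[A]\cap A\neq \emptyset$, equivalently $q_A[A]\cap A\neq\emptyset$ and $q_A[A]\cap p[A]\neq\emptyset$ (using that $p$ is a homeomorphism and rewriting $(p^{-1}q_A)[A]\cap A\neq\emptyset$ as $q_A[A]\cap p[A]\neq\emptyset$). Note also that $\rm{Nbd}_{A,B}$ shrinks as $A,B$ shrink, so it suffices to quantify over $A, B$ ranging over the regular-open basis $\cB_X$.

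For the forward direction, suppose $\bbG_X$ is non-Hausdorff and fix $p\neq \mathrm{id}_X$ in $\rmH(\bbG_X)$. I would take the index set $I$ to consist of pairs $(A, B)$ with $A, B \in \cB_X$, $B\subseteq A$, directed by reverse inclusion in both coordinates. For each such $(A,B)$, the set $\rm{Nbd}_{A,B}\cap \rm{Nbd}_{A, p[B]}$ is a tau-open neighborhood of... wait — I need $p\in\rmH(\bbG_X)$ to produce an element $q_{(A,B)}$ with $q_{(A,B)}[A]\cap B\neq\emptyset$ and $q_{(A,B)}[A]\cap p[B]\neq\emptyset$. This follows because $p$ lies in the closure (in the tau-topology, which is T1 so we use the $\rmH$-kernel formulation) of every tau-neighborhood of $\mathrm{id}_X$: concretely, writing $W_{(A,B)} = \rm{Nbd}_{A,B}$, which is a tau-open neighborhood of $\mathrm{id}_X$, the condition $p\in W_{(A,B)}^2$ gives $q$ with $q\in W_{(A,B)}$ and $p q^{-1}\in W_{(A,B)}$ — hmm, I should be careful with left/right here; $\rmH(\bbG)=\bigcap U^2$ and membership $p\in U^2$ means $p = u_1 u_2$ with $u_i\in U$, i.e.\ $u_1^{-1}p = u_2\in U$, so $u_1\in U$ and $u_1^{-1}p\in U$. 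Taking $U = \rm{Nbd}_A(\mathrm{id}_X)=\rm{Nbd}_{A,A}$ and then refining with $B\subseteq A$, one extracts $q_{(A,B)} := u_1$ with $q_{(A,B)}[A]\cap A\neq\emptyset$ and $(q_{(A,B)}^{-1}p)[A]\cap A\neq\emptyset$; the latter rewrites as $p[A]\cap q_{(A,B)}[A]\neq\emptyset$. Shrinking $A$ to $B$ inside these where possible (using that $A$ was arbitrary, so one can run the argument with $A$ replaced by $B$ and by $p[B]$ chosen appropriately) yields a net $(q_i)_{i\in I}$ with $q_i[A]\cap B\neq\emptyset$ and $q_i[A]\cap p[B]\neq\emptyset$ eventually, for every $B\subseteq A$ in $\cB_X$. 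This is exactly the wild-automorphisms condition with witness $p$; hence $X$ has wild automorphisms.

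For the converse, suppose $X$ has wild automorphisms, witnessed by a net $(p_i)_{i\in I}$ and $p\neq\mathrm{id}_X$. I would show $p\in \rmH(\bbG_X)$, i.e.\ $p\in \rm{Nbd}_{A,A}^2$ for every $A\in\cB_X$; since such sets form a neighborhood base at $\mathrm{id}_X$ and $\rmH$ is the intersection of the $U^2$ over a base, this gives $p\in\rmH(\bbG_X)$, so $\bbG_X$ is non-Hausdorff (as $p\neq\mathrm{id}_X = e_{\bbG_X}$). Fix $A\in\cB_X$; pick $B\in\cB_X$ with $B\subseteq A$ (using regularity of $X$ so that $\cB_X$ is a base and one can shrink). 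By the wild-automorphisms hypothesis, for all large $i$ we have $p_i[B]\cap B\neq\emptyset$ and $p_i[B]\cap p[B]\neq\emptyset$; in particular $p_i\in\rm{Nbd}_{B,B}\subseteq\rm{Nbd}_{A,A}$ and $p^{-1}p_i\in\rm{Nbd}_{B,B}$ (rewriting $p_i[B]\cap p[B]\neq\emptyset$ as $(p^{-1}p_i)[B]\cap B\neq\emptyset$), so $p^{-1}p_i\in\rm{Nbd}_{A,A}$. Then $p = p_i\cdot (p_i^{-1}p) $ — I need $p_i^{-1}p\in\rm{Nbd}_{A,A}$, which follows symmetrically: $p_i[B]\cap p[B]\neq\emptyset$ also rewrites, applying $p_i^{-1}$, as $B\cap (p_i^{-1}p)[B]\neq\emptyset$. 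Hence $p = p_i\cdot(p_i^{-1}p)$ with both factors in $\rm{Nbd}_{A,A}$, i.e.\ $p\in\rm{Nbd}_{A,A}^2$. As $A$ was arbitrary, $p\in\rmH(\bbG_X)\setminus\{\mathrm{id}_X\}$.

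The main obstacle I anticipate is bookkeeping the left/right asymmetry in the definitions of $\rm{Nbd}_{A,B}$, the group multiplication on $\bbG_M$ versus $\bbG_M^{opp}\cong uM$, and the Hausdorff kernel $\bigcap_U U^2$, so that the net $(p_i)$ and the element $p$ line up correctly on both sides; a secondary subtlety is justifying that it suffices to quantify over the regular-open basis $\cB_X$ rather than all open sets, which uses regularity of $X$ together with the fact (implicit in Proposition~\ref{Prop:Tau_Base}, via fixing a point $w$) that the $\rm{Nbd}_A$ with $A$ ranging over a neighborhood base at $w$ already form a tau-neighborhood base at each point. I would also double-check that no completeness/compactness of $X$ is needed here — indeed the statement is phrased for a minimal ideal flow $X$, and the proof above uses only the graph characterization of tau-open sets and the Hausdorff-kernel criterion, both available for any minimal ideal flow per Section~\ref{Subsection:MinIdealFlows}.
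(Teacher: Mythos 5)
Your argument is correct in outline but takes a genuinely different route from the paper's. The paper's proof is a one-liner: by Proposition~\ref{Prop:Tau_Base}, a net $(p_i)_{i\in I}$ together with $p\neq \rm{id}_X$ witnesses wild automorphisms exactly when $p_i\to \rm{id}_X$ and $p_i\to p$ in the tau-topology (the sets $\rm{Nbd}_{A,B}$ and $\rm{Nbd}_{A,p[B]}$ are tau-open and contain $\rm{id}_X$ and $p$ respectively whenever $B\subseteq A$), and a compact T1 group with separately continuous multiplication is non-Hausdorff iff some net converges to two points, which by translation may be taken to be $\rm{id}_X$ and some $p\neq\rm{id}_X$. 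This sidesteps the Hausdorff kernel entirely and, crucially, inherits a single directed index set from the doubly convergent net. Your route through $\rmH(\bbG_X)=\bigcap_U U^2$ is workable and your converse direction is correct as written: $p=p_i\circ(p_i^{-1}\circ p)$ with both factors in $\rm{Nbd}_{A,A}$ does give $p\in\rm{Nbd}_{A,A}^2$, and these sets (restricted to $A$ containing a fixed $w$) form a base at $\rm{id}_X$, so $p\in\rmH(\bbG_X)\setminus\{\rm{id}_X\}$.

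Your forward direction, however, has two concrete holes. First, the index set of pairs $(A,B)$ with $B\subseteq A$ ordered by reverse inclusion in both coordinates is \emph{not} directed: two pairs whose first coordinates are disjoint have no common refinement, so you do not literally obtain a net. The standard fix is to index by finite sets $F$ of pairs and, for each $F$, produce a single $q_F$ working for all pairs in $F$ simultaneously; this requires $p\in W_F^2$ for $W_F=\bigcap_{(A,B)\in F}\rm{Nbd}_{B,B}$, which is fine since $W_F$ is a tau-open neighborhood of $\rm{id}_X$. Second, your extraction of $q_{(A,B)}$ is where the left/right issue you worried about actually lives, and the parenthetical ``run the argument with $A$ replaced by $B$ and by $p[B]$ chosen appropriately'' does not resolve it. Note that writing $p=u_1u_2$ with $u_1,u_2\in\rm{Nbd}_{A,B}$ yields $u_1[A]\cap B\neq\emptyset$ but $(u_1^{-1}p)[A]\cap B\neq\emptyset$, which rewrites as $p[A]\cap u_1[B]\neq\emptyset$ --- the wrong-handed condition. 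What saves the argument is that $\rm{Nbd}_{B,B}$ is a \emph{symmetric} neighborhood of $\rm{id}_X$ (as $q[B]\cap B\neq\emptyset$ iff $q^{-1}[B]\cap B\neq\emptyset$): taking $U=\rm{Nbd}_{B,B}$ and $p=u_1\circ u_2$ with $u_1,u_2\in U$, one gets $u_1[B]\cap B\neq\emptyset$ and $(u_1^{-1}\circ p)[B]\cap B\neq\emptyset$, the latter being $u_1[B]\cap p[B]\neq\emptyset$; both conditions then enlarge from $B$ to $A\supseteq B$ to give exactly the wild-automorphism requirements. You should make this symmetry explicit, since with an asymmetric choice of $U$ the step would fail.
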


\begin{proof}
    Keeping in mind Proposition~\ref{Prop:Tau_Base}, a net $(p_i)_{i\in I}$ from $\bbG_X$ converges to both $\rm{id}_X$ and $p\in \bbG_X\setminus \{\rm{id}_X\}$ iff $(p_i)_{i\in I}$ and $p$  witness that $X$ has wild automorphisms.  
\end{proof}

\begin{prop}
    \label{Prop:Wild_NonHaus}
    If $X$ is a minimal preflow with wild automorphisms, then $\bbG_X$ is not Hausdorff. 
\end{prop}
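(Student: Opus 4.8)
The statement to prove is: if $X$ is a minimal preflow with wild automorphisms, then $\bbG_X$ is not Hausdorff. Recall $\bbG_X = \bbG_{\wh X}$ is the automorphism group of the enveloping ideal $M_{\wh X}$ equipped with the tau-topology, and by Proposition~\ref{Prop:Non_Hausdorff} it suffices to show that $M_{\wh X}$ has wild automorphisms. So the first step is to observe that wild automorphisms pass from $X$ up to $\wh X$: given the witnessing net $(p_i)_{i\in I}$ from $\aut(X)$ and $p \in \aut(X) \setminus \{\rm{id}_X\}$, each $p_i$ and $p$ extend continuously to $\wh X$ (since $X$ is a preflow, automorphisms are uniform isomorphisms, hence extend), and for $A, B \in \cB_{\wh X}$ with $B \subseteq A$ one checks the defining condition by restricting to $A \cap X, B \cap X \in \cB_X$ — this uses density of $X$ in $\wh X$ and the fact that $p_i[A]\cap B \neq \emptyset$ is an open condition, so it is detected on the dense subspace. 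Thus $\wh X$ has wild automorphisms, witnessed by the extended net and extended $p$.

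Next, I need to transfer wild automorphisms from $\wh X$ to $M_{\wh X}$. The key point is Fact~\ref{Fact:MinIdealFlow}: there is a $G$-map $\widetilde{\cdot}\colon \bbG_{\wh X} \to \bbG_{\wh X}$ — more precisely, every automorphism of $\wh X$ lifts to an automorphism of $M_{\wh X}$, and conversely $M_{\wh X}$ admits a factor map $\rho\colon M_{\wh X} \to \wh X$ (via $\rho = \rho_{x_0}$ for a suitable $x_0$ with $ux_0 = x_0$, $u$ a fixed idempotent) which is $\bbG$-equivariant with respect to the induced actions. Given the net $(\wh{p_i})$ and $\wh p \neq \rm{id}_{\wh X}$ witnessing wildness of $\wh X$, I lift to automorphisms $q_i, q \in \bbG_{M_{\wh X}}$ with $\rho \circ q_i = \wh{p_i} \circ \rho$ and $\rho \circ q = \wh p \circ \rho$; since $\wh p \neq \rm{id}$ and $\rho$ is onto, $q \neq \rm{id}_{M_{\wh X}}$. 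Now for $A', B' \in \cB_{M_{\wh X}}$ with $B' \subseteq A'$, I want $q_i[A'] \cap B' \neq \emptyset$ and $q_i[A'] \cap q[B'] \neq \emptyset$ for large $i$. Here I use that $\rho\colon M_{\wh X} \to \wh X$ is irreducible (being a factor map between minimal flows, it is pseudo-open, hence SWD-preserving), so $\Int(\rho[B']) \neq \emptyset$; shrinking, I find open $A \supseteq$ some piece and $B \subseteq \rho[B']$ open with $B \subseteq A := $ something inside $\rho[A']$, apply wildness of $\wh X$ to get $\wh{p_i}[A] \cap B \neq \emptyset$ and $\wh{p_i}[A] \cap \wh p[B] \neq \emptyset$ eventually, and pull back through $\rho$ using $\rho \circ q_i = \wh{p_i} \circ \rho$ to conclude $q_i[\rho^{-1}[A]] \cap \rho^{-1}[B] \neq \emptyset$, etc., then intersect with $A', B'$ appropriately. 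The bookkeeping of nesting open sets through $\rho$ and its fiber images is the step requiring care.

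Once $M_{\wh X}$ is shown to have wild automorphisms, Proposition~\ref{Prop:Non_Hausdorff} applies (since $M_{\wh X}$ is a minimal ideal flow) and gives that $\bbG_{M_{\wh X}} = \bbG_X$ is not Hausdorff, completing the proof.

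\textbf{The main obstacle.} The delicate step is the transfer of wild automorphisms through the factor map $\rho\colon M_{\wh X} \to \wh X$: I must arrange the nested families of basic open sets in $M_{\wh X}$, $\wh X$ so that the wildness inequalities in $\wh X$ pull back correctly, using irreducibility/pseudo-openness of $\rho$ to ensure that open sets downstairs have open fiber-images-with-nonempty-interior upstairs, and using the equivariance relation $\rho \circ q_i = \wh{p_i} \circ \rho$ in the right direction. A clean way to organize this is to note that wildness of a flow is preserved under irreducible extensions and factors more or less formally — one could isolate this as a small lemma: if $\rho\colon Y \to Z$ is an irreducible factor map of flows, $Z$ has wild automorphisms witnessed by automorphisms that all lift through $\rho$, then $Y$ has wild automorphisms — and then apply it with $Y = M_{\wh X}$, $Z = \wh X$. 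Everything else (the preflow-to-completion step, the final invocation of Proposition~\ref{Prop:Non_Hausdorff}) is routine given the machinery already developed.
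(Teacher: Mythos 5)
Your reduction is the right one in spirit --- by Proposition~\ref{Prop:Non_Hausdorff} it would indeed suffice to show that the enveloping ideal $M_{\wh{X}}$ has wild automorphisms, and your first step (wildness passes from the preflow $X$ to $\wh{X}$) is fine and is exactly the remark the paper makes after Definition~\ref{Def:Wild}. But the transfer from $\wh{X}$ to $M_{\wh{X}}$ has a genuine gap. The factor map $\pi\colon M_{\wh{X}}\to \wh{X}$ is pseudo-open (as is any $G$-map between minimal flows), but it is \emph{not} irreducible in general, and your argument conflates the two: you write that $\rho$ is ``irreducible (being a factor map between minimal flows, it is pseudo-open\dots)''. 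Pseudo-openness gives $\Int(\pi[B'])\neq\emptyset$; irreducibility would give a nonempty open $B\subseteq \wh{X}$ with $\pi^{-1}[B]\subseteq B'$, and that is what your ``intersect with $A', B'$ appropriately'' step actually needs. Concretely: from $p_i[A]\cap B\neq\emptyset$ with $A\subseteq\pi[A']$, $B\subseteq\pi[B']$, and from $\pi\circ q_i = p_i\circ\pi$, you only get that the \emph{saturated} set $q_i[\pi^{-1}[A]]=\pi^{-1}[p_i[A]]$ meets $\pi^{-1}[B]$; since $A'$ and $B'$ can be much thinner than the fibers of $\pi$, this says nothing about $q_i[A']\cap B'$. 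The underlying obstruction is that the lifts $q_i$ are only determined modulo $\bbG_\pi$, so their behaviour inside the fibers of $\pi$ is completely uncontrolled by the $p_i$; no choice of lifts is forced to reproduce the wildness on open sets of $M_{\wh{X}}$ that are not $\pi$-saturated. (The implication ``$\wh{X}$ wild $\Rightarrow$ $M_{\wh{X}}$ wild'' is true, but only as a \emph{consequence} of the proposition together with Proposition~\ref{Prop:Non_Hausdorff}; it does not seem provable by a direct lifting argument.)

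The paper avoids this by arguing contrapositively: assuming $\bbG_X$ is Hausdorff, it invokes the structure theorem (Theorem~\ref{Thm:Tau_Hausdorff}\ref{Item:Group_Thm:TauHaus}) to produce minimal ideal flows $X^*\to X$, $X^*\to Z\to Y$ with $X^*\to Z$ proximal, $Z\to Y$ a compact group extension by $K\cong\bbG_{X^*}$, and $Y$ proximal. The wild net is lifted into the compact group $K$ (this lifting is legitimate because $X^*$ is a minimal ideal flow), a distal pair $(w,z)$ in $X^*$ is extracted via Vietoris limit points using pseudo-openness of $\eta\colon X^*\to X$, and the contradiction comes from the \emph{continuity} of the $K$-action on $Z$. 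If you want to keep your direct strategy, you would need to replace $M_{\wh{X}}$ by a flow to which $\wh{X}$ admits an irreducible (highly proximal) extension --- but $\rmS_G(\wh{X})$ is not the enveloping ideal, so Proposition~\ref{Prop:Non_Hausdorff} would no longer apply; some form of the structure theory appears unavoidable here.
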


\begin{proof}
    Towards a contradiciton, suppose $\bbG_X$ was Hausdorff. We can replace $X$ with $\wh{X}$ and assume $X$ is a flow. Using Theorem~\ref{Thm:Tau_Hausdorff}\ref{Item:Group_Thm:TauHaus}, we fix minimal ideal flows $X^*$, $Z$, and $Y$ along with $G$-maps $\eta\colon X^*\to X$, $\iota\colon X^*\to Z$, and $\sigma\colon Z\to Y$ such that $\iota$ and $Y$ are proximal and $\sigma$ is a compact group extension, witnessed by the compact group $K\cong \bbG_Z\cong \bbG_{X^*}$. Let us also fix $(p_i)_{i\in I}$ and $p\neq \rm{id}_{X}$ from $\aut(X)$ witnessing that $X$ has wild automorphisms. As $X^*$ is a minimal ideal flow, for each $i\in I$, we can find $q_i\in K$ with $p_i\circ \eta = \eta\circ q_i$. 

    Fix $y\in X^*$. We claim that there is a distal pair $w\neq z\in X^*$ so that for any $A\in \op(y, X^*)$, $B\in \op(w, X^*)$ and $C\in \op(z, X^*)$, then for large enough $i\in I$, we have $q_i[A]\cap B\neq \emptyset$ and $q_i[A]\cap C\neq \emptyset$. For each $A\in \op(y, X^*)$, let 
    $K_A\in \rmK(X^*)$ be a limit point of the net $(q_i[\ol{A}])_{i\in I}$ from $\rmK(X^*)$. We can arrange that $A'\subseteq A$ implies $K_{A'}\subseteq K_A$. Set $K = \bigcap_{A\in \op(y, X^*)} K_A$. 
    Now fix $A\in \op(y, X^*)$. As $\eta$ is a map between minimal flows, it is pseudo-open, so $\Int(\eta[A])\neq \emptyset$. We also note that $\ol{\Int(\eta[A])} = \eta[\ol{A}]$. Pick some $x\in \Int(\eta[A])$, and let $L_A\in \rmK(X)$ be a limit point of the net $(p_i[\eta[\ol{A}]])_{i\in I}$. We can arrange that $L_A\subseteq \eta[K_A]$. Then as $(p_i)_{i\in I}$ and $p$ witness wild automorphisms for $X$, we have $\{x, p(x)\}\subseteq L_A$, so in particular, the set $\{(w, z)\in K_A\times K_A: p\circ \eta(w) = z\}$ is closed and non-empty, and every $(w, z)$ in this set is a distal pair. Intersecting over all $A\in \op(y, X^*)$, we obtain $(w, z)$ as desired.

    As $\iota$ is proximal, $\iota(w)\neq \iota(z)$, and for any $A\in \op(\iota(y), Z)$, $B\in \op(\iota(w), Z)$ and $C\in \op(\iota(z), Z)$, then for large enough $i\in I$, we have $q_i[A]\cap B\neq \emptyset$ and $q_i[A]\cap C\neq \emptyset$. But as the action of $K$ on $Z$ is continuous, this cannot happen. 
\end{proof}

Given a minimal preflow $X$, we now combine the results of this subsection to obtain a $\Sigma_1$ statement equivalent to $\neg \Phi(X)$:
\begin{itemize}
    \item 
    There are a preflow $Y$ with $\wh{X}\cong \wh{Y}$, a set $I$, and a preflow $Z\subseteq Y^I$ such that $Z$ has wild automorphisms.
\end{itemize}
Let us briefly discuss two aspects of this statement. First, the formula $\Psi(X, Y)$ which asserts that $X$ and $Y$ are preflows with $\wh{X}\cong \wh{Y}$ can be seen to be $\Sigma_1$ as follows: $\Psi(X, Y)$ holds iff $X$ and $Y$ are preflows and there are a preflow $Z$ and preflow $G$-maps $\phi\colon X\to Z$ and $\psi\colon Y\to Z$ such that $\im(\phi)$ is dense and so that for every $U\in \cU_X$, there is $V\in \cU_Z$ so that for every $x, y\in X$ with $(x,y)\not\in U$, we have $(\phi(x), \phi(y))\not\in V$, and likewise for $Y$ and $\psi$. Second, we view $Y^I$ as a preflow with the diagonal action and the product uniformity. While the statement ``$Z= Y^I$'' is $\Pi_1$, the statement $Z\subseteq Y^I$ is $\Delta_1$: Every $z\in Z$ is a function from $I$ to $Y$, $Z$ is $G$-invariant,  and for every $U\in \cU_Z$, there are $F\in \fin{I}$ and $V\in \cU_Y$ so that for any $z_0, z_1\in Z$, we have $(z_0, z_1)\in U$ iff for every $i\in F$, we have $(z_0(i), z_1(i))\in V$. 

Having shown that $\Phi(X)$ is both $\Sigma_1$ and $\Pi_1$, this proves Theorem~\ref{Thm:Ellis_H_Absolute}.

\begin{remark}
    One can show that the formula $\Theta(X, K)$ which asserts that ``$X$ is a minimal preflow and $K$ is a precompact group such that $\wt{K}\cong \bbG_X$'' is $\Sigma_1$ and not $\Pi_1$. So it is somewhat amusing that the formula $\Phi(X)$, which is equivalent to $\exists K \Theta(X, K)$, is $\Delta_1$.
\end{remark}

\section{Connections with definable groups}
\label{Section:Definable}

The study of definable groups has been a central interest of model theorists for decades; see \cites{Poizat_Stable, Wag_2000, HPP_2008, Newelski_2009, CS_2018}. Given a model $\bM$ of a first-order theory $T$, a \emph{group definable in $\bM$} is an definable (with parameters from $\bM$) subset $G\subseteq \bM^n$ for some $n< \omega$ along with formulas which define multiplication and inversion on $G$. The formulas which define the underlying set and the group operations of $G$ in $\bM$ will make sense in any model of $T$, so one often writes $G^\bM$, $G^\bN$, etc.\ to discuss the group defined by these formulas in the given model. One way to think of a definable group is as a discrete group $G$ equipped with a distinguished $2$-sided-invariant Boolean algebra $\cB\subseteq \cP(G)$ which contains all singletons. The sets in $\cB$ should be thought of as exactly the definable (or \emph{externally} definable, see below) subsets of $G$.

Newelski in \cite{Newelski_2009} introduces the main concepts of the \emph{definable} topological dynamics of definable groups; see also \cites{GPP_2014, KP_2017, CS_2018}. Given a definable group $(G, \cB)$, one can form $\st(\cB)$ the Stone space of $\cB$, which is naturally a $G$-flow. Since $\cB$ contains all singletons, we can view $G$ as a subspace of $\st(\cB)$. When $G$ is definable in $\bM$ and $\cB = \cB_{G, \bM}$ is the algebra of definable subsets of $G$, we typically write $\rmS_G(\bM)$ for $\st(\cB)$ (with context distinguishing this use of $\rmS_G$ from the Gleason completion of a $G$-space). Thus $\rmS_G(\bM)$ is the space of $n$-types over $\bM$ which are finitely satisfiable in $G$. Another common situation uses externally definable subsets. If $G$ is definable in $\bM$, a subset $S\subseteq G$ is \emph{externally definable} if $S = G\cap D$, where $D\subseteq \frak{C}^n$ is definable with parameters in some monster model $\frakC\succ \bM$. If $\cB = \cB_{G, \bM}^{ext}$ is the algebra of externally definable subsets of $G$, we write $\rmS_{G, ext}(\bM)$ for $\st(\cB)$;  this is the space of \emph{global} $n$-types which are finitely satisfiable in $\bM$. Note that $\rmS_{G, ext}(\bM)$ factors onto $\rmS_G(\bM)$.

Given a definable group $(G, \cB)$ and a $G$-flow $X$, we call $X$ \emph{definable} if for every $x\in X$, the map $\rho_x\colon G\to X$ given by $\rho_x(g) = gx$ continuously extends to $\st(\cB)$. When $G$ is definable in $\bM$, we call a $G$-flow $X$ definable or externally definable depending on which subalgebra $\cB\subseteq \cP(G)$ is intended. It is not always the case that $\st(\cB)$ is definable; this happens exactly when $\st(\cB)\cong \rmE(\st(\cB))$. Let us call $(G, \cB)$ \emph{strongly definable} in this case. We can characterize when $(G, \cB)$ is strongly definable using the shift action on $2^G$, where given $x\in 2^G$ and $g, h\in G$, we set $(g\cdot x)(h) = x(hg)$.
\begin{lemma}
    Let $(G, \cB)$ be a definable group. Then it is strongly definable iff whenever $S, T\subseteq G$ with $S\in \cB$ and $\chi_T\in \ol{G\cdot \chi_S}\subseteq 2^G$, then $T\in \cB$.
\end{lemma}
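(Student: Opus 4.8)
The plan is to unwind the definition of strong definability via the enveloping semigroup and translate it into the shift action on $2^G$. Recall that $(G,\cB)$ is strongly definable exactly when the natural map $\st(\cB) \to \rmE(\st(\cB))$ is an isomorphism; equivalently, when $\st(\cB)$ is a definable $G$-flow, meaning that for every $x \in \st(\cB)$ the orbit map $\rho_x \colon G \to \st(\cB)$, $g \mapsto gx$, extends continuously to $\st(\cB)$. The key observation is that the Boolean algebra $\cB$ is encoded in the shift action on $2^G$: each $S \in \cP(G)$ gives a point $\chi_S \in 2^G$, and the $G$-orbit closure $\ol{G \cdot \chi_S}$ consists of limits of translates of $S$, which correspond exactly to the relatively weakly-compact ``ultralimits'' of translates of $S$. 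So I would first set up the dictionary between $\st(\cB)$ as a $G$-flow and the subflow $\ol{G \cdot \{\chi_S : S \in \cB\}}$ of $2^G$.

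First I would prove the forward direction. Suppose $(G,\cB)$ is strongly definable, and let $S \in \cB$ with $\chi_T \in \ol{G \cdot \chi_S} \subseteq 2^G$. Then $\chi_T$ is a limit of a net $(g_i \cdot \chi_S)_{i \in I}$; passing to an ultrafilter $\cU$ on $I$, we get $p = \lim_\cU g_i \in \st(\cB)$ (an element of the Stone space, which absorbs all such limits since $\cB$ is a Boolean algebra and $G \subseteq \st(\cB)$ is dense). Strong definability gives us that $\st(\cB) \cong \rmE(\st(\cB))$, so $p$ acts as a continuous function on $\st(\cB)$. The point is that $T$ is recovered from $p$ and $S$ by the formula $g \in T \iff g^{-1} \cdot p \in \rmC_S$ (using the semigroup structure on $\st(\cB) \cong \rmE(\st(\cB))$), and since $\rmC_S$ is clopen and the map $g \mapsto g^{-1} p$ is the restriction to $G$ of a continuous function $\st(\cB) \to \st(\cB)$, the set $T = \{g : g^{-1}p \in \rmC_S\}$ is the preimage of a clopen set, hence its characteristic function comes from a clopen subset; one then checks this forces $T \in \cB$ because $\cB = \rm{clop}(\st(\cB))$ and the relevant preimage, when restricted to $G$, lands in $\cB$ by definability of the $\st(\cB)$-action. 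For the converse, suppose the stated closure property holds for $\cB$. To show $(G,\cB)$ is strongly definable, it suffices to show every $p \in \st(\cB)$ extends to a continuous selfmap of $\st(\cB)$, i.e. the orbit map $\rho_{q} \colon G \to \st(\cB)$ extends continuously for each $q \in \st(\cB)$. Equivalently (using Fact~\ref{Fact:Continuity}), whenever $(g_i)$ is a net in $G$ with $g_i \to p$ in $\st(\cB)$, one must check $g_i q$ converges; the obstruction to convergence would be two distinct cluster points $q_0 \neq q_1$, separated by some $S \in \cB$, i.e. $S \in q_0$, $G \setminus S \in q_1$. Chasing the translates $g_i \cdot \chi_{S'}$ for $S' \in q$ appropriately and using that $\chi_T$-limits of $G$-translates of $\cB$-sets stay in $\cB$, I would derive that the limit behavior of $(g_i \cdot \chi_{q^{-1}S})$ is pinned down by a single element of $\cB$, contradicting the existence of the separating set.

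The main obstacle I anticipate is bookkeeping the relationship between the semigroup operation on $\st(\cB)$ (which is only right-topological) and the shift action on $2^G$ — in particular making precise the claim ``$T$ is recovered from $p$ and $S$'' and ensuring the recovered set genuinely lies in $\cB$ rather than just in $\cP(G)$. This requires being careful about left versus right translates and about the fact that $\cB$ is $2$-sided invariant. A clean way to handle it is to note that $2^G$ with the shift action, restricted to the subflow generated by $\{\chi_S : S \in \cB\}$, is itself (a quotient of) the relevant Stone space, and then strong definability is precisely the statement that this subflow equals its own enveloping semigroup, which by the standard characterization of when a flow coincides with its enveloping semigroup (closure of the orbit of the ``diagonal'' point) is equivalent to the stated closure condition. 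I would aim to phrase the whole argument in this language to minimize the right-topological semigroup manipulations.
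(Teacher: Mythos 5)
Your proof is correct and is essentially the paper's argument: both reduce strong definability to the statement that for each $q \in \st(\cB)$ and $S \in \cB$ the set $T = \{g : S \in gq\}$ --- whose characteristic function is exactly $\lim_{h\to q} h\cdot\chi_S \in \ol{G\cdot \chi_S}$ --- lies in $\cB$, with the paper phrasing this as well-definedness of the semigroup multiplication on $\st(\cB)$ and you phrasing the same condition as continuous extendability of the orbit maps $\rho_q$ together with the identification $\rm{clop}(\st(\cB)) \cong \cB$. The only correction needed is the left/right bookkeeping you yourself flagged: under the paper's shift convention $(g\cdot x)(h) = x(hg)$ the recovered set is $T = \{g : gp \in \rmC_S\} = \rho_p^{-1}[\rmC_S]$, not $\{g : g^{-1}p \in \rmC_S\}$.
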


\begin{proof}
    We have that $\st(\cB)\cong \rmE(\st(\cB))$ iff the following multiplication on $\st(\cB)$ is well defined, where given $p, q\in \st(\cB)$ and $S\in \cB$, we have
    $$S\in pq \Leftrightarrow \{g\in G: S\in gq\}\in p$$
    Set $T = \{g\in G: S\in gq\}$; the above is well defined provided $T\in \cB$, and we have $\chi_T = \displaystyle\lim_{h\to q} h\cdot \chi_S$.
\end{proof}

For $G$ definable in $\bM$, $(G, \cB_{G, \bM}^{ext})$ is always strongly definable \cite{Newelski_2009}. In certain situations, $(G, \cB_{G, \bM})$ is strongly definable. This happens whenever $\bM$ is stable (as then externally definable sets are definable), or more generally whenever types over $\bM$ are definable. When $\bM$ is NIP, one can pass to the \emph{Shelah expansion} $\bM^{ext}$, which adds predicates for every externally definable subset of $\bM$. Shelah proves \cite{Shelah_2009} that $\bM^{ext}$ eliminates quantifiers, is NIP, and that externally definable subsets of $\bM^{ext}$ are definable, and hence  $\rmS_G(\bM^{ext}) = \rmS_{G, ext}(\bM)$. For more on NIP theories, see \cite{Simon_NIP}. 

Let $G$ be a group definable in an NIP structure $\bM$. We can view $\rmS_{G, ext}(\bM)\cong \rmS_G(\bM^{ext})$ as the \emph{universal externally definable $G$-ambit}, and any minimal subflow of it will be the \emph{universal externally definable minimal flow}. Letting $\cM\subseteq \rmS_{G, ext}(\bM)$ be a minimal subflow (the font choice distinguishes this from the underlying set of the model $\bM$), then $\cM$ is a minimal ideal flow, and the results of Subsection~\ref{Subsection:MinIdealFlows} apply. In particular, one can form the Ellis group $\bbG_\cM$, equipped with the tau-topology \cite{KP_2017}.  Newelski conjectured in \cite{Newelski_2009}*{p. 69} that for groups definable in a NIP structure, one has $\bbG_\cM\cong G^\frakC/G_{00}$, where $\frakC\succ \bM$ is a monster model, $G_{00}$ is the smallest type-definable subgroup of $G^\frakC$ of bounded index, and $G^\frakC/G_{00}$ is equipped with the \emph{logic topology} \cite{Pillay_2004}. While this is true in the definably amenable case \cite{CS_2018}, it fails for $\rm{SL}(2, \bbR)$ \cite{GPP_2015}. A revised form was asked by Krupi\'nski and Pillay in \cite{KP_2023}: Is $\bbG_\cM$ Hausdorff? This question has come to be called the \emph{revised Newelski conjecture} and was recently proven in the case that $\bM$ is countable by Chernikov, Gannon, and Krupi\'nski \cite{DC3} using deep results of Glasner \cite{Glasner_Tame_Gen} on the structure of minimal, metrizable, and \emph{tame} dynamical systems.

Tame dynamical systems were first isolated by K\"ohler \cite{Kohler_1995} under the name ``regular'' and have many equivalent characterizations. The following combinatorial version of the definition is essentially from \cite{KL_2007}.

\begin{defin}
    \label{Def:Tame}
    Let $G$ be a group (topological, definable, doesn't matter), and fix a $G$-flow $X$. Given $A_0, A_1\in \op(X)$ with $\ol{A_0}\cap \ol{A_1} = \emptyset$ and $S\subseteq G$, we say that $(A_0, A_1)$ \emph{shatters} $S$ if for every partition $S = S_0\sqcup S_1$, there is $x\in X$ so that given $g\in S$, we have $gx\in U_i$ iff $g\in S_i$. We say that $X$ \emph{shatters} $S$ if there are $A_0, A_1$ as above. We remark that the definition works fine for preflows, taking $A_0, A_1\in \cB_X$ to be uniformly apart. 
    
    A $G$-flow $X$ is  \emph{tame} if $X$ does not shatter any infinite $S\subseteq G$.
\end{defin}

One can show (see \cite{GM_2012}) that tame flows are closed under products, factors, and subflows. 

\begin{lemma}
    \label{Lem:Preflow_Tame}
    Given a preflow $X$, $\wh{X}$ is non-tame iff there are uniformly apart $A_0, A_1\in \cB_X$ and an infinite $S\subseteq G$ so that every finite $F\subseteq S$ is shattered by $(A_0, A_1)$. 
\end{lemma}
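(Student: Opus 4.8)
The statement to prove is \Cref{Lem:Preflow_Tame}: for a preflow $X$, the flow $\wh{X}$ is non-tame if and only if there are uniformly apart $A_0, A_1 \in \cB_X$ and an infinite $S \subseteq G$ such that every finite $F \subseteq S$ is shattered by $(A_0, A_1)$ (as a pair of open subsets of $X$, or equivalently as uniformly apart basic sets).

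\textbf{Proof proposal.} The plan is to argue by a compactness/limit argument in both directions, exploiting the density of $X$ in $\wh{X}$ and the fact that the shattering condition is, for each fixed finite $F$, a finitary existential statement witnessed by points that can be taken in the dense subspace $X$.

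For the ``if'' direction, suppose we are given uniformly apart $A_0, A_1 \in \cB_X$ and an infinite $S \subseteq G$ so that every finite $F \subseteq S$ is shattered by $(A_0, A_1)$ in $X$. Let $\wh{A_i} = \Int_{\wh{X}}(\rm{cl}_{\wh{X}}(A_i))$ as in the conventions preceding \Cref{Lem:Levy}; since $A_0, A_1$ are uniformly apart, $\rm{cl}_{\wh{X}}(\wh{A_0}) \cap \rm{cl}_{\wh{X}}(\wh{A_1}) = \emptyset$, so $(\wh{A_0}, \wh{A_1})$ is an admissible pair for shattering in $\wh{X}$. I claim $(\wh{A_0}, \wh{A_1})$ shatters all of $S$ in $\wh{X}$. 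Fix a partition $S = S_0 \sqcup S_1$. For each finite $F \subseteq S$, writing $F_i = F \cap S_i$, the hypothesis gives $x_F \in X$ with $gx_F \in A_i$ whenever $g \in F_i$. Directing the finite subsets of $S$ by inclusion and passing to a subnet, let $x_F \to x \in \wh{X}$. For any $g \in S_i$, eventually $g \in F$ and $gx_F \in A_i \subseteq \wh{A_i}$; since $gx_F \to gx$ and $\rm{cl}_{\wh{X}}(\wh{A_0})$, $\rm{cl}_{\wh{X}}(\wh{A_1})$ are disjoint closed sets, $gx \in \rm{cl}_{\wh{X}}(\wh{A_i})$, and because these closures are disjoint we actually get $gx \notin \rm{cl}_{\wh{X}}(\wh{A_{1-i}})$, which after shrinking $A_0, A_1$ slightly at the start (replace $A_i$ by a uniformly-apart shrinking $A_i'$ with $\rm{cl}_{\wh X}(\wh{A_i'}) \subseteq \wh{A_i}$) gives $gx \in \wh{A_i}$. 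Hence $\wh{X}$ shatters the infinite set $S$, so $\wh{X}$ is non-tame.

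For the ``only if'' direction, suppose $\wh{X}$ is non-tame, witnessed by $B_0, B_1 \in \op(\wh{X})$ with $\rm{cl}(B_0) \cap \rm{cl}(B_1) = \emptyset$ and an infinite $S \subseteq G$ shattered by $(B_0, B_1)$. Using regularity of $\wh{X}$ and the density of $X$, choose $A_0, A_1 \in \cB_X$ with $\wh{A_i} \subseteq B_i$ and still uniformly apart (this is where one uses that $\cB_X$ is generated by the uniformity base $\cU_X$, so one can find basic sets inside any open set of $\wh{X}$ that are uniformly apart — cf. the discussion before \Cref{Lem:Levy}). Now fix a finite $F \subseteq S$ and a partition $F = F_0 \sqcup F_1$; extend it arbitrarily to a partition $S = S_0 \sqcup S_1$ with $S_i \supseteq F_i$. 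Shattering of $S$ in $\wh{X}$ gives $x \in \wh{X}$ with $gx \in B_i$ for $g \in S_i$, in particular for $g \in F_i$. Since $F$ is finite and $x \in \rm{cl}_{\wh{X}}(X)$, and the conditions ``$gx \in B_i$'' are open in $x$, we may move $x$ slightly to a point of $X$: pick $x' \in X$ in the (open, nonempty) set $\bigcap_{g \in F_0} g^{-1}B_0 \cap \bigcap_{g \in F_1} g^{-1}B_1$. Then $gx' \in B_i$ for all $g \in F_i$; but I want the witness to land in $A_i$, not merely $B_i$. To fix this, run the selection with the $B_i$ replaced from the outset by slightly larger sets: choose $A_i \subseteq \wh{A_i} \subseteq B_i'' \subseteq \rm{cl}(B_i'') \subseteq B_i$ with all closures disjoint, apply the $\wh X$-shattering to get $x$ with $gx \in B_i''$, then $x' \in X$ with $gx' \in B_i''$; since $gx' \in X$ and $B_i'' \cap X$ can be arranged to lie inside $\wh{A_i} \cap X = A_i$ by choosing $A_i$ to be a regular-open set of $\wh X$ whose trace on $X$ is everything, one gets $gx' \in A_i$. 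So $(A_0, A_1)$ shatters every finite $F \subseteq S$ in $X$, as required.

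\textbf{Main obstacle.} The routine part is the net-convergence argument; the delicate bookkeeping — and the thing I would be most careful about — is the passage between basic sets $A_i \in \cB_X$ of the preflow and genuine open sets of the completion $\wh X$, ensuring throughout that ``uniformly apart in $X$'' matches ``closures disjoint in $\wh X$'' and that witnesses found in $\wh X$ can be pushed into the dense subspace $X$ without losing membership in the designated sets. Concretely, one should fix at the very beginning a chain of uniformly-apart basic sets $A_i \subseteq C_i \subseteq B_i$ (with $A_i, C_i \in \cB_X$ and all $\wh X$-closures pairwise disjoint) and do all the limiting and thickening steps within this chain, invoking \Cref{Fact:Continuity} for the continuity of translations on $\wh X$ and the conventions on $\wh{A}$, $\wh{U}$ from the paragraph before \Cref{Lem:Levy}. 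Once this plumbing is set up, both implications are short.
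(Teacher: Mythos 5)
Your overall strategy is the paper's: for one direction take a limit point of the finite witnesses along the net of finite subsets of $S$, and for the other use that the set of witnesses for a fixed finite partition is a non-empty open subset of $\wh{X}$ and hence meets the dense subspace $X$. The ``if'' direction is fine once the enlarge-versus-shrink wording is sorted out: since $A_0$ and $A_1$ are \emph{uniformly} apart, one can fatten them by a small entourage to open subsets of $\wh{X}$ that still have disjoint closures and that contain $\rm{cl}_{\wh{X}}(A_i)$; the limit point then witnesses shattering of $S$ in $\wh{X}$ by the fattened pair. (Shrinking, as your parenthetical suggests, would not preserve the hypothesis that every finite $F\subseteq S$ is shattered.)

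The ``only if'' direction as written has a genuine error in the direction of the containments. You choose $A_i$ with $\wh{A_i}\subseteq B_i''\subseteq B_i$ and then ``apply the $\wh{X}$-shattering to get $x$ with $gx\in B_i''$.'' But the non-tameness hypothesis only provides witnesses with $gx\in B_i$; shattering is not inherited by smaller sets, so there is no reason the witness lands in $B_i''$, let alone that its perturbation into $X$ lands in $A_i$. The repair is to go the other way, which is what the paper does: shattering \emph{is} preserved under enlarging the pair so long as the closures stay disjoint, so cover each compact set $\rm{cl}_{\wh{X}}(B_i)$ by finitely many small uniform balls centered at points of the dense set $X$; the resulting enlarged pair still has disjoint closures, its trace on $X$ is a finite union of uniform balls and hence lies in $\cB_X$, and any point of $X$ in the non-empty open finite-witness set then lands in these traces. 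With that reversal the rest of your argument goes through and coincides with the paper's proof.
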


\begin{proof}
    By taking a limit point in $\wh{X}$ as $F_0\sqcup F_1 = F$ grows to $S_0\sqcup S_1\subseteq S$, we see that the condition on  $X$ implies non-tameness of $\wh{X}$. Say  $\wh{X}$ is non-tame as witnessed by $A_0, A_1\in \op(\wh{X})$ and $S\subseteq G$. By enlarging if needed, $A_0$ and $A_1$ can be chosen so that $A_i\cap X\in \cB_X$. Then, if $F_0, F_1\subseteq S$ are disjoint finite subsets, the set $B:= \{x\in \wh{X}: g\in F_i\Rightarrow gx\in A_i\text{ for each } i< 2\}$ is non-empty open, hence meets $X$. Any $x\in B\cap X$ will witness the desired condition.
\end{proof}

\begin{prop}
\label{Prop:Tame_Levy}
    Given a preflow $X$, the statement ``$\wh{X}$ is tame'' is $\Delta_1$.
\end{prop}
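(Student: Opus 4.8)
The plan is to show that ``$\wh{X}$ is tame'' is both $\Sigma_1$ and $\Pi_1$, exactly mirroring the pattern used for the Rosendal criterion and the Ellis-group Hausdorffness results earlier in the paper. The key input is \Cref{Lem:Preflow_Tame}, which translates (non-)tameness of the flow $\wh X$ into a purely combinatorial statement about the preflow $X$, namely one that quantifies only over members of $\cB_X$, finite subsets of $G$, and elements of $X$; such quantifiers are bounded once we have fixed the set-theoretic presentation of $X$ as a preflow (recall that ``$(X,\cU_X)$ is a precompact uniformity'' and hence ``$A\in\cB_X$'' are $\Delta_1$, indeed $\Delta_0$, as noted in Subsection~\ref{Subsection:Levy}).

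First I would write out the $\Sigma_1$ (really $\Delta_0$, given the presentation) reading of non-tameness: by \Cref{Lem:Preflow_Tame}, $\wh X$ is non-tame iff there exist uniformly apart $A_0,A_1\in\cB_X$ and an infinite $S\subseteq G$ such that every finite $F\subseteq S$ is shattered by $(A_0,A_1)$. Unwinding the definition of ``shattered'' (Definition~\ref{Def:Tame}), the phrase ``every finite $F\subseteq S$ is shattered by $(A_0,A_1)$'' is: for every finite $F\subseteq S$ and every partition $F=F_0\sqcup F_1$, there is $x\in X$ with $gx\in A_i$ whenever $g\in F_i$. All quantifiers here range over elements of $G$, finite subsets of $G$, and elements of $X$, so this is $\Delta_0$. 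The existential quantifier over $A_0,A_1\in\cB_X$ and over the infinite set $S\subseteq G$ (a subset of the given set $G$) then makes ``$\wh X$ is non-tame'' a $\Sigma_1$ formula of $X$, hence ``$\wh X$ is tame'' is $\Pi_1$.

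Next I would give a $\Sigma_1$ reading of \emph{tameness} itself. This is the step I expect to be the real content, since a naive rendering of ``for all infinite $S$, for all $A_0,A_1$, $S$ is not shattered'' is $\Pi_1$, not $\Sigma_1$. The standard device, as used for the analogous statements about $\cal{RC}_G$ and about Hausdorff Ellis groups, is to witness tameness by a \emph{bound}. Concretely, by the (one-sided, VC-type) compactness argument underlying the Sauer--Shelah dichotomy for tame flows: $\wh X$ is tame iff there is a function $N\colon \cB_X\times\cB_X\to\omega$ such that for all uniformly apart $A_0,A_1\in\cB_X$ and all $S\in\fin{G}$ with $|S|\ge N(A_0,A_1)$, the pair $(A_0,A_1)$ fails to shatter $S$. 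One direction is immediate; for the other, if no such $N$ worked then for some fixed $A_0,A_1$ arbitrarily large finite sets would be shattered, and a compactness/limit argument in $\wh X$ (taking a cluster point as the shattered finite sets grow, exactly as in the first half of \Cref{Lem:Preflow_Tame}) produces an infinite shattered set, contradicting tameness. Quantifying existentially over the witnessing function $N$ (an element of $\omega^{\cB_X\times\cB_X}$, a set) in front of a $\Delta_0$ body gives a $\Sigma_1$ definition of ``$\wh X$ is tame''.

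Finally I would observe that a formula equivalent over ZFC to both a $\Sigma_1$ and a $\Pi_1$ formula is $\Delta_1$, completing the proof. The main obstacle is making the $\Sigma_1$ direction rigorous: one must be careful that the ``bounded shattering'' characterization of tameness is a genuine theorem (it is the one-sided independence/VC dichotomy for dynamical systems, cf.\ the tameness literature such as \cite{KL_2007}, \cite{GM_2012}), and that the compactness argument producing an infinite shattered set from unboundedly large finite ones lives entirely inside $\wh X$ and uses only the preflow data of $X$, so that no impredicative quantifier sneaks in. Everything else is bookkeeping of quantifier complexity against the $\Delta_1$ presentation of preflows already established in Subsection~\ref{Subsection:Levy}.
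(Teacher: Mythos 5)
Your $\Pi_1$ half is fine and coincides with the paper's: by \Cref{Lem:Preflow_Tame}, non-tameness of $\wh X$ is witnessed by uniformly apart $A_0,A_1\in\cB_X$ together with an infinite $S\subseteq G$ all of whose finite subsets are shattered, and this is $\Sigma_1$ in the preflow data. The gap is in your $\Sigma_1$ direction. The ``bounded shattering'' statement you propose --- the existence of $N\colon\cB_X\times\cB_X\to\omega$ bounding the size of shattered finite sets --- is \emph{not} equivalent to tameness; it is the strictly stronger property of being \emph{null} in the sense of Kerr--Li. The compactness step you sketch does not go through: if $(A_0,A_1)$ shatters arbitrarily large finite sets $F_n\subseteq G$, there is in general no infinite $S\subseteq G$ all of whose finite subsets are shattered, because the $F_n$ may be pairwise incompatible (think of a set system on $\omega$ made of independent blocks of unbounded size: every finite subset of a single block is shattered, but no infinite set is, since it must meet infinitely many blocks). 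The cluster-point argument in \Cref{Lem:Preflow_Tame} is of a different nature: there the infinite set $S$ is already given, and one takes limits in the compact space $\wh X$ of the witnessing points $x$ as the partitioned finite subsets of that fixed $S$ grow. Compactness of $\wh X$ cannot manufacture the infinite set $S$ itself, since the shattering condition indexes over elements of $G$ discretely.

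The correct $\Sigma_1$ rendering --- and the one the paper uses --- packages the finite shattered sets into a tree: for uniformly apart $A_0,A_1\in\cB_X$, let $T_{A_0,A_1}$ be the tree of finite (injective) sequences from $G$ whose image is shattered by $(A_0,A_1)$; this tree is $\Delta_1$ in $A_0,A_1$. Then $\wh X$ is tame iff each $T_{A_0,A_1}$ is well-founded, and well-foundedness of a tree is the classical $\Delta_1$ statement (an infinite branch witnesses ill-foundedness; an ordinal-valued rank function witnesses well-foundedness). After the bounded quantifier over $A_0,A_1\in\cB_X$, the whole statement remains $\Delta_1$. You should replace your bounding-function characterization with this well-foundedness reduction.
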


\begin{proof}
    Consider uniformly apart $A_0, A_1\in \cB_X$, and define
    $$T_{A_0, A_1}:= \{s\in G^{<\omega}: \im(s)\text{ is shattered by }(A_0, A_1)\}$$
    We remark that the definition of $T_{A_0, A_1}$ is $\Delta_1$ in $A_0$ and $A_1$. Then by Lemma~\ref{Lem:Preflow_Tame}, $X$ is tame iff for every uniformly apart $A_0, A_1\in \cB_X$, the tree $T_{A_0, A_1}$ is well-founded (see \cite{Jech}*{p.\ 185}). 
\end{proof}

Glasner proves in \cite{Glasner_Tame_Gen} a deep structure theorem for minimal, \emph{metrizable} tame systems. We state a portion of it as Fact~\ref{Fact:Glasner}. 

\begin{defin}
\label{Def:StronglyProx}
    Given a compact space $X$, $\rmP(X)$ denotes the space of regular Borel probability measures on $X$ equipped with the weak${}^*$-topology. Given $G$-flows $X$ and $Y$, a $G$-map $\phi\colon X\to Y$ is \emph{strongly proximal} if for every $\mu\in \rmP(X)$ with $\rm{supp}(\mu)\subseteq \phi^{-1}[y]$ for some $y\in Y$, then there are $x\in X$ and $p\in \sa(G)$ with $p\mu = \delta_x$. If $Y$ is trivial, we call $X$ strongly proximal. 
\end{defin}

\begin{fact}[\cite{Glasner_Tame_Gen}]
    \label{Fact:Glasner}
    Given $X$ a minimal, metrizable, and tame flow, there are $X^*$, $Z$, $Y$, $\eta\colon X^*\to X$, $\iota\colon X^*\to Z$, and $\sigma\colon Z\to Y$ exactly as in Theorem~\ref{Thm:Tau_Hausdorff}\ref{Item:Basic_Thm:TauHaus}, but with $\eta$ and $Y$ both strongly proximal.  
\end{fact}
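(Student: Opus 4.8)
The statement to prove is Fact~\ref{Fact:Glasner}, quoted from Glasner's paper \cite{Glasner_Tame_Gen}. Since this is stated as a \emph{fact} drawn from the literature, the ``proof'' the authors give is almost certainly just a pointer to the relevant portion of Glasner's structure theorem, together with whatever minor bookkeeping is needed to extract the precise diagram asserted here from the more elaborate statement in that reference. So my plan is not to reprove Glasner's theorem but to explain how its conclusion specializes to the claimed form.

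The plan is as follows. First I would recall the shape of Glasner's structure theorem for minimal metrizable tame systems: every such $X$ has a minimal almost-one-to-one (highly proximal) extension $X^*$ which sits in a commuting diagram where $X^*$ is a \emph{strongly proximal} extension of an equicontinuous extension $Z$ of a metrizable \emph{strongly proximal} flow $Y$ — this is the tame analogue of the Veech/Glasner structure theorem, with proximal replaced by strongly proximal at both ends. Concretely, Glasner produces $\eta\colon X^*\to X$ highly proximal, $\iota\colon X^*\to Z$ strongly proximal, and $\sigma\colon Z\to Y$ equicontinuous (in fact an isometric extension), with $Y$ strongly proximal. Since highly proximal maps are in particular proximal, and strongly proximal maps are in particular proximal, this diagram is literally an instance of the diagram in Theorem~\ref{Thm:Tau_Hausdorff}\ref{Item:Basic_Thm:TauHaus}, and it carries the extra information that $\eta$ and $Y$ are strongly proximal (and that $\eta$ may even be taken highly proximal). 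That is exactly the content of Fact~\ref{Fact:Glasner}.

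The second step is to make sure the vocabulary matches: I would check that ``strongly proximal'' as used in Glasner's paper agrees with Definition~\ref{Def:StronglyProx}, i.e.\ that every measure supported on a fiber can be pushed to a point mass along the semigroup $\sa(G)$, and that ``equicontinuous extension'' as used there matches Definition~\ref{Def:Extension_Props}. Both are standard and the translation is immediate; the only subtlety is that Glasner typically phrases things for $G$ discrete or Polish acting on metrizable spaces, which is exactly the setting here (the flow is assumed metrizable), so no extension of his results is needed. I would then simply cite \cite{Glasner_Tame_Gen} for the existence of the diagram, remarking that ``$\eta$ and $Y$ strongly proximal'' is precisely the strengthening of Theorem~\ref{Thm:Tau_Hausdorff}\ref{Item:Basic_Thm:TauHaus} that the tame hypothesis buys.

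I do not expect a genuine obstacle here, since the statement is explicitly labelled a Fact and attributed to an external source; the ``proof'' is a citation plus a one-line observation that strongly proximal implies proximal and highly proximal implies proximal, so that the asserted diagram is an enhanced instance of the one in Theorem~\ref{Thm:Tau_Hausdorff}. If anything, the only care needed is to state clearly \emph{which} part of Glasner's (long) structure theorem is being invoked, and to note that the metrizability hypothesis is what makes that theorem applicable — this is exactly why, in Section~\ref{Section:Abs}, the authors must run a forcing/absoluteness argument to remove metrizability when they later apply this to arbitrary tame flows.

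\medskip

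\noindent\emph{Proof.} This is a direct consequence of Glasner's structure theorem for minimal metrizable tame flows \cite{Glasner_Tame_Gen}. That theorem provides, for any minimal metrizable tame $G$-flow $X$, a commuting diagram of minimal flows of the form in Theorem~\ref{Thm:Tau_Hausdorff}\ref{Item:Basic_Thm:TauHaus} in which the extension $\eta\colon X^*\to X$ is highly proximal (in particular proximal), $\iota\colon X^*\to Z$ is strongly proximal (in particular proximal), $\sigma\colon Z\to Y$ is an isometric, hence equicontinuous, extension, and $Y$ is strongly proximal. Since strongly proximal flows and maps are proximal, and highly proximal maps onto minimal flows are proximal, the diagram so obtained is an instance of Theorem~\ref{Thm:Tau_Hausdorff}\ref{Item:Basic_Thm:TauHaus}, now carrying the additional information that $\eta$ and $Y$ are strongly proximal. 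The only point to verify is that the notions of strong proximality and of equicontinuous extension used in \cite{Glasner_Tame_Gen} coincide with Definitions~\ref{Def:StronglyProx} and \ref{Def:Extension_Props}, which is immediate from the definitions; metrizability of $X$ is exactly the hypothesis under which Glasner's theorem applies. $\qed$
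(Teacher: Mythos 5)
Your proposal is correct and matches the paper exactly: the paper gives no proof for this Fact, simply attributing it to \cite{Glasner_Tame_Gen}, and your citation-plus-translation (strongly proximal implies proximal, so Glasner's diagram is an enhanced instance of the one in Theorem~\ref{Thm:Tau_Hausdorff}\ref{Item:Basic_Thm:TauHaus}) is precisely the intended reading.
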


\begin{theorem}
    \label{Thm:Tame_Tau_Hausdorff}
    The Ellis group of a minimal tame flow is Hausdorff.
\end{theorem}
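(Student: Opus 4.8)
The plan is to deduce Theorem~\ref{Thm:Tame_Tau_Hausdorff} by combining the combinatorial absoluteness of tameness and of the Hausdorff-Ellis-group property (Theorem~\ref{Thm:Ellis_H_Absolute}) with the metrizable structure theorem of Glasner (Fact~\ref{Fact:Glasner}) and the intrinsic structural characterization of Hausdorff Ellis groups (Theorem~\ref{Thm:Tau_Hausdorff}). First I would set up the statement properly for preflows: given a minimal tame $G$-flow $X$, view it as a minimal preflow, and recall from Proposition~\ref{Prop:Tame_Levy} that ``$\wh{X}$ is tame'' is a $\Delta_1$ property of the preflow $X$, while from Theorem~\ref{Thm:Ellis_H_Absolute} the property ``$\bbG_X$ is Hausdorff'' is also $\Delta_1$. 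Both are therefore absolute between transitive models of set theory.

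The main step is a forcing and absoluteness argument in the style of Theorem~\ref{Thm:MG_Structure_TMD}. Let $\sfV$ be our universe and let $\sfW \supseteq \sfV$ be a forcing extension (e.g.\ by $\rm{Coll}(\omega, \kappa)$ for $\kappa$ large enough) in which $G$ becomes second countable and $X$, as a preflow, becomes separable and metrizable; then $\wh{X}^\sfW$ is a metrizable $G$-flow. Since tameness is $\Delta_1$, $\wh{X}^\sfW$ is still tame in $\sfW$. Now in $\sfW$ we may apply Fact~\ref{Fact:Glasner}: there exist minimal metrizable flows $X^*, Z, Y$ and $G$-maps $\eta\colon X^* \to \wh{X}^\sfW$, $\iota\colon X^* \to Z$, $\sigma\colon Z \to Y$ with $\eta$ and $Y$ strongly proximal and $\sigma$ equicontinuous. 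Since strongly proximal maps are in particular proximal, this diagram satisfies the hypotheses of Theorem~\ref{Thm:Tau_Hausdorff}\ref{Item:Basic_Thm:TauHaus}, so working in $\sfW$ we conclude that $\bbG_{\wh{X}^\sfW}$ is Hausdorff, i.e.\ $\bbG_X$ is Hausdorff in $\sfW$. By absoluteness of the $\Delta_1$ statement ``$\bbG_X$ is Hausdorff'' (Theorem~\ref{Thm:Ellis_H_Absolute}), this reflects down to $\sfV$: $\bbG_X$ is Hausdorff in $\sfV$ as well.

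The delicate point — and the main obstacle — is making sure the objects transfer correctly across $\sfV$ and $\sfW$. In $\sfW$, the Ellis group of the \emph{interpretation} $\wh{X}^\sfW$ of the ground-model flow need not literally be the interpretation of the ground-model enveloping ideal $M_X$; this is exactly the subtlety handled in Subsection~\ref{Subsection:NUlts} (cf.\ Lemma~\ref{Lem:Same_Ellis} and the discussion of $M_{X,\sfW}$ versus $M_X^\sfW$ before Proposition~\ref{Prop:Non_Hausdorff}). The resolution is that the property $\Phi(X)=$``$X$ is a minimal preflow and $\bbG_X$ is Hausdorff'' was \emph{defined} (Theorem~\ref{Thm:Ellis_H_Absolute}) as a property of the preflow $X$ itself — via $\bbG_X := \bbG_{\wh{X}}$, the automorphism group of the enveloping ideal of $\wh{X}$ computed in whatever model we are in — and shown to be $\Delta_1$ precisely so that it is absolute. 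So the statement we verify in $\sfW$ is exactly $\Phi(X)$ as computed in $\sfW$, and $\Delta_1$-absoluteness hands it back in $\sfV$ as $\Phi(X)$ computed in $\sfV$, which is what we want. One should also note that the hypothesis ``$X$ tame'' in the ground model is being used only through its $\Delta_1$ formulation for the preflow $X$, so that the hypothesis itself transfers up to $\sfW$ cleanly; and conversely, a minimal $G$-flow in $\sfV$ is certainly a minimal preflow in $\sfV$, so there is no loss in phrasing the whole argument for preflows. Finally, I would remark (as the paper does for Glasner's metrizable theorem) that this argument simultaneously yields a partial, non-metrizable analogue of Fact~\ref{Fact:Glasner}: every minimal tame $G$-flow $X$ is, up to passing to a (highly) proximal extension and a suitable diagram, governed by an equicontinuous extension of a proximal flow, since that is exactly the content of $\bbG_X$ being Hausdorff via Theorem~\ref{Thm:Tau_Hausdorff}.
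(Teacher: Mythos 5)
Your proposal is correct and follows essentially the same route as the paper: force to make $X$ second countable, use the $\Delta_1$-ness of tameness (Proposition~\ref{Prop:Tame_Levy}) and of minimality to see that $X^\sfW$ is a minimal, metrizable, tame flow in $\sfW$, apply Glasner's structure theorem together with Theorem~\ref{Thm:Tau_Hausdorff} there, and reflect the Hausdorffness of the Ellis group back to $\sfV$ via Theorem~\ref{Thm:Ellis_H_Absolute}. The paper's proof is just a terser version of the same argument (it does not need $G$ to become second countable, only $X$), and your discussion of the $M_{X,\sfW}$ versus $M_X^\sfW$ subtlety correctly identifies why Theorem~\ref{Thm:Ellis_H_Absolute} is the right tool.
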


\begin{proof}
    As in the previous section, $\sfV$ denotes the universe of set theory. Given $X\in \sfV$ a minimal tame flow, let $\sfW\supseteq \sfV$ be a forcing extension in which $X$ is second countable. Hence by Proposition~\ref{Prop:Tame_Levy} and Lemma~\ref{Lem:Preflow_Props}, $X^\sfW\in \sfW$ is a minimal, metrizable, and tame flow. Hence $X^\sfW$ has Hausdorff Ellis group in $\sfW$, so by Theorem~\ref{Thm:Ellis_H_Absolute}, $X$ has Hausdorff Ellis group in $\sfV$.
\end{proof}

\begin{question}
    Using Theorem~\ref{Thm:Tau_Hausdorff}, we obtain a partial form of Glasner's structure theorem for all minimal tame flows. Can this be strengthened to recover the full structure result from \cite{Glasner_Tame_Gen}, or at least the fragment presented in Fact~\ref{Fact:Glasner}? Namely, if $X$ is minimal and tame, then with notation as in Theorem~\ref{Thm:Tau_Hausdorff}, can we demand that $\eta$ and $Y$ are strongly proximal and that $\iota$ is irreducible?
\end{question}

Whenever $G$ is definable in an NIP structure $\bM$, then $\rmS_G(\bM)$ is tame \cite{CS_2018}. Hence $\rmS_{G, ext}(\bM)\cong \rmS_G(\bM^{ext})$ is tame, as is any minimal subflow. Hence we obtain a proof of the revised Newelski conjecture:

\begin{theorem}
    \label{Thm:Newelski}
    Let $G$ be a group definable in a NIP structure $\bM$, and let $\cM\subseteq \rmS_{G, ext}(\bM)$ be a minimal subflow. Then $\bbG_\cM$ is Hausdorff.
\end{theorem}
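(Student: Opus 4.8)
The statement follows by chaining together three facts established in the excerpt, so the proof is essentially a bookkeeping argument. First I would recall the standard model-theoretic input: by work of Chernikov and Simon \cite{CS_2018}, if $G$ is a group definable in a NIP structure $\bM$, then the $G$-flow $\rmS_G(\bM)$ is tame in the sense of Definition~\ref{Def:Tame}. I would then pass to the Shelah expansion $\bM^{ext}$, which by Shelah's theorem eliminates quantifiers, is again NIP, and has the property that externally definable subsets are already definable, so that $\rmS_G(\bM^{ext}) \cong \rmS_{G, ext}(\bM)$ as $G$-flows. Applying the tameness result to $\bM^{ext}$ in place of $\bM$ gives that $\rmS_{G, ext}(\bM)$ is a tame $G$-flow.

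Next I would invoke the closure properties of tame flows recorded just after Definition~\ref{Def:Tame}: tame flows are closed under passing to subflows. Hence the minimal subflow $\cM \subseteq \rmS_{G, ext}(\bM)$ is itself a tame $G$-flow, and by construction it is a minimal ideal flow, so that its Ellis group $\bbG_\cM = \aut(\cM)$ equipped with the tau-topology is defined as in Subsection~\ref{Subsection:MinIdealFlows}. The final step is to apply Theorem~\ref{Thm:Tame_Tau_Hausdorff}, which asserts precisely that the Ellis group of any minimal tame flow is Hausdorff; applied to $\cM$ this yields that $\bbG_\cM$ is Hausdorff, which is the desired conclusion.

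The conceptual heavy lifting is of course all in Theorem~\ref{Thm:Tame_Tau_Hausdorff} itself, whose proof uses the forcing-and-absoluteness machinery of Section~\ref{Section:Abs} — specifically, that "$\wh{X}$ is tame" is a $\Delta_1$ property of a preflow $X$ (Proposition~\ref{Prop:Tame_Levy}), that minimality and metrizability behave well under the relevant forcing (Lemma~\ref{Lem:Preflow_Props}), that Glasner's structure theorem \cite{Glasner_Tame_Gen} handles the metrizable case via Theorem~\ref{Thm:Tau_Hausdorff}, and that "$\bbG_X$ is Hausdorff" is $\Delta_1$ (Theorem~\ref{Thm:Ellis_H_Absolute}). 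For the present theorem, however, nothing new is required: once tameness of $\cM$ is in hand, the result is immediate. The only genuinely external ingredient is the Chernikov–Simon tameness theorem, so I expect the main (and in fact sole) obstacle is simply marshalling the definability-theoretic facts correctly — in particular being careful that the passage to $\bM^{ext}$ is what lets us work with the \emph{externally} definable types while still citing tameness results stated for $\rmS_G$ of a NIP structure.
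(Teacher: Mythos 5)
Your proposal matches the paper's argument exactly: tameness of $\rmS_G(\bM)$ for NIP $\bM$ from Chernikov--Simon, passage to the Shelah expansion to identify $\rmS_{G,ext}(\bM)\cong\rmS_G(\bM^{ext})$ as a tame flow, closure of tameness under subflows to get tameness of $\cM$, and then Theorem~\ref{Thm:Tame_Tau_Hausdorff}. The paper's proof is the same two-line deduction, with all the substance residing in Theorem~\ref{Thm:Tame_Tau_Hausdorff} as you note.
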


\begin{proof}
    Immediate from the above discussion and Theorem~\ref{Thm:Tame_Tau_Hausdorff}.
\end{proof}

\begin{question}
    For $G$ a group definable in a NIP structure $\bM$, is the Ellis group of the universal externally definable minimal flow an invariant of the theory? Namely, if $\bN\equiv \bM$ and $\cN\subseteq \rmS_{G, ext}(\bN)$ is a minimal $G^\bN$-subflow, do we have $\bbG_\cN\cong \bbG_\cM$? For definably amenable NIP groups, this is a theorem of Chernikov and Simon \cite{CS_2018}. 
\end{question}

We end with some discussion comparing and contrasting the results in this paper with what is known for definable groups. To do this, it will be helpful to provide weakenings of some of the classes of topological groups we have defined.

\begin{defin}
    \label{Def:Weak_TMD}
    Fix $G$ a topological group.
    \begin{itemize}
        \item 
        $G\in \sf{SCAP}_0$ if $\rm{Min}_G(\sa(G))$ is Vietoris compact.
        \item 
        $G\in \sf{CAP}_0$ iff $\rm{AP}_G(\sa(G))\subseteq \sa(G)$ is closed.
        \item 
        $G\in \sf{TMD}_0$ if $\rm{Min}_G(\sa(G))$ is meets Hausdorff.
        \item 
        $G\in \sf{WCAP}_0$ if for every $y\in \ol{\rm{AP}_G(\sa(G))}$, $\ol{G\cdot y}$ contains a unique minimal subflow. 
    \end{itemize}
\end{defin}
Among ambitable groups, $\sf{CAP}_0$ and $\sf{CMD}$ coincide \cite{BassoZucker}, and among separable groups, $\sf{WCAP}_0$ and $\sf{TMD}$ coincide by Theorem~\ref{Thm:Polish_WCAP}, using the fact that for $G$ a separable and non-precompact group, $\sa(G)$ embeds a copy of $\alpha_G(\omega\times \rmM(G))$ (see \cite{ZucThesis}*{Lemma~2.7.8}). 

We can similarly define classes of strongly definable groups $(G, \cB)$, writing $\sf{SCAP}_0^{def}$, $\sf{SCAP}^{def}$, etc. Let us emphasize that when we write $\sf{SCAP}^{def}$, $\sf{CAP}^{def}$, etc, we mean as in Definiton~\ref{Def:Weak_TMD}, but for all definable $G$-flows. In particular, the analogs of Theorems~\ref{Thm:Concrete} and \ref{Thm:Rosendal_Minimal} need not hold for definable groups. We end by collecting some known properties and examples of definable groups. 
\begin{itemize}
    \item 
    If $G$ is definable in a stable structure $\bM$, then $G\in \sf{SCAP}_0^{def}$, as $\rmS_G(\bM) \cong \rmS_{G, ext}(\bM)$ contains a unique minimal subflow \cite{Newelski_2009}. However, the group $\bbZ$ (viewed as a model) is stable, but not in $\sf{TMD}^{def}$ as follows. Let 
    $$X  = (\bbT\times \{0\}) \cup \{(e^{2i\pi(k/n)}, 1/n): 2\leq n\leq \omega, 0\leq k< n\}\subseteq \bbT\times [0, 1/2].$$
    We turn $X$ into a $\bbZ$-flow via the homeomorphism $f$ given by:
    \begin{align*}
        f(x, y) = \begin{cases}
            (x, y) \quad &\text{if } y = 0\\
            (e^{2i\pi((k+1)/n)}, 1/n) \quad &\text{if }(x, y) = (e^{2i\pi(k/n)}, 1/n).
        \end{cases}
    \end{align*}
    Then $X$ is definable, since the definable subsets of $\bbZ$ are exactly finite unions of arithmetic progressions. However, letting $Y_n$ be the minimal subflow $X\cap (\bbT\times \{1/n\})$, we have $\lim Y_n = \bbT\times \{0\}$, which consists of fixed points. 

    Perhaps $G\in \sf{TMD}^{def}$ or even $\sf{SCAP}^{def}$ provided $\bM$ is suitably saturated. This example also seems related to the lack of a good notion of ``joint continuity'' for definable group actions. 
    \item 
    If $G$ is a definably amenable NIP group definable in a suitably saturated $\bM$ and $(G, \cB_{G, \bM})$ is strongly definable, then $G\in \sf{TMD}_0^{def}$ \cite{CS_2018}*{Proposition~4.3}. Perhaps in the definably amenable NIP context, $(G, \cB_{G, \bM}^{ext})$ is always in $\sf{TMD}_0^{def}$. Perhaps this is true even when $G$ is not definably amenable.
    \item 
    The group $G = \bbR^2\times \bbT$ (defined in the field $\bbR$) is a definably amenable NIP group with $G\not\in \sf{CAP}_0^{def}$ \cite{PY_2016}.
    \item 
    Let us mention a peculiar observation about the orbit structure of some universal definable minimal flows. For both the groups $G = \bbT$ and $G = \rm{SL}_2(\bbR)$ defined in $\bbR$, we have $\cB_{G, \bM} = \cB_{G, \bM}^{ext}$, and the universal definable minimal flow is the natural action of $G$ on the \emph{doubled circle} $X = \{s^-: s\in \bbT\}\sqcup \{s^+: s\in \bbT\}$, where a basic clopen set has the form $[s^+, t^-]$  \cite{GPP_2015}. Hence for both of these definable groups, the universal definable minimal flow has exactly $2$ orbits, both of which are non-meager. 

    It would be interesting to understand which definable NIP groups have the property that their universal externally definable minimal flows have a (possibly several) non-meager orbit.
\end{itemize}

\bibliographystyle{amsplain}
\bibliography{aux/main}
\vspace{5 mm}

\noindent
\url{gianluca.basso@protonmail.com}
\vspace{5 mm}

\noindent
University of Waterloo, Department of Pure Mathematics, 200 University Avenue West, Waterloo, Ontario N2L 3G1, Canada.

\noindent
\url{a3zucker@uwaterloo.ca}

\end{document}